\newlength{\numberheight}
  \renewcommand*{\chapnamefont}{\normalfont\LARGE\flushright}
  \renewcommand*{\printchaptername}{%
    \chapnamefont\MakeUppercase{\@chapapp}}
\def\monthword{\ifcase\month \or
  Januar\or Februar\or M\"arz\or April\or Mai\or Juni\or
  Juli\or August\or September\or Oktober\or November\or Dezember\fi}
\newcites{own}{List of Publications}
\newtheorem*{theorem*}{Theorem}
\newtheorem*{example*}{Example}
\newtheorem{theorem}{Theorem}[section]
\newtheorem{definition}[theorem]{Definition}
\newtheorem{proposition}[theorem]{Proposition}
\newtheorem{lemma}[theorem]{Lemma}
\newtheorem{remark}[theorem]{Remark}
\newtheorem{algorithm}[theorem]{Algorithm}
\newtheorem{example}[theorem]{Example}
\newtheorem{corollary}[theorem]{Corollary}
\newtheorem{detail}{Detail}
\newcommand{\orig}{{\rm orig}}
\newcommand{\st}{\text{ {\rm s.t.} }}
\newcommand{\epi}{\mathop{\rm epi}}
\newcommand\lev{ {\rm lev}}
\DeclareMathOperator{\MAX}{MAX}
\newcommand{\rarr}{\rightarrow}
\newcommand{\larr}{\leftarrow}
\newcommand{\Rarr}{\Rightarrow}
\newcommand{\Larr}{\Leftarrow}
\newcommand{\iseq}{\; \Longleftrightarrow \;}
\newcommand{\minfty}{{- \infty}}
\newcommand{\pinfty}{{+ \infty}}
\newcommand{\zerovec}{\boldsymbol{0}}
\newcommand{\vectornorm}[1]{\lVert#1\rVert} 
\let \oldtodo \todo
\renewcommand{\todo}[1]{\oldtodo{TODO:\\#1}}
\newcommand{\checken}[1]{\oldtodo[color=red!20]{CHECKEN: \\ #1}}
\newcommand{\rem}[1]{\oldtodo[color=green]{BEM.:\\ #1}}
\newcommand{\iftime}[1]{\oldtodo[color=blue!20]{IF TIME:\\#1}}
\let\diss\iftime 
\newcommand{\may}[1]{\oldtodo[color=blue!20]{M\!\;\!a\!\;\!y\,2015\!\;\!:\\#1}}   
\renewcommand{\todo}[1]{}
\renewcommand{\checken}[1]{}
\renewcommand{\rem}[1]{}
\renewcommand{\iftime}[1]{}
\DeclareMathOperator{\oxlim}{\mathcal{O}_X-lim}
\DeclareMathOperator{\tlim}{\mathcal{T}-lim}
\DeclareMathOperator{\spanop}{span}
\NewDocumentCommand \linspan { m } {\spanop(#1) }
\NewDocumentCommand \Linspan { m } {\spanop\left(#1\right) }
\DeclareMathOperator{\sdirsum}{\uplus}
\NewDocumentCommand \openball { O{} O{} O{} }
{
  \ifstrequal {#3}{}
    { 
      \ifstrequal{#2}{}
	{ \mathbb{B}_{#1} }
	{ \mathbb{B}_{#1}\!\!\;(#2) } 
    }
    { 
      \ifstrequal{#2}{}
	{ \mathbb{B}^{(#3)}_{#1} } 
	{ \mathbb{B}^{(#3)}_{#1}\!\!\;(#2) }  
    }
}
\NewDocumentCommand \closedball { O{} O{} O{} }
{
  \ifstrequal {#3}{}
    { 
      \ifstrequal{#2}{}
	{ \overline{\mathbb{B}}_{#1} }
	{ \overline{\mathbb{B}}_{#1}\!\!\;(#2) } 
    }
    { 
      \ifstrequal{#2}{}
	{ \overline{\mathbb{B}}^{(#3)}_{#1} } 
	{ \overline{\mathbb{B}}^{(#3)}_{#1}\!\!\;(#2) }  
    }
}
\NewDocumentCommand \sphere { O{} O{} O{} }
{
  \ifstrequal {#3}{}
    { 
      \ifstrequal{#2}{}
	{ \mathbb{S}_{#1} }
	{ \mathbb{S}_{#1}\!\!\;(#2) }
    }
    { 
      \ifstrequal{#2}{}
	{ \mathbb{S}^{(#3)}_{#1} } 
	{ \mathbb{S}^{(#3)}_{#1}\!\!\;(#2) }  
    }
}
\newcommand{\hill}[1]{\wideparen{#1}}
\newcommand{\id}{{\rm id}}
\newcommand{\R}{\mathbb{R}}
\newcommand{\N}{\mathbb{N}}
\newcommand{\Z}{\mathbb{Z}}
\newcommand{\C}{\mathbb{C}}
\def\tT{{\mbox{\tiny{T}}}}
\def\argmin{\mathop{\rm argmin}}
\def\dom{\mathop{\rm dom}}
\def\sup{\mathop{\rm sup}}
\def\nvphantom{\v@true\h@false\nph@nt}
\def\nhphantom{\v@false\h@true\nph@nt}
\def\nphantom{\v@true\h@true\nph@nt}
\def\nph@nt{\ifmmode\def\next{\mathpalette\nmathph@nt}%
  \else\let\next\nmakeph@nt\fi\next}
\def\nmakeph@nt#1{\setbox\z@\hbox{#1}\nfinph@nt}
\def\nmathph@nt#1#2{\setbox\z@\hbox{$\m@th#1{#2}$}\nfinph@nt}
\def\nfinph@nt{\setbox\tw@\null
  \ifv@ \ht\tw@\ht\z@ \dp\tw@\dp\z@\fi
  \ifh@ \wd\tw@-\wd\z@\fi \box\tw@}
\newcommand{\adapt}[2]{\hphantom{#1}\nhphantom{#2}#2}
\newcommand{\defeq}{\coloneq}
\newcommand{\eqdef}{\eqcolon}
\newcommand{\defiseq}{\vcentcolon \iseq}
\newcommand{\topology}[1]{          \mathcal{O}_{#1}   }
\newcommand{\topospace}[1]{     (#1 ,\mathcal{O}_{#1} )}
\newcommand{\ordertotopspace}[2]
			{     (#1^{#2}, \mathcal{T}_{\leq#2}        )}
\newcommand{\orderspace}[2]{     (#1^{#2}, \leq#2        )}
\newcommand{\decorspace}[2]
			{     (#1#2, \mathcal{O}#2        )}
\newcommand{\compdecorspace}[2]
			{     (#1#2, \mathcal{O}#2        )_{\infty#2}}
    \newcounter{saveenumerate}
    \newcommand{\enumeratext}[1]{%
    \setcounter{saveenumerate}{\value{enum\romannumeral\the\@enumdepth}}
    \end{enumerate}
    #1
    \begin{enumerate}
    \setcounter{enum\romannumeral\the\@enumdepth}{\value{saveenumerate}}%
    }
\renewcommand{\otimes}{\varotimes \!\!\;}
\let\oldstrictfi\strictfi
\renewcommand{\strictfi}{\!\! \oldstrictfi \!\!}
\let\oldstrictif\strictif
\renewcommand{\strictif}{\!\! \oldstrictif \!\!}
	\newcommand{\upref}[2][)]{\textup{\ref{#2}#1}}
\newcommand{\DissVersionForMeOrHareBrainedOfficialVersion}[2]{#2} 
\newcommand{\MayChangesPartiallyPerformedVersionOrHareBrainedOfficialVersion}[2]{#2} 
  \newcommand{\addtointroscoll}[1]{\begin{collect*}{introscollection}{}{}{}{} #1 --------------  \end{collect*} }
  \renewcommand{\addtointroscoll}[1]{#1}
\begin{document}
\clearpage   
\selectlanguage{\ngerman}
%
%
\thispagestyle{empty}

\title{\vspace{-4cm}
    \normalsize Vom Fachbereich Mathematik
    \normalsize der Technischen Universit\"at Kaiserslautern\\
    \normalsize zur Verleihung des akademischen Grades\\
    \normalsize Doktor der Naturwissenschaften
    \normalsize (Doctor rerum naturalium, Dr. rer. nat.)\\
    \normalsize genehmigte \\[1.5ex]
    \large \textbf{Dissertation}*\\
\vspace{2cm}
				 \huge \textbf{Coercive functions from a topological viewpoint and properties
	of minimizing sets of convex functions appearing in image restoration }
}
\author{\vspace*{0.7cm}\\
{\LARGE René Ciak}
 	 	\vspace{6cm}\\ 
		\hspace{-1.6cm}\hspace{-4.46cm}
		Gutachter: \\
		\hspace{-1.6cm}\hspace{-1.53cm} Prof. Dr. Gabriele Steidl \\ 
		\hspace{-1.6cm}Prof. Dr. Gerlind Plonka--Hoch\\ 
}

\date{ \vspace{0.3cm} 
       Tag der Disputation: 9. Oktober 2014 
       \vspace{2cm}\\  
       \normalsize D 386
       \vspace{-2cm}}

\maketitle
\thispagestyle{empty}



\clearpage   
\newpage

\thispagestyle{empty}
\ \\
\large*\normalsize up to minor differences, see last page
\vspace*{\fill}

\begin{tabular}{l}
\textbf{Promotionskommission}
\end{tabular}\\[1ex]
\begin{tabular}{l l}
			Vorsitzender: & Prof. Dr. Claus Fieker, TU Kaiserslautern \\
			Erstgutachterin:  & Prof. Dr. Gabriele Steidl, TU Kaiserslautern\\
			Zweitgutachterin: &  Prof. Dr. Gerlind Plonka--Hoch, Universit\"at G\"ottingen \\ 
			Weiterer Pr\"ufer: & Prof. Dr. J\"urgen Franke, TU Kaiserslautern
\end{tabular} 
\bigskip\\
\begin{tabular}{lr}
\end{tabular}


\frontmatter

\newpage

\selectlanguage{\english}
\chapter{Table of notation}


\subsection*{Sets, ordered sets and level sets }
\begin{tabular}{p{3.75cm}p{\textwidth-2\tabcolsep-3.75cm}}
  $A \subseteq B$
    & $A$ is subset of $B$
\\
  $A \subset B$
    & $A$ is strict subset of $B$
\\
  $\N$
    & Set $\{1,2,3,\dots \}$ of natural numbers 
\\
  $\N_0$
    & Set $\{0,1,2,3, \dots \} = \{0\} \cup \N$
\\
  $\R$
    & Set of real numbers
\\
  $\R_0^+$
    & The real interval $[0, \pinfty)$
\\
  $\C$
    & Set of complex numbers
\\[0.8ex]
  $\MAX_\leq(Z)$, $\MAX(Z)$
    & (Possibly empty) set of maximal elements of an ordered set $(Z, \leq)$
\\
  $\max_\leq(Z)$, $\max (Z)$
    & Maximum of a totally ordered set $(Z, \leq)$ really having 
      a maximum
\\
  $\min_\leq(Z)$, $\min (Z)$
    & Minimum of a totally ordered set $(Z, \leq)$ really having 
      a minimum
\\[0.8ex]
  $\lev_{\leq \tau} \Psi, \lev_{\tau} \Psi$
    & (Lower) level set $\{x \in X : \Psi(x) \leq \tau \}$ 
      of the function $\Psi: X \rarr (Z, \leq)$
\\
  $\lev_{< \tau} \Psi$
    & Strict (lower) level set $\{x \in X : \Psi(x) < \tau \}$ 
\\
  $\lev_{= \tau} \Psi$
    & Iso-level set $\{x \in X : \Psi(x) = \tau \}$ 
\\
  $\closedball[r][a][] [d]$, $\closedball[r][a]$
    & Closed ball $\{x \in X : d(x, a) \leq r \}$ in a metric space 
      $(X,d)$
\\
  $\openball[r][a][] [d]$, $\openball[r][a]$
    & Open ball $\{x \in X : d(x,a) < r\}$
\\
  $\sphere[r][a][] [d]$, $\sphere[r][a]$
    & Sphere $\{x \in X: d(x,a) = r\}$
\\
  $\closedball[r][][] [\|\cdot\|]$,$\closedball[r]$
    & Closed ball $\{x \in X : \|x\| \leq r \}$ around $\zerovec$ in 
      a normed space $(X, \|\cdot \|)$
\\
  $\openball[r][][] [\|\cdot\|], \openball[r]$
    & Open ball $\{x \in X : \|x\| < r\}$ around $\zerovec$
\\
  $\sphere[r][][] [\|\cdot\|], \sphere[r]$
    & Sphere $\{x \in X: \|x\| = r\}$ around $\zerovec$
\\
  $\closedball[r][a][n] [\|\cdot\|], \closedball[r][a][n] $
    & Closed ball $\{x \in \R^n: \|x\| \leq r \}$ in $(\R^n, \|\cdot \|)$
\\
  $\openball[r][a][n] [\|\cdot\|], \openball[r][a][n]$
    & Open ball $\{x \in \R^n : \|x\| < r\}$ in $(\R^n, \|\cdot \|)$
\\
  $\sphere[r][a][n] [\|\cdot\|], \sphere[r][a][n]$
    & Sphere $\{x \in \R^{\boldsymbol{n+1}}: \|x\| = r\}$
	in $(\R^{\boldsymbol{n+1}}, \|\cdot \|)$
\\
  $H_{p, \alpha}^{\leq}$
    & Closed halfspace $\{x \in \R^n : \langle x, p \rangle \leq \alpha \}$
\\
  $H_{p, \alpha}^{<}$
    & Open halfspace $\{x \in \R^n : \langle x, p \rangle < \alpha \}$
\\
  $H_{p, \alpha}^{=}$
    & Hyperplane $\{x \in \R^n : \langle x, p \rangle = \alpha \}$
\\
  $\dom \Phi$
    & Effective domain $\{ x \in X : \Phi(x) < \pinfty \}$
      of the function $\Phi$
\\
  $OP(\Phi, \Psi)$
    & The set 
      $\{\tau \in \R : \dom \Phi \cap \lev_\tau \Psi \not = \emptyset\}$
    of parameters $\tau \in \R$ for which $\dom \Phi$ and
      $\lev_\tau \Psi$ overlap
\end{tabular}

\subsection*{Topological spaces and systems of sets}

\begin{tabular}{p{3cm}p{\textwidth-2\tabcolsep-3cm}}
  $(X, \mathcal{O})$
    & A topological space, i.e. a set $X$ equipped
      with some topology $\mathcal{O}$
\\
  $(X_\infty, \mathcal{O}_\infty)$
    & One point compactification of a topological 
      space $(X, \mathcal{O})$
\\[0.8ex]
  $\mathcal{U}(x)$
    & Neighborhood system of the point $x$ of a topological space 
      $(X, \mathcal{O})$
\\
  $\mathcal{B}(x)$
    & A neighborhood basis of the point $x$ of a topological space 
      $(X, \mathcal{O})$
\\
  $\mathcal{K}(X, \mathcal{O}), \mathcal{K}(X)$
    & System of all compact subsets of a topological space 
      $(X, \mathcal{O})$
\\
  $\mathcal{A}(X, \mathcal{O}), \mathcal{A}(X)$
    & System of all closed subsets of a topological space
      $(X, \mathcal{O})$
\\
  $\mathcal{KA}(X, \mathcal{O})$
    & System 
      of all compact and closed subsets of a topological space
      $(X, \mathcal{O})$
\\[0.8ex]
  $U \Cap \mathcal{O}$
    & Subspace topology $\{U \cap O: O \in \mathcal{O}\}$ 
      for  the subset $U$ of a topological space $(X, \mathcal{O})$
\\
  $\mathcal{O}_\leq$
    & Usual order topology for a totally ordered set $(X, \leq)$
\\
  $\mathcal{T}_\leq$
    & Right order topology for a totally ordered set $(X, \leq)$
\\
  $\mathcal{T}_\geq$
    & Left order topology for a totally ordered set $(X, \leq)$
\\
  $\mathcal{T}$
    & Right order topology for $[\minfty, \pinfty]$
\\[0.8ex]
  $(\R, \mathcal{O})$
    & $\R$ equipped with its natural topology
\\
  $(\R^n, \mathcal{O}^{\otimes n})$
    & $\R^n$ equipped with its natural topology
\end{tabular}

\subsection*{Hulls and topological operations}


\begin{tabular}{p{3cm}p{\textwidth-2\tabcolsep-3cm}}
  ${\rm co}(S)$
    & Convex hull of the set $S$
\\
  ${\rm aff}(S)$
    & Affine hull of the set $S$
\\[0.8ex]
  $\overline S$
    & Closure of the set $S$
\\
  ${\rm int}(S)$
    & Interior of the set $S$
\\
  ${\rm int}_A(S)$
    & Interior of the set $S$, relative to $A$
\\
  ${\rm ri}(S)$
    & Relative interior ${\rm int}_{{\rm aff}(S)}(S)$ of the set $S$
\\
  ${\rm rb}(S)$
    & Relative boundary $\overline S \setminus {\rm ri}(S)$ 
	of the set $S$
\end{tabular}

\subsection*{Linear Algebra}

\begin{tabular}{p{3cm}p{\textwidth-2\tabcolsep-3cm}}
  $S_1 \oplus \dots \oplus S_k$
    & Direct sum of the \emph{subsets} $S_1, \dots, S_k$ of 
      some vecotor space
\\
  $A^*$
    & Transpose of the matrix $A$
\\
  $v^\tT$
    & Transpose of the vector $v$
\\
  $e_1, \dots, e_n$
    & Standard basis vectors 
      $(1,0, \dots, 0)^\tT, \dots, (0,0, \dots 1)^\tT $
      of $\R^n$
\\
  $\mathcal{N}(A)$
    & Nullspace of the linear mapping $A$, resp. of the matrix $A$
\\
  $\mathcal{R}(A)$
    & Range of the linear mapping $A$, resp. of the matrix $A$
\\
  $0_{X}$
    & The trivial linear mapping $0_{X}: X \rarr \R, x \mapsto 0$ 
\end{tabular}

\newpage
\thispagestyle{plain}

\subsection*{Operators, functions and families of functions}

\begin{tabular}{p{3cm}p{\textwidth-2\tabcolsep-3cm}}
 $F_1 \sdirsum F_2$
   & Semidirect sum of the functions $F_i: X_i \rarr \R \cup \{\pinfty\}$
     defined on subspaces $X_i$ with $X_1 + X_2 = X_1 \oplus X_2$,
     given by \\&
     $(F_1 \sdirsum F_2)(x_1 + x_2) \defeq F_1(x_1) + F_2(x_2) $
\\
  $|y|$ 
    & The vector in $\R^n$ which is derived from $y = (a,b)^\tT \in \R^{n+n}$
      according to $|y|_i \defeq \sqrt{a_i^2 + b_i^2}$,
      $i = 1 \dots n$.
\\
 $\nabla$
   & Gradient operator (the continuous one or a discrete one)
\\ 
 $\partial \Phi(x)$
   & Subdifferential of the function $\Phi$ at $x$ 
\\
\end{tabular}
\begin{tabular}{p{3cm}p{\textwidth-2\tabcolsep-3cm}}
  $\Phi^*$
   & (Fenchel) conjugate function of $\Phi$ \\
  ${\rm cl} \Phi$
    & Closure of the function $\Phi$
\\
 $\iota_S$ 
   & Indicator function $\iota_S:\R\rightarrow\R\cup\{+\infty\}$ of $S$ defined by \\
   & $\iota_S(x)\;=\; \begin{cases} 0 & x\in S, \\
     														\infty & \text{otherwise}
     							\end{cases}
     $ 
\\
  ${\rm  gr} g$
    & Graph of the function $g$
\\
  $\Gamma_0(X)$
    & Set of all proper convex and lower semicontinuous \\& functions
      mapping a nonempty affine subset $X$ of $\R^n$ to $[\minfty, \pinfty]$
\end{tabular}

\cleardoublepage
\section*{Summary}
\thispagestyle{empty}
Many tasks in image processing can be tackled by 
modeling an appropriate data fidelity term 
$\Phi: \R^n \rarr \R \cup \{\pinfty\}$ and then solve 
one of the regularized minimization problems
\begin{align*} 
  &(P_{1,\tau}) \qquad \argmin_{x \in \mathbb R^n} \left\{ \Phi(x) \st \Psi(x) \le \tau \right\}
\\
  &(P_{2,\lambda}) \qquad \argmin_{x \in \mathbb R^n} \{ \Phi(x) +  \lambda \Psi(x) \}, \; \lambda > 0
\end{align*}
with some function $\Psi: \R^n \rarr \R \cup \{\pinfty\}$
and a good choice of the parameter(s).
Two tasks arise naturally here:
\begin{enumerate}
  \item 
    Study the solver sets ${\rm SOL}(P_{1,\tau})$ and 
    ${\rm SOL}(P_{2,\lambda})$ of the minimization problems.
  \item 
    Ensure that the minimization problems have solutions.
\end{enumerate}

This thesis provides contributions to both tasks:
Regarding the first task for a more special setting we prove that there 
are intervals $(0,c)$ and $(0,d)$ such that the setvalued curves 
\begin{align*}
  \tau \mapsto {}& {\rm SOL}(P_{1,\tau}),  \; \tau \in (0,c)
\\
  \lambda \mapsto {}& {\rm SOL}(P_{2,\lambda}), \; \lambda \in (0,d)
\end{align*}
are the same, besides an order reversing parameter change 
$g: (0,c) \rarr (0,d)$. Moreover we show 
that the solver sets are changing all the time while 
$\tau$ runs from $0$ to $c$ and $\lambda$ runs from $d$ to $0$.

In the presence of lower semicontinuity
the second task is done if we have additionally coercivity.
We regard lower semicontinuity and coercivity from a topological 
point of view and develop a new technique for proving 
lower semicontinuity plus coercivity.
The key point is that 
a function
$f: \R^n \rarr [\minfty, \pinfty]$
is lower semicontinuous and coercive, iff
a certain continuation of $f$ to the one point compactification of 
$\R^n$ is continuous with respect to the right order topology on 
$[\minfty, \pinfty]$.

Dropping any lower semicontinuity assumption we also prove a theorem
on the coercivity of a sum of functions. 
More precisely, this theorem gives information on which subspaces 
of $\R^n$ a sum $F + G$ 
of functions $F, G : \R^n \rarr [\minfty, \pinfty]$ 
is coercive, provided that $F$ and $G$ are of a certain form, namely
\begin{align*}  
  &F = F_1 \sdirsum F_2 \quad \text{  and  }  \quad G = G_1 \sdirsum G_2 
\end{align*}
with functions
$F_1: X_1 \rarr \R \cup \{\pinfty\}$, 
$F_2: X_2 \rarr \R \cup \{\pinfty\}$,
$G_1: Y_1 \rarr \R \cup \{\pinfty\}$, \linebreak[2]
and 
$G_2: Y_2 \rarr \R \cup \{\pinfty\}$,
where 
\begin{gather*}
  \R^n = X_1 \oplus X_2 = Y_1 \oplus Y_2.
\end{gather*}
For such functions the theorem basically states that $F + G$ 
is coercive on 
$X_1 + Y_1 = (X_2 \cap Y_2)^\perp$ if 
$X_1 \perp X_2$, $Y_1 \perp Y_2$ and 
certain boundedness conditions hold true.

\selectlanguage{\ngerman}
\thispagestyle{empty}
\newpage
\section*{Zusammenfassung}
\thispagestyle{empty}
Viele Aufgaben in der Bildverarbeitung lassen sich wie folgt
angehen:
Nach Modellierung eines Datenterms
$\Phi: \R^n \rarr \R \cup \{\pinfty\}$ löst man 
eines der folgenden regularisierten Minimierungsprobleme 
\begin{align*} 
  &(P_{1,\tau}) \qquad \argmin_{x \in \mathbb R^n} \left\{ \Phi(x) \st \Psi(x) \le \tau \right\}
\\
  &(P_{2,\lambda}) \qquad \argmin_{x \in \mathbb R^n} \{ \Phi(x) +  \lambda \Psi(x) \}, \; \lambda > 0
\end{align*}
mit einer Funktion $\Psi: \R^n \rarr \R \cup \{\pinfty\}$
und jeweils gut gewähltem Parameterwert.
Es stellen sich unter anderem folgende Aufgaben:
\begin{enumerate}
  \item 
    Untersuche die Lösungsmengen ${\rm SOL}(P_{1,\tau})$ und
    ${\rm SOL}(P_{2,\lambda})$ der Minimierungsprobleme.
  \item 
    Stelle sicher, daß die Minimierungsprobleme überhaupt Lösungen besitzen.
\end{enumerate}

Diese Arbeit enthält Beiträge zu beiden Aufgaben:
Bezüglich der ersten Aufgabe wird (in einem spezielleren Rahmen)
die Existenz von Intervallen
$(0,c)$ und $(0,d)$ bewiesen derart, daß die mengenwertigen Kurven
\begin{align*}
  \tau \mapsto {}& {\rm SOL}(P_{1,\tau}),  \; \tau \in (0,c)
\\
  \lambda \mapsto {}& {\rm SOL}(P_{2,\lambda}), \; \lambda \in (0,d)
\end{align*}

die selben sind, bis auf einen ordnungsumkehrenden 
Parameterwechsel $g: (0,c) \rarr (0,d)$. 
Desweiteren zeigen wir, daß die Lösungsmengen
${\rm SOL}(P_{1,\tau})$ bzw. ${\rm SOL}(P_{2,\lambda})$
sich die ganze Zeit ändern, während $\tau$ aufsteigend 
das Intervall $(0,c)$ durchläuft bzw. $\lambda$ absteigend
das Intervall $(0,d)$ durchläuft.

Falls Halbstetigkeit von unten gegeben ist, ist die 
zweite Aufgabe gelöst, wenn zusätzlich Koerzivität vorliegt.

Wir betrachten in dieser Arbeit sowohl Halbstetigkeit von unten 
als auch Koerzivität von einem topologischen Standpunkt.
Grundlegend ist hierbei, daß eine Funktion 
$f: \R^n \rarr [\minfty, \pinfty]$
genau dann halbstetig von unten und koerziv ist, wenn 
eine gewisse Fortsetzung von $f$ auf die Einpunktkompaktifizierung von 
$\R^n$ stetig bzgl. der von den Halbstrahlen $(a, \pinfty]$, 
$a \in [\minfty, \pinfty)$ erzeugten Topologie ist.
Hieraus wird eine neue Beweistechnik für den gemeinsamen Nachweis von
Halbstetigkeit von unten und Koerzivität entwickelt.

\newpage
\thispagestyle{empty}
Desweiteren beweisen wir einen Satz über die Koerzivität
der Summe zweier Funktionen, ohne Halbstetigkeit von unten vorauszusetzen.
Genauer gesagt liefert dieser Satz Informationen darüber 
auf welchen Unterräumen des $\R^n$ die Summe $F + G$ 
von Funktionen $F, G : \R^n \rarr [\minfty, \pinfty]$ 
koerziv ist, wenn diese Funktionen von der Bauart
\begin{align*}  
  &F = F_1 \sdirsum F_2 \quad \text{  and  }  \quad G = G_1 \sdirsum G_2
\end{align*}
sind mit Funktionen 
$F_1: X_1 \rarr \R \cup \{\pinfty\}$, 
$F_2: X_2 \rarr \R \cup \{\pinfty\}$,
$G_1: Y_1 \rarr \R \cup \{\pinfty\}$, \linebreak[2]
und 
$G_2: Y_2 \rarr \R \cup \{\pinfty\}$,
worin 
\begin{gather*}
  \R^n = X_1 \oplus X_2 = Y_1 \oplus Y_2.
\end{gather*}
Für Funktionen solchen Typs besagt der Satz im Wesentlichen, daß 
$F + G$ genau dann koerziv auf dem Unterraum
$X_1 + Y_1 = (X_2 \cap Y_2)^\perp$ ist, wenn
$X_1 \perp X_2$, $Y_1 \perp Y_2$ und 
gewisse Beschränktheitsvoraussetzungen erfüllt sind.

\thispagestyle{plain}
\cleardoublepage
\selectlanguage{\english}

\tableofcontents*

\nociteown{*}

\bibliographyown{my_publications}
\bibliographystyleown{alpha_unsrt}


\newpage
\thispagestyle{empty}
\cleardoublepage

\newpage

\selectlanguage{\english}
\mainmatter

\clearpage   


\chapter{Introduction and overview} \label{chap:intro}

\minitoc


\section{Definitions, notations and conventions} \label{sec:definitions_notations_and_conventions}

Writing $A \subseteq B$ means that $A$ is a subset of $B$,
whereas writing $A \subset B$ indicates that $A$ 
is a proper subset of $B$.
A function $f: X \rarr Y$ is {\bf genuine} or {\bf non-trivial},
iff $X$ (and therefore also $Y$) is nonempty.

  A {\bf (direct) decomposition} of a vector space $V$ {\bf into subspaces}
  $V_1, V_2 \dots V_n$ is a tupel $(V_1, V_2, \dots V_n)$ of subspaces,
  such that every $v \in V$ can be written in a unique way in the form 
  $v = v_1 + v_2 + \dots + v_n$ with $v_i \in V_i$ for $i = 1 \dots n$.
  A bit sloppily but practically we will also write
  $V = V_1 \oplus V_2 \oplus \dots \oplus V_n$
  and call this a (direct) decomposition or direct sum.
  For a given subspace $U_1$ of $V$ a subspace $U_2$ is called
  {\bf complementary} to $U_1$ iff $V = U_1 \oplus U_2$.

The set of all $n$-tuples of real numbers is denoted by 
  $\R^n$, where $n \in \N_0$.
Note that $\R^0$, containing only the empty tupel, 
is the trivial real vector space.
By $e_1, e_2, \cdots , e_n$ we name the 
vectors 
$(1,0, 0, \dots 0)^T, (0,1,0, \dots, 0)^T, \dots (0, \dots 0,1)^T$,
which form the {\bf standard basis} of $\R^n$.
The {\bf trivial linear mapping} $X \rarr \R, x\mapsto 0$ 
between a real vector space $X$ and the real numbers 
will be denoted by $0_{X}$.
The {\bf nullspace (kernel)} of a matrix/linear 
operator $A$ is denoted by  ${\cal N}(A)$ and
its {\bf range} by ${\cal R}(A)$.
The {\bf transpose} of a matrix $A$ is denoted by $A^*$.
For Euclidean vectors $v$ we will also write $v^T$.
For a vector $y = (a, b)^\tT \in \R^{n+n}$ let 
$|y|$ denote the vector in $\R^n$ whose components are
$\sqrt{a_i^2 + b_i^2} \eqdef |y|_i$, $i = 1 \dots n$.
Usually $y$ appears in the form 
$y = \nabla x$ with a linear mapping $\nabla : \R^n \rarr \R^n \times \R^n$
modeling a discrete gradient. 

We also remark that, in the presence of a direct decomposition of
$\R^n$ into subspaces like $\R^n = X_1 \oplus X_2 \oplus X_3$,
we will use the unique decomposition $x=x_1 + x_2 + x_3$ 
of $x\in \R^n$ in its components
$x_1 \in X_1, x_2 \in X_2, x_3 \in X_3$ without emphasizing
the underlying direct decomposition every time.
Furthermore
we will use the notation 
$S = S_1 \oplus S_2 \oplus \dots \oplus S_k$
for subsets $S, S_1, \dots, S_k$ of $\R^n$ iff
every $s \in S$ has a unique decomposition
$s = s_1 + s_2 + \dots + s_k$ into components 
$s_j \in S_j$, $j\in \{1,\dots, k\}$.
For \emph{convex} subsets $C_1, C_2$ of $\R^n$ we have 
$C_1 + C_2 = C_1 \oplus C_2$ iff
${\rm aff} (C_1) + {\rm aff} (C_2) = {\rm aff} (C_1) \oplus {\rm aff} (C_2)$, 
see \prettyref{thm:directness_of_sum_of_convex_sets_eq_carries_over_to_affine_hulls}
for more details.

The {\bf convex hull} of a set $S \subseteq \R^n$ is denoted by 
${\rm co}(S)$.
The {\bf affine hull} of a set $S \subseteq \R^n$ is named by 
${\rm aff}(S)$.
%
The (topological) {\bf closure} and the {\bf interior} of a set 
$S \subseteq \R^n$ will be denoted by $\overline{S}$ and 
${\rm int}(S)$,  
respectively. 
Note that, for any subset $A \subseteq \R^n$, the identity 
$\overline{A}^B = \overline{A}$ holds for all $B \supseteq A$ that 
are closed subsets of $\R^n$;
in particular it does not matter whether we form the closure 
of a subset $A$ of $\R^n$ with respect to $\R^n$ or 
with respect to any affine supperset of $A$, including ${\rm aff}(A)$.
The {\bf relative interior} of a convex set $C$ will be denoted by
${\rm ri}(C)$.
The {\bf relative boundary} of a convex set $C$ will be denoted by
${\rm rb}(C) \defeq \overline{C} \setminus {\rm ri}(C)$.

  For a totally ordered set $(Z, \leq)$ 
  we set 
  \begin{gather*}
    \MAX_\leq(Z) 
    \defeq \{\widehat z \in Z : z \text{ is a maximum of } Z \}
  \end{gather*}
  If it is clear from the context which total order is given to $Z$
  we will shortly also write $\MAX(Z)$.
  If $(Z, \leq)$ has a maximum $\widehat z$ then 
  $\MAX_\leq(Z) = \{\widehat z\}$.
  If $(Z, \leq)$ has no maximum then $\MAX_\leq(Z) = \emptyset$. 



Let $\R_0^+ \defeq [0,+\infty)$ and let
$
\Gamma_0(\mathbb R^n)
$ denote the set of proper, convex, closed functions mapping $\R^n$
into the extended real numbers $\R \cup \{ +\infty \}$.
For nonempty, affine subsets $X \subseteq \R^n$, we define 
$\Gamma_0(X)$ 
in an analogous way.
The {\bf closure} of a convex function 
$f: \R^n \rightarrow \R \cup \{-\infty, +\infty\}$ is denoted by 
${\rm cl}f$. The closure of a proper convex function
is its lower semicontinuous hull. See 
\prettyref{thm:closure_of_proper_convex_function}
for some of the properties of the closure operator.
For a given function 
$\Psi : X \rarr Z$ between a set $X$ and a totally ordered 
set $(Z, \leq)$ we distinguish different 
types of {\bf level sets} by the following notations:
\begin{align*}  
  {\rm lev}_\tau \Psi 
  \defeq 
  {\rm lev}_{\leq\tau} \Psi 
  \defeq 
  \{x\in X: \Psi(x) \le \tau \}
  &&\text{and}&&
  {\rm lev}_{<\tau} \Psi 
  \defeq 
  \{x\in X: \Psi(x) < \tau \}.
\end{align*}
Usually the term ``level set'' refers to the first type with ``$\leq$''.

Important lower level sets are the closed balls
$
  \overline{\mathbb{B}}_{r}(a)[\|\cdot\|]
  \defeq 
  \{x \in \R^n : \|x\| \leq r\}
$
of radius $r \in [0, +\infty)$, midpoint $a \in \R^n$ with respect to a norm 
$\|\cdot \|$.
If it is clear from the context which norm is meant
we use the abbreviation $\overline{\mathbb{B}}_{r}(a)$.
If $a=\zerovec$ we even more shortly write $\overline{\mathbb{B}}_{r}$.
For spheres 
$
  \overline{\mathbb{S}}_{r}(a)[\|\cdot\|]
  \defeq 
  \{x \in \R^n : \|x\| = r\}
$
and open balls
$
  \mathbb{B}_{r}(a)[\|\cdot\|]
  \defeq 
  \{x \in \R^n : \|x\| < r\}
$
with midpoint $a$, radius $r \in [0, +\infty)$ and $r\in(0,+\infty)$, 
respectively, we apply similar abbreviations.
If more general a metric space $(X, d)$ is given we use the notations
$
  \openball[R][a] \defeq \{x \in X: d(x,a) < R \}, 
  \closedball[R][a] \defeq \{x \in X: d(x,a) \leq R \}
$ and 
$\sphere[R][a] \defeq{} \{x\in X: d(x,a) = R \}$
for the 
{\bf open ball},  {\bf closed ball} and {\bf sphere} of 
radius $R \in \R$ around $a \in X$, respectively. If
$X = \R^n$ is endowed with the usual Euclidean metric we also
will use the notations
$
  \openball[R][a][n]  \defeq \{x \in \R^n: \|x - a \| < R \},  
  \closedball[R][a][n] \defeq \{x \in \R^n: \|x - a \| \leq R \}
$ and 
$ \sphere[R][a][n-1] \defeq{} \{x\in \R^n: \|x - a \| = R \}$.
If the dimension $n$ of the underlying Euclidean space is clear from 
the context we also 
use the abbreviations 
$ \openball[R][a],\closedball[R][a]$ and $\sphere[R][a]$.
If $a = \zerovec$ and/or $r = 1$ we 
sometimes
omit the corresponding parts of the notations and write
e.g. $\sphere[r]$, $\sphere[][a]$, $\sphere[][]$ or $\closedball$.

Further important level sets are {half-spaces} and {hyperplanes}.
We use the notations 
$H_{p,\alpha}^{\leq} \defeq \{x \in \R^n: \langle p,x \rangle \leq \alpha\}$,
$H_{p,\alpha}^{>} \defeq \{x \in \R^n: \langle p,x \rangle > \alpha\}$ and
$H_{p,\alpha}^{=} \defeq \{x \in \R^n: \langle p,x \rangle = \alpha\}$
for the {\bf closed halfspaces}, the {\bf open halfspaces} and 
{\bf hyperplanes}, respectively.
\MayChangesPartiallyPerformedVersionOrHareBrainedOfficialVersion{%
The alternatives $H_{p,\alpha}^{\geq}$ and $H_{p,\alpha}^{<}$ are defined analogously.
\may{This Sentence was newly inserted}%
}
{}%

The set of {\bf overlapping parameters} between a set $A$ and a family
$(B_\tau)_{\tau \in T}$ of sets $B_\tau$ with some index set $T$ is
$
  OP(A, (B_\tau)_{\tau \in T}) 
  \defeq \{\tau \in T: A \cap B_\tau \not = \emptyset\}
$. In this 
thesis
we will consider the case $A = \dom \Phi$ and
$B_\tau = {\rm lev}_{\tau}\Psi, \tau \in \R$ for functions
$\Phi, \Psi: \R^n \rightarrow \R \cup \{+\infty\}$ and use the notation
\begin{equation*}
  OP(\Phi, \Psi) 
  \defeq 
  OP(\dom \Phi, ({\rm lev}_{\tau}\Psi)_{\tau \in \R})
  = \{\tau \in \R: \dom \Phi \cap {\rm lev}_{\tau}\Psi \not = \emptyset\}.
\end{equation*}

Furthermore, the {\bf indicator function} $\iota_S$ of a set $S$
is defined by
$$
\iota_S(x) \defeq
\left\{
\begin{array}{cl}
0&{\rm if} \; x \in S,\\
+\infty&{\rm otherwise}.
\end{array}
\right.
$$

For $x_0 \in \R^n$ the {\bf subdifferential} $\partial \Psi(x_0)$  of $\Psi$ at $x_0$ is the set
$$
\partial \Psi(x_0) \defeq \{p \in \R^n: \Psi(x_0) + \langle p,x-x_0\rangle \le \Psi(x)  \; \text{ for all } x \in \R^n\}.
$$
If $\Psi$ is proper, convex and $x_0 \in {\rm ri}({\rm dom} \Psi)$, then
$\partial \Psi(x_0) \not = \emptyset$.

Additionally we will need the {\bf Fenchel conjugate function} of $\Psi$ defined by
$$
\Psi^*(p) \defeq \sup_{x \in \R^n} \{ \langle p,x \rangle - \Psi(x)\}.
$$

Finally the graph of a function $g$ is denoted by ${\rm gr} \, g$.

\subsection*{Topological notations and notions}

\begin{definition}
  We say that a topological space $(X,\mathcal{O})$ is {\bf nonempty}, iff
  $X$ is nonempty.
\end{definition}

\begin{definition}
  Let $U$ be a subset of a set $X$ and let $\mathcal{O}$ be a system of subsets of
  $X$. Then we denote the system
  \begin{equation*}
    \{U \cap O : O\in \mathcal{O} \}
  \end{equation*}
  abbreviated by $U \doublecap \mathcal{O}$.
\end{definition}

If $\mathcal{O}$ is a topology on $X$ then $U \Cap \mathcal{O}$ is a 
topology on $U$; cf. also 
\prettyref{subsec:subspaces}.

\begin{definition}
  An {\bf open neighborhood} of a point $x$ in a topological space 
  $(X, \mathcal{O})$ is just 
  a
  subset $O\in \mathcal{O}$ that contains $x$.
  
  A {\bf neighborhood} of a point $x$ from a topological space $(X, \mathcal{O})$
  is just a subset $U \subseteq X$ containing an open neighborhood of $x$.

  The system of all neighborhoods of $x$ will be denoted by 
  $\mathcal{U}[\mathcal{O}](x)$ or, if the underlying topological space 
  is clear from the context, 
  simply also by $\mathcal{U}(x)$.

  A system $\mathcal{B}(x)$ of 
  open 
  subsets of $X$ is called an 
  $\mathcal{O}$--{\bf neighborhood basis} of a point $x \in X$, iff 
  every neighborhood $U \in \mathcal{U}(x)$ contains some 
  $B \in \mathcal{B}(x)$.
\end{definition}
  We will feel free to adopt our notations for neighborhood systems
  according to the notations for the underlying topological space.
  For instance in the context of a topological space
  $(X',\mathcal{O}')$ we usually write $\mathcal{U}'(x')$
  instead of $\mathcal{U}(x')$.

\begin{remark}
  Having a neighborhood basis $\mathcal{B}(x)$ for every point $x$ 
  of a topological space $\decorspace{X}{}$ 
  we can first reconstruct all neighborhood systems 
  $\mathcal{U}(x)$, $x \in X$, and then also the whole topology 
  by means of the formulas
  \begin{align*}
    \mathcal{U}(x) 
    &= 
    \{U \subseteq X \,|\, \exists B \in \mathcal{B}(x) : U \supseteq B \}
  &&\text{and}
    &\mathcal{O} 
    =
    \{O \subseteq X \,|\, \forall x \in O : O \in \mathcal{U}(x) \}.
  \end{align*}
  See 
  \cite[2.9 Satz]{Qrnb2001} and its proof for more details.
\end{remark}

Regarding the following definition we note that 
``limit point'' is really meant 
as limit point and not as accumulation point.
\begin{definition}
  A sequence $(x_n)_{n\in \N}$ in a topological space 
  $(X, \mathcal{O}_X)$
  is said to have an element $x \in X$  as {\bf limit point}
  iff every neighborhood of $x$ contains almost all 
  sequence members, i.e. -- more formally expressed -- iff 
  \begin{gather*}
	  \forall U \in \mathcal{U}(x)
    \;\;  \exists N \in \N
    \;\;  \forall n \geq N :
	  x_n \in U
  \end{gather*}
  holds true.
  The set of all limit points will be denoted by 
  $\oxlim_{n \rarr \pinfty} x_n$ or simply by
  $\lim_{n \rarr \pinfty} x_n$,
  if it is clear which topology is given to $X$.
  If the sequence has at last one limit point we call the 
  sequence {\bf convergent}.
\end{definition}

\begin{definition}
  A topological space $(X, \mathcal{O})$ is called a 
  {\bf Hausdorff space} iff any two distinct points 
  have two disjoint open neighborhoods, 
  i.e. for every pair of distinct point 
  $x_1, x_2 \in X$ there are open \emph{disjoint} sets 
  $O_1, O_2 \in \mathcal{O}$ with $x_1 \in O_1$ and $x_2 \in O_2.$
\end{definition}

\begin{definition} \label{def:compactness_of_a_space}
  A topological space $(X,\mathcal{O})$ is called {\bf compact} if every
  covering of $X$ by sets from $\mathcal{O}$ has a 
  finite subcover.
\end{definition}

If the topological space appears as a subspace of another space, see
\prettyref{subsec:subspaces}, the
following equivalent definition can also be used:

\begin{definition}
  Let $(\widehat X, \widehat {\mathcal{O}})$ be a topological space.
  A subspace $(X,X \Cap \widehat {\mathcal{O}})$ is called {\bf compact} if every
  open covering of $X$ with open sets from $\widehat {\mathcal{O}}$ has a 
  finite subcover.
\end{definition}

\begin{remark}
  In some texts the word ``compact'' is only used for spaces that 
  are in addition Hausdorff spaces.
\end{remark}

\begin{definition}
  Let $(X, \mathcal{O})$ be a topological space. We say that $K \subseteq X$
  is a {\bf compact subset} of $(X,\mathcal{O})$, iff 
  $(K, K \Cap \mathcal{O})$ is a compact space.
  We denote the system 
  $\{K \subseteq X : K \text{ is a compact subset of } (X, \mathcal{O})\}$
  by $\bm{\mathcal{K}(X, \mathcal{O})} $ 
  or sometimes only by 
  $\bm{\mathcal{K}(X)}$, if it is clear which topology is given to $X$.
  
  Similarly we denote the system of closed subsets of $(X, \mathcal{O})$
  by $\bm {\mathcal{A}(X,\mathcal{O})}$ or by 
  $\bm {\mathcal{A}(X)}$ or even only by $\bm {\mathcal{A}}$.
  Finally the system of compact and closed subsets of $(X, \mathcal{O})$
  will be denoted by $\bm {\mathcal{KA}(X,\mathcal{O})}$ or by 
  $\bm {\mathcal{KA}(X)}$.
\end{definition}

Note that 
$
  \mathcal{KA}(X,\mathcal{O}) 
  = \mathcal{K}(X,\mathcal{O}) \cap \mathcal{A}(X,\mathcal{O})
  \subseteq 
  \mathcal{K}(X,\mathcal{O})
$ 
can be a strict subset of $\mathcal{K}(X,\mathcal{O})$, cf. 
\prettyref{exa:compact_not_closed_subset}.

The following definition is taken from 
\DissVersionForMeOrHareBrainedOfficialVersion
{\cite[section Locally Compact Spaces on p. 146]{Kelley1955}.}
{\cite[p. 146]{Kelley1955}.}

\begin{definition}
  A topological space is {\bf locally compact}, iff each point has
  at least one compact neighborhood.
\end{definition}

\begin{example}
  The Euclidean space $\R^n$, endowed with the natural topology, 
  is not compact, but locally compact, 
  since $\closedball[1][x]$ is a compact neighborhood 
  for an arbitrary point $x \in \R^n$.
\end{example}

Cf.
\prettyref{rem:metric_continuity_vs_topological_continuity}
for the following definition.
\begin{definition}
  A function $f: (X,\mathcal{O}) \rarr (X',\mathcal{O}')$ 
  between topological spaces $(X, \mathcal{O})$ and 
  $(X', \mathcal{O}')$ is called {\bf continuous in} \bm{$x_0$} 
  iff for all open neighborhoods $O'_{f(x_0)}\in \mathcal{O}'$  of $f(x_0)$ 
  there is an
  open neighborhood $O_{x_0} \in \mathcal{O}$ of $x_0$ with
  $f[O_{x_0}] \subseteq O'_{f(x_0)}$ 
  (which is to say $O_{x_0} \subseteq f^{-}[O'_{f(x_0)}]$).
  We call $f$ {\bf continuous} if $f$ is continuous in 
  all points $x \in X$, i.e. if 
  for all open sets $O' \in \mathcal{O}'$ the pre-image 
  $O \defeq f^{-}[O']$ is an open set from $\mathcal{O}$.
\end{definition}

For the next two definitions cf. e.g. 
\cite[p. 90]{Kelley1955} and \cite[p. 94]{Kelley1955}.

\begin{definition}
  A mapping $g:\topospace{Y} \rarr \topospace{Z}$ between 
  topological spaces is called {\bf open} iff 
  every open subset of $\topospace{Y}$ is mapped by $g$
  to an open subset of $\topospace{Z}$.
  Analogously $g$ is called {\bf closed} iff 
  every closed subset of $\topospace{Y}$ is mapped by $g$
  to a closed subset of $\topospace{Z}$. 
\end{definition}

Note that a bijective mapping is open, respectively closed, 
iff its inverse mapping is continuous.

\section{Motivation from image processing} \label{sec:introduction}

Many tasks in image processing such as deblurring, inpainting,
removal of different kinds of noise or
reconstruction of a sparse signal can be tackled by 
minimizing a (parameter containing) function, designed  
for the respective purpose. Often this function 
can be written as a weighted sum 
\begin{gather*}
  \Phi + \lambda \Psi
\end{gather*}
of two functions $\Phi, \Psi \in \Gamma_0(\R^n)$,
where $\Phi$ serves as data fidelity term and $\Psi$ as regularization 
term which influence is controlled by the parameter $\lambda$.
At this point vectors $x \in \R^n$ model gray value images, 
where $n= n_x n_y$ is the total number of pixels.

Both the family of penalized problems
\begin{gather*}
  \argmin_{x \in \R^n} (\Phi(x) + \lambda \Psi(x))
\end{gather*}
and the related families of constrained problems 
\begin{align*}
  \argmin_{x \in \R^n} ( \Phi(x) \st \Psi(x) \leq \tau )
   & \iseq  \argmin_{x \in \R^n} ( \Phi(x) + \iota_{\lev_\tau \Psi} ),
\\
    \argmin_{x \in \R^n} ( \Psi(x) \st \Phi(x) \leq \sigma )
   & \iseq  \argmin_{x \in \R^n} ( \Psi(x) + \iota_{\lev_\sigma \Phi})
\end{align*}
(for certain parameter ranges) are considered in the literature. 
Some examples are:

\begin{itemize}
  \item The family of penalized problems 
     \begin{gather*}                                		
       \argmin_{x \in \R^n} \big( \|Ax -b\|_2^2 + \lambda \|x\|_1 \big),
     \end{gather*}
    along with the families of constraint problems
    \begin{align*}
      \argmin_{x \in \R^n} \big( \|Ax -b\|_2^2 \st \|x\|_1 \leq \tau \big)
	&\iseq \argmin_{x \in \R^n} \big( \|Ax -b\|_2 \st \|x\|_1 \leq \tau \big)
      \\
	&\iseq \argmin_{x \in \R^n} \big( \|Ax -b\|_2 + \iota_{\lev_\tau(\|\cdot\|_1)}(x) \big)
    \intertext{(LASSO problem) and}
      \argmin_{x \in \R^n} \big( \|x\|_1 \st \|Ax - b\|_2 \leq \sqrt\sigma \big)
        &\iseq \argmin_{x \in \R^n} \big( \|x\|_1 + \iota_{\lev_{\sqrt\sigma}(\|A \cdot -b\|_2)}(x) \big),
    \end{align*}    
    (Basis pursuit denoising), cf. e.g. \cite{vandenBergFriedlander2008probing},
    \cite{LO11},
    \cite{vandenBergFriedlander2011sparse}, \cite{DFL08}.
  \item The family of penalized problems 
     \begin{gather*}                                		
       \argmin_{x \in \R^n} \big( \|Ax -b\|_2^2 + \lambda \big\| |\nabla x| \big\|_1 \big),
     \end{gather*}
    along with the families of constraint problems 
    \begin{align*}
      \argmin_{x \in \R^n} \big( \|Ax -b\|_2^2 \st \big\| |\nabla x| \big\|_1 \leq \tau \big)
	&\iseq \argmin_{x \in \R^n} \big( \|Ax -b\|_2 + \iota_{\lev_\tau(\| |\nabla \cdot| \|_1)}(x) \big)
    \intertext{and}
      \argmin_{x \in \R^n} \big( \big\| |\nabla x| \big\|_1 \st \|Ax - b\|_2 \leq \sqrt\sigma \big)
        &\iseq \argmin_{x \in \R^n} \big( \big\| |\nabla x| \big\|_1 + \iota_{\lev_{\sqrt\sigma}(\|A \cdot -b\|_2)}(x) \big),
    \end{align*}    
    cf. e.g. \cite{NgWeissYuan2010solving},
    \cite{WenChan2012parameter},
    \cite{WBA09}.
  \item The family of penalized problems 
     \begin{gather*}                                		
       \argmin_{x \in \R^n} \Bigg(
	  \underbrace{
	     \sum_{k=1}^n \big([Ax]_k - b_k \log([Ax]_k) \big) 
	  }_{\eqdef \Phi(x)}
	      + \lambda \| |\nabla x| \|_1 \Bigg)
	  ,
     \end{gather*}
    along with the families of constraint problems 
    \begin{align*}
      \argmin_{x \in \R^n} \big( \Phi(x) \st \| |\nabla x| \|_1 \leq \tau \big)
	&\iseq \argmin_{x \in \R^n} \big( \Phi(x) + \iota_{\lev_\tau(\| |\nabla \cdot| \|_1)}(x) \big)
    \intertext{and}
      \argmin_{x \in \R^n} \big( \| |\nabla x| \|_1 \st \Phi(x)  \leq \sigma \big)
        &\iseq \argmin_{x \in \R^n} \big( \| |\nabla x| \|_1 + \iota_{\lev_{\sigma}(\Phi(\cdot))}(x) \big),
    \end{align*}    
    cf. e.g. \cite{FigueiredoBioucasdias2009deconvolution},
    \cite{SteidlTeuber2010removing},
    \cite{CiShSt2012}.
\end{itemize}

All this minimization problems are of the form
\begin{gather} \label{eq:basic_minimization_problem}
  \argmin (F + G_\eta)
\end{gather}
with functions $F, G \in \Gamma_0 (\R^n)$ and some regularization 
parameter $\eta$; for $\eta \neq 0$ the function $G_\eta$ 
is often of the form 
\begin{gather*}
  G_\eta(\cdot) = G(\eta L \cdot)
\end{gather*}
with a matrix $L \in \R^{m, n}$
and a norm $G(\cdot) = \|\cdot\|$ on $\R^m$ in the penalized cases
and the indicator function $G = \iota_{\lev_1 G}$
in the constraint cases, respectively.

Two questions arise naturally:
How can a good regularization parameter be chosen?
How can $\argmin (F+G_\eta) \neq \emptyset$ be ensured?
Regarding the first question for penalized problems
\begin{gather*}
  \argmin_{x \in \R^n} \big( F(x) + \lambda  \|L x \|  \big)
\end{gather*}
there are for instance methods from statistics for choosing a value for
$\lambda$,
cf. 
\cite{Wahba1990spline},
\cite{BardsleyGoldes2009regularization},
\cite{HankeHansen1993regularization}.
However, in cases where we have knowledge about the original image 
$x_\orig$, say in the sense of knowing a good upper bound for 
$\|L x_\orig \|$, we can use this upper bound as value for 
the regularization  parameter in the constrained problem 
\begin{gather*}
  \argmin_{x \in \R^n} (F(x) \st \|L (x) \| \leq \tau).
\end{gather*}
If we have knowledge about the noise level, say in the sense 
of knowing approximately $F(x_\orig)$, we can similar 
choose this approximate value in the constrained problem
\begin{gather*}
  \argmin_{x \in \R^n} (\|L x \| \st F(x) \leq \sigma).
\end{gather*}
But even if we had chosen a good parameter
$\tau$, resp. $\sigma$, the questions remains
how we can find a corresponding value for $\lambda$.

Regarding the second question it is well known that 
the lower semicontinous function $F + G_\eta \eqdef H_\eta$ has 
a minimizer if it is coercive, i.e. fulfills 
$H_\eta(x) \rarr \pinfty$ as $\|x\| \rarr \infty$.
Often it is possible to prove coercivity of $H_\eta$ 
by hand. Since this can be laboriously it would be 
good to have some 
easy 
tools which ensure 
coercivity of such a sum.

This thesis provides contributions to both the question 
on how to find for given $\tau$ a corresponding value 
$\lambda$ and performs also coercivity investigations.

\section{Contributions and a useful inequality}

\subsection{A method for proving coercivity and lower semicontinuity}
As already mentioned coercivity is a usefull property 
for proving the existence of a minimizer. 
The defining condition
$H(x) \rarr \pinfty$ as $\|x\| \rarr \pinfty$ 
looks somewhat like a continuity condition.

As we will see in \prettyref{thm:characterize_lsc_and_coercive}
a lower semicontinous function $H: \R^n \rarr [\minfty, \pinfty] $ 
is indeed coercive iff a certain extension 
$\widehat H: \widehat X \rarr [\minfty, \pinfty]$ to a compact 
topological superspace of $\R^n \eqdef X$ 
is continuous with respect to a certain topology $\mathcal{T}_\leq$ on 
$[\minfty, \pinfty]$, making the latter to a compact space as well. 
This equivalence between the lower semicontinuity plus 
coercivity of the mapping $H$ and the existence of such a certain
\emph{compact continuation} $\widehat H$
leads to a -- as far as the author knows -- new technique 
of proving lower semicontinuity plus coercivity. The rough idea is 
as follows:
Assume we know that a function 
$g: \R^n \rarr [\minfty, \pinfty]$ can be written as, say, composition 
$g = g_2 \circ g_1$ of easier functions 
$g_1: \R^n \rarr Y$, $g_2: Y \rarr [\minfty, \pinfty]$,
where $Y$ is some topological space, such that 
each of them allows a compact continuation 
$\widehat g_1: \widehat X \rarr \widehat Y$ and
$\hill g_2: \hill Y \rarr [\minfty, \pinfty]$. 
Under certain conditions then also the existence of the needed
compact continuation $\widehat g$ of  $g$ can be concluded. 
The needed compact continuation $\widehat g$ is simply obtained  
if we can directly form the concatenation 
$\hill g_2 \circ \widehat g_1$, i.e. if 
$\widehat Y = \hill Y$. Also if $\id_Y$ allows a compact continuation 
$\widehat{\id_Y}: \widehat Y \rarr \hill Y$ we are done after setting 
$\widehat g = \hill{g_2} \circ \widehat{\id_Y} \circ \widehat g_1$.
More surprising and more important is the fact that 
the needed compact continuation $\widehat g$
also exists (under certain conditions)
if the mapping $\id_Y$ allows a compact continuation 
$\hill{\id_Y}: \hill Y \rarr \widehat Y$, cf. 
\prettyref{thm:concat_compactly_continuable_mappings} and 
\linebreak[1]
\prettyref{thm:concat_compactly_continuable_mappings_special_case}.
Although the developed theory is quite rudimentary it is already 
strong enough to easily prove 
for example
the following often applied result in image restoration which was
indeed the starting point of my work.
\\
\\
{\it
  Assume that the following mappings are given:
  \begin{enumerate}
    \item 
      Two matrices / linear mappings 
      $H:\R^n \rarr \R^d, K: \R^n \rarr \R^e$ with
      \begin{gather*}
        \mathcal{N}(H) \cap \mathcal{N}(K) = \{\zerovec\}.
      \end{gather*}
    \item 
      Two proper, lower semicontinuous and coercive mappings
      $
	\phi: \R^d \rarr [\minfty, \pinfty]
      $, \linebreak
      $
	\psi: \R^e \rarr [\minfty, \pinfty]
      $.
  \end{enumerate}
  Then the mapping $h: \R^n \rarr [\minfty, \pinfty]$, given by
  \begin{gather*}
    x \mapsto \phi(Hx) + \psi(Kx)
  \end{gather*}
  is lower semicontinuous and coercive. In particular the mapping $h$
  takes his infimum $\inf h \in [\minfty,\pinfty]$ at some point in $\R^n$.
}
\\\\
The corresponding proof can be found in 
\prettyref{sec:application_to_an_example}.

\subsection{Properties of lower semicontinuous mappings from a topological viewpoint}

In the previous section we have mentioned the topology 
$\mathcal{T}$ for $[\minfty, \pinfty]$. More precise this 
is the right order topology which is induced by the 
natural order on $[\minfty, \pinfty]$. This is the 
natural topology for studying lower semicontinuity,
since a function $\R^n \rarr [\minfty, \pinfty]$ is lower 
semicontinuous iff it is continuous with respect to 
the topology $\mathcal{T}$ on $[\minfty, \pinfty]$.
After investigating some properties of the 
topological space $([\minfty, \pinfty], \mathcal{T})$
we will see in
\prettyref{subsec:known_properties_of_lsc_functions_revisited}
that some well known (and easy to prove)
properties of lower semicontinous functions are just 
special cases of common theorems from topology.
For instance the general statement 
\begin{center}
  {\it ``The concatenation $g \circ f$ of continuous mappings $f,g$
is again continuous.'' }
\end{center}
becomes in this context the 
property 
\begin{center}
  {\it
  ``The concatenation $g \circ f$ of a 
continuous mapping $f$ with a \\ lower semicontinuous mapping 
$g$ is again lower semicontinous.''
}
\end{center}
In the same way we can also regard the fact that a 
lower semicontinuous function $f$ takes its infimum on every compact set:
The general statement
\begin{center}
  {\it
  ``A continuous function maps compact sets onto compact sets''
  }
\end{center}
reads in our context
\begin{center}
  {\it
  ``A lower semicontinous function maps compact sets 
  \\on sets which contain their infimum.''
  }
\end{center}

\subsection{Coercivity of a sum of functions}

%
\prettyref{thm:sum_coercive_on_certain_subspaces} can be used as an
easy to apply tool for investigating 
coercivity of a sum of functions. 
More precisely, this theorem gives information on which subspaces 
of $\R^n$ a sum $F + G$ 
of functions $F, G : \R^n \rarr [\minfty, \pinfty]$ 
is coercive, provided that $F$ and $G$ are of a certain form, namely
\begin{align*}  
  &F = F_1 \sdirsum F_2 \quad \text{  and  }  \quad G = G_1 \sdirsum G_2 
\end{align*}
with functions
$F_1: X_1 \rarr \R \cup \{\pinfty\}$, 
$F_2: X_2 \rarr \R \cup \{\pinfty\}$,
$G_1: Y_1 \rarr \R \cup \{\pinfty\}$, \linebreak[2]
and 
$G_2: Y_2 \rarr \R \cup \{\pinfty\}$,
where 
\begin{gather*}
  \R^n = X_1 \oplus X_2 = Y_1 \oplus Y_2.
\end{gather*}
For such functions the theorem basically states that $F + G$ 
is coercive on 
$X_1 + Y_1 = (X_2 \cap Y_2)^\perp$ if 
$X_1 \perp X_2$, $Y_1 \perp Y_2$ and 
certain boundedness conditions hold true.

If the conditions 
$X_1 \perp X_2$, $Y_1 \perp Y_2$ are not fulfilled 
there is no guarantee that $F + G$ is coercive on 
$X_1 + Y_1$. But at least
$F+G$ is then still coercive on all those subspaces $Z_1$ of $\R^n$
that are complementary to $Z_2 \defeq X_2 \cap Y_2$.

\subsection{Relation between the constrained and unconstained problems for a rather general setting}

In \cite{CiShSt2012} Ciak et al. considered for an underlying 
orthogonal decomposition $\R^n = X_1 \oplus X_2$ of $\R^n$
the primal minimizations problems
\begin{eqnarray*}
  (P_{1,\tau}) && \argmin_{x \in \R^n}  \left\{ \Phi(x) \st \| Lx\| \le \tau \right\} \\  
  (P_{2,\lambda}) && \argmin_{x \in \R^n} \left\{ \Phi(x) \; + \; \lambda \| Lx\| \right\} 
\end{eqnarray*}
along with the dual problems 
\begin{eqnarray*}
  (D_{1,\tau}) && \argmin_{p \in \R^m} \left\{\Phi^*(-L^* p) \; + \; \tau \|p\|_* \right\},  \\
  (D_{2,\lambda}) && \argmin_{p \in \R^m} \left\{ \Phi^*(-L^* p)  \st  
  \| p \|_* \le \lambda \right\} .
\end{eqnarray*}
The function $\Phi$ there has the special form
\begin{gather*} 
  \Phi(x) = \Phi(x_1 + x_2) =  \phi(x_1),
\end{gather*}
where 
$\phi:X_1 \rightarrow \R \cup \{+\infty\}$ is a function 
fulfilling some properties. 

In this thesis we extend this setting by allowing a third component 
in the orthogonal decomposition of $\R^n = X_1 \oplus X_2 \oplus X_3$ 
and demand
\begin{gather*}
  \Phi(x) = \Phi(x_1 + x_2 + x_3)
  =
  \begin{cases}
    \phi(x_1) & \text{ if } x_3 = \zerovec,
  \\
    \pinfty   & \text{ if } x_3 \neq \zerovec.
  \end{cases}
\end{gather*}
This extension can become interesting when dealing with data in
a high dimensional real vector space if
the data is actually contained in a lower dimensional subspace.
Moreover, this extended form has the advantage that 
a symmetry between $\Phi$ and $\Phi^*$ is recognizable
much better in this extended setting as we shall see in 
\prettyref{lem:subgradient_and_conjugate_function_in_our_setting}.

\subsection{A simple but useful equality}

Here we want to mention  
\prettyref{lem:same_blow_up_factor_gets_all_attachted_affines_strangers_out_of_ball}
from the appendix along with its preceding vivid explanation.
    The simple but helpful inequality presented in that lemma is
    \begin{gather*}
      \|h_1\| \leq C \|h_1 + h_2\|
    \end{gather*}
    for all $h_1$ and $h_2$ in subspaces $X_1, X_2$ of $\R^n$ with 
    trivial intersection.
    Originally 
    this inequality was made and proved in the context of 
    \prettyref{lem:essentially_smoothness_does_not_depend_on_periodspace}, 
    in which proof it was twice used for showing differentiability.
    However it turned out that using this inequality also simplifies 
    the boundedness proof in
    \cite[Lemma 3.1 (i)]{CiShSt2012}
    as done in the proof of part 
    \upref{enu:intersection_of_lower_level_sets_bounded} of    
    \prettyref{lem:levelsets_and_existence_of_minimizer_of_sum}.
    Moreover this inequality was helpful in showing convergence 
    of a sequence which appeared in the proof of 
    \prettyref{lem:addition_restricted_is_homeo}.

\section{Overview} \label{sec:overview}

This thesis consists of three parts, organized in Chapters
\ref{chap:coercivity_and_lscty_from_the_top_point_of_view},
\ref{chap:coercivity_of_sum_of_functions} and
\ref{chap:penalizers_and_constraints_in_convex_problems}.
In the first part we develop a theory giving rise to a 
-- as far as the author knows -- new technique of proving
lower semicontinuity plus coercivity of functions 
$h$. The main ingredients are as follows:
\begin{itemize}
  \item 
    Equivalence of lower semicontinuity plus coercivity to 
    the existence of a certain compact continuation $\widehat h$
    of $h$.
  \item 
    An analysis of compact continuations,
    giving a criteria 
    for ensuring that a concatenate function $h = g \circ f$ allows 
    a compact continuation $\widehat h$ if $g$ and $f$ have 
    a compact continuation $\widehat f$ and $\hill g$.
\end{itemize}
Having a function $h: \R^n \rarr [\minfty, \pinfty]$ we can hence 
perform the strategy to write this mapping as composition 
$h = g \circ f$ with mappings $f$ and $g$ that allow certain 
compact continuations in a first step. In a second step 
we can then try to get the needed extension of $h$.

The first part is organized as follows:
After recalling some set theoretic topology we introduce the 
right order topology for the set $[\minfty, \pinfty]$ 
and prove the mentioned equivalence.
Then the concept of compact continuations is introduced.
An application of the theory to an example concludes the first part.

The second part also deals with coercivity. However,  
lower semicontinuity no longer plays a role in this part. 
After giving definitions and developing some lemmata we address
the easy case of linear mappings before moving towards
the main theorem of this chapter, giving
information on which subspaces of $\R^n$
certain sums $(F_1 \sdirsum F_2) + (G_1 \sdirsum G_2)$ are coercive.

In the third part we are interested in the relation between the
convex constrained optimization problem
\begin{align} \label{eq:intro_constraint} 
  &(P_{1,\tau}) \qquad \argmin_{x \in \mathbb R^n} \left\{ \Phi(x) \st \Psi(x) \le \tau \right\}
\intertext {and the unconstrained optimization problem}
\label{eq:intro_nonconstraint} 
  &(P_{2,\lambda}) \qquad \argmin_{x \in \mathbb R^n} \{ \Phi(x) +  \lambda \Psi(x) \}, \;\; \lambda \ge 0.
\end{align}

The constrained problem \eqref{eq:intro_constraint} is interesting only for
$\tau \in OP(\Phi, \Psi)$ and can then
be rewritten as the following unconstrained one:
\begin{equation} \label{eq:constraint_1}
\argmin_{x \in \mathbb R^n} \{ \Phi(x) +  \iota_{{\rm lev}_\tau \Psi} (x) \}.
\end{equation}

In the inverse problems and machine learning context 
the problems \eqref{eq:intro_constraint} and 
\eqref{eq:intro_nonconstraint} are referred
to as Ivanov regularization and
Tichonov
regularization of 
optimization problems of the form
$\argmin_{x \in \mathbb R^n} \{ \Phi(x)\}$.

Let ${\rm SOL}(P_\bullet)$ denote the set of solutions of problem $(P_\bullet)$.
While it is rather clear that under mild conditions on $\Phi$ and $\Psi$ a vector
$\hat x \in {\rm SOL}(P_{2,\lambda})$, $\lambda >0$ is also a solution of $(P_{1,\tau})$
exactly for $\tau = \Psi(\hat x)$, the opposite direction has in general no simple explicit solution.
At least it is known that, under certain conditions, for $\hat x \in {\rm SOL} (P_{1,\tau})$
there exists $\lambda \ge 0$ such that $\hat x \in {\rm SOL} (P_{2,\lambda})$.
This result, beeing stated in 
\prettyref{thm:constraint_vs_nonconstraint} and 
\prettyref{cor:constraint_vs_nonconstraint},  
can be shown by using that the relation
\begin{gather*}
  \R_0^+ \, \partial \Psi(x) = \partial \iota_{{\rm lev}_{\Psi(x)} \Psi} (x)
\end{gather*}
from \cite[p. 245]{HL93} holds true under certain conditions.
This result is presented in 
Lemma \ref{level_sets} and proved by 
using an epigraphical projection or briefly inf-projection, cf. \cite[p. 18+]{RW04},
which allows reducing the intrinsic problem to one dimension. 

After developing some assisting theory we consider 
particular
problems 
where 
\begin{align*}
 &\Phi(x) \defeq  \phi(x_1)
  && \text{ and } && \Psi \defeq \| L \cdot\| \text{ with } L \in \R^{m,n};
\end{align*}
here $x_1$ is the orthogonal projection of $x\in \dom \Phi$
onto a subspace $X_1$ of $\mathbb R^n$
and 
$\phi: X_1 \rightarrow \mathbb R \cup \{ +\infty \}$ is a function
which fulfills the following conditions:
\begin{enumerate}
  \item 
    $\dom \phi$ is an open subset of $X_1$ with $\zerovec \in \overline{ \dom \phi }$,
  \item 
    $\phi$ is proper, convex and lower semicontinuous
    as well as strictly convex and essentially smooth,
    and
  \item 
    $\phi$ has a  minimizer.
\end{enumerate}
We use the dual problems 
to prove that in a certain interval there is a one-to-one
correspondence between $\tau$ and $\lambda $ in the sense that
$
{\rm SOL}(P_{1,\tau}) = {\rm SOL}(P_{2,\lambda}) 
$
exactly for the corresponding pairs.
Furthermore, given $\tau$, the value $\lambda$ is determined by $\lambda \defeq \| \hat p\|_*$,
where $\hat p$ is any solution of the dual problem of $(P_{1,\tau})$.
See \prettyref{thm:theo_lambda_tau} for more details.
\\[1ex]
The third part is organized as follows:
We first deal with two ways of interpreting each of the
minimization problems $(P_{1,\tau})$ and $(P_{2,\lambda})$ 
and show that these perspectives, though related, are 
not equivalent in general.
In \prettyref{sec:penalizers_and_constraints} we state
a known relation between $(P_{1,\tau})$ and $(P_{2,\lambda})$ 
for a rather general setting, see 
\prettyref{thm:constraint_vs_nonconstraint}.
In particular, we provide some novel proofs 
by making use of 
an epigraphical projection.
We also recall Fenchel's Duality relation. 
Finally we discuss 
the mentioned \prettyref{thm:constraint_vs_nonconstraint} 
more in detail. In particular a 
relation between one of its regularity assumptions and Slaters Constraint 
Qualification is given.
In close connection with \prettyref{sec:penalizers_and_constraints} is
\prettyref{sec:homegeneous_penalizers_and_constraints}, where we restrict ourselves to homogeneous
regularizers and to essentially smooth data terms, which are 
strictly convex on a certain subspace of $\R^n$.
We prove a relation between the  parameters $\tau$ and $\lambda$ 
such that the solution sets of the corresponding constrained and unconstrained problems coincide
and determine the $\lambda$ corresponding to $\tau$ by duality arguments.
The intermediate 
\prettyref{sec:assisting_theory} provides some theorems and lemmata 
needed in the proofs of 
\prettyref{sec:homegeneous_penalizers_and_constraints},
some of which are interesting in themselves.
In the Appendix some useful theorems are collected.
The parts there which are not own work but are taken from the literature
are clearly indicated by giving references.

Applications can be found in Section 4 of 
\cite{CiShSt2012}.
Ideas from this chapter were also used in 
\cite{TeuberSteidlCahn2013minimization}.

\chapter{Coercivity and lower semicontinuity from the topological point of view}  
\label{chap:coercivity_and_lscty_from_the_top_point_of_view}

\minitoc


For convenience we will call a topological space also just ``space''
in this chapter.

\section{On the relation between closed and compact subsets} \label{sec:relation_between_closed_and_compact_subsets}

In this section we recall a known theorem, describing the relation 
between compactness and closeness.
\begin{theorem} \label{thm:relation_closed_compact}
  ~
  \begin{enumerate}
    \item \label{enu:closeness_implies_compactness}
      Each closed subset of a compact space is compact.
    \item \label{enu:compactness_implies_closeness}
      Each compact subset of a Hausdorff space is closed.
  \end{enumerate}

\end{theorem}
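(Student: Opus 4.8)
The plan is to prove the two standard facts about the interplay of compactness and the Hausdorff property separately, using only the definition of compactness via open covers (\prettyref{def:compactness_of_a_space}) and the subspace-topology formulation of compactness for subsets.

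For part \upref{enu:closeness_implies_compactness}, let $A$ be a closed subset of a compact space $(X,\mathcal{O})$. First I would take an arbitrary cover of $A$ by sets open in the subspace topology $A \Cap \mathcal{O}$; each such set is of the form $A \cap O$ with $O \in \mathcal{O}$. Then the family consisting of all these $O$ together with the open set $X \setminus A$ (open because $A$ is closed) forms an open cover of all of $X$. Compactness of $X$ yields a finite subcover; discarding $X \setminus A$ from it (if present) leaves finitely many $O_1,\dots,O_k$ whose union contains $A$, so the corresponding finitely many sets $A \cap O_i$ cover $A$. Hence $(A, A \Cap \mathcal{O})$ is compact.

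For part \upref{enu:compactness_implies_closeness}, let $K$ be a compact subset of a Hausdorff space $(X,\mathcal{O})$; I would show $X \setminus K$ is open by producing, for each $x \in X \setminus K$, an open neighborhood of $x$ disjoint from $K$. Fix such an $x$. For every $y \in K$, the Hausdorff property gives disjoint open sets $U_y \ni x$ and $V_y \ni y$. The family $(V_y)_{y \in K}$ is an open cover of $K$, so by compactness finitely many $V_{y_1},\dots,V_{y_k}$ already cover $K$. Then $U \defeq U_{y_1} \cap \dots \cap U_{y_k}$ is an open neighborhood of $x$, and it is disjoint from $K$ because it is disjoint from each $V_{y_i}$ and these cover $K$. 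Thus every point of $X \setminus K$ has an open neighborhood inside $X \setminus K$, so $X \setminus K$ is open and $K$ is closed.

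The only mild subtlety — and the step most worth stating carefully rather than a genuine obstacle — is keeping the two notions of ``open cover'' straight in part \upref{enu:closeness_implies_compactness}: one starts with sets open in the subspace $A$, lifts them to sets open in $X$, applies compactness of $X$, and then restricts back down to $A$. Everything else is a direct unwinding of the definitions, so this proof is routine.
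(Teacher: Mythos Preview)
Your proof is correct and follows essentially the same route as the paper's: for \upref{enu:closeness_implies_compactness} you add $X\setminus A$ to an open cover of $A$ and extract a finite subcover, and for \upref{enu:compactness_implies_closeness} you separate a fixed $x \notin K$ from each point of $K$, reduce to a finite subcover, and intersect the corresponding neighborhoods of $x$. The only cosmetic difference is that you phrase part \upref{enu:closeness_implies_compactness} via the subspace topology while the paper covers $A$ directly by ambient open sets, but these are equivalent formulations.
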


The subsequent proof 
resembles the proof of Bemerkung 2 in
\cite[ch. 1.8 on p. 26]{Jnch2001}
and the proof of a Lemma in
\cite[ch.1.8 on p. 28]{Jnch2001}.
\begin{proof}

  \upref{enu:closeness_implies_compactness}
  Let $\topospace{X}$ be a compact space and $A$ a closed subset of 
  this space.
  Let $A$ be covered by open sets $O_i \in \mathcal{O}_X, i \in I$.
  Adding the open set $X \setminus A \in \mathcal{O}_X$ to the 
  $O_i, i \in I$, yields an open covering of $\topospace{X}$.
  Due the compactness of $\topospace{X}$ finitely many of the $O_i$ 
  together with $X \setminus A$ suffice to cover $X$. Due to
  $(X \setminus A) \cap A = \emptyset$ these finitely many $O_i$
  must already cover $A$. So $(A, A \Cap \mathcal{O}_X)$ is compact.
  \\
  \upref{enu:compactness_implies_closeness}
  Let $\topospace{X}$ be a Hausdorff space and $A$ some compact subset.
  For proving the closeness of $A$ it suffices to show that each
  $x\in X \setminus A$ is an interior point of $X \setminus A$, i.e. that
  there is an open neighborhood $U$ of $x$ with $U \subseteq X \setminus A$.
  To this end we fix $x \in X \setminus A$. Since $\topospace{X}$ is a
  Hausdorff space, there are disjoint open neighborhoods
  $O_a \in \mathcal{U}(a)$ and $U_a \in \mathcal{U}(x)$ for every $a \in A$.
  The open cover of the compact set $A$ by the $O_a, a \in A$ has a 
  finite subcover; i.e. there are finitely many $a_1, \dots, a_n \in A$ with
  $\bigcup_{i=1}^{n} O_{a_i} \supseteq A$. The set $\bigcap_{i=1}^{n}U_{a_i}$ 
  is an open neighborhood of $x$ with 
  \[
    \bigcap_{i=1}^{n} U_{a_i} \cap A
    \subseteq
    \bigcap_{i=1}^{n} U_{a_i} \cap \bigcup_{j=1}^{n} O_{a_j}
    =
    \bigcup_{j=1}^{n} \left( \bigcap_{i=1}^{n} U_{a_i} \cap O_{a_j} \right)
    \subseteq
    \bigcup_{j=1}^{n} \left( U_{a_j} \cap O_{a_j} \right)
    =
    \emptyset,
  \]
  i.e. $\bigcap_{i=1}^{n} U_{a_i} \subseteq X \setminus A$.
  So $x$ is indeed an interior point of $X \setminus A$.
\end{proof}

We point out that even a compact topological space 
can have compact subsets which are not closed.
An example for this behavior is obtained when equipping 
the interval $[\minfty, \pinfty]$ with the right order topology,
see \prettyref{exa:compact_not_closed_subset}.

\section{Remarks on the topology induced by a metric space} \label{sec:metric_space}

In this subsection we first recall some well known facts for 
the topology induced by a metric.
Then we recall the equivalence of
metric continuity concepts and topological continuity concepts.

\begin{definition} 
  Let $(X, d)$ be a metric space. The topology generated by the ''open`` balls
  $\openball[r][x]$, $r>0$, $x \in X$, i.e. the topology
  \[
    \mathcal{O}[d] 
    \defeq 
    \{O \subseteq X : O \text{ is union of ''open`` balls }\},
  \]
  will be called {\bf topology induced by $\bm{d}$}. If it is clear 
  from the context we will also use the short form $\mathcal{O}$ for
  $\mathcal{O}[d]$
\end{definition}

\begin{remark}
  The 
  open
  balls $\openball[r][x]$, $r>0$, $x \in X$ 
  are really open sets from $\mathcal{O}[d]$.
\end{remark}

\begin{remark} \label{rem:metric_continuity_vs_topological_continuity}
  Let $(X,d)$, $(X', d')$ be metric spaces and $(X,\mathcal{O})$, 
  $(X',\mathcal{O}')$ the induced topological spaces.
  For a mapping $f: X \rarr X'$ the metric continuity notions and the 
  topological continuity notions are the same;
  speaking in particular about the \emph{continuity} in a 
  single point $x_0$ we have the equivalence of the following statements
  \begin{enumerate}
    \item 
      $f: (X,d) \rarr (X',d')$ is continuous in $x_0$ in the metric sense, i.e.
      \\
      $
	\forall \varepsilon>0 \;\; \exists \delta>0 
	\;\; \forall x \in X:
	\\
	d(x,x_0) < \delta \implies d'(f(x), f(x_0)) < \varepsilon
      $
    \item
      $f: (X,\mathcal{O}) \rarr (X',\mathcal{O}')$ is continuous in $x_0$ 
      in the topological sense, i.e.,
      \\
      for every open neighborhood $O'\in \mathcal{O}'$  of $f(x_0)$ there is an
      open neighborhood $O \in \mathcal{O}$ of $x_0$ with $f[O] \subseteq O'$ 
      (which is to say $O \subseteq f^{-}[O']$).
  \end{enumerate}

  Similarly, speaking about continuity of the whole function, we have the 
  equivalence of the statements
  \begin{enumerate}
    \item 
      $f: (X,d) \rarr (X',d')$ is continuous in the metric sense, i.e.
      \\
      $
	\forall x_0 \in X \;\; \forall \varepsilon>0 \;\; \exists \delta>0 
	\;\; \forall x \in X:
	\\
	d(x,x_0) < \delta \implies d'(f(x), f(x_0)) < \varepsilon
      $
    \item
      $f: (X,\mathcal{O}) \rarr (X',\mathcal{O}')$ is continuous in the topological sense, i.e.
      \\
      $
	\forall O' \in \mathcal{O'}: f^{-}[O'] \in \mathcal{O}
      $.
  \end{enumerate}

\end{remark}

\section{Creating topological spaces from given ones} 
\label{sec:topology__creating_new_spaces}

%

In this section we give a short introduction in four known 
ways of generating topological spaces from given ones: 
\begin{itemize}
  \item
    In \prettyref{subsec:subspaces} we
    discuss how a subset of  a topological space can be made to a subspace 
    by giving it the ''correct`` topology. 
  \item 
    In \prettyref{subsec:product_spaces} we
    show how to equip finite products of topological spaces with a 
    meaningful topology.
  \item
    In \prettyref{subsec:identification_spaces} we deal with the vivid notion 
    of glueing a given object and how we can formalize it in the language of
    topology.
  \item 
    In \prettyref{subsec:one_point_compactification} we extend every 
    topological space to a compact one by adding one single new point.
\end{itemize}

In each of this four subsections we give motivations for the definition.
We remark that our motivation for the identification topology
seems to be new.

%

\subsection{Subspaces} \label{subsec:subspaces}
Let $(X,d)$ be a metric space and 
$
  (\widecheck X, \widecheck d)
  =
  (\widecheck X, d|_{\widecheck X \times \widecheck X})
$
some metric subspace. After choosing a point 
$\widecheck x \in \widecheck X \subseteq X$
and some ''radius`` $r > 0$ we can think of an open ball of radius $r$ 
around $\widecheck x$ in two ways -- on on the one hand with respect to 
$(\widecheck X, \widecheck d)$ and the other hand with respect to
$(X, d)$. Though they are different in general, they are linked via

\begin{align*}
  \openball[r][\widecheck x]{}[\widecheck d]
  =&
    \{x \in \widecheck X:  d(x, \widecheck x) < r \}
    \\
  =& 
    \widecheck X \cap \{x \in X : d(x, \widecheck x) < r\}  
    \\
  =& 
    \widecheck X \cap \openball[r][\widecheck x]{}[d].
\end{align*}

For any $\widecheck x_i \in \widecheck X$ and $r_i > 0, i\in I$ we
therefore have
\[
  \bigcup_{i \in I} \openball[r_i][\widecheck x_i]{}[\widecheck d]
  =
  \widecheck X
  \cap 
  \bigcup_{i\in I} \openball[r_i][\widecheck x_i]{}[d].
\]
So 
$
  \mathcal{O}[(\widecheck X, \widecheck d)] 
  = 
  \widecheck X \Cap \mathcal{O}[(X, d)].
$
This gives rise to the following definition.

\begin{definition}
  Let $(X,\mathcal{O})$ be a topological space and $\widecheck X \subseteq X$.
  We call $(\widecheck X, \widecheck {\mathcal{O}})$ a {\bf subspace} of 
  $(X, \mathcal{O})$, iff 
  $\widecheck {\mathcal{O}} = \widecheck X \Cap \mathcal{O}$. The topology
  $\widecheck X \Cap \mathcal{O}$ is called {\bf subspace topology} for 
  $\widecheck X \subseteq X$.
  To the contrary a topological space $(X, \mathcal{O})$ is called a 
  {\bf superspace} of a space $(\widecheck X, \widecheck {\mathcal{O}})$, iff
  the latter is a subspace of the first.
\end{definition}

The following remark illuminates that the above topology is the 
appropriate topology 
for subsets of a already given topological space. It states that the
continuity of a function
$f: (X, \mathcal{O}) \rarr (Y, \mathcal{P})$
does not get lost by restricting its domain and by extending its codomain:
\begin{remark}
  Let $(X, \mathcal{O})$ be a topological space with some subspace
  $
    (\widecheck X, \widecheck {\mathcal{O}}) 
    = 
    (\widecheck X, \widecheck X \Cap \mathcal{O})
  $ and let $(Y, \mathcal{P})$ be a topological space with some superspace
  $(\widehat Y, \widehat {\mathcal{P}}) $.
  Then the following holds true for all mappings $f: X \rarr Y$:
  \begin{enumerate}
    \item 
      $f: (X,\mathcal{O}) \rarr (Y,\mathcal{P})$ is continuous
      $
	\implies
	f|_{\widecheck X}: (\widecheck X,\widecheck{\mathcal{O}}) \rarr (Y, \mathcal{P})
      $
      is continuous.
 
    \item 
      $f: (X,\mathcal{O}) \rarr (Y,\mathcal{P})$ is continuous
      $
	\iseq
	f: (X,\mathcal{O}) \rarr (\widehat Y, \widehat {\mathcal{P}})
      $
      is continuous.
  \end{enumerate}

\end{remark}

\subsection{Product spaces} \label{subsec:product_spaces}

Let $(Y_1, \mathcal{O}_1), \dots , (Y_n, \mathcal{O}_n)$ be 
topological spaces. We search a topology $\mathcal{O}$ for the Cartesian
product $Y \defeq Y_1, \times \dots \times Y_n$ such that for any sequence
$y^{(k)}$ in $Y$ the equivalence
\begin{gather*}
  (\forall i \in \{1,\dots , n\}:
  y_i^{(k)} \rarr y_i^{\ast})
  \iseq
  y^{(k)} \rarr y^{\ast}
\end{gather*}
holds true.
To this end we express the left hand side as explicit statement
\begin{equation}
  \forall i \in \{1, \dots , n\} \;\; 
  \forall U_i \in \mathcal{U}_i(y_i^{\ast}) \;\;
  \exists \widecheck k_i \in \N \;\;
  \forall k \geq \widecheck k_i: \;\;
  y_i^{(k)} \in U_i
\label{eq:convergence_in_factors}
\end{equation}
and compare it with the explicit formulation
\begin{equation}
  \forall U \in \mathcal{U}(y^\ast) \;\; 
  \exists \widecheck k \in \N \;\;
  \forall k \geq \widecheck k: \;\;
  y^{(k)} \in U
\label{eq:convergence_in_product_space}
\end{equation}
for the right--hand side. 
On the one hand, to guarantee 
``\prettyref{eq:convergence_in_product_space} 
  $\Rarr$
  \prettyref{eq:convergence_in_factors}'',
we should demand that every product
$U \defeq U_1 \times \dots \times U_n$, 
where $U_i \in \mathcal{U}_i(y_i^\ast)$,
is already a neighborhood of $y^\ast$.
On the other hand, to guarantee
``\prettyref{eq:convergence_in_product_space} 
  $\Larr$
  \prettyref{eq:convergence_in_factors}'',
all those subsets $\widecheck Y \subseteq Y$, which do not contain any
product $U_1 \times \dots \times U_n$ with $U_i \in \mathcal{U}_i(y_i^\ast)$, 
should be barred from beeing a neighborhood of $y^\ast$; i.e we should
demand that every $U \in \mathcal{U}(y^\ast)$ contains some product
$U_1 \times \dots \times U_n$ of neighborhoods 
$U_i \in \mathcal{U}_i(y_i^{\ast})$.
Altogether it seems reasonable to demand
\[
  U \in \mathcal{U}(y^\ast) 
  \defiseq
  \exists U_1 \in \mathcal{U}_1(y_i^\ast), 
  \dots, 
  U_n \in \mathcal{U}_n(y_n^\ast):
  U \supseteq U_1 \times \dots \times U_n
\]
This leads to the following
\begin{definition}
  Let $(Y_1,\mathcal{O}_1), (Y_2,\mathcal{O}_2), \dots , (Y_n, \mathcal{O}_n)$ 
  be \emph{finitely} many topological spaces. A topology $\mathcal{O}$ 
  on the Cartesian product
  $Y_1 \times Y_2 \dots \times Y_n \eqdef Y$ is said to be \emph{the}
  {\bf product topology} of 
  $\mathcal{O}_1, \mathcal{O}_2, \dots, \mathcal{O}_n$, if one of the
  following equivalent conditions is fulfilled:
  \begin{enumerate}
    \item 
      The neighborhood system $\mathcal{U}(y^\ast)$ of a point $y^\ast \in Y$ 
      exactly consists
      of the sets $U = U_1 \times \dots \times U_n$, where 
      $U_i \in \mathcal{U}_i(y_i^\ast)$, $i \in \{1,\dots, n\}$, and of all
      subsets of $Y$ which are supersets of these sets $U$.
    \item
      The topology $\mathcal{O}$ consists exactly of those subsets 
      $O \subseteq Y$, which are of the form 
      $O_1 \times \dots \times O_n$ with any
      $O_i \in \mathcal{O}_i, i \in \{1, \dots , n\}$, 
      or can be written as union of sets of this form.
  \end{enumerate}
  The {\bf product space} $(Y, \mathcal{O})$ will be denoted by
  \begin{align*}
      (
	Y_1 \times Y_2 \times \dots \times Y_n
	,
	\mathcal{O}_1 
	  \varotimes 
	\mathcal{O}_2 
	  \varotimes
	  \dots 
	  \varotimes 
	\mathcal{O}_n
      )
    \shortintertext{or by}
      (Y_1, \mathcal{O}_1) 
	\varotimes 
      (Y_2, \mathcal{O}_2) 
	\varotimes 
	\dots 
	\varotimes 
      (Y_n, \mathcal{O}_n).
  \end{align*}
As a shorter notation for
  $
    \underbrace{(Y,\mathcal{O}) \varotimes \dots \varotimes (Y, \mathcal{O})}
    _{ n \text { times} }
  $
  we will also write $(Y^n, \mathcal{O}^{\otimes n})$.

\end{definition}

 
%

In most cases we deal with $Y = \R$ equipped with its natural topology
$\mathcal{O} = \mathcal{O}[d]$, where $d$ is the natural metric 
defined by $d(x,y) = |x-y|$.
The product topology 
$\mathcal{O}^{\otimes n}$ for 
$\R^n$ equals its natural topology, i.e.
the topology generated by every norm on $\R^n$.

\begin{remark}
  Let $\mathcal{O}_1, \mathcal{O}_2, \mathcal{O}_3$ be some topologies.
  Then we have
  $
    \mathcal{O}_1 \otimes \mathcal{O}_2 \otimes \mathcal{O}_3
    =
    (\mathcal{O}_1 \otimes \mathcal{O}_2) \otimes \mathcal{O}_3
    =
    \mathcal{O}_1 \otimes (\mathcal{O}_2 \otimes \mathcal{O}_3)  
  $,
  i.e. building product spaces is an associative operation.
\end{remark}

The following remark illuminates that the above defined topology is the 
appropriate topology for the Cartesian product of already given topological spaces.
It states that a ``multivalued'' function is continuous iff its 
component functions are continuous.
\begin{remark}
  A mapping
  $
  f:(X,\mathcal{O}) \rarr 
  (Y_1, \mathcal{O}_1) \varotimes (Y_2, \mathcal{O}_2) \varotimes 
  \dots \varotimes (Y_n, \mathcal{O}_n),
  x \mapsto f(x) = (f_1(x), f_2(x), \dots, f_n(x))
  $
  is continuous if and only if all its component functions
  $f_i: (X, \mathcal{O}) \rarr (Y_i, \mathcal{O}_i)$, $i \in \{1, \dots n\}$, 
  are continuous.
\end{remark}

Next we state Tichonov's Theorem for the 
simple case of building the product of only finitely many 
compact spaces. For a proof see
\cite[Theorem 5.7 on p. 167]{Munkres1975}.
\begin{theorem}[Tichonov's Theorem for finite products]\label{thm:little_tichonov}
  The product space of finitely many compact spaces is 
  compact.
\end{theorem}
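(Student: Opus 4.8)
The plan is to reduce to the case of two factors and then run the classical \emph{tube lemma} argument. By the associativity of the product construction noted in the remark after the definition of the product space, an easy induction on $n$ reduces the claim to showing: if $(X,\mathcal{O}_X)$ and $(Y,\mathcal{O}_Y)$ are compact, then so is $(X\times Y,\mathcal{O}_X\otimes\mathcal{O}_Y)$. The base cases $n=0$ (one point space) and $n=1$ are trivial, and the inductive step writes $Y_1\times\dots\times Y_{n+1}\cong(Y_1\times\dots\times Y_n)\times Y_{n+1}$, where the first factor is compact by the induction hypothesis.

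For the two-factor case I would first establish the tube lemma: if $N\in\mathcal{O}_X\otimes\mathcal{O}_Y$ contains the slice $\{x_0\}\times Y$, then there is $W\in\mathcal{U}(x_0)$ with $W\times Y\subseteq N$. To prove it, for each $y\in Y$ pick, using the description of the product topology as unions of rectangles $O_X\times O_Y$, a basic open set $U_y\times V_y\subseteq N$ with $(x_0,y)\in U_y\times V_y$. The family $\{V_y:y\in Y\}$ covers the compact space $Y$, so finitely many $V_{y_1},\dots,V_{y_k}$ already cover $Y$; then $W\defeq\bigcap_{i=1}^k U_{y_i}$ is an open neighborhood of $x_0$ and one checks $W\times Y\subseteq\bigcup_{i=1}^k (U_{y_i}\times V_{y_i})\subseteq N$.

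With the tube lemma in hand, let $\mathcal{A}\subseteq\mathcal{O}_X\otimes\mathcal{O}_Y$ be an open cover of $X\times Y$. Fix $x\in X$: the slice $\{x\}\times Y$ is (homeomorphic to) the compact space $Y$, hence finitely many members $A_1^{(x)},\dots,A_{m_x}^{(x)}$ of $\mathcal{A}$ cover it, and $N_x\defeq\bigcup_{j=1}^{m_x}A_j^{(x)}$ is an open set containing $\{x\}\times Y$. The tube lemma supplies $W_x\in\mathcal{U}(x)$ with $W_x\times Y\subseteq N_x$. Now $\{W_x:x\in X\}$ is an open cover of the compact space $X$, so there are $x_1,\dots,x_p$ with $X=\bigcup_{\ell=1}^p W_{x_\ell}$; then the finite subfamily $\{A_j^{(x_\ell)}:1\le\ell\le p,\ 1\le j\le m_{x_\ell}\}$ of $\mathcal{A}$ covers $X\times Y$, since every $(x,y)$ lies in some $W_{x_\ell}\times Y\subseteq N_{x_\ell}$.

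The only genuinely nontrivial ingredient is the tube lemma, and that is precisely the place where compactness of the second factor $Y$ is used; everything else is bookkeeping with finite covers. I would expect the write-up to spend most of its length making the tube lemma precise (especially the passage from an arbitrary open set $N$ to rectangles inside it, which relies on the explicit form of $\mathcal{O}_X\otimes\mathcal{O}_Y$) and then the covering argument is short.
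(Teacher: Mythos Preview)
Your argument is correct; the tube-lemma proof you sketch is the standard one. The paper itself does not give a proof of this statement but simply refers the reader to \cite[Theorem 5.7 on p.~167]{Munkres1975}, whose proof is exactly the tube-lemma argument you outline.
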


\begin{remark}
  We only introduced the product space of finitely many topological spaces.
  Although it is possible to declare a product space also for
  infinitely many topological spaces, we have decided to avoid this, more
  complicated and harder to grasp, construction, since we will not need it.
\end{remark}

We conclude this subsection with a remark showing that 
the order in which the actions of building subspaces and 
product spaces are done have no influence on the finally resulting 
topological space:

\begin{remark} 
\label{rem:productspace_of_supspaces_is_a_subspace_of_productspace}
  Given two topological spaces $(\widehat X_1,\widehat {\mathcal{O}}_1 )$ and 
  $(\widehat X_2, \widehat {{\mathcal{O}}}_2)$, the Cartesian product $X_1 \times X_2$ 
  of two subsets $X_1 \subseteq \widehat X_1$ and $X_2 \subseteq \widehat X_2$ 
  has to be equipped with a topology.
  Two natural ways of equipping $X_1 \times X_2$ with a topology seem possible:
  On the one hand $X_1 \times X_2$ can be interpreted as subset of 
  $\widehat X_1 \times \widehat X_2$ and thus be equipped with the subspace topology
  \begin{gather*}
    (X_1 \times X_2)
    \Cap
    (\widehat {\mathcal{O}}_1 \otimes \widehat {\mathcal{O}}_2).
  \end{gather*}
  On the other hand $X_1 \times X_2$ can be seen as Cartesian product of the sets
  $X_1$ and $X_2$ and thus be equipped with the product topology
  \begin{gather*}
    (X_1 \Cap \widehat {\mathcal{O}}_1) 
    \otimes 
    (X_2 \Cap \widehat {\mathcal{O}}_2 ).
  \end{gather*}
  Luckily these topologies are actually identical since the sets 
  \begin{gather*}
    (\widehat{O}_1 \times \widehat {O}_2) \cap ({X_1 \times X_2})
    =
    (\widehat {O}_1 \cap X_1) \times (\widehat{O}_2 \cap X_2),
  \end{gather*}
  where $\widehat{O}_1 \in \widehat {\mathcal{O}}_1$,
  $\widehat{O}_2 \in \widehat {\mathcal{O}}_2$, form 
  a base for both topologies.
\end{remark}

\subsection{Identification or quotient spaces} \label{subsec:identification_spaces}

In the following example let $\mathcal{O}$ be 
the natural topology of $\R$ and 
$\mathbb{S} \defeq \{ x\in \R^2 : \|x\|_2 = 1 \}$.

\rem{Im Latex Code steht noch die alte Version mit der offenen Frage,
ob beide Definitionen wirklich äquivalent sind}
\DissVersionForMeOrHareBrainedOfficialVersion{\\---------------------------------------------------------------------------------------------------------------------
    \\ {\bf Die alte Version steht noch im Tex Code und sehnt sich nach Erl\"osung}
\\---------------------------------------------------------------------------------------------------------------------
}{} 

\begin{example} \label{exa:motivation_of_identification_topology}
  Consider the surjective and continuous mapping
  $
    f: 
    ([0,2\pi], [0, 2\pi] \Cap \mathcal{O} )
    \rarr
    (\mathbb{S}, \mathbb{S} \cap \mathcal{O}^{\otimes 2})
  $,
  given by
  \begin{gather*}
    x
    \mapsto
    e^{ix} 
    = 
    \begin{pmatrix}
      \cos x
      \\
      \sin x
    \end{pmatrix}.    
  \end{gather*}
  
  The impression occurs that the straight line 
  $([0, 2\pi], [0, 2\pi] \Cap \mathcal{O})$ 
  is transformed
  to the circle line $(\mathbb{S}, \mathbb{S} \Cap \mathcal{O}^{\otimes 2})$
  by gluing the endpoints $0$ and $2\pi$ to one and the same point
  $ (1,0)^\tT = f(0) = f(2\pi)$ of the circle line. At any other point 
  $x \in (0, 2\pi)$, where nothing is glued, it seems that nothing 
  essential changes:
  A small interval--like--neighborhood $U'$ of $f(x)$ seems to be just 
  the image $f[U]$ of some small interval--neighborhood $U$ of $x$. 
\DissVersionForMeOrHareBrainedOfficialVersion{In contrast, a small interval--like--neighborhood $U'$ of 
  $(1,0)^\tT = f(0) = f(2\pi) $ appears to come into beeing, from gluing a small
  neighborhood, say $[0, \varepsilon_1)$, of $0 \in [0, 2\pi]$,  with a small
neighborhood, say $(2\pi-\varepsilon_2, 2\pi]$, of $2\pi \in [0, 2\pi]$.}
{In contrast it seems that a small interval--like--neighborhood $U'$ of 
  $(1,0)^\tT = f(0) = f(2\pi) $ is obtained from gluing a small
  neighborhood, say $[0, \varepsilon_1)$, of $0 \in [0, 2\pi]$,  with a small
neighborhood, say $(2\pi-\varepsilon_2, 2\pi]$, of $2\pi \in [0, 2\pi]$.}
  So whatever point $x' \in \mathbb{S}$ we consider: It always
  seems that a neighborhood $U'$ of $x'$ is build by taking a suitable
  $U_x \in \mathcal{U}(x)$, for every $x$ with $f(x) = x'$, and then
  getting $U'$ as union of the images of the $U_x$, i.e. via
  \begin{gather} \label{eq:example_neighborhoods_glued_together}
    U'
    =
    \bigcup_{x\in [0, 2\pi]: f(x)=x'} f[U_x],
  \end{gather}  
  or, to put it more vividly, by \emph {glueing} neighborhoods 
  $U_x$, $x \in f^{-}[U']$.
\end{example}

The next remark serves as a bridge between the previous example
and the subsequent definition of an identifying mapping.
It picks up \eqref{eq:example_neighborhoods_glued_together}
and shows how this naturally lead to the definition of 
identification topology and identifying mapping.
This way of motivating the identification topology seems to be new.

\begin{remark}[Motivation for the definition of the identification topology]
  Consider a surjective mapping $f: (X, \mathcal{O}) \rarr X'$ between a 
  topological space $(X, \mathcal{O})$ and some set $X'$.
  Assume that there is a topology $\mathcal{O}'$ on $X'$
  such that every neighborhood $U'$ of an arbitrarily chosen point 
  $x'$ results from gluing neighborhoods of all preimage points 
  $x\in f^-[\{x'\}]$; i.e. assume that there is a topology $\mathcal{O}'$
  on $X'$ whose neighborhood systems fulfill
      \begin{gather} \label{eq:generall_case_neighborhoods_glued_together}
	\mathcal{U}'(x') 
	=
	\bigg \{
	  U' \subseteq X' \mathrel{\bigg|} 
	  \forall x \in f^-[\{x'\}] \;\; \exists U_x \in \mathcal{U}(x):
	  U' = \bigcup_{x\in f^-[\{x'\}]} f[U_x]
	\bigg \}
      \end{gather}
  for every $x' \in X'$.
  Is it then possible to describe $\mathcal{O}'$ in a more direct manner?
  Due to the following equivalences for a subset $O' \subseteq X'$
  we can give a positive answer to this question:
  \begin{align*}
        & O' \in \mathcal{O}'
	\\
    \iseq {}&
	\forall x' \in O' : O' \in \mathcal{U'}(x')
	\\
    \overset{\eqref{eq:generall_case_neighborhoods_glued_together}}{\iseq} {}&
	\forall x' \in O' \;\; 
	\forall x \in f^-[\{x'\}] \;\; \exists U_x \in \mathcal{U}(x):
	O' = \bigcup_{x \in f^-[\{x'\}]} f[U_x]
	\\
    \overset{(\ast)}{\iseq} {}&
	\forall x' \in O' \;\; 
	\forall x \in f^-[\{x'\}] \;\; \exists \widetilde {U_x} \in \mathcal{U}(x):
	O' \supseteq \bigcup_{x \in f^-[\{x'\}]} f[\widetilde{U_x}]
	\\
    \iseq {}&
	\forall x' \in O' \;\; 
	\forall x \in f^-[\{x'\}] \;\; 
	\exists \widetilde {O_x} \in \mathcal{U}(x) \cap \mathcal{O}:
	O' \supseteq f[\bigcup_{x \in f^-[\{x'\}]} \widetilde{O_x}]
	\\
    \iseq {}&
	\forall x' \in O' \;\; \exists O \in \mathcal{O} :
	f^-[\{x'\}] \subseteq O  \wedge  O' \supseteq  f[O] 
	\\
    \iseq {}&
	\exists \widehat O \in \mathcal{O} \;\; \forall x' \in O':
	f^-[\{x'\}] \subseteq \widehat O  \wedge O'
	\supseteq f[\widehat O]
	\\
    \iseq {}&
	\exists \widehat O \in \mathcal{O}:
	f^-[O'] \subseteq \widehat O  \wedge f^-[O'] 
	\supseteq \widehat O 
	\\
    \iseq {}&
	\exists \widehat O \in \mathcal{O}:
	f^-[O'] = \widehat O
	\\
    \iseq {}&
	f^-[O'] \in \mathcal{O}.
  \end{align*}
  Note that the harder implication ''$\Leftarrow$'' in $(\ast)$ holds true, since
  $U_x \defeq f^-[O'] \supseteq \widetilde {U_x}$ is a neighborhood for
  each $x \in f^-[\{x'\}]$ and fulfills $f[U_x] = O'$, in virtue of
  $f$'s surjectivity.

  Summarizing we can say that necessarily 
  \begin{gather*}
    \mathcal{O}'
    =
    \big \{O' \subseteq X' : f^-[O'] \in \mathcal{O} \big \}.
  \end{gather*}
  This motivates the following definition.
  Take note, though, that we did \emph{not} prove that the 
  topology $\big \{O' \subseteq X' : f^-[O'] \in \mathcal{O} \big \}$ 
  actually induces neighborhood systems 
  which fulfill \eqref{eq:generall_case_neighborhoods_glued_together}.
\end{remark}

\begin{definition}
  We say that a mapping 
  $f: (X, \mathcal{O}) \rarr (X', \mathcal{O}')$
  between two topological spaces $(X, \mathcal{O})$ and $(X', \mathcal{O}')$
  is 
  {\bf identifying}
  or that it
  {\bf glues} 
  $\bm{(X, \mathcal{O})} $
  {\bf to}
  $\bm{(X', \mathcal{O}')}$, iff it is \emph{surjective} and 
\begin{gather*}
  \mathcal{O}' = \{O' \subseteq X' : f^-[O'] \in \mathcal{O}\}.
\end{gather*}
%
  The topology $\mathcal{O}'$ is called 
  {\bf quotient topology} or {\bf identification topology}
  {\bf induced by \boldmath$f$ and $\mathcal{O}$} and
  $(X', \mathcal{O}')$ is called the
  {\bf quotient space} or {\bf identification space} 
  {\bf induced by \boldmath$f$ and $\mathcal{O}$}.
\end{definition}
\iftime{Staturierte Topologie rein: SONST: -- }

The identification topology is uniquely determined by
the surjective mapping $f$, cf. Remark 
\ref{rem:identifying_implies_continuous_AND_identification_topology_unique}.

\begin{remark} \label{rem:identifying_implies_continuous_AND_identification_topology_unique}
    If a topological space $(X, \mathcal{O})$ is glued to 
    a topological space $(X', \mathcal{O}')$ by a mapping $g$
    then the, by definition surjective, mapping $g$ is in particular
    continuous; to see this just compare
    \begin{align*}
      g \text{ is continuous } 
      &\iseq 
      \left(
      \forall S' \subseteq X':
      S' \in \mathcal{O}_{X'}
      \implies
      g^-[S'] \in \mathcal{O}_{X}
      \right)      
    \shortintertext{with}
      g \text{ is identifying } 
      &\iseq 
      \left(
      \forall S' \subseteq X':
      S' \in \mathcal{O}_{X'}
      \iseq
      g^-[S'] \in \mathcal{O}_{X}
      \right)
      \\
      &\color{white}\iseq \color{black} \wedge \
      g \text{ is surjective}.
    \end{align*}
    More precisely one can read from the above lines, that a surjective mapping 
    $g$ glues a topological space $(X, \mathcal{O}) $
    to a topological space $(X', \mathcal{O}')$, iff
    $\mathcal{O}'$ is the finest topology on $X'$ for which 
    $g: (X, \mathcal{O})  \rarr  (X', \mathcal{O}')$ is still continuous.
\end{remark}

The relation between ''homeomorphic``, ''identifying`` and ''continuous``
is shown in the following diagram.
\[ \label{diag:relation_homeo_identifying_continuous}
  \xymatrix{
    f: \topospace{X} \rarr \topospace{X'} \text{ is a homeomorphism} 
	\ar@<-1ex>@{=>}[d]
	\\
    f: \topospace{X} \rarr \topospace{X'} 
	    \text{ glues } \topospace{X} 
	    \text{ to }    \topospace{X'}
	\ar@<-1ex>@{=>}[u]_{f \text{ bij.} }   
	\ar@<-1ex>@{=>}[d]  
	\\
    f: \topospace{X} \rarr \topospace{X'} \text{ is continuous }
	\ar@<-1ex>@{=>}[u]_{f \text { surj. and (open or closed)} }
  }
\]
%
The relations between the first and second row are easy to see, by 
\prettyref{rem:identifying_implies_continuous_AND_identification_topology_unique}. 
The implication from the second to the third row is also clear by this 
Remark. 
It remains to deal with the implication from the third to the second row.
Before illustrating this condition and then moving towards its 
justification in \prettyref{thm:sufficient_conditions_for_indentifying}
we would like to warn the reader that 
restricting identifying mappings is more problematic than 
restricting continuous mappings or homeomorphisms:
The restriction of a continuous mappings resp. homeomorphism are 
again continuous mappings resp. homeomorphisms.
In contrast 
the restriction of an identifying mapping is not
necessarily again identifying, cf. Example 
\ref{exa:restricted_function_no_longer_identifying}.
Now we return to our discussion of the implication from the 
third row to the second row.
As stated in 
\prettyref{rem:identifying_implies_continuous_AND_identification_topology_unique},
every identifying mapping $(X, \mathcal{O}) \rarr (X', \mathcal{O}')$
is continuous. However, the opposite is not true. The identity mapping
$\id_{\{0,1\}}: \{0,1\} \rarr \{0,	1\}$
between 
$(X, \mathcal{O})   = (\{0,1\}, \{X, \emptyset, \{0\} \} )$ and 
$(X', \mathcal{O}') = 
 (X, \mathcal{O}')  =(\{0,1 \}, \{X, \emptyset        \} )$ is
a simple, but maybe not very natural, example.
A more natural example for a surjective continuous mapping, 
which is not identifying is given in 
\prettyref{exa:continuous_but_not_identifying_projection}.

The proof of the following lemma, can also be found 
in \cite[p. 109]{Qrnb2001}. 

\begin{lemma} 
\label{lem:continuous_mapping_from_compact_space_to_hausdorff_space_is_closed}
  A continuous mapping $g:\topospace{Y} \rarr \topospace{Z}$ 
  from a compact space $\topospace{Y}$ into a Hausdorff
  space $\topospace{Z}$ is always a closed mapping.
  In particular $g$ is a homeomorphism if $g$ is additionally 
  bijective.
\end{lemma}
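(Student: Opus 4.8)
The plan is to reduce the statement to the two parts of \prettyref{thm:relation_closed_compact} together with the elementary fact that a continuous map carries compact sets to compact sets. First I would pick an arbitrary closed subset $A$ of the compact space $\topospace{Y}$; by \upref{enu:closeness_implies_compactness} of \prettyref{thm:relation_closed_compact} the set $A$ is then a compact subset of $\topospace{Y}$, i.e. $(A, A \Cap \mathcal{O}_Y)$ is a compact space.

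Next I would show that $g[A]$ is a compact subset of $\topospace{Z}$; this is the only step that requires a (very short) argument. Given any covering of $g[A]$ by sets $O'_i \in \mathcal{O}_Z$, $i \in I$, the preimages $g^{-}[O'_i]$ are open in $\topospace{Y}$ by continuity of $g$ and their union contains $A$, so they form an open covering of $A$. Compactness of $A$ yields finitely many indices $i_1, \dots, i_k$ with $A \subseteq \bigcup_{j=1}^k g^{-}[O'_{i_j}]$, whence $g[A] \subseteq \bigcup_{j=1}^k O'_{i_j}$. Thus every open covering of $g[A]$ admits a finite subcover, so $g[A]$ is compact.

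Since $\topospace{Z}$ is a Hausdorff space, \upref{enu:compactness_implies_closeness} of \prettyref{thm:relation_closed_compact} now gives that the compact set $g[A]$ is closed in $\topospace{Z}$. As $A$ was an arbitrary closed subset of $Y$, this proves that $g$ maps closed sets to closed sets, i.e. $g$ is a closed mapping. For the addendum, suppose in addition that $g$ is bijective. Recall the remark made just after the definition of closed mappings: a bijective mapping is closed if and only if its inverse is continuous. Hence $g^{-1}$ is continuous, and since $g$ itself is continuous by hypothesis, $g$ is a continuous bijection with continuous inverse, that is, a homeomorphism.

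There is essentially no hard part here: the whole proof is an assembly of the already established \prettyref{thm:relation_closed_compact} with the routine preservation of compactness under continuous maps, and the observation about bijective closed maps. The only point one must be slightly careful about is to record that ``continuous image of compact is compact'' (which is not isolated as a numbered statement in the preceding text) and to use it for the subspace $A$ rather than for all of $Y$.
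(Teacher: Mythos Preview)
Your proof is correct and follows essentially the same route as the paper: closed $\Rightarrow$ compact in $\topospace{Y}$ via \prettyref{thm:relation_closed_compact}\,\upref{enu:closeness_implies_compactness}, continuous image of compact is compact, then compact $\Rightarrow$ closed in the Hausdorff space $\topospace{Z}$ via \prettyref{thm:relation_closed_compact}\,\upref{enu:compactness_implies_closeness}. The only small differences are that you spell out the ``continuous image of compact is compact'' step (the paper takes it for granted), and for the homeomorphism addendum you invoke the earlier remark that a bijective closed map has continuous inverse, whereas the paper instead verifies directly that $g$ is open by writing $g[O] = Z \setminus g[Y \setminus O]$; both arguments are equally valid and equally short.
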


\begin{proof}
  A closed subset of the compact space $\topospace{Y}$ is again 
  compact by part \upref{enu:closeness_implies_compactness}
  of \prettyref{thm:relation_closed_compact}; 
  therefore it is mapped by the continuous mapping $g$
  to a compact subset of $\topospace{Z}$,
  which is a closed subset of this Hausdorff space, by
  part \upref{enu:compactness_implies_closeness} of
  \prettyref{thm:relation_closed_compact}.
  Hence $g$ is a closed mapping.
  If $g$ is in addition bijective then the mapping $g$
  is also an open mapping, since the image $g[O]$ of 
  every open subset  $O \in \topology{Y}$ can then be written 
  in the form 
  $
    g[O] 
    = 
    g[Y \setminus (Y \setminus O)]
    =
    g[Y] \setminus g[Y \setminus O]
    =
    Z \setminus g[Y \setminus O]
  $,
  showing that $g[O]$ is the complement of the closed set 
  $g[Y \setminus O]$ and hence an open subset of $\topospace{Z}$.
  Therefore the mapping $g$ is open and continuous and hence 
  a homeomorphism. 
\end{proof}

Each identifying mapping is also continuous. 
The converse is not true in general.
Yet the next theorem gives some  sufficient criteria for ensuring 
that a continuous function is even identifying.

\begin{theorem} \label{thm:sufficient_conditions_for_indentifying}
  A surjective continuous mapping 
  $g: \topospace{Y} \rarr \topospace{Z}$ is identifying,
  if at least one of the following additional properties is fulfilled:
  \begin{enumerate}
    \item \label{enu:open_or_closed_implies_indentifying}
      $g$ is a closed or open mapping.
    \item \label{enu:mapping_from_compact_to_hausdorff_implies_indentifying}
      $\topospace{Y}$ is a compact space and $\topospace{Z}$ is a Hausdorff space.
  \end{enumerate}
  
\end{theorem}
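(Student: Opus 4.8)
The plan is to simply unravel what ``identifying'' means. Since $g$ is continuous and surjective by assumption, the inclusion $\mathcal{O}_Z \subseteq \{O' \subseteq Z : g^{-}[O'] \in \mathcal{O}_Y\}$ is nothing but the continuity of $g$, so the only thing left to establish in either case is the reverse inclusion: whenever $O' \subseteq Z$ has open preimage $g^{-}[O'] \in \mathcal{O}_Y$, then $O'$ itself is open in $Z$. Throughout I would lean on the elementary fact that, $g$ being \emph{surjective}, $g\big[g^{-}[S]\big] = S$ for every $S \subseteq Z$.

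For part \upref{enu:open_or_closed_implies_indentifying}, consider first the case that $g$ is open. Given $O' \subseteq Z$ with $g^{-}[O'] \in \mathcal{O}_Y$, the set $O' = g\big[g^{-}[O']\big]$ is the image of an open set under the open map $g$, hence open in $Z$. For the case that $g$ is closed, given $O' \subseteq Z$ with $g^{-}[O'] \in \mathcal{O}_Y$, the complement $Y \setminus g^{-}[O'] = g^{-}[Z \setminus O']$ is closed in $Y$; its image under the closed map $g$ is therefore closed in $Z$, and by surjectivity this image equals $Z \setminus O'$, so $O'$ is open in $Z$. In both cases $g$ is identifying.

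For part \upref{enu:mapping_from_compact_to_hausdorff_implies_indentifying}, I would not argue from scratch but instead invoke \prettyref{lem:continuous_mapping_from_compact_space_to_hausdorff_space_is_closed}, which already tells us that a continuous mapping from the compact space $\topospace{Y}$ into the Hausdorff space $\topospace{Z}$ is closed; hence this case is reduced to part \upref{enu:open_or_closed_implies_indentifying}.

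I do not anticipate any genuine obstacle: the whole argument is a two-line manipulation of preimages and images together with the definition of the identification topology. The one subtlety worth flagging is that the identity $g\big[g^{-}[S]\big] = S$ really does require the surjectivity of $g$ — without it, pushing a set forward and pulling it back need not recover it, and the reasoning would break down.
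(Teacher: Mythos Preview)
Your proof is correct and follows essentially the same approach as the paper's: both reduce the identifying condition to showing that $g^{-}[O'] \in \mathcal{O}_Y$ implies $O' \in \mathcal{O}_Z$, handle the open case via $O' = g[g^{-}[O']]$ using surjectivity, handle the closed case by passing to complements, and reduce part~\upref{enu:mapping_from_compact_to_hausdorff_implies_indentifying} to part~\upref{enu:open_or_closed_implies_indentifying} via \prettyref{lem:continuous_mapping_from_compact_space_to_hausdorff_space_is_closed}. Your treatment of the closed case is in fact a bit more explicit than the paper's, which simply says to ``translate all involved statements to the closed set viewpoint by means of building complements'' and argue analogously.
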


Before proving this theorem, we give an example for a continuous, 
but not identifying mapping $g$, which is defined on a simple subset 
$Y$ of $\R^2$ and maps onto a compact interval $Z$.
By \prettyref{thm:sufficient_conditions_for_indentifying}
it is clear that $Y$ must not be a compact subset of 
$(\R^2, \mathcal{O}^{\otimes 2})$ and that $g$ must not be open and closed.
We note that our example was inspired by an example, given by Kelly
in \cite[ch. Quotient spaces, p. 95]{Kelley1955}, illustrating  that there
are continuous mappings which are neither open nor closed. 
The natural topology of $\R$ is denoted by $\mathcal{O}$.

\begin{example} \label{exa:continuous_but_not_identifying_projection}
  The interval $[-1,1]$ can be generated by putting a single point, say
  $(0,1) \in \R^2$, into the 
  gap of $[-1,1]\setminus \{0\} =[-1,0) \cup (0,1]$. This operation
  is modeled by the mapping
  $g: \topospace{Y} \rarr \topospace{Z}$, 
  $g(y) \defeq y_1$, 
  where
  \begin{align*}
    Y
    &\defeq
    \left(
    [-1,1]\setminus \{0\} \times \{0\}
    \right)
    \cup
    \{(0,1)\}\\
  \shortintertext{and}
    Z
    &\defeq
    [-1,1]
  \end{align*}
  are endowed with the subspace topologies 
  $\mathcal{O}_Y = Y \Cap \mathcal{O}^{\otimes 2}$ and
  $\mathcal{O}_Z = Z \Cap \mathcal{O}$, respectively. 
  The projection $g$ is continuous but, however, not
  identifying: Consider the point $ 0 \in [-1,1]$ and its only preimage
  point $(0,1)\in Y$. The isolated point $(0,1) \in Y$ has 
  $\{(0,1)\}\eqdef U$ as smallest open neighborhood. Yet $g[U]=\{0\}$ is no
  neighborhood of $0$. 
  We remark that the same reasoning shows that $g$ is not
  an open mapping; moreover $g$ is neither a closed mapping
  since it maps the closed subset 
  $[-1,0) \times \{0\}$ of $\topospace{Y}$ to $[-1,0)$ which is not
  a closed subset of $\topospace{Z}$.

\end{example}

\begin{proof}[Proof of Theorem \ref{thm:sufficient_conditions_for_indentifying}]

  \upref{enu:open_or_closed_implies_indentifying}
    Since $g: \topospace{Y} \rarr \topospace{Z}$ is surjective we have
      \begin{align}
	  g \text{ is continuous } 
	  &\iseq 
	  \left(
	  \forall \widecheck Z \subseteq Z:
	  \widecheck Z \in \mathcal{O}_{Z}
	  \implies
	  g^-[\widecheck Z] \in \mathcal{O}_{Y}
	  \right)
      \label{eq:g_is_contiuous_iff}
      \shortintertext{and}
	  g \text{ is identifying } 
	  &\iseq 
	  \left(
	  \forall \widecheck Z \subseteq Z:
	  \widecheck Z \in \mathcal{O}_{Z}
	  \iseq
	  g^-[\widecheck Z] \in \mathcal{O}_{Y}
	  \right).
      \label{eq:surjective_g_is_identifying_iff}
      \end{align}
      So our task of proving 
      ''$g$ is continuous $\implies$ $g$ is identifying``
      reduces to verify the statement
      \begin{equation}
        \forall \widecheck Z \subseteq Z:
	g^-[\widecheck Z] \in \mathcal{O}_Y
	\implies
	\widecheck Z \in \mathcal{O}_Z.
      \label{eq:reduced_task__g_continuous_implies_g_identifying}
      \end{equation}
      
      In the first case that $g$ is open, i.e. fulfills
      $g[O_Y] \in \mathcal{O}_Z$ for all $O_Y \in \mathcal{O}_Y$
      we are done by writing $\widecheck Z = g[g^-[\widecheck Z]]$
      and setting $O_Y \defeq g^-[\widecheck Z]$.
      In the second case that $g$ is closed, i.e. fulfills
      $g[A_Y] \in \mathcal{A}_Z$ for all $A_Y \in \mathcal{A}_Y$ 
      -- where $\mathcal{A}_Y$ and $\mathcal{A}_Z$ are the systems 
      of the closed subsets of $\topospace{Y}$ and $\topospace{Z}$, 
      respectively --
      we translate all involved statements of the previous reasoning
      from their ''open set viewpoint`` formulation
      \prettyref{eq:g_is_contiuous_iff},
      \prettyref{eq:surjective_g_is_identifying_iff} and
      \prettyref{eq:reduced_task__g_continuous_implies_g_identifying}
      to the corresponding ''closed set viewpoint`` formulation,
      by means of building complements.
      Then the reasoning goes the same way as before.

  \upref{enu:mapping_from_compact_to_hausdorff_implies_indentifying}
%
%
  By Lemma 
  \ref{lem:continuous_mapping_from_compact_space_to_hausdorff_space_is_closed}
  the function $g$ maps every closed subset of $\topospace{Y}$ to a 
  closed subset of $\topospace{Z}$ and therefore fulfills
  \upref{enu:open_or_closed_implies_indentifying}, which implies that
  $g$ is identifying.  
\end{proof}

In the next theorem we consider two functions 
$g : \topospace{Y} \rarr \topospace{Z}$ and 
$g': \topospace{Y'} \rarr \topospace{Z}$ which are 
identical except that their domains of definition
do not need to be totally identical; rather $\topospace{Y}$ 
shall only to be glued to $\topospace{Y'}$ by an identifying mapping 
$I: \topospace{Y} \rightarrow \topospace{Y'}$.
The theorem states that $g$ is continuous respectively identifying, 
iff so is $g'$.
\begin{gather*}
  \xymatrix{
    &   \topospace{Y}  \ar[dd]_{I} \ar[dr]|-{g}   \\
    & & \topospace{Z}   \\
    &   \topospace{Y'} \ar[ur]|-{g'}      
  }
\end{gather*}

\begin{theorem} \label{thm:mappings_from_ori_space_vs_mappings_from_glued_space}
  Let 
  $g : \topospace{Y} \rarr \topospace{Z}$ and
  $g': \topospace{Y'} \rarr \topospace{Z}$
  be mappings between topological spaces, which are 
  related via $g = g' \circ I$,
  with a mapping $I$ that glues $\topospace{Y}$ to $\topospace{Y'}$. 
  Then the following statements hold true:
  \begin{enumerate}
    \item \label{enu:continuous_mappings__from_ori_space_vs_from_glued_space}
      $g$ is continuous $\iseq g'$ is continuous. 
    \item \label{enu:identifying_mappings__from_ori_space_vs_from_glued_space}
      $g$ glues $\topospace{Y}$ to $\topospace{Z}$
      $\iseq$
      $g'$ glues $\topospace{Y'}$ to $\topospace{Z}$.
  \end{enumerate}
\end{theorem}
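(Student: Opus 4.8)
The plan is to build everything on the single set–theoretic identity $g^{-}[S] = I^{-}\bigl[{g'}^{-}[S]\bigr]$, valid for all $S \subseteq Z$ because $g = g' \circ I$, together with the two defining features of the situation: that $I$ is surjective and continuous (\prettyref{rem:identifying_implies_continuous_AND_identification_topology_unique}), and that $\mathcal{O}_{Y'}$ is the identification topology induced by $I$, i.e.\ that $O' \in \mathcal{O}_{Y'}$ holds exactly when $I^{-}[O'] \in \mathcal{O}_{Y}$.

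For part \upref{enu:continuous_mappings__from_ori_space_vs_from_glued_space}, the implication ``$g'$ continuous $\Rightarrow$ $g$ continuous'' is immediate, since $g = g' \circ I$ is then a composition of continuous mappings. For the converse, assume $g$ is continuous and take $O_Z \in \mathcal{O}_Z$; then $I^{-}\bigl[{g'}^{-}[O_Z]\bigr] = g^{-}[O_Z] \in \mathcal{O}_Y$, which by the description of $\mathcal{O}_{Y'}$ says precisely that ${g'}^{-}[O_Z] \in \mathcal{O}_{Y'}$. Hence $g'$ is continuous.

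For part \upref{enu:identifying_mappings__from_ori_space_vs_from_glued_space}, I would first record that $g$ is surjective if and only if $g'$ is, using $g[Y] = g'\bigl[I[Y]\bigr] = g'[Y']$ and the surjectivity of $I$. It then remains to compare the two candidate–topology conditions. Using the preimage identity once more, for every $S \subseteq Z$ we get the chain $g^{-}[S] \in \mathcal{O}_Y \iseq I^{-}\bigl[{g'}^{-}[S]\bigr] \in \mathcal{O}_Y \iseq {g'}^{-}[S] \in \mathcal{O}_{Y'}$, so that $\{S \subseteq Z : g^{-}[S] \in \mathcal{O}_Y\}$ and $\{S \subseteq Z : {g'}^{-}[S] \in \mathcal{O}_{Y'}\}$ are literally the same collection of subsets of $Z$. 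Therefore $\mathcal{O}_Z$ equals the first collection if and only if it equals the second; combining this with the surjectivity equivalence yields ``$g$ glues $\topospace{Y}$ to $\topospace{Z}$ $\iseq$ $g'$ glues $\topospace{Y'}$ to $\topospace{Z}$''.

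I do not expect a genuine obstacle: the whole argument is an unwinding of the definition of the identification topology through the preimage identity $g^{-}[S] = I^{-}\bigl[{g'}^{-}[S]\bigr]$. The only point deserving care is bookkeeping — remembering that ``glues'' bundles surjectivity together with the topology equality, so the surjectivity half of the equivalence must be established explicitly rather than taken for granted.
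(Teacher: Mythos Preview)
Your proposal is correct and follows essentially the same route as the paper: both arguments hinge on the chain $g^{-}[S] \in \mathcal{O}_Y \iseq I^{-}\bigl[{g'}^{-}[S]\bigr] \in \mathcal{O}_Y \iseq {g'}^{-}[S] \in \mathcal{O}_{Y'}$, which turns the continuity and identifying conditions for $g$ into those for $g'$. You are in fact a bit more explicit than the paper in separating out the surjectivity half of ``glues'' via $g[Y] = g'[I[Y]] = g'[Y']$; the paper folds this into its equivalence chain without comment.
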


See also \cite[p. 95 -- 96]{Kelley1955}  
for the first part of the subsequent proof.
\begin{proof}
  Since $I$ is identifying we have, for every subset 
  $\widecheck Z $ of $Z$, the equivalences 
  \[
    g'^-[\widecheck Z] \in \mathcal{O}_{Y'}
    \iseq
    I^-[g'^-[\widecheck Z]] \in \mathcal{O}_{Y}
    \iseq
    g^-[\widecheck Z] \in \mathcal{O}_{Y}.
  \]
  Having this in mind we get
  \begin{align*}
    g \text{ is continuous } 
      &\iseq 
      \left(
      \forall \widecheck Z \subseteq Z:
      \widecheck Z \in \mathcal{O}_{Z}
      \implies
      g^-[\widecheck Z] \in \mathcal{O}_{Y}
      \right)
      \\
      &\iseq
      \left(
      \forall \widecheck Z \subseteq Z:
      \widecheck Z \in \mathcal{O}_{Z}
      \implies
      g'^-[\widecheck Z] \in \mathcal{O}_{Y'}
      \right)
      \\
      &\iseq
      g' \text{ is continuous}
  \shortintertext{and}
    g \text{ is identifying } 
      &\iseq 
      \left(
      \forall \widecheck Z \subseteq Z:
      \widecheck Z \in \mathcal{O}_{Z}
      \iseq
      g^-[\widecheck Z] \in \mathcal{O}_{Y}
      \right)
      \\
      &\iseq
      \left(
      \forall \widecheck Z \subseteq Z:
      \widecheck Z \in \mathcal{O}_{Z}
      \iseq
      g'^-[\widecheck Z] \in \mathcal{O}_{Y'}
      \right)
      \\
      &\iseq
      g' \text{ is identifying}.
  \end{align*}
\end{proof}

We end this subsection with a warning: in general a restriction 
of an identifying mapping is no longer identifying as the following 
example shows.
Again $\mathcal{O}$ is the natural topology of $\R$ and 
$\mathbb{S} \defeq \{ x\in \R^2 : \|x\|_2 = 1 \}$.

\begin{example} \label{exa:restricted_function_no_longer_identifying}
  We consider, once more, the both surjective and continuous mapping \linebreak
  $
    f: 
    ([0,2\pi], [0, 2\pi] \Cap \mathcal{O} )
    \rarr
    (\mathbb{S}, \mathbb{S} \cap \mathcal{O}^{\otimes 2})
  $,
  given by
  \begin{gather*}
    x
    \mapsto
    e^{ix} 
    = 
    \begin{pmatrix}
      \cos x
      \\
      \sin x
    \end{pmatrix}.    
  \end{gather*}
  This mapping is identifying by part
  \upref{enu:mapping_from_compact_to_hausdorff_implies_indentifying}
  of Theorem 
  \ref{thm:sufficient_conditions_for_indentifying}.
  Restricting this mapping to the subset 
  $\check X \defeq [0,2\pi)$ we get the continuous bijection 
  $
    f|_{\check X}: 
    ([0,2\pi), [0, 2\pi) \Cap \mathcal{O} ) 
    \rarr 
    (\mathbb{S}, \mathbb{S} \cap \mathcal{O}^{\otimes 2})
  $
  which is no longer identifying,
  since an identifying bijection would necessarily be an homeomorphism,
  cf. the Diagram on page 
  \pageref{diag:relation_homeo_identifying_continuous}.
  However the spaces $([0,2\pi), [0, 2\pi) \Cap \mathcal{O} )$ and 
  $(\mathbb{S}, \mathbb{S} \cap \mathcal{O}^{\otimes 2})$ are clearly
  not homeomorphic, since only the latter one is compact.
  
\end{example}

\subsection{One-point compactification of a topological space}
\label{subsec:one_point_compactification}

We start with a well known special case before we give the general definition.
\begin{example}[{{\bf and Definition}}] \label{exa:one_point_compactification_of_euclidean_space}
  It is often convenient to regard $\R^n$ as the subset
  $\sphere[][][n] \setminus \{(0,0,\dots, 0, 1)\} \eqdef \dot S$ of the 
  sphere 
  $
    \topospace{\sphere[][][n]}
    \defeq
    (\sphere[][][n], \sphere[][][n] \Cap \mathcal{O}^{\otimes (n+1)})
  $ by means of the homeomorphism
  \begin{align*}
    &\pi: (\dot S, \dot S \Cap  \mathcal{O}_{\sphere[][][n]})
      \rarr (\R^n, \mathcal{O}^{\otimes n}),\\
    &\pi: (s_1,s_2, \dots, s_n; s_{n+1})^\tT
      \mapsto \tfrac{1}{1-s_{n+1}}(s_1, s_2, \dots, s_n)^\tT,
  \end{align*}
  known as stereographic 
  projection\DissVersionForMeOrHareBrainedOfficialVersion{\footnote{
      An image illustrating the stereographic projection can be found in
      \cite{wiki:stereographical_projection}, 
      cf. also \cite{wiki:image:projection_stereographique}
      and \cite[p. 350]{Munkres1975}.
      }.
    }
    {, cf. \cite[p. 350]{Munkres1975}.}
  %
  The topological superspace 
  $(\sphere[][][n], \mathcal{O}_{\sphere[][][n]})$
  of $(\dot S, \dot S \Cap \mathcal{O}_{\sphere[][][n]})$
  differs not much from the latter:
  The set 
  \begin{gather*}
    \sphere[][][n] 
    = 
    \dot S  \cup  \{(0,0,\dots,0,1)^{\tT}\}
  \end{gather*}

  contains just one point more than $\dot S$ and the
  topology 
  $
    \mathcal{O}_{\sphere[][][n]} 
      \supsetneq
      \dot S \Cap \mathcal{O}_{\sphere[][][n]}
  $
  differs from $\dot S \Cap \mathcal{O}_{\sphere[][][n]}$
  only by additionally containing the open neighborhoods of 
  the ''north pole`` $(0,0,\dots,0,1) \eqdef N$ as expressed by 
  \begin{align*}
    \mathcal{O}_{\sphere[][][n]}  
    ={}&  
    (\dot S \Cap \mathcal{O}_{\sphere[][][n]})
      \mathrel{\dot \cup}
      \{O \in  \mathcal{O}_{\sphere[][][n]}: N \in O\}
  \\
    ={}&
    (\dot S \Cap \mathcal{O}_{\sphere[][][n]})
      \mathrel{\dot \cup}
      \{
	\sphere[][][n] \setminus A :
	  A \in \mathcal{A}(\sphere[][][n]),
	  A \subseteq \dot S
      \}
  \\
    ={}&
    (\dot S \Cap \mathcal{O}_{\sphere[][][n]})
      \mathrel{\dot \cup}
      \{
	\sphere[][][n] \setminus K : 
	  K \in \mathcal{K}(\sphere[][][n]),
	  K \subseteq \dot S
      \}
  \\
    ={}&
    (\dot S \Cap \mathcal{O}_{\sphere[][][n]})
      \mathrel{\dot \cup}
      \{
	\sphere[][][n] \setminus K : 
	  K \in \mathcal{K}(\dot S)
      \}.
  \end{align*}
  
  Likewise we set 
  $\R^n_\infty \defeq \R^n \cup \{\infty\}$ with an additional point 
  $\infty \not \in \R^n$ and define
  \begin{gather*}
    \mathcal{O}^{\otimes n}_\infty 
    \defeq 
    (\mathcal{O}^{\otimes n})_\infty
    \defeq 
    \mathcal{O}^{\otimes n}
    \mathrel{\dot \cup }
    \{ \R^n_\infty \setminus K : K \in \mathcal{K}(\R^n)\}.
  \end{gather*}
  Then $(\R^n_\infty, \mathcal{O}^{\otimes n}_\infty)$ is a compact 
  topological space, called the 
  {\bf one-point compactification} of $(\R^n, \mathcal{O}^{\otimes n})$;
  it contains $(\R^n, \mathcal{O}^{\otimes n})$
  as dense subspace. Moreover the homeomorphism 
  $
    \pi: (\dot S, \dot S \Cap  \mathcal{O}_{\sphere[][][n]})
    \rarr
      (\R^n, \mathcal{O}^{\otimes n})
  $
  can be extended to a homeomorphism 
  $
    \topospace{\sphere[][][n]}
    \rarr
      (\R^n_\infty, \mathcal{O}^{\otimes n}_\infty)
  $
  by setting $\pi(N) \defeq \infty$.
  Setting $\|\infty\| \defeq \pinfty$ we then have for any sequence 
  of points $x_k$  from $(\R^n_\infty, \mathcal{O}^{\otimes n}_\infty)$ 
  the relation
  \begin{align*}
    x_k \rarr \infty 
    &\iseq 
    \pi^{-1}(x_k) \rarr \pi^{-1}(\infty)
  \\&\iseq 
    \pi^{-1}(x_k) \rarr N
  \\&\iseq 
    \|x_k\| \rarr \pinfty.
  \end{align*}
\end{example}

For general topological spaces $(X, \mathcal{O})$ the procedure is done
similarly by adding a new point $\infty$, resulting in the set
$X_\infty \defeq X \cup \{\infty\}$, and by equipping $\infty$ with 
an appropriate system of neighborhoods.
In the latter we have to be careful if $(X, \mathcal{O})$ is
not a Hausdorff space.
Namely, in this case it may happen that there
are compact subsets $K_1, K_2 \in \mathcal{K}(X, \mathcal{O})$
whose intersection $K_1 \cap K_2$ is no longer compact,
see Detail \ref{det:intersection_of_compact_sets_not_compact} in the 
Appendix;
we would therefore fail here, when we were trying to define the open neighborhoods of the 
new point $\infty$ as the sets
\begin{gather} \label{eq:failing_try_to_define_neighborhoods_of_infty}
  X_\infty \setminus K, \text{ with } K \in \mathcal{K}(X,\mathcal{O}),
\end{gather}
since the union of the ''open neighborhoods`` $X_\infty \setminus K_1$
and $X_\infty \setminus K_2$ is the set
$
  (X_\infty \setminus K_1) \cup (X_\infty \setminus K_2) 
  = 
  X_\infty \setminus (K_1 \cap K_2)
$
which is no longer a ''neighborhood`` of $\infty$. This problem 
is solved if we restrict us in 
\eqref{eq:failing_try_to_define_neighborhoods_of_infty} 
to those compact subsets $K$ of $(K, \mathcal{O})$ which are additionally
closed, see Detail
\ref{det:intersection_of_closed_and_compact_sets_is_again_closed_and_compact}
in the Appendix.
Choosing
\begin{gather} \label{eq:succeding_try_to_define_neighborhoods_of_infty}
  X_\infty \setminus K 
    \text{ with } 
    K \in \mathcal{KA}(X, \mathcal{O}) 
\end{gather}
as the open neighborhoods of $\infty$ indeed is the right idea.
Before we give the definition of the general one-point compactification 
in accordance to \eqref{eq:succeding_try_to_define_neighborhoods_of_infty}
we note that the sets $X_\infty \setminus K$ in 
\eqref{eq:failing_try_to_define_neighborhoods_of_infty} and
\eqref{eq:succeding_try_to_define_neighborhoods_of_infty}
coincide if $(X, \mathcal{O})$ is a Hausdorff space since in this case
we have 
$\mathcal{K}(X, \mathcal{O}) \subseteq \mathcal{A}(X, \mathcal{O})$
by part \upref{enu:compactness_implies_closeness} of
Theorem \ref{thm:relation_closed_compact}.
The following general definition as well as the subsequent 
Theorem \ref{thm:one_point_compactification} are, in essence,
taken from \cite[p. 150]{Kelley1955}.

\begin{definition}
  Let $(X, \mathcal{O})$ be a topological space and $\infty \not \in X$ an 
  additional point. The {\bf one-point compactification} of
  $(X, \mathcal{O})$ is the space 
  $(X, \mathcal{O})_\infty \defeq (X_\infty, \mathcal{O}_\infty)$,
  where 
  $
    X_\infty 
    \defeq 
    X \cup \{\infty\}
  $
  and 
  $
    \mathcal{O}_\infty 
    \defeq 
    \mathcal{O}
    \cup 
    \{ 
      X_\infty \setminus K : 
      K \in \mathcal{KA}(X, \mathcal{O}) 
    \}
  $.
\end{definition}

\begin{theorem} \label{thm:one_point_compactification}
  The one-point compactification $(X_\infty, \mathcal{O}_\infty)$ of a
  topological space $(X, \mathcal{O})$ is a compact topological space,
  which contains $(X, \mathcal{O})$ as subspace. 
  $(X_\infty, \mathcal{O}_\infty)$ is a Hausdorff space if and only if $X$ 
  is a locally compact Hausdorff space.
\end{theorem}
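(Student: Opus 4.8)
The plan is to verify, in turn, that $\mathcal{O}_\infty$ is a topology on $X_\infty$, that $(X,\mathcal{O})$ embeds in it as a subspace, that $(X_\infty,\mathcal{O}_\infty)$ is compact, and finally the Hausdorff equivalence. Starting with the topology axioms: since $\emptyset\in\mathcal{O}\subseteq\mathcal{O}_\infty$ and $\emptyset\in\mathcal{KA}(X,\mathcal{O})$ (it is closed and vacuously compact), we get $X_\infty=X_\infty\setminus\emptyset\in\mathcal{O}_\infty$. For finite intersections I would split into three cases: two members of $\mathcal{O}$, whose intersection stays in $\mathcal{O}$; two neighbourhoods of $\infty$, where $(X_\infty\setminus K_1)\cap(X_\infty\setminus K_2)=X_\infty\setminus(K_1\cup K_2)$ and $K_1\cup K_2\in\mathcal{KA}(X,\mathcal{O})$ because a finite union of compact, resp.\ closed, sets is again compact, resp.\ closed; and the mixed case $O\cap(X_\infty\setminus K)=O\setminus K$, which lies in $\mathcal{O}$ since $\infty\notin O$ and $K$ is closed in $X$. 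For arbitrary unions I would separate the members $O_i\in\mathcal{O}$ from the members $X_\infty\setminus K_j$, set $O\defeq\bigcup_i O_i\in\mathcal{O}$ and, whenever at least one $K_j$ occurs, $K\defeq\bigcap_j K_j$. An intersection of closed sets is closed and is a closed subset of the compact set $K_{j_0}$, hence compact by part \upref{enu:closeness_implies_compactness} of \prettyref{thm:relation_closed_compact}, so $K\in\mathcal{KA}(X,\mathcal{O})$; the union then equals $X_\infty\setminus\bigl(K\cap(X\setminus O)\bigr)$, and $K\cap(X\setminus O)$ is again closed and a closed subset of $K$, hence in $\mathcal{KA}(X,\mathcal{O})$, so the union lies in $\mathcal{O}_\infty$.

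That $(X,\mathcal{O})$ is a subspace is immediate: $X\cap O=O$ for $O\in\mathcal{O}$ and $X\cap(X_\infty\setminus K)=X\setminus K\in\mathcal{O}$, so $X\Cap\mathcal{O}_\infty=\mathcal{O}$. For compactness, given an open cover $(U_i)_{i\in I}$ of $X_\infty$, some $U_{i_0}$ must contain $\infty$ and is therefore of the form $X_\infty\setminus K$ with $K\in\mathcal{KA}(X,\mathcal{O})$ (a set of $\mathcal{O}$ cannot contain $\infty$). The traces $U_i\cap X$ are open in $X$ and cover the compact set $K$, so finitely many of them, say $U_{i_1}\cap X,\dots,U_{i_m}\cap X$, already cover $K$; then $U_{i_0},U_{i_1},\dots,U_{i_m}$ cover $X_\infty$.

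For the Hausdorff equivalence: if $(X_\infty,\mathcal{O}_\infty)$ is Hausdorff, then the subspace $X$ is Hausdorff, and for $x\in X$ we may pick disjoint $U,V\in\mathcal{O}_\infty$ with $x\in U$, $\infty\in V$; since $\infty\in V$ we necessarily have $V=X_\infty\setminus K$ with $K\in\mathcal{KA}(X,\mathcal{O})$, and since $\infty\notin U$ it follows that $U\in\mathcal{O}$ and $U\subseteq X_\infty\setminus V=K$, so $K$ is a compact neighbourhood of $x$ and $X$ is locally compact. Conversely, assume $X$ is locally compact Hausdorff and take distinct $p,q\in X_\infty$: if both lie in $X$, separate them inside $X$ and use that $\mathcal{O}\subseteq\mathcal{O}_\infty$; if $q=\infty$, choose a compact neighbourhood $C$ of $p$ in $X$, note that $C$ is closed by part \upref{enu:compactness_implies_closeness} of \prettyref{thm:relation_closed_compact} and hence $C\in\mathcal{KA}(X,\mathcal{O})$, pick $O_p\in\mathcal{O}$ with $p\in O_p\subseteq C$, and observe that $O_p$ and $X_\infty\setminus C$ are disjoint open neighbourhoods of $p$ and $\infty$.

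The routine part is the case analysis for the topology axioms; the only genuinely delicate point — already anticipated in the discussion preceding the definition — is that one must use \emph{closed} compact sets, since this is exactly what makes $\mathcal{KA}(X,\mathcal{O})$ stable under the finite unions and the intersections with closed sets used above, and hence what makes $\mathcal{O}_\infty$ a topology at all. Everything else reduces to \prettyref{thm:relation_closed_compact}.
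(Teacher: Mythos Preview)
Your proof is correct and complete. Note, however, that the paper does not actually supply its own proof of this theorem: it states the result and attributes both the definition and the theorem to Kelley \cite[p.~150]{Kelley1955}, so there is nothing in the paper to compare against. Your argument is the standard one (and essentially Kelley's): verify the topology axioms by the three-way case split on the two kinds of open sets, check the subspace property, extract a finite subcover from the compact complement of a neighbourhood of $\infty$, and handle the Hausdorff equivalence via compact neighbourhoods. Your closing remark is also on point: the paper's preceding discussion explains precisely why one must restrict to $\mathcal{KA}(X,\mathcal{O})$ rather than $\mathcal{K}(X,\mathcal{O})$, and you use exactly this closedness in the union and mixed-intersection cases.
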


\section{Topologization of totally ordered sets and topological coercivity notions}
\addtointroscoll{

In this section's subsections 
\begin{itemize}
  \item 
    \ref{subsec:topology_for_linear_odered_sets}
    Three topologies for totally ordered sets    
  \item 
    \ref{subsec:topology_for_inf_complete_linear_odered_sets}
     The right order topology on an inf-complete totally ordered set
  \item 
    \ref{subsec:topological_coercivity_and_its_interpretation_as_continuity}
    Topological coercivity notions and continuity interpretations
  \item 
    \ref{subsec:topological_coercivity_and_boundedness_below}
    Topological coercivity and boundedness below
\end{itemize}
we introduce for a given totally ordered set $(Z, \leq)$ 
the right order topology (along with two other topologies),
give its very simple form in case of totally ordered sets,
use it to define topological coercivity notions
and show its good influence when investigating boundedness from 
below.

More precisely we introduce in the first subsection
three different topolgies for a given totally ordered set $(Z, \leq)$.
For us the most important of them is the right order topology 
$\mathcal{T}_\leq$, beeing the suited topology to investigate 
lower semicontinuity.
Also with regard to coercivity questions this topology is useful.

In the second subsection we will see that $(Z, \mathcal{T}_\leq)$
becomes very simple if the underlying totally ordered set 
is inf-complete.
The topology $\mathcal{T} = \mathcal{T}_\leq$
of the topological space 
$([\minfty, \pinfty], \mathcal{T})$
is an important example and will be studied in more detail in
\prettyref{sec:the_topological_space_intervalus_maximus_with_the_right_order_topology}.

In the third 
subsection 
the notions of  
topological (strong) coercivity towards a set and some boundedness notions 
are introduced. In 
\prettyref{thm:topological_coercivivity_is_eq_to_continuity_of_extension}
we will see that a mapping 
$f:(X, \mathcal{O}) \rarr (X', \mathcal{O}')$ is topological 
coercive (towards $\emptyset$) iff a certain extension 
$\widehat f : (X, \mathcal{O})_\infty \rarr (X', \mathcal{O}')_{\infty'}$
is continuous in the newly added point $\infty$. In case of a mapping 
$f: \R^n \rarr \R^m$ this later turns out to be equivalent
to the normcoercivity of $f$, see 
\prettyref{thm:characterize_normcoercivity}.
For a mapping $f:(X, \mathcal{O}) \rarr (Z, \mathcal{T}_\leq)$ another 
similar relation can be described if the totally ordered set 
$(Z, \leq)$ has a maximum $\widehat z$ and a minimum.
In this case $f: (X, \mathcal{O}) \rarr (Z, \mathcal{T}_\leq)$ is 
topological coercive towards $\{\widehat z\}$ iff another certain 
extension 
$\widehat f: (X, \mathcal{O})_\infty \rarr (Z, \mathcal{T}_\leq)$
is continuous in the newly added point $\infty$,  see
\prettyref{thm:topological_coercivity_towards_maximal_element_equivalent_to_continuous_property}.
In case of a mapping $f: \R^n \rarr [\minfty, \pinfty]$ this 
will turn out to be equivalent to the coercivity of $f$, see
\prettyref{thm:characterize_lsc_and_coercive}.

In the fourth and last 
subsection
we recall the usual global boundedness 
definition for functions $f: (X, \mathcal{O}) \rarr (Z, \leq)$ 
and add two less common, more easier to check, local
boundedness notions
and show  that the local ones imply the global one if 
$f: (X, \mathcal{O}) \rarr (Z, \mathcal{T}_\geq)$ is topological 
strongly coercive towards $\MAX_\leq (Z)$.
Note that here $Z$ is not equipped with the right order topology 
but really with the left order topology!

Finally we mention that 
the right order topology is a special case of the 
Scott topology for a partially ordered set $(Z, \sqsubseteq)$.
The latter topology is defined as the collection of all 
subsets $O$ of $Z$ which fulfill the following conditions:
\begin{enumerate}
  \item 
    Along with any $z \in O$ also the ``upper set'' 
    $\{\widetilde z \in Z: \widetilde z \sqsupseteq z\}$ belongs to 
    $O$;\\ i.e. -- more formally expressed -- the condition
    $
      \forall z \in O \; \forall \widetilde z \in Z : 
      \widetilde z \sqsupseteq z \implies \widetilde z \in O
    $
    holds true,
  \item
    Every directed subset $S$ of $(Z, \sqsubseteq)$ whose 
    supremum exists and belongs to $O$ has nonempty 
    intersection with $O$, i.e. fulfills 
    $S \cap O \neq \emptyset$,
\end{enumerate}
cf. \cite{Scott72} where Scott defined this topology using 
the name ``induced topology''.
\rem{Beweis, das Scott topologie auf total geordneter Menge 
gerade die rechte Ordnungstopologie ist, liegt als Scan bei und als
papierne Abheftung vor.}

}

\subsection{Three topologies for totally ordered sets}
\label{subsec:topology_for_linear_odered_sets}


Before defining topologies out of $\leq$ we remark that we use interval
notation just as for $\R$ endowed with the natural order. In addition
we introduce analogues for the unbounded real intervals like
$(\minfty, b]$.

\begin{definition}
  Let $(X, \leq)$ be a totally ordered set. We use the
  shortcuts
  \begin{align*}
    b) &\defeq \{x\in X: x   <  b \},  \\
    b] &\defeq \{x\in X: x \leq b \},  \\
    (a &\defeq \{x\in X: a   <  x \},  \\
    [a &\defeq \{x\in X: a \leq x \}. 
  \end{align*}
  If the totally ordered set is denoted with a decoration like 
  in $\leq'$ we feel free to adopt the notation accordingly and 
  write e.g. $('a$ instead of $(a$.
\end{definition}

Given a totally ordered set $(X, \leq)$ we consider three
different topologies for it,
namely two ``one sided'' topologies and one ``two sided'' topology.
We start with the ``one sided'' topologies, cf.
\cite[p. 74]{Steen1978}. But be aware that the definition there is not 
totally correct, 
see Detail \ref{det:definition_in_steen_not_totally_correct}
in the Appendix. 
A correct definition can be found in \cite{wiki:Ordnungstopologie}.
\rem{Die endlische Wikiseite enthaelt bei der Definition der 
Topologien einen Fehler; gleicher Fehler ($X$ musz auch offene Menge sein!)
auch in Counterexamples in topology..}
\begin{definition}
  Let $(X, \leq)$ be a totally ordered set. 
  The system of sets, which are $\emptyset, X$ or which can be written 
  as unions of sets of the form 
  $(a$, with $a \in X$, forms a topology. 
  It will be called 
  {\bf right order topology} for $(X, \leq)$ and will be
  denoted by $\mathcal{T}_\leq$.
  Analogously the {\bf left order topology} $\mathcal{T}_\geq$
  for $(X, \leq)$ is defined as system of sets which are
  $\emptyset, X$ or which can be written 
  as unions of sets of the form 
  $b)$, with $b \in X$.
\end{definition}

\begin{remark}
  \begin{enumerate}
    \item 
      The notations for the right order topology 
      and the left order topology for a totally ordered set 
      $(X, \leq)$ are consistent:
      Define the inverse order $\preccurlyeq$ on $X$ via 
      $x \preccurlyeq y \defiseq x \geq y$ for all $x, y \in X$.
      Then the left order topology $\mathcal{T}_\geq$ for 
      $(X, \leq)$ is indeed just the right order topology 
      $\mathcal{T}_{\preccurlyeq}$ for $(X, \preccurlyeq)$.
    \item 
      The above systems $\mathcal{T}_\leq$ and 
      $\mathcal{T}_\geq$ are really topologies on $X$: 
      By the first part of this remark it suffices to prove 
      that $\mathcal{T}_\leq$ is a topology.
      $\emptyset$ and $X$ belong to $\mathcal{T}_\leq$ 
      by definition.
      Clearly arbitrary unions of sets from $\mathcal{T}_\leq$ 
      belong again to $\mathcal{T}_\leq$ by definition of 
      this system. Finally also the intersection of two sets 
      $T, S \in \mathcal{T}_\leq$ again belongs to that system:
      If $T$ or $S$ is empty we have 
      $T \cap S = \emptyset \in \mathcal{T}_\leq$. Likewise $T \cap S$ 
      belongs to $\mathcal{T}_\leq$ if $T = X$ or $S = X$.
      In the remaining case $T = \bigcup_{i \in I}(t_i$ and
      $S = \bigcup_{j \in J}(s_j$ with any index sets $I, J$ and elements 
      $t_i, s_j \in X$ we finally have 
      \begin{align*}
	T \cap S 
      & = \left[ \bigcup_{i \in I} \, (t_i \right] 
	    \cap \left[ \bigcup_{j \in J} \, (s_j \right]
	=  \bigcup_{i \in I} \left[\, (t_i  
	    \cap \bigcup_{j \in J} \, (s_j \right]
      \\
      & =  \bigcup_{i \in I} \bigcup_{j \in J}  \bigg[\, (t_i  
	    \cap \, (s_j \bigg]
	=  \bigcup_{i \in I, j \in J} (\max{\{t_i,s_j\}}.
      \end{align*}
    Hence we have shown $T \cap S \in \mathcal{T}_\leq$ also in this case.
  \end{enumerate}
\end{remark}

Now the ``two-sided'' topology is introduced, cf.
\cite{wiki:Ordnungstopologie} and \cite[p. 22]{Qrnb2001}.
\begin{definition} \label{def:order_topology}
  Let $(X, \leq)$ be a totally ordered set. 
  The {\bf order topology} for $(X, \leq)$ is the system 
  $\mathcal{O}_\leq$ consisting of $\emptyset, X$ and the 
  ``open intervals''
  \begin{align*}
    & (a,b) && \text{or}  && (a  && \text{or} && b)
  \end{align*}
  where $a,b \in X$, and all unions of the open intervals.
\end{definition}

\begin{example}
  The order topology for $(\R, \leq)$ is the natural topology 
  of $\R$ which is induced by $|\cdot|$.
\end{example}

\begin{remark}
  The order topology for a totally ordered set $(X, \leq)$ really is
  a topology: In order to avoid dealing with many cases we 
  first represent 
  the sets from the system $\mathcal{O}_\leq$ in a unified way,
  which has been mentioned in \cite{wiki:Ordnungstopologie}.
  To this end let $\larr$ and $\rarr$ be two elements 
  which are not yet contained in $X$.
  Then set
  \begin{gather*}
	  \widehat X 
    \defeq \{\larr\}  \cup X \cup \{\rarr\}    
  \end{gather*}
  and extend the total order $\leq$ on $X$ to a total order 
  on $\widehat X$ (again denoted by $\leq$) by additionally setting 
  $\larr \leq x $ and $x \leq \rarr$ for all 
  $x \in \widehat X$.
  Then 
  \begin{align*}
            X &= (\larr, \rarr)   &  (a &= (a, \rarr)
  \\
    \emptyset &= (\rarr, \larr)   &  b) &= (\larr, b)
  \end{align*}
  for all $a, b \in X$ so that the sets from $\mathcal{O}_\leq$
  appear now simply as the unions of sets of the form 
  $(a,b)$ where $a,b \in \widehat X$.
  %
  This representation makes it clear
  that arbitrary unions of sets from $\mathcal{O}_\leq$ belong again 
  to $\mathcal{O}_\leq$. Moreover the intersection 
  of two arbitrary sets $O = \bigcup_{i \in I}(a_i, b_i)$ and 
  $P = \bigcup_{j \in J}(c_j, d_j)$ -- with 
  $a_i, b_i ,c_j, d_j \in \widehat X$ and any index sets $I,J$ -- 
  can be written in the form 
  \begin{gather*}  
      O \cap P
    = \bigcup_{
	\begin{smallmatrix}
	  i \in I \\
	  j \in J
	\end{smallmatrix}}
	\left[ (a_i, b_i) \cap (c_j, d_j) \right]
    = \bigcup_{
	\begin{smallmatrix}
	  i \in I \\
	  j \in J
	\end{smallmatrix}}
	(\max\{a_i,c_j\}, \min\{b_i, d_j\})
  \end{gather*}
  so that the intersection $O \cap P$ again belongs to 
  $\mathcal{O}_\leq$. Finally clearly 
  $X, \emptyset \in \mathcal{O}_\leq$ so that $\mathcal{O}_\leq$ 
  really is a topology on $X$.
\end{remark}

\begin{proposition} \label{prop:neighborhoods_of_minimal_element}
  Let a totally ordered space $(Z, \leq)$ be equipped with its
  right order topology $\mathcal{T}_\leq$.
  If $(Z, \leq)$ has some minimum $\widecheck z$ then 
  the only $(Z, \mathcal{T}_\leq)$-neighborhood
  of $\widecheck z$ is the whole space $Z$. 
  In particular 
  a mapping $f: (X, \mathcal{O}) \rarr (Z, \mathcal{T}_\leq)$
  is continuous in all points $x$ which are mapped to the minimal element.
  More formally expressed:
  $\mathcal{U}[\mathcal{T}_\leq] (\widecheck z) = \{Z\}$ and 
  $\forall x \in X: \big( f(x) = \widecheck z \implies f$ 
      is continuous in $x \big)$.
\end{proposition}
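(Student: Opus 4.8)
The plan is to unwind the definition of the right order topology and observe that the minimum $\widecheck z$ lies in no proper open set. First I would recall that, by definition, every member of $\mathcal{T}_\leq$ is either $\emptyset$, or $Z$, or a union of basic sets of the form $(a = \{z \in Z : a < z\}$ with $a \in Z$. Now suppose $O \in \mathcal{T}_\leq$ with $\widecheck z \in O$. The case $O = \emptyset$ is impossible, and if $O$ were a union $\bigcup_{i} (a_i$ of basic sets, then $\widecheck z \in (a_i$ for some $i$, i.e. $a_i < \widecheck z$, contradicting the minimality of $\widecheck z$. Hence the only remaining possibility is $O = Z$. So the only open set of $(Z, \mathcal{T}_\leq)$ that contains $\widecheck z$ is $Z$ itself.

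Next I would translate this into the statement about neighborhoods. By definition a neighborhood of $\widecheck z$ is a set $U \subseteq Z$ that contains an open set containing $\widecheck z$; by the previous step that open set must be $Z$, so $Z \subseteq U \subseteq Z$, forcing $U = Z$. Conversely $Z$ is trivially a neighborhood of $\widecheck z$. Therefore $\mathcal{U}[\mathcal{T}_\leq](\widecheck z) = \{Z\}$, which is the first assertion.

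Finally the continuity claim is immediate from the pointwise definition of continuity. Let $f: (X, \mathcal{O}) \rarr (Z, \mathcal{T}_\leq)$ and let $x \in X$ with $f(x) = \widecheck z$. Given any open neighborhood $O'$ of $f(x) = \widecheck z$ in $(Z, \mathcal{T}_\leq)$, the first step shows $O' = Z$, so $f^{-}[O'] = X \in \mathcal{O}$ is an open neighborhood of $x$ satisfying $f[X] \subseteq Z = O'$. Hence $f$ is continuous in $x$.

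I do not expect any genuine obstacle here, since the proposition is essentially a direct reading of the definitions; the only points requiring a little care are keeping the order direction straight (the basic open sets $(a$ are "upward rays", so they can never capture the bottom element) and not conflating "open neighborhood" with "neighborhood" when concluding $\mathcal{U}[\mathcal{T}_\leq](\widecheck z) = \{Z\}$.
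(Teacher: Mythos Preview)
Your proof is correct and follows essentially the same approach as the paper: show that no basic open ray $(a$ can contain the minimum, deduce that $Z$ is the unique (open) neighborhood of $\widecheck z$, and then read off continuity at any preimage of $\widecheck z$ directly from the definition. Your write-up is in fact slightly more careful than the paper's in distinguishing open neighborhoods from arbitrary neighborhoods, but the argument is the same.
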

\begin{proof}
  Clearly the whole space $Z$ is a neighborhood of $\widecheck z$.
  It is also the only neighborhood of $\widecheck z$ since 
  this minimum is never contained in a set $(a$, $a \in Z$,
  and hence also not in unions of such sets.
  Let $x\in X$ be a point with $f(x) = \widecheck z$. For each neighborhood $U$ of $x$
  we trivially have $f[U] \subseteq Z$. Since $Z$
  is the only existing neighborhood of $\widecheck z = f(x)$, 
  this inclusion already shows that $f$ is continuous in $x$.
\end{proof} 

Recall in the next theorem that a mapping 
$f:(X, \leq) \rarr (X', \leq')$ between ordered sets
is called an {\bf order isomorphism} iff $f$ is bijective and 
fulfills $f(x_1) \leq' f(x_2) \iseq x_1 \leq x_2$ for all 
$x_1, x_2 \in X$.

\begin{theorem} \label{thm:monotonicity_vs_continuity}
  Let $\orderspace{X}{}$ and $\orderspace{X}{'}$ be
  totally ordered sets with their corresponding topological spaces
  $(X, \mathcal{T}_\leq)$ and $(X', \mathcal{T}_{\leq'})$, respectively.
  For a mapping 
  $f: X \rarr X'$ the following holds true:
  \begin{enumerate}
    \item \label{enu:continuity_in_point_implies_order_property}
      If $f: \ordertotopspace{X}{} \rarr \ordertotopspace{X}{'}$ 
      is continuous in $x_\ast$ then 
      $f(x) \geq' f(x_\ast)$ for all $x \geq x_\ast$ 
    \item \label{enu:continuity_implies_monotonicity}
      If $f: \ordertotopspace{X}{} \rarr \ordertotopspace{X}{'}$
      is continuous then 
      $f: \orderspace{X}{} \rarr \orderspace{X}{'}$ is monotonically
      increasing.
    \item \label{enu:homeomorphism_vs_order_isomorphism}
      $f: \ordertotopspace{X}{} \rarr \ordertotopspace{X}{'}$
      is a homeomorphism, iff
      $f: \orderspace{X}{} \rarr \orderspace{X}{'}$ is an order isomorphism.
  \end{enumerate}

\end{theorem}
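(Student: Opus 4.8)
The plan is to reduce all three parts to one structural fact about the right order topology: \emph{every open set in $(X,\mathcal{T}_\leq)$ is an up-set}, that is, if $O \in \mathcal{T}_\leq$, $x_\ast \in O$ and $x \geq x_\ast$, then $x \in O$. This is immediate from the definition, since a nonempty open set is either $X$ itself or a union of sets of the form $(a = \{z \in X : a < z\}$, and each of these is plainly an up-set: if $x_\ast \in (a$ and $x \geq x_\ast$ then $a < x_\ast \leq x$. The analogous statement holds for $(X',\mathcal{T}_{\leq'})$. I would record this observation first and then invoke it repeatedly.

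For \upref{enu:continuity_in_point_implies_order_property} I would argue by contradiction. Suppose $f$ is continuous in $x_\ast$, let $x \geq x_\ast$, and assume $f(x) <' f(x_\ast)$ (the only alternative to $f(x) \geq' f(x_\ast)$, since $\leq'$ is total). Then $N' \defeq \{y' \in X' : f(x) <' y'\}$ is an open neighborhood of $f(x_\ast)$, so continuity in $x_\ast$ provides an open $O \in \mathcal{T}_\leq$ with $x_\ast \in O$ and $f[O] \subseteq N'$. Since $O$ is an up-set containing $x_\ast$ and $x \geq x_\ast$, we get $x \in O$, hence $f(x) \in f[O] \subseteq N'$, i.e.\ $f(x) <' f(x)$ --- a contradiction. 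Part \upref{enu:continuity_implies_monotonicity} is then immediate: for $x_1 \leq x_2$ apply \upref{enu:continuity_in_point_implies_order_property} with $x_\ast \defeq x_1$ and $x \defeq x_2$ to obtain $f(x_2) \geq' f(x_1)$.

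For \upref{enu:homeomorphism_vs_order_isomorphism}, the direction ``order isomorphism $\Rightarrow$ homeomorphism'': an order isomorphism $f$ is bijective, and its inverse is again an order isomorphism, so it suffices to check that every order isomorphism is continuous. This is a one-line computation: the pre-image of a basic open set $\{y' \in X' : a' <' y'\}$ under an order isomorphism $g$ equals $\{x \in X : g^{-1}(a') < x\}$, which is basic open in $(X,\mathcal{T}_\leq)$; as pre-images commute with unions and send $\emptyset, X'$ to $\emptyset, X$, continuity follows. For the converse: if $f$ is a homeomorphism then $f$ and $f^{-1}$ are both continuous, hence both monotonically increasing by \upref{enu:continuity_implies_monotonicity}; combining ``$x_1 \leq x_2 \implies f(x_1) \leq' f(x_2)$'' with ``$f(x_1) \leq' f(x_2) \implies x_1 = f^{-1}(f(x_1)) \leq f^{-1}(f(x_2)) = x_2$'' gives $f(x_1) \leq' f(x_2) \iseq x_1 \leq x_2$, and together with bijectivity this is exactly the statement that $f$ is an order isomorphism.

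I do not expect a genuinely hard step here; the only points that need a little care are bookkeeping ones --- remembering that $X$ has to be listed separately among the open sets (a union of the rays $(a$ need not contain a minimum of $X$, should one exist) and using totality of the orders to turn the failure of $f(x) \geq' f(x_\ast)$ into $f(x) <' f(x_\ast)$. So the ``main obstacle'' is really just keeping the case distinctions straight.
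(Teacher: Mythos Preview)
Your proof is correct and follows essentially the same route as the paper's: the same contradiction via the open neighborhood $('f(x)$ in part~\upref{enu:continuity_in_point_implies_order_property}, the immediate deduction of \upref{enu:continuity_implies_monotonicity}, and in \upref{enu:homeomorphism_vs_order_isomorphism} the same use of monotonicity of $f$ and $f^{-1}$ for one direction and the correspondence $(a \leftrightarrow ('f(a)$ for the other. The only difference is cosmetic: you explicitly isolate the ``open sets are up-sets'' observation up front, whereas the paper uses it implicitly when it writes ``since $x > x_\ast$ assures $x \in U$''.
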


\begin{proof}
  ~

  \upref{enu:continuity_in_point_implies_order_property}
    Let $f: \ordertotopspace{X}{} \rarr \ordertotopspace{X}{'}$ be 
    continuous in $x_\ast \in X$. For $x=x_\ast$ we trivially have
    $f(x) \geq' f(x_\ast)$. Assume that there is an $x > x_\ast$ such that
    $f(x) \not \geq' f(x_\ast)$. This means $f(x) <' f(x_\ast)$, 
    because $\leq'$ is a total order on $X'$. Hence 
    $f(x_\ast) \in ('f(x) \eqdef U'$. Since $f$ is continuous in 
    $x_\ast$ there is an
    open neighborhood $U \in \mathcal{U}(x_\ast)$ with 
    $f[U] \subseteq U' = ('f(x)$. Since $x > x_\ast$ assures $x \in U$ we
    would consequently get $f(x) \in f[U] \subseteq ('f(x)$ -- 
    a contradiction.
  
  \upref{enu:continuity_implies_monotonicity}
    This directly follows from 
    \upref{enu:continuity_in_point_implies_order_property}
  
  \upref{enu:homeomorphism_vs_order_isomorphism}
    Let $f:\ordertotopspace{X}{} \rarr \ordertotopspace{X}{'}$ be a
    homeomorphism. The continuity of 
    $f: \ordertotopspace{X}{} \rarr \ordertotopspace{X}{'}$ and
    $f^{-1}: \ordertotopspace{X}{'} \rarr \ordertotopspace{X}{}$
    yields the monotonicity of 
    $f: \orderspace{X}{} \rarr \orderspace{X}{'}$ and 
    $f^{-1}: \orderspace{X}{'} \rarr \orderspace{X}{}$, respectively,
    by part \upref{enu:continuity_implies_monotonicity}.
    Now let, to the contrary,
    $f: {\orderspace{X}{}} \rarr {\orderspace{X}{'}}$ be an order isomorphism.
    Then the bijective mapping $f$ gives a one to one correspondence between
    the open sets of $\ordertotopspace{X}{}$ and the open sets of
    $\ordertotopspace{X}{'}$ -- essentially by 
    $(a \leftrightarrow ('f(a)$. Thus $f$ is a homeomorphism between
    these two topological spaces.
\end{proof}

The following example shows that there are monotone functions between
totally ordered sets which are not continuous in the deduced topologies.

\begin{example}
  Consider the 
  totally ordered sets
  $\orderspace{X}{} = ([0,1], \leq)$ and 
  $\orderspace{X}{'} = (\{2,3\}, \leq')$, with the natural orders $\leq$ on
  $[0,1]$ and $\leq'$ on $\{2,3\}$.
  The mapping $f: {\orderspace{X}{}} \rarr \orderspace{X}{'}$, given by
  \[
    f(x) 
    \defeq 
    \begin{cases}
      2& \text{ if } x \in [0,1)  \\
      3& \text{ if } x = 1.
    \end{cases},
  \]
  is monotone;
  yet $f: \ordertotopspace{X}{} \rarr \ordertotopspace{X}{'}$ is 
  not continuous: The preimage of 
  $
    \{3\}
    = 
    \{ x' \in X' : x' >' 2 \} 
    = 
    ('2 
    \in \mathcal{T}_{\leq^{'}}$ is the set
  $\{1\} \subseteq [0,1]$. This nonempty set does not belong to 
  $\mathcal{T}_\leq$, because it is neither the full space $X$, 
  nor can it be written as union of intervals of the form $(x$ 
  where $x\in [0,1]$.
\end{example}

\subsection{The right order topology on an inf-complete totally ordered set}
  \label{subsec:topology_for_inf_complete_linear_odered_sets}

In this subsection we give a remark showing that the right order 
topology gets very simple if the underlying totally ordered set 
$(X, \leq)$ fulfills a property called inf-completeness which is 
defined as follows:

\begin{definition}
  We call a totally ordered set $(X,\leq)$ {\bf inf-complete}, iff
  each subset $\widecheck X \subseteq X$ possesses an infimum 
  $\inf \widecheck X \in X$.
\end{definition}

\begin{remark} \label{rem:definition_of_right_order_topology_for_inf_complete_totally_ordered_sets}
  The the right order topology becomes very simple if it is given 
  to a totally ordered set $(X, \leq)$ which is inf-complete:
  Consider the union of sets $(a_i$ with $a_i \in X$ where $i$ runs 
  through some nonempty index set $I$. Due to the inf-completeness of 
  $(X, \leq)$ we know that $\inf \{a_i: i \in I\} \eqdef a$ exists in 
  $X$ so that the union
  \begin{gather*}
    \bigcup_{i \in I} \, (a_i = (a
  \end{gather*}
  is again of the very same form as the original sets.
  In particular $\mathcal{T}_\leq$ just consists of $X, \emptyset$
  and the sets of the form $(a$ where $a \in X$.
\end{remark}

\begin{example}
  Consider the set $X \defeq (0,1) \cup (2,4)$ endowed 
  with the usual order $\leq$.
  The totally ordered set $(X, \leq)$ is not inf-complete 
  since the interval $(2,3) \subset X$ has many lower bounds in $X$ 
  but no infimum in $X$. 
  Setting $a_i \defeq 2+\tfrac1i$, $i \in \N$ we see that the union 
  \begin{gather*}
    \bigcup_{i \in \N} \, (a_i = (2,4)
  \end{gather*}
  is neither $\emptyset, X$ nor of the form 
  $(a$ with some $a \in X$.
\end{example}

\subsection{Topological coercivity notions and continuity interpretations }
\label{subsec:topological_coercivity_and_its_interpretation_as_continuity}

Recall that $\mathcal{K}(X, \mathcal{O})$ denotes the system
of compact subsets of a topological space $(X,\mathcal{O})$,
whereas the system of its compact and closed subsets is denoted by 
$\mathcal{KA}(X, \mathcal{O})$.
In the following we will need the following subsystems.
\begin{definition}
  Let $(X, \mathcal{O})$ be a topological space and $S \subseteq X$.
  Then we set
  \begin{align*}
             \bm{\mathcal{KA}_S(X, \mathcal{O})} 
    & \defeq \{K \in \mathcal{KA}(X, \mathcal{O}): K \cap S = \emptyset \},
  \\
             \bm{\mathcal{K}_S(X, \mathcal{O})} 
    & \defeq \{K \in \mathcal{K}(X, \mathcal{O}): K \cap S = \emptyset \}.
  \end{align*}
\end{definition}
Note that 
$\mathcal{KA}_{\emptyset}(X, \mathcal{O}) = \mathcal{KA}(X, \mathcal{O})$.
The main idea behind the first definition is to collect all those 
closed and compact subsets of $(X,\mathcal{O})$ in the set system 
$\mathcal{KA}_S(X, \mathcal{O})$, which are not allowed to hit 
the set $S$ but which might come ``arbitrary close'' to $S$.
The idea behind the second definition is similar.

\begin{lemma}
  \label{lem:compact_and_closed_subsets_in_right_order_topology}
  Let $(Z, \leq)$ be a totally ordered set which has a 
  minimum $\widecheck z$. Then  the following holds true:
  \begin{enumerate}
    \item \label{enu:closed_subsets_are_all_compact}
      All closed subsets of $(Z,\mathcal{T}_\leq)$ are compact;
      in particular 
      $
	\mathcal{KA}(Z, \mathcal{T}_\leq) 
	=
	\mathcal{A}(Z, \mathcal{T}_\leq)
      $.
    \item \label{enu:compact_closed_sets_which_do_not_hit_max_el_are_compl_of_its_open_neighborhoods}
      If $(Z, \leq)$ contains also a maximum $\widehat z$
      then 
      $
	\mathcal{KA}_{\{\widehat z\}}(Z, \mathcal{T}_\leq)
	=
	\{Z \setminus U': U' \in \mathcal{U}'(\widehat z) 
	    \cap \mathcal{T}_\leq\}
      $.
  \end{enumerate}
\end{lemma}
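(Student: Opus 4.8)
The plan is to reduce both assertions to a single observation: under the standing hypothesis that $(Z,\leq)$ has a minimum, the space $(Z,\mathcal{T}_\leq)$ is already compact. For part \upref{enu:closed_subsets_are_all_compact} I would first record this compactness in the sense of \prettyref{def:compactness_of_a_space}. Indeed, the minimum $\widecheck z$ lies in no set of the form $(a$ (that would force $a<\widecheck z$), hence in no union of such sets, so $Z$ itself is the only open set containing $\widecheck z$ — this is precisely \prettyref{prop:neighborhoods_of_minimal_element}. Therefore every open cover of $Z$ must already contain $Z$, which is a finite subcover. Now apply part \upref{enu:closeness_implies_compactness} of \prettyref{thm:relation_closed_compact} to the compact space $(Z,\mathcal{T}_\leq)$: each closed subset of it is compact, i.e. $\mathcal{A}(Z,\mathcal{T}_\leq)\subseteq\mathcal{K}(Z,\mathcal{T}_\leq)$. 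Since $\mathcal{KA}(Z,\mathcal{T}_\leq)=\mathcal{K}(Z,\mathcal{T}_\leq)\cap\mathcal{A}(Z,\mathcal{T}_\leq)$, this inclusion yields $\mathcal{KA}(Z,\mathcal{T}_\leq)=\mathcal{A}(Z,\mathcal{T}_\leq)$.

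For part \upref{enu:compact_closed_sets_which_do_not_hit_max_el_are_compl_of_its_open_neighborhoods} I would simply unwind the definitions using part \upref{enu:closed_subsets_are_all_compact}. By definition $\mathcal{KA}_{\{\widehat z\}}(Z,\mathcal{T}_\leq)=\{K\in\mathcal{KA}(Z,\mathcal{T}_\leq):\widehat z\notin K\}$, which by part \upref{enu:closed_subsets_are_all_compact} equals $\{A\in\mathcal{A}(Z,\mathcal{T}_\leq):\widehat z\notin A\}$. The complement map $A\mapsto Z\setminus A$ is a bijection from $\mathcal{A}(Z,\mathcal{T}_\leq)$ onto $\mathcal{T}_\leq$, and it sends a closed set $A$ with $\widehat z\notin A$ precisely to an open set $U'\defeq Z\setminus A$ with $\widehat z\in U'$, i.e. to an open neighborhood of $\widehat z$, an element of $\mathcal{U}'(\widehat z)\cap\mathcal{T}_\leq$. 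Running this bijection the other way (using $A=Z\setminus U'$ and $Z\setminus(Z\setminus U')=U'$) gives exactly $\mathcal{KA}_{\{\widehat z\}}(Z,\mathcal{T}_\leq)=\{Z\setminus U':U'\in\mathcal{U}'(\widehat z)\cap\mathcal{T}_\leq\}$.

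I do not anticipate a genuine obstacle here: the only point one must \emph{see} is that the presence of a minimum already makes the right order topology compact, after which everything is bookkeeping with complements. The one thing worth flagging is that the argument for part \upref{enu:compact_closed_sets_which_do_not_hit_max_el_are_compl_of_its_open_neighborhoods} does not actually use the maximality of $\widehat z$ — it works for the complement of an open neighborhood of any point — but since the surrounding development only ever invokes this for the maximal element, I would keep the hypothesis as stated.
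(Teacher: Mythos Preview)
Your proof is correct and follows essentially the same approach as the paper. The only cosmetic difference is in part \upref{enu:closed_subsets_are_all_compact}: the paper argues directly that each nonempty closed set contains $\widecheck z$ (since its open complement cannot) and hence any open cover of it must contain $Z$ as a member, whereas you first establish compactness of the whole space and then invoke \prettyref{thm:relation_closed_compact}\,\upref{enu:closeness_implies_compactness}; both rest on the same observation that $Z$ is the sole open neighborhood of $\widecheck z$, and part \upref{enu:compact_closed_sets_which_do_not_hit_max_el_are_compl_of_its_open_neighborhoods} is handled identically.
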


\begin{proof}
  \upref{enu:closed_subsets_are_all_compact}
  No open set $O \in \mathcal{T}_\leq$ contains 
  the minimum $\widecheck z$ except for $O = Z$.
  Except for the closed set $\emptyset = Z \setminus Z$, which 
  is anyway compact, every closed subset $A$ of $(Z, \mathcal{T}_\leq)$
  contains hence $\widecheck z$. In particular any open covering 
  $(T_i)_{i \in I}$ of such a set $A$ must have a member $T$,
  which is an open neighborhood of $\widecheck z$.
  However, the only neighborhood of this minimal element is 
  the full space $Z$ by definition of $\mathcal{T}_\leq$.
  So picking out $T = Z$ already gives a finite subcovering for 
  $A \subseteq Z$. Hence the nonempty closed subsets of 
  $(Z, \mathcal{T}_\leq)$ are compact.
  In particular 
  $
    \mathcal{KA}(Z, \mathcal{T}_\leq)
    =
    \mathcal{K}(Z, \mathcal{T}_\leq) \cap \mathcal{A}(Z, \mathcal{T}_\leq)
    =
    \mathcal{A}(Z, \mathcal{T}_\leq)
  $.
\\
  \upref{enu:compact_closed_sets_which_do_not_hit_max_el_are_compl_of_its_open_neighborhoods}
  Using the previous part we see that the system
  $\mathcal{KA}_{\{\widehat z\}}(Z, \mathcal{T}_\leq)$
  consists of exactly those closed subsets of 
  $(Z, \mathcal{T}_\leq)$ which do not contain $\widehat z$,
  i.e. of exactly the complements of those open sets which contain 
  $\widehat z$.
  In other words the system
  $\mathcal{KA}_{\{\widehat z\}}(Z, \mathcal{T}_\leq)$ consists 
  of exactly the complements of open neighborhoods of $\widehat z$.
  This is what the formula 
    $
    \mathcal{KA}_{\{\widehat z\}}(Z, \mathcal{T}_\leq)
	=
	\{Z \setminus U': U' \in \mathcal{U}'(\widehat z) 
	    \cap \mathcal{T}_\leq\}
    $
  expresses.
\end{proof}

The first parts of the following two definitions 
stem from \cite{wiki:CoerciveFunctions} 
where just the name ``coercive'' was used.
However we prefer the names ``topological coercive'' and 
``strongly topological coercive'' here.
The second parts of these definitions 
are new to the best of the author's knowledge.
After stating the definitions we give some remarks on them and 
point out a relation to the notions of 
\emph{normcoercivity} and \emph{coercivity}.

\begin{definition} \label{def:topological_coercivity}
  A genuine mapping $f: \decorspace{X}{} \rarr \decorspace{X}{'}$
  between topological spaces 
  $\decorspace{X}{}$ and $\decorspace{X}{'}$
  is called {\bf topological coercive}, iff
  for every closed compact subset $K'$ of $X'$ 
  there is a closed compact subset
  $K$ of $X$ such that $f[X \setminus K] \subseteq X' \setminus K'$;
  i.e. -- more formally expressed -- iff 
  \begin{gather*}
	\forall K' \in \mathcal{KA}(X', \mathcal{O}')
    \;\;  \exists K \in \mathcal{KA}(X, \mathcal{O}):
	  f[X \setminus K] \subseteq X' \setminus K'.
  \end{gather*}
  holds true.
\\
  More generally we say that $f$ is {\bf topological coercive towards}
  a set ${\bm S'} \subseteq X$ iff
  for every closed compact subset $K'$ of $X'$ which does not hit $S'$
  there is a closed compact subset
  $K$ of $X$ such that $f[X \setminus K] \subseteq X' \setminus K'$;
  i.e. -- more formally expressed -- iff 
  \begin{gather*}
	\forall K' \in \mathcal{KA}_{S'}(X', \mathcal{O}')
    \;\;  \exists K \in \mathcal{KA}(X, \mathcal{O}):
	  f[X \setminus K] \subseteq X' \setminus K'.
  \end{gather*}
  holds true.
\end{definition}

By replacing ``compact and closed'' in the codomain 
in the previous definition by ``compact'' 
we get the following definition:

\begin{definition} \label{def:topological_strong_coercivity}
  A genuine mapping $f: \decorspace{X}{} \rarr \decorspace{X}{'}$
  between topological spaces 
  $\decorspace{X}{}$ and $\decorspace{X}{'}$
  is called {\bf topological strongly coercive}, iff
  for every compact subset $K'$ of $X'$ 
  there is a closed compact subset
  $K$ of $X$ such that $f[X \setminus K] \subseteq X' \setminus K'$;
  i.e. -- more formally expressed -- iff 
  \begin{gather*}
	\forall K' \in \mathcal{K}(X', \mathcal{O}')
    \;\;  \exists K \in \mathcal{KA}(X, \mathcal{O}):
	  f[X \setminus K] \subseteq X' \setminus K'
  \end{gather*}
  holds true.
\\
  More generally we say that $f$ is 
  {\bf topological strongly coercive towards}
  a set ${\bm S'} \subseteq X$ iff
  for every compact subset $K'$ of $X'$ which does not hit $S'$
  there is a closed compact subset
  $K$ of $X$ such that $f[X \setminus K] \subseteq X' \setminus K'$;
  i.e. -- more formally expressed -- iff 
  \begin{gather*}
		    \forall K' \in \mathcal{K}_{S'}(X', \mathcal{O}')
		\;\;  \exists K \in \mathcal{KA}(X, \mathcal{O}):
		      f[X \setminus K] \subseteq X' \setminus K'
  \end{gather*}
  holds true.
\end{definition}

\begin{remark}
  A genuine mapping $f: (X, \mathcal{O}) \rarr (X', \mathcal{O}')$
  is topological coercive iff it is topological 
  coercive towards $\emptyset$.
  Likewise the mapping $f$ is topological strongly coercive iff it 
  is topological strongly coercive towards $\emptyset$.
\end{remark}

\begin{remark}
  The previous Definitions \ref{def:topological_coercivity} and 
  \ref{def:topological_strong_coercivity} coincide 
  if the codomain $(X',\mathcal{O}')$ is a topological space whose 
  compact sets are all closed, e.g. if $(X', \mathcal{O}')$ 
  is a Hausdorff space, cf. \prettyref{thm:relation_closed_compact}.
  In later applications however the codomain will be 
  a totally ordered set equipped with the right order topology 
  which contains compact sets that are not closed, so that the 
  definitions no longer coincide.
\end{remark}

\begin{remark}
  Although the notion of topological coercivity is defined in the context
  of any topological spaces $(X, \mathcal{O})$ and $(X', \mathcal{O}')$
  it is rather made for noncompact spaces $(X, \mathcal{O})$ and 
  $(X', \mathcal{O}')$; if one of these spaces is compact the notion of 
  of topological coercivity becomes uninteresting:
  If $(X, \mathcal{O})$ is compact then every genuine mapping 
  $f:(X, \mathcal{O}) \rarr (X', \mathcal{O}')$ from 
  $(X, \mathcal{O})$ to any topological space 
  $(X', \mathcal{O}')$ is trivially topological coercive since we 
  can always choose $K \defeq X$.
  If, on the other hand, the space $(X', \mathcal{O}')$ is compact
  we can choose $K' \defeq X'$ so that a genuine mapping 
  $f:(X, \mathcal{O}) \rarr (X', \mathcal{O}')$ is topological 
  coercive iff $(X, \mathcal{O})$ is compact.
\end{remark}
In Subsection \ref{subsec:coercivity_vs_continuity} we will define 
the notion normcoercive for mappings 
$f: \R^n \rarr \R^m$ and the notion coercive for mappings 
$f: \R^n \rarr [\minfty, \pinfty]$ and see that these notions are 
special cases of topological coercivity towards a set:
One the one hand a mapping $f: \R^n \rarr \R^m$ is normcoercive 
iff it is topological coercive, i.e. topological coercive 
towards $\emptyset$, see \prettyref{thm:characterize_normcoercivity}.
On the other hand a mapping $f: \R^n \rarr [\minfty, \pinfty]$ is 
coercive iff it is topological coercive towards 
$\max [\minfty, \pinfty] = \{\pinfty\}$, 
see \prettyref{thm:characterize_lsc_and_coercive}.
For proving these equivalences the subsequent two 
theorems will be helpful.
\\[0.7ex]
The first of these theorems states that the
topological coervivity of a mapping 
$f: (X, \mathcal{O}) \rarr (X', \mathcal{O}')$ 
can be viewed as continuity at ``infinity'':

\begin{theorem} 
  \label{thm:topological_coercivivity_is_eq_to_continuity_of_extension}
  Let $(X, \mathcal{O})$ and $(X', \mathcal{O}')$ be topological spaces 
  and $(X, \mathcal{O})_\infty$ and $(X', \mathcal{O}')_{\infty'}$ their 
  one-point compactifications.
  For a mapping $f: X \rarr X'$ and
  its extension 
  $\widehat f: X_\infty \rarr X'_{\infty'}$,
  given by
  \begin{gather*}
    \widehat f (x) \defeq 
    \begin{cases}
      f(x), & \text{if } x \in X \\
      \infty', & \text{if } x = \infty
    \end{cases}
  \end{gather*}
  the following are equivalent:
  \begin{enumerate}
    \item \label{enu:function_is_topological_coercive}
      $f: (X, \mathcal{O}) \rarr (X', \mathcal{O}')$ 
      is topological coercive.
    \item \label{enu:extened_function_is_continuous_at_infty}
      $\widehat f: 
	(X, \mathcal{O})_\infty \rarr (X', \mathcal{O}')_{\infty'}$
      is continuous at $\infty$.
  \end{enumerate}
\end{theorem}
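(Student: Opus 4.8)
The plan is to chase definitions: after unwinding the description of the two one-point compactifications, both conditions turn out to be literally the same statement, so there is essentially nothing to prove beyond bookkeeping.

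First I would pin down the open neighborhoods of the two added points. By definition of the one-point compactification, $\mathcal{O}_\infty = \mathcal{O} \cup \{X_\infty \setminus K : K \in \mathcal{KA}(X, \mathcal{O})\}$, and since every member of $\mathcal{O}$ is a subset of $X$ and hence misses $\infty$, the open sets of $(X_\infty, \mathcal{O}_\infty)$ that contain $\infty$ are \emph{exactly} the sets $X_\infty \setminus K$ with $K \in \mathcal{KA}(X, \mathcal{O})$. The same remark applies verbatim to $(X'_{\infty'}, \mathcal{O}'_{\infty'})$ and $\infty'$. Since $\widehat f(\infty) = \infty'$, the definition of continuity of $\widehat f$ at $\infty$ therefore reads: for every $K' \in \mathcal{KA}(X', \mathcal{O}')$ there is $K \in \mathcal{KA}(X, \mathcal{O})$ with $\widehat f[X_\infty \setminus K] \subseteq X'_{\infty'} \setminus K'$.

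Next I would rewrite that inclusion purely in terms of $f$. Because $X_\infty \setminus K = (X \setminus K) \cup \{\infty\}$ and $\widehat f$ restricts to $f$ on $X$ and sends $\infty$ to $\infty'$, we have $\widehat f[X_\infty \setminus K] = f[X \setminus K] \cup \{\infty'\}$; and since $f[X \setminus K] \subseteq X'$ while $\infty' \notin X'$, the inclusion $f[X \setminus K] \cup \{\infty'\} \subseteq (X' \setminus K') \cup \{\infty'\} = X'_{\infty'} \setminus K'$ is equivalent to $f[X \setminus K] \subseteq X' \setminus K'$. Substituting this back, continuity of $\widehat f$ at $\infty$ becomes: for every $K' \in \mathcal{KA}(X', \mathcal{O}')$ there is $K \in \mathcal{KA}(X, \mathcal{O})$ with $f[X \setminus K] \subseteq X' \setminus K'$, which is exactly the definition of $f$ being topological coercive. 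Hence \upref{enu:function_is_topological_coercive} and \upref{enu:extened_function_is_continuous_at_infty} are equivalent.

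There is no genuine obstacle in this argument; the only point deserving a line of care is the first step, namely that for continuity at $\infty$ it is enough (and necessary) to quantify over the neighborhoods $X'_{\infty'} \setminus K'$ of $\infty'$, together with the dual fact that each $X_\infty \setminus K$ with $K$ compact and closed really is an open neighborhood of $\infty$ in $(X_\infty, \mathcal{O}_\infty)$ -- both being immediate from the definition of $\mathcal{O}_\infty$. I would also note in passing that the closedness in ``$\mathcal{KA}$'' (as opposed to mere compactness) is precisely the restriction the one-point compactification imposes on the admissible sets $K$, so that no separate hypothesis on $(X,\mathcal{O})$ is required.
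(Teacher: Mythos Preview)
Your proof is correct and follows essentially the same approach as the paper's own proof: both unwind the definition of the one-point compactification to identify the open neighborhoods of $\infty$ and $\infty'$ with the complements of compact closed sets, then observe that the inclusion $\widehat f[X_\infty\setminus K]\subseteq X'_{\infty'}\setminus K'$ is equivalent to $f[X\setminus K]\subseteq X'\setminus K'$. The paper presents this as a single chain of equivalences, whereas you give a slightly more discursive version, but there is no substantive difference.
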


\begin{proof}
  Using the definitions of topological coercivity and the 
  definition of the one point compactification we get
  \begin{align*}
   &\hphantom{~\iseq} 
	      f: (X, \mathcal{O}) \rarr (X', \mathcal{O}')
	      \text{ is topological coercive }  
   \\
   & \iseq
    	\forall K' \in \mathcal{KA}(X', \mathcal{O}')		   
    \;\;  \exists K \in \mathcal{KA}(X, \mathcal{O}):		   
	  f[X \setminus K] \subseteq X' \setminus K'
   \\
   & \iseq
    	\forall K' \in \mathcal{KA}(X', \mathcal{O}')
    \;\;  \exists K \in \mathcal{KA}(X, \mathcal{O}):  
	  f[X \setminus K] \cup \{\infty'\} 
	  \subseteq 
	  (X' \cup \{\infty'\}) \setminus K'
   \\
   & \iseq
    	\forall K' \in \mathcal{KA}(X', \mathcal{O}')
    \;\;  \exists K \in \mathcal{KA}(X, \mathcal{O}):  
	  \widehat f[X_\infty \setminus K] 
	  \subseteq 
	  X'_{\infty'} \setminus K'
   \\
   & \iseq
    	\forall U' \in \mathcal{U}'(\infty')		    
    \;\;  \exists U \in \mathcal{U}(\infty):
	  \widehat f[U] \subseteq U'
   \\
    & \iseq  
	    \widehat f :(X, \mathcal{O})_\infty 
		  \rarr (X', \mathcal{O}')_{\infty'}  
		  \text{ is continuous at the point } \infty.
  \end{align*}
\end{proof}

Regard now a mapping 
$f:(X, \mathcal{O}) \rarr (Z, \mathcal{T}_\leq)$
where $\mathcal{T}_\leq$ is the right order topology 
induced by some total order $\leq$ on $Z$.
If $Z$ has a minimum and a maximum
we can similar regard the topological coercivity of $f$
towards $\max X'$ as continuity at ``infinity'':

\begin{theorem} 
\label{thm:topological_coercivity_towards_maximal_element_equivalent_to_continuous_property}
  Let $(X, \mathcal{O})$ be a topological space and $(Z, \leq)$ a 
  totally ordered set which has a minimum $\widecheck z$ and a maximum 
  $\widehat z$.
  For a mapping $f: X \rarr Z$ and its extension 
  $\widehat f: X_\infty \rarr Z$ given by 
  \begin{gather}
    \widehat f(x) \defeq 
    \begin{cases}
      f(x) & \text{if } x \in X
    \\
      \widehat z & \text{if } x = \infty
    \end{cases}
  \end{gather}
  the following are equivalent:
  \begin{enumerate}
    \item \label{enu:mapping_top_coercive_towards_max}
      $f:(X, \mathcal{O}) \rarr (Z, \mathcal{T}_\leq)$ is 
      topological coercive towards 
      $\{ \widehat z \} = \{\max_\leq Z\}$.
    \item \label{enu:extended_mapping_in_same_totally_ordered_space}
      $\widehat f: (X, \mathcal{O})_\infty \rarr (Z, \mathcal{T}_\leq)$
      is continuous at the point $\infty$.
  \end{enumerate}  
\end{theorem}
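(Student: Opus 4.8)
The plan is to reproduce the argument of Theorem \ref{thm:topological_coercivivity_is_eq_to_continuity_of_extension} almost verbatim, the only differences being that the codomain $(Z,\mathcal{T}_\leq)$ needs no compactification and that the target set $S'=\{\widehat z\}$ is now nonempty. First I would record three elementary facts. (a) Since $(Z,\leq)$ has a minimum, every closed subset of $(Z,\mathcal{T}_\leq)$ is compact by part \upref{enu:closed_subsets_are_all_compact} of Lemma \ref{lem:compact_and_closed_subsets_in_right_order_topology}; in particular $Z$ itself is compact, so $\widehat f$ really is a map into the compact space $(Z,\mathcal{T}_\leq)$ and ``$\widehat f$ continuous at $\infty$'' makes sense as written. (b) By part \upref{enu:compact_closed_sets_which_do_not_hit_max_el_are_compl_of_its_open_neighborhoods} of the same lemma, $\mathcal{KA}_{\{\widehat z\}}(Z,\mathcal{T}_\leq)=\{Z\setminus U' : U'\in \mathcal{U}'(\widehat z)\cap\mathcal{T}_\leq\}$, so quantifying over $K'\in\mathcal{KA}_{\{\widehat z\}}(Z,\mathcal{T}_\leq)$ is the same as quantifying over open neighborhoods $U'$ of $\widehat z$, and under the correspondence $K'=Z\setminus U'$ one has $Z\setminus K'=U'$. (c) By the definition of the one-point compactification, the open neighborhoods of $\infty$ in $(X,\mathcal{O})_\infty$ are precisely the sets $X_\infty\setminus K$ with $K\in\mathcal{KA}(X,\mathcal{O})$, since the members of $\mathcal{O}$ lie in $X$ and hence cannot contain $\infty$.

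Next I would chain the equivalences. By Definition \ref{def:topological_coercivity}, statement \upref{enu:mapping_top_coercive_towards_max} says that for every $K'\in\mathcal{KA}_{\{\widehat z\}}(Z,\mathcal{T}_\leq)$ there is $K\in\mathcal{KA}(X,\mathcal{O})$ with $f[X\setminus K]\subseteq Z\setminus K'$; by fact (b) this is equivalent to: for every open neighborhood $U'$ of $\widehat z$ there is $K\in\mathcal{KA}(X,\mathcal{O})$ with $f[X\setminus K]\subseteq U'$. Now $\widehat f[X_\infty\setminus K]=\widehat f[(X\setminus K)\cup\{\infty\}]=f[X\setminus K]\cup\{\widehat z\}$, and since $U'$ is a neighborhood of $\widehat z$ it always contains $\widehat z$; hence $\widehat f[X_\infty\setminus K]\subseteq U'$ holds if and only if $f[X\setminus K]\subseteq U'$. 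So the previous line is equivalent to: for every open neighborhood $U'$ of $\widehat z=\widehat f(\infty)$ there is $K\in\mathcal{KA}(X,\mathcal{O})$ with $\widehat f[X_\infty\setminus K]\subseteq U'$, which by fact (c) and the paper's definition of continuity at a point is exactly the statement that $\widehat f:(X,\mathcal{O})_\infty\rarr(Z,\mathcal{T}_\leq)$ is continuous at $\infty$, i.e. \upref{enu:extended_mapping_in_same_totally_ordered_space}. Every step is a genuine equivalence, so the theorem follows. I would present this computation as a single displayed chain of logical equivalences, just as in the proof of Theorem \ref{thm:topological_coercivivity_is_eq_to_continuity_of_extension}.

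There is essentially no hard step; the work is bookkeeping with Lemma \ref{lem:compact_and_closed_subsets_in_right_order_topology}. The one place where the precise hypotheses are used — and the only conceptual point worth flagging — is fact (c): the target set of the coercivity notion must be exactly $\{\widehat z\}=\{\max_\leq Z\}$, because $\widehat z$ is the value $\widehat f$ takes at the new point $\infty$, and it is precisely the automatic membership $\widehat z\in U'$ that lets the extra point $\infty$ be absorbed ``for free'' into the preimage inclusion. Without a maximum in $Z$, part \upref{enu:compact_closed_sets_which_do_not_hit_max_el_are_compl_of_its_open_neighborhoods} of Lemma \ref{lem:compact_and_closed_subsets_in_right_order_topology} would be unavailable and the clean correspondence in fact (b) would fail, which is why the maximum is assumed.
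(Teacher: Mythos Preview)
Your proof is correct and follows the paper's own argument essentially line for line: both use part \upref{enu:compact_closed_sets_which_do_not_hit_max_el_are_compl_of_its_open_neighborhoods} of Lemma \ref{lem:compact_and_closed_subsets_in_right_order_topology} to trade $K'\in\mathcal{KA}_{\{\widehat z\}}(Z,\mathcal{T}_\leq)$ for open neighborhoods $U'$ of $\widehat z$, then absorb the extra point $\infty$ via $\widehat f(\infty)=\widehat z\in U'$, and finally rewrite the existence of $K\in\mathcal{KA}(X,\mathcal{O})$ as the existence of a neighborhood of $\infty$ in $(X,\mathcal{O})_\infty$. Your explicit justification of the step $f[X\setminus K]\subseteq U'\iff \widehat f[X_\infty\setminus K]\subseteq U'$ is slightly more detailed than the paper's, but the route is identical.
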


\begin{proof}
  Using part \upref{enu:compact_closed_sets_which_do_not_hit_max_el_are_compl_of_its_open_neighborhoods}
  of \prettyref{lem:compact_and_closed_subsets_in_right_order_topology}
  and $\widehat f(\infty) = \widehat z$ we obtain
  \begin{align*}
   &\hphantom{~\iseq} 
	      f: (X, \mathcal{O}) \rarr (Z, \mathcal{T}_\leq)
	      \text{ is topological coercive towards } 
		  \{ \widehat z \}
   \\
   & \iseq
    	\forall K' \in \mathcal{KA}_{\{\widehat z\}}(Z, \mathcal{T}_\leq)		   
    \;\;  \exists K \in \mathcal{KA}(X, \mathcal{O}):		   
	  f[X \setminus K] \subseteq Z \setminus K'
   \\
   & \iseq
    	\forall U' \in \mathcal{U}'(\widehat z) \cap \mathcal{T}_\leq
    \;\;  \exists K \in \mathcal{KA}(X, \mathcal{O}):		   
	  f[X \setminus K] \subseteq Z \setminus (Z \setminus U')   \\
   & \iseq
    	\forall U' \in \mathcal{U}'(\widehat z) 
    \;\;  \exists K \in \mathcal{KA}(X, \mathcal{O}):		   
	  f[X \setminus K] \subseteq U'   \\
   & \iseq
    	\forall U' \in \mathcal{U}'(\widehat z)		    
    \;\;  \exists K \in \mathcal{KA}(X, \mathcal{O}):
	  \widehat f[X_\infty \setminus K] \subseteq U'
   \\
   & \iseq
    	\forall U' \in \mathcal{U}'(\widehat z)		    
    \;\;  \exists U \in \mathcal{U}(\infty):
	  \widehat f[U] \subseteq U'
   \\
    & \iseq  
	    \widehat f :(X, \mathcal{O})_\infty 
		  \rarr (Z, \mathcal{T}_\leq)  
		  \text{ is continuous at the point } \infty.    
  \end{align*}

\end{proof}

\subsection{Topological coercivity and boundedness below}
\label{subsec:topological_coercivity_and_boundedness_below}

In this subsection we deal with the relations
between one global and two local boundedness notions
and give a sufficient criteria when 
local boundedness implies the global boundedness,
cf. also \cite[p. 240f]{BrucknerEtAl2008real}.

We first give the definitions of the mentioned boundedness notions.
\begin{definition}
  Let $f: X \rarr Z$ be a genuine mapping from a topological space
  $(X, \mathcal{O})$ to some totally ordered set $(Z, \leq)$.
  We call $f$ {\bf bounded below}, if there is some $\widecheck z \in Z$
  such that  $f(x) \geq \widecheck z$ for all $x \in X$.
  We call $f$ {\bf locally bounded below}, iff every point $x_0 \in X$
  has a neighborhood $U \in \mathcal{U}(x_0)$ where $f|_U$ is bounded 
  below; i.e. -- more formally expressed -- iff
  \begin{gather*}
	\forall x_0 \in X
    \;\;  \exists U \in \mathcal{U}(x)
    \;\;  \exists \widecheck z \in Z
    \;\;  \forall x \in U : f(x) \geq \widecheck z
  \end{gather*}
  holds true.
  Similarly, we call $f$ {\bf compactly bounded below}, iff $f$ is bounded
  below on every compact subset of $X$; 
  i.e. -- more formally expressed -- iff
  \begin{gather*}
	\forall K \in \mathcal{K}(X, \mathcal{O})
    \;\;  \exists \widecheck z \in Z
    \;\;  \forall x \in K : f(x) \geq \widecheck z
  \end{gather*}
  holds true.
\end{definition}

The next proposition shows relations between these boundedness notions.
Note therein that the relation between "locally bounded below" and 
"compactly bounded below" is similar to the 
relation between the notions "locally uniform convergence"
and "compactly (uniform) convergence":
  Local boundedness below always implies 
  compact boundedness below; 
  in locally compact spaces the two notions even coincide.
Note further that all three boundedness notions for a mapping 
$f: (X, \mathcal{O}) \rarr (Z,\leq)$ coincide if 
$f: (X, \mathcal{O}) \rarr (Z, \mathcal{T}_\geq)$
is topological strongly coercive towards $\MAX_\leq(Z)$.

%

\begin{proposition} \label{prop:relation_between_boundedness_notions_for_mapping}
  The different boundedness notions for a function 
  $f: (X, \mathcal{O}) \rarr (Z, \leq)$ between a topological space and 
  a totally ordered space are related as follows:
  \begin{gather*}
    \label{diag:relation between the boundedness notions}
    \xymatrix{
      f \text{ bounded below } 
	  \ar@<-1ex>@{=>}[d]
	  \\
      f \text{ locally bounded below }
	  \ar@<-1ex>@{=>}[d]  
	  \\
      f \text { compactly bounded below} 
	  \ar@<-1ex>@{=>}[u]
	    _{(X, \mathcal{O}) \text{ loc. comp.} }
	  \ar@<-13ex>@/_2.0pc/@{=>}[uu]
	    _{f:(X, \mathcal{O}) \rarr (Z, \mathcal{T}_\geq) 
	      \text{ top. str. coerc. tow. } \MAX_\leq(Z)} 
    }
  \end{gather*}
\end{proposition}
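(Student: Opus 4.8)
The plan is to establish the three implications indicated by the arrows in the diagram, in the following order.

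\textbf{(1) ``bounded below'' $\Rightarrow$ ``locally bounded below''.} This is immediate: if $f(x) \geq \widecheck z$ for all $x \in X$, then for every $x_0 \in X$ the neighborhood $U \defeq X \in \mathcal{U}(x_0)$ works with the same witness $\widecheck z$. No work is needed here beyond writing it out.

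\textbf{(2) ``locally bounded below'' $\Rightarrow$ ``compactly bounded below''.} Fix a compact set $K \in \mathcal{K}(X, \mathcal{O})$. By local boundedness below, each $x_0 \in K$ has an \emph{open} neighborhood $U_{x_0} \in \mathcal{U}(x_0) \cap \mathcal{O}$ and a witness $\widecheck z_{x_0} \in Z$ with $f(x) \geq \widecheck z_{x_0}$ for all $x \in U_{x_0}$; one may shrink $U_{x_0}$ to an open neighborhood without losing the bound. The family $(U_{x_0})_{x_0 \in K}$ is then an open cover of $K$, so by compactness there are finitely many $x_1, \dots, x_m \in K$ with $K \subseteq \bigcup_{i=1}^m U_{x_i}$. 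Since $(Z, \leq)$ is \emph{totally} ordered, the finite set $\{\widecheck z_{x_1}, \dots, \widecheck z_{x_m}\}$ has a minimum $\widecheck z \defeq \min_{\leq}\{\widecheck z_{x_1}, \dots, \widecheck z_{x_m}\}$, and then $f(x) \geq \widecheck z$ for every $x \in K$. Hence $f$ is compactly bounded below. (This is the step that genuinely uses total order of $Z$ and compactness of $K$; it is the analogue of the usual "locally uniform $\Rightarrow$ compactly uniform" argument.)

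\textbf{(3a) ``compactly bounded below'' $\Rightarrow$ ``locally bounded below'' when $(X, \mathcal{O})$ is locally compact.} If $(X, \mathcal{O})$ is locally compact, each $x_0 \in X$ has a compact neighborhood $U \in \mathcal{U}(x_0)$; applying compact boundedness below to the compact set $U$ yields a witness $\widecheck z$ with $f|_U \geq \widecheck z$, which is exactly local boundedness below at $x_0$.

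\textbf{(3b) ``compactly bounded below'' $\Rightarrow$ ``bounded below'' when $f: (X, \mathcal{O}) \rarr (Z, \mathcal{T}_{\geq})$ is topological strongly coercive towards $\MAX_{\leq}(Z)$.} This is the main obstacle and the only place where the coercivity hypothesis enters. The idea: if $\MAX_{\leq}(Z) = \emptyset$ (no maximum), one must argue separately --- but I expect the intended reading is that one works with the codomain carrying the \emph{left} order topology $\mathcal{T}_{\geq}$, for which the relevant "compact sets not hitting $\MAX_{\leq}(Z)$" are the complements of left-order-open neighborhoods of maximal elements, i.e.\ sets of the form $Z \setminus [a) = \{z : z \geq a\} = [a$ for suitable $a$ (by the left-order analogue of Lemma~\ref{lem:compact_and_closed_subsets_in_right_order_topology}, obtained via the inverse order $\preccurlyeq$). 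Strong coercivity towards $\MAX_{\leq}(Z)$ then says: for every such $K' = [a$ there is a compact $K \in \mathcal{KA}(X, \mathcal{O})$ with $f[X \setminus K] \subseteq Z \setminus [a = \{z : z < a\} \subseteq b)$ for any $b \geq a$; in particular $f$ is "eventually small" off a compact set. Concretely, pick any $a \in Z$ that is \emph{not} a maximal element (if $Z$ has no non-maximal element it is a singleton and the claim is trivial), so that $[a$ does not meet $\MAX_{\leq}(Z)$ --- wait, this needs care: $[a$ \emph{does} contain maximal elements. The correct choice is $K' \defeq \{z : z \geq a\}$ only when this avoids $\MAX_\leq(Z)$, which fails; instead one uses that in $(Z, \mathcal{T}_{\geq})$ the compact sets disjoint from $\MAX_\leq(Z)$ are the sets $b)$ with no maximal element in $b)$. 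So fix such a $b)$. Strong coercivity provides $K \in \mathcal{KA}(X, \mathcal{O})$ with $f[X \setminus K] \subseteq Z \setminus b) = [b = \{z : z \geq b\}$, giving a lower bound $b$ off $K$. On the compact set $K$, compact boundedness below gives a lower bound $\widecheck z_K$. Then $\min_{\leq}\{b, \widecheck z_K\}$ (using total order again) is a global lower bound for $f$, so $f$ is bounded below.

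\textbf{What I expect to be the main difficulty.} Step (3b) is delicate because it hinges on correctly identifying $\mathcal{K}_{\MAX_\leq(Z)}(Z, \mathcal{T}_{\geq})$ and on the existence of a suitable "threshold" element $b \in Z$ below all maximal elements but giving a usable lower bound; one must also dispose of degenerate cases ($Z$ a singleton, $\MAX_{\leq}(Z) = \emptyset$, $Z$ unbounded below). The other implications are routine, the only recurring technical ingredient being that finite subsets of a totally ordered set have minima, which lets finitely many local lower bounds be combined into one.
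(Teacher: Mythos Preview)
Your arguments for (1), (2), and (3a) are correct and match the paper exactly: the compactness/finite-subcover argument in (2) and the compact-neighborhood argument in (3a) are precisely what the paper does.

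The gap is in (3b). After correctly abandoning $K' = [a$, you settle on $K' = b)$ and assert that ``in $(Z, \mathcal{T}_{\geq})$ the compact sets disjoint from $\MAX_\leq(Z)$ are the sets $b)$''. This is false: $b) = \{z : z < b\}$ is in general \emph{not} compact in $(Z, \mathcal{T}_{\geq})$. Take $Z = \R$ and $b = 0$; the open cover of $(-\infty,0)$ by the $\mathcal{T}_{\geq}$-open sets $(-\tfrac{1}{n}) = \{z : z < -\tfrac{1}{n}\}$, $n \in \N$, has no finite subcover. So strong coercivity need not give you anything for this $K'$.

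The fix is one symbol: use $K' \defeq z'] = \{z : z \leq z'\}$ for some $z' \in Z \setminus \MAX_\leq(Z)$ (such a $z'$ exists once $|Z| \geq 2$, which disposes of the trivial case). This set \emph{is} compact in $(Z, \mathcal{T}_{\geq})$: any open cover has a member containing $z'$, and any $\mathcal{T}_{\geq}$-open set containing $z'$ already contains all of $z']$. It is also disjoint from $\MAX_\leq(Z)$ since $z'$ is not maximal. Strong coercivity towards $\MAX_\leq(Z)$ then produces a closed compact $K \subseteq X$ with $f[X \setminus K] \subseteq Z \setminus z'] = (z'$, i.e.\ $f(x) > z'$ off $K$; combined with the bound $z''$ on $K$ from compact boundedness below, $\min\{z',z''\}$ is a global lower bound. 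This is exactly the paper's proof, and the remainder of your argument (combining the two lower bounds via a minimum in the totally ordered $Z$) is already correct.
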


\begin{proof}
  Clearly boundedness below implies locally boundedness below.
  Next, let $f: X \rarr Z$ be locally bounded below. For every $x \in X$
  there is then some -- without loss of generality open -- neighborhood
  $U_x$ of $x$ and some $z_x \in Z$ such that 
  \begin{gather*}
    f(\widetilde x) \geq z_x
  \end{gather*}
  for all $\widetilde x \in U_x$. 
  Let now $K$ be some -- without loss of generality nonempty -- compact 
  subset of $X$. Clearly the sets $U_x$, $x \in K$ form an open
  covering of $K$. By the compactness of $K$ there are finitely many 
  $x_1, x_2, \dots x_N \in K$ with 
  \begin{gather*}
    \bigcup_{n = 1}^{N} U_{x_n} \supseteq K.
  \end{gather*}
  Setting $\widecheck z \defeq \min\{z_{x_1}, z_{x_2}, \dots, z_{x_N}\}$
  we hence get $f(x) \geq \widecheck z$ for all $x \in K$,
  so that $f$ is indeed compactly bounded below.

  Assume now that $(X, \mathcal{O})$ is additionally locally compact 
  and let to the contrary $f$ be compactly bounded below.
  Every $x_0 \in X$ has some compact neighborhood $K \eqdef U.$
  For this compact set there is some $\widecheck z \in Z$ such that
  $f(x) \geq \widecheck z$ for all $x \in K= U$. 
  Thus $f$ is locally bounded below.
  Finally we consider a mapping 
  $f:(X, \mathcal{O}) \rarr (Z, \mathcal{T}_\geq)$ which is 
  topological strongly coercive towards $\MAX_\leq(Z)$ and show that
  $f: (X, \mathcal{O}) \rarr (Z, \leq)$ is already bounded below if
  it is compactly bounded below.
  Assuming the latter we reason dependent on the cardinality of $Z$.
  If $Z$ contains at most one element then $f$ anyway is bounded 
  below. Otherwise we choose any $z' \in Z \setminus \MAX_\leq(Z)$ 
  and consider the set
  \begin{gather*}
    K' \defeq \{ z \in Z : z \leq z' \}.
  \end{gather*}
  The set $K'$ is a compact subset of $(Z, \mathcal{T}_\geq)$ 
  by Detail \ref{det:closed_left_halfray_is_compact_in_opposite_topology}
  in the Appendix.
  Therefore and since $f: (X, \mathcal{O}) \rarr (Z, \mathcal{T}_\geq)$
  is topological strongly coercive towards $\MAX_\leq(Z)$
  there is a compact set $K \in \mathcal{K}(X, \mathcal{O})$ with
  $f[X \setminus K] \subseteq Z \setminus K'$, i.e.
  \begin{gather*}
    f(x) > z' \text{ for all } x \in X \setminus K.
  \end{gather*}
  Moreover the compactly lower bounded function 
  $f: (X, \mathcal{O}) \rarr (Z, \leq)$ is bounded below on $K$,
  i.e. there is a $z'' \in Z$ such that 
  \begin{gather*}
    f(x) \geq z'' \text{ for all } x \in K.
  \end{gather*}
  Summarizing we have $f(x) \geq \min \{ z', z''\}$ for all 
  $x \in X$, so that $f$ is indeed bounded below.
\end{proof}

\newcommand{\CaptionForContentsTheTopSpace}{The topological space $({[}-\infty, +\infty{]}, \mathcal{T})$}
\section[\CaptionForContentsTheTopSpace]
    {The topological space $\boldsymbol{([-\infty, +\infty], \mathcal{T})}$ } 
\label{sec:the_topological_space_intervalus_maximus_with_the_right_order_topology}

In subsections 
\begin{itemize}
  \item 
    \ref{subsec:a_topology_on_intervalus_maximus_suited_for_lower_semicontinuous_functions}
      A topology on ${[}-\infty, +\infty{]}$ suited for lower semicontinuous functions  
  \item 
    \ref{subsec:properties_of_the_topological_space_intervalus_maximus_with_right_order_topology}
     Properties of the topological space $({[}-\infty, +\infty{]}, \mathcal{T})$
  \item 
    \ref{subsec:known_properties_of_lsc_functions_revisited}
    Known properties of lower semicontinuous functions revisited
  \item 
    \ref{subsec:coercivity_vs_continuity}
    Coercivity properties versus continuity properties
  \item 
    \ref{subsec:continuous_arithmetic_operations_in_intervalus_maximus_with_right_order_topology}
    Continuous arithmetic operations in $([-\infty, +\infty], \mathcal{T})$
\end{itemize}
we equip $[\minfty, \pinfty]$ with the right order topology
$\mathcal{T} = \mathcal{T}_\leq$,
study some properties of the resulting topological space 
$([\minfty, \pinfty], \mathcal{T})$, allowing us to 
see known properties of lower semicontinuous functions 
in a topological light, show that coercivity can be regarded 
as continuity, and that there is a continuous addition 
on $[\minfty, \pinfty]$ if the topology $\mathcal{T}$ is 
installed on $[\minfty, \pinfty]$.

A key role for establishing a -- as far as the author knows -- new 
topological method for proving lower semicontinuity plus coercivity of 
a function is due to
\prettyref{thm:characterize_lsc_and_coercive},
which allows us to replace the task of proving the lower semicontinuity 
and coercivity of a function $h: \R^n \rarr [\minfty, \pinfty]$
by the task of showing that $h$ admits a certain continuous extension.

\newcommand{\CaptionForContentsAtopologyOn}{A topology on ${[}-\infty, +\infty{]}$ suited for lower semicontinuous functions}
\subsection[\CaptionForContentsAtopologyOn]
    {A topology on $\boldsymbol{[-\infty, +\infty]}$ suited for lower semicontinuous functions} 
\label{subsec:a_topology_on_intervalus_maximus_suited_for_lower_semicontinuous_functions}

In this subsection we search for a topology $\mathcal{T}$ for 
the interval 
$[\minfty, \pinfty]$ which is suited when dealing with 
lower semicontinuous functions. 


\begin{definition}
  A function $f: \R^n \rarr [\minfty,\pinfty]$ is called 
  {\bf lower semicontinuous} or {\bf lsc}, 
  iff it has one of the following equivalent properties:
  \begin{itemize}
    \item
      $
	\forall x, x_1, x_2, x_3, \dots \in \R^n: 
	x_l \rarr x 
	\implies
	f(x) \leq \liminf_{l\rarr \pinfty} f(x_l)
      $,
    \item 
      $
	f^-[[\minfty,\alpha]] 
	\text{ is closed for all } 
	\alpha \in (\minfty, \pinfty).	
      $
  \end{itemize}
\end{definition}

These conditions are really equivalent, cf.
\cite[Theorem 7.1]{Rockafellar1970}.


We start with a consideration which will lead us to the definition of 
our topology for $[\minfty,\pinfty]$.

Let $f:\R^n \rarr [\minfty,\pinfty]$ be a function.
Referring to the natural topology of
$\R^n$, when speaking about ``open'' and ``closed'' sets, we have
%
%
\begin{align}
  f \text{ is lsc} & \iseq f^-[[\minfty,\alpha]] \text{ is closed for all } \alpha \in (\minfty, \pinfty) \\
                   & \iseq f^-[(\alpha,\pinfty]] \text{ is open for all }   \alpha \in (\minfty, \pinfty). \label{eq:preimageSubbasissetIsOpen}
\end{align}



{{\bf Agreement.} \it
  In the rest of this thesis the interval $[\minfty, \pinfty]$ will 
  -- unless otherwise stated -- be equipped 
  with the topology created by taking the above sets
  $(\alpha,\pinfty], \alpha \in (\minfty, \pinfty)$ as subbasis, 
  i.e. with the topology
  \begin{equation*}
    \mathcal{T} 
    \coloneq
    \{ 
	\emptyset, [\minfty, \pinfty], 
	(\alpha, \pinfty]: \alpha \in (\minfty, \pinfty)
    \},
  \end{equation*}
  which is the right order topology $\mathcal{T}_\leq$ 
  for the inf--complete,
  totally ordered space ${([\minfty,\pinfty], \leq)}$, cf. Remark
  \ref{rem:definition_of_right_order_topology_for_inf_complete_totally_ordered_sets}.
  Only in 
  a 
  few situations we will equip $[\minfty, \pinfty]$ 
  with the ``just opposite'' topology
  \begin{equation*}
    \mathcal{T}_\geq
    =
    \{ 
	\emptyset, [\minfty, \pinfty], 
	[\minfty, \beta): \beta \in (\minfty, \pinfty)
    \}.
  \end{equation*}
}

By equivalence \prettyref{eq:preimageSubbasissetIsOpen} a function 
$f:\R^n \rarr [\minfty,\pinfty]$ is lower semicontinuous if 
and only if the preimages of all sets 
$(\alpha,\pinfty],\alpha \in (\minfty,\pinfty)$, are open sets.
Since the intervals $(\alpha,\pinfty],\alpha \in (\minfty,\pinfty)$ form
a subbasis of $\mathcal{T}$ we further have
\begin{align*}
  & f^- [(\alpha,\pinfty]] \text{ is open for all } \alpha \in (\minfty,\pinfty)
\\
	\iseq {}&  f^- [T] \text{ is open for all } T\in \mathcal{T}
\\
	\iseq {}& f:(\R^n,\mathcal{O}^{\otimes n}) \rarr ([\minfty, \pinfty],\mathcal{T})
	\text { is continuous}.
\end{align*}
%
In summary we obtain the following theorem, cf. 
\cite[Examples II -- 2.3 (3)]{GierzScottEtAl2003continuous}
\begin{theorem} \label{thm:characterize_lower_semi_continuous}
  For a mapping $f: \R^n \rarr [\minfty,\pinfty]$ the following are 
  equivalent:
  \begin{enumerate}
    \item 
      $f: \R^n \rarr [\minfty,\pinfty]$ is lower semicontinuous,
    \item 
      $f:(\R^n,\mathcal{O}^{\otimes n}) \rarr ([\minfty,\pinfty],\mathcal{T})$
      is continuous.
  \end{enumerate}

\end{theorem}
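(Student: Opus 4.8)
The plan is to prove the two implications by a short chain of equivalences, essentially formalizing the computation that precedes the statement. First I would recall that, by definition, $f$ is lower semicontinuous exactly when $f^-[[\minfty,\alpha]]$ is a closed subset of $(\R^n,\mathcal{O}^{\otimes n})$ for every $\alpha\in(\minfty,\pinfty)$. Passing to complements inside $[\minfty,\pinfty]$ and using $[\minfty,\pinfty]\setminus[\minfty,\alpha]=(\alpha,\pinfty]$ together with the fact that preimage formation commutes with complements, this is equivalent to demanding that $f^-[(\alpha,\pinfty]]$ be open in $(\R^n,\mathcal{O}^{\otimes n})$ for every $\alpha\in(\minfty,\pinfty)$. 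This is exactly the equivalence \eqref{eq:preimageSubbasissetIsOpen} already noted.

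Next I would invoke the elementary fact from general topology that a map $g\colon(Y,\mathcal{P})\rarr(Z,\mathcal{Q})$ is continuous as soon as the preimages under $g$ of all members of some subbasis of $\mathcal{Q}$ are open in $(Y,\mathcal{P})$: preimage formation commutes with arbitrary unions and finite intersections, so once the subbasis elements have open preimages, so do all finite intersections of them (the basis elements) and hence all their unions (the open sets). Here the relevant subbasis of $\mathcal{T}$ is $\{(\alpha,\pinfty]:\alpha\in(\minfty,\pinfty)\}$, fixed by the Agreement defining $\mathcal{T}$; the two ``degenerate'' members $\emptyset$ and $[\minfty,\pinfty]$ of $\mathcal{T}$ have preimages $\emptyset$ and $\R^n$, which are open anyway, so they cause no trouble. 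Consequently, ``$f^-[(\alpha,\pinfty]]$ open for all $\alpha\in(\minfty,\pinfty)$'' is equivalent to ``$f^-[T]$ open for all $T\in\mathcal{T}$'', i.e.\ to continuity of $f\colon(\R^n,\mathcal{O}^{\otimes n})\rarr([\minfty,\pinfty],\mathcal{T})$.

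Stringing these equivalences together yields the claim. I do not expect any real obstacle: the only points requiring a word of care are that the complement of $[\minfty,\alpha]$ inside $[\minfty,\pinfty]$ is indeed $(\alpha,\pinfty]$ (so the endpoint $\minfty$ causes no issue, since $\minfty\le\alpha$ for real $\alpha$), and that it suffices to test continuity on a subbasis rather than on the whole topology --- both standard. One may additionally remark that enlarging the range of $\alpha$ from $(\minfty,\pinfty)$ to all of $[\minfty,\pinfty]$ changes nothing, since $(\minfty,\pinfty]=[\minfty,\pinfty]$ and $(\pinfty,\pinfty]=\emptyset$ already lie in $\mathcal{T}$ trivially.
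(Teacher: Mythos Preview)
Your proposal is correct and follows essentially the same route as the paper: the paper's argument is precisely the chain of equivalences you describe --- lower semicontinuity via closed sublevel sets, passing to complements to obtain \eqref{eq:preimageSubbasissetIsOpen}, and then using that the sets $(\alpha,\pinfty]$ form a subbasis of $\mathcal{T}$ to upgrade to continuity. Your version is slightly more explicit about why checking continuity on a subbasis suffices and about the degenerate cases $\emptyset$ and $[\minfty,\pinfty]$, but the substance is identical.
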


By this theorem the notion of lower semicontinuity can be extended 
to a broader class of functions,
while staying consistent with the definition for 
functions $f:\R^n \rarr [\minfty,\pinfty]$.

\begin{definition} \label{def:lowersemicontinuity_of_a_function_defined_on_a_topological_space}
  Let a set $X$ be endowed with some topology $\mathcal{O}_X$.
  A mapping $f: X \rarr [\minfty,\pinfty]$ is called 
  {\bf lower semicontinuous} iff 
  $f: (X, \mathcal{O}_X) \rarr ([\minfty,\pinfty], \mathcal{T})$
  is continuous.
\end{definition}

The topology $\mathcal{T}$ on $[\minfty,\pinfty]$ does not only 
allow to regard the notion of lower semicontinuity 
as continuity; also the notion of coercivity 
can be viewed as continuity property, 
see \prettyref{thm:characterize_lsc_and_coercive}.

\newcommand{\CaptionForContentsPropertiesOfTheTopSpace}{Properties of the topological space $({[}-\infty, +\infty{]}, \mathcal{T})$}
\subsection[\CaptionForContentsPropertiesOfTheTopSpace]
    {Properties of the topological space $\boldsymbol{([-\infty, +\infty], \mathcal{T})}$} 
\label{subsec:properties_of_the_topological_space_intervalus_maximus_with_right_order_topology}

The topology $\mathcal{T}$ is not induced by a metric on 
$[\minfty, \pinfty]$ since otherwise every two distinct points
would have non-overlapping neighborhoods, 
but this is obviously not the case; consider for
example the points $x_1 = 1$ and $x_2 = 2$ and any two neighborhoods 
$N_1$ and $N_2$ of $x_1$ and $x_2$, respectively -- the intersection 
$N_1 \cap N_2 \supseteq [2, \pinfty]$ is not empty.
Only by this fact that $([\minfty, \pinfty],\mathcal{T})$ 
is not a Hausdorff space
the following phenomena are possible:
\begin{enumerate}
  \item \label{enu:phenomena_of_several_limit_points}
    A sequence $(y_k)_{k \in \N}$ in $([\minfty,\pinfty], \mathcal{T})$
    can have several limit points at the same time.
    In particular, $\minfty$ is a limit point of any sequence
    $(y_k)_{k \in \N}$ in $([\minfty,\pinfty], \mathcal{T})$.
  \item \label{enu:phenomena_of_compact_subsets_that_are_not_closed}
    The space $([\minfty,\pinfty], \mathcal{T})$ contains 
    compact subsets that are not closed.
\end{enumerate}
Illustrations of these phenomena can be found in 
\prettyref{exa:more_than_one_limit_point} and 
\prettyref{exa:compact_not_closed_subset}, \linebreak[3] respectively.
Phenomena \upref{enu:phenomena_of_several_limit_points} 
is completely explained by 
\prettyref{thm:limits_in_intervalus_maximus_with_right_order_topology}.


\begin{example} \label{exa:more_than_one_limit_point}
  Consider the constant sequence $(y_n)_{n\in \N} = (1)_{n \in \N}$
  in the topological space $([\minfty,\pinfty], \mathcal{T})$.
  On the one hand every $y \in (1, \pinfty]$ is not a 
  $\mathcal{T}$-limit point of $(y_n)$; indeed, the neighborhood 
  $U \defeq (z, \pinfty]$ of $y$, where $z$ is any point between 
  $1$ and $y$, does not contain even one single sequence member.
  On the other hand every $y \in [\minfty, 1]$ is a 
  $\mathcal{T}$-limit point of $(y_n)$; indeed, any neighborhood 
  of $y$ contains the set $[y, \pinfty]$ and hence even all 
  sequence members.
\end{example}

More generally we have the following theorem:
\begin{theorem}[Limits of sequences in $({[}\minfty, \pinfty{]},\mathcal{T})$]
\label{thm:limits_in_intervalus_maximus_with_right_order_topology}
  Let $(y_n)_{n \in \N}$ be a sequence in $[\minfty,\pinfty]$.
  A point $y \in [\minfty,\pinfty]$ belongs to 
  $\tlim_{n\rarr \pinfty} y_n$, iff 
  $y \leq \liminf_{n \rarr \pinfty}  y_n $. 
  In particular the point $\minfty$ is 
  $\mathcal{T}$-limit point of 
  every sequence in $([\minfty,\pinfty],\mathcal{T})$.
\end{theorem}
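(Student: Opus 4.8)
The plan is to unwind the definition of the right order topology $\mathcal{T}$ on $[\minfty,\pinfty]$ and the definition of limit point, and then to match the resulting condition with the definition of $\liminf$. Recall that, by Remark~\ref{rem:definition_of_right_order_topology_for_inf_complete_totally_ordered_sets}, the topology $\mathcal{T}$ consists precisely of $\emptyset$, $[\minfty,\pinfty]$ and the sets $(\alpha,\pinfty]$ with $\alpha \in (\minfty,\pinfty)$. Hence a neighborhood basis of a point $y \in [\minfty,\pinfty]$ is easily described: if $y = \minfty$, the only neighborhood is the whole space $[\minfty,\pinfty]$ (this is just Proposition~\ref{prop:neighborhoods_of_minimal_element}, since $\minfty$ is the minimum); if $y > \minfty$, then the sets $(\alpha,\pinfty]$ with $\alpha < y$ form a neighborhood basis of $y$ (together with $[\minfty,\pinfty]$ itself, which is the member $(\alpha,\pinfty]$'s superset anyway).

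First I would dispose of the degenerate point $y = \minfty$: since its only neighborhood is $[\minfty,\pinfty]$, which trivially contains all sequence members, $\minfty$ is a $\mathcal{T}$-limit point of \emph{every} sequence, and the inequality $\minfty \leq \liminf_{n\to\pinfty} y_n$ holds trivially. So the asserted equivalence is true in this case. Then I would treat the case $y > \minfty$. Here $y \in \tlim_{n\to\pinfty} y_n$ means, by definition of limit point, that every neighborhood of $y$ contains almost all $y_n$; using the neighborhood basis just described, this is equivalent to: for every $\alpha \in (\minfty,\pinfty)$ with $\alpha < y$ there is an $N$ with $y_n \in (\alpha,\pinfty]$, i.e.\ $y_n > \alpha$, for all $n \geq N$. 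This last statement says exactly that every $\alpha < y$ is \emph{eventually} a strict lower bound for the sequence, which is precisely the statement that $y \leq \liminf_{n\to\pinfty} y_n$ (recall $\liminf_{n\to\pinfty} y_n = \sup_{N} \inf_{n \geq N} y_n$ in $[\minfty,\pinfty]$, so $y \leq \liminf y_n$ iff for each $\alpha < y$ there is $N$ with $\inf_{n\geq N} y_n \geq \alpha$, and one checks the harmless interplay of $>$ and $\geq$ here by moving $\alpha$ slightly).

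The remaining work is just to nail down that last equivalence between "$\forall \alpha < y\ \exists N\ \forall n\geq N:\ y_n > \alpha$" and "$y \leq \liminf_{n\to\pinfty} y_n$" carefully in the extended reals, including the sub-case $y = \pinfty$ (where it reads: the sequence is eventually larger than any real, i.e.\ $\liminf y_n = \pinfty$). I expect the only mildly delicate point — the "main obstacle", such as it is — to be bookkeeping around whether the relevant inequalities are strict, and handling $\pm\infty$ uniformly; there is no real mathematical difficulty, only the need to be precise about the definition of $\liminf$ as a supremum of infima in $[\minfty,\pinfty]$. The final sentence of the theorem (that $\minfty$ is a $\mathcal{T}$-limit point of every sequence) then follows immediately from the case analysis, and also matches Phenomenon~\upref{enu:phenomena_of_several_limit_points} mentioned just before the theorem.
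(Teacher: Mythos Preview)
Your proposal is correct and follows essentially the same route as the paper's proof: split off the trivial case $y=\minfty$ using that its only neighborhood is the whole space, and for $y>\minfty$ use the neighborhood basis $\{(\alpha,\pinfty]:\alpha<y\}$ to reduce the limit condition to the $\liminf$ inequality. The only cosmetic difference is that the paper phrases the second case contrapositively (showing $y\notin\tlim y_n \iff \liminf y_n < y$), whereas you argue the direct implication; the content is identical.
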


\begin{proof}
  Consider first the case $y = \minfty$.
  Then clearly $y \leq \liminf_{n \rarr \pinfty} y_n$ and also
  $y \in \tlim_{n \rarr \pinfty} y_n$, 
  because the only $\mathcal{T}$-neighborhood of $y = \minfty$ is 
  $[\minfty, \pinfty]$ which contains trivially all $y_n$.
  Hence the claimed equivalence holds true in this case.
  In the other case $y \in (\minfty, \pinfty]$ we have 
  $y \not \in \tlim_{n \rarr \infty} y_n$ iff 
  there is some neighborhood $(a, \pinfty]$ of $y$ where 
  $a \in (\minfty, y)$ such that $y_n \not \in (a, \pinfty]$
  for infinitely many $n \in \N$, i.e. iff 
  $\liminf_{n \rarr \pinfty} y_n < y$ holds true.
  So the claimed equivalence holds true also in that case.
\end{proof}

\begin{theorem}[Compact subspaces of $({[}\minfty, \pinfty{]},\mathcal{T})$]
\label{thm:compactSubspaces}
  ~
   For nonempty subsets $K \subseteq [\minfty, \pinfty]$ the following are
   equivalent:
  \begin{enumerate}
    \item
      $(K, K \Cap \mathcal{T} )$ is a compact subspace of
      $([\minfty, \pinfty],\mathcal{T})$.
    \item
      $\inf K$ belongs to $K$.
  \end{enumerate}
  In particular the whole space $([\minfty, \pinfty],\mathcal{T})$
  is compact.
\end{theorem}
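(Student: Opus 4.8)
The plan is to prove the two implications of the equivalence separately, using the characterization of compactness of a subspace via coverings by sets from the ambient topology $\mathcal{T}$, and then to obtain the final ``in particular'' statement by applying the equivalence to $K=[\minfty,\pinfty]$, where $\inf K=\minfty\in K$. Throughout I will use that the members of $\mathcal{T}$ are exactly $\emptyset$, $[\minfty,\pinfty]$, and the half-rays $(\alpha,\pinfty]$ with $\alpha\in[\minfty,\pinfty)$.

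For $(2)\Rightarrow(1)$, set $m\defeq\inf K$ and assume $m\in K$. Let $(T_i)_{i\in I}$ be a cover of $K$ by sets from $\mathcal{T}$, and pick $i_0$ with $m\in T_{i_0}$. If $T_{i_0}=[\minfty,\pinfty]$, then $\{T_{i_0}\}$ is already a finite subcover. Otherwise $T_{i_0}=(\alpha,\pinfty]$ for some $\alpha$, and $m\in(\alpha,\pinfty]$ forces $\alpha<m$; hence every $x\in K$ satisfies $x\geq m>\alpha$, so $K\subseteq(\alpha,\pinfty]=T_{i_0}$ and again $\{T_{i_0}\}$ is a finite subcover. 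Either way $K$ is compact. (The case $m=\minfty$ falls under the first alternative, since the only $\mathcal{T}$-neighbourhood of $\minfty$ is the whole space, cf. \prettyref{prop:neighborhoods_of_minimal_element}.)

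For $(1)\Rightarrow(2)$ I would argue contrapositively: assume $m\defeq\inf K\notin K$. Then $m<\pinfty$ (otherwise $K=\{\pinfty\}$ and $\inf K\in K$), and every $x\in K$ satisfies $x>m$. Consequently the family $\mathcal{C}\defeq\{(a,\pinfty]:a\in(m,\pinfty)\}\subseteq\mathcal{T}$ covers $K$, because for $x\in K$ we may choose $a$ with $m<a<x$. But $\mathcal{C}$ has no finite subcover: the union of finitely many of its members $(a_1,\pinfty],\dots,(a_n,\pinfty]$ equals $(a^*,\pinfty]$ with $a^*\defeq\min_{j}a_j>m=\inf K$, so $a^*$ is not a lower bound of $K$ and some $x\in K$ has $x<a^*$, i.e. $x\notin\bigcup_{j}(a_j,\pinfty]$. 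Hence $K$ is not compact, which establishes the contrapositive.

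I do not expect a genuine obstacle here; the only things to watch are the degenerate values $m=\minfty$ and $m=\pinfty$ and the exact shape of the open sets of $\mathcal{T}$, which is precisely what makes ``the open set containing the infimum already swallows all of $K$'' work in one direction and ``shrinking half-rays can never stabilise strictly above $\inf K$'' work in the other. Applying the equivalence to $K=[\minfty,\pinfty]$ then yields the compactness of the whole space.
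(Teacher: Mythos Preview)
Your proof is correct and follows essentially the same approach as the paper. The direction $(2)\Rightarrow(1)$ is identical: pick an open set containing $\inf K$ and observe that by the shape of $\mathcal{T}$ it already contains all of $K$. For $(1)\Rightarrow(2)$ the paper also argues contrapositively by exhibiting a cover of half-rays with no finite subcover, but splits into the three cases $\inf K=\pinfty$, $\inf K=\minfty$, $\inf K\in\R$ and uses concrete countable families (e.g.\ $(-n,\pinfty]$ or $(\inf K+\tfrac1n,\pinfty]$), whereas your single argument with $\mathcal{C}=\{(a,\pinfty]:a\in(m,\pinfty)\}$ and the observation that $a^*=\min_j a_j$ cannot be a lower bound of $K$ handles all cases uniformly.
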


Before proving this theorem we give an example that shows that
the space $([\minfty, \pinfty], \mathcal{T})$ has compact subsets 
which are not closed.
It also illustrates that -- in contrast to the infimum -- 
the supremum of compact subsets of 
$([\minfty, \pinfty], \mathcal{T})$
needs not to belong to the compact set.

\begin{example} \label{exa:compact_not_closed_subset}
  Consider the set $K\defeq[0,1)$. $(K, K \Cap \mathcal{T})$ is compact 
  by \prettyref{thm:compactSubspaces};
  yet $K$ is not a closed subset of
  $([\minfty, \pinfty],\mathcal{T})$, since its complement
  $[\minfty,0) \cup [1,\pinfty]$ is obviously not an open set from
  $\mathcal{T}$. Furthermore $K$ does clearly not contain its supremum $1$.
\end{example}

This examples and part \upref{enu:closed_subsets_are_all_compact} of 
\prettyref{lem:compact_and_closed_subsets_in_right_order_topology} shows
$
  \mathcal{A}([\minfty, \pinfty], \mathcal{T}) 
  \subset
  \mathcal{K}([\minfty, \pinfty], \mathcal{T}) 
$.
Such a relation can never be true in Hausdorff spaces
$(X, \mathcal{O})$, where we rather have
$
  \mathcal{A}(X, \mathcal{T}) 
  \supseteq
  \mathcal{K}(X, \mathcal{T}) 
$,
due to part \upref{enu:compactness_implies_closeness} of 
\prettyref{thm:relation_closed_compact}
or even 
$
  \mathcal{A}(X, \mathcal{T}) 
  \supset
  \mathcal{K}(X, \mathcal{T}) 
$
if the space $(X, \mathcal{O})$ is not compact.

\begin{proof}[Proof of \prettyref{thm:compactSubspaces}]
  Let $(K, K \Cap \mathcal{T} )$ be any nonempty compact subspace and let
  $\widecheck k \in [\minfty, \pinfty]$ denote the infimum of $K$. 
  In the first case $\widecheck k = \pinfty$ the nonemptiness of $K$ yields $K=\{\pinfty\}$
  and thus $\widecheck k \in K$.
  In the second case $\widecheck k = \minfty$ we must have $\widecheck k \in K$,
  since otherwise the sets $(z,\pinfty] \in \mathcal{T}$, 
  $z \in \{-1, -2, -3, \dots\}$ would form an open covering of
  $K$ which can not be reduced to a finite subcover;
  so $K$ would not be compact.
    In the final third case $\widecheck k \in \R$ we similarly must have
  $\widecheck k \in K$ since otherwise the sets $\{\widecheck k + \frac{1}{n}\}$, 
  $n \in \N$ would form an open covering of $K$ which has no finite subcover.

  Let, to the contrary, $K$ now be a nonempty subset of
  $[\minfty, \pinfty]$  with $\widecheck k \defeq \inf K \in K$ and let
  $(T_i)_{i \in I}$ be an open covering of $K$ with sets $T_i$ from
  $\mathcal{T}$. Due to 
  \begin{equation*}
    \widecheck k \in K \subseteq \bigcup_{i \in I} T_i
  \end{equation*}
  there is an $i_{\ast} \in I$ with $\widecheck k \in T_{i_\ast}$. With this
  open set
  \begin{align*}
    T_{i_\ast} & \in \mathcal{T} \setminus \{\emptyset\} 
    \\
		& =  \{ 
		      [\minfty, \pinfty], (\alpha, \pinfty]: 
		      \alpha \in (\minfty, \pinfty] 
		    \}
  \end{align*}
  we already have found a finite subcover,
  because $T_{i_\ast} \supseteq [\widecheck k, \pinfty] \supseteq K$.
  So $(K, K \Cap \mathcal{T} )$ is a compact subspace of
  $([\minfty, \pinfty],\mathcal{T})$.

  Note finally that $[\minfty, \pinfty]$ 
  contains its infimum $\minfty$, so that
  $([\minfty, \pinfty],\mathcal{T})$ is compact 
  by the already proven equivalence.
\end{proof}

In the subsequent subsection we will
use Theorem \ref{thm:characterize_lower_semi_continuous}
and Theorem \ref{thm:compactSubspaces} to give a topological proof
of the known results that the composition 
of a continuous function with a lower semicontinuous function is again
lower semicontinuous and that a lower semicontinuous function 
takes its infimum on any nonempty compact set, 
respectively\DissVersionForMeOrHareBrainedOfficialVersion{, hoping that these two facts will appear in a new light.}{.}

\todo{NACH /BEIM erstem Korrekturlesen. sinvoll?? - Generalvorrausetzung: Top. Raeume nichtleer}

\subsection{Known properties of lower semicontinuous functions revisited} 
\label{subsec:known_properties_of_lsc_functions_revisited}

In this subsection we revisit known properties of lower semicontinous
functions. We will see that these properties stem from 
\prettyref{thm:characterize_lower_semi_continuous}
and the properties of the space $([\minfty,\pinfty], \mathcal{T})$.
The property we start with is the fact that every composition 
$g \circ f$ of a continuous mapping $f$ with some lower semicontinuous
mapping $g$ is lower semicontinous,
cf. \cite[1.40 Exercise]{RW04}.

\begin{theorem}
  Let $(X, \mathcal{O}_X)$ and $(Y, \mathcal{O}_Y)$
  be topological spaces,
  $f: (X, \mathcal{O}_X) \rarr (Y, \mathcal{O}_Y)$
  be a continuous mapping and 
  $g: Y \rarr [\minfty,\pinfty]$
  be a lower semicontinous mapping.
  Then the concatenation $h \defeq g \circ f: X \rarr [\minfty,\pinfty]$
  is again lower semicontinous.
\end{theorem}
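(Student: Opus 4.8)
The plan is to reduce the claim to the elementary fact that the concatenation of two continuous mappings between topological spaces is again continuous, which becomes directly applicable once lower semicontinuity is reinterpreted as continuity into $([\minfty,\pinfty],\mathcal{T})$.

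First I would unravel the hypothesis on $g$: by \prettyref{def:lowersemicontinuity_of_a_function_defined_on_a_topological_space}, saying that $g: Y \rarr [\minfty,\pinfty]$ is lower semicontinuous means precisely that $g: (Y,\mathcal{O}_Y) \rarr ([\minfty,\pinfty],\mathcal{T})$ is continuous. Together with the continuity of $f: (X,\mathcal{O}_X) \rarr (Y,\mathcal{O}_Y)$ we thus have two composable continuous mappings of topological spaces. Then I would verify that $h = g\circ f$ is continuous as a mapping $(X,\mathcal{O}_X) \rarr ([\minfty,\pinfty],\mathcal{T})$ in the usual way: for any $T \in \mathcal{T}$ the set $g^-[T]$ lies in $\mathcal{O}_Y$ by continuity of $g$, hence $h^-[T] = f^-[g^-[T]]$ lies in $\mathcal{O}_X$ by continuity of $f$. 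So $h: (X,\mathcal{O}_X) \rarr ([\minfty,\pinfty],\mathcal{T})$ is continuous, which by \prettyref{def:lowersemicontinuity_of_a_function_defined_on_a_topological_space} is exactly the assertion that $h$ is lower semicontinuous.

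There is no real obstacle here: the content has been front-loaded into \prettyref{thm:characterize_lower_semi_continuous} and into the definition extending lower semicontinuity to mappings on arbitrary topological spaces. Once those are in place, the classical statement ``continuous $\circ$ lower semicontinuous is lower semicontinuous'' is merely the topological slogan ``continuous $\circ$ continuous is continuous'' read through the topology $\mathcal{T}$ on $[\minfty,\pinfty]$ — which is precisely the point this subsection is designed to illustrate.
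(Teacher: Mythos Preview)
Your proof is correct and follows essentially the same approach as the paper's own proof: reinterpret lower semicontinuity of $g$ as continuity into $([\minfty,\pinfty],\mathcal{T})$ via \prettyref{def:lowersemicontinuity_of_a_function_defined_on_a_topological_space}, then invoke that the composition of continuous maps is continuous. The paper's version is slightly terser (it does not spell out the preimage computation), but the argument is identical.
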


\begin{proof}
  The mappings
  $f: (X, \mathcal{O}_X) \rarr (Y, \mathcal{O}_Y)$
  and 
  $g: (Y, \mathcal{O}_Y) \rarr ([\minfty,\pinfty], \mathcal{T})$
  are continuous by assumption and by definition, respectively.
  Hence their concatenation 
  $
    h = g \circ f : 
    (X, \mathcal{O}_X) \rarr ([\minfty,\pinfty], \mathcal{T})
  $
  is again continuous, i.e.
  $h: X \rarr [\minfty,\pinfty]$ is lower semicontinous.
\end{proof}

Phenomenon
\upref{enu:phenomena_of_several_limit_points} 
in \prettyref{subsec:properties_of_the_topological_space_intervalus_maximus_with_right_order_topology}
said that a sequence $(y_k)_{k \in \N}$ in $([\minfty,\pinfty], \mathcal{T})$
can have several limit points at the same time and that 
$\minfty$ is always a limit point.
The first part of this phenomenon is reflected also in the 
fact that lower semicontinous functions defined on punctured $\R^n$ 
can be usually continued in many ways to a lower semicontinous function
on whole $\R^n$, see 
Example \ref{exa:different_continuations_are_lsc}.
The second part of this phenomenon is reflected in the fact
that a function 
$f: (X,\mathcal{O}) \rarr ([\minfty, \pinfty],\mathcal{T})$
is automatically continuous in all preimage points of $\{\minfty\}$,
see \prettyref{lem:continuity_in_points_which_are_mapped_to_minfty}.

\begin{example} \label{exa:different_continuations_are_lsc}
  Consider the function $f:\R \setminus \{0\} \rarr [\minfty,\pinfty]$,
  given by $f(x) \defeq 1$.
  Setting $f(0) \defeq c$ with any $c \in [\minfty,1]$
  we obtain a lower semicontinous function 
  $f: \R \rarr [\minfty,\pinfty]$.
\end{example}

The following lemma is directly obtained as special case of 
\prettyref{prop:neighborhoods_of_minimal_element}.

\begin{lemma} \label{lem:continuity_in_points_which_are_mapped_to_minfty}
  Let $(X, \mathcal{O})$ be a topological space and 
  $f: (X,\mathcal{O}) \rarr ([\minfty, \pinfty],\mathcal{T})$
  a mapping. For every $x \in X$ we have
  \[
    f(x) = \minfty \implies f \text{ is continuous in } x.
  \]
\end{lemma}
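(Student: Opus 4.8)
The plan is to obtain the claim as an immediate instance of Proposition~\ref{prop:neighborhoods_of_minimal_element}. First I would observe that $([\minfty,\pinfty],\leq)$ is a totally ordered set whose minimum is $\widecheck z \defeq \minfty$, and that the topology $\mathcal{T}$ installed on $[\minfty,\pinfty]$ is, by the Agreement above, precisely the right order topology $\mathcal{T}_\leq$ for this ordered set. Hence Proposition~\ref{prop:neighborhoods_of_minimal_element} applies verbatim with $(Z,\leq) = ([\minfty,\pinfty],\leq)$ and $\mathcal{T}_\leq = \mathcal{T}$: the only $\mathcal{T}$-neighborhood of $\minfty$ is the whole space $[\minfty,\pinfty]$, and consequently any mapping $f: (X,\mathcal{O}) \rarr ([\minfty,\pinfty],\mathcal{T})$ is continuous at every point that is sent to $\minfty$, which is exactly the assertion of the lemma.

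To spell out the continuity condition at such a point: for $x \in X$ with $f(x) = \minfty$ and an arbitrary open neighborhood $O' \in \mathcal{T}$ of $f(x) = \minfty$, the only possibility is $O' = [\minfty,\pinfty]$, since no set of the form $(\alpha,\pinfty]$ with $\alpha \in (\minfty,\pinfty)$ contains $\minfty$; then $X$ itself is an open neighborhood of $x$ with $f[X] \subseteq [\minfty,\pinfty] = O'$, which is the defining condition for continuity of $f$ in $x$. There is essentially no obstacle here; the only thing worth double-checking is that $\minfty$ really is the $\leq$-minimum of $[\minfty,\pinfty]$ and that $[\minfty,\pinfty]$ is therefore the unique $\mathcal{T}$-neighborhood of $\minfty$, and both of these are immediate from the explicit description of $\mathcal{T}$.
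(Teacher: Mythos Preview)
Your proof is correct and follows essentially the same approach as the paper: the paper itself notes that the lemma is directly obtained as a special case of Proposition~\ref{prop:neighborhoods_of_minimal_element}, and its short explicit proof is the same argument you spell out in your second paragraph (the only neighborhood of $\minfty$ is $[\minfty,\pinfty]$, so any neighborhood of $x$ is mapped into it).
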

%

\begin{proof}
  Let $x\in X$ be a point with $f(x) = \minfty$. For each neighborhood $U$ of $x$
  we trivially have $f[U] \subseteq [\minfty,\pinfty]$. Since $[\minfty,\pinfty]$
  is the only existing neighborhood of $\minfty = f(x)$, this inclusion already 
  shows that $f$ is continuous in $x$.
\end{proof}

The following theorem says that a lower semicontinous function 
attains a minimum on every nonempty compact subset, 
cf. \cite[1.10 Corollary]{RW04}.

\begin{theorem} \label{thm:lsc_function_takes_minimum}
  Let $(X, \mathcal{O}_X)$ be a topological space and 
  $f: X \rarr [\minfty,\pinfty]$ be lower semicontinous.
  Then $f$ attains its infimum on any nonempty compact subset of 
  $(X, \mathcal{O}_X)$.
\end{theorem}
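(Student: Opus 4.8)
The plan is to deduce the statement as a direct corollary of two results already in hand: the identification of lower semicontinuity of $f$ with plain continuity of $f\colon(X,\mathcal{O}_X)\rarr([\minfty,\pinfty],\mathcal{T})$ (\prettyref{thm:characterize_lower_semi_continuous} together with \prettyref{def:lowersemicontinuity_of_a_function_defined_on_a_topological_space}), and the description of the compact subspaces of $([\minfty,\pinfty],\mathcal{T})$ as exactly the nonempty sets that contain their infimum (\prettyref{thm:compactSubspaces}). The slogan behind the argument is: a continuous map sends a compact set to a compact set, and in $([\minfty,\pinfty],\mathcal{T})$ a nonempty compact set necessarily contains its infimum; so the image of the compact domain under $f$ contains its infimum, which is precisely the claim.

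Concretely, I would fix a nonempty compact subset $K$ of $(X,\mathcal{O}_X)$ and proceed in three steps. First, by \prettyref{thm:characterize_lower_semi_continuous} the map $f\colon(X,\mathcal{O}_X)\rarr([\minfty,\pinfty],\mathcal{T})$ is continuous, hence so is its restriction $f|_K\colon(K,K\Cap\mathcal{O}_X)\rarr([\minfty,\pinfty],\mathcal{T})$ to the subspace $K$ (restricting the domain of a continuous map preserves continuity, cf.\ the remark in \prettyref{subsec:subspaces}). Second, the continuous image of a compact space is compact, so $f[K]$, equipped with the topology $f[K]\Cap\mathcal{T}$, is a compact subspace of $([\minfty,\pinfty],\mathcal{T})$; moreover $f[K]\neq\emptyset$ since $K\neq\emptyset$. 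Third, \prettyref{thm:compactSubspaces} applied to the nonempty compact set $f[K]$ gives $\inf f[K]\in f[K]$. Thus there is an $x_0\in K$ with $f(x_0)=\inf f[K]=\inf_{x\in K}f(x)$, i.e.\ $f$ attains its infimum on $K$.

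There is no genuine obstacle here; the interest of the proof is conceptual, namely recasting a classical optimization fact as the purely topological statement that a continuous map carries compact sets to sets containing their infimum, valid once $[\minfty,\pinfty]$ is given the right order topology. The only points deserving a word of care are that the infimum is taken in $[\minfty,\pinfty]$ — which is legitimate because this totally ordered set is inf-complete, so $\inf f[K]$ always exists there — and that the admissible value $\minfty$ is allowed, so that the statement remains correct even when $f$ is identically $\minfty$ on $K$. If one prefers not to quote ``continuous image of a compact set is compact'' as a black box, it is proved in a single line exactly as in the proof of \prettyref{lem:continuous_mapping_from_compact_space_to_hausdorff_space_is_closed}: pull back an open cover of $f[K]$ along $f|_K$ to an open cover of $K$, extract a finite subcover by compactness of $K$, and take images.
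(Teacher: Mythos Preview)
Your proof is correct and follows essentially the same route as the paper's own argument: identify lower semicontinuity with continuity into $([\minfty,\pinfty],\mathcal{T})$, use that continuous images of compact sets are compact, and invoke \prettyref{thm:compactSubspaces} to conclude that the nonempty compact image $f[K]$ contains its infimum. The paper's proof is just a terser version of what you wrote.
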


\begin{proof}
  The mapping 
  $f: (X, \mathcal{O}_X) \rarr ([\minfty,\pinfty],\mathcal{T})$
  is continuous by \prettyref{def:lowersemicontinuity_of_a_function_defined_on_a_topological_space}.
  Hence every nonempty compact subset $K$ of $(X, \mathcal{O}_X)$ 
  is mapped by $f$ to a compact subset of 
  $([\minfty,\pinfty],\mathcal{T})$.
  This again compact image $f[K]$ contains its infimum by 
  \prettyref{thm:compactSubspaces}. 
\end{proof}

By \prettyref{thm:lsc_function_takes_minimum} a 
lower semicontinuous function $f: X \rarr [\minfty,\pinfty]$ 
on a topological space $(X, \mathcal{O}_X)$ takes 
its minima on every nonempty compact subset of this space.
However $f$ does not need to takes maxima on nonempty compact subsets
as the following example shows.

\begin{example}
  The function $f: \R \rarr [\minfty,\pinfty]$ given by
  \begin{gather*}
    f(x)
    \defeq 
    \begin{cases}
      - |x| & \text{ for } x \neq 0,
    \\
      - 1   & \text{ for } x = 0,
    \end{cases}
  \end{gather*}
  is a lower semicontinous function that does not 
  attain its supremum $0 = \sup_{x\in [-1,1]} f(x)$ on the compact 
  subset $[-1,1]$ of $(\R, \mathcal{O})$.  
\end{example}

We conclude this subsection by giving a table with 
some properties of the topological space $([\minfty, \pinfty], \mathcal{T})$
and corresponding properties of lower semicontinuous functions, 
i.e. continuous functions 
$(X, \mathcal{O}) \rarr ([\minfty, \pinfty], \mathcal{T})$.

\begin{center}
\begin{tabularx}{\textwidth}{p{0.28\textwidth} | p{0.48\textwidth} | p{0.17\textwidth}}
  Space $([\minfty, \pinfty],\mathcal{T})$  & Function $f:(X, \mathcal{O}) \xlongrightarrow{\text{cont.}} ([\minfty, \pinfty],\mathcal{T})$ & cf.                                                                     \\
  \hline
  A seq. $(y_k)_{k \in \N}$ can have many  
  limit points: \linebreak				     
  $
    y \in \tlim_{k \rarr \pinfty}\limits y_k 
    \implies {  }
    [\minfty, y] \subseteq 
    \tlim_{k\rarr \pinfty}y_k
  $                                         
                                            & Making a function value $f(x_0)$ smaller preserves lower semicontinuity:\linebreak                     
					      $\widetilde f(x) 
						\defeq \begin{cases}
							  f(x) &  \text { if } x \neq x_0 \\
							  c    &  \text { if } x = x_0
							\end{cases}
					      $
					      yields still continuous mapping 
					      $\widetilde f: (X, \mathcal{O}) \rarr ([\minfty, \pinfty], \mathcal{T})$ 
					      for $c \in [\minfty, f(x_0)]$
                                                                                                                                           & Thm. \ref{thm:limits_in_intervalus_maximus_with_right_order_topology} 
                                                                                                                                             \&  Ex. \ref{exa:different_continuations_are_lsc}                        \\
  \hline
  $
    \minfty \in \tlim_{k\rarr \pinfty}
    \limits y_k
  $    
  for all sequences $(y_k)_{k \in \N}$ in 
  $[\minfty, \pinfty]$.                   
                                            & $f(x_0) = \minfty \implies f$ cont. in $x_0$                                                 & Thm. \ref{thm:limits_in_intervalus_maximus_with_right_order_topology} 
																	     \&  Lem. \ref{lem:continuity_in_points_which_are_mapped_to_minfty}       \\
  \hline
  $K' \subseteq [\minfty, \pinfty]$ 
  is compact $\iseq \inf K' \in K'$       
                                            & $f$ takes a minimum on every compact set $K \subseteq X$                                     & Thm. \ref{thm:compactSubspaces} 
                                                                                                                                             \&  Thm. \ref{thm:lsc_function_takes_minimum}                            \\
\end{tabularx}
\\
\end{center}
\subsection{Coercivity properties versus continuity properties}
  \label{subsec:coercivity_vs_continuity}

In this subsection we define the notion of coercivity for 
functions $f: \R^n \rarr [\minfty,\pinfty]$ and 
see that $f$ is coercive and lower semicontinuous 
iff extending $f$ to the one point compactification of $\R^n$
by setting $\widehat f(\infty) \defeq \pinfty$ yields 
a continuous mapping $\widehat f: \R^n_\infty \rarr [\minfty, \pinfty]$,
see \prettyref{thm:characterize_lsc_and_coercive}.
This equivalence is the key for a 
-- as far as the author knows -- new technique
for proving coercivity plus lower semicontinuity.
See \prettyref{sec:compact_continuations} and 
\prettyref{sec:application_to_an_example} for more details.

We also define the notion of normcoercivity for mappings 
$f: \R^n \rarr \R^m$ and will see that this property 
is again equivalent to a continuity property of some
continuation of $f$ to the one point compactification of $\R^n$,
see \prettyref{thm:characterize_normcoercivity}.

We start with giving the definitions.
\begin{definition} \label{def:coercivity_for_mappings_from_Rn_into_Rdouble_extended}
  A function $f: \R^n \rarr [\minfty,\pinfty]$ is called
  {\bf coercive}, iff 
  \begin{equation*}
    f(x) \rarr \pinfty \text{  for  } \vectornorm{x} \rarr \pinfty
  \end{equation*}
\end{definition}
A related coercivity notion is given in 
\cite[Definition 1.12]{Maso1993}, cf. also
\cite[Example 1.14]{Maso1993}.
For the next definition 
cf. \cite[p. 134]{FacchineiANDPang2003}.
\begin{definition} \label{def:normcoercivity_for_mappings_between_Rn_and_Rm}
  A mapping $f: \R^n \rarr \R^m$ is called
  {\bf normcoercive}, iff
  \begin{equation*}
    \vectornorm{f(x)} \rarr \pinfty \text{ for } \vectornorm{x} \rarr \pinfty
  \end{equation*}
\end{definition}

For a mapping $f:\R^n \rarr \R$ we can speak both of coercivity and 
normcoercivity. Clearly coercivity implies normcoercivity.
The contrary holds not true as the following example shows:
\begin{example}
  The function $f: \R \rarr \R$ given by 
  $f(x) \defeq x$
  is clearly normcoercive. Considering the sequence of the numbers
  $x_k \defeq -k$ for $x \in \N$ we have
  $|x_k| \rarr \pinfty$ as $k \rarr \pinfty$ but 
  $f(x_k) = -k \rarr \minfty \neq \pinfty$ as $k \rarr \pinfty$
  so that $f$ is not coercive.
\end{example}

The following theorems show that coercivity properties of 
functions correspond to continuity properties of special 
continuations of them -- anticipating 
a name from
\prettyref{sec:compact_continuations} -- more precisely of special 
compact continuations of them.
The order topology for the interval $[\minfty, \pinfty]$
is denoted by $\mathcal{O}_\leq$, cf. 
\prettyref{def:order_topology}.

\begin{theorem} \label{thm:characterize_lsc_and_coercive}
  A mapping $f:\R^n \rarr [\minfty, \pinfty]$ and its continuation

  $\widehat f: \R^n_\infty \rarr [\minfty, \pinfty]$, given by
  $
    \widehat f(x) \defeq
    \begin{cases}
      f(x),    & \text{if } x  \in \R^n\\
      \pinfty, & \text{if } x =\infty
    \end{cases}
  $
  ,
  are connected by the following relations:
  \begin{enumerate}
    \item \label{enu:characterize_coercive}
      \begin{align*}
              & f: \R^n \rarr [\minfty, \pinfty] \text{ is coercive} 
	\\
	\iseq & f:
		(\R^n, \mathcal{O}^{\otimes n}) 
		\rarr 
		([\minfty, \pinfty], \mathcal{T})
		\text{ is topological coercive towards } \{\pinfty\}
	\\
        \iseq & \widehat f: 
		(\R^n_\infty, \mathcal{O}^{\otimes n}_\infty) 
		\rarr 
		([\minfty, \pinfty], \mathcal{T})
		\text{ is continuous at the point } \infty \in \R^n_\infty
	\\
        \iseq & \widehat f: 
		(\R^n_\infty, \mathcal{O}^{\otimes n}_\infty) 
		\rarr 
		([\minfty, \pinfty], \mathcal{O}_\leq)
		\text{ is continuous at the point } \infty.
      \end{align*}
    \item \label{enu:characterize_lsc_and_coercive}
      \begin{align*}
	      & f:\R^n \rarr [\minfty, \pinfty]
		\text{ is lower semicontinous and coercive} 
	\\
        \iseq & \widehat f: 
		(\R^n_\infty, \mathcal{O}^{\otimes n}_\infty) 
		\rarr 
		([\minfty, \pinfty], \mathcal{T})
		\text{ is continuous}.
      \end{align*}
  \end{enumerate}
\end{theorem}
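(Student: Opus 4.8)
The plan is to prove both parts by chaining together the characterizations already established in the excerpt, so that essentially no new work is needed beyond bookkeeping about topologies on $[\minfty,\pinfty]$ and about the one-point compactification $\R^n_\infty$. The two topological tools I would lean on are: first, Theorem~\ref{thm:topological_coercivity_towards_maximal_element_equivalent_to_continuous_property}, applied with $(X,\mathcal{O}) = (\R^n,\mathcal{O}^{\otimes n})$ and $(Z,\leq) = ([\minfty,\pinfty],\leq)$, which has minimum $\minfty$ and maximum $\pinfty$; second, Theorem~\ref{thm:characterize_lower_semi_continuous}, identifying lower semicontinuity with continuity into $([\minfty,\pinfty],\mathcal{T})$, together with its extension Definition~\ref{def:lowersemicontinuity_of_a_function_defined_on_a_topological_space} for functions on arbitrary topological spaces.

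\textbf{Proof of part \upref{enu:characterize_coercive}.} First I would establish the first equivalence: $f$ coercive $\iseq$ $f\colon(\R^n,\mathcal{O}^{\otimes n})\rarr([\minfty,\pinfty],\mathcal{T})$ is topological coercive towards $\{\pinfty\}$. Unwinding Definition~\ref{def:topological_coercivity} with $S' = \{\pinfty\}$ and using part~\upref{enu:compact_closed_sets_which_do_not_hit_max_el_are_compl_of_its_open_neighborhoods} of Lemma~\ref{lem:compact_and_closed_subsets_in_right_order_topology}, topological coercivity towards $\{\pinfty\}$ says: for every open neighborhood $(\alpha,\pinfty]$ of $\pinfty$ there is a compact closed $K\subseteq\R^n$ with $f[\R^n\setminus K]\subseteq(\alpha,\pinfty]$. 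Since the compact closed subsets of $\R^n$ are precisely the bounded closed ones, and every such set sits inside a closed ball $\closedball[r]$, this is exactly the statement ``for every $\alpha\in\R$ there is $r>0$ with $\|x\|>r \implies f(x)>\alpha$,'' i.e. $f(x)\rarr\pinfty$ as $\|x\|\rarr\pinfty$; both implications are routine. The second equivalence, topological coercivity towards $\{\pinfty\}$ $\iseq$ continuity of $\widehat f$ at $\infty$ with respect to $\mathcal{T}$, is a direct citation of Theorem~\ref{thm:topological_coercivity_towards_maximal_element_equivalent_to_continuous_property} (note the extension $\widehat f$ defined there, sending $\infty\mapsto\widehat z = \pinfty$, is exactly the $\widehat f$ in the statement). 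For the third equivalence I would observe that at the point $\infty$ the neighborhood filters of $\pinfty$ in $\mathcal{T}$ and in $\mathcal{O}_\leq$ generate the same condition on $\widehat f$: a $\mathcal{T}$-neighborhood of $\pinfty$ is $(\alpha,\pinfty]$; an $\mathcal{O}_\leq$-neighborhood basis of $\pinfty$ is given by the intervals $(\alpha,\pinfty]$ as well (there is no element above $\pinfty$). Since $\widehat f^{-}[(\alpha,\pinfty]] = \widehat f^{-}[(\alpha,\pinfty]]$ in both cases, continuity of $\widehat f$ \emph{at $\infty$} is literally the same requirement for the two topologies, so the equivalence is immediate.

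\textbf{Proof of part \upref{enu:characterize_lsc_and_coercive}.} Here I would argue: $\widehat f\colon(\R^n_\infty,\mathcal{O}^{\otimes n}_\infty)\rarr([\minfty,\pinfty],\mathcal{T})$ is continuous $\iseq$ it is continuous at every point of $\R^n_\infty$ $\iseq$ it is continuous at every $x\in\R^n$ and continuous at $\infty$. Because $(\R^n,\mathcal{O}^{\otimes n})$ is an \emph{open} subspace of its one-point compactification (the sets of $\mathcal{O}^{\otimes n}$ are still open in $\mathcal{O}^{\otimes n}_\infty$, and for $x\in\R^n$ there is an open neighborhood of $x$ entirely inside $\R^n$), continuity of $\widehat f$ at a point $x\in\R^n$ is equivalent to continuity of $f = \widehat f|_{\R^n}$ at $x$; hence $\widehat f$ is continuous at all points of $\R^n$ iff $f\colon\R^n\rarr([\minfty,\pinfty],\mathcal{T})$ is continuous, which by Theorem~\ref{thm:characterize_lower_semi_continuous} means $f$ is lower semicontinuous. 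Continuity of $\widehat f$ at $\infty$ is, by part~\upref{enu:characterize_coercive}, equivalent to coercivity of $f$. Combining, $\widehat f$ is continuous iff $f$ is lower semicontinuous and coercive, which is the claim.

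\textbf{Main obstacle.} The only nontrivial point is being careful about \emph{which} facts are ``continuity at a single point'' versus ``continuity everywhere,'' and about the fact that $\R^n$ is an open subspace of $\R^n_\infty$ so that local continuity on $\R^n$ does not feel the added point $\infty$ — this open-subspace observation is what lets part~\upref{enu:characterize_lsc_and_coercive} decompose cleanly. Everything else is translation of definitions and invocation of the earlier theorems; I would take care to note explicitly that compact closed subsets of $\R^n$ coincide with closed bounded subsets (via Heine--Borel, equivalently via part~\upref{enu:compactness_implies_closeness} of Theorem~\ref{thm:relation_closed_compact} plus local compactness), since that identification is what turns the abstract $\mathcal{KA}$ condition into the classical $\|x\|\rarr\pinfty$ statement.
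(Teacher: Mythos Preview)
Your proposal is correct and follows essentially the same approach as the paper: the paper proves the first equivalence in part~\upref{enu:characterize_coercive} by the same unwinding (closed balls versus general compact closed sets, then Lemma~\ref{lem:compact_and_closed_subsets_in_right_order_topology}), cites Theorem~\ref{thm:topological_coercivity_towards_maximal_element_equivalent_to_continuous_property} for the second, and uses the identical neighborhood-basis observation for the third; for part~\upref{enu:characterize_lsc_and_coercive} it likewise splits continuity of $\widehat f$ into continuity on $\R^n$ (invoking Theorem~\ref{thm:characterize_lower_semi_continuous} and the fact that $\R^n \in \mathcal{O}^{\otimes n}_\infty$) and continuity at $\infty$ (invoking part~\upref{enu:characterize_coercive}).
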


Before proving this theorem we give an example 
to illustrate part \upref{enu:characterize_lsc_and_coercive}.
$\mathcal{O}$ denotes again the natural topology on $\R$.

\begin{example}
  The function $f: \R \rarr [\minfty, \pinfty]$, given by 
  \begin{gather*}
    f(x) \defeq x
  \end{gather*}
  is lower semicontinous but not coercive.
  In accordance to part \upref{enu:characterize_lsc_and_coercive}
  of \prettyref{thm:characterize_lsc_and_coercive}
  its continuation 
  $\widehat f: (\R_\infty, \mathcal{O}_\infty) \rarr ([\minfty, \pinfty], \mathcal{T})$,
  given by
  \begin{gather*}
    \widehat f(x) \defeq
    \begin{cases}
      f(x) = x   & \text{ if } x \in \R
    \\
      \pinfty 	 & \text{ if } x = \infty,
    \end{cases}    
  \end{gather*}
  is not continuous;
  more precise $\widehat f$ is not continuous in the newly added point 
  $\infty$ since 
  there is no compact subset $K$ of $\R$ such that a
  $\widehat f [ \R_\infty \setminus K ]$
  is contained in the neighborhood 
  $(3, \pinfty] \eqdef U$ of $\pinfty$ for the following reason:
  Any compact subst $K$ of $\R$ is bounded and hence contained in 
  some interval $[-N, N]$ with some $N \in \N$.
  Hence the image
  $
    \widehat f \big [\R_\infty \setminus K \big]
    \supseteq
    \widehat f \big[ \R_\infty \setminus [-N, N] \big]
    =
    (\minfty, -N) \cup (N, \pinfty]
    \supseteq 
    (\minfty, -N)
  $
  is not completely contained in $U = (3, \pinfty]$. 
\end{example}

\begin{proof}[Proof of \prettyref{thm:characterize_lsc_and_coercive}]
  ~
  \\
  \upref{enu:characterize_coercive}
  We have
  \begin{align*}
   & f \text{ is coercive }  
   \\
   \iseq {} &
	      f(x) \rarr \pinfty \text{ for } \vectornorm{x} \rarr \pinfty 
   \\
   \iseq {} &
	      \forall \alpha \in \R \;\; \exists R > 0 \;\; \forall x\in \R^n: 
	      \vectornorm{x} > R \Rarr  f(x) > \alpha
   \\
   \iseq {} &
	      \forall \alpha \in \R \;\; \exists R > 0 :
	      f[\R^n \setminus \closedball[R][\zerovec]] \subseteq (\alpha, \pinfty]
   \\
   \overset{(\ast)}{\iseq} {} &   
	      \forall \alpha \in \R \;\; \exists K \in \mathcal{KA}(\R^n) :
	      f[\R^n \setminus K] \subseteq (\alpha, \pinfty]
   \\
   \iseq {} &
	      \forall U \in \mathcal{U}'(+ \infty) \cap \mathcal{T} 
	      \;\; \exists K \in \mathcal{KA}(\R^n) :
	      f[\R^n \setminus K] \subseteq U'
   \\
   \overset{(\diamond)}{\iseq} {} &
	      \forall K' \in \mathcal{KA}_{\{\pinfty\}}([\minfty, \pinfty], \mathcal{T}) 
	      \;\; \exists K \in \mathcal{KA}(\R^n) :
	      f[\R^n \setminus K] \subseteq [\minfty, \pinfty] \setminus K'
   \\
   \iseq {} &
	      f: (\R^n, \mathcal{O}^{\otimes n}) 
		\rarr ([\minfty, \pinfty], \mathcal{T})
	      \text{ is topological coercive towards } \{\pinfty\}.
  \end{align*}
  Explanations for the equivalences in 
  $(\ast)$ and $(\diamond)$ are given in Detail
  \ref{det:explanation_equivalences_star_and_diamond_in_thm_characterize_lsc_and_coercive}
  in the Appendix.
  So we have proved the first of the claimed three equivalences.
  The second of the claimed equivalences is just a special case of 
  \prettyref{thm:topological_coercivity_towards_maximal_element_equivalent_to_continuous_property}.
  Finally the third of the claimed equivalences holds true since the 
  system $\{T \in \mathcal{T}: \pinfty \in T\}$ of open 
  $\mathcal{T}$--neighborhoods 
  of $\pinfty$ is both a $\mathcal{T}$--neighborhood basis of $\pinfty$ 
  and an $\mathcal{O}_\leq$--neighborhood basis for $\pinfty$;
  a detailed proof of the third equivalence can be found in 
  Detail \ref{det:continuity_notions_at_pinfty_are_equivalent}
  in the Appendix.
  \\

  \upref{enu:characterize_lsc_and_coercive}
  With \prettyref{thm:characterize_lower_semi_continuous} 
  and\DissVersionForMeOrHareBrainedOfficialVersion{ the just proven}{}
  part 
  \upref{enu:characterize_coercive} we get
  \begin{align*}
   \hphantom{\iseq} {}&
	      f \text{ is lsc and coercive }  
   \\
   \iseq {}&
	      f: (\R^n, \mathcal{O}^{\otimes n}) 
	      \rarr 
	      ([\minfty, \pinfty],\mathcal{T})
	      \begin{minipage}{\textwidth}
      		$\text{ is continuous in every } x \in \R^n$
		\\
		\text{ and coercive. }    
	      \end{minipage}
   \\
   \overset{\R^n \in \mathcal{O}_\infty^{\otimes n}}{\iseq } {}&
	      \widehat f: (\R^n_\infty, \mathcal{O}_\infty^{\otimes n}) 
	      \rarr 
	      ([\minfty, \pinfty],\mathcal{T})
	      \begin{minipage}{\textwidth}
	      $ \text{ is continuous in every } x \in \R^n$
	      \\
	      $\text{ and $f$ is coercive. }$
	      \end{minipage}
   \\
   \iseq {}&
	      \widehat f: (\R^n_\infty, \mathcal{O}_\infty^{\otimes n}) 
	      \rarr 
	      ([\minfty, \pinfty],\mathcal{T})
	      \begin{minipage}{\textwidth}
	      $ \text{ is continuous in every } x \in \R^n$
	      \\
	      $\text{ and in } x = \infty.$
	      \end{minipage}
   \\
   \iseq {}&
	      \widehat f : (\R^n_\infty, \mathcal{O}_\infty^{\otimes n} ) 
		  \rarr
		  ([\minfty, \pinfty],\mathcal{T})
		  \text{ is continuous. }
  \end{align*}
\end{proof}

Similarly we have the following theorem.
\begin{theorem}	\label{thm:characterize_normcoercivity}
  For a mapping $f: \R^n \rarr \R^m$ and its continuation
  $\widehat f: \R^n_{\infty} \rarr \R^m_{\infty}$, where 
  $\widehat f(\infty) \defeq \infty$, the following are 
  equivalent:
  \begin{enumerate}
    \item 
      $f: \R^n \rarr \R^m$ is normcoercive.
    \item
      $f:
	(\R^n, \mathcal{O}^{\otimes n})
	\rarr 
	(\R^m, \mathcal{O}^{\otimes m})
      $
      is topological coercive.
    \item 
      $
	\widehat f:(\R^n_\infty,\mathcal{O}_\infty^{\otimes n}) 
	\rarr 
	(\R^m_\infty,\mathcal{O}_\infty^{\otimes m})
      $
      is continuous in $\infty \in \R^n_\infty$.
  \end{enumerate} 
\end{theorem}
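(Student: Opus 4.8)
The plan is to establish the three equivalences by proving $(1)\iseq(2)$ directly and then noticing that $(2)\iseq(3)$ is already available. Indeed, the equivalence between the second and third statements is exactly \prettyref{thm:topological_coercivivity_is_eq_to_continuity_of_extension} specialized to $(X,\mathcal{O})=(\R^n,\mathcal{O}^{\otimes n})$ and $(X',\mathcal{O}')=(\R^m,\mathcal{O}^{\otimes m})$: their one--point compactifications are $(\R^n_\infty,\mathcal{O}^{\otimes n}_\infty)$ and $(\R^m_\infty,\mathcal{O}^{\otimes m}_\infty)$, the newly added point $\infty'$ of the codomain is our $\infty$, and the canonical extension used in that theorem is literally the map $\widehat f$ of the present statement. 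So no work at all is needed for that half.

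For $(1)\iseq(2)$ I would argue just as in part \upref{enu:characterize_coercive} of \prettyref{thm:characterize_lsc_and_coercive}, only with the (simpler) codomain $\R^m$ in place of $[\minfty,\pinfty]$ and with the target set $\emptyset$ in place of $\{\pinfty\}$, by unwinding both sides into statements about images of complements of closed balls:
\begin{align*}
  & f \text{ is normcoercive}
  \\
  \iseq {} & \|f(x)\| \rarr \pinfty \text{ as } \|x\| \rarr \pinfty
  \\
  \iseq {} & \forall \rho > 0 \;\; \exists R > 0 \;\; \forall x \in \R^n : \|x\| > R \Rarr \|f(x)\| > \rho
  \\
  \iseq {} & \forall \rho > 0 \;\; \exists R > 0 : f[\R^n \setminus \closedball[R]] \subseteq \R^m \setminus \closedball[\rho]
  \\
  \iseq {} & \forall K' \in \mathcal{KA}(\R^m, \mathcal{O}^{\otimes m}) \;\; \exists K \in \mathcal{KA}(\R^n, \mathcal{O}^{\otimes n}) : f[\R^n \setminus K] \subseteq \R^m \setminus K'
  \\
  \iseq {} & f : (\R^n, \mathcal{O}^{\otimes n}) \rarr (\R^m, \mathcal{O}^{\otimes m}) \text{ is topological coercive}.
\end{align*}
Here the last line is nothing but \prettyref{def:topological_coercivity} (topological coercivity being topological coercivity towards $\emptyset$, and $\mathcal{KA}_{\emptyset}=\mathcal{KA}$), while the second and third equivalences are pure rewriting, using $\{x\in\R^n:\|x\|>R\}=\R^n\setminus\closedball[R]$ and $\{y\in\R^m:\|y\|>\rho\}=\R^m\setminus\closedball[\rho]$.

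The only step carrying content is the fourth equivalence, and it rests on the observation that in $\R^n$ and in $\R^m$, endowed with their natural topologies, the closed balls $\closedball[r]$ centred at the origin form a subfamily of $\mathcal{KA}$ that is cofinal under inclusion: by Heine--Borel each $\closedball[r]$ is compact, and since these spaces are metric hence Hausdorff each $\closedball[r]$ is also closed, so $\closedball[r]\in\mathcal{KA}$; conversely every member of $\mathcal{KA}$ is in particular compact, hence bounded, hence contained in some $\closedball[r]$. Granting this, ``$\Rightarrow$'' follows by choosing, for a given $K'\in\mathcal{KA}(\R^m)$, a radius $\rho$ with $K'\subseteq\closedball[\rho]$, applying the hypothesis to obtain $R$, and taking $K\defeq\closedball[R]$; ``$\Leftarrow$'' follows by choosing $K'\defeq\closedball[\rho]$, obtaining $K\in\mathcal{KA}(\R^n)$ from the hypothesis, picking $R$ with $K\subseteq\closedball[R]$, and using $\R^n\setminus\closedball[R]\subseteq\R^n\setminus K$. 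This is the exact analogue of the manipulations labelled $(\ast)$ and $(\diamond)$ in the proof of \prettyref{thm:characterize_lsc_and_coercive}, which are carried out in the Appendix; I would either reproduce that short argument or simply refer to it.

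I do not anticipate a genuine obstacle here: once \prettyref{thm:topological_coercivivity_is_eq_to_continuity_of_extension} is in hand the theorem is close to a corollary, and the only things needing care are the quantifier bookkeeping in the displayed chain and the cofinality of balls in $\mathcal{KA}$. The mild points to watch are that the relevant target set is now $\emptyset$ — so one really uses ``topological coercive'', not ``topological coercive towards'' a nonempty set, in contrast to \prettyref{thm:characterize_lsc_and_coercive} where it was $\{\pinfty\}$ — and that $\widehat f$ sends $\infty$ to $\infty$ rather than to some spurious point, which is automatic since the one--point compactification of $\R^m$ adds exactly one point.
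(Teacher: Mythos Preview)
Your proposal is correct and follows essentially the same approach as the paper: both prove $(1)\iseq(2)$ by the displayed chain of equivalences (reducing to closed balls via their cofinality in $\mathcal{KA}(\R^n)=\mathcal{K}(\R^n)$, exactly the argument labeled $(\ast)$ in the proof of \prettyref{thm:characterize_lsc_and_coercive}), and both obtain $(2)\iseq(3)$ as a direct specialization of \prettyref{thm:topological_coercivivity_is_eq_to_continuity_of_extension}. Your exposition of the cofinality step is in fact more self-contained than the paper's, which simply refers to the appendix detail.
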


\begin{proof}
  Similar to the proof of part \upref{enu:characterize_coercive}
  in \prettyref{thm:characterize_lsc_and_coercive} we obtain
\MayChangesPartiallyPerformedVersionOrHareBrainedOfficialVersion{%
  \begin{align*}
   &\hphantom{~\iseq} 
	      f \text{ is normcoercive }  
   \\
   & \iseq 
	      \vectornorm{f(x)} \rarr \pinfty 
	      \text{ for } \vectornorm{x} \rarr \pinfty 
   \\
   & \iseq  
	      \forall r \in \R \;\; \exists R > 0 \;\; \forall x\in \R^n: 
	      \vectornorm{x} > R \Rarr  \vectornorm{f(x)} > r
   \\
   & \iseq  
	      \forall r \in \R \;\; \exists R > 0 :
	      f[\R^n \setminus \closedball[R][\zerovec]] 
	      \subseteq 
	      \R^m \setminus \closedball[r][\zerovec]
   \\
   &\overset{(\ast)}{\iseq}   
	      \forall r \in \R \;\; \exists K \in \mathcal{K}(\R^n) :
	      f[\R^n \setminus K] 
	      \subseteq 
	      \R^m \setminus \closedball[r][\zerovec]
   \\
   & \iseq  
	      \forall K' \in \mathcal{K}(\R^m) \;\; \exists K \in \mathcal{K}(\R^n) :
	      f[\R^n \setminus K] 
	      \subseteq 
	      \R^m \setminus K'
  \\
   & \iseq  
	      \forall K' \in \mathcal{KA}(\R^m) \;\; \exists K \in \mathcal{KA}(\R^n) :
	      f[\R^n \setminus K] 
	      \subseteq 
	      \R^m \setminus K'
    \\
   &\iseq    f:
	      (\R^n, \mathcal{O}^{\otimes n})
	      \rarr 
	      (\R^m, \mathcal{O}^{\otimes m})
	      \text{ is topological coercive}.
  \end{align*}  \may{Added one more equivalence (the second last statement is new)}
}
{
  \begin{align*}
   &\hphantom{~\iseq} 
	      f \text{ is normcoercive }  
   \\
   & \iseq 
	      \vectornorm{f(x)} \rarr \pinfty 
	      \text{ for } \vectornorm{x} \rarr \pinfty 
   \\
   & \iseq  
	      \forall r \in \R \;\; \exists R > 0 \;\; \forall x\in \R^n: 
	      \vectornorm{x} > R \Rarr  \vectornorm{f(x)} > r
   \\
   & \iseq  
	      \forall r \in \R \;\; \exists R > 0 :
	      f[\R^n \setminus \closedball[R][\zerovec]] 
	      \subseteq 
	      \R^m \setminus \closedball[r][\zerovec]
   \\
   &\overset{(\ast)}{\iseq}   
	      \forall r \in \R \;\; \exists K \in \mathcal{K}(\R^n) :
	      f[\R^n \setminus K] 
	      \subseteq 
	      \R^m \setminus \closedball[r][\zerovec]
   \\
   & \iseq  
	      \forall K' \in \mathcal{K}(\R^m) \;\; \exists K \in \mathcal{K}(\R^n) :
	      f[\R^n \setminus K] 
	      \subseteq 
	      \R^m \setminus K'
    \\
   &\iseq    f:
	      (\R^n, \mathcal{O}^{\otimes n})
	      \rarr 
	      (\R^m, \mathcal{O}^{\otimes m})
	      \text{ is topological coercive}.
  \end{align*}  
}%
  For the equivalence $(\ast)$ cf. 
  \prettyref{det:explanation_equivalences_star_and_diamond_in_thm_characterize_lsc_and_coercive}.
  So the equivalence of the first two statements from 
  \prettyref{thm:characterize_normcoercivity}
  is
  proved. The equivalence of the second and the third statement
  is just a special case of 
  \prettyref{thm:topological_coercivivity_is_eq_to_continuity_of_extension}.
\end{proof}

\newcommand{\CaptionForContentsContinuousArithmeticOp}{Continuous arithmetic operations in $([-\infty, +\infty], \mathcal{T})$}
\subsection[\CaptionForContentsContinuousArithmeticOp]
  {Continuous arithmetic operations in $\boldsymbol{([-\infty, +\infty], \mathcal{T})}$} 
\label{subsec:continuous_arithmetic_operations_in_intervalus_maximus_with_right_order_topology}

In this subsection we consider addition and multiplication on 
$[\minfty, \pinfty]$. In \prettyref{thm:addition_in_intervalus_maximus}
we show that there is a continuous addition
$
  +:([\minfty,\pinfty]^2, \mathcal{T}^{\otimes 2})
  \rarr
  ([\minfty,\pinfty], \mathcal{T})
$ 
on $([\minfty,\pinfty], \mathcal{T})$.
This is remarkable, since there is no continuous addition on the 
topological space
$([\minfty,\pinfty], \mathcal{O}_\leq)$,
no matter which value from $[\minfty,\pinfty]$ we choose for the critical
$\minfty + (\pinfty)$.
Regarding multiplication, however, things are more complicated.
For instance we will see in 
\prettyref{thm:multiplication_in_intervalus_maximus} that
multiplication with $\lambda \in (0,\pinfty)$ is continuous,
whereas multiplication with $\lambda \in (\minfty, 0)$ is not continuous 
-- but we should rather be happy about that: The just mentioned 
properties of the multiplication fit namely to the facts that 
multiplying a lower semicontinuous function with some 
$\lambda \in (0, \pinfty)$ gives again a lower semicontinuous function,
whereas multiplying with $\lambda \in (\minfty, 0)$ can result in 
a non lower semicontinuous function:
\begin{example}
  Consider the function $f: \R \rarr \R$ given by 
  \begin{gather*}
    f(x)
    \defeq
    \begin{cases}
      3  & \text{ for } x <0
    \\
      2  & \text{ for } x \geq 0
    \end{cases}.
  \end{gather*}
  Obviously $f$ is lower semicontinuous (but not upper semicontinuous).
  Multiplication of $f$ with $-1 \in (\minfty, 0)$ results 
  in the non lower semicontinuous function $-f$.
\end{example}

The next theorem shows that there is a continuous addition 
on $[\minfty, \pinfty]$.

\begin{theorem} \label{thm:addition_in_intervalus_maximus}
  Continuing the addition on $\R \cup \{\pinfty\}$, by setting 
  $ \pinfty + (\minfty) \defeq \minfty$ and
  $ \minfty + (\pinfty) \defeq \minfty,$ 
  we get a continuous function
  \begin{equation*}
    +:
    (
    [\minfty, \pinfty]\times[\minfty, \pinfty]
    ,
    \mathcal{T}       \otimes \mathcal{T}
    )
    \rarr
    ([\minfty, \pinfty],\mathcal{T}).
  \end{equation*}
  Setting $\pinfty + (\minfty)$ or $\minfty + (\pinfty)$ not to $\minfty$,
  but to any other value $c \in (\minfty, \pinfty ]$ would result 
  in a non-continuous mapping.
\end{theorem}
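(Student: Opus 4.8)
The plan is to reduce the continuity claim to a subbasis check. By the Agreement above, $\{(\alpha,\pinfty] : \alpha \in \R\}$ is a subbasis of $\mathcal{T}$, so it suffices to show that, for every $\alpha\in\R$, the preimage of $(\alpha,\pinfty]$ under $+$, namely
$S_\alpha := \{(x,y)\in[\minfty,\pinfty]^2 : x+y>\alpha\}$,
is open in the product topology $\mathcal{T}\otimes\mathcal{T}$ (the preimages of $\emptyset$ and of $[\minfty,\pinfty]$ being trivially open). A preliminary remark, immediate from the two stipulated conventions together with $\minfty+r=\minfty$, is that $x+y=\minfty$ holds precisely when $x=\minfty$ or $y=\minfty$; hence every $(x_0,y_0)\in S_\alpha$ has both coordinates in $(\minfty,\pinfty]$.

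First I would establish openness of $S_\alpha$ by exhibiting, around an arbitrary $(x_0,y_0)\in S_\alpha$, a basic open box $B := (\gamma,\pinfty]\times(\delta,\pinfty]$ with $\gamma,\delta\in\R$, $\gamma<x_0$, $\delta<y_0$ and $\gamma+\delta\ge\alpha$. If $x_0$ and $y_0$ are both finite I set $\gamma := x_0-\eta$, $\delta := y_0-\eta$ with $\eta := (x_0+y_0-\alpha)/2>0$; if $x_0=\pinfty$ I pick any real $\delta<y_0$ (possible since $y_0>\minfty$) and put $\gamma := \alpha-\delta$; the case $y_0=\pinfty$ is symmetric. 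Then $(x_0,y_0)\in B$, and for any $(x,y)\in B$ the bounds $x>\gamma$, $y>\delta$ with $\gamma,\delta$ finite force $x,y\in(\minfty,\pinfty]$, so $x+y=\pinfty>\alpha$ if one coordinate equals $\pinfty$, while $x+y>\gamma+\delta\ge\alpha$ otherwise; hence $B\subseteq S_\alpha$. Since $(x_0,y_0)$ was arbitrary, $S_\alpha$ is a union of such boxes and therefore open, which gives the continuity of $+$.

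For the second assertion, suppose the critical value $\pinfty+\minfty$ is reassigned to some $c\in(\minfty,\pinfty]$ (the case of $\minfty+\pinfty$ being symmetric), and denote the resulting map by $+_c$. I would fix a real $\beta<c$, so that $V := (\beta,\pinfty]$ is a $\mathcal{T}$-open neighborhood of $\pinfty+_c\minfty=c$, and show that $(+_c)^{-1}[V]$ is not a neighborhood of $(\pinfty,\minfty)$. The decisive input is \prettyref{prop:neighborhoods_of_minimal_element}: the only $\mathcal{T}$-open set containing the minimum $\minfty$ is $[\minfty,\pinfty]$. Consequently every $\mathcal{T}\otimes\mathcal{T}$-open set $W$ with $(\pinfty,\minfty)\in W$ contains a basic box $O_1\times[\minfty,\pinfty]$ with $\pinfty\in O_1$; picking a finite point $x_1\in O_1$ (such a point exists whether $O_1=[\minfty,\pinfty]$ or $O_1=(\alpha_1,\pinfty]$) and then $y_1 := \beta-x_1\in\R$ yields $(x_1,y_1)\in W$ with $\pinfty$-free coordinates, so $(+_c)(x_1,y_1)=x_1+y_1=\beta\notin V$. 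Hence no open neighborhood of $(\pinfty,\minfty)$ is mapped into $V$, and $+_c$ is discontinuous at $(\pinfty,\minfty)$.

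The hard part will be the bookkeeping in the middle step: keeping the case distinction for $\gamma$ and $\delta$ tidy, especially the mixed cases where exactly one of $x_0,y_0$ equals $\pinfty$, and staying consistent with the convention $\pinfty+\minfty=\minfty$ so that $S_\alpha$ genuinely omits the problematic point $(\pinfty,\minfty)$ for finite $\alpha$. Once the subbasis reduction and \prettyref{prop:neighborhoods_of_minimal_element} are in hand, the remaining verifications are routine.
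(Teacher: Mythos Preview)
Your proof is correct and follows essentially the same approach as the paper: verify continuity by showing that each subbasic preimage $S_\alpha$ is open via a case distinction on which coordinates are $\pinfty$, and prove the uniqueness claim by observing that every product-open neighborhood of the critical point must contain a full slice $O_1\times[\minfty,\pinfty]$ (since $\minfty$ has only $[\minfty,\pinfty]$ as neighborhood), whose image under $+$ is all of $[\minfty,\pinfty]$. The only cosmetic differences are that the paper treats the uniqueness part first and phrases it as ``the image of every neighborhood is $[\minfty,\pinfty]$'' rather than exhibiting a single bad point, and that the paper separates the case $x_0=y_0=\pinfty$ explicitly whereas you absorb it into the case $x_0=\pinfty$.
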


\begin{proof}
  We set $\minfty + (\pinfty)$ and 
  $\pinfty + (\minfty)$ to some values $c,d \in [\minfty,\pinfty]$, 
  respectively, and ask if the thereby extended addition
  $
    +:
    (
      [\minfty, \pinfty] \times [\minfty,\pinfty]
      ,
      \mathcal{T} \otimes \mathcal{T}
    )
    \rarr ([\minfty, \pinfty], \mathcal{T})
  $
  can be continuous at all in the points
  $(\minfty; \pinfty) \in [\minfty,\pinfty] \times [\minfty,\pinfty]$
  and
  $(\pinfty; \minfty) \in [\minfty, \pinfty] \times [\minfty, \pinfty]$,
  respectively.
  We deal first with the point 
  $(\minfty; \pinfty) \in [\minfty,\pinfty] \times [\minfty,\pinfty]$
  and consider the local mapping behavior of our extended addition near this
  point.
  To this end note that any neighborhood $U$ of that point contains a subset of
  the form $[\minfty,\pinfty] \times (\alpha , \pinfty]$, 
  where $\alpha \in [\minfty, \pinfty)$, and therefore is mapped to
  $+[U] = [\minfty, \pinfty]$ all the more. So we can achieve continuity in
  the point $(\minfty; \pinfty) \in [\minfty,\pinfty] \times [\minfty,\pinfty]$
  only by choosing a value $c$ whose only neighborhood is $[\minfty,\pinfty]$;
  clearly only $c=\minfty$ meets that demand.
  Analogously, setting $d=\minfty$ is the only chance to get an extended
  addition, which is continuous in the point
  $(\pinfty; \minfty) \in [\minfty,\pinfty] \times [\minfty,\pinfty]$.

  Now we prove that setting $\minfty + (\pinfty) \defeq \minfty$ and
  $\pinfty + (\minfty) \defeq \minfty$ really yields a continuous mapping
  \[
    +:
    (
    [\minfty, \pinfty]\times[\minfty, \pinfty]
    ,
    \mathcal{T}       \otimes \mathcal{T}
    )
    \rarr
    ([\minfty, \pinfty],\mathcal{T})
  \]
  To this end we show that all preimages
  \begin{align*}
    +^-[(c,\pinfty]] &= \{(a_1,a_2) \in [\minfty,\pinfty]^2 : a_1 + a_2 > c \}
    \\
		     &= \{(a_1,a_2) \in (\minfty,\pinfty]^2 : a_1 + a_2 > c \}
		      \eqdef A_c
  \end{align*}
  of the subbasis forming sets $(c,\pinfty], c \in [\minfty,\pinfty)$ are 
  again open sets. For this purpose we show that every $(a_1,a_2) \in A_c$ is an
  interior point of $A_c$, i.e. that there are neighborhoods
  $(\widecheck a_1,\pinfty]$ of $a_1$ and $(\widecheck a_2,\pinfty]$ of $a_2$
  with 
  \begin{equation*}
    \forall b_1 \in (\widecheck a_1,\pinfty], b_2 \in (\widecheck a_2,\pinfty]:
      b_1 + b_2 > c.
  \end{equation*}

  In the first case $a_1,a_2 \in \R$ we can choose 
  $\widecheck a_1 \defeq a_1 - \frac{1}{2}((a_1+a_2)-c) < a_1$ and
  $\widecheck a_2 \defeq a_2 - \frac{1}{2}((a_1+a_2)-c) < a_2$.
  In the second case $a_1 = a_2 = \pinfty$ the job is done by
  $\widecheck a_1 \defeq \frac{c}{2}$ and
  $\widecheck a_2 \defeq \frac{c}{2}$.
  In the third case $a_1 = \pinfty$ and $a_2 \in \R$
  we can choose any real $\widecheck a_2 < a_2$ and then set 
  $\widecheck a_1 \defeq c-\widecheck a_2 < \pinfty$.
  The remaining forth case $a_1 \in \R$ and $a_2 = \pinfty=a_1$ can be
  handled analogously by switching roles.
\end{proof}

Next we will consider multiplication. We start with the following lemma
which allows to transfer some of our results about 
addition to multiplication.
\begin{lemma} \label{lem:extention_of_exp}
  Extending the usual exponential function $x\mapsto e^x$ 
  via $e^\pinfty \defeq \pinfty$ and $e^\minfty \defeq 0$
  gives a homeomorphism
  $
    ([\minfty, \pinfty],\mathcal{T})
    \rarr
    ([0, \pinfty], [0,\pinfty] \Cap \mathcal{T} )
  $.
  It translates the, by means of
  $\pinfty+(\minfty) = \minfty+(\pinfty) = \minfty$, extended addition
  into the, by means of 
  $0 \cdot (\pinfty) = \pinfty \cdot 0 = 0$, extended multiplication;
  namely in virtue of
  \[
    \exp(x_1 + x_2) = \exp(x_1) \cdot \exp(x_2) 
  \]
  for all 
  $x_1, x_2 \in [\minfty,\pinfty]$.
\end{lemma}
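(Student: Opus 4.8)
The plan is to treat the two claims of the lemma separately --- that the extended $\exp$ is a homeomorphism $([\minfty,\pinfty],\mathcal{T}) \rarr ([0,\pinfty],[0,\pinfty]\Cap\mathcal{T})$, and that it carries the extended addition over to the extended multiplication --- each by a short case inspection together with results already available.

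First I would identify the topology on the codomain. Since $[0,\pinfty]$ is a subset of $[\minfty,\pinfty]$, the subspace topology $[0,\pinfty]\Cap\mathcal{T}$ consists of $\emptyset$, of $[0,\pinfty]$ itself, and of the sets $(\alpha,\pinfty]\cap[0,\pinfty]$ for $\alpha\in\R$; this intersection equals $(\alpha,\pinfty]$ when $\alpha\ge 0$ and equals all of $[0,\pinfty]$ when $\alpha<0$. Hence $[0,\pinfty]\Cap\mathcal{T}$ is precisely the right order topology of the inf-complete totally ordered set $([0,\pinfty],\leq)$ in the sense of \prettyref{rem:definition_of_right_order_topology_for_inf_complete_totally_ordered_sets}. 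Next I would note that the extended $\exp$ is an order isomorphism $([\minfty,\pinfty],\leq)\rarr([0,\pinfty],\leq)$: it restricts to the classical strictly increasing bijection $\R\rarr(0,\pinfty)$, and the two new values are placed order-consistently, namely $\exp(\minfty)=0=\min[0,\pinfty]$ and $\exp(\pinfty)=\pinfty=\max[0,\pinfty]$. Applying part \upref{enu:homeomorphism_vs_order_isomorphism} of \prettyref{thm:monotonicity_vs_continuity} to these two totally ordered sets equipped with their right order topologies then yields at once that $\exp$ is the desired homeomorphism. (Alternatively one checks directly that $\exp\bigl[(\alpha,\pinfty]\bigr]=(e^{\alpha},\pinfty]$, that $\exp^{-1}\bigl[(\beta,\pinfty]\bigr]=(\log\beta,\pinfty]$ for $\beta\in(0,\pinfty)$ and $\exp^{-1}\bigl[(0,\pinfty]\bigr]=(\minfty,\pinfty]$, so that images and preimages of the subbasis sets are again open.)

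For the second claim I would verify the identity $\exp(x_1+x_2)=\exp(x_1)\cdot\exp(x_2)$ for all $x_1,x_2\in[\minfty,\pinfty]$, where the left-hand side uses the extended addition ($\pinfty+(\minfty)=\minfty+(\pinfty)=\minfty$) and the right-hand side uses the extended multiplication ($0\cdot(\pinfty)=(\pinfty)\cdot 0=0$) together with the usual convention $a\cdot\pinfty=\pinfty$ for $a\in(0,\pinfty)$. If both arguments are real this is the classical functional equation. If exactly one is $\pinfty$ and the other is real, both sides equal $\pinfty$; if exactly one is $\minfty$ and the other is real, both sides equal $0$; the cases $x_1=x_2=\pinfty$ and $x_1=x_2=\minfty$ give $\pinfty=\pinfty\cdot\pinfty$ and $0=0\cdot 0$. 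The only delicate case is $\{x_1,x_2\}=\{\minfty,\pinfty\}$: here the left side is $\exp(\minfty)=0$ and the right side is $\pinfty\cdot 0=0$, and the point is exactly that the chosen additive convention $\pinfty+(\minfty)=\minfty$ matches the multiplicative convention $\pinfty\cdot 0=0$ under $\exp$. This exhausts all cases.

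I do not expect a genuine obstacle; the only two things that need care are confirming that $[0,\pinfty]\Cap\mathcal{T}$ really is the right order topology on $[0,\pinfty]$ (so that \prettyref{thm:monotonicity_vs_continuity} is applicable) and keeping precise track, in the boundary cases of the functional equation, of which extended addition and which extended multiplication is meant.
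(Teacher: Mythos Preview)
Your proposal is correct and follows essentially the same route as the paper: identify $[0,\pinfty]\Cap\mathcal{T}$ with the right order topology on $[0,\pinfty]$, observe that the extended $\exp$ is an order isomorphism, and invoke part~\upref{enu:homeomorphism_vs_order_isomorphism} of \prettyref{thm:monotonicity_vs_continuity}. Your treatment is in fact more thorough than the paper's, which does not spell out the case analysis for the functional equation at the boundary values; the paper simply asserts the identity and focuses only on the homeomorphism claim.
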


\begin{proof}
  Since the extended exponential function is an order isomorphism
  between the totally ordered sets ${([\minfty,\pinfty], \leq)}$ and $
  ([0,\pinfty], \leq|_{[0,\pinfty]\times[0,\pinfty]})$
  we know, by \prettyref{thm:monotonicity_vs_continuity}, that
  \[
    \exp: 
    ([\minfty, \pinfty],\mathcal{T}) 
    \rarr 
    ([0, \pinfty], [0,\pinfty] \Cap \mathcal{T});
  \]
  is an homeomorphism; note here that the subspace topology 
  $[0,\pinfty] \Cap \mathcal{T}$ is the same as the 
  right order topology on $[0, \pinfty]$,
  generated by $\leq|_{[0,\pinfty]\times[0,\pinfty]}$.  
\end{proof}

\iftime{Mutiplikation vereinheitlichen; ausmisten; SONST: -- | eventuell inzwischen schon aufgeraumt}

The following theorem deals in its first block with multiplication on 
$[0, \pinfty]$
and with multiplication on $[\minfty, \pinfty]$. 
Since the results for the latter are not as 
satisfying as the results in 
\prettyref{thm:addition_in_intervalus_maximus}
we moreover deal in a second block 
with multiplication
\begin{equation*}
  m_\lambda:
  ([\minfty, \pinfty],\mathcal{T}) 
  \rarr 
  ([\minfty, \pinfty],\mathcal{T}), 
\quad 
  m_\lambda(x) \defeq \lambda x
\end{equation*}
\emph {by a factor} $\lambda \in [\minfty,\pinfty]$,
distinguishing the cases $\lambda \in (0, \pinfty)$, $\lambda = 0$,
$\lambda \in (\minfty, 0)$, $\lambda = \pinfty$ and $\lambda = \minfty$.
We will see that  
the continuity properties of $m_\lambda$
depend heavily on $\lambda$. For instance the following holds true for 
the mapping $m_\lambda:
  ([\minfty, \pinfty],\mathcal{T}) 
  \rarr 
  ([\minfty, \pinfty],\mathcal{T})$.
\begin{itemize}
  \item 
    For $\lambda \in (0, \pinfty)$ it is a homeomorphism
    and hence in particular continuous.
  \item 
    For $\lambda \in (\minfty, 0)$ it is discontinuous
    in every point of $[\minfty, \pinfty)$.
\end{itemize}
More precisely we have the following statements.

\begin{theorem} \label{thm:multiplication_in_intervalus_maximus}
  Considering multiplication as function of two variables the 
  following statements hold true:
  \begin{enumerate}
    \item \label{enu:multiplicationIsContinuousOnClosedQuarterplaneNE} 
      Continuing the multiplication of non-negative numbers, by setting
      the problematic cases
      $ 0 \cdot (\pinfty) \defeq 0$ and
      $ (\pinfty) \cdot 0 \defeq 0,$
      we get a continuous function
      \begin{equation*}
	\cdot:
	\big(
	[0, \pinfty]\times[0, \pinfty]
	,
	([0, \pinfty]\times[0, \pinfty]) \Cap (\mathcal{T} \otimes \mathcal{T})
	\big)
	\rarr
	\big([0, \pinfty], [0, \pinfty] \Cap \mathcal{T} \big)	
      \end{equation*}
      Setting $0 \cdot (\pinfty)$ or $(\pinfty) \cdot 0 $ not to $0$,
      but to any other value $d \in (0, \pinfty ]$, would result 
      in a non-continuous mapping.

    \item \label{enu:points_where_multiplication_is_continuous}
      Continuing the multiplication on $\R$, by setting
      each of the problematic cases
      $ 0 \cdot (\pinfty), (\pinfty) \cdot 0$
      and
      $ 0 \cdot (\minfty), (\minfty) \cdot 0$
      to any four values from $[\minfty,\pinfty]$,
      we get a function which is continuous in a point 
      $x \in [\minfty,\pinfty] \times [\minfty,\pinfty]$, iff
      \begin{align*}
	x 
	\in 
	\{
	  x \in [\minfty,\pinfty] \times [\minfty,\pinfty] :
	  x_1 > 0 \text{ and } x_2 > 0
	\}
	\\
	\cup
	\{
	  x \in [\minfty,\pinfty] \times [\minfty,\pinfty] :
	  x_1 \cdot x_2 = \minfty
	\}.
      \end{align*}
\enumeratext{For multiplication by a constant factor the following 
statements hold true:}
%

    \item \label{enu:multiplication_with_positive_factor_is_continuous}
      The multiplication $m_\lambda : x \mapsto \lambda x$ by a factor
      $\lambda \in (0, \pinfty)$ is a homeomorphism
      \begin{equation*}
	m_\lambda:
	([\minfty, \pinfty],\mathcal{T}) 
	\rarr 
	([\minfty, \pinfty],\mathcal{T})
      \end{equation*}
      and thus in particular continuous.

    \item \label{enu:multiplication_with_0_is_continuous}
      If we agree $0 \cdot x = x \cdot 0 = 0$ also for  
      $x = \minfty$ and $x = \pinfty$ then the multiplication by $0$
      is also a continuous mapping
      \begin{equation*}
	m_0:
	([\minfty, \pinfty],\mathcal{T}) 
	\rarr 
	([\minfty, \pinfty],\mathcal{T}).
      \end{equation*}
\rem{	Hier kann $0 \cdot \minfty$ auch auf irgendeinen Wert
      $c \in [\minfty,0]$ gesetzt werden, ohne dasz man die Stetigkeit verliert.
      Sinnvoll?
    }
%
    \item \label{enu:multiplication_with_negative_factor_is_not_continuous}
      The multiplication $m_\lambda : x \mapsto \lambda x$ with 
      $\lambda \in (\minfty, 0)$ is a mapping 
      \begin{equation*}
	m_\lambda: 
	([\minfty, \pinfty],\mathcal{T}) 
	\rarr
	([\minfty, \pinfty],\mathcal{T}),
      \end{equation*}

      which is discontinuous in each point $[\minfty,\pinfty)$; the point
      $\pinfty$ is the only one where this mapping is continuous.
    \item \label{enu:multiplication_with_pinfty}
      Extend the multiplication with $\pinfty$ by setting the problematic
      $(\pinfty)\cdot 0$ to some value $c\in [\minfty,\pinfty]$. 
      This
      extended multiplication
	\begin{align*}
	  m_{\pinfty}: 
	  ([\minfty, \pinfty],\mathcal{T})
	  &\rarr 
	  ([\minfty, \pinfty],\mathcal{T})
	  \\
	  x
	  &\mapsto
	  (\pinfty) \cdot x \defeq 
	  \begin{cases}
	    \pinfty & \text{ for } x > 0
	    \\
	    c	      & \text{ for } x = 0
	    \\
	    \minfty & \text{ for } x < 0
	  \end{cases}
	\end{align*}
      with the factor $\pinfty$ is then continuous in all $x > 0$ and in all
      $x < 0$. In the point $0$ it is continuous, iff we have set $c=\minfty$.
    \item \label{enu:multiplication_with_minfty}
      Extend the multiplication with $\minfty$ by setting the problematic
      $(\minfty) \cdot 0$ to some value $c \in [\minfty,\pinfty]$. The,
      in this way, extended multiplication
      \begin{align*}
	m_{\minfty}:
	([\minfty, \pinfty],\mathcal{T})
	&\rarr
	([\minfty, \pinfty],\mathcal{T})
	\\
	x 
	&\mapsto 
	(\minfty) \cdot x \defeq
	\begin{cases}
	  \minfty & \text { for } x > 0
	  \\
	  c	    & \text { for } x = 0
	  \\
	  \pinfty & \text { for } x < 0
	\end{cases}
      \end{align*}
      with the factor $\minfty$ is then continuous in all $x>0$, discontinuous
      in all $x < 0$.  In $0$ it is continuous, iff
      $c = \minfty$.
  \end{enumerate}   
\end{theorem}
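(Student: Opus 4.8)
The plan is to reduce the whole theorem to two facts already at hand: that the extended addition of Theorem~\ref{thm:addition_in_intervalus_maximus} is continuous, which via the exponential homeomorphism of Lemma~\ref{lem:extention_of_exp} becomes a statement about multiplication on $[0,\pinfty]$; and the trivial observation --- Proposition~\ref{prop:neighborhoods_of_minimal_element}, Lemma~\ref{lem:continuity_in_points_which_are_mapped_to_minfty} --- that the only $\mathcal{T}$-neighborhood of $\minfty$ is the whole space, so that any mapping into $([\minfty,\pinfty],\mathcal{T})$ is automatically continuous at each point it sends to $\minfty$. Together with the equivalence ``order isomorphism $\iseq$ homeomorphism'' from Theorem~\ref{thm:monotonicity_vs_continuity}~\upref{enu:homeomorphism_vs_order_isomorphism}, this already disposes of most items.

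For part~\upref{enu:multiplicationIsContinuousOnClosedQuarterplaneNE} I would note that $\exp\times\exp$ is a homeomorphism from $([\minfty,\pinfty]^2,\mathcal{T}^{\otimes 2})$ onto $([0,\pinfty]^2,[0,\pinfty]^2\Cap\mathcal{T}^{\otimes 2})$ --- being a product of the homeomorphisms of Lemma~\ref{lem:extention_of_exp}, where Remark~\ref{rem:productspace_of_supspaces_is_a_subspace_of_productspace} identifies the square of the subspace with the subspace of the square --- and that it conjugates the extended addition into the extended multiplication of non-negative numbers, because $\exp(\minfty)=0=0\cdot(\pinfty)$ matches $\minfty+(\pinfty)=\minfty$. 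Continuity of $+$ then yields continuity of $\cdot$ on $[0,\pinfty]^2$, and the clause on other choices of $0\cdot(\pinfty)$ transfers the corresponding clause of Theorem~\ref{thm:addition_in_intervalus_maximus}. Parts~\upref{enu:multiplication_with_positive_factor_is_continuous} and~\upref{enu:multiplication_with_0_is_continuous} are immediate: for $\lambda\in(0,\pinfty)$ the map $m_\lambda$ is a strictly increasing bijection of $[\minfty,\pinfty]$ onto itself fixing $\pm\infty$, hence an order isomorphism and thus a homeomorphism; and $m_0$ is a constant map.

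Parts~\upref{enu:multiplication_with_negative_factor_is_not_continuous}, \upref{enu:multiplication_with_pinfty} and~\upref{enu:multiplication_with_minfty} I would treat point by point. At a point $x$ with $m_\lambda(x)=\minfty$ continuity is automatic. At a point $x\neq\minfty$ with $m_\lambda(x)\in(\minfty,\pinfty]$, every basic neighborhood $(\beta,\pinfty]$ of $x$ either contains $\pinfty$ (when $x\in\R$) or contains reals of both signs (when $x=0$), and pushing a suitable such element through $m_\lambda$ produces $\minfty$ in the image; this image therefore fails to lie in the proper neighborhood $(\alpha,\pinfty]$, $\alpha\in\R$, of $m_\lambda(x)$, so $m_\lambda$ is discontinuous at $x$. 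For $\lambda\in(\minfty,0)$ the remaining point $x=\minfty$ has $m_\lambda(\minfty)=\pinfty$, and its only neighborhood, the whole space, maps bijectively onto the whole space, which lies in no proper neighborhood of $\pinfty$. The value $c$ assigned to $(\pm\infty)\cdot 0$ affects only the point $x=0$: the image of every neighborhood of $0$ already contains $\minfty$, so continuity at $0$ forces $c=\minfty$, and conversely $c=\minfty$ makes $m_\lambda$ continuous at $0$.

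The heart of the proof is part~\upref{enu:points_where_multiplication_is_continuous}. On the region $\{x_1>0,\,x_2>0\}$ I would pick a basic neighborhood $(\beta_1,\pinfty]\times(\beta_2,\pinfty]$ with $\beta_i\in(0,x_i)$; on it the extended multiplication coincides with the one from part~\upref{enu:multiplicationIsContinuousOnClosedQuarterplaneNE}, and since such neighborhoods also form a neighborhood basis of $(x_1,x_2)$ in $([\minfty,\pinfty]^2,\mathcal{T}^{\otimes 2})$, continuity at $x$ follows from part~\upref{enu:multiplicationIsContinuousOnClosedQuarterplaneNE}. On the region $\{x_1\cdot x_2=\minfty\}$ continuity is again automatic. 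For every other point --- where $x_1\cdot x_2\neq\minfty$ and not both coordinates are positive --- I would show that each basic product neighborhood $U_1\times U_2$ of $x$ has $\minfty$ in its image: after swapping coordinates assume $x_1\leq 0$; if $x_1=\minfty$ then $U_1$ is the whole space and, since $x_2\neq\minfty$, $U_2\ni\pinfty$, giving $(\minfty)\cdot(\pinfty)=\minfty$; if $x_1\in(\minfty,0]$ then $U_1\supseteq(\beta_1,\pinfty]$ with $\beta_1<x_1\leq 0$ contains a negative real, and when $x_2>\minfty$ we have $\pinfty\in U_2$ so that the product is $\minfty$, while when $x_2=\minfty$ we have $\minfty\in U_2$ and a positive real in $U_1$, again yielding $\minfty$; the case $x_1>0,\,x_2\leq 0$ is symmetric. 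Since $m(x)\neq\minfty$ has a proper neighborhood avoiding $\minfty$, no neighborhood of $x$ is mapped into it, so $m$ is discontinuous at $x$. Finally the two named sets are disjoint (on the first the product is positive), so their union is exactly the continuity locus. The main obstacle is precisely this bookkeeping in part~\upref{enu:points_where_multiplication_is_continuous}: correctly exhausting the sign/infinity cases on the bad region --- keeping the non-commutativity at $0\cdot(\pm\infty)$ in mind so that the ``swap coordinates'' step is legitimate --- and justifying on the good quadrant the reduction to basic neighborhoods sitting inside $(0,\pinfty]$, so that part~\upref{enu:multiplicationIsContinuousOnClosedQuarterplaneNE} applies without change.
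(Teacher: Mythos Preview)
Your plan for parts \upref{enu:multiplicationIsContinuousOnClosedQuarterplaneNE}, \upref{enu:multiplication_with_positive_factor_is_continuous}, \upref{enu:multiplication_with_0_is_continuous} is exactly the paper's, and your treatment of part \upref{enu:points_where_multiplication_is_continuous} is correct and in fact a little cleaner than the paper's on the positive quadrant (the paper verifies continuity there directly; you reduce it to part \upref{enu:multiplicationIsContinuousOnClosedQuarterplaneNE} via a basic neighborhood inside $(0,\pinfty]^2$). On the bad region of part \upref{enu:points_where_multiplication_is_continuous} your case split has a small slip --- the clause ``since $x_2\neq\minfty$'' in the sub-case $x_1=\minfty$ is not forced --- but the conclusion $U_2\ni\pinfty$ holds regardless (every $\mathcal{T}$-neighborhood contains $\pinfty$), so the argument survives.

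There is, however, a genuine gap in your unified treatment of parts \upref{enu:multiplication_with_negative_factor_is_not_continuous}--\upref{enu:multiplication_with_minfty}. Your step [B] asserts that at every $x\neq\minfty$ with $m_\lambda(x)\in(\minfty,\pinfty]$ one can push ``a suitable element'' of any basic neighborhood to $\minfty$, hence $m_\lambda$ is discontinuous there. This fails for $\lambda=\pinfty$ at any $x>0$: then $m_{\pinfty}(x)=\pinfty\neq\minfty$, but the neighborhood $(0,\pinfty]$ of $x$ contains only positive elements, each of which $m_{\pinfty}$ sends to $\pinfty$ --- there is no element mapping to $\minfty$. Your scheme would therefore wrongly conclude discontinuity precisely where the theorem asserts continuity, and you never supply the (one-line) continuity argument: the neighborhood $(0,\pinfty]$ of $x$ is mapped by $m_{\pinfty}$ into $\{\pinfty\}\subseteq(\beta,\pinfty]$. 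The paper handles this case separately for exactly this reason. A related omission: the point $x=\minfty$ for $\lambda=\minfty$ (where $m_{\minfty}(\minfty)=\pinfty$) is covered neither by your [B] (which excludes $x=\minfty$) nor by your [C] (which is stated only for finite negative $\lambda$); it needs the same ``only neighborhood is the whole space, image not contained in any $(\alpha,\pinfty]$'' argument you gave in [C].
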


\begin{proof}
  \upref{enu:multiplicationIsContinuousOnClosedQuarterplaneNE}
  With the help of the homeomorphism $\exp$ from 
  \prettyref{lem:extention_of_exp} and its higher dimensional 
  relative 
  \begin{align*}
    ([\minfty, \pinfty] \times [\minfty,\pinfty], \mathcal{T} \otimes \mathcal{T}) 
	& \rarr
    ([0, \pinfty] \times [0,\pinfty],  ([0, \pinfty] \times [0,\pinfty]) \Cap (\mathcal{T} \otimes \mathcal{T}) )
    \\
    (x_1,x_2) & \mapsto (\exp(x_1), \exp(x_2))
  \end{align*}
  we can translate our knowledge from 
  \prettyref{thm:addition_in_intervalus_maximus} 
  about the addition to the current
  \upref{enu:multiplicationIsContinuousOnClosedQuarterplaneNE},
  since those homeomorphisms yield a bijection $\beta$ between
  \[
    \mathcal{C}
    \left(
      ([\minfty, \pinfty],\mathcal{T})^{\otimes 2},([\minfty, \pinfty],\mathcal{T})
    \right)
  \]
  and
  \[
    \mathcal{C}
    \left(
      ([0, \pinfty],\mathcal{T})^{\otimes 2},([0, \pinfty],\mathcal{T})
    \right),
  \]
  namely via $\beta(f) \defeq g_f$, where 
  $
    g_f(y_1,y_2) 
    \defeq 
    \exp \left( 
		f \left(
			\exp^{-1}(y_1), \exp^{-1}(y_2)
		  \right) 
	 \right).
  $
  Choosing for $f$ the extended addition from 
  \prettyref{thm:addition_in_intervalus_maximus}
  we see that the continuous mapping 
  $\beta(+)$ is just our, by means of $0 \cdot \pinfty \defeq 0$ and
  $\pinfty \cdot 0 \defeq 0$, extended multiplication.
  It remains to show the uniqueness of that extension; assume that there is
  another continuous extension $\cdot'$ of the multiplication with, say,
  $d = 0 \cdot' (\pinfty) \in (0, \pinfty]$. Then $\beta^{-1}(\cdot ') \eqdef +'$
  would be a continuous extension of the addition with
  \[
    \minfty +' (\pinfty) 
    = 
    \exp^{-1}(0 \cdot' (\pinfty) )
    = 
    \exp^{-1}(d) \neq \minfty.
  \]
  But such a continuous extension of the addition does not exist 
  by \prettyref{thm:addition_in_intervalus_maximus}.

  \upref{enu:points_where_multiplication_is_continuous}
  We show that the (extended) multiplication is continuous in point $x$ with
  $x_1, x_2 > 0$. To this end let 
  $(\beta, \pinfty], \beta \in [\minfty, x_1 \cdot x_2)$ be some neighborhood of
  $x_1 \cdot x_2 > 0$. 
\MayChangesPartiallyPerformedVersionOrHareBrainedOfficialVersion{%
  [The case of the largest possible neighborhood $[\minfty, \pinfty]$ can be skipped since 
   $[\minfty, \pinfty] \supseteq (\beta, \pinfty]$.] \may{The sentence in brackets was added}
}
{}%
  Choose any $\widecheck x_1, \widecheck x_2 > 0 $ 
  with $\widecheck x_1 < x_1$ and
  $\widecheck x_2 < x_2$, $\beta < \widecheck x_1 \cdot \widecheck x_2$. Then 
  $(\widecheck x_1, \pinfty] \times (\widecheck x_2, \pinfty]$ is a neighborhood of
  $x$ which is mapped by the (extended) multiplication into 
  $(\beta, \pinfty]$. This shows the continuity in $x$.

  It remains to show that the (extended) multiplication is continuous in a point 
  \begin{gather*}
    x \in 
    ([\minfty,\pinfty] \times [\minfty,\pinfty] ) 
    \setminus 
    \{(x_1,x_2): x_1 > 0 \text{ and } x_2 > 0\},
  \end{gather*}
  iff $x_1 \cdot x_2 = \minfty$.
\MayChangesPartiallyPerformedVersionOrHareBrainedOfficialVersion{%
  \may{The justification of 
       ``multiplication continuous in $x \in $... $\iseq x_1\cdot x_2 = \minfty$ ''
       was rewritten in oder to remove some incorrect arguments}
  Let first the extended multiplication be continuous in a point
  $
    x \in 
    ([\minfty,\pinfty] \times [\minfty,\pinfty] ) 
    \setminus 
    \{(x_1,x_2): x_1 > 0 \text{ and } x_2 > 0\}
  $.
  Assume without loss of generality
  $x_1 \leq x_2$, so that $x_1 \leq 0$.
  Since any neighborhood $U$ of $x$ contains a subset of the form
  $(\widecheck x_1 , \pinfty] \times (\widecheck x_2, \pinfty]$
  with $\widecheck x_1 < 0$ and $\widecheck x_2 \leq x_2$ we see that 
  $\cdot [U] = [\minfty,\pinfty] $. Since the multiplication
  $\cdot$ is continuous in $x$ the latter equation means
  that $[\minfty,\pinfty]$ must be the only neighborhood of $x_1 \cdot x_2$;
  This is only the case if $x_1 \cdot x_2 = \minfty$.
  To the contrary \prettyref{lem:continuity_in_points_which_are_mapped_to_minfty} assures 
  that the multiplication is continuous in points 
  $x$ with $x_1 \cdot x_2 = \minfty$.
}
{Assume that 
  $
    x \in 
    ([\minfty,\pinfty] \times [\minfty,\pinfty] ) 
    \setminus 
    \{(x_1,x_2): x_1 > 0 \text{ and } x_2 > 0\}
  $.
  Due to the commutativity of the multiplication we may assume
  $x_1 \leq x_2$, without loss of generality, so that we have $x_1 \leq 0$.
  Since any neighborhood $U$ of $x$ contains a subset of the form
  $(\widecheck x_1 , \pinfty] \times (\widecheck x_2, \pinfty]$
  with $\widecheck x_1 < 0$ and $\widecheck x_2 \leq x_2$ we see that 
  $\cdot [U] = [\minfty,\pinfty] $. Since the multiplication
  $\cdot$ has to be continuous in $x$ the latter equation means
  that $[\minfty,\pinfty]$ must be the only neighborhood of $x_1 \cdot x_2$;
  This is only the case if $x_1 \cdot x_2 = \minfty$.
  Finally \prettyref{lem:continuity_in_points_which_are_mapped_to_minfty} assures 
  that the multiplication is continuous in points 
  $x$ with $x_1 \cdot x_2 = \minfty$.}%
  
  \upref{enu:multiplication_with_positive_factor_is_continuous} 
  The multiplication by a constant factor $\lambda \in (0, \pinfty)$ 
  is an order automorphism of $([\minfty,\pinfty], \leq)$.
  Because of part \upref{enu:homeomorphism_vs_order_isomorphism} of 
  \prettyref{thm:monotonicity_vs_continuity} it is therefore a homeomorphism 
  $([\minfty, \pinfty],\mathcal{T}) \rarr ([\minfty, \pinfty],\mathcal{T})$.

  \upref{enu:multiplication_with_0_is_continuous} A constant mapping between
  topological spaces is continuous.

  \upref{enu:multiplication_with_negative_factor_is_not_continuous}
  The continuity in $\pinfty$ is assured by 
  \prettyref{lem:continuity_in_points_which_are_mapped_to_minfty}.
  Let now $x \in [\minfty,\pinfty)$ and choose any $x_2 > x$. Since
  $\lambda x_2 < \lambda x_1$ we get the discontinuity of $m_\lambda$ in
  $x$ by part 
  \upref{enu:continuity_in_point_implies_order_property}
  of
  \prettyref{thm:monotonicity_vs_continuity}.

  \upref{enu:multiplication_with_pinfty} The continuity of $m_{\pinfty}$ in $x<0$
  is ensured by \prettyref{lem:continuity_in_points_which_are_mapped_to_minfty}.
  $m_{\pinfty}$ is also continuous in a point $x > 0$: Let $(\beta, \pinfty]$ be
  any neighborhood of $x$. 
\MayChangesPartiallyPerformedVersionOrHareBrainedOfficialVersion{%
  [The case of the largest possible neighborhood $[\minfty, \pinfty]$ can be skipped again since 
   $[\minfty, \pinfty] \supseteq (\beta, \pinfty]$.] \may{The sentence in brackets was added}
}
{}%
  Then $U \defeq (0, \pinfty]$ is a neighborhood of $x$
  which is mapped by $m_{\pinfty}$ into $(\beta, \pinfty]$.
  Consider the remaining point $x=0$. If we had set $c= \minfty$ we have continuity in
  $0$ again by \prettyref{lem:continuity_in_points_which_are_mapped_to_minfty}.
  If we had set $c > \minfty$ we can choose any neighborhood $(\beta, \pinfty]$
  of $c$. Since every neighborhood $U$ of $0$ contains an element $u < 0$ we have
  $\minfty \in m_{\pinfty}[U]$, so that we get
  $m_{\pinfty}[U] \not \subseteq (\beta, \pinfty]$ for all neighborhoods $U$ of $0$;
  i.e. $m_{\pinfty}$ is not continuous in $0$.

  \upref{enu:multiplication_with_minfty} The continuity of $m_\minfty$ in points
  $x > 0 $ is again ensured by
  \prettyref{lem:continuity_in_points_which_are_mapped_to_minfty}. Yet 
  in every point $x < 0$ this mapping is not continuous by part 
  \upref{enu:continuity_in_point_implies_order_property} of
  \prettyref{thm:monotonicity_vs_continuity},
  since for any $x_2 > 0 > x$ we have $m_\minfty(x_2) < m_\minfty(x)$.
  Consider now the remaining point $x=0$. If we had set $c= \minfty$ 
  we have continuity in $0$ once more by 
  \prettyref{lem:continuity_in_points_which_are_mapped_to_minfty}. If
  $c > \minfty $ we can just argue as before in the case $x<0$ to see that 
  $m_\minfty$ is not continuous in $0$.
\end{proof}


\section{Compact continuations} \label{sec:compact_continuations}
\addtointroscoll{

In this subsection we will introduce and deal with 
the notion of compact continuation of functions 
$f:(V,\mathcal{O}) \rarr (V',\mathcal{O}')$
between topological spaces $(V,\mathcal{O})$ and $(V',\mathcal{O}')$. 
This notion is, as far as the author knows, new.

Due to \prettyref{thm:characterize_lsc_and_coercive}
the lower semicontinuity and coercivity of a mapping 
$h: \R^n \rarr [\minfty, \pinfty]$ can be proven by checking that 
a certain extension $\widehat h: \R^n_\infty \rarr [\minfty, \pinfty]$ 
of $h$ is continuous,
i.e., in other words, if 
the mapping $\widehat h$ is a compact continuation of $h$.
If $h = g \circ f$, as in \prettyref{sec:application_to_an_example},
then the question arises if 
$h$ has that compact continuation provided that both $f$ and $g$ 
have according compact continuations.
An answer to this question is given in 
\prettyref{thm:concat_compactly_continuable_mappings}.

We remark here that this technique goes beyond the technique of 
proving coercivity of a mapping $h: \R^n \rarr [\minfty, \pinfty]$
by writing it as composition $h = g \circ f$ of a normcoercive mapping 
$f: \R^n \rarr \R^m$ and a coercive mapping 
$g: \R^m \rarr [\minfty, \pinfty]$, since the latter technique 
works only for 
decompositions of $h$ where the intermediate space is
$\R^m$, whereas the first technique can -- at least in principle -- work also 
for decompositions into functions $f: \R^n \rarr Y$ and 
$g: Y \rarr [\minfty, \pinfty]$ where the intermediate space can 
any topological space $Y$, like  e.g.
the product space 
$([\minfty, \pinfty] \times [\minfty, \pinfty], \mathcal{T} \otimes \mathcal{T})$
in the decomposition $g = g_2 \circ g_1$ in 
\prettyref{lem:application_of_compact_continuations_to_get_coercivity_and_lsc_of_sum}.

However our topological technique
has two disadvantages:
It can not be used to prove coercivity of a non lower semicontinuous
function and more important: Even if we have 
a straightforward choice of continuing each of the concatenated 
function in $h = g \circ f$ to functions 
$\widehat f: \hat X \rarr \hat Y$
and $\hill g : \hill Y \rarr \hill Z$
it might still be the case that
working with our technique could be somewhat cumbersome 
in cases where $\hat Y \neq \hill Y$.

\DissVersionForMeOrHareBrainedOfficialVersion
{Another more convenient approach would be to find new coercivity concepts
which include the old ones but which are not limited to decompositions 
$h = g \circ f$ where the intermediate space has to be $\R^m$.
In this approach we would not try to continue $g$ and $f$
but rather the ingredients used for measuring coercivity;
for instance norms could be (compactly?) continued to 
what could be named hypernorms. Norms could be also be replaced 
by more general ``levelfunctions'', cf. the Appendix
with the suggestions for further work.
%
}
{}

}

We now define the notion of compact continuation.
As far as the author knows this notion is new.
\begin{definition}
  A continuous mapping $f:(V,\mathcal{O}) \rarr (V',\mathcal{O}')$ between 
  topological spaces $(V,\mathcal{O})$, $(V', \mathcal{O}')$ 
  is called \textbf{compactly continuable} if there is a continuation 
  \begin{equation*}
    \widehat f: (\widehat V, \widehat{\mathcal{O}} ) \rarr (\widehat V', \widehat{\mathcal{O}}')
  \end{equation*}
  which fulfills:
  \begin{enumerate}
    \item
      $(\widehat V, \widehat {\mathcal{O}})$ is a compact topological space which contains
      $(V, \mathcal{O})$ as subspace,
    \item
      $(\widehat V', \widehat {\mathcal{O}'})$ is a topological space that contains
      $(V', \mathcal{O}')$ as subspace,
    \item 
      $\widehat f$ is continuous and fulfills $\widehat f(v) = f(v)$ for all $v \in V.$
  \end{enumerate}
  Each such continuation $\widehat f$ will be called \textbf{compact continuation}
  of $f$.
  If $\widehat f$ fulfills in addition
  $\widehat f [\widehat V  \setminus V] \subseteq \widehat V' \setminus V'$
  we call $\widehat f$ a \textbf{home leaving compact continuation} of $f$.

\end{definition}

\begin{theorem} \label{thm:concat_compactly_continuable_mappings}
  Assume that the two continuous mappings 
  $f:(X,\mathcal{O}_X) \rarr (Y,\mathcal{O}_Y),$
  $g:(Y,\mathcal{O}_Y) \rarr (Z,\mathcal{O}_Z)$
  have compact continuations
  \[
  \begin{alignedat}{1}
    \widehat f : (\widehat X, \mathcal{O}_{\widehat X}) \rarr 
	      (&\widehat Y, \mathcal{O}_{\widehat Y}),
    \\
    \hill g:  (&\hill Y, \mathcal{O}_{\hill Y}) \rarr
	       (\hill Z, \mathcal{O}_{\hill Z}).
  \end{alignedat}
  \]
  Then $g \circ f \eqcolon h$ has a compact continuation 
  \begin{equation*}
    \widehat h: (\widehat X, \mathcal{O}_{\widehat{X}} ) \rarr
	      (\hill Z, \mathcal{O}_{\hill Z})
  \end{equation*}
  if\DissVersionForMeOrHareBrainedOfficialVersion{ at least}{}
  one of the following conditions is fulfilled:
  \begin{enumerate}
    \item \label{enu:continue_id_in_easy_direction}
      $\id_Y: (Y,\mathcal{O}_Y) \rarr (Y,\mathcal{O}_Y) $ 
      has a compact continuation\\
      $
	\widehat {\id_Y} :
	(\widehat Y , \mathcal{O}_{\widehat Y}) \rarr (\hill Y, \mathcal{O}_{\hill Y})
      $.
    \item \label{enu:continue_id_in_hard_direction_general_case}
      $\id_Y: (Y,\mathcal{O}_Y) \rarr (Y,\mathcal{O}_Y) $ 
      has a compact continuation\\
      $
	\hill {\id_Y} : 
	(\hill Y, \mathcal{O}_{\hill Y}) \rarr (\widehat Y, \mathcal{O}_{\widehat Y})
      $ which, firstly, glues 
      $(\hill Y, \mathcal{O}_{\hill Y})$ to $(\widehat Y, \mathcal{O}_{\widehat Y})$
      and, secondly, fulfills 
      $
	\hill {\id_Y}(y_1) = \hill {\id_Y}(y_2)
	\implies
	\hill g (y_1) = \hill g(y_2)
      $, 
      for all $y_1,y_2 \in \hill Y$.
    
    %
    %
    \item \label{enu:continue_id_in_hard_direction_special_case}
            $\id_Y: (Y,\mathcal{O}_Y) \rarr (Y,\mathcal{O}_Y) $ 
      has a surjective compact continuation\\
      $
	\hill {\id_Y} : 
	(\hill Y, \mathcal{O}_{\hill Y}) \rarr (\widehat Y, \mathcal{O}_{\widehat Y})
      $ 
      where, firstly,  $(\widehat Y, \mathcal{O}_{\widehat Y})$ is a 
      Hausdorff space and, secondly, the condition
      $
	\hill {\id_Y}(y_1) = \hill {\id_Y}(y_2)
	\implies
	\hill g (y_1) = \hill g(y_2)
      $ 
      holds true for all $y_1,y_2 \in \hill Y$.
  \end{enumerate}
  If, in addition to 
  \upref{enu:continue_id_in_easy_direction} or  
  \upref{enu:continue_id_in_hard_direction_general_case} /
  \upref{enu:continue_id_in_hard_direction_special_case},
  respectively, both $\widehat f$ and 
  ($\,\,\widehat {\id_Y}$ or $\hill {\id_Y}$, respectively)
  are home leaving compact continuations then $\widehat h$ can be chosen such that
  it fulfills
  \begin{equation}
    \widehat h[\widehat X \setminus X]
    \subseteq 
    \hill g [\hill Y \setminus Y].
  \end{equation}

  
\end{theorem}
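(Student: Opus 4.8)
The plan is to treat the three cases separately, in each case writing down an explicit continuation of $h$ and checking the three defining properties of a compact continuation; the only genuinely new ingredient is a quotient-type factorization argument in cases \upref{enu:continue_id_in_hard_direction_general_case} and \upref{enu:continue_id_in_hard_direction_special_case}. First I would dispose of case \upref{enu:continue_id_in_easy_direction}, where all three extensions point ``the same way'', so that one may simply set $\widehat h \defeq \hill g \circ \widehat{\id_Y} \circ \widehat f : (\widehat X, \mathcal{O}_{\widehat X}) \rarr (\hill Z, \mathcal{O}_{\hill Z})$. As a composition of continuous mappings $\widehat h$ is continuous; its domain $(\widehat X, \mathcal{O}_{\widehat X})$ is compact and contains $(X, \mathcal{O}_X)$ as a subspace because $\widehat f$ is a compact continuation of $f$, and $(\hill Z, \mathcal{O}_{\hill Z})$ contains $(Z, \mathcal{O}_Z)$ as a subspace because $\hill g$ is a compact continuation of $g$. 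For $x \in X$ we have $\widehat f(x) = f(x) \in Y$, hence $\widehat{\id_Y}(f(x)) = \id_Y(f(x)) = f(x)$ and therefore $\widehat h(x) = \hill g(f(x)) = g(f(x)) = h(x)$, so $\widehat h$ is a compact continuation of $h$.

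Next I would handle case \upref{enu:continue_id_in_hard_direction_general_case}, which is the heart of the argument. Since $\hill{\id_Y}$ glues $(\hill Y, \mathcal{O}_{\hill Y})$ to $(\widehat Y, \mathcal{O}_{\widehat Y})$ it is in particular surjective, and by the second hypothesis of \upref{enu:continue_id_in_hard_direction_general_case} the mapping $\hill g$ is constant on each fibre of $\hill{\id_Y}$; hence there is a well-defined mapping $g^\ast : \widehat Y \rarr \hill Z$ determined by $g^\ast\big(\hill{\id_Y}(y)\big) \defeq \hill g(y)$ for $y \in \hill Y$, and it satisfies $g^\ast \circ \hill{\id_Y} = \hill g$. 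Applying part \upref{enu:continuous_mappings__from_ori_space_vs_from_glued_space} of \prettyref{thm:mappings_from_ori_space_vs_mappings_from_glued_space} to the identifying mapping $\hill{\id_Y}$ and the continuous mapping $\hill g = g^\ast \circ \hill{\id_Y}$, we conclude that $g^\ast : (\widehat Y, \mathcal{O}_{\widehat Y}) \rarr (\hill Z, \mathcal{O}_{\hill Z})$ is continuous. Now I set $\widehat h \defeq g^\ast \circ \widehat f$; this is continuous, is defined on the compact space $(\widehat X, \mathcal{O}_{\widehat X}) \supseteq (X, \mathcal{O}_X)$, and takes values in $(\hill Z, \mathcal{O}_{\hill Z}) \supseteq (Z, \mathcal{O}_Z)$. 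For $x \in X$ we obtain $\widehat h(x) = g^\ast(\widehat f(x)) = g^\ast(f(x)) = g^\ast\big(\hill{\id_Y}(f(x))\big) = \hill g(f(x)) = g(f(x)) = h(x)$, using $\hill{\id_Y}(f(x)) = f(x)$; hence $\widehat h$ is a compact continuation of $h$. Case \upref{enu:continue_id_in_hard_direction_special_case} then reduces to this one: by the definition of a compact continuation the domain $(\hill Y, \mathcal{O}_{\hill Y})$ of $\hill{\id_Y}$ is a compact space, and $(\widehat Y, \mathcal{O}_{\widehat Y})$ is assumed to be Hausdorff, so part \upref{enu:mapping_from_compact_to_hausdorff_implies_indentifying} of \prettyref{thm:sufficient_conditions_for_indentifying} shows that the surjective continuous mapping $\hill{\id_Y}$ glues $(\hill Y, \mathcal{O}_{\hill Y})$ to $(\widehat Y, \mathcal{O}_{\widehat Y})$, which is exactly the extra property that was used in \upref{enu:continue_id_in_hard_direction_general_case}.

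For the home-leaving addendum I would trace a point $\widehat x \in \widehat X \setminus X$ through the constructions. In case \upref{enu:continue_id_in_easy_direction} we get $\widehat f(\widehat x) \in \widehat Y \setminus Y$ since $\widehat f$ is home leaving, then $\widehat{\id_Y}(\widehat f(\widehat x)) \in \hill Y \setminus Y$ since $\widehat{\id_Y}$ is home leaving, whence $\widehat h(\widehat x) = \hill g\big(\widehat{\id_Y}(\widehat f(\widehat x))\big) \in \hill g[\hill Y \setminus Y]$. In cases \upref{enu:continue_id_in_hard_direction_general_case} and \upref{enu:continue_id_in_hard_direction_special_case} we again have $\widehat f(\widehat x) \in \widehat Y \setminus Y$; picking $y \in \hill Y$ with $\hill{\id_Y}(y) = \widehat f(\widehat x)$ (possible by surjectivity of $\hill{\id_Y}$), we must have $y \notin Y$, for otherwise $\widehat f(\widehat x) = \hill{\id_Y}(y) = \id_Y(y) = y \in Y$, a contradiction; hence $\widehat h(\widehat x) = g^\ast(\widehat f(\widehat x)) = \hill g(y) \in \hill g[\hill Y \setminus Y]$. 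In every case $\widehat h[\widehat X \setminus X] \subseteq \hill g[\hill Y \setminus Y]$, as claimed. I expect the construction of $g^\ast$ and the verification of its continuity via the gluing theorem to be the only non-mechanical step; everything else is bookkeeping with compositions, restrictions, and the definition of a compact continuation.
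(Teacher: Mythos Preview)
Your proof is correct and follows essentially the same approach as the paper's: the same composition $\hill g \circ \widehat{\id_Y} \circ \widehat f$ for case \upref{enu:continue_id_in_easy_direction}, the same factorization $\hill g = g^\ast \circ \hill{\id_Y}$ through the quotient (the paper calls your $g^\ast$ by the name $g'$) with continuity obtained from \prettyref{thm:mappings_from_ori_space_vs_mappings_from_glued_space} in case \upref{enu:continue_id_in_hard_direction_general_case}, and the same reduction of \upref{enu:continue_id_in_hard_direction_special_case} to \upref{enu:continue_id_in_hard_direction_general_case} via \prettyref{thm:sufficient_conditions_for_indentifying}. Your treatment of the home-leaving addendum is in fact more explicit than the paper's, which merely asserts that the inclusion follows easily from the constructions; your fibre-tracing argument in cases \upref{enu:continue_id_in_hard_direction_general_case}/\upref{enu:continue_id_in_hard_direction_special_case} is a nice touch, and note that it actually works without invoking the home-leaving hypothesis on $\hill{\id_Y}$, since any extension of $\id_Y$ automatically sends $Y$ into $Y$.
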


Before proving the theorem we show by an example that $g \circ f$,
does not need to have a compact continuation
$\topospace{\widehat X} \rarr \topospace{\hill Z}$, if none of the three 
conditions from the above theorem is fulfilled.
$\mathcal{O}$ denotes again the natural topology of $\R$.

\begin{example}

  Consider three copies
  \begin{gather*}
    (X,\mathcal{O}_X) 
    = 
    (Y,\mathcal{O}_Y) 
    = 
    (Z, \mathcal{O}_Z) 
    = 
    \left((0, 2\pi), (0, 2\pi) \Cap \mathcal{O}\right)
  \end{gather*}
  of the real open interval $(0, 2\pi)$ along with the identity mappings
  $f=\id_{(0, 2\pi)}: (X,\mathcal{O}_X) \rarr (Y,\mathcal{O}_Y)$ and
  $g=\id_{(0, 2\pi)}: (Y,\mathcal{O}_Y) \rarr (Z, \mathcal{O}_Z)$
  between them.
  We will extend the equal mappings $f$ and $g$
  in different ways to compact continuations
  $
    \widehat f: 
    (\widehat X, \mathcal{O}_{\widehat X})
    \rarr
    (\widehat Y, \mathcal{O}_{\widehat Y})
  $
  and
  $
    \hill g:
    (\hill Y, \mathcal{O}_{\hill Y})
    \rarr
    (\hill Z, \mathcal{O}_{\hill Z})
  $
  such that 
  $h \defeq g \circ f$ 
  can not be extended to a compact continuation 
  $
    \widehat h: 
    (\widehat X, \mathcal{O}_{\widehat X})
    \rarr 
    (\hill Z, \mathcal{O}_{\hill Z})
  $.
  Let
  \begin{alignat*}{5}
    \widehat f &\defeq \id_{(0, 2\pi) \cup \{ \infty \}}:{} &&
    \big( (0, 2\pi), (0,2\pi) \Cap \mathcal{O} \big)_\infty
    && \rarr
    \big( (0, 2\pi), (0,2\pi) \Cap \mathcal{O} \big)_\infty \,,
  \\ 
    \hill g & \defeq \id_{[0, 2\pi]}:{}  &&
    \big([0,2\pi], [0, 2\pi] \Cap \mathcal{O} \big) 
    && \rarr 
    \big([0,2\pi], [0, 2\pi] \Cap \mathcal{O} \big)
  \end{alignat*}
  and set
  \begin{alignat*}{3}
    (\widehat X, \mathcal{O}_{\widehat X})
    &\defeq
    (\widehat Y, \mathcal{O}_{\widehat Y})
    &&\defeq 
    \big( (0, 2\pi), (0,2\pi) \Cap \mathcal{O} \big)_\infty \,,
  \\
    (\hill Y, \mathcal{O}_{\hill Y})
    &\defeq 
    (\hill Z, \mathcal{O}_{\hill Z})
    &&\defeq
    ([0,2\pi], [0, 2\pi] \Cap \mathcal{O}).   
  \end{alignat*}
  The functions $\widehat f$ and $\hill g$ are compact continuations of 
  $f$ and $g$, respectively, but it is not possible to
  extend $h \defeq g \circ f = \id_{(0, 2\pi)}$  to a compact continuation 
  $
    \widehat h: 
    (\widehat X, \mathcal{O}_{\widehat X})
    \rarr 
    (\hill Z, \mathcal{O}_{\hill Z})
  $;
  indeed, if such a continuous mapping $\widehat h$ existed, it would have 
  to map its compact domain of definition to a compact subspace of 
  $(\hill Z, \mathcal{O}_{\hill Z})
    =
    ([0, 2\pi], [0, 2\pi] \Cap \mathcal{O})
  $;
  however
  \[
    \widehat h [\widehat X]
    =
    \widehat h [(0, 2\pi) \cup \{\infty\}]
    =
    (0, 2\pi) \cup \{\widehat h(\infty)\}
  \]
  would never be a compact subset of 
  $([0, 2\pi], [0, 2\pi] \Cap \mathcal{O})$ 
  -- regardless whether $\widehat h(\infty) = 0$, $\widehat h(\infty) = 2\pi$ or
  $\widehat h(\infty) \in (0,2\pi)$.

  So we know by the last theorem that none of the conditions 
  \upref{enu:continue_id_in_easy_direction},
  \upref{enu:continue_id_in_hard_direction_general_case}
  and
  \upref{enu:continue_id_in_hard_direction_special_case}
  can be fulfilled. We nevertheless verify this directly, 
  to complete our illustration of the preceding theorem.

  \upref{enu:continue_id_in_easy_direction} is not fulfilled as we just have
  shown by proving the nonexistence of a compact continuation 
  $
    \widehat h: 
    (\widehat X, \mathcal{O}_{\widehat X})
    \rarr 
    (\hill Z, \mathcal{O}_{\hill Z})
  $,
  i.e. of a compact continuation
  $
    \widehat \id_{(0, 2\pi)} =\widehat h: 
    (\widehat Y, \mathcal{O}_{\widehat Y})
    \rarr 
    (\hill Y, \mathcal{O}_{\hill Y})
  $.

  Furthermore 
  \upref{enu:continue_id_in_hard_direction_general_case}
  and 
  \upref{enu:continue_id_in_hard_direction_special_case}
  are not fulfilled, since any continuation of
  $\id_{(0,2\pi)}: (0, 2\pi) \rarr (0, 2\pi)$ to a mapping
  $
    \hill {\id_{(0, 2\pi)}}: 
    (0,2\pi) \cup \{0, 2\pi\}
    \rarr 
    (0,2\pi) \cup \{\infty\}     
  $
  is not injective any longer, 
  so that there is no chance for the injective mapping $\hill g$
  to fulfill 
  $ 
    \hill g (y_1)
    =
    \hill g (y_2)
  $
  in the occurring case that
  $
    \hill {\id_{(0, 2\pi)}}(y_1) 
    = 
    \hill {\id_{(0, 2\pi)}}(y_2)
  $
  for distinct points $y_1, y_2 \in \hill Y$.
\end{example}

\begin{proof}[Proof of \prettyref{thm:concat_compactly_continuable_mappings}]
  If \upref{enu:continue_id_in_easy_direction} holds,
  it suffices to take
  $
    \widehat h 
    \defeq 
    \hill g
    \circ
    \widehat {\id_Y} 
    \circ
    \widehat f
  $.

  Assume now that condition 
  \upref{enu:continue_id_in_hard_direction_general_case} holds. 
%
  The mapping $g': \topospace{\widehat Y} \rarr \topospace{\hill Z}$, given by
  \[
    g'(\widehat y) 
    \defeq 
    ``\hill g (\hill{\id_Y}^-[\{\widehat y\}])``
    \defeq
    \hill g (\hill y), 
    \text{ where } 
    \hill y 
    \text{ is any element of } 
    \hill Y 
    \text{ with }
    \hill{\id_Y}(\hill y)
    =
    \widehat y
  \]
  is well defined since
  $
	\hill {\id_Y}(y_1) = \hill {\id_Y}(y_2)
  $
  ensures
  $	
	\hill g (y_1) = \hill g(y_2)
  $, 
  for all $y_1,y_2 \in \hill Y$.
  The definition of $g'$ was done in such a way that 
  $\hill g = g' \circ \hill{\id_Y}$. 
  \begin{gather*}
    \xymatrix{
      & \topospace{\widehat Y}  \ar[dr]|-{g'}   \\
      & \topospace{\hill Y}   \ar[u]^{\hill {\id_Y}}  \ar[r]|-{\hill g}  & \topospace{\hill Z}    
    }
  \end{gather*}
  This implies, firstly, the continuity
  of $g'$, in virtue of 
  \prettyref{thm:mappings_from_ori_space_vs_mappings_from_glued_space},
  and, secondly, $g'(y) = \hill g (y)$ at least for all $y\in Y$.
  Thus we have found a compact continuation
  $g': \topospace{\widehat Y} \rarr \topospace{\hill Z}$ of
  $g: \topospace{Y} \rarr \topospace{Z}$. The concatenation 
  $\widehat h \defeq g' \circ \widehat f$ is the needed extension.

\MayChangesPartiallyPerformedVersionOrHareBrainedOfficialVersion{%
  The relation 
  $ \widehat h[\widehat X \setminus X]
    \subseteq 
    \hill g [\hill Y \setminus Y]
  $
  follows easily from the constructions of $\widehat h$ if the involved 
  mappings are homeleaving compact continuations.\may{This sentence was added}
}%
{}%
  Finally note that the assumptions in
  \upref{enu:continue_id_in_hard_direction_special_case}
  imply the assumptions in 
  \upref{enu:continue_id_in_hard_direction_general_case},
  in virtue of
  \prettyref{thm:sufficient_conditions_for_indentifying}.
\end{proof}

Next we deal with a special case of 
\prettyref{thm:concat_compactly_continuable_mappings}, where the 
``intermediate'' spaces $\topospace{Y}$, $\topospace{\hill Y}$, 
$\topospace{\widehat Y}$ and the continuation $\hill g$ have 
special forms,
which will occur, when applying the theory to our example
in \prettyref{sec:application_to_an_example}.
We start with the following preparatory lemma.

\begin{lemma} \label{lem:torus_like_and_one_point_compactification}
  For locally compact Hausdorff spaces 
  $\decorspace{Y}{'}$ and $\decorspace{Y}{''}$
  the following is true:
  \begin{enumerate}
    \item \label{enu:one_point_compactification_of_product_and_product_of_one_point_compactification_are_superspaces}
      Both $[\decorspace{Y}{'} \varotimes \decorspace{Y}{''} ]_\infty $ and 
      $\compdecorspace{Y}{'} \varotimes \compdecorspace{Y}{''}$ are
      compact Hausdorff spaces which contain 
      $\decorspace{Y}{'} \varotimes \decorspace{Y}{''}$
      as subspace.
    \item \label{enu:extension_of_id_on_product_space_to_a_homeleaving_compact_continuation}
      An extension of 
      $
	\id : 
	\decorspace{Y}{'} \varotimes \decorspace{Y}{''}
	\rarr
	\decorspace{Y}{'} \varotimes \decorspace{Y}{''}
      $
      to a surjective, homeleaving compact continuation
      $
	\overline{\id}: 
	\compdecorspace{Y}{'} \varotimes \compdecorspace{Y}{''}
	\rarr
	[\decorspace{Y}{'} \varotimes \decorspace{Y}{''} ]_\infty
      $ 
      is given by
      \[
	\overline{\id}(y', y'') 
	\defeq
	\begin{cases}
	  (y', y'')  & \text{, if } y' \in Y' \text{ and } y'' \in Y'' \\
	  \infty     & \text{, if } y' = \infty' \text{ or } y'' = \infty''.
	\end{cases}
      \]
  \end{enumerate}
\end{lemma}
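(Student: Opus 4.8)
The plan is to reduce the whole statement to facts already in the excerpt: \prettyref{thm:one_point_compactification} on one-point compactifications, \prettyref{thm:little_tichonov} (finite Tichonov), \prettyref{rem:productspace_of_supspaces_is_a_subspace_of_productspace} on interchanging ``subspace'' with ``product'', and \prettyref{thm:relation_closed_compact} relating compactness and closedness. For part \upref{enu:one_point_compactification_of_product_and_product_of_one_point_compactification_are_superspaces}, I would first note that $(Y',\mathcal{O}') \otimes (Y'',\mathcal{O}'')$ is itself a locally compact Hausdorff space: a finite product of Hausdorff spaces is Hausdorff, and given a point $(y',y'')$, choosing compact neighborhoods $K'$ of $y'$ and $K''$ of $y''$ yields a compact neighborhood $K' \times K''$ of $(y',y'')$ by \prettyref{thm:little_tichonov}, using \prettyref{rem:productspace_of_supspaces_is_a_subspace_of_productspace} to see that the subspace topology on $K'\times K''$ agrees with the product of the two subspace topologies. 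Hence \prettyref{thm:one_point_compactification} shows that $[(Y',\mathcal{O}')\otimes(Y'',\mathcal{O}'')]_\infty$ is a compact Hausdorff space containing $(Y',\mathcal{O}')\otimes(Y'',\mathcal{O}'')$ as a subspace. For the other space, each factor $(Y',\mathcal{O}')_{\infty'}$, $(Y'',\mathcal{O}'')_{\infty''}$ is compact Hausdorff by \prettyref{thm:one_point_compactification}; their product is compact by \prettyref{thm:little_tichonov} and Hausdorff as a finite product of Hausdorff spaces; and since $(Y',\mathcal{O}')$ is a subspace of $(Y',\mathcal{O}')_{\infty'}$ and $(Y'',\mathcal{O}'')$ of $(Y'',\mathcal{O}'')_{\infty''}$, \prettyref{rem:productspace_of_supspaces_is_a_subspace_of_productspace} shows it contains $(Y',\mathcal{O}')\otimes(Y'',\mathcal{O}'')$ as a subspace.

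For part \upref{enu:extension_of_id_on_product_space_to_a_homeleaving_compact_continuation}, the easy observations come first: $\overline{\id}$ restricts to $\id$ on $Y'\times Y''$ by definition; it is surjective because every $(y',y'')\in Y'\times Y''$ is its own image while $\overline{\id}(\infty',\infty'')=\infty$; and it is home leaving because $(Y'_{\infty'}\times Y''_{\infty''})\setminus(Y'\times Y'')$ consists precisely of the pairs with an $\infty'$ or $\infty''$ entry, all of which $\overline{\id}$ sends to $\infty\notin Y'\times Y''$. By part \upref{enu:one_point_compactification_of_product_and_product_of_one_point_compactification_are_superspaces} the domain is compact and both spaces contain $(Y',\mathcal{O}')\otimes(Y'',\mathcal{O}'')$ as a subspace, so everything reduces to continuity of $\overline{\id}$. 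Since $Y'\times Y''$ is Hausdorff, $\mathcal{KA}(Y'\times Y'')=\mathcal{K}(Y'\times Y'')$ by \prettyref{thm:relation_closed_compact}, so the topology of $[(Y',\mathcal{O}')\otimes(Y'',\mathcal{O}'')]_\infty$ is exactly the collection of the open sets $O\in\mathcal{O}'\otimes\mathcal{O}''$ together with the sets $(Y'\times Y'')_\infty\setminus K$ for $K$ compact in $Y'\times Y''$; hence it suffices to show that $\overline{\id}^{-}$ of each of these two kinds is open.

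I would handle the two cases as follows. For $O\in\mathcal{O}'\otimes\mathcal{O}''$ one has $\overline{\id}^{-}[O]=O$ (points outside $Y'\times Y''$ map to $\infty\notin O$), and writing $O$ as a union of boxes $O_i'\times O_i''$ with $O_i'\in\mathcal{O}'\subseteq\mathcal{O}'_{\infty'}$, $O_i''\in\mathcal{O}''\subseteq\mathcal{O}''_{\infty''}$ exhibits $O$ as open in $(Y',\mathcal{O}')_{\infty'}\otimes(Y'',\mathcal{O}'')_{\infty''}$. For $O=(Y'\times Y'')_\infty\setminus K$, a short set-theoretic computation using $\overline{\id}^{-}[\{\infty\}]=(Y'_{\infty'}\times Y''_{\infty''})\setminus(Y'\times Y'')$ gives $\overline{\id}^{-}[O]=(Y'_{\infty'}\times Y''_{\infty''})\setminus K$. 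The hard part --- the heart of the lemma --- is then to see that this last set is open, i.e.\ that $K$ is closed in $(Y',\mathcal{O}')_{\infty'}\otimes(Y'',\mathcal{O}'')_{\infty''}$. I would argue that $K$, being compact as a subspace of $Y'\times Y''$, remains a compact subset of the larger product: compactness depends only on the subspace topology carried by $K$, and by \prettyref{rem:productspace_of_supspaces_is_a_subspace_of_productspace} this topology is the same whether $K$ is seen inside $Y'\times Y''$ or inside the bigger product. Since that bigger product is Hausdorff by part \upref{enu:one_point_compactification_of_product_and_product_of_one_point_compactification_are_superspaces}, part \upref{enu:compactness_implies_closeness} of \prettyref{thm:relation_closed_compact} makes $K$ closed there, so its complement is open. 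This finishes the continuity check and hence the proof.
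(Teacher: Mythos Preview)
Your proof is correct. Part \upref{enu:one_point_compactification_of_product_and_product_of_one_point_compactification_are_superspaces} is essentially identical to the paper's argument (the paper defers ``product of locally compact Hausdorff is locally compact Hausdorff'' to a detail, while you prove it inline, but the content is the same).

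In part \upref{enu:extension_of_id_on_product_space_to_a_homeleaving_compact_continuation} you take a genuinely different route for continuity. The paper works pointwise: for a preimage point of $\infty$ and a basic neighborhood $V=(Y'\times Y'')_\infty\setminus K$, it uses the continuous projections $\pi',\pi''$ to produce compact sets $\pi'[K]\subseteq Y'$, $\pi''[K]\subseteq Y''$ and then exhibits the explicit open neighborhood
\[
U=(Y'_{\infty'}\times Y''_{\infty''})\setminus(\pi'[K]\times\pi''[K])
=(Y'_{\infty'}\setminus\pi'[K])\times Y''_{\infty''}\;\cup\;Y'_{\infty'}\times(Y''_{\infty''}\setminus\pi''[K]),
\]
checking $\overline{\id}[U]\subseteq V$. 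You instead verify continuity globally via preimages, and your key step is that $K$, being compact in $Y'\times Y''$, is compact as a subspace hence compact in the bigger product, and therefore closed there because that product is Hausdorff. This is slicker: it avoids the projection machinery entirely and reduces everything to \prettyref{thm:relation_closed_compact}. The paper's construction has the minor advantage of being more explicit (it actually names an open box containing the boundary points), but your argument is shorter and equally rigorous.
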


\begin{proof}
  \upref{enu:one_point_compactification_of_product_and_product_of_one_point_compactification_are_superspaces}
  \prettyref{thm:one_point_compactification} ensures that both
  $\compdecorspace{Y}{'}$ and $\compdecorspace{Y}{''}$
  are compact Hausdorff spaces, which contain $\decorspace{Y}{'}$
  and $\decorspace{Y}{''}$, respectively, as subspace. Therefore their
  product space $\compdecorspace{Y}{'} \varotimes \compdecorspace{Y}{''}$
  is compact -- in virtue of Tichonov's Theorem
  \ref{thm:little_tichonov} -- and 
  contains 
  $
    \decorspace{Y}{'} \varotimes \decorspace{Y}{''}
    =
    (Y', Y' \Cap \mathcal{O}'_{\infty'}) \varotimes 
    (Y'', Y'' \Cap \mathcal{O}''_{\infty''})
  $ 
  as subspace, since 
  \prettyref{rem:productspace_of_supspaces_is_a_subspace_of_productspace}
  allows the reformulation
  \begin{align*}
    \big(Y', Y' \Cap \mathcal{O}'_{\infty'} \big) \varotimes 
    \big(Y'', Y'' \Cap \mathcal{O}''_{\infty''} \big)
    =
    \big (Y' \times Y'', (Y' \times Y'') \Cap 
	    (\mathcal{O}'_{\infty'} \varotimes \mathcal{O}''_{\infty''}) 
    \big).
  \end{align*} 
  By \prettyref{det:product_of_locally_compact_Hausdorff_is_of_same_type}
  the product space $\decorspace{Y}{'} \varotimes \decorspace{Y}{''}$ of
  two locally compact Hausdorff spaces is again a locally compact Hausdorff
  space, so that its one-point compactification 
  $[\decorspace{Y}{'} \varotimes \decorspace{Y}{''}]_\infty$ is a compact
  Hausdorff superspace of $\decorspace{Y}{'} \varotimes \decorspace{Y}{''}$, 
  by \prettyref{thm:one_point_compactification}.
  This show the first part of the lemma.

  \upref{enu:extension_of_id_on_product_space_to_a_homeleaving_compact_continuation}
  As core part for proving that $\overline{\id}$ is a 
  surjective, homeleaving compact continuation of $\id$ we have to show
  that $\overline{\id}$ is continuous; it easy to see, by $\overline{\id}$'s
  definition, that it fulfills the remaining properties, we had to show.
  In order to prove the continuity of $\overline{\id}$ 
  we will use that the projections 
  $
    \pi' : 
    \decorspace{Y}{'} \varotimes \decorspace{Y}{''}
    \rarr
    \decorspace{Y}{'}
  $
  and
  $
    \pi'' : 
    \decorspace{Y}{'} \varotimes \decorspace{Y}{''}
    \rarr
    \decorspace{Y}{''}
  $
  to the first and second component, respectively, are continuous and
  therefore map compact subsets of 
  $\decorspace{Y}{'} \varotimes \decorspace{Y}{''}$ to compact subsets of
  $\decorspace{Y}{'}$ and $\decorspace{Y}{''}$, respectively.
  In every point $(y', y'')$ of
  the open subset 
  $
    Y' \times Y'' 
    \in 
    \mathcal{O}'_{\infty'} \varotimes \mathcal{O}''_{\infty''}
  $
  the mapping $\overline{\id}$ is clearly continuous.
  It remains to show that $\overline{\id}$ is continuous in all points
  of the form $(\infty', y'')$ or $(y', \infty'')$ where 
  $y' \in Y'_{\infty'}$ and 
  $y''\in Y''_{\infty''}$. In order to show the continuity in all 
  these preimage points of $\infty$ we consider any neighborhood
  \[
    V = (Y' \times Y'')_\infty \setminus K
  \]
  of $\infty$, with arbitrary 
  $
    K 
    \in 
    \mathcal{KA}(Y' \times Y'') 
    \xlongequal{Thm. \ref{thm:relation_closed_compact}}
    \mathcal{K}(Y' \times Y'')
  $
  and convince ourselves that the set
  $
    U 
    \defeq
    ( Y'_{\infty'} \times Y''_{\infty''} )
    \setminus
    ( \pi'[K] \times \pi''[K] )  
    =
      (Y'_{\infty'} \setminus \pi'[K] )
      \times
      Y''_{\infty''}
    \;
    \cup
    \;
      Y'_{\infty'}
      \times
      (Y''_{\infty''} \setminus \pi''[K] )
  $,
  firstly,
  fulfills 
  $
    \overline{\id}[U] 
    =
    (Y' \times Y'')_\infty \setminus ( \pi'[K] \times \pi''[K] )
    \subseteq
    V
  $
  and, secondly, is an open neighborhood of all our preimage points of 
  $\infty$.
  This shows the second part of the lemma.
\end{proof}

Using this Lemma we are now  going to prove the announced
special case of
\prettyref{thm:concat_compactly_continuable_mappings}:

\begin{theorem} \label{thm:concat_compactly_continuable_mappings_special_case}
  Let $\decorspace{Y}{'}$ and $\decorspace{Y}{''}$ be locally compact 
  Hausdorff spaces and let
  two continuous mappings 
  $
    f:(X,\mathcal{O}_X) 
      \rarr \decorspace{Y}{'} \varotimes \decorspace{Y}{''}
  $,
  $
    g:\decorspace{Y}{'} \varotimes \decorspace{Y}{''} 
      \rarr (Z,\mathcal{O}_Z)
  $
  have compact continuations
  \[
  \begin{alignedat}{1}
    \widehat f : (\widehat X, \mathcal{O}_{\widehat X}) \rarr 
	      [\decorspace{Y}{'} &\varotimes \decorspace{Y}{''} ]_\infty \,,
    \\
    \hill g:  \compdecorspace{Y}{'} &\varotimes \compdecorspace{Y}{''} 
	    \rarr (\hill Z, \mathcal{O}_{\hill Z}).
  \end{alignedat}
  \]
  Then $g \circ f \eqcolon h$ has a compact continuation 
  \begin{equation*}
    \widehat h: (\widehat X, \mathcal{O}_{\widehat{X}} ) \rarr
	      (\hill Z, \mathcal{O}_{\hill Z}),
  \end{equation*}
  if $\hill g$ fulfills
  \begin{equation} 
  \label{eq:special_case_extension_of_g_identifies_less_than_id}
    \hill g(\infty', y'')
    =
    \hill g(y', \infty'')
  \end{equation}
  for all $y' \in Y'_{\infty'}$ and $y'' \in Y''_{\infty''}$.
  If, in addition, 
  $\widehat f[\widehat X \setminus X] \subseteq \{\infty\}$ 
  then $\widehat h$ can be chosen such that it fulfills
  \begin{equation}
    \widehat h[\widehat X \setminus X]
    \subseteq 
      \hill{g}[\{\infty'\}\times Y'']
      \cup
      \hill{g}[Y' \times \{\infty''\}]
      \cup
      \hill{g}[\{(\infty', \infty'')\}].
  \end{equation}
\end{theorem}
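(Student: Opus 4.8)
The plan is to obtain this as the instance of \prettyref{thm:concat_compactly_continuable_mappings} in which the intermediate space is $Y \defeq Y' \times Y''$ carrying the product topology $\mathcal{O}_Y \defeq \mathcal{O}' \varotimes \mathcal{O}''$, the compact superspace $\widehat Y$ is the one-point compactification $[\decorspace{Y}{'} \varotimes \decorspace{Y}{''}]_\infty$, and the second superspace $\hill Y$ is the product $\compdecorspace{Y}{'} \varotimes \compdecorspace{Y}{''}$ of the individual one-point compactifications; with these identifications the given $\widehat f$ and $\hill g$ are precisely compact continuations of $f$ and $g$ in the sense of that theorem. By part \upref{enu:one_point_compactification_of_product_and_product_of_one_point_compactification_are_superspaces} of \prettyref{lem:torus_like_and_one_point_compactification} both $\widehat Y$ and $\hill Y$ are compact Hausdorff spaces containing $\decorspace{Y}{'} \varotimes \decorspace{Y}{''}$ as a subspace; in particular $\widehat Y$ is Hausdorff, which is one of the hypotheses of case \upref{enu:continue_id_in_hard_direction_special_case} of \prettyref{thm:concat_compactly_continuable_mappings}. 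Part \upref{enu:extension_of_id_on_product_space_to_a_homeleaving_compact_continuation} of the same lemma supplies the surjective, homeleaving compact continuation
\[
  \overline{\id} : \compdecorspace{Y}{'} \varotimes \compdecorspace{Y}{''} \rarr [\decorspace{Y}{'} \varotimes \decorspace{Y}{''}]_\infty
\]
of $\id_Y$, which plays the role of $\hill{\id_Y}$.

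What remains is to verify the identification-compatibility condition of case \upref{enu:continue_id_in_hard_direction_special_case}, namely that $\overline{\id}(y_1) = \overline{\id}(y_2)$ implies $\hill g(y_1) = \hill g(y_2)$ for all $y_1, y_2 \in \hill Y$. From the explicit formula for $\overline{\id}$ one reads off that every fibre over a point of $Y' \times Y''$ is a single point, so the only case to treat is $\overline{\id}(y_1) = \overline{\id}(y_2) = \infty$, i.e. $y_1, y_2 \in \hill Y \setminus Y = (\{\infty'\} \times Y''_{\infty''}) \cup (Y'_{\infty'} \times \{\infty''\})$. The key point is that hypothesis \eqref{eq:special_case_extension_of_g_identifies_less_than_id} makes $\hill g$ constant on that set: fixing an arbitrary $y''_\ast \in Y''_{\infty''}$, the equalities $\hill g(y', \infty'') = \hill g(\infty', y''_\ast)$ valid for all $y' \in Y'_{\infty'}$ show that $\hill g$ is constant along $Y'_{\infty'} \times \{\infty''\}$; symmetrically it is constant along $\{\infty'\} \times Y''_{\infty''}$; and the two constants coincide, once more by \eqref{eq:special_case_extension_of_g_identifies_less_than_id}. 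Hence $\hill g$ takes a single value on all of $\hill Y \setminus Y$, so in particular $\hill g(y_1) = \hill g(y_2)$, and the compatibility condition holds. Applying case \upref{enu:continue_id_in_hard_direction_special_case} of \prettyref{thm:concat_compactly_continuable_mappings} now produces the desired compact continuation $\widehat h : \topospace{\widehat X} \rarr \topospace{\hill Z}$ of $h = g \circ f$.

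For the supplementary statement, note that $\widehat Y \setminus Y = \{\infty\}$, so the extra assumption $\widehat f[\widehat X \setminus X] \subseteq \{\infty\}$ says precisely that $\widehat f$ is a homeleaving compact continuation, and $\overline{\id}$ is homeleaving by part \upref{enu:extension_of_id_on_product_space_to_a_homeleaving_compact_continuation} of \prettyref{lem:torus_like_and_one_point_compactification}. The concluding clause of \prettyref{thm:concat_compactly_continuable_mappings} then lets us choose $\widehat h$ with $\widehat h[\widehat X \setminus X] \subseteq \hill g[\hill Y \setminus Y]$, and since
\[
  \hill Y \setminus Y = \big(\{\infty'\} \times Y''\big) \cup \big(Y' \times \{\infty''\}\big) \cup \{(\infty', \infty'')\},
\]
this is exactly the inclusion claimed. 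The argument carries no analytic difficulty; the only thing to get right is the matching of the two different compactifications of $Y' \times Y''$ to the roles $\widehat Y$ and $\hill Y$ of the general theorem, together with the elementary observation that the seemingly weak hypothesis \eqref{eq:special_case_extension_of_g_identifies_less_than_id} already forces $\hill g$ to be constant on $\hill Y \setminus Y$, which is where it does all its work.
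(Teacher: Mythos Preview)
Your proof is correct and follows exactly the same route as the paper: identify $(Y,\widehat Y,\hill Y)$ with $\big(Y'\times Y'',\ [\decorspace{Y}{'}\varotimes\decorspace{Y}{''}]_\infty,\ \compdecorspace{Y}{'}\varotimes\compdecorspace{Y}{''}\big)$, invoke \prettyref{lem:torus_like_and_one_point_compactification} to get the surjective homeleaving continuation $\overline{\id}$ of $\id_Y$ into the Hausdorff space $\widehat Y$, and then apply case \upref{enu:continue_id_in_hard_direction_special_case} of \prettyref{thm:concat_compactly_continuable_mappings}. The paper's proof is terser and simply asserts that the compatibility condition follows from \eqref{eq:special_case_extension_of_g_identifies_less_than_id}; your explicit verification that this hypothesis forces $\hill g$ to be constant on $\hill Y\setminus Y$ is a welcome elaboration.
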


\begin{proof}
  Setting 
  $
    (Y,\mathcal{O}_Y) 
    \defeq \decorspace{Y}{'} \varotimes \decorspace{Y}{''}
  $,
  $
    (\widehat Y, \mathcal{O}_{\widehat Y})
    \defeq [\decorspace{Y}{'} \varotimes \decorspace{Y}{''} ]_\infty
  $
  and
  $
    (\hill Y, \mathcal{O}_{\hill Y})
    \defeq \compdecorspace{Y}{'} \varotimes \compdecorspace{Y}{''} 
  $
  we get the theorem as special case of
  \prettyref{thm:concat_compactly_continuable_mappings}, since
  all assumptions of its condition
  \upref{enu:continue_id_in_hard_direction_special_case}
  and its additional condition hold true, in virtue of
  \prettyref{lem:torus_like_and_one_point_compactification} and the condition 
  \prettyref{eq:special_case_extension_of_g_identifies_less_than_id}.
\end{proof}

\section{Application of the theory to an example} \label{sec:application_to_an_example}

We agree 
$\minfty + (\pinfty) = \pinfty + (\minfty) = \minfty$ in the
following example. 
Although the 
assumptions in this 
example prevent the occurrence of the value $\minfty$ 
we nevertheless need the stated agreement
in order to obtain a continuous addition 
on $[\minfty,\pinfty]$, 
cf. \prettyref{thm:addition_in_intervalus_maximus}.

\begin{lemma} \label{lem:application_of_compact_continuations_to_get_coercivity_and_lsc_of_sum}
  Assume that the following mappings are given:
  \begin{enumerate}
    \item 
      Two matrices / linear mappings 
      $H:\R^n \rarr \R^d, K: \R^n \rarr \R^e$ with
      \begin{gather*}
        \mathcal{N}(H) \cap \mathcal{N}(K) = \{\zerovec\}.
      \end{gather*}
    \item 
      Two proper, lower semicontinuous and coercive mappings
      $
	\phi: \R^d \rarr [\minfty, \pinfty]
      $
      and
      $
	\psi: \R^e \rarr [\minfty, \pinfty]
      $.
  \end{enumerate}
  Then the mapping $h: \R^n \rarr [\minfty, \pinfty]$, given by
  \begin{equation}
    x \mapsto \phi(Hx) + \psi(Kx)
  \end{equation}
  is lower semicontinuous and coercive. In particular, the mapping $h$
  attains its infimum $\inf h \in [\minfty,\pinfty]$ at some point in $\R^n$.
\end{lemma}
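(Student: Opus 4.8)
The plan is to realise $h = g \circ f$ as a composition through the product space $\R^d \times \R^e$ and to run it through the compact--continuation calculus, most directly \prettyref{thm:concat_compactly_continuable_mappings_special_case}. So I would set $f \colon \R^n \rarr \R^d \times \R^e$, $f(x) \defeq (Hx, Kx)$, and $g \colon \R^d \times \R^e \rarr [\minfty, \pinfty]$, $g(y,z) \defeq \phi(y) + \psi(z)$; the latter is well defined because $\phi$ and $\psi$, being proper, never take the value $\minfty$. Then $h = g \circ f$, the map $f$ is linear hence continuous, and $g$ is lower semicontinuous, i.e.\ continuous into $([\minfty,\pinfty],\mathcal{T})$ by \prettyref{thm:characterize_lower_semi_continuous}: it equals $+ \circ (\phi \times \psi)$, where $\phi \times \psi$ is continuous into $([\minfty,\pinfty]^2, \mathcal{T} \otimes \mathcal{T})$ (its two component functions are compositions of coordinate projections with the lower semicontinuous $\phi$, $\psi$), and $+$ is the continuous addition of \prettyref{thm:addition_in_intervalus_maximus}. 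Finally $\R^d \times \R^e$ with its natural topology is exactly the product space $(\R^d, \mathcal{O}^{\otimes d}) \otimes (\R^e, \mathcal{O}^{\otimes e})$ of two locally compact Hausdorff spaces, which is the setting of \prettyref{thm:concat_compactly_continuable_mappings_special_case}.

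Next I would produce the two compact continuations. For $f$: the hypothesis $\mathcal{N}(H) \cap \mathcal{N}(K) = \{\zerovec\}$ says precisely that $f$ is injective, and an injective linear map between finite--dimensional normed spaces is bounded below, $\vectornorm{f(x)} \geq c\,\vectornorm{x}$ for some $c > 0$ and all $x$, so $\vectornorm{f(x)} \rarr \pinfty$ as $\vectornorm{x} \rarr \pinfty$, i.e.\ $f$ is normcoercive. By \prettyref{thm:characterize_normcoercivity} the extension $\widehat f \colon \R^n_\infty \rarr (\R^{d+e}_\infty, \mathcal{O}^{\otimes(d+e)}_\infty)$ with $\widehat f(\infty) \defeq \infty$ is continuous, and since $(\R^{d+e}_\infty, \mathcal{O}^{\otimes(d+e)}_\infty)$ is the one--point compactification of $(\R^d, \mathcal{O}^{\otimes d}) \otimes (\R^e, \mathcal{O}^{\otimes e})$, this $\widehat f$ is a compact continuation of $f$ with $\widehat f[\R^n_\infty \setminus \R^n] = \{\infty\}$. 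For $g$: since $\phi$ and $\psi$ are lower semicontinuous and coercive, part \upref{enu:characterize_lsc_and_coercive} of \prettyref{thm:characterize_lsc_and_coercive} makes their extensions $\widehat\phi$ and $\widehat\psi$ (with value $\pinfty$ at the added points) continuous into $([\minfty,\pinfty],\mathcal{T})$; then $\widehat\phi \times \widehat\psi$ is continuous into $([\minfty,\pinfty]^2, \mathcal{T}\otimes\mathcal{T})$, so $\hill g \defeq + \circ (\widehat\phi \times \widehat\psi)$ is continuous, restricts to $g$ on $\R^d \times \R^e$, and is defined on a compact superspace of $(\R^d, \mathcal{O}^{\otimes d}) \otimes (\R^e, \mathcal{O}^{\otimes e})$ by \prettyref{lem:torus_like_and_one_point_compactification} (which itself uses Tychonov's \prettyref{thm:little_tichonov}); hence $\hill g$ is a compact continuation of $g$.

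The only nonroutine point is the compatibility hypothesis \eqref{eq:special_case_extension_of_g_identifies_less_than_id} of \prettyref{thm:concat_compactly_continuable_mappings_special_case}, and it is exactly here that properness of $\phi$, $\psi$ is load--bearing: for all $y' \in (\R^d)_\infty$, $y'' \in (\R^e)_\infty$ one has $\hill g(\infty', y'') = +(\pinfty, \widehat\psi(y''))$, and since $\widehat\psi(y'') \in (\minfty, \pinfty]$ (it equals $\psi(y'')$ or $\pinfty$) the extended addition returns $\pinfty$ -- the troublesome value $\pinfty + (\minfty) = \minfty$ is never hit -- so $\hill g(\infty', y'') = \pinfty$, and symmetrically $\hill g(y', \infty'') = \pinfty$; thus the two sides agree, and moreover $\hill g[\{\infty'\}\times\R^e] = \hill g[\R^d\times\{\infty''\}] = \hill g[\{(\infty',\infty'')\}] = \{\pinfty\}$. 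Then \prettyref{thm:concat_compactly_continuable_mappings_special_case} furnishes a compact continuation $\widehat h \colon \R^n_\infty \rarr ([\minfty,\pinfty],\mathcal{T})$ of $h$, and because $\widehat f[\R^n_\infty \setminus \R^n] \subseteq \{\infty\}$ its additional assertion forces $\widehat h[\R^n_\infty \setminus \R^n] \subseteq \{\pinfty\}$, i.e.\ $\widehat h(\infty) = \pinfty$. So $\widehat h$ is precisely the extension appearing in part \upref{enu:characterize_lsc_and_coercive} of \prettyref{thm:characterize_lsc_and_coercive}, and its continuity says that $h$ is lower semicontinuous and coercive. For the last assertion, $\widehat h$ is a continuous map on the compact space $\R^n_\infty$, so by \prettyref{thm:lsc_function_takes_minimum} it attains $\inf \widehat h[\R^n_\infty] = \inf_{\R^n} h$ at some $\xi \in \R^n_\infty$; if $\xi \in \R^n$ we are done, and otherwise $\inf_{\R^n} h = \widehat h(\infty) = \pinfty$, so $h$ attains its infimum at every point. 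The main obstacle is thus organizational rather than conceptual: lining up the various one--point compactifications and product topologies so that \prettyref{thm:concat_compactly_continuable_mappings_special_case} applies verbatim, the one genuinely essential observation being that properness keeps the sole discontinuity of the extended addition (at $\pinfty + (\minfty)$) out of play.
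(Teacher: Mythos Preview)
Your proof is correct and takes essentially the same approach as the paper's: the same decomposition $h = g \circ f$ through $\R^d \times \R^e$ with $g = + \circ (\phi \times \psi)$, the same compact continuations $\widehat f$ and $\hill g$, the same application of \prettyref{thm:concat_compactly_continuable_mappings_special_case} with the compatibility check via properness, and the same conclusion via \prettyref{thm:characterize_lsc_and_coercive}. The only differences are cosmetic---the paper names the factors $g_1 = (\phi, \psi)$ and $g_2 = +$ separately---and that you spell out the infimum attainment, which the paper leaves implicit.
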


\begin{proof}
  Due to part \upref{enu:characterize_lsc_and_coercive}
  in \prettyref{thm:characterize_lsc_and_coercive})
  our task of proving that $h$ is coercive and lower 
  semicontinuous can be done by showing that setting
  $\widehat h(\infty) \defeq \pinfty$ gives a continuous continuation
  $
    \widehat h: 
    (\R^n_\infty, \mathcal{O}^{\otimes n}_\infty) 
      \rarr 
      ([\minfty,\pinfty], \mathcal{T})
  $
  of $h$.
  We will do this in three steps:
  Firstly we write $h$ as composition $h = g \circ f$ of easier
  functions $g$ and $f$ and extend them to compact continuations
  $\hill{g}$ and $\widehat f$.
  Secondly, a compact continuation $\hill h$ of $h$ is obtained
  from $\hill{g}$ and $\widehat f$ by applying
  \prettyref{thm:concat_compactly_continuable_mappings_special_case}.
  Thirdly we convince us that $\widehat h = \hill{h}$.
\\[0.3ex]
  The mapping $h$ can be written as composition
  $h= \underbrace{g_2 \circ g_1}_{\eqcolon g} \circ f$
  of mappings
  \begin{alignat*}{3}
    f:\R^n \rarr \;  
	  & \R^d \times \R^e, 
		&& 
		      &&
    \\
    g_1:\;
	  & \R^d \times \R^e \rarr 
		&& \;[\minfty,\pinfty] \times [\minfty,\pinfty] 
		      &&,
    \\
	  & \adapt{\R^d \times \R^e \rarr }{g_2:} 
		&&\;[\minfty, \pinfty] \times [\minfty,\pinfty]
		      && \rarr [\minfty, \pinfty]
    \shortintertext{which are given by}
    f: x \mapsto  
	  & \begin{pmatrix}
	      Hx\\
	      Kx
	     \end{pmatrix}, 
		&& 
		      &&
    \\
    g_1:\;
	  & \begin{pmatrix}
	      y_1\\
	      y_2
	     \end{pmatrix}
	     \mapsto
		&& \;\begin{pmatrix}
		       \phi(y_1)\\
		       \psi(y_2)
		     \end{pmatrix}, 
		      &&    
    \\
	  & \adapt{\R^d \times \R^e \rarr }{g_2:} 
		&& \;\begin{pmatrix}
		       a_1\\
		       a_2
		     \end{pmatrix}
		     \mapsto a_1 + a_2.  &&
  \end{alignat*}
  After equipping the vector spaces 
  $\R^n \eqcolon X$ and $\R^d \times \R^e \eqcolon Y$ 
  with their natural topology, 
  the interval $[\minfty, \pinfty] \eqcolon Z$ 
  with the Topology $\mathcal{T}$, and 
  $[\minfty,\pinfty]\times[\minfty,\pinfty]$ with the corresponding 
  product topology $\mathcal{T}\otimes\mathcal{T}$ we have continuous
  mappings $f, g_1, g_2$ and $g = g_2 \circ g_1$.
  Due to $\mathcal{N}(K) \cap \mathcal{N}(L) = \{\zerovec\}$ the mapping $f$
  is normcoercive and hence the mapping
  \begin{alignat*}{1}
    \widehat f&: (\R^n_\infty, \mathcal{O}_\infty^{\otimes n}) 
	     \rarr 
	     \big( (\R^d \times \R^e)_\infty , (\mathcal{O}^{\otimes d} \otimes \mathcal{O}^{\otimes e})_\infty \big)
    \\
    \widehat f&(x) \defeq \begin{cases}
			f(x)   & \text{ if } x\in \R^n \\
			\infty & \text{ if } x = \infty
                      \end{cases}
  \end{alignat*}
  is a 
%
  compact continuation by 
  \prettyref{thm:characterize_normcoercivity};
  furthermore $\widehat f$ fulfills clearly 
  $\widehat f[\R^n_\infty \setminus \R^n] \subseteq \{\infty\}$ 
  by definition.
  Similar, by part \upref{enu:characterize_lsc_and_coercive} in 
  \prettyref{thm:characterize_lsc_and_coercive},
  we obtain compact continuations 
  $
    \hill \phi:
    (\R^d_\infty, \mathcal{O}^{\otimes d}_\infty)
    \rarr 
    ([\minfty,\pinfty], \mathcal{T})
  $ 
  and 
  $\hill \psi: 
    (\R^e_\infty, \mathcal{O}^{\otimes e}_\infty)
    \rarr 
    ([\minfty,\pinfty], \mathcal{T})
  $ 
  of $\phi$ and $\psi$
  by setting $\hill \phi (\infty) \coloneq \pinfty$ and
  $\hill \psi (\infty) \defeq \pinfty$, respectively. These two mappings 
  form a compact continuation 
  $
    \hill {g_1}:
      (\R^d_\infty \times \R^e_\infty, 
	\mathcal{O}^{\otimes d}_\infty  \otimes \mathcal{O}^{\otimes e}_\infty)
      \rarr 
      ([\minfty,\pinfty]^2, \mathcal{T}^{\otimes 2})
  $
  of $g_1$.
  Then
  \begin{alignat*}{1}
    \hill g& : 
      (\R^d_\infty \times \R^e_\infty,  
      \mathcal{O}^{\otimes d}_\infty  \otimes \mathcal{O}^{\otimes e}_\infty)
      \rarr 
      ([\minfty,\pinfty], \mathcal{T})\\
    \hill g& \defeq g_2 \circ \hill{g_1}
    = 
    \hill \phi + \hill \psi    
  \end{alignat*}
  is a compact continuation of $g$.
  In order to apply 
  \prettyref{thm:concat_compactly_continuable_mappings_special_case}
  we note that
  $(Y', \mathcal{O}') \defeq   (\R^d, \mathcal{O}^{\otimes d})$ and
  $(Y'', \mathcal{O}'') \defeq (\R^e, \mathcal{O}^{\otimes e})$ are
  surely locally compact Hausdorff spaces, and that the mappings 
  $
    \widehat f: (\R^n_\infty, \mathcal{O}_\infty^{\otimes n}) 
      \rarr 
      \big[ (Y', \mathcal{O}') \otimes (Y'', \mathcal{O}'') \big]_\infty
  $,
  $
    \hill g : 
      (Y', \mathcal{O}')_\infty \otimes (Y'', \mathcal{O}'')_\infty
      \rarr 
      ([\minfty,\pinfty], \mathcal{T})    
  $
  have the needed form, where $\hill{g}$ fulfills
  $
    \hill{g}(\infty, y'') 
    = 
    \hill{\phi}(\infty) + \hill{\psi}(y'')
    = 
    \pinfty 
    =
    \hill{\phi}(y') + \hill{\psi}(\infty)
    =
    \hill{g}(y',\infty)
  $
  for all $y' \in Y'_\infty$ and $y'' \in Y''_\infty$,
  because $\phi$ and $\psi$ are proper.
  Applying the theorem we obtain
  a compact continuation 
  \begin{equation*}
    \hill h:(\R^d \times \R^e)_\infty \rarr [\minfty,\pinfty]
  \end{equation*}
  of $h$ with 
  \begin{equation*}
    \hill {h}[\{\infty\}]
    =
    \hill h [\R^n_\infty \setminus \R^n]
    \subseteq
    \hill{g}[\{\infty\} \times Y'']
      \cup \hill{g}[Y' \times \{\infty\}]
      \cup \hill{g}[\{(\infty,\infty)\}]
    =
    \{\pinfty\},
  \end{equation*}
  i.e. $\hill{h}(\infty) = \pinfty = \widehat{h}(\infty)$.
  So $\widehat h = \hill h$ is indeed a continuous mapping 
  $
    \big((\R^d \times \R^e)_\infty, \mathcal{O}^{\otimes (d+e)}_\infty \big)
    \rarr 
    \big( [\minfty,\pinfty], \mathcal{T} \big)
  $.
\end{proof}

\chapter{Coercivity of a sum of functions} \label{chap:coercivity_of_sum_of_functions}

\minitoc

\addtointroscoll{

In this chapter we develop a tool 
(\prettyref{thm:sum_coercive_on_certain_subspaces}) which gives
information on which subspaces a sum $F+G$ of certain functions 
is coercive. The coercivity assertion of 
\prettyref{lem:application_of_compact_continuations_to_get_coercivity_and_lsc_of_sum}
is contained as special case in the coercivity assertion of
\prettyref{thm:sum_coercive_on_certain_subspaces} 
if we set 
$F = \phi(Hx) = F_1 \sdirsum 0_{X_2}$ and 
$G = \psi(Kx) = G_1 \sdirsum 0_{Y_2}$
with $F_1 \defeq F|_{X_1}$ and $G_1 \defeq G|_{Y_1}$,
where $X_1 \defeq \mathcal{R}(H^*)$, $X_2 \defeq \mathcal{N}(H)$
and $Y_1 \defeq \mathcal{R}(K^*)$, $Y_2 \defeq \mathcal{N}(K)$,
see 
\prettyref{det:coercivity_assertion_in_example_is_special_case_of_thm_sum_coercive_on_certain_subspaces}
in the Appendix.

In contrast to the previous chapter we restrict us in this chapter
to coercivity notions without regarding e.g. lower semicontinuity
at the same time.
Moreover the coercivity notions in this chapter are rather based
on norms instead of compact (or compact and closed) sets.
In case of vector spaces of finite dimension there is however 
a strong relation between topological coercivity notions 
from the previous chapter and the coercivity notions that 
will be given in this chapter, 
see \prettyref{lem:coervivity_vs_top_strong_coercivity_in_normed_spaces}
and cf. \prettyref{thm:characterize_lsc_and_coercive}.
For linear mappings between vector spaces of finite dimension 
normcoercivity is equivalent to injectivity, see 
\prettyref{thm:linear_mapping_coercive_iff_injective}.

}

\section{Extension of coercivity notions to broader classes
of functions}

So far we introduced the notions of coercivity and normcoercivity only 
for mappings $f: \R^n \rarr \R^m$, cf. the definitions on page 
\pageref{def:coercivity_for_mappings_from_Rn_into_Rdouble_extended}.
We now extend the notion of coercivity and normcoercivity to 
broader classes of functions, show that they behave well under 
concatenation and that the 
``Cartesian product'' of normcoercive mappings is again normcoercive,
see \prettyref{thm:concatenation_normcoercive_and_dots} and
\prettyref{lem:normcoercivity_of_SoToSay_cartesian_product_of_functions},
respectively.
Then another extension of the notion of coercivity is performed
by replacing the codomain $[\minfty, \pinfty]$ by a general 
totally ordered set $(Z, \leq)$.
In
\prettyref{lem:coervivity_vs_top_strong_coercivity_in_normed_spaces}
we will see that this coercivity notion is related to topological 
coercivity notions involving the spaces 
$(Z, \mathcal{T}_\geq)$ and $(Z, \mathcal{T}_\leq)$.
Finally a variant of 
\prettyref{prop:relation_between_boundedness_notions_for_mapping}
is given in 
\prettyref{thm:coercive_mapping_in_finite_dim_space_bounded_below_already_when_locally_bounded_below},
saying that a coercive mapping 
$F: (X, \|\cdot\|_X) \rarr (Z, \leq)$ from normed space 
of finite dimension into a totally ordered set is
already bounded below if it is locally bounded below.

\newcommand{\norm}[1]{\left\Vert #1\right\Vert }
\newcommand{\seq}{\subseteq}
\begin{definition}
  Let $\left(X,\norm{\cdot}_{X}\right)$ be a normed space with 
  nonempty 
  subset $\check{X}\seq X$. We call a mapping 
  $f:\check{X}\rightarrow [\minfty, \pinfty] $\textbf{
  coercive}, if and only if \[
  \lim_{\begin{smallmatrix}\norm{\check{x}}_{X}\rightarrow\pinfty\\
  \check{x}\in\check{X}\end{smallmatrix}}f\left(\check{x}\right)=\pinfty.\]
\end{definition}
  \begin{definition}
    Let $\left(X,\norm{\cdot}_{X}\right)$ and $\left(Y,\norm{\cdot}_{Y}\right)$
    be normed spaces with
    nonempty 
    subsets $\check{X}\seq X,$
    $\check{Y}\seq Y$. We call a mapping $f:\check{X}\rightarrow\check{Y}$
    \textbf{normcoercive}, if and only if \[
    \lim_{\begin{smallmatrix}\norm{\check{x}}_{X}\rightarrow\pinfty\\
    \check{x}\in\check{X}\end{smallmatrix}}\norm{f\left(\check{x}\right)}_{Y}=\pinfty.\]
    (I.e. $\check{x}\mapsto\norm{f\left(\check{x}\right)}_{Y}$ is
    coercive.)
  \end{definition}

Note in theses definitions that functions $f$ are vacuously coercive
respectively normcoercive, if the domain of definition $\widecheck X$ 
is bounded:
The -- more explicitly formulated -- 
defining conditions for coercivity and normcoercivity
\begin{align*}
 &\text{For all sequences } (\widecheck x^{(k)})_{k \in \N} 
    \text{ in } \widecheck X \text{ with } 
    \|\widecheck x^{(k)}\| \rarr \pinfty \text{ we have }
    f(\widecheck x^{(k)}) \rarr \pinfty,
  \\
 &\text{For all sequences } (\widecheck x^{(k)})_{k \in \N} 
    \text{ in } \widecheck X \text{ with } 
    \|\widecheck x^{(k)}\| \rarr \pinfty \text{ we have }
    \|f(\widecheck x^{(k)})\| \rarr \pinfty
\end{align*}
are namely both trivially fulfilled in that case
since a bounded set $\widecheck X$ contains no sequences
$(\widecheck x^{(k)})_{k\in \N}$ with 
$\|\widecheck x^{(k)}\| \rarr \pinfty$ as $k \rarr \pinfty$.
The following tool is obtained directly from the definitions.
\begin{theorem} \label{thm:concatenation_normcoercive_and_dots}
  The following concatenation statements hold:
  \begin{enumerate}
    \item \label{enu:concatenation_normcoercive_normcoercive}
      The concatenation of normcoercive mappings
      is again normcoercive. 
    \item \label{enu:concatenation_normcoercive_coercive}
      The concatenation of a normcoercive mapping 
      $E:\check{X}\rightarrow\check{Y}$
      with a coercive mapping 
      $
	F:\check{Y} \rightarrow [\minfty,\pinfty]	 
      $
      is coercive.
  \end{enumerate}
\end{theorem}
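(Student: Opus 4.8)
The plan is to read off both assertions directly from the sequential characterizations of normcoercivity and coercivity recorded just after the definitions, so that only some bookkeeping with the intermediate images is needed.

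For part \upref{enu:concatenation_normcoercive_normcoercive}, I would fix normcoercive mappings $E:\check{X}\rarr\check{Y}$ and $G:\check{Y}\rarr\check{Z}$ between nonempty subsets of normed spaces $(X,\norm{\cdot}_X)$, $(Y,\norm{\cdot}_Y)$, $(Z,\norm{\cdot}_Z)$, so that $G\circ E:\check{X}\rarr\check{Z}$ is defined. First I would take an arbitrary sequence $(\check{x}^{(k)})_{k\in\N}$ in $\check{X}$ with $\norm{\check{x}^{(k)}}_X\rarr\pinfty$ (if $\check{X}$ is bounded no such sequence exists and the claim is vacuous). Normcoercivity of $E$ gives $\norm{E(\check{x}^{(k)})}_Y\rarr\pinfty$, so setting $\check{y}^{(k)}\defeq E(\check{x}^{(k)})$ produces a sequence in $\check{Y}$ whose norms tend to $\pinfty$; feeding it into the normcoercivity of $G$ yields $\norm{(G\circ E)(\check{x}^{(k)})}_Z=\norm{G(\check{y}^{(k)})}_Z\rarr\pinfty$, which is precisely the defining property of normcoercivity for $G\circ E$.

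For part \upref{enu:concatenation_normcoercive_coercive} the argument is identical until the very last step: given $E:\check{X}\rarr\check{Y}$ normcoercive and $F:\check{Y}\rarr[\minfty,\pinfty]$ coercive, a sequence $(\check{x}^{(k)})_{k\in\N}$ in $\check{X}$ with $\norm{\check{x}^{(k)}}_X\rarr\pinfty$ is mapped by $E$ to a sequence $\check{y}^{(k)}\defeq E(\check{x}^{(k)})$ in $\check{Y}$ with $\norm{\check{y}^{(k)}}_Y\rarr\pinfty$, and then the coercivity of $F$ gives $(F\circ E)(\check{x}^{(k)})=F(\check{y}^{(k)})\rarr\pinfty$ in $[\minfty,\pinfty]$, so $F\circ E$ is coercive.

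I do not expect a genuine obstacle here; the two points to keep straight are that the intermediate sequence $\check{y}^{(k)}=E(\check{x}^{(k)})$ really lies in the domain $\check{Y}$ of the outer map -- which is automatic, since the concatenations are only formed when $E[\check{X}]\subseteq\check{Y}$ -- and the trivial bounded-domain case, where both defining implications hold vacuously as already noted after the definitions. If one preferred to avoid sequences entirely, both parts could instead be derived by translating normcoercivity into topological coercivity via \prettyref{thm:characterize_normcoercivity} and composing the associated continuous extensions, but the sequential proof sketched above is shorter and fully self-contained.
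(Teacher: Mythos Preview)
Your proof is correct and matches the paper's approach: the paper simply states that the theorem ``is obtained directly from the definitions'' without spelling out any details, and your sequential argument is exactly the natural unpacking of that remark.
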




In the following lemma we equip the product spaces
of $X \times Y$ with the norm 
$
  \| \cdot \| 
  \defeq 
  \|\cdot \|_{X \times Y} 
  \defeq \|\cdot \|_X + \|\cdot \|_Y
$
(or any equivalent norm).

\begin{lemma} 
\label{lem:normcoercivity_of_SoToSay_cartesian_product_of_functions}
  Let $(X, \|\cdot \|_X)$, $(Y, \|\cdot\|_Y)$ and 
  $(Z, \|\cdot \|_Z)$, $(W, \| \cdot \|_W)$ 
  be normed spaces and let
  $\widecheck F: \widecheck X \rarr Z$, 
  $\widecheck G: \widecheck Y \rarr W$
  be normcoercive mappings, defined on subsets 
  $\widecheck X$ and $\widecheck Y$ of $X$ and $Y$,
  respectively.
  Then the function 
  $
    \widecheck A: 
    \widecheck X \times \widecheck Y
    \rarr 
    Z \times W
  $,
  given by 
  \begin{gather*}
    \widecheck A (\widecheck x, \widecheck y)
    \defeq 
    \begin{pmatrix}
      \widecheck F(\widecheck x)
      \\
      \widecheck G(\widecheck y)
    \end{pmatrix}
  \end{gather*}
  is also normcoercive.
\end{lemma}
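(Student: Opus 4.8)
The plan is to reduce normcoercivity of $\widecheck A$ directly to the two hypotheses, using the trivial but decisive fact that $\|\widecheck F(\widecheck x)\|_Z$ and $\|\widecheck G(\widecheck y)\|_W$ are both nonnegative, so that pushing only one of these two ``blocks'' to infinity already forces $\|\widecheck A(\widecheck x,\widecheck y)\|_{Z\times W}$ to infinity. By the standing convention recalled just before the lemma I take the norm on $Z\times W$ to be $\|\cdot\|_Z+\|\cdot\|_W$ (any equivalent norm only alters constants and hence does not affect coercivity), so that $\|\widecheck A(\widecheck x,\widecheck y)\|_{Z\times W}=\|\widecheck F(\widecheck x)\|_Z+\|\widecheck G(\widecheck y)\|_W$. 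I would work with the usual unpacking of the coercivity limit, which is also equivalent to the sequential form spelled out after the definitions above: a map $g$ on a set $\widecheck S$ is coercive iff for every $M$ there is $R\geq 0$ with $g(\widecheck s)>M$ whenever $\widecheck s\in\widecheck S$ and $\|\widecheck s\|>R$ (this covers the vacuous case of bounded $\widecheck S$ automatically, by taking $R$ past a bound of $\widecheck S$).

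First I would fix $M>0$. Normcoercivity of $\widecheck F$ yields an $R_1\geq 0$ with $\|\widecheck F(\widecheck x)\|_Z>M$ for every $\widecheck x\in\widecheck X$ satisfying $\|\widecheck x\|_X>R_1$, and normcoercivity of $\widecheck G$ yields an $R_2\geq 0$ with $\|\widecheck G(\widecheck y)\|_W>M$ for every $\widecheck y\in\widecheck Y$ satisfying $\|\widecheck y\|_Y>R_2$. Then I set $R:=R_1+R_2$ and take any $(\widecheck x,\widecheck y)\in\widecheck X\times\widecheck Y$ with $\|(\widecheck x,\widecheck y)\|_{X\times Y}=\|\widecheck x\|_X+\|\widecheck y\|_Y>R$. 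Since $\|\widecheck x\|_X+\|\widecheck y\|_Y>R_1+R_2$, at least one of $\|\widecheck x\|_X>R_1$ or $\|\widecheck y\|_Y>R_2$ must hold; in the first case $\|\widecheck F(\widecheck x)\|_Z>M$, in the second $\|\widecheck G(\widecheck y)\|_W>M$, and in either case $\|\widecheck A(\widecheck x,\widecheck y)\|_{Z\times W}=\|\widecheck F(\widecheck x)\|_Z+\|\widecheck G(\widecheck y)\|_W\geq\max\{\|\widecheck F(\widecheck x)\|_Z,\|\widecheck G(\widecheck y)\|_W\}>M$. As $M>0$ was arbitrary, this is exactly the statement that $\widecheck A$ is normcoercive.

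There is essentially no real obstacle here; the statement is elementary and the argument above is complete up to bookkeeping. The only points worth making explicit in the final write-up are that no continuity of $\widecheck F$ or $\widecheck G$ is used, only nonnegativity of the norms; that replacing the product norms by equivalent ones is harmless for coercivity; and the routine equivalence between the $\liminf$/sequential form of normcoercivity used in the text and the $\forall M\,\exists R$ form used here — which could alternatively be sidestepped by running the same case distinction contrapositively along an arbitrary sequence $(\widecheck x^{(k)},\widecheck y^{(k)})$ in $\widecheck X\times\widecheck Y$ with $\|(\widecheck x^{(k)},\widecheck y^{(k)})\|_{X\times Y}\to\pinfty$.
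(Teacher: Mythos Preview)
Your argument is correct and complete. It is essentially the direct $\forall M\,\exists R$ version of the paper's proof, which instead runs the same idea sequentially and by contradiction: the paper assumes a subsequence along which $\|\widecheck A\|_{Z\times W}<C$, deduces that both $\|\widecheck F(\widecheck x_{n_k})\|_Z$ and $\|\widecheck G(\widecheck y_{n_k})\|_W$ are bounded, and then uses normcoercivity of $\widecheck F$ and $\widecheck G$ contrapositively to bound both coordinate sequences, contradicting $\|(\widecheck x_n,\widecheck y_n)\|\to\pinfty$. The underlying observation is identical---one large coordinate in the domain forces one large block in the image, and nonnegativity of norms does the rest---so the two proofs are really the same argument read forwards versus backwards; your direct version is a bit shorter and avoids the subsequence bookkeeping.
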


\begin{proof}
  In order to prove that 
  $\widecheck A: \widecheck X \times \widecheck Y \rarr Z \times W$
  is normcoercive consider an arbitrary sequence 
  $(\widecheck x_n, \widecheck y_n)_{n \in \N}$ in 
  $\widecheck X \times \widecheck Y$ with 
  \begin{gather} \label{eq:sequence_in_product_space_tends_to_infty}
    \|(\widecheck x_n, \widecheck y_n)\|_{X \times Y} \rarr \pinfty
  \end{gather}
  as $n \rarr \pinfty$.
  We have to show that for any $C \in \R$ there is an $N \in \N$ such
  that for all natural $n \geq N$ the inequality 
  $\|\widecheck A(\widecheck x_n, \widecheck y_n)\| \geq C$ holds true.
  Assume that the latter statement is not true; then there is a $C > 0$ and a
  subsequence 
  $(\widecheck x_{n_k}, \widecheck y_{n_k})_{k \in \N}$ such that 
  $
    C > 
      \|\widecheck A (\widecheck x_{n_k}, \widecheck y_{n_k}) \|_{Z \times W}
     =
      \|\widecheck F(\widecheck x_{n_k})\|_Z
     +
      \|\widecheck G (\widecheck y_{n_k})\|_W
  $
  for all $k \in \N$.
  In particular we had
  \begin{align} \label{eq:functions_bounded_on_subsequence}
       \|\widecheck F(\widecheck x_{n_k})\|_Z < C
     &&\text{and}
     &&\|\widecheck G(\widecheck y_{n_k})\|_W < C
  \end{align}
  for all $k \in \N$. Consequently both 
  $(\|\widecheck x_{n_k}\|)_{k \in \N}$ and 
  $(\|\widecheck y_{n_k}\|)_{k \in \N}$
  would be bounded above by some $B > 0$, see Detail
  \ref{det:both_subsequences_bounded_above}
  in the Appendix.
  We thus would obtain
  $
    \|(\widecheck x_{n_k}, \widecheck y_{n_k})\|_{X \times Y}
    =
    \|\widecheck x_{n_k}\|_X + \|\widecheck y_{n_k}\|_Y
    \leq 
    2B
  $
  for all $k \in \N$ and hence a contradiction to
  \eqref{eq:sequence_in_product_space_tends_to_infty}.
\end{proof}

\begin{definition}
  Let $(X, \|\cdot\|)$ be a normed space
  and let $(Z, \leq)$ be a totally ordered set.
  A mapping $f: (X, \|\cdot\|) \rarr (Z, \leq)$ is called 
  {\bf coercive} iff for any $z$ which is not a maximum of 
  $(Z, \leq)$ there is an $R > 0$ such that $f(x) > z$ for all 
  $x \in X$ with $\|x\| > R$, 
  i.e -- more formally expressed -- iff
  \begin{gather*}
	  \forall z \in Z \setminus \MAX_\leq(Z)
    \;\;  \exists R > 0
    \;\;  \forall x \in X:
	  \|x\| > R \implies f(x) > z
  \end{gather*}
  holds true.
\end{definition}

Note in the following lemma that we really mean 
``$\mathcal{T}_\geq$'' in the second condition and that it is not 
a typo.

\begin{lemma} \label{lem:coervivity_vs_top_strong_coercivity_in_normed_spaces}
  Let $(X, \| \cdot \|)$ be a real normed space of finite dimension and
  let $(Z, \leq)$ be a totally ordered set. Equip $X$ with the 
  topology $\mathcal{O}$ which is induced by $\| \cdot \|$.
  For a mapping $f: X \rarr Z$ the following are equivalent:
  \begin{enumerate}
    \item \label{enu:mapping_from_normed_space_is_coercive}
      $f: (X, \|\cdot\|) \rarr (Z, \leq)$ is coercive.
    \item \label{enu:mapping_from_normed_space_is_topological_strongly_coercive_towards_MAX_in_opposite_topology}
      $f: (X, \mathcal{O}) \rarr (Z, \mathcal{T}_\geq)$ is topological strongly 
      coercive towards $\MAX_\leq(Z)$.
    \enumeratext{If $(Z, \leq)$ contains a minimum and a maximum the above conditions are also equivalent to}
    \item \label{enu:mapping_from_normed_space_is_topological_coercive_towards_MAX_in_right_order_topology}
      $f: (X, \mathcal{O}) \rarr (Z, \mathcal{T}_\leq)$ is topological coercive 
      towards $\MAX_\leq(Z)$.
  \end{enumerate}
\end{lemma}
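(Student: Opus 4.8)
The overall plan is to prove the cycle $\upref{enu:mapping_from_normed_space_is_coercive}\Rightarrow\upref{enu:mapping_from_normed_space_is_topological_strongly_coercive_towards_MAX_in_opposite_topology}\Rightarrow\upref{enu:mapping_from_normed_space_is_coercive}$ in general, and, once $(Z,\leq)$ is assumed to have a minimum $\widecheck z$ and a maximum $\widehat z$, to append the cycle $\upref{enu:mapping_from_normed_space_is_coercive}\Rightarrow\upref{enu:mapping_from_normed_space_is_topological_coercive_towards_MAX_in_right_order_topology}\Rightarrow\upref{enu:mapping_from_normed_space_is_coercive}$. Two facts about the domain will be used throughout: since $(X,\|\cdot\|)$ has finite dimension, every closed ball $\closedball[R]$ is a compact and closed subset of $(X,\mathcal{O})$, hence lies in $\mathcal{KA}(X,\mathcal{O})$; and conversely every $K\in\mathcal{KA}(X,\mathcal{O})$, being compact in a normed space, is bounded, so $K\subseteq\closedball[R]$ for some $R>0$. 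On the codomain side the one genuinely new ingredient is the claim that \textbf{every nonempty compact subset $K'$ of $(Z,\mathcal{T}_\geq)$ possesses a maximum, and that maximum lies in $K'$;} this I would prove first. If $K'$ had no maximum, then for each $z\in K'$ there would be $z'\in K'$ with $z<z'$, so the $\mathcal{T}_\geq$-open sets $\{w\in Z:w<z'\}$, $z'\in K'$, would cover $K'$; a finite subcover would be dominated by $\{w\in Z:w<z^\ast\}$, where $z^\ast$ is the largest of the finitely many $z'$ occurring, and then $z^\ast\in K'\subseteq\{w\in Z:w<z^\ast\}$ is absurd.

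For $\upref{enu:mapping_from_normed_space_is_coercive}\Rightarrow\upref{enu:mapping_from_normed_space_is_topological_strongly_coercive_towards_MAX_in_opposite_topology}$, take $K'\in\mathcal{K}_{\MAX_\leq(Z)}(Z,\mathcal{T}_\geq)$; the case $K'=\emptyset$ is settled by $K\defeq\emptyset$, so assume $K'\neq\emptyset$ and set $z\defeq\max K'$, which by the claim lies in $K'$ and is therefore not a maximum of $Z$. Coercivity of $f\colon(X,\|\cdot\|)\rarr(Z,\leq)$ gives $R>0$ with $f(x)>z$ whenever $\|x\|>R$, so $K\defeq\closedball[R]\in\mathcal{KA}(X,\mathcal{O})$ satisfies $f[X\setminus K]\subseteq\{w\in Z:w>z\}\subseteq Z\setminus K'$. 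For the reverse $\upref{enu:mapping_from_normed_space_is_topological_strongly_coercive_towards_MAX_in_opposite_topology}\Rightarrow\upref{enu:mapping_from_normed_space_is_coercive}$, fix $z\in Z\setminus\MAX_\leq(Z)$ and consider $K'\defeq\{w\in Z:w\leq z\}$, which is compact in $(Z,\mathcal{T}_\geq)$ by \prettyref{det:closed_left_halfray_is_compact_in_opposite_topology} and misses $\MAX_\leq(Z)$ because $z$ is not a maximum of $Z$. Topological strong coercivity towards $\MAX_\leq(Z)$ supplies $K\in\mathcal{KA}(X,\mathcal{O})$ with $f[X\setminus K]\subseteq Z\setminus K'$; choosing $R>0$ with $K\subseteq\closedball[R]$ we conclude, by totality of $\leq$, that $\|x\|>R$ forces $f(x)\notin K'$, i.e.\ $f(x)>z$.

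Now assume $(Z,\leq)$ has a minimum $\widecheck z$ and a maximum $\widehat z$, so $\MAX_\leq(Z)=\{\widehat z\}$, and recall from \prettyref{lem:compact_and_closed_subsets_in_right_order_topology} that $\mathcal{KA}(Z,\mathcal{T}_\leq)=\mathcal{A}(Z,\mathcal{T}_\leq)$ and that $\mathcal{KA}_{\{\widehat z\}}(Z,\mathcal{T}_\leq)$ is precisely the set of complements $Z\setminus U'$ of open $\mathcal{T}_\leq$-neighborhoods $U'$ of $\widehat z$. For $\upref{enu:mapping_from_normed_space_is_coercive}\Rightarrow\upref{enu:mapping_from_normed_space_is_topological_coercive_towards_MAX_in_right_order_topology}$, take $K'\in\mathcal{KA}_{\{\widehat z\}}(Z,\mathcal{T}_\leq)$; $K'=\emptyset$ is trivial, and otherwise $K'=Z\setminus U'$ with $U'\neq Z$ an open neighborhood of $\widehat z$, so $U'$ contains a basic set $\{w\in Z:w>a\}$ with $a<\widehat z$, whence $K'\subseteq\{w\in Z:w\leq a\}$ and $a\notin\MAX_\leq(Z)$; coercivity yields $R>0$ with $f(x)>a$ for $\|x\|>R$, and $K\defeq\closedball[R]$ works. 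For $\upref{enu:mapping_from_normed_space_is_topological_coercive_towards_MAX_in_right_order_topology}\Rightarrow\upref{enu:mapping_from_normed_space_is_coercive}$, fix $z\in Z\setminus\{\widehat z\}$, so $z<\widehat z$; the set $K'\defeq\{w\in Z:w\leq z\}$ is the complement of the $\mathcal{T}_\leq$-open set $\{w\in Z:w>z\}$, hence closed and therefore compact in $(Z,\mathcal{T}_\leq)$, and it does not contain $\widehat z$, so topological coercivity towards $\{\widehat z\}$ produces $K\in\mathcal{KA}(X,\mathcal{O})$ with $f[X\setminus K]\subseteq Z\setminus K'=\{w\in Z:w>z\}$; bounding $K$ by some $\closedball[R]$ gives $\|x\|>R\Rightarrow f(x)>z$. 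The only real obstacle here is the displayed claim that compact subsets of the left order topology carry a maximum; everything else is threading the various definitions together, and finite-dimensionality of $X$ enters solely to guarantee that closed balls are compact.
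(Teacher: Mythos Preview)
Your proof is correct and follows essentially the same route as the paper's. The only organizational differences are that you prove directly that every nonempty compact subset of $(Z,\mathcal{T}_\geq)$ has a maximum (whereas the paper, via Detail~\ref{det:compact_subset_contained_in_closed_half_ray}, only shows such a set is contained in some $b]$ with $b\notin\MAX_\leq(Z)$---your version is slightly cleaner and avoids a case split), and that you establish the equivalence with \upref{enu:mapping_from_normed_space_is_topological_coercive_towards_MAX_in_right_order_topology} by proving $\upref{enu:mapping_from_normed_space_is_coercive}\Leftrightarrow\upref{enu:mapping_from_normed_space_is_topological_coercive_towards_MAX_in_right_order_topology}$ directly, while the paper instead compares the set systems to show $\upref{enu:mapping_from_normed_space_is_topological_strongly_coercive_towards_MAX_in_opposite_topology}\Leftrightarrow\upref{enu:mapping_from_normed_space_is_topological_coercive_towards_MAX_in_right_order_topology}$.
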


\begin{proof}
  If $Z$ contains less than two elements all three statements are 
  clearly all true and hence equivalent.
  In the following we may hence assume that $Z$ contains at least 
  two elements.
\\
  ``\upref{enu:mapping_from_normed_space_is_coercive}
  $\implies$ 
  \upref{enu:mapping_from_normed_space_is_topological_strongly_coercive_towards_MAX_in_opposite_topology}'':
  Let $f: (X, \|\cdot\|) \rarr (Z, \leq)$ be coercive and let 
  $K' \in \mathcal{K}_{\MAX_\leq(Z)}(Z, \mathcal{T}_\geq)$.
  There is some $b \in Z \setminus \MAX_\leq(Z)$ with 
  $K' \subseteq b]$, see Detail
  \ref{det:compact_subset_contained_in_closed_half_ray}
  in the Appendix.
  Since $f:(X, \|\cdot\|) \rarr (Z, \leq)$ is coercive there is 
  for that $b \in Z \setminus \MAX_\leq(Z)$ an $R > 0$ such that
  $f(x) > b $ for all $x \in X$ with $\|x\| > R$.
  In other words 
  \begin{gather*}
    f[X \setminus \closedball[R][][][\|\cdot\|]] \subseteq Z \setminus b].
  \end{gather*}
  Setting $K \defeq \closedball[R][][][\|\cdot\|]$ we hence have found 
  a compact and closed subset of $(X, \|\cdot\|)$ with 
  $f[X \setminus K] \subseteq X' \setminus b] \subseteq X' \setminus K'$.
\\
  ``\upref{enu:mapping_from_normed_space_is_topological_strongly_coercive_towards_MAX_in_opposite_topology}
  $\implies$ 
  \upref{enu:mapping_from_normed_space_is_coercive}'':
  Let $f: (X, \mathcal{O}) \rarr (Z, \leq)$ be topological strongly 
  coercive towards $\MAX_\leq(Z)$.
  Let $z \in Z \setminus \MAX_\leq(Z)$.
  The set $z] \eqdef K'$ is a compact subset of $(Z, \mathcal{T}_\geq)$, 
  cf. Detail
  \ref{det:closed_left_halfray_is_compact_in_opposite_topology}
  in the Appendix.
  Moreover $K' \cap \MAX_\leq(Z) = \emptyset$ so that 
  $K' \in \mathcal{K}_{\MAX_\leq(Z)}(Z, \mathcal{T}_\geq)$.
  Since $f: (X, \mathcal{O}) \rarr (Z, \mathcal{T}_\geq)$ is 
  topological strongly coercive towards $\MAX_\leq(Z)$ there 
  is hence a $K \in \mathcal{K}(K, \mathcal{O})$ such that 
  \begin{gather*}
    f[X \setminus K] \subseteq Z \setminus K'.
  \end{gather*}
  Let $R > 0$ be so large that 
  $\closedball[R][][][\|\cdot\|] \supseteq K$.
  Then 
  \begin{gather*}
    f[X \setminus \closedball[R][][][\|\cdot\|] ]
    \subseteq f[X \setminus K] \subseteq Z \setminus K' 
    = Z \setminus z] = (z.
  \end{gather*}
  In other words we know that for $x \in X$ 
  the inequality $\|x\| > R$ implies $f(x) > z$.
  So $f:(X, \mathcal{O}) \rarr (Z, \leq)$ is coercive.
  Finally assume now additionally that $(Z, \leq)$ contains both 
  a minimum $\widecheck z$ and a maximum $\widehat z$.
  Set $S' \defeq \MAX_{\leq}(Z) = \{\widehat z\}$.
  We have to prove that the two statements
  \begin{alignat}{3}
    \label{eq:mapping_strong_coercive__towards_MAX_in_left_ordere_topology}
    	\forall K' &\in \mathcal{K}_{S'}(Z, \mathcal{T}_\geq)
    \;\; &&  \exists K \in \mathcal{KA}(X, \mathcal{O}):
	  f[X \setminus K] \subseteq Z \setminus K',
  \\
    \label{eq:mapping_coercive_towards_MAX_in_right_order_topology}
    	\forall L' &\in \mathcal{KA}_{S'}(Z, \mathcal{T}_\leq)
    \;\; &&  \exists K \in \mathcal{KA}(X, \mathcal{O}):
	  f[X \setminus K] \subseteq Z \setminus L'
  \end{alignat}
  are now equivalent.
  In order to prove that 
  \eqref{eq:mapping_coercive_towards_MAX_in_right_order_topology}
  implies \eqref{eq:mapping_strong_coercive__towards_MAX_in_left_ordere_topology}
  it is clearly sufficient to show that for any 
  $K' \in \mathcal{K}_{S'}(Z, \mathcal{T}_\geq)$ there is some 
  $L' \in \mathcal{KA}_{S'}(Z, \mathcal{T}_\leq)$
  with $Z \setminus L' \subseteq Z \setminus K'$, i.e. with 
  $L' \supseteq K'$.
  For the inverse implication it is likewise sufficient to show that for 
  any $L' \in \mathcal{KA}_{S'}(Z, \mathcal{T}_\leq)$ there is some 
  $K' \in \mathcal{K}_{S'}(Z, \mathcal{T}_\geq)$ with 
  $K' \supseteq L'$.
  Let first $K' \in \mathcal{K}_{S'}(Z, \mathcal{T}_\geq)$.
  As we have seen in part 
  ``\upref{enu:mapping_from_normed_space_is_coercive}
  $\implies$ 
  \upref{enu:mapping_from_normed_space_is_topological_strongly_coercive_towards_MAX_in_opposite_topology}''
  there is some $b \in Z \setminus S'$ with $K' \subseteq b]$.
  Clearly $L' \defeq b] = Z \setminus (b$ is a closed subset of 
  $(Z, \mathcal{T}_\leq)$.
  Moreover $L' = [\widecheck z, b]$ is surely a compact subset of 
  $(Z, \mathcal{T}_\leq)$; cf. 
  Detail \ref{det:closed_left_halfray_is_compact_in_opposite_topology}
  with reversed order or note that 
  $\widecheck z \in L'$ ca be covered by no open set from 
  $\mathcal{T}_\leq$, except for the whole space $Z \supseteq L'$.
  So $L' = b]$ fulfills both 
  $L' \in \mathcal{KA}_{S'}(Z, \mathcal{T}_\leq)$
  and $L' \supseteq K'$.
  Let to the contrary 
  $L' \in \mathcal{KA}_{S'}(Z, \mathcal{T}_\leq)$.
  Then $Z \setminus L'$ is an open neighborhood of $\widehat z$ and 
  contains hence a set of the form $(a$ with some 
  $a \in Z \setminus S' = Z \setminus \{\widehat z\}$.
  Building complements transforms $(a \subseteq Z \setminus L'$
  into $L' \subseteq a] \eqdef K'$. Again $K'$ is a compact subset of 
  $(Z, \mathcal{T}_\geq)$, 
  cf. \prettyref{det:closed_left_halfray_is_compact_in_opposite_topology}
  in the Appendix.
  Moreover $K'$ does not hit 
  $S'$ so that it fulfills both 
  $K' \in \mathcal{K}_{S'}(Z, \mathcal{T}_\geq)$ and 
  $K' \supseteq L'$.
\end{proof}

\begin{theorem} \label{thm:coercive_mapping_in_finite_dim_space_bounded_below_already_when_locally_bounded_below}
  Let $(X, \|\cdot\|_X)$ be a normed space of finite dimension and 
  $(Z, \leq)$ a totally ordered set. A coercive mapping 
  $F: (X, \|\cdot\|_X) \rarr (Z, \leq)$ is already bounded below if it is 
  locally bounded below.
\end{theorem}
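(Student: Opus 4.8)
The plan is to combine the fact that closed balls in a finite-dimensional normed space are compact with the two implications already contained in Proposition~\ref{prop:relation_between_boundedness_notions_for_mapping}. First I would dispose of the degenerate situation: if $Z$ consists of at most one element, then $F$ is trivially bounded below, and since $X$, being a normed space, contains $\zerovec$ and is therefore nonempty, $Z$ cannot be empty either. Hence we may assume $Z$ has at least two elements; for a totally ordered set this forces $Z \setminus \MAX_\leq(Z) \neq \emptyset$, so we may fix some $z' \in Z \setminus \MAX_\leq(Z)$.

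Next I would run the direct argument. Applying coercivity of $F \colon (X, \|\cdot\|_X) \rarr (Z, \leq)$ to this $z'$, we obtain some $R > 0$ with $F(x) > z'$ for every $x \in X$ satisfying $\|x\|_X > R$. Because $\dim X < \infty$, the closed ball $K \defeq \{x \in X : \|x\|_X \leq R\}$ is a compact subset of $(X, \mathcal{O})$, where $\mathcal{O}$ is the topology induced by $\|\cdot\|_X$. Since $F$ is locally bounded below, Proposition~\ref{prop:relation_between_boundedness_notions_for_mapping} shows that $F$ is compactly bounded below, so there is some $z'' \in Z$ with $F(x) \geq z''$ for all $x \in K$. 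Writing $X = K \cup (X \setminus K)$ and using that $(Z, \leq)$ is totally ordered so that $\widecheck z \defeq \min\{z', z''\}$ exists, I conclude $F(x) \geq \widecheck z$ for all $x \in X$: on $K$ this is immediate from $F(x) \geq z'' \geq \widecheck z$, and off $K$ we have $\|x\|_X > R$, hence $F(x) > z' \geq \widecheck z$.

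Finally, since the statement is advertised as a variant of Proposition~\ref{prop:relation_between_boundedness_notions_for_mapping}, I would also indicate the purely topological route: a finite-dimensional normed space is locally compact (the compact ball above serves as a compact neighborhood of any of its points), and by Lemma~\ref{lem:coervivity_vs_top_strong_coercivity_in_normed_spaces} the coercivity of $F$ is equivalent to $F \colon (X, \mathcal{O}) \rarr (Z, \mathcal{T}_\geq)$ being topological strongly coercive towards $\MAX_\leq(Z)$. The arrow in Proposition~\ref{prop:relation_between_boundedness_notions_for_mapping} labelled "$(X,\mathcal{O})$ loc.\ comp." then gives "locally bounded below $\Rightarrow$ compactly bounded below", and the long curved arrow gives "compactly bounded below $\Rightarrow$ bounded below"; composing the two yields the claim. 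There is no genuine obstacle; the only points meriting a line of care are that closed balls in finite dimensions really are compact (Heine--Borel) and that a single common lower bound for the inside and the outside of the ball can be formed, which is exactly where total ordering of $Z$ is used.
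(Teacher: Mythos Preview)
Your proposal is correct, and the ``purely topological route'' you sketch in the final paragraph is precisely the paper's proof: identify $(X,\|\cdot\|_X)$ with a finite-dimensional Euclidean space, invoke Lemma~\ref{lem:coervivity_vs_top_strong_coercivity_in_normed_spaces} to translate coercivity into topological strong coercivity of $F:(X,\mathcal{O})\rarr(Z,\mathcal{T}_\geq)$ towards $\MAX_\leq(Z)$, and then apply Proposition~\ref{prop:relation_between_boundedness_notions_for_mapping}. The paper does not spell out the local-compactness step because the downward arrow ``locally bounded below $\Rightarrow$ compactly bounded below'' in that proposition is unconditional.

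Your first, direct argument is equally valid and amounts to unwinding the proof of the long curved arrow in Proposition~\ref{prop:relation_between_boundedness_notions_for_mapping} in the concrete finite-dimensional setting: you use coercivity directly (rather than through $\mathcal{T}_\geq$) to bound $F$ outside a closed ball, and Heine--Borel plus the first implication of the proposition to bound it inside. The one cosmetic difference is the choice of degenerate case: the paper singles out $\dim X=0$, whereas you single out $|Z|\le 1$; either choice suffices, and in fact your main argument already covers $\dim X=0$ since the closed ball is then a singleton.
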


\begin{proof}
  If $\dim X = 0$, the image $F[X] = F[\{\zerovec\}]$ consists of 
  just one single point, so that $F$ is bounded below by that value.
  If $n \defeq \dim X \in \N$ we may without loss of generality 
  assume that $(X, \|\cdot \|_X) = (\R^n, \|\cdot\|)$ with some 
  norm $\|\cdot \|$ on $\R^n$.
  After equipping the totally ordered space 
  $(Z, \leq)$ with the left order topology 
  $\mathcal{T}_\geq$ the coercivity of the mapping
  $
    F:(\R^n, \mathcal{O}^{\otimes n} )
      \rarr (Z, \leq)
  $
  corresponds to the topological strong coercivity of
  $
    F:(\R^n, \mathcal{O}^{\otimes n} )
      \rarr (Z, \mathcal{T}_\geq)
  $ 
  towards $\MAX_\leq(Z)$
  by \prettyref{lem:coervivity_vs_top_strong_coercivity_in_normed_spaces}.
  Hence \prettyref{prop:relation_between_boundedness_notions_for_mapping}
  ensures that the locally bounded below mapping 
  $
    F:(\R^n, \mathcal{O}^{\otimes n} )
      \rarr (Z, \leq)
  $
  is even bounded below.
\end{proof}

\section{Normcoercive linear mappings}

A linear mapping defined in any finite dimensional 
space is normcoercive if and only if it is injective:

\begin{theorem} \label{thm:linear_mapping_coercive_iff_injective} 
  A linear mapping $\alpha:X\rightarrow Y$
  of a finite-dimensional normed space $\left(X,\|\cdot\|_{X}\right)$
  into a normed space $\left(Y,\|\cdot\|_{Y}\right)$ is normcoercive
  if and only if its nullspace $\mathcal{N}$ just consists of $\zerovec_{X}$.
\end{theorem}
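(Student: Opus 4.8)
The plan is to prove both implications directly from the definition of normcoercivity, using the finite-dimensionality of $X$ in an essential way for the nontrivial direction.

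First I would treat the easy direction: if $\mathcal{N}(\alpha) \neq \{\zerovec_X\}$, then pick $0 \neq v \in \mathcal{N}(\alpha)$ and consider the sequence $x_k \defeq k v$. Then $\|x_k\|_X = k\|v\|_X \rarr \pinfty$ while $\|\alpha(x_k)\|_Y = \|k\,\alpha(v)\|_Y = 0$ for all $k$, so $\|\alpha(x_k)\|_Y \not\rarr \pinfty$ and $\alpha$ fails to be normcoercive. This is the contrapositive of ``normcoercive $\Rarr$ injective''.

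For the converse, assume $\mathcal{N}(\alpha) = \{\zerovec_X\}$, i.e. $\alpha$ is injective. The key fact is that on a finite-dimensional normed space an injective linear map is bounded below: there is a constant $c > 0$ with $\|\alpha(x)\|_Y \geq c\|x\|_X$ for all $x \in X$. I would prove this by a compactness argument — the function $x \mapsto \|\alpha(x)\|_Y$ is continuous and strictly positive on the unit sphere $\{x \in X : \|x\|_X = 1\}$, which is compact since $\dim X < \infty$; hence it attains a positive minimum $c$ there, and the estimate extends to all $x$ by homogeneity. (Here one also uses that $\alpha$, being linear on a finite-dimensional space, is automatically continuous.) From $\|\alpha(x)\|_Y \geq c\|x\|_X$ it is immediate that $\|x\|_X \rarr \pinfty$ forces $\|\alpha(x)\|_Y \rarr \pinfty$, which is exactly normcoercivity.

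The main obstacle — really the only substantive point — is establishing the lower bound $\|\alpha(x)\|_Y \geq c\|x\|_X$, since this is precisely where finite-dimensionality enters (the statement is false for injective linear maps on infinite-dimensional spaces). Everything else is a routine unwinding of the definitions. Alternatively, one could phrase the converse using \prettyref{thm:characterize_normcoercivity} by checking that the extension $\widehat\alpha$ is continuous at $\infty$, but the direct sphere-compactness argument is cleaner here and self-contained.
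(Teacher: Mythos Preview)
Your proposal is correct and follows essentially the same route as the paper: the non-injective direction via the sequence $k v$ with $v \in \mathcal{N}(\alpha)\setminus\{\zerovec\}$, and the injective direction via the positive minimum of $\|\alpha(\cdot)\|_Y$ on the compact unit sphere, extended by homogeneity. The paper additionally singles out the trivial case $X = \{\zerovec_X\}$ (where the unit sphere is empty), which you may want to mention for completeness.
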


\begin{proof}
  In the case $X = \{\zerovec\}$ we clearly have 
  $\mathcal{N} = \{\zerovec\}$; moreover there is no 
  sequence $(x_n)_{n\in \N}$ with $\|x_n\| \rarr \pinfty$,
  as $n \rarr \pinfty$, so that $f$ is trivially normcoercive.
  Consider now the case $X \supset \{\zerovec\}$.
  If $\mathcal{N}$ contains an element $x\neq\zerovec_{X}$ then $\alpha$
  is not normcoercive since the sequence $\left(x_{n}\right)_{n\in\mathbb{N}}$
  defined by $x_{n}\defeq nx$ fulfills $\|x_{n}\|_{X}\rightarrow\pinfty$
  but $\|\alpha\left(x_{n}\right)\|_{Y}=n\|\alpha\left(x\right)\|_{Y}=0\nrightarrow\pinfty$
  for $n\rightarrow\pinfty$. To show the other direction we assume
  that $\mathcal{N}=\left\{ \zerovec_{X}\right\}$.
  Then the sphere 
    $
      \mathbb{S}
      \defeq 
      \left\{ \tilde{x}\in X:\|\tilde{x}\|_{X}=1\right\}
    $
  is mapped by $\alpha$ to a set which omits $\zerovec_{Y}.$ For this and
  by the compactness of the nonempty set $\mathbb{S}$ 
  we find a point $\check{x}\in\mathbb{S}$ with
  \[
    \min_{\tilde{x}\in\mathbb{S}}\|\alpha\left(\tilde{x}\right)\|_{Y}
    =
    \|\alpha\left(\check{x}\right)\|_{Y}>0.
  \]
  By scaling with a positive number $\lambda>0$ we see that
  \begin{eqnarray*}
    \min_{\|x\|_{X}=\lambda}\|\alpha\left(x\right)\|_{Y} 
    & = & \min_{x\in\lambda\mathbb{S}}\|\alpha\left(x\right)\|_{Y}\\
    & = & \lambda\min_{x\in\lambda\mathbb{S}}\|\alpha\left(\frac{x}{\lambda}\right)\|_{Y}\\
    & = & \lambda\min_{\tilde{x}\in\mathbb{S}}\|\alpha\left(\tilde{x}\right)\|_{Y}\\
    & = & \lambda\|\alpha\left(\check{x}\right)\|_{Y}.
  \end{eqnarray*}
  This means that 
  $
    \|\alpha\left(x\right)\|_{Y}
    \geq
    \|x\|_{X}
    \underbrace{\|\alpha\left(\check{x}\right)\|_{Y}}_{>0}\rightarrow\pinfty$
    for $\|x\|_{X}\rightarrow\pinfty
  $,
  i.e. $\alpha$ is normcoercive.
\end{proof}

\begin{corollary} \label{cor:Hinreichende_Bed_an_nullraueme}
  Let 
  $
    \begin{cases}
      \alpha': & X\rightarrow Y'\\
      \alpha'': & X\rightarrow Y''
    \end{cases}
  $ be linear mappings of a finite-dimensional normed space 
  $\left(X,\|\cdot\|_{X}\right)$
  into normed spaces $\left(Y',\|\cdot\|_{Y'}\right)$,
  $\left(Y'',\|\cdot\|_{Y''}\right)$.
  If their nullspaces
  $\mathcal{N}'$, $\mathcal{N}''$ have only $\zerovec_{X}$ in
  common then the linear mapping 
  $\alpha:X\rightarrow Y'\times Y''$,
  given by
  \[
    \alpha\left(x\right)
    \defeq
    \left(\begin{smallmatrix}
	\alpha'\left(x\right)\\
	\alpha''\left(x\right)
    \end{smallmatrix}\right),
  \]
  is normcoercive.
\end{corollary}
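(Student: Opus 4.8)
The plan is to reduce the assertion to \prettyref{thm:linear_mapping_coercive_iff_injective} by identifying the nullspace of $\alpha$. First I would fix a norm on the product space $Y' \times Y''$, for instance $\|(y',y'')\| \defeq \|y'\|_{Y'} + \|y''\|_{Y''}$ (any choice will do for the argument below, and the notion of normcoercivity into $Y' \times Y''$ does not change if this norm is replaced by an equivalent one). Note that $\alpha(x) = (\alpha'(x), \alpha''(x))$ is a linear mapping, since $\alpha'$ and $\alpha''$ are linear, and that it maps the finite-dimensional normed space $(X, \|\cdot\|_X)$ into the normed space $Y' \times Y''$.

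Next I would compute the nullspace $\mathcal{N}$ of $\alpha$. We have $\alpha(x) = (\zerovec_{Y'}, \zerovec_{Y''})$ if and only if $\alpha'(x) = \zerovec_{Y'}$ \emph{and} $\alpha''(x) = \zerovec_{Y''}$, that is, if and only if $x \in \mathcal{N}' \cap \mathcal{N}''$. By hypothesis $\mathcal{N}' \cap \mathcal{N}'' = \{\zerovec_X\}$, so $\mathcal{N} = \{\zerovec_X\}$.

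Finally, since $(X, \|\cdot\|_X)$ is finite-dimensional, \prettyref{thm:linear_mapping_coercive_iff_injective} applies to $\alpha: X \rarr Y' \times Y''$ and, the nullspace being trivial, yields that $\alpha$ is normcoercive. This completes the proof.

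There is essentially no genuine obstacle here; the only points demanding a little care are the choice of norm on $Y' \times Y''$ and the elementary observation that $\mathcal{N}(\alpha) = \mathcal{N}' \cap \mathcal{N}''$. One might be tempted to route instead through \prettyref{lem:normcoercivity_of_SoToSay_cartesian_product_of_functions}, but that lemma requires $\alpha'$ and $\alpha''$ to be normcoercive \emph{individually}, which is strictly stronger than the hypothesis of the corollary, so the direct kernel computation combined with \prettyref{thm:linear_mapping_coercive_iff_injective} is the appropriate path.
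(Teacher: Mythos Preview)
Your proof is correct and follows exactly the same route as the paper: compute $\mathcal{N}(\alpha)=\mathcal{N}'\cap\mathcal{N}''=\{\zerovec_X\}$ and invoke \prettyref{thm:linear_mapping_coercive_iff_injective}. The paper's version is terser (it omits the discussion of the product norm and the warning about \prettyref{lem:normcoercivity_of_SoToSay_cartesian_product_of_functions}), but the argument is identical.
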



\begin{proof}
Since the nullspace $\mathcal{N}$ of $\alpha$ fulfills 
$\mathcal{N}=\mathcal{N}'\cap\mathcal{N}''=\{\underbrace{\left(\begin{smallmatrix}\zerovec_{Y'}\\
\zerovec_{Y''}\end{smallmatrix}\right)}_{=\zerovec_{Y}}\}$ we obtain the statement by applying \prettyref{thm:linear_mapping_coercive_iff_injective}.
\end{proof}




\begin{definition}
  Let $X = X_1 \oplus X_2$ be a direct 
  decomposition of a real vector space $X$.
  The linear mapping $\pi_{X_1,X_2}: X \rarr X_1$, given by
  \begin{gather*}
    \pi_{X_1,X_2}(x) 
    = 
    \pi_{X_1,X_2}(x_1 + x_2)
    \defeq 
    x_1
  \end{gather*}
  is called {\bf projection to $X_1$ along $X_2$}.
  If $X$ is equipped with some inner 
  product $\langle \cdot, \cdot \rangle$ such that
  $X_2 = X_1^\perp$ we will also shortly write $\pi_{X_1}$.
\end{definition}



\begin{lemma} \label{lem:nullspace_of_projections}
  Let $X = X_1 \oplus X_2$ and $X = W_1 \oplus  W_2$
  be direct 
  decompositions of 
  a real vector space $X$.
  The following holds true:
  \begin{enumerate}
    \item \label{enu:nullspace_of_projection}
      The nullspace of $\pi_{X_1,X_2}$ is 
      $\mathcal{N}(\pi_{X_1,X_2}) = X_2$.
      In particular, for any subspace $\widetilde {X_1}$ 
      of $X$ which is 
      also complementary to $X_2$,
      the restriction
      $\pi_{X_1,X_2}|_{\widetilde {X_1}}: \widetilde{X_1} \rarr X_1$ 
      is a vector space isomorphism between 
      $\widetilde{X_1}$ and $X_1$.      
    \item \label{enu:where_duo_of_proj_is_injective}
      The linear mapping 
      $\alpha : X \rarr X_1 \times W_1$, given by
      \begin{gather*}
        \alpha(z) \defeq 
	\begin{pmatrix}
	  \pi_{X_1,X_2}(z)
	\\
	  \pi_{W_1,W_2}(z)
	\end{pmatrix}
      \end{gather*}
      has nullspace $X_2 \cap W_2$;
      in particular restricting $\alpha$
      to any complementary subspace $Z_1$ of $X_2 \cap W_2$
      yields an injective mapping 
      $\alpha|_{Z_1}: Z_1 \rarr X_1 \times W_1$.
    \item \label{enu:where_duo_of_orth_proj_is_injective}
      If $\langle \cdot, \cdot \rangle$ is some inner product on 
      $X$ such that $X_2 = X_1^\perp$ and $W_2 = W_1^\perp$ then
      the linear mapping 
      $\alpha : X \rarr X_1 \times W_1$, given by
      \begin{gather*}
        \alpha(z) \defeq 
	\begin{pmatrix}
	  \pi_{X_1}(z)
	\\
	  \pi_{W_1}(z)
	\end{pmatrix}
      \end{gather*}
      has nullspace 
      $X_1^\perp \cap W_1^\perp$.
      In particular the restriction 
      $\alpha|_{X_1+W_1}: X_1+W_1 \rarr X_1 \times W_1$ is injective.
  \end{enumerate}
\end{lemma}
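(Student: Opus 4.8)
I would establish the three parts in succession, each reducing to the defining property $\pi_{X_1,X_2}(x_1+x_2)=x_1$ of the projection along a subspace. For \upref{enu:nullspace_of_projection} the argument is direct: writing $x\in X$ uniquely as $x=x_1+x_2$ with $x_i\in X_i$, one has $\pi_{X_1,X_2}(x)=x_1$, which equals $\zerovec$ iff $x_1=\zerovec$, i.e. iff $x=x_2\in X_2$; hence $\mathcal{N}(\pi_{X_1,X_2})=X_2$. For the supplement, let $\widetilde{X_1}$ be another complement of $X_2$. The restriction $\pi_{X_1,X_2}|_{\widetilde{X_1}}$ is injective since its kernel is $\widetilde{X_1}\cap\mathcal{N}(\pi_{X_1,X_2})=\widetilde{X_1}\cap X_2=\{\zerovec\}$; it is surjective since, given $x_1\in X_1$ with decomposition $x_1=\widetilde x_1+x_2$ along $X=\widetilde{X_1}\oplus X_2$, linearity together with $\pi_{X_1,X_2}(x_1)=x_1$ and $\pi_{X_1,X_2}(x_2)=\zerovec$ gives $\pi_{X_1,X_2}(\widetilde x_1)=x_1$. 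A linear bijection is a vector space isomorphism, which finishes this part.

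For \upref{enu:where_duo_of_proj_is_injective} the key observation is that $\alpha(z)=\zerovec$ holds iff $\pi_{X_1,X_2}(z)=\zerovec$ and $\pi_{W_1,W_2}(z)=\zerovec$, which by part \upref{enu:nullspace_of_projection} means $z\in X_2$ and $z\in W_2$; thus $\mathcal{N}(\alpha)=X_2\cap W_2$. For any complement $Z_1$ of $X_2\cap W_2$ we then have $Z_1\cap\mathcal{N}(\alpha)=\{\zerovec\}$, so $\alpha|_{Z_1}$ is injective. Part \upref{enu:where_duo_of_orth_proj_is_injective} is the special case $X_2=X_1^\perp$, $W_2=W_1^\perp$ of the previous one, which immediately yields $\mathcal{N}(\alpha)=X_1^\perp\cap W_1^\perp$. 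For the concluding ``in particular'' it suffices to check $(X_1+W_1)\cap(X_1^\perp\cap W_1^\perp)=\{\zerovec\}$: a vector $v$ lying in $X_1+W_1$ and orthogonal to both $X_1$ and $W_1$ is orthogonal to all of $X_1+W_1$, hence to itself, so $\langle v,v\rangle=0$ and $v=\zerovec$ by positive definiteness of the inner product; injectivity of $\alpha|_{X_1+W_1}$ then follows exactly as in part \upref{enu:where_duo_of_proj_is_injective}. Alternatively, one may record that $X_1+W_1$ is in fact a complement of $X_1^\perp\cap W_1^\perp=(X_1+W_1)^\perp$ and invoke part \upref{enu:where_duo_of_proj_is_injective} verbatim.

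None of the steps presents a genuine difficulty; the only point deserving a moment's care is the last one, where I would make sure to read off the injectivity of $\alpha|_{X_1+W_1}$ from the trivial-intersection argument above rather than from a full complementarity claim, so that the statement remains valid under exactly the hypotheses already imposed on $X$.
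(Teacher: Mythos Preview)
Your proof is correct and follows essentially the same route as the paper's: the nullspace computations in all three parts match verbatim, and your trivial-intersection argument for $(X_1+W_1)\cap(X_1^\perp\cap W_1^\perp)=\{\zerovec\}$ is exactly the content of the paper's appendix detail (writing $x=x_1+w_1$, observing $\langle x,x_1\rangle=0=\langle x,w_1\rangle$, and summing). The only cosmetic difference is your elementwise phrasing of surjectivity in part \upref{enu:nullspace_of_projection}, where the paper instead writes $\pi_{X_1,X_2}[\widetilde{X_1}]=\pi_{X_1,X_2}[\widetilde{X_1}\oplus X_2]=\pi_{X_1,X_2}[X]=X_1$.
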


\begin{proof}
  \upref{enu:nullspace_of_projection}
  Writing an arbitrarily chosen $x \in X$ in the form $x = x_1 + x_2$
  with uniquely determined $x_1 \in X_1$ and $x_2 \in X_2$
  we obtain
  \begin{gather*}
	  x \in \mathcal{N}(\pi_{X_1,X_2})
    \iseq \pi_{X_1,X_2}(x_1+x_2) = \zerovec
    \iseq x_1 = \zerovec 
    \iseq x = x_2
    \iseq x \in X_2
  \end{gather*}
  so that $\mathcal{N}(\pi_{X_1,X_2}) = X_2$. 
  This implies that the restricted mapping 
  $\pi_{X_1,X_2}|_{\widetilde {X_1}}: \widetilde {X_1} \rarr X_1$
  is injective for any subspace $\widetilde {X_1}$,
  which is also complementary to $X_2$, since
  \begin{gather*}
      \mathcal{N}(\pi_{X_1,X_2}|_{\widetilde {X_1}})
    = \mathcal{N}(\pi_{X_1,X_2}) \cap \widetilde {X_1}
    = X_2 \cap \widetilde {X_1}
    = \{\zerovec\}.
  \end{gather*}
  Due to 
  \begin{gather*}
      \pi_{X_1,X_2}|_{\widetilde {X_1}}[\widetilde {X_1}]
    = \pi_{X_1,X_2}[\widetilde {X_1}] 
    = \pi_{X_1,X_2}[\widetilde {X_1} \oplus X_2]
    = \pi_{X_1,X_2}[X]
    = X_1
  \end{gather*}
  the linear mapping $\pi_{X_1,X_2}|_{\widetilde X_1}$ is also 
  surjective and hence a vector space isomorphism.
\\
  \upref{enu:where_duo_of_proj_is_injective}
  Applying the just proven part twice we obtain for any 
  $x \in X$ the equivalences
  \begin{gather*}
	  \alpha(x) = \zerovec
    \iseq \pi_{X_1,X_2}(x) = \zerovec \wedge \pi_{W_1,W_2}(x) = \zerovec
    \iseq x \in X_2 \wedge x \in W_2
    \iseq x \in X_2 \cap W_2,
  \end{gather*}
  so that $\mathcal{N}(\alpha) = X_2 \cap W_2$.
  Likewise as in the already proven part 
  \upref{enu:nullspace_of_projection}
  this implies that the restricted mapping 
  $\alpha|_{Z_1}: Z_1 \rarr X_1 \times W_1$
  is injective for any subspace $Z_1$ of 
  $X$ which is complementary to $X_2 \cap W_2$.
\\
  \upref{enu:where_duo_of_orth_proj_is_injective}
  By the just proven previous part 
  \upref{enu:where_duo_of_proj_is_injective} we have 
  $
      \mathcal{N}(\alpha) 
    = X_2 \cap W_2 
    = X_1^\perp \cap W_1^\perp
  $.
  Therefore and since 
  $(X_1^\perp \cap W_1^\perp) \cap (X_1 + W_1) = \{\zerovec\}$,
  see Detail 
  \ref{det:intersection_between_sum_of_spaces_and_intersection_of_their_orthogonal_complements_is_trivial},
  we obtain
  \begin{gather*}
      \mathcal{N}(\alpha|_{X_1+W_1})
    = \mathcal{N}(\alpha) \cap (X_1 + W_1)
    = (X_1^\perp \cap W_1^\perp) \cap (X_1 + W_1) 
    = \{\zerovec\}.
  \end{gather*}
  Hence $\alpha|_{X_1 + W_1}$ is injective.
\end{proof}

  \section{Semidirect sums and coercivity} \label{sec:semidirect_sums_and_coercivity}


In this subsection we consider functions 
$F,G: \R^n \rarr \R \cup \{\pinfty\}$, which allow a certain 
decomposition into coercive and locally bounded from below parts 
$F_1: X_1 \rarr \R \cup \{\pinfty\}$, 
$G_1: Y_1 \rarr \R \cup \{\pinfty\}$ and bounded from below parts 
$F_2: X_2 \rarr \R \cup \{\pinfty\}$,
$G_2: Y_2 \rarr \R \cup \{\pinfty\}$
and prove a sufficient criteria for 
$F + G$ beeing coercive on a subspace $Z_1$. 
The exact result is stated in 
\prettyref{thm:sum_coercive_on_certain_subspaces}.

The mentioned decomposability of $F$ means more precisely 
that $F$ can be written as some, to be introduced, 
{\emph semidirect sum} $F = F_1 \sdirsum F_2$.
The demanded boundedness assumptions for $F_2$ and $G_2$
allows us to replace $F_2$ and $G_2$ by the constant 
zero functions $0_{X_2}$ and $0_{Y_2}$.
Working with the simpler direct decompositions
$F_1 \sdirsum 0_{X_2}$ and $G_1 \sdirsum 0_{Y_2}$
is the core of the proofs in this subsection.

\begin{definition}
  Let $X = X_1 \oplus X_2$ be a direct decomposition of a real 
  vector space $X$. The   
  {\bf semi-direct sum}   
  of functions $F_1: X_1 \rarr \R \cup \{\pinfty\}$ and 
  $F_2: X_2 \rarr \R \cup \{\pinfty\}$ is the function 
  $F_1 \sdirsum F_2 : X \rarr \R \cup \{\pinfty\}$, given by 
  \begin{gather*}
    (F_1 \sdirsum F_2 ) (x_1 + x_2) \defeq F_1(x_1) + F_2(x_2)
  \end{gather*}
\end{definition}

\begin{remark}
  Although the notation $X_1 \oplus X_2$ for the underlying spaces 
  suggests the similar notation $F_1 \oplus F_2$ 
  for a pair of functions defined on $X_1$ and $X_2$, respectively,
  we prefer the notation 
  $F_1 \sdirsum F_2$ for the following reason:
  If $\widetilde{F_1}: X_1 \rarr \R \cup \{\pinfty\}$ and 
  $\widetilde F_2 : X_2 \rarr \R \cup \{\pinfty\}$ are mappings with 
  $F_1 \sdirsum F_2 = \widetilde{F_1} \sdirsum \widetilde F_2$ 
  we can in general not conclude that
  $F_1 = \widetilde{F_1}$ and $F_2 = \widetilde F_2$;
  for real-valued functions we can conclude only that there is a constant 
  $C \in \R$ such that $F_1 = \widetilde{F_1} + C$ and
  $F_2 = \widetilde F_2 - C$,
  see Detail \ref{det:sum_unique_only_up_to_constant} 
  -- moreover not even the latter is 
  in general true, if one of the four functions takes the 
  value $\pinfty$, see Detail 
  \ref{det:sum_not_even_unique_up_to_constant}.
  But at least we have 
  \begin{gather}
  \label{eq:chancel_rule_for_semi_direct_sum}
    F_1 \sdirsum F_2 = \widetilde{F_1} \sdirsum F_2
    \implies 
    F_1 = \widetilde{F_1},
  \end{gather}
  if $F_2$ is real-valued; note here that $F_1$ and $\widetilde{F_1}$ 
  have
  the same domain of definition!

\end{remark}

  \begin{lemma} \label{lem:semidirect_sum_with_zerofunction}
  Let $\R^n = X_1 \oplus X_2 = Y_1 \oplus Y_2$ be decompositions of 
  $\R^n$ into subspaces and let 
  $F_1: X_1 \rarr \R \cup \{\pinfty\}$,
  $G_1: Y_1 \rarr \R \cup \{\pinfty\}$
  be mappings. The following holds true:
  \begin{enumerate}
    \item \label{enu:semidirect_sum_with_zero_independet_from_choice_of_complement}
      For every subspace $\widetilde {X_1}$ of $\R^n$ which is 
      also complementary to $X_2$ there is exactly one mapping 
      $\widetilde{F_1}: \widetilde{X_1} \rarr \R \cup \{\pinfty\}$
      with 
      \begin{gather*}
        \widetilde{F_1} \sdirsum 0_{X_2}
	=
	F_1 \sdirsum 0_{X_2},
      \end{gather*}
      namely the function
      $
	\widetilde{F_1} = F_1 \circ \pi_{X_1,X_2}|_{\widetilde{X_1}}
	= (F_1 \sdirsum 0_{X_2})|_{\widetilde{X_1}}
      $.
      In particular $F_1$ is coercive iff 
      $\widetilde F_1$ is coercive.
    \item \label{enu:sum_of_semidir_sum_with_zero_is_of_the_same_structure}
      For any subspace $Z_1$ of $\R^n$ which is complementary to 
      $X_2 \cap Y_2 \eqdef Z_2$ we have
      \begin{gather*}
        H
	\defeq
	(F_1 \sdirsum 0_{X_2}) + (G_1 \sdirsum 0_{Y_2})
	=
	H_1 \sdirsum 0_{X_2 \cap Y_2},
      \end{gather*}
      where 
      $
	H_1 
	\defeq 
	H|_{Z_1} 
	= 
	F_1 \circ \pi_{X_1,X_2}|_{Z_1} + G_1 \circ \pi_{Y_1,Y_2}|_{Z_1}
      $.
      If $X_1 \perp X_2$ and $Y_1 \perp Y_2$ holds true in addition
      we can choose $Z_1 = X_1 + Y_1$.
  \end{enumerate}

\end{lemma}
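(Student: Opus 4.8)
The plan is to reduce both parts to the kernel description $\mathcal{N}(\pi_{X_1,X_2}) = X_2$, $\mathcal{N}(\pi_{Y_1,Y_2}) = Y_2$ supplied by part \upref{enu:nullspace_of_projection} of \prettyref{lem:nullspace_of_projections}, together with the fact recorded there that restricting a projection to a complement of its kernel is a vector space isomorphism.

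For part \upref{enu:semidirect_sum_with_zero_independet_from_choice_of_complement} I would first unwind the defining equation of the semidirect sum. For $z \in \R^n$ written as $z = \widetilde x_1 + x_2$ relative to $\R^n = \widetilde{X_1} \oplus X_2$ one has $(\widetilde{F_1} \sdirsum 0_{X_2})(z) = \widetilde{F_1}(\widetilde x_1)$, while $(F_1 \sdirsum 0_{X_2})(z) = F_1(\pi_{X_1,X_2}(z))$. Evaluating the required identity at $z = \widetilde x_1 \in \widetilde{X_1}$ forces $\widetilde{F_1}(\widetilde x_1) = F_1(\pi_{X_1,X_2}(\widetilde x_1))$, which gives uniqueness and pins down the only candidate $\widetilde{F_1} = F_1 \circ \pi_{X_1,X_2}|_{\widetilde{X_1}} = (F_1 \sdirsum 0_{X_2})|_{\widetilde{X_1}}$. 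That this candidate really works follows because for every $z = \widetilde x_1 + x_2$ we have $\pi_{X_1,X_2}(z) = \pi_{X_1,X_2}(\widetilde x_1)$, using $x_2 \in X_2 = \mathcal{N}(\pi_{X_1,X_2})$ and linearity, so both sides equal $F_1(\pi_{X_1,X_2}(\widetilde x_1))$. For the coercivity equivalence I would note that $\pi_{X_1,X_2}|_{\widetilde{X_1}} : \widetilde{X_1}\rarr X_1$ is a vector space isomorphism by part \upref{enu:nullspace_of_projection} of \prettyref{lem:nullspace_of_projections}; it and its inverse are injective, hence normcoercive by \prettyref{thm:linear_mapping_coercive_iff_injective}, so part \upref{enu:concatenation_normcoercive_coercive} of \prettyref{thm:concatenation_normcoercive_and_dots} turns coercivity of $F_1$ into coercivity of $\widetilde{F_1} = F_1 \circ \pi_{X_1,X_2}|_{\widetilde{X_1}}$ and, conversely, coercivity of $\widetilde{F_1}$ into coercivity of $F_1 = \widetilde{F_1} \circ (\pi_{X_1,X_2}|_{\widetilde{X_1}})^{-1}$.

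For part \upref{enu:sum_of_semidir_sum_with_zero_is_of_the_same_structure} I would write $H(z) = F_1(\pi_{X_1,X_2}(z)) + G_1(\pi_{Y_1,Y_2}(z))$ by unwinding the two semidirect sums, and then check $H(z_1 + z_2) = H(z_1)$ for all $z_1 \in Z_1$ and $z_2 \in Z_2 = X_2 \cap Y_2$. Since $z_2 \in X_2 \cap Y_2$ lies both in $\mathcal{N}(\pi_{X_1,X_2}) = X_2$ and in $\mathcal{N}(\pi_{Y_1,Y_2}) = Y_2$, linearity of the projections gives $\pi_{X_1,X_2}(z_1 + z_2) = \pi_{X_1,X_2}(z_1)$ and $\pi_{Y_1,Y_2}(z_1 + z_2) = \pi_{Y_1,Y_2}(z_1)$, hence $H(z_1 + z_2) = H(z_1) = H_1(z_1)$; by the definition of the semidirect sum over $\R^n = Z_1 \oplus Z_2$ this is exactly $H = H_1 \sdirsum 0_{X_2 \cap Y_2}$, and the stated formula for $H_1 = H|_{Z_1}$ is just the restriction written out. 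For the final sentence, under $X_1 \perp X_2$ and $Y_1 \perp Y_2$ one gets $X_2 = X_1^\perp$ and $Y_2 = Y_1^\perp$ (for dimension reasons), so $Z_2 = X_2 \cap Y_2 = X_1^\perp \cap Y_1^\perp = (X_1 + Y_1)^\perp$; since $(X_1 + Y_1) \cap (X_1 + Y_1)^\perp = \{\zerovec\}$ by \prettyref{det:intersection_between_sum_of_spaces_and_intersection_of_their_orthogonal_complements_is_trivial} and the dimensions add up to $n$, the subspace $Z_1 \defeq X_1 + Y_1$ is a legitimate complement of $Z_2$, so the general assertion applies to this choice.

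I do not expect a genuine obstacle: the lemma is essentially bookkeeping with the two projections. The one point that needs care is the uniqueness-and-existence argument in \upref{enu:semidirect_sum_with_zero_independet_from_choice_of_complement}, because the two occurrences of ``$\sdirsum 0_{X_2}$'' refer to two different direct decompositions of $\R^n$ (namely $\widetilde{X_1} \oplus X_2$ versus $X_1 \oplus X_2$), so the cancellation rule \eqref{eq:chancel_rule_for_semi_direct_sum} cannot be applied directly; one really has to restrict to $\widetilde{X_1}$ first and then exploit $X_2 \subseteq \mathcal{N}(\pi_{X_1,X_2})$.
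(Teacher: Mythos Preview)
Your proof is correct and follows essentially the same route as the paper's: both parts reduce to the identity $(F_1 \sdirsum 0_{X_2})(z) = F_1(\pi_{X_1,X_2}(z))$ and the kernel description $\mathcal{N}(\pi_{X_1,X_2}) = X_2$, and the orthogonality clause is handled via $(X_1+Y_1)^\perp = X_1^\perp \cap Y_1^\perp = X_2 \cap Y_2$ in both arguments.

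Two small remarks on the differences. First, for the coercivity equivalence the paper argues directly that a linear isomorphism between finite-dimensional spaces is bicontinuous and hence preserves $\|x_n\|\to\pinfty$, whereas you route through \prettyref{thm:linear_mapping_coercive_iff_injective} and \prettyref{thm:concatenation_normcoercive_and_dots}; your packaging is cleaner. Second, your closing warning about \eqref{eq:chancel_rule_for_semi_direct_sum} is slightly off: the paper \emph{does} invoke that cancellation rule for uniqueness, but it applies it to two candidate functions $\widetilde{F_1}$ and $\Phi_1$ both defined on $\widetilde{X_1}$, so both semidirect sums live over the \emph{same} decomposition $\widetilde{X_1}\oplus X_2$ and the rule is legitimate. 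The rule would indeed fail if one tried to cancel across the two different decompositions $X_1\oplus X_2$ versus $\widetilde{X_1}\oplus X_2$, but that is not what is needed for uniqueness. Your direct evaluation at points of $\widetilde{X_1}$ is of course also fine.
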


\begin{proof}
  \upref{enu:semidirect_sum_with_zero_independet_from_choice_of_complement}
  We first show the uniqueness of $\widetilde{F_1}$.
  To this end let 
  $\Phi_1: X_1 \rarr \R \cup \{\pinfty\}$ be a mapping with 
  $\Phi_1 \sdirsum 0_{X_2} = \widetilde{F_1} \sdirsum 0_{X_2}$.
  Clearly the mapping $0_{X_2}$ is real-valued so that we get 
  $\Phi_1 = \widetilde{F_1}$ by \eqref{eq:chancel_rule_for_semi_direct_sum}.
  Next we show that 
  $\widetilde{F_1} = F_1 \circ \pi_{X_1,X_2}|_{\widetilde{X_1}}$
  fulfills the claimed equality 
  $ \widetilde{F_1} \sdirsum 0_{X_2} = 	F_1 \sdirsum 0_{X_2}$.
  To this end we write an arbitrarily chosen $x \in \R^n$ in the forms
  $x = x_1 + x_2 = \widetilde x_1 + x_2'$ with 
  $x_1 \in X_1, \widetilde x_1 \in \widetilde{X_1}$ and 
  $x_2, x_2' \in X_2$. Then
  $
      \pi_{X_1,X_2}(\widetilde x_1) 
    = \pi_{X_1,X_2}(x_1 + (x_2 - x_2') )  
    = x_1 
  $,
  so that 
  $
      \widetilde{F_1} (\widetilde x_1) 
    = F_1(\pi_{X_1,X_2}(\widetilde x_1)) 
    = F_1(x_1)
  $.
  Therefrom we obtain
  \begin{align*}
    & (F_1 \sdirsum 0_{X_2})(x)
    = (F_1 \sdirsum 0_{X_2})(x_1 + x_2) 
    = F_1(x_1) + 0
    = \widetilde{F_1}(\widetilde x_1) + 0
    = (\widetilde{F_1} \sdirsum 0_{X_2}) (\widetilde x_1 + x_2')
  \\
   = {}& (\widetilde{F_1} \sdirsum 0_{X_2}) (x)
  \end{align*}
  as well as
  $
    F_1 \circ \pi_{X_1,X_2}|_{\widetilde{X_1}} 
    = 
    (F_1 \sdirsum 0_{X_2})|_{\widetilde{X_1}}
  $
  since
  \begin{align*}
    & F_1 \circ \pi_{X_1,X_2}|_{\widetilde{X_1}}(\widetilde x_1)
    = F_1(x_1)
    = F_1(x_1) + 0_{X_2}(x_2-x_2')
    = (F_1 \sdirsum 0_{X_2})(x_1+x_2-x_2')
    \\
    = {}& (F_1 \sdirsum 0_{X_2})|_{\widetilde{X_1}}(\widetilde x_1).
  \end{align*}
  It remains to show that $F_1$ is coercive iff 
  $\widetilde F_1 = F_1 \circ \pi_{X_1,X_2}|_{\widetilde{X_1}}$ is
  coercive. To this end note that 
  \begin{gather*} 
    \pi 
    \defeq 
    \pi_{X_1,X_2}|_{\widetilde{X_1}}:  \widetilde {X_1} \rarr X_1
  \end{gather*}
  is a vector space isomorphism by part \upref{enu:nullspace_of_projection} 
  of \prettyref{lem:nullspace_of_projections}. 
  Since the spaces $\widetilde {X_1}$ and $X_1$ are of finite dimension 
  the mapping $\pi$ is even a 
  bicontinuous vector space isomorphism. In particular
  the equivalence
  \begin{gather*}
        \|\widetilde {x_1}^{(n)}\| \rarr \pinfty
    \iseq
    \|\pi(\widetilde {x_1}^{(n)})\| \rarr \pinfty
  \end{gather*}
  holds true for all sequences
  $(\widetilde {x_1}^{(n)})_{n \in \N}$ in $\widetilde {X_1}$
  so that 
  \begin{alignat*}{3}
    & \widetilde{F_1}(\widetilde{x_1}) \rarr \pinfty 
      &&\text{ as } \|\widetilde {x_1}\| \rarr \pinfty, 
      \; \widetilde {x_1} \in \widetilde{X_1}
  \\ \iseq {}&
    F_1(\pi(\widetilde{x_1})) \rarr \pinfty 
      &&\text{ as } \|\pi(\widetilde{x_1})\| \rarr \pinfty,
      \; \widetilde{x_1} \in \widetilde{X_1}
  \\ \iseq {}&
    F_1(x_1) \rarr \pinfty 
      &&\text{ as } \|x_1\| \rarr \pinfty, 
      \; x_1 \in X_1.
  \end{alignat*}
\\
  \upref{enu:sum_of_semidir_sum_with_zero_is_of_the_same_structure}
  We first show that 
  $
      H_1 
    \defeq H|_{Z_1} 
    = F_1 \circ \pi_{X_1,X_2}|_{Z_1} + G_1 \circ \pi_{Y_1,Y_2}|_{Z_1}
  $.
  Writing an arbitrarily chosen $z_1' \in Z_1$ in the forms
  $z_1' = x_1' + x_2' = y_1' + y_2'$, where $x_1' \in X_1, x_2' \in X_2$
  and $y_1' \in Y_1, y_2' \in Y_2$, we indeed get 
  \begin{align*}
      H_1(z_1') 
   &= (F_1 \sdirsum 0_{X_2})(z_1') + (G_1 \sdirsum 0_{Y_2}) (z_1')
    = (F_1 \sdirsum 0_{X_2})(x_1'+x_2') + (G_1 \sdirsum 0_{Y_2}) (y_1'+y_2')
  \\
   &= F_1(x_1') + G_1(y_1')
    = F_1(\pi_{X_1,X_2}(x_1' + x_2') ) + G_1(\pi_{Y_1,Y_2}(y_1' + y_2') )
  \\
   &= [ F_1 \circ \pi_{X_1,X_2} + G_1 \circ \pi_{Y_1,Y_2} ](z_1').
  \end{align*}
  In order to prove 
  $
    (F_1 \sdirsum 0_{X_2}) + (G_1 \sdirsum 0_{Y_2})
    =
    H_1 \sdirsum 0_{X_2 \cap Y_2}
  $
  we write an arbitrarily chosen $x \in \R^n$ in the forms
  $x = x_1 + x_2 = y_1 + y_2 = z_1 + z_2$ where each vector 
  is an element of the similar denoted subspace.
  Using 
  $\pi_{X_1,X_2}(z_1) = \pi_{X_1,X_2}(x_1 + (x_2 - z_2) ) = x_1$,
  $\pi_{Y_1,Y_2}(z_1) = y_1$ and the previous calculation we obtain
  \begin{align*}
      H_1 \sdirsum 0_{X_2 \cap Y_2}(x)
   &= H_1(z_1) + 0
    = F_1(\pi_{X_1,X_2}(z_1) ) + G_1(\pi_{Y_1,Y_2}(z_1) )
    = F_1(x_1) +0  + G_1(y_1) + 0
  \\
   &= (F_1 \sdirsum 0_{X_2})(x_1 + x_2) + (G_1 \sdirsum 0_{Y_2})(y_1+y_2)
    = H(x).
  \end{align*}
  If $X_1 \perp X_2$ and $Y_1 \perp Y_2$ we can choose $Z_1 = X_1 + Y_1$
  since $(X_1 + Y_1 )^\perp = X_1^\perp \cap Y_1^\perp = X_2 \cap Y_2$
  so that in particular $\R^n = (X_1 + Y_1 ) \oplus (X_2 \cap Y_2)$.
\end{proof}

\begin{theorem} \label{thm:coercivity_does_not_depend_on_complementary_subspace}
  Let $\R^n = X_1 \oplus X_2$ be a direct decomposition of $\R^n$
  into subspaces $X_1$ and $X_2$ and let 
  $F_1 : X_1 \rarr \R \cup \{\pinfty\}$ be coercive and 
  $F_2 : X_2 \rarr \R \cup \{\pinfty\}$ be bounded below.
  Every function $F: \R^n \rarr \R \cup \{\pinfty\}$ with 
  $F \geq F_1 \sdirsum F_2$ is then  coercive on all those subspaces 
  $\widetilde {X_1}$ of $\R^n$ which are complementary to $X_2$, 
  i.e. which give a direct decomposition
  $\widetilde {X_1} \oplus  X_2 = \R^n = X_1 \oplus X_2$.
\end{theorem}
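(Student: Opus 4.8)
The plan is to squeeze $F$ from below by the simpler semi-direct sum $F_1 \sdirsum 0_{X_2}$, up to an additive constant, and then to read off the desired coercivity from part \upref{enu:semidirect_sum_with_zero_independet_from_choice_of_complement} of \prettyref{lem:semidirect_sum_with_zerofunction}, which is exactly the statement that coercivity of $F_1$ survives the change of complement from $X_1$ to $\widetilde{X_1}$. First I would use that $F_2: X_2 \rarr \R \cup \{\pinfty\}$ is bounded below to fix a \emph{real} number $c$ with $F_2(x_2) \geq c$ for all $x_2 \in X_2$ (the given lower bound $\widecheck z$ satisfies $\widecheck z \leq \pinfty$, so any real $c \leq \widecheck z$ works). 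Decomposing an arbitrary $x \in \R^n$ as $x = x_1 + x_2$ along $X_1 \oplus X_2$, the hypothesis $F \geq F_1 \sdirsum F_2$ then gives the pointwise estimate
\begin{align*}
  F(x)
  &\geq (F_1 \sdirsum F_2)(x)
  = F_1(x_1) + F_2(x_2)
  \\
  &\geq F_1(x_1) + c
  = (F_1 \sdirsum 0_{X_2})(x) + c
\end{align*}
on all of $\R^n$; since $F_1$ and $F_2$ can attain only the value $\pinfty$ (never $\minfty$), none of these additions is indeterminate.

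Next, fix any subspace $\widetilde{X_1}$ of $\R^n$ complementary to $X_2$. By part \upref{enu:semidirect_sum_with_zero_independet_from_choice_of_complement} of \prettyref{lem:semidirect_sum_with_zerofunction} the function
\begin{gather*}
  \widetilde{F_1}
  \defeq
  (F_1 \sdirsum 0_{X_2})|_{\widetilde{X_1}}
  = F_1 \circ \pi_{X_1,X_2}|_{\widetilde{X_1}}
  : \widetilde{X_1} \rarr \R \cup \{\pinfty\}
\end{gather*}
is coercive, because $F_1$ is coercive by assumption. Restricting the pointwise estimate above to $\widetilde{X_1}$, I obtain $F(\widetilde x_1) \geq \widetilde{F_1}(\widetilde x_1) + c$ for every $\widetilde x_1 \in \widetilde{X_1}$. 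Letting $\|\widetilde x_1\| \rarr \pinfty$ inside $\widetilde{X_1}$, the right-hand side tends to $\pinfty$ by coercivity of $\widetilde{F_1}$, hence so does the left-hand side. This says precisely that $F|_{\widetilde{X_1}}: \widetilde{X_1} \rarr \R \cup \{\pinfty\}$ is coercive, i.e. that $F$ is coercive on $\widetilde{X_1}$, which is the assertion.

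The substantial work is already packaged inside \prettyref{lem:semidirect_sum_with_zerofunction}: there the restriction $\pi_{X_1,X_2}|_{\widetilde{X_1}} : \widetilde{X_1} \rarr X_1$ is identified as a (bicontinuous, by finite-dimensionality) vector-space isomorphism, which transfers coercivity back and forth between $F_1$ and $\widetilde{F_1}$. So the only genuine care needed in the present proof is the passage from the bounded-below hypothesis on $F_2$ to a finite real bound $c$, together with the bookkeeping that all additions in $\R \cup \{\pinfty\}$ remain well defined; there is no real obstacle beyond that.
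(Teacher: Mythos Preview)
Your proof is correct and follows essentially the same approach as the paper: bound $F_2$ below by a real constant to reduce to $F_1 \sdirsum 0_{X_2}$ plus a constant, then invoke part \upref{enu:semidirect_sum_with_zero_independet_from_choice_of_complement} of \prettyref{lem:semidirect_sum_with_zerofunction} to transfer coercivity from $F_1$ to $\widetilde{F_1} = (F_1 \sdirsum 0_{X_2})|_{\widetilde{X_1}}$. The paper's version is slightly terser but the argument is the same.
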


\begin{proof}
  Since $F_2$ is bounded below there is a constant $m \in \R$ with
  \begin{gather*}
    F_2(x_2) \geq m
  \end{gather*}
  for all $x_2 \in X_2$. Due to
  $F \geq F_1 \sdirsum F_2 \geq (F_1 \sdirsum 0_{X_2}) + m$
  it suffices to show that 
  $F_1 \sdirsum 0_{X_2} : \R^n \rarr \R \cup \{\pinfty\}$
  is coercive on every subspace $\widetilde {X_1}$ of $\R^n$
  which is complementary to $X_2$.
  The latter however follows from part
  \upref{enu:semidirect_sum_with_zero_independet_from_choice_of_complement}
  of \prettyref{lem:semidirect_sum_with_zerofunction} after fixing any 
  subspace $\widetilde {X_1}$ and setting 
  $\widetilde{F_1} \defeq (F_1 \sdirsum 0_{X_2})|_{\widetilde {X_1}}$.
\end{proof}

As word of warning note that, in contrast to part 
\upref{enu:semidirect_sum_with_zero_independet_from_choice_of_complement}
in \prettyref{lem:semidirect_sum_with_zerofunction}, 
the previous theorem states no equivalence between the coercivity of 
$F_1$ and $\widetilde F_1 \defeq F_1|_{\widetilde {X_1}}$
but states only that the coercivity of $F_1$ carries over to 
$\widetilde F_1$ if the assumptions of the previous theorem are fulfilled.
If $F_2$ is not constant zero the reverse implication is in general
not true as the following example shows:

\begin{example}
  Consider the direct decompositions 
  $\R^2 = X_1 \oplus X_2 = \widetilde {X_1} \oplus X_2$ with the one
  dimensional subspaces 
  $X_1 \defeq \R (1,0)^\tT$, $X_2 \defeq \R (0,1)^\tT$  
  and $\widetilde{X_1} \defeq \R (1,1)^\tT$.
  Consider the functions $F_1: X_1 \rarr \R$,
  $F_2: X_2 \rarr \R$ and $\widetilde{F_1}: \widetilde {X_1} \rarr \R $
  given by
  \begin{align*}
    F_1 &\defeq 0_{X_1},
  &&F_2(x_2) \defeq \|x_2\|_2^2,
  \\
  \widetilde {F_1} & \defeq (\underbrace{F_1 \sdirsum F_2}_{\eqdef F})|_{\widetilde {X_1}}.
  \end{align*}
  Clearly $F_2$ is bounded below. Moreover 
  $\widetilde {F_1}: \widetilde {X_1} \rarr \R$ is 
  coercive since 
  $
    \widetilde {F_1} ( (\xi, \xi)^\tT ) 
    = F((\xi, \xi)^\tT) 
    = \xi^2 \rarr \pinfty
  $
  as $\|(\xi, \xi)^\tT\|_2 \rarr \pinfty$.
  However the function $F_1$ is clearly not coercive.
  Note that this does not contradict the previous theorem since it
  is not even possible to write 
  $F = F_1 \sdirsum F_2$ in the form 
  $F = \widetilde {F_1} \sdirsum \Phi_2$ with a function
  $\Phi_2: X_2 \rarr \R \cup \{\pinfty\}$;
  if that would be possible the function 
  $\Phi_2$ would actually be finite and 
  the mapping 
  \begin{gather*}
    g: x_2 \mapsto F\big((1,1)^\tT + x_2\big) - F\big((0,0)^\tT + x_2\big) 
    = \widetilde{F_1}\big((1,1)^\tT\big) - \widetilde{F_1}\big((0,0)^\tT\big) 
  \end{gather*}
  would be constant on whole $X_2$. That is however clearly not the case;
  for instance we have
  $g((0,0)^\tT) = F((1,1)^\tT) - F((0,0)^\tT) = 1-0 = 1$
  and
  $g((0,3)^\tT) = F((1,4)^\tT) - F((0,3)^\tT) = 16-9 = 7$.

\end{example}

\begin{theorem} \label{thm:sum_coercive_on_certain_subspaces}
  Let $\R^n = X_1 \oplus X_2 = Y_1 \oplus Y_2$ be direct decompositions
  of $\R^n$ into subspaces and let
  $F_1: X_1 \rarr \R \cup \{\pinfty\}$, 
  $G_1: Y_1 \rarr \R \cup \{\pinfty\}$
  be both coercive and locally bounded below and let
  $F_2: X_2 \rarr \R \cup \{\pinfty\}$,
  $G_2: Y_2 \rarr \R \cup \{\pinfty\}$
  be bounded below. 
  Then 
      the sum $F+G: \R^n \rarr \R \cup \{\pinfty\}$ of functions 
      $F \geq F_1 \sdirsum F_2$ and $G \geq G_1 \sdirsum G_2$
      is coercive on all those vector subspaces $Z_1$ of $\R^n$
      with $\R^n = Z_1 \oplus (X_2 \cap Y_2)$.
      In particular $F+G$ is coercive on $X_1 + Y_1$, if 
      $X_1 \perp X_2$ and $Y_1 \perp Y_2$ hold additionally true.
\end{theorem}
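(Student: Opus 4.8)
The plan is to first discard $F_2$ and $G_2$ using their boundedness below, and then to recognize the surviving sum as a composition of an injective -- hence normcoercive -- linear map with a coercive function. First I would choose $m, m' \in \R$ with $F_2 \geq m$ on $X_2$ and $G_2 \geq m'$ on $Y_2$. Then $F_1 \sdirsum F_2 \geq (F_1 \sdirsum 0_{X_2}) + m$ and $G_1 \sdirsum G_2 \geq (G_1 \sdirsum 0_{Y_2}) + m'$, so, setting $H \defeq (F_1 \sdirsum 0_{X_2}) + (G_1 \sdirsum 0_{Y_2})$, we get $F + G \geq H + (m + m')$. Since coercivity on a subspace is unaffected by adding a constant, it suffices to show that $H$ is coercive on every subspace $Z_1$ of $\R^n$ with $\R^n = Z_1 \oplus (X_2 \cap Y_2)$. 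For such a $Z_1$, part \upref{enu:sum_of_semidir_sum_with_zero_is_of_the_same_structure} of \prettyref{lem:semidirect_sum_with_zerofunction} gives $H = H_1 \sdirsum 0_{X_2 \cap Y_2}$ with
\begin{equation*}
  H_1 \;\defeq\; H|_{Z_1} \;=\; F_1 \circ \pi_{X_1,X_2}|_{Z_1} \,+\, G_1 \circ \pi_{Y_1,Y_2}|_{Z_1},
\end{equation*}
and the desired conclusion ``$F + G$ coercive on $Z_1$'' is precisely ``$H_1$ coercive''.

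Next I would factor $H_1$. Let $\alpha : \R^n \rarr X_1 \times Y_1$ be the linear map $\alpha(z) \defeq (\pi_{X_1,X_2}(z),\, \pi_{Y_1,Y_2}(z))$ and let $\beta : X_1 \times Y_1 \rarr \R \cup \{\pinfty\}$ be $\beta(x_1,y_1) \defeq F_1(x_1) + G_1(y_1)$, where $X_1 \times Y_1$ carries the norm $\|\cdot\|_{X_1} + \|\cdot\|_{Y_1}$; then $H_1 = \beta \circ \alpha|_{Z_1}$. By part \upref{enu:where_duo_of_proj_is_injective} of \prettyref{lem:nullspace_of_projections} the nullspace of $\alpha$ equals $X_2 \cap Y_2$, so $\alpha|_{Z_1}$ is injective because $Z_1$ is complementary to $X_2 \cap Y_2$; as $Z_1$ is finite dimensional, $\alpha|_{Z_1}$ is normcoercive by \prettyref{thm:linear_mapping_coercive_iff_injective}. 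For $\beta$: both $F_1$ and $G_1$ are coercive and locally bounded below, hence bounded below by \prettyref{thm:coercive_mapping_in_finite_dim_space_bounded_below_already_when_locally_bounded_below}, say $F_1 \geq m_1$ and $G_1 \geq m_2$. A routine subsequence argument then yields coercivity of $\beta$: if $\beta$ stayed bounded above along some sequence $(x_1^{(k)}, y_1^{(k)})_{k}$ in $X_1 \times Y_1$ with $\|x_1^{(k)}\|_{X_1} + \|y_1^{(k)}\|_{Y_1} \rarr \pinfty$, then along a subsequence both $F_1(x_1^{(k_j)})$ and $G_1(y_1^{(k_j)})$ would be bounded above (using $F_1 \geq m_1$, $G_1 \geq m_2$), which by the coercivity of $F_1$ and of $G_1$ forces $(\|x_1^{(k_j)}\|)_j$ and $(\|y_1^{(k_j)}\|)_j$ to be bounded, contradicting $\|x_1^{(k_j)}\|_{X_1} + \|y_1^{(k_j)}\|_{Y_1} \rarr \pinfty$. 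Consequently $H_1 = \beta \circ \alpha|_{Z_1}$ is coercive by part \upref{enu:concatenation_normcoercive_coercive} of \prettyref{thm:concatenation_normcoercive_and_dots}, which proves the general statement.

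Finally, the ``in particular'' clause drops out: when $X_1 \perp X_2$ and $Y_1 \perp Y_2$ one has $(X_1 + Y_1)^\perp = X_1^\perp \cap Y_1^\perp = X_2 \cap Y_2$, so $\R^n = (X_1 + Y_1) \oplus (X_2 \cap Y_2)$ and $Z_1 \defeq X_1 + Y_1$ is an admissible choice in what was just proved (this is also the complement singled out in part \upref{enu:sum_of_semidir_sum_with_zero_is_of_the_same_structure} of \prettyref{lem:semidirect_sum_with_zerofunction}). There is no deep obstacle here; the only thing requiring care is that the various ``complementary subspace'' hypotheses line up so that $\alpha|_{Z_1}$ is genuinely injective (hence normcoercive) and \prettyref{lem:semidirect_sum_with_zerofunction} is applicable, while the coercivity of $\beta$ is the sole analytic ingredient and the mildest part of the argument.
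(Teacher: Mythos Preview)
Your proof is correct and follows the same overall strategy as the paper: reduce to $H=(F_1\sdirsum 0_{X_2})+(G_1\sdirsum 0_{Y_2})$ via the lower bounds on $F_2,G_2$, factor $H_1$ through the linear map $\alpha=(\pi_{X_1,X_2},\pi_{Y_1,Y_2})$, use injectivity of $\alpha|_{Z_1}$ for normcoercivity, and combine with a coercivity statement for the ``product'' of $F_1$ and $G_1$.

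There are two small but genuine streamlinings in your route relative to the paper's. First, the paper inserts a without-loss-of-generality reduction to the orthogonal case $X_1=X_2^\perp$, $Y_1=Y_2^\perp$, $Z_1=Z_2^\perp$ (replacing $F_1,G_1,H_1$ by their pullbacks along $\pi_{X_1,X_2}|_{X_2^\perp}$ etc.) and then invokes the orthogonal version, part~\upref{enu:where_duo_of_orth_proj_is_injective}, of \prettyref{lem:nullspace_of_projections}; you bypass this detour by using part~\upref{enu:where_duo_of_proj_is_injective} directly for the oblique projections. Second, the paper restricts to the finite-valued domain (defining $\widecheck A$, $\widecheck\alpha$ on $\{(x_1,y_1):F_1(x_1),G_1(y_1)\in\R\}$) so as to talk about normcoercivity of a map into $\R^2$, and then appeals to \prettyref{lem:normcoercivity_of_SoToSay_cartesian_product_of_functions}; you instead keep $\beta$ valued in $\R\cup\{\pinfty\}$, prove its \emph{coercivity} by the same subsequence argument that underlies that lemma, and finish with part~\upref{enu:concatenation_normcoercive_coercive} of \prettyref{thm:concatenation_normcoercive_and_dots}. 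Both changes are legitimate and make the argument a touch cleaner; nothing is lost.
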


Before proving the theorem we give a remark on two important assumptions.
\begin{remark} \label{rem:important_assumptions_for_coercivity_of_sum}
    It is important to demand locally boundedness of $F_1$ and $G_1$, 
    see \prettyref{exa:locally_boundedness_important}.
    In case of a non-orthogonal decomposition there is no guarantee
    that $F+G$ is coercive on $X_1 + Y_1$ as
    \prettyref{exa:non_orthogonal_decompositions} shows.
\end{remark}

\begin{proof}[Proof of \prettyref{thm:sum_coercive_on_certain_subspaces}]
  Since $F_2$ and $G_2$ are bounded below there is a constant 
  $m_2 \in \R$ with
  \begin{align*}
    F_2(x_2) \geq m_2,
    &&
    G_2(y_2) \geq m_2
  \end{align*}
  for all $x_2 \in X_2$, $y_2 \in Y_2$.
  Hence
  $
    F + G 
    \geq 
    (F_1 \sdirsum 0_{X_2}) + (G_1 \sdirsum0_{Y_2}) + 2m_2
  $,
  so that it suffices to show that
  $(F_1 \sdirsum 0_{X_2}) + (G_1 \sdirsum 0_{Y_2}) \eqdef H$
  is coercive on any subspace $Z_1$ which is complementary to
  $(X_2 \cap Y_2) \eqdef Z_2$.
  Concerning the domains of definition $X_1,Y_1$ and $Z_1$ of 
  the mappings $F_1, G_1$ and $H_1 \defeq H|_{Z_1}$, respectively, we may, 
  without loss of generality, assume
  $X_1 = X_2^\perp$, $Y_1 = Y_2^\perp$ and 
  $
    Z_1 
    = 
    Z_2^\perp 
  $,
  respectively, see Detail \ref{det:orthogonality_can_be_assumed_wlog}.
  In order to prove that 
  $H_1$ is coercive let any sequence
  $(z_k)_{k\in \N}$ in 
  $
    Z_1 
    = 
    (X_2 \cap Y_2)^\perp 
    = 
    X_2^\perp + Y_2^\perp
    =
    X_1 + Y_1
  $
  with $\|z_k\| \rarr \pinfty$ for $k \rarr \pinfty$
  be given. 
  The claimed  $H_1(z_k) \rarr \pinfty$ as $k \rarr \pinfty$
  holds trivially true, if there is a $K \in \N$ such that 
  $H_1(z_k) = \pinfty$ for all $k \geq K$. If there is no
  such $K$ we may without loss of generality assume
  $H_1(z_k) \in \R$ for all $k \in \N$.
  Since both $F_1$ and $G_1$ are bounded below, see  
  Detail \ref{det:coercive_and_locally_bounded_below_imply_bounded_below},
  there is a constant $m_1 \in \R$ such that
  \begin{align*}
    F_1(x) \geq m_1,
    &&
    G_1(y) \geq m_1
  \end{align*}
  for all $x \in X_1$, $y \in Y_1$.
  Therefore and by part 
  \upref{enu:sum_of_semidir_sum_with_zero_is_of_the_same_structure}
  of Lemma \ref{lem:semidirect_sum_with_zerofunction}
  we obtain
\rem{Beweis wuerde wohl ab hier 
  leichter und viel klarer, wenn Normbegriff zu 
  ``Hypernorm'' ausgedeht wuerde, vgl. Idee im Anhang}
  \begin{align*}
    H_1(z_k) 
    &= 
      F_1\big(\pi_{X_1}(z_k)\big) + G_1\big(\pi_{Y_1}(z_k)\big)
  \\
    &\geq 
    \max
    \big\{
      F_1\big(\pi_{X_1}(z_k)\big), G_1\big(\pi_{Y_1}(z_k)\big)
    \big\}
    +
    m_1
  \\
    &=
    \left\| 
    \begin{pmatrix}
      F_1\big(\pi_{X_1}(z_k)\big)
      \\
      G_1\big(\pi_{Y_1}(z_k)\big)
    \end{pmatrix}
    \right\|_\infty
    + m_1
  \\
    &=
    \left\| 
      \widecheck A(\widecheck \alpha(z_k))
    \right\|_\infty
    + m_1,
  \end{align*}
  where
  \begin{align*}
    \widecheck A(x,y) 
    \defeq 
    \begin{pmatrix}
      F_1(x)
    \\
      G_1(y)
    \end{pmatrix},
  &&
    \widecheck \alpha(z) 
    \defeq 
    \begin{pmatrix}
      \pi_{X_1}(z)
    \\
      \pi_{Y_1}(z)
    \end{pmatrix};
  \end{align*}
  the mappings 
  $\widecheck A: D_{\widecheck A} \rarr \R^2$ and 
  $\widecheck \alpha: D_{\widecheck \alpha} \rarr D_{\widecheck A}$, 
  are here defined on the nonempty sets
  \begin{gather*}
    D_{\widecheck A} 
    \defeq 
    \{ 
	(x_1, y_1) \in X_1 \times Y_1 :
	F_1(x_1), G_1(y_1) \in \R
    \}
    \subseteq X_1 \times Y_1    
  \shortintertext{and}
    D_{\widecheck \alpha}
    \defeq
    \{z_1 \in Z_1 = X_1 + Y_1 : 
      (\pi_{X_1}(\widecheck z_1), \pi_{Y_1}(\widecheck z_1))^\tT \in D_{\widecheck A}\}
    \subseteq
    X_1 + Y_1 
    \subseteq 
    \R^n,
  \end{gather*}
  respectively. The mappings $\widecheck A$ and 
  $\widecheck \alpha$ are restrictions of the likewise defined 
  mappings
  $
    A: X_1 \times Y_1 
      \rarr (\R \cup \{\pinfty\}) \times (\R \cup \{\pinfty\})
  $ and 
  $\alpha: \R^n \rarr X_1 \times Y_1$, respectively.
  Due to the previous estimate it suffices to show that 
  $\widecheck A \circ \widecheck \alpha: D_{\widecheck \alpha} \rarr \R^2$
  is normcoercive.
  Part \upref{enu:where_duo_of_orth_proj_is_injective} of 
  \prettyref{lem:nullspace_of_projections} ensures that 
  $\alpha|_{X_1+Y_1}$ is injective.
  The normcoercivity of $\alpha|_{X_1+Y_1}$ is hence obtained by 
  \prettyref{thm:linear_mapping_coercive_iff_injective} and carries over to 
  $\widecheck \alpha = \alpha|_{D_{\widecheck \alpha}}$. 
  In order to prove the normcoercivity of $\widecheck A$ we write 
  its domain of definition in the form 
  \begin{align*}
    D_{\widecheck A}
    &=
    \{(x_1,y_1) \in X_1\times Y_1: F_1(x_1) \in \R, \, G_1(y_1) \in \R \}
  \\&=
    \underbrace{\{x_1 \in X_1 : F_1(x_1) \in \R\} }_{\eqdef \widecheck X}
    \times
    \underbrace{\{y_1 \in Y_1 : G_1(y_1) \in \R\} }_{\eqdef \widecheck Y}
  \end{align*}
  and restrict the coercive and hence normcoercive functions 
  $F_1$ and $G_1$ to 
  $F_1|_{\widecheck X} \eqdef \widecheck F$ and 
  $G_1|_{\widecheck Y} \eqdef \widecheck G$, respectively.
  Applying Lemma
  \ref{lem:normcoercivity_of_SoToSay_cartesian_product_of_functions}
  to 
  \begin{gather*}
    \widecheck A(\cdot, \bullet) 
    = 
    \begin{pmatrix}
      \widecheck F (\cdot)
    \\
      \widecheck G (\bullet)
    \end{pmatrix}
  \end{gather*}
  gives then the normcoercivity of $\widecheck A$.
  Finally the concatenation 
  $\widecheck A \circ \widecheck \alpha$ of the normcoercive
  mappings is again normcoercive by Theorem
  \ref{thm:concatenation_normcoercive_and_dots}.
\end{proof}

\begin{example} \label{exa:locally_boundedness_important}
  Consider the functions $F, G: \R^2 \rarr \R$ given by
  \begin{align*}
    F(x_1, x_2) 
    \defeq
    \begin{cases}
      x_1^2 - \frac{1}{x_1^4} & \text{ for } x_1 \neq 0
    \\
      0  		      & \text{ for } x_1 = 0
    \end{cases},
  &&
    G(x_1, x_2) 
    \defeq
    \begin{cases}
      x_2^2  & \text{ for } x_2 \neq 0
    \\
      0	     & \text{ for } x_2 = 0
    \end{cases}.
  \end{align*}
  Setting 
  \begin{align*}
    X_1 \defeq &{} \linspan{e_1}, 
  &
    Y_1 \defeq & \linspan{e_2} = X_2,
  \\
    X_2 \defeq &{} \linspan{e_2},
  &
    Y_2 \defeq &{} \linspan{e_1} = X_1,
  \\
    F_1 \defeq &{} F|_{X_1}, 
  &
    G_1 \defeq &{} G|_{Y_1} = G|_{X_2}, 
  \\
    F_2 \defeq &{} 0_{X_2},
  &
    G_2 \defeq &{} 0_{Y_2} = 0_{X_1},
  \end{align*}
  we can write $F$ and $G$ as semidirect sums
  \begin{align*}
    F 
    =
    F_1 \sdirsum F_2 
  &&
    G
    =
    G_1 \sdirsum G_2.
  \end{align*}
  Clearly all assumptions of Theorem 
  \ref{thm:sum_coercive_on_certain_subspaces}
  are fulfilled -- except for one:
  The function $F_1$ fails to be locally bounded below, because of 
  the exceptional point $(0,0) \in X_1$.
  Setting
  \begin{gather*}
    x^{(n)} \defeq (x_1^{(n)}, x_2^{(n)}) \defeq (\tfrac1n, n)
  \end{gather*}
  gives a sequence $(x^{(n)})_{n \in \N}$ with
  $\|x^{(n)}\| \rarr \pinfty$ as $n\rarr \pinfty$ for which
  \begin{gather*}
    F(x^{(n)}) + G(x^{(n)})
    =
    \Big(\frac{1}{n}\Big)^2 - \frac{1}{(\tfrac1{n})^4} + n^2
    =
    - n^4 + n^2 + \tfrac1{n^2}
    \rarr 
    \minfty	
    \neq
    \pinfty
  \end{gather*}
  as $n \rarr \pinfty$.
  In particular the sum $F+G$ is not coercive on the 
  complementary subspace $X_1 + Y_1 = \R^2$ of $X_2 \cap Y_2 = \{\zerovec\}$.
\end{example}

\begin{example} \label{exa:non_orthogonal_decompositions}
  Consider the function $H: \R^2 \rarr \R$, given by 
  $
    H(x_1,x_2) \defeq x_1^2
  $
  and regard it with respect to the decompositions 
  \begin{gather*}
    \R^2 
      = 
      \underbrace{\Linspan{e_1}}_{\eqdef X_1}
      \oplus 
      \underbrace{\Linspan{e_2}}_{\eqdef X_2}
      =
      \underbrace{\Linspan{e_1+e_2}}_{\eqdef Y_1}
      \oplus 
      \underbrace{\Linspan{e_2}}_{\eqdef Y_2},
  \end{gather*}
  the first beeing an orthogonal one 
  and the second beeing a non orthogonal
  one. Clearly $H$ is coercive both on $X_1$ and $Y_1$. Moreover $H$ is
  bounded below on $X_2 = Y_2$ since it is even constant there.
  Setting
  \begin{align*}
    F_1 \defeq {}& H|_{X_1},  	      &  G_1 \defeq {}& H|_{Y_1}, 
    \\
    F_2 \defeq {}& H|_{X_2} \equiv 0, &  G_2 \defeq {}& H|_{Y_2} \equiv 0
  \intertext{we can write the functions $F \defeq H$ and $G \defeq H$ as semidirect sums}
    F = {}& F_1 \sdirsum F_2,     &   G = {}& G_1 \sdirsum G_2.
  \end{align*}
  In accordance with the previous theorem we see that 
  $F+G = 2H$ is coercive on any subspace $Z_1$ of $\R^2$ 
  with $\R^2 = Z_1 \oplus (X_2 \cap Y_2)$.
  However $X_1 + Y_1 = \R^2$ is none of these subspaces and 
  $F+G = 2H$ is clearly not coercive on 
  $X_1 + Y_1 = \R^2 \supseteq \linspan{e_2}$.
  
\end{example}

\clearpage   

\chapter{Penalizers and constraints in convex problems}
\label{chap:penalizers_and_constraints_in_convex_problems}

\minitoc
\section{Unconstrained perspective versus constrained perspective} 


\addtointroscoll{

This section consists of three subsections.
In subsections 
\ref{subsec:definition_of_zero_times_infty} and 
\ref{subsec:definition_of_argmin}, respectively,
different possibilities of defining
$0\cdot(+\infty)$ and the set $\argmin F$ of minimizers 
of a function $F: \R^n \rarr \R \cup \{\pinfty\}$
are dicussed among their pros and cons, respectively. 
We finially choose the definitions
\begin{align*}
  0\cdot(+\infty) & \defeq 0  
\shortintertext{and} 
  \argmin F 
  &\defeq
  \{ \check x \in \R^n: 
        F(\check x) \leq F(x) \text{ for all } x \in \R^n
  \}. 
\end{align*}
These definitions are suggested when regarding minimizations problems 
of the form 
\begin{gather*}
  F_1 + \lambda F_2 \rarr \min
\end{gather*}
from an ``uncounstrained perspecitive'', which we prefer to take 
instead of the alternative ``constrained perspecitive''.

Subsection \ref{subsec:a_kind_of_dilemma}
serves as introduction to the already discussed Subsections
\ref{subsec:definition_of_zero_times_infty} and 
\ref{subsec:definition_of_argmin},
giving a summarizing and connecting overview of the main ideas
presented there, along with 
our concept to keep the gap between the two different 
perspectives as closed as possible in the following sections.

%
%

We finally mention that we use quite often quotation 
marks in this section, usually at places where,
sometimes hidden, unanswered questions lurk. 
However these implicit questions can be ignored
when regarding this section just as motivation 
for our way of defining $0\cdot(+\infty)$ and $\argmin F$.

}

\subsection{A kind of dilemma}
\label{subsec:a_kind_of_dilemma}

Consider for a possibly empty, fixed subset $C \subseteq \R^n$ 
those pairs of mappings
\begin{align*}
  F: \R^n \rightarrow \R \cup \{+ \infty \},
  &&
  f: C \rightarrow \R,
\end{align*}
which are related in a one to one manner by
$\dom F = C$ and $F|_{\dom F} = f$.
We will also write $F = \hat f$ and $f = \check F$ to indicate
that $F$ and $f$ are related in that manner.
Two things need to be defined: $\argmin F$ and $0\cdot(+\infty)$.
If we want to take an ``unconstrained perspective'' we should define 
\begin{align*}
  \argmin F 
  \defeq
  \{ \check x \in \R^n: 
       F(\check x) \leq F(x) \text{ for all } x \in \R^n
  \},
  &&
  0\cdot (+\infty) \defeq 0.
\end{align*}
If we prefer to take a ``constrained perspective'' we should define 
\begin{align*}
  \argmin F 
  \defeq
  \{ \check x \in \dom F:
	F(\check x) \leq F(x) \text{ for all } x \in \dom F  
  \},
  &&
  0\cdot (+\infty) \defeq +\infty.
\end{align*}

The decision we have to take will turn out to be in a way a dilemma:
On the one hand we would like the minimization problems
$\argmin F$ vs. $\argmin f$
and ``especially'' the minimization problems
$\argmin F = \argmin (\Phi + \lambda \Psi)$
vs. $\argmin f = \argmin (\phi + \lambda \psi)$, 
$\lambda \in [0, +\infty)$, to be always equivalent. To this end we
should choose the definitions fitting to the constrained perspective.
On the other hand we would like to avoid a clash with a definition 
of $\argmin$ in a general situation and -- even more important -- 
want the equation $\Phi + 0 \Psi = \Phi$ to hold true.
To that end we should, however, choose the definitions from 
the unconstrained perspective.
\\[1ex]
We are aware that it is unfortunately not uncommon to define 
$\argmin$ fitting to the constrained perspective and $0 \cdot (+ \infty) \defeq 0$ fitting
to the unconstrained perspective. We try to avoid this mixture of, 
in general not equivalent, perspectives at the 
level of definitions. Instead we will follow the unconstrained 
perspective here and pursue the strategy of imposing conditions 
in our theorems that
ensure at least a weak form of equivalence between the unconstrained
and the constrained perspective.
For instance conditions like 
$\dom \Phi \cap \dom \Psi \not = \emptyset$ in 
\prettyref{thm:constraint_vs_nonconstraint} ensure  
$F\defeq \Phi + \lambda \Psi \not \equiv +\infty$ for 
$\lambda \in [0, \infty)$, so that 
the unconstrained and the constrained perspective 
of the minimization problem are equivalent here, 
at least in the sense of 
$\argmin F = \argmin f$; 
for $\lambda \in (0, + \infty )$ we even have equivalence in a 
stronger sense, since 
\begin{gather*}
  (\Phi + \lambda \Psi){\check {}} = \check \Phi + \lambda \check \Psi.
\end{gather*}
holds in addition.
This is, however, no longer true for $\lambda = 0$, if 
$\dom \Psi \not \supseteq \dom \Phi$.
It is the price we have to pay to ensure $\Phi + 0 \Psi = \Phi$
without putting further assumptions like 
$\dom \Psi \supseteq \dom \Phi$. Note that a more general version 
of this inclusion, was assumed by Rockafellar in his chapter on 
Ordinary Convex Problems and Lagrange multipliers, 
cf. \cite[p. 273]{Rockafellar1970}.
\\[1ex]
\rem{Es ist unklar was ``equivalent'' hier genau bedeuten soll.
Verwandt und ebenfalls unklar: Wie formalisiert man ``Minimierungsproblem''
gescheit? [Einige Ansaetze und Ideen: Handzettel vom 20. 11. 2012];
AUCH: SCAN aus Optimierungsbuch}
\rem{Noch nicht scharf herausgearbeitet und mit obiger Bem. zusammenhaengend: 
Die Vorstellung, der ``unrestringierten Sichtweise'' vs. 
der ``restringierten Sichtweise'' und die Zugehoerigkeit der Def.
$0 (+\infty) \defeq 0$ zur unrestringierten Sichtweise, sowie der
Def. von $\argmin F\defeq{..\in \dom F:..}$ zur restringierten Sichtweise}
%
%
%
    
The following table gives a summarized overview. Some details can be found in the next subsections.
\\ 
\begin{tabu}{l | l | l } \label{tab:unconstrained_vs_constrained_perspecive}
                                          & unconstrained perspective                                  & constrained perspective  \\
  \hline                        
  Definition of $0\cdot(+\infty)$         & $0$                                                        & $ + \infty$\\
  Definition of $\argmin F$               & $\{\check x\in \R^n:$                                      & $\{\check x\in \dom F: $\\
                                          & $ \forall x \in \R^n: F(\check x) \leq F(x) \}$            & $\forall x \in \dom F: F(\check x) \leq F(x) \}$\\
  \hline  
  $\argmin F = \argmin f$                 & for $F \not \equiv +\infty$                                & always\\
  $\argmin \{F_1 + 
      \iota_{{\rm lev}_{\tau}F_2}\} =$    & for $\dom F_1 \cap {\rm lev}_{\tau}F_2 \not = \emptyset$   & always\\
  $\argmin \{F_1 \st
      F_2 \leq \tau \}$&                                                      & \\
  \hline
  $(F_1 + \lambda F_2)\check {} 
      = \check {F_1} + \lambda \check {F_2}$      & for $\lambda \in \R \setminus \{0\}$               & for every $\lambda \in \R$\\  
  $F_1 + 0F_2 = F_1$                      & always true                                                & only true if $\dom F_2 \supseteq \dom F_1$\\
  \hline
  $F$ lsc $\Rightarrow$ $\lambda F$ lsc   & for $\lambda \in [0, +\infty)$                             & in general only for $\lambda \in (0, +\infty)$\\
  \hline
\end{tabu}

\subsection[Definition of $0\cdot(+\infty)$]
  {Definition of $\boldsymbol{0\cdot(+\infty)}$} \label{subsec:definition_of_zero_times_infty}
Let $\phi: C_\phi \rightarrow \R$ and $\psi: C_\psi \rightarrow \R$
be mappings with domains $C_\phi \subseteq \R^n$ and 
$C_\psi \subseteq \R^n$, respectively,
and let $\Phi \defeq \hat\phi$ and $\Psi \defeq \hat \psi$ denote their
natural continuations to functions 
$\R^n \rightarrow \R \cup \{+\infty \}$.
In the constrained perspective we want
$\Phi + \lambda \Psi$ to be the ``exact'' twin of 
$\phi + \lambda \psi$ for all $\lambda \in [0, \infty)$,
i.e. we want 
\begin{gather*}
  (\Phi + \lambda \Psi){\check {}} 
  = 
  \check \Phi + \lambda \check \Psi
  = 
  \phi + \lambda \psi
\end{gather*}
to hold true.
For $\lambda \in (0, + \infty)$ this equation is always fulfilled.
For $\lambda = 0$ it is however in general only true, if we would set
$0\cdot(+ \infty)$ to be $+ \infty$; choosing any other value 
from $[0, + \infty)$ for this product, let us say the value $0$,
would cause the domain of definition of 
$(\Phi + 0 \Psi){\check {}}$ to be different from 
the domain of definition of $\check \Phi + 0 \check \Psi$, if
$\dom \Psi \not \supseteq \dom \Phi$:
Here the domain of definition of 
$(\Phi + 0 \Psi){\check {}} = \check \Phi$ 
equals 
$C_\phi = \dom \Phi$, whereas the domain of definition of
$\check \Phi + 0 \check \Psi$ is 
$C_\phi \cap C_\psi = \dom \Phi \cap \dom \Psi \subset \dom \Phi$.
\\[1ex]
In the unconstrained perspective we concede $\Phi + \lambda \Psi$
a mode of being that is beyond being a copy of 
$\phi + \lambda \psi$, made up for technical purposes;
Here we consider $\Phi, \Psi$ and $\Phi + \lambda \Psi$ in first line
``really'' as mappings $\R^n \rightarrow \R \cup \{+ \infty\}$
which all have the same domain of definition. This allows us to achieve
$\Phi + 0 \Psi = \Phi$ by setting
\begin{gather*}
  0\cdot(+\infty) \defeq 0.
\end{gather*}
With this definition we accept that the identity
$
  (\Phi + \lambda \Psi){\check {}} 
  = 
  \check \Phi + \lambda \check \Psi
  = 
  \phi + \lambda \psi
$
may fail for $\lambda = 0$.
\\[1ex]
Finally we remark that our definition of $0\cdot (+\infty)$
seems to be the ``correct'' one from the viewpoint of lower semicontinous
functions: If $\Psi : \R^n \rightarrow \R \cup \{+ \infty\}$ is
lower semicontinuous then so is $\lambda \Psi$ for all 
$\lambda \in (0,+\infty)$ and also for $\lambda = 0$, thanks to 
our definition $0\cdot(+ \infty) \defeq 0$.
Note that lower semicontinuity would, however, in general not be 
preserved, if we had chosen $0\cdot(+ \infty)$ to be $+ \infty$
in the constrained perspective's sense:
Consider the function $\psi: (0, +\infty) \rightarrow \R$, given by
$\psi(x) \defeq \frac{1}{x}$. Its natural continuation 
$\Psi \defeq \hat \psi : \R^n \rightarrow \R \cup \{+ \infty\}$ is lower 
semicontinous, but its product $0\cdot \Psi$ 
(in the constrained perspective's sense!) would not be lower 
semicontinous, since its epigraph would be the non-closed set 
$(0, + \infty)\times [0, + \infty)$.

\subsection[Definition of $\argmin$]
    {Definition of $\boldsymbol{\argmin}$} \label{subsec:definition_of_argmin} 

Let $f: C \rightarrow \R$ be some real-valued function,
defined on some subset $C \subset \R^n$ and let 
$F \defeq \hat f$ be its natural
continuation to a function $\R^n \rightarrow \R \cup \{+\infty\}$. 
\\[1ex]
In the constrained perspective we regard $F$ as a kind of 
working copy of $f$; in particular we want the equation 
$\argmin F = \argmin f$
to hold always true. Defining $\argmin F$ as
$
  \{ \check x \in \dom F:
	F(\check x) \leq F(x) \text{ for all } x \in \dom F  
  \}
$
would do the job.
\\[1ex]
In the unconstrained perspective we, however, want to minimize $F$
``really'' over $\R^n$, its whole domain of definition, so that we
define
\begin{gather*}
  \argmin F 
  \defeq
  \{ \check x \in \R^n: 
        F(\check x) \leq F(x) \text{ for all } x \in \R^n
  \}  
\end{gather*}
We then still have $\argmin F = \argmin f$, except for the 
particular case $F \equiv + \infty$ where we unfortunately
get $\argmin F = \R^n \not = \emptyset = \argmin f$.
\\[1ex]
Despite this small disadvantage we nevertheless define $\argmin F$
according to the unconstrained perspective -- not only because 
we had already decided us for this perspective when defining
$0\cdot(+\infty) \defeq 0$ but also for the sake of consistency
with the definition of $\argmin$ in the following more general situation:
Assume we want to define $\argmin H$ for a quite general function
$H: X \rightarrow Y$  between a (possibly empty) set $X$ and a 
totally ordered set $(Y, \leq_Y)$. 
The natural choice for defining the 
(possibly empty) set of minimizers seems to be 
\begin{gather*} \label{eq:def_of_argmin}
  \argmin H 
  \defeq 
  \{ \check x \in X : H(\check x) \leq_Y H(x) \text{ for all } x \in X\}.
\end{gather*}
Our de facto definition of $\argmin F$ appears then just as a special 
case for $X = \R^n$, $Y = (-\infty, + \infty]$ with the natural order
and $H = F$. In contrast, the rejected, constrained perspective way
of defining $\argmin F$ would clash to the general definition 
for $F \equiv +\infty$.
\\[1ex]
We conclude this section with a remark
to the constrained optimization problem
\begin{gather*}
  \argmin \{F_1 \st F_2 \leq \tau \}
  \defeq
  \{\check x \in \R^n : F_2(\check x) \leq \tau \text{ and } 
  F_1(\check x) \leq F_1(x) \text{ for all } x\in {\rm lev}_{\tau}F_2\},
\end{gather*}
where $\tau \in \R$ and
$F_1,F_2: \R^n \rightarrow \R \cup \{+\infty\}$. In the 
constrained perspective we can always rewrite it to
$\argmin \{F_1 + \iota_{{\rm lev}_{\tau}F_2}\} $. In the 
unconstrained perspective we can do this however only if 
$F_1 + \iota_{{\rm lev}_{\tau}F_2} \not \equiv +\infty$, 
i.e. if the overlapping condition
$\dom F_1 \cap {\rm lev}_{\tau}F_2 \not = \emptyset$ is fulfilled.
A similar condition which ensures a stronger overlapping 
between $\dom F_1$ and ${\rm lev}_{\tau}F_2$ is used in part i) of
\prettyref{thm:constraint_vs_nonconstraint}.
The question is also if we should at all speak of the 'constrained
problem' $\argmin \{F_1 \st F_2 \leq \tau \}$, 
defined as above, in the context of our unconstrained perspective,
or if we should consider just the problem 
$\argmin F_1 + \iota_{{\rm lev}_{\tau}F_2}$ instead.


\section{Penalizers and constraints} \label{sec:penalizers_and_constraints}

\addtointroscoll{

This section consists of three subsections:
In the first subsection we review 
general relations between the constrained problem 
\begin{align} \label{eq:repeatet_constraint} 
  &(P_{1,\tau}) \qquad \argmin_{x \in \mathbb R^n} \left\{ \Phi(x) \st \Psi(x) \le \tau \right\}
\intertext{and the unconstrained, penalized problem}
\label{eq:repeated_nonconstraint} 
  & (P_{2,\lambda}) \qquad \argmin_{x \in \mathbb R^n} \{ \Phi(x) +  \lambda \Psi(x) \}, \;\; \lambda \ge 0.
\end{align}
This relation is stated in Detail in
\prettyref{thm:constraint_vs_nonconstraint}.
In the second subsection we add to a primal problem, which 
can be the constrained or the penalized problem,
the corresponding Fenchel Dual problem along with 
conditions that characterize their solutions.
In the third subsection we discuss  
\prettyref{thm:constraint_vs_nonconstraint}. In particular a 
relation between one of its assumptions and Slater's Constraint 
Qualification is given.

}

\subsection{Relation between solvers of constrained and 
  penalized problems  
  } 

\MayChangesPartiallyPerformedVersionOrHareBrainedOfficialVersion{%
\diss{Bei finalem Notationsabgleich: 
Bezeichnungen in Lemma: Statt $x^*$ besser $\hat x$ ?}
}%
{}%
In this subsection there are two lemmas and one theorem along with
their proofs and some examples.
The first Lemma \ref{aux_lemma} is an auxiliary lemma for
the second Lemma \ref{level_sets}. The latter lemma gives 
a relation between the subgradients $\partial \Psi(x^*)$ and 
$\partial \iota_S(x^*)$, where $S\defeq {\rm lev}_{\Psi(x^*)}\Psi$.
This relation is used to prove
\prettyref{thm:constraint_vs_nonconstraint},
which gives relations between solvers of ${{\rm SOL}}(P_{1,\tau})$
and ${\rm SOL}(P_{2,\lambda})$.
For comments on this subsection see  
\prettyref{sec:Notes_to_pvc_relation_theorem}.
%
\begin{lemma} \label{aux_lemma}
Let $\Psi: \R^n \rightarrow \R \cup \{+\infty\}$ be a proper and 
convex function, 
$x^* \in {\rm dom} \Psi$ and
$S \defeq {\rm lev}_{\Psi(x^*)} \Psi $. 
Let $p \in \R^n$ such that the half-space
$H_{p,\alpha}^{\le}$ 
with $\alpha \defeq \langle p,x^* \rangle$ contains $S$.
Then we have the equality
\begin{gather} \label{an_equality}
\inf_{ x \in H_{p,\alpha}^= } \Psi(x) = \Psi(x^*),
\end{gather}
if $x^* \in {\rm int}({\rm dom} \Psi)$ or 
if both $x^* \in {\rm ri}({\rm dom} \Psi)$ and $S$ is not completely
contained in $H_{p,\alpha}^{=}$.
\end{lemma}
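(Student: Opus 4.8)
The plan is to establish the two inequalities $\inf_{x \in H_{p,\alpha}^=} \Psi(x) \le \Psi(x^*)$ and $\inf_{x \in H_{p,\alpha}^=} \Psi(x) \ge \Psi(x^*)$ separately. The first is immediate: since $\alpha = \langle p, x^*\rangle$, the point $x^*$ lies on the hyperplane $H_{p,\alpha}^=$, and it also lies in $\dom\Psi$, so $x^*$ is a competitor in the infimum and hence $\inf_{x \in H_{p,\alpha}^=}\Psi(x) \le \Psi(x^*)$. So the whole content is in the reverse inequality, i.e. showing that $\Psi$ cannot dip strictly below $\Psi(x^*)$ anywhere on the hyperplane.

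For the reverse inequality I would argue by contradiction: suppose there is a point $\bar x \in H_{p,\alpha}^=$ with $\Psi(\bar x) < \Psi(x^*)$. Then $\bar x \in \lev_{\Psi(x^*)}\Psi = S$ by definition, hence $\bar x \in S \subseteq H_{p,\alpha}^{\le}$, which is consistent; the point is that $\bar x$ is a point of $S$ that actually lies \emph{on} the bounding hyperplane $H_{p,\alpha}^=$. The strategy now is to perturb: consider the segment from $x^*$ through $\bar x$ and slightly beyond, or rather a point obtained by moving from $\bar x$ in a direction that leaves $H_{p,\alpha}^{\le}$ (i.e. increases $\langle p, \cdot\rangle$) while staying in $\dom\Psi$. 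Concretely, in the case $x^* \in \operatorname{int}(\dom\Psi)$: pick a direction $d$ with $\langle p, d\rangle > 0$; for small $t > 0$ the point $x^* + td$ lies in $\dom\Psi$ and strictly outside $H_{p,\alpha}^{\le}$. Now use convexity of $\Psi$ along the line through $\bar x$ and $x^* + td$: since $\Psi(\bar x) < \Psi(x^*)$ and $\Psi$ is convex and finite on a neighborhood of $x^*$, a standard slope/monotonicity argument (the difference quotients of a convex function are monotone) forces some point of the segment — which can be chosen to lie in $S$ but strictly outside $H_{p,\alpha}^{\le}$ — to contradict $S \subseteq H_{p,\alpha}^{\le}$. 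The key elementary fact is: if a convex function on a segment has value $< \Psi(x^*)$ at one endpoint $\bar x$ and value $\Psi(x^*)$ somewhere in the relative interior direction, then it stays $< \Psi(x^*)$ (or $\le$) on a sub-segment extending past $x^*$; combined with continuity of $\Psi$ on $\operatorname{int}(\dom\Psi)$ this places a point of $\lev_{\Psi(x^*)}\Psi$ strictly on the $H_{p,\alpha}^{>}$ side.

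In the relative-interior case ($x^* \in \operatorname{ri}(\dom\Psi)$, and $S \not\subseteq H_{p,\alpha}^=$), the same perturbation argument is carried out inside the affine hull $\operatorname{aff}(\dom\Psi)$ instead of all of $\R^n$: the extra hypothesis that $S$ is not entirely contained in $H_{p,\alpha}^=$ guarantees there is a point $s_0 \in S$ with $\langle p, s_0\rangle < \alpha$, so the direction $x^* - s_0$ (lying in the affine hull) has positive inner product with $p$ and can be used to push past the hyperplane while remaining in $\operatorname{ri}(\dom\Psi)$ for small steps; then convexity and continuity of $\Psi$ relative to $\operatorname{aff}(\dom\Psi)$ finish the contradiction exactly as before. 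I expect the main obstacle to be handling this relative-interior case cleanly — making sure the perturbed points genuinely stay in the relative interior of the domain and that the convexity/monotone-slope estimate is applied along a line that actually meets both the interior-direction and the strictly-outside-$H^{\le}$ region — whereas the interior case and the easy inequality are routine.
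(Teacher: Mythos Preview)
Your overall strategy matches the paper's: argue by contradiction from a point $\bar x\in H_{p,\alpha}^{=}$ with $\Psi(\bar x)<\Psi(x^*)$, and combine convexity with continuity of $\Psi$ on ${\rm int}(\dom\Psi)$ (resp.\ on ${\rm ri}(\dom\Psi)$) to exhibit a point of $S$ lying strictly in $H_{p,\alpha}^{>}$, contradicting $S\subseteq H_{p,\alpha}^{\le}$. The relative-interior case is handled the same way in both: work inside ${\rm aff}(\dom\Psi)$ and use the hypothesis $S\not\subseteq H_{p,\alpha}^{=}$ to obtain a direction in the affine hull transverse to the hyperplane.

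One step is imprecise and worth correcting. Your ``key elementary fact'' (that if a convex function is $<\Psi(x^*)$ at $\bar x$ and equals $\Psi(x^*)$ at $x^*$, it stays $\le\Psi(x^*)$ on a sub-segment \emph{past} $x^*$) is false as written: convexity forces the values beyond $x^*$ to be $\ge\Psi(x^*)$, not $\le$. Your line through $\bar x$ and $x^*+td$ \emph{can} be made to work, but by a different mechanism: continuity at $x^*$ bounds $\Psi(x^*+td)$ only slightly above $\Psi(x^*)$, and then the chord bound from convexity yields $\Psi<\Psi(x^*)$ on a portion of the segment near $\bar x$, all of which already lies in $H_{p,\alpha}^{>}$. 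The paper's execution is cleaner and sidesteps this: first move along the segment from $\bar x$ to $x^*$ (which stays in $H_{p,\alpha}^{=}$) to a point $x_\lambda=\lambda\bar x+(1-\lambda)x^*$ with $\lambda$ small; convexity gives $\Psi(x_\lambda)<\Psi(x^*)$, and $x_\lambda\in{\rm int}(\dom\Psi)$. Continuity then gives a full ball $\closedball[\varepsilon][x_\lambda]$ on which $\Psi<\Psi(x^*)$, so this ball lies in $S\subseteq H_{p,\alpha}^{\le}$; but a ball centered on the hyperplane must meet $H_{p,\alpha}^{>}$, the desired contradiction.
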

%
{\bf Proof.} 
For $n = 0$ the assertion of the Lemma is trivially true.
Without loss of generality we may therefore assume $n \geq 1$ 
in the following.
We first consider the case $x^* \in {\rm int}({\rm dom} \Psi)$.
Assume that there exists $y \in H_{p,\alpha}^=$ 
such that $\Psi(y) < \Psi(x^*)$. Since $y,x^* \in \dom \Psi$, 
we see by the convexity of $\Psi$ that
$$
\Psi(\underbrace{\lambda y + (1-\lambda) x^*}_{\eqdef x_\lambda}) \le \lambda \Psi(y) +  (1-\lambda) \Psi( x^*) < \Psi( x^*) 
$$
for all $\lambda \in (0,1)$.
Since 
$x^* \in {\rm int}({\rm dom} \Psi)$
we have 
$x_\lambda \in {\rm int}({\rm dom} \Psi)$ for $\lambda$ small enough.
Since $\Psi$ is continuous on ${\rm int}({\rm dom} \Psi)$, 
there exists $\varepsilon>0$ such that the Euclidean ball
$\closedball[\varepsilon][x_\lambda]$ centered at $x_\lambda$ 
with radius $\varepsilon$ fulfills $\closedball[\varepsilon][x_\lambda] \subseteq {\rm int}(\dom \Psi)$ and
$\Psi(x) < \Psi(x^*)$ for all $x \in \closedball[\varepsilon][x_\lambda]$.
Hence we obtain by the assumption on $S$ and $p$ the inclusion
$\closedball[\varepsilon][x_\lambda] \subseteq S \subseteq H_{p,\alpha}^{\le}$ 
so that
$\closedball[\varepsilon][x_\lambda] \cap H_{p,\alpha}^{>} = \emptyset$.
This contradicts $x_\lambda \in H_{p,\alpha}^=$. 
\\
The remaining case can be reduced to this argument:
Without loss of generality we may assume $x^*$
to be the point of origin, so that $H_{p,\alpha}^{=}$ and 
${\rm aff}(\dom \Psi) \eqdef U$  are vector subspaces of $\R^n$;
note herein 
$x^* \in {\rm ri}(\dom \Psi) \subseteq {\rm aff}(\dom \Psi)$.
For simplicity of perception we may without loss of generality assume further,
that $p$ is of the form $p=(0,\dots, 0, 1)$, i.e.
$H_{p,\alpha}^{=} = \R^{n-1} \times \{0\}$ and
$H_{p,\alpha}^{\leq} = \R^{n-1} \times (-\infty,0]$.
The level set $S \subseteq H_{p,\alpha}^{\leq}$ is not completely contained in
$H_{p,\alpha}^{=}$. Therefore $H_{p,\alpha}^{=}$, or rather 
$H^{=} \defeq H_{p,\alpha}^{=} \cap U$, must separate $U$ in an 
upper part $H^{\geq} \defeq H_{p,\alpha}^{\geq} \cap U$ and a 
lower part $H^{\leq} \defeq H_{p,\alpha}^{\leq} \cap U$;
note here that $H^{\geq}$ is a hyperplane in $U = {\rm aff}(\dom \Psi)$
by Detail 
\ref{det:hyperplane_intersection_with_subspace_yields_here_again_a_hyperplane}. 
Due to $H^{\leq} \supseteq S$ and since
$
  \inf_{ x \in H_{p,\alpha}^= } \Psi(x) 
  = 
  \inf_{ x \in H^= } \Psi(x) 
$
we can consider $\Psi$ only on $U = {\rm aff}(\dom \Psi)$ 
and then argue just as before in this vector subspace, using
$x^*$ to be an interior point of $S$ (considered of course as subset of $U$).
\hfill $\Box$
\\[2ex]

\begin{remark} \label{rem:infimum_on_hyperplane_touching_levelset}
~
  \begin{enumerate}
    \item \label{enu:touching_point_must_be_in_interior_of_domain}
      In cases where ${\rm aff}(\dom \Psi)$ is the full space $\R^n$,
      i.e. where ${\rm int}(\dom \Psi) = {\rm ri}(\dom \Psi)$, 
      the condition $x^* \in {\rm int}({\rm dom} \Psi)$ is, in general,
      really necessary to get the equality \eqref{an_equality}
      as Fig. \ref{bound_1} illustrates.
      \begin{figure}[htpb] 
      \begin{center}
	\includegraphics[width=0.35\textwidth]{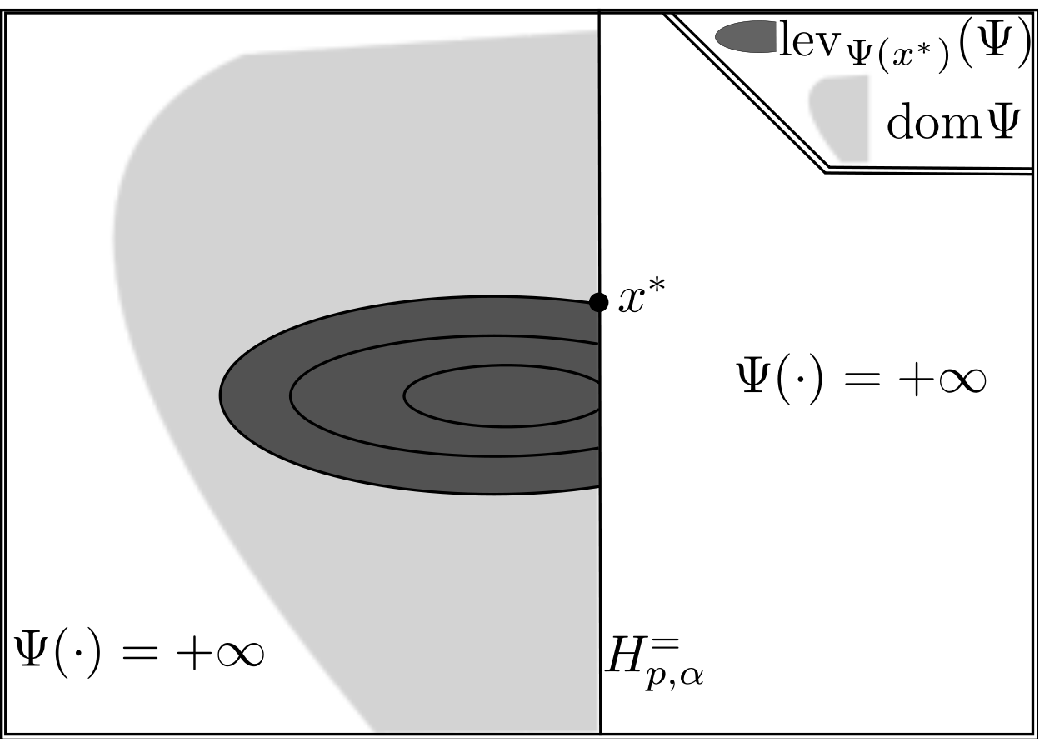}
	\caption{Illustration that relation \eqref{an_equality} is in general not valid for $x^* \in {\rm dom} \Psi \backslash {\rm int}(\dom \Psi)$. \label{bound_1}}
	\label{fig:inf_not_attained_when_POI_not_in_int}
      \end{center}
      \end{figure}
    \item \label{enu:touching_point_must_be_in_ri_of_domain_and_hyperplane_must_be_placed_well}
      In cases where ${\rm aff}(\dom \Psi) \subset \R^n$, i.e. where
      ${\rm int}(\dom \Psi) = \emptyset$, the condition 
      $x^* \in {\rm ri}(\dom \Psi) $ in general really needs to be 
      complemented by the condition 
      $S \not \subseteq H^{=}_{p,\alpha}$ 
      to get the equality \eqref{an_equality}, see the second part of
      Remark
      \ref{rem:relation_between_subdifferentials_not_full}
    or make the following gedankenexperiment:
    Look at Figure
    \ref{fig:inf_not_attained_when_POI_not_in_int}
    and regard the two dimensional effective domain of $\Psi$
    as $x_1$-$x_2$-plane of $\R^3$, i.e.
    extend the there sketched function 
    $\Psi: \R^2 \rightarrow \R \cup \{+\infty\}$
    to a function $\hat \Psi: \R^3 \rightarrow \R \cup \{+\infty\}$ 
    by setting 
    \begin{gather*}
      \hat \Psi(x_1,x_2,x_3) \defeq 
      \begin{cases}
	\Psi(x_1,x_2) & \text{ if } x_3 = 0\\
	+ \infty      & \text{ if } x_3 \not = 0.
      \end{cases}
    \end{gather*}
    Move now $x^*$ and the line  $H^=_{p,\alpha}$ to some place in 
    ${\rm ri}(\dom \Psi) \setminus \argmin \Psi$
    but change the direction of $H^=_{p,\alpha}$, if necessary,
    in such a way that we still have
    $S \defeq {\rm lev}_{\Psi(x^*)} \Psi \subseteq H^\leq_{p,\alpha}$.
    Consider finally the line $H^=_{p,\alpha}$ as part of a plane
    $\hat H^=_{\hat p, \hat \alpha}$ with 
    $\hat p \in \R^3 \setminus \{\zerovec\}$ and 
    $\hat \alpha \defeq \langle \hat p,  x^*\rangle$. 
    As long as we consider only such planes 
    $\hat H^=_{\hat p, \hat \alpha}$ which are not identical to 
    the $x_1$-$x_2$-plane ${\rm aff}(\dom \Psi)$,
    but intersect this plane
    only in $H^=_{p,\alpha}$, everything keeps essentially the same as
    before: Also $\hat H^=_{\hat p, \hat \alpha}$ separates $\dom \Psi$
    at $x^* \in {\rm ri}(\dom \Psi)$ into two parts, such that 
    $S$ is completely contained in 
    $\hat H^\leq_{\hat p, \hat \alpha}$.
    Such a separation is, however, no longer performed by 
    $\hat H^=_{\hat p, \hat \alpha}$ if it is identical to the 
    $x_1$-$x_2$-plane. In this case equation 
    \eqref{an_equality} is clearly no longer fulfilled.

  \end{enumerate}
\end{remark}
\vspace{0.2cm}

The following lemma will be used in our proof of \prettyref{thm:constraint_vs_nonconstraint}.
%
\begin{lemma} \label{level_sets}
Let $\Psi: \R^n \rightarrow \R \cup \{ +\infty\}$ be a proper, convex function,  $x^* \in {\rm dom} \Psi$ and $S \defeq {\rm lev}_{\Psi(x^*)} \Psi$. 
Then we have 
\begin{equation} \label{wichtig}
\R_0^+ \, \partial \Psi(x^*) \subseteq \partial \iota_{S} (x^*).
\end{equation}
If $x^*$ is not a minimizer of $\Psi$ we moreover have
\begin{alignat}{3}
\label{eq:subgradient_relation_with_indicatorfunction_for_ri_point}
  \partial \iota_S(x^*)
  =
  \overline{\R_0^+ \partial\Psi(x^*)} 
    & \; \text { if } x^* \in {\rm ri}(\dom \Psi),
\\
\label{eq:subgradient_relation_with_indicatorfunction_for_interior_point}
  \partial \iota_S(x^*)
  =
  \R_0^+ \partial\Psi(x^*) 
    & \; \text { if } x^* \in {\rm int}(\dom \Psi), 
	  \text { or in other words } 
\\ 
    & \;    \text { if } x^* \in {\rm ri}(\dom \Psi)
	    \text{ and } {\rm aff}(\dom \Psi) = \R^n.   
\nonumber
\end{alignat}
\end{lemma}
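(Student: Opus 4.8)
The plan is to establish the three assertions in the order \eqref{wichtig}, then \eqref{eq:subgradient_relation_with_indicatorfunction_for_interior_point}, then \eqref{eq:subgradient_relation_with_indicatorfunction_for_ri_point}, since the interior case is cleaner and the relative-interior case is obtained by passing to $\mathrm{aff}(\dom\Psi)$.

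First I would prove the inclusion \eqref{wichtig}. Since $\partial\iota_S(x^*)$ is a convex cone containing $\zerovec$, it suffices to show $\partial\Psi(x^*)\subseteq\partial\iota_S(x^*)$. Take $p\in\partial\Psi(x^*)$, so $\Psi(x)\ge\Psi(x^*)+\langle p,x-x^*\rangle$ for all $x\in\R^n$. For $x\in S={\rm lev}_{\Psi(x^*)}\Psi$ we have $\Psi(x)\le\Psi(x^*)$, hence $\langle p,x-x^*\rangle\le 0=\iota_S(x^*)+\langle p,x-x^*\rangle\le\iota_S(x)$ trivially for $x\in S$, and for $x\notin S$ the inequality $\iota_S(x)=+\infty\ge\iota_S(x^*)+\langle p,x-x^*\rangle$ holds automatically. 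Thus $p\in\partial\iota_S(x^*)$, and multiplying by $t\ge0$ stays in the cone, giving \eqref{wichtig}.

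Next I would treat \eqref{eq:subgradient_relation_with_indicatorfunction_for_interior_point}, assuming $x^*\in{\rm int}(\dom\Psi)$ and $x^*\notin\argmin\Psi$. The inclusion ``$\supseteq$'' is \eqref{wichtig}, so I only need ``$\subseteq$''. Let $p\in\partial\iota_S(x^*)$, $p\neq\zerovec$ (the case $p=\zerovec$ is trivial since $\zerovec\in\R_0^+\partial\Psi(x^*)$). Then $\langle p,x-x^*\rangle\le 0$ for all $x\in S$, i.e. $S\subseteq H^{\le}_{p,\alpha}$ with $\alpha\defeq\langle p,x^*\rangle$. Now invoke \prettyref{aux_lemma}: since $x^*\in{\rm int}(\dom\Psi)$, we get $\inf_{x\in H^=_{p,\alpha}}\Psi(x)=\Psi(x^*)$. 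This means the affine function $x\mapsto\langle p,x\rangle-\alpha$ (which vanishes on $H^=_{p,\alpha}$) together with $\Psi$ can be separated: the point $(x^*,\Psi(x^*))$ lies on the boundary of $\mathrm{epi}\,\Psi$, and the infimum condition forces the existence of a scalar $t\ge0$ with $tp\in\partial\Psi(x^*)$. Concretely, I would argue that the hyperplane $H^=_{p,\alpha}$ supports the sublevel set $S$ at $x^*$ and touches the value $\Psi(x^*)$; standard convex-analysis (the relation between supporting hyperplanes of sublevel sets and subgradients, e.g. via the normal cone $N_S(x^*)$ and the fact that $N_S(x^*)=\R_0^+\partial\Psi(x^*)$ when $x^*$ is a non-minimizing interior point) then yields $p\in\R_0^+\partial\Psi(x^*)$. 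Here I would use that $\partial\Psi(x^*)\neq\emptyset$ (true since $x^*\in{\rm int}(\dom\Psi)\subseteq{\rm ri}(\dom\Psi)$) and that $\Psi(x^*)>\inf\Psi$ guarantees $\zerovec\notin\partial\Psi(x^*)$, so the scaling is well-defined. The main obstacle is exactly this step --- turning the infimum equality of \prettyref{aux_lemma} into membership in the scaled subdifferential; I expect to need a careful separation argument in $\R^n\times\R$ between $\mathrm{epi}\,\Psi$ and the half-line $\{x^*\}\times(-\infty,\Psi(x^*))$ restricted appropriately, or alternatively a direct computation showing that $p/\|$(suitable normalization)$\|$ is a subgradient.

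Finally, for \eqref{eq:subgradient_relation_with_indicatorfunction_for_ri_point} with $x^*\in{\rm ri}(\dom\Psi)\setminus\argmin\Psi$, I would reduce to the interior case by restricting everything to $U\defeq{\rm aff}(\dom\Psi)$. After translating so that $x^*=\zerovec$, $U$ is a linear subspace, $x^*\in{\rm int}_U(\dom\Psi)$, and $\Psi|_U$ is a proper convex function on $U$ with $x^*$ in the interior of its domain relative to $U$. The sublevel set $S$ lies in $U$, and one checks $\partial\iota_S(x^*)=\partial_U\iota_S(x^*)+U^\perp$ and $\partial\Psi(x^*)=\partial_U(\Psi|_U)(x^*)+U^\perp$ (the subdifferential picks up the whole orthogonal complement because $\Psi=+\infty$ off $U$). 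Applying the already-proven interior case inside $U$ gives $\partial_U\iota_S(x^*)=\R_0^+\,\partial_U(\Psi|_U)(x^*)$; adding $U^\perp$ and taking into account that $\R_0^+(\,\cdot\,+U^\perp)$ need not be closed even when $\R_0^+(\cdot)$ is --- this is where the closure $\overline{\phantom{x}}$ enters --- I would conclude $\partial\iota_S(x^*)=\overline{\R_0^+\partial\Psi(x^*)}$. I would double-check the closure issue against the illustrative discussion in \prettyref{rem:relation_between_subdifferentials_not_full} and the picture in Figure \ref{fig:inf_not_attained_when_POI_not_in_int} to make sure the hypothesis $S\not\subseteq H^=_{p,\alpha}$ from \prettyref{aux_lemma} is automatically available in this reduction (it is, because $x^*$ being a relative-interior non-minimizer forces the separating hyperplane to genuinely cut $\dom\Psi$).
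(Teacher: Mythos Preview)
Your handling of the inclusion \eqref{wichtig} and of the relative-interior case \eqref{eq:subgradient_relation_with_indicatorfunction_for_ri_point} is essentially the paper's argument: the first is immediate from the subgradient inequality, and the third is obtained by restricting to $A=\mathrm{aff}(\dom\Psi)$, decomposing both subdifferentials as $\partial(\cdot|_A)(x^*)+U^\perp$, applying the interior case inside $A$, and then checking $\R_0^+ B + U^\perp \subseteq \overline{\R_0^+(B+U^\perp)}$ to account for the closure.

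The gap is in your treatment of \eqref{eq:subgradient_relation_with_indicatorfunction_for_interior_point}. You correctly identify the obstacle --- turning the conclusion $\inf_{x\in H^=_{p,\alpha}}\Psi(x)=\Psi(x^*)$ of Lemma~\ref{aux_lemma} into $p\in\R_0^+\partial\Psi(x^*)$ --- but you do not actually bridge it. Your first suggestion, invoking ``the fact that $N_S(x^*)=\R_0^+\partial\Psi(x^*)$ when $x^*$ is a non-minimizing interior point'', is circular: $N_S(x^*)=\partial\iota_S(x^*)$, so this \emph{is} the statement to be proved. Your second suggestion, a separation argument in $\R^n\times\R$ between $\mathrm{epi}\,\Psi$ and a half-line, is too vague; a generic supporting hyperplane to $\mathrm{epi}\,\Psi$ at $(x^*,\Psi(x^*))$ need not have its horizontal component in the ray $\R_0^+ p$, so you would still have to explain how the infimum condition forces this alignment.

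The paper's device here is an \emph{inf-projection}: after rotating so that $p=(0,\dots,0,p_n)^\tT$ with $p_n>0$, one sets $\nu(x_n)\defeq\inf_{\bar x\in\R^{n-1}}\Psi(\bar x,x_n)$. Lemma~\ref{aux_lemma} then reads $\nu(x_n^*)=\Psi(x^*)$, and one checks directly that $\tfrac1h p\in\partial\Psi(x^*)$ is equivalent to $\tfrac1h p_n\in\partial\nu(x_n^*)$. Since $\nu$ is a proper convex function on $\R$ with $x_n^*\in{\rm int}(\dom\nu)$, one has $\partial\nu(x_n^*)\neq\emptyset$; finally, using that $x^*$ is not a minimizer one exhibits $y_n<x_n^*$ with $\nu(y_n)<\nu(x_n^*)$, forcing any $q_n\in\partial\nu(x_n^*)$ to be strictly positive. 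This reduction to one dimension is the missing idea in your outline.
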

%
\vspace{0.2cm}
A proof of a similar lemma for finite functions $\Psi: \mathbb R^n \rightarrow \R $ based on cone relations 
can be found, e.g., in \cite[p. 245]{HL93}.
Here we provide a proof which uses the {\it epigraphical projection}, also known as {\it inf-projection} as defined in \cite[p. 18+, p. 51]{RW04}.
For a function $f: \mathbb R^n \times \mathbb R^m \rightarrow \mathbb R \cup \{ +\infty \}$, the inf-projection is defined by
$
\nu (u) \defeq \inf_x f(x,u).
$
The name 'epigraphical projection' is due to the following fact:
$\epi \nu$ is the image of $\epi f$ under the projection $(x,u,\alpha) \mapsto (u,\alpha)$, if
$\argmin _x f(x,u)$ is attained for each $u \in \dom \nu$.
(Note that this is not the projection onto epigraphs as used, e.g., in \cite[p. 427]{BC11}.)
The inf-projection is convexity preserving, i.e.,
if $f$ is convex, then $\nu$ is also convex, cf. \cite[Proposition 2.22]{RW04}.
\\[2ex]
{\bf Proof.}  
1. First we show that $\R_0^+ \, \partial \Psi(x^*) \subseteq \partial \iota_{S} (x^*)$.
By definition of the subdifferential we obtain 
\begin{eqnarray*} \label{lhs}
 q \in \partial \Psi(x^*) 
&\Longleftrightarrow& \quad \forall x \in \R^n: \;\, \langle q, x-x^* \rangle \le \Psi(x) - \Psi(x^*),\\
&\Longrightarrow& \quad \forall x \in S: \quad \langle q, x-x^* \rangle \le 0 \nonumber
\end{eqnarray*}
Hence we obtain the above inclusion by
\begin{equation} \label{rhs}
 p \in \partial \iota_{S} (x^*) \; \Longleftrightarrow \; \forall x \in S: \; \langle p, x-x^* \rangle \le 0.
\end{equation}
2. Next we prove 
$\partial \iota_{S} (x^*) \subseteq \R_0^+ \, \partial \Psi(x^*) $ 
if $x^*$ is not a minimizer of $\Psi$ and the additional assumptions in 
\eqref{eq:subgradient_relation_with_indicatorfunction_for_interior_point} 
are fulfilled, so that $x^* \in {\rm int}({\rm dom} \Psi)$. 
Let $p \in \partial \iota_{S} (x^*)$. If $p$ is the zero vector, then we are done since $\partial \Psi(x^*) \not = \emptyset$.
In the following we assume that $p$ is not  the zero vector. 
It remains to show that there exists $h>0$ such that $\frac{1}{h} p \in \partial \Psi(x^*)$.
We can restrict our attention to $p =(0,\ldots,0,p_n)^\tT$ with $p_n >0$.
(Otherwise we can perform a suitable rotation of the coordinate system.)
Then \eqref{rhs} becomes
\begin{equation} \label{drei}
p \in \partial \iota_{S} (x^*) \; \Longleftrightarrow \; \forall x = (\bar x,x_n) \in S: \; p_n x_n \le p_n x_n^*.
\end{equation}
Hence we can apply lemma \ref{aux_lemma} with $p =(0,\ldots,0,p_n)^\tT$ and obtain 
$$
\inf_{ \{ x\in \mathbb R^n:x_n = x_n^*\}} \Psi (x) = \Psi(x^*).
$$
Introducing the inf-projection
$\nu: \R \rightarrow \R\cup \{ \pm \infty \}$ by
$$
\nu (x_n) \defeq \inf_{\bar x \in \R^{n-1}} \Psi(\bar x,x_n).
$$
this can be rewritten as
\begin{equation} \label{show_1}
 \nu (x_n^*) =  \Psi(x^*).
\end{equation}
Therefore we have
\begin{align*}
  \frac{1}{h} p = (0,\ldots,0,\frac{1}{h} p_n)^\tT \in \partial \Psi(x^*) 
   &\;  \Longleftrightarrow \;
      \forall x \in \mathbb R^n: \Psi(x) \ge \nu(x_n^*) + \frac1h p_n(x_n-x_n^*) 
      \\
   &\;  \Longleftrightarrow \;
      \forall x_n \in \mathbb R: \nu(x_n)  \ge \nu(x_n^*) + \frac1h p_n(x_n-x_n^*)  
      \\
   &\;  \Longleftrightarrow \;
      \frac{1}{h}p_n \in \partial \nu(x_n^*),
\end{align*}
so that it remains to show that $\partial \nu(x_n^*)$ contains a positive number.
By \eqref{show_1} we verify that $\nu(x_n^*)$ is finite. Moreover, 
$x^* \in {\rm int}({\rm dom} \Psi)$ implies 
 $x_n^* \in {\rm int}({\rm dom}  \nu)$.
Therefore $\partial \nu(x_n^*) \not = \emptyset$. 
Let $q_n \in \partial \nu(x_n^*)$, i.e.,
$$
q_n(x_n - x_n^*) \le \nu(x_n) - \nu(x_n^*) 
$$
for all $x_n \in \R.$
Since $x^*$ is not a minimizer of $\Psi$, there exists $y \in \R^n$ with 
$\Psi(y) < \Psi(x^*)$
and we get by \eqref{drei} that $y_n \le x_n^*$.
Since $y_n = x_n^*$ would by \eqref{show_1} 
imply that
$\Psi(x^*) = \nu(y_n) \le \Psi(y)$, we even have $y_n < x_n^*$.
Thus
$$
q_n(y_n -  x_n^*) \le \nu(y_n) - \nu(x_n^*) \le \Psi(y) - \Psi(x^*) < 0
$$
implies $q_n > 0$ and we are done. 
\\
3. Next we prove 
$\partial \iota_{S} (x^*) \subseteq \overline{\R_0^+ \, \partial \Psi(x^*)} $ 
if $x^*$ is not a minimizer of $\Psi$ and $x^* \in {\rm ri}({\rm dom} \Psi)$;
then taking closures in
$
  \R_0^+ \, \partial \Psi(x^*)
  \subseteq
  \partial \iota_{S} (x^*) 
  \subseteq 
  \overline{\R_0^+ \, \partial \Psi(x^*)} 
$ 
gives the wanted 
$ 
  \partial \iota_S(x^*) 
  = 
  \overline{\R_0^+ \partial\Psi(x^*)} 
$
since $\partial \iota_S(x^*)$ is closed.
\\
We have $x^* \in \dom {\iota_S} = S \subseteq \dom \Psi$, so that 
both effective domains are in particular contained in 
${\rm aff}(\dom \Psi) \eqdef A$. Applying Theorem 
\ref{thm:subdifferential_for_functions_with_not_full_dim_dom}
two times yields hence
\begin{alignat*}{2}
  \partial\iota_S(x^*) &= \partial(\iota_S|_A)(x^*) &&+ U^\perp,
\\
  \partial\Psi(x^*) &= \partial(\Psi|_A)(x^*) &&+ U^\perp,
\end{alignat*}
where $U$ is the difference space of $A$.
By part 2. of the proof we know 
$\partial(\iota_S|_A)(x^*) = \R_0^+\partial(\Psi|_A)(x^*)$.
So the claimed
$\partial\iota_S(x^*) \subseteq \overline{\R_0^+\partial\Psi(x^*)}$
is equivalent to 
$
  \R_0^+\partial(\Psi|_A)(x^*)
  + 
  U^\perp
  \subseteq
  \overline{\R_0^+[\partial(\Psi|_A)(x^*) + U^\perp]}
$
and can hence be proved by showing that the relation 
\begin{gather*}
  \R_0^+ B + W \subseteq \overline{\R_0^+ (B+W)}
\end{gather*}
holds true for any subsets $B, W$ of $\R^n$ with $\R_0^+W = W$.
To this end let $\lambda \in \R_0^+$, $b\in B$ and $w \in W$ be 
given. In case $\lambda \not = 0$ we have
$
  \lambda b + w 
  = \lambda(b + \lambda^{-1}w) 
  \in 
  \R_0^+(B+W)
  \subseteq
  \overline{\R_0^+(B+W)}
$.
In case $\lambda = 0$ we have
$
  \lambda b + w
  =
  0b + w
  =\lim_{k\rightarrow \infty} \frac{1}{k}(b + k w) 
  \in 
  \overline{ \R_0^+ (B+W)}
$.
Thus $\R_0^+B + W \subseteq \overline{\R_0^+(B+W)}$ really holds true.
\hfill $\Box$
\\

\begin{remark} \label{remark_1}
The condition that $x^*$ is not a 
minimizer of $\Psi$ is essential to have equality in \eqref{wichtig}
as the following example illustrates.
The function $\Psi$ given by $\Psi(x) = x^2$ is minimal at
$x^* = 0 \in {\rm int}(\dom \Psi)$. 
We have $S \defeq {\rm lev}_{\Psi(0)} \Psi = \{0\}$ so that
$$
\R_0^+ \partial \Psi(x^*) = \{ 0 \} \subset \R = \partial \iota_{S} (x^*).
$$
\end{remark}

\begin{remark} \label{rem:relation_between_subdifferentials_not_full}
{\rm i)} The condition $x^* \in \dom \Psi$ is not sufficient to get equality in \eqref{wichtig}.
Consider the proper, convex, lower semicontinuous function $\Psi$ given by
$$
\Psi(x) \defeq 
\left\{
\begin{array}{ll}
-\sqrt{x}& {\rm if} \; x\ge 0,\\
+\infty & {\rm if} \; x<0.
 \end{array}
\right.
$$

The point $x^* = 0$ is not a minimizer of $\Psi$ and
belongs to $\dom \Psi$ but not to ${\rm ri} (\dom \Psi)$.
Using $S \defeq {\rm lev}_{\Psi(0)} \Psi = \R_0^+$
we see that
$$
\R_0^+ \partial \Psi(x^*) = \emptyset \subset  (-\infty,0] = \partial \iota_{S} (x^*).
$$
{\rm ii)} Even the condition $x^* \in {\rm ri}(\dom \Psi)$ is 
  not sufficient to guarantee equality in \eqref{wichtig}, 
  if ${\rm aff}(\dom \Psi)$ is not the full space $\R^n$:
  Consider the proper, convex and lower semicontinuous 
  function $\Psi:\R^2 \rightarrow \R \cup \{+\infty\}$, 
  given by
  \begin{gather*}
    \Psi(x_1,x_2) \defeq
    \begin{cases}
      x_1     & \text{ if } x_2      = 0, \\
      +\infty & \text{ if } x_2 \not = 0.
    \end{cases}
  \end{gather*}
  The affine hull ${\rm aff}(\dom \Psi) = \R \times \{0\}$
  is a proper subset of $\R^2$. We have 
  $S\defeq{\rm lev}_{\Psi(x^*)} \Psi = (-\infty, x^*_1] \times \{0\}$
  for arbitrarily chosen  
  $
    x^* = (x^*_1,0) 
    \in 
    \R \times \{0\}
    = 
    {\rm aff}(\dom \Psi)
    =
    {\rm ri }(\dom \Psi) 
  $.
  Applying Theorem 
  \ref{thm:subdifferential_for_functions_with_not_full_dim_dom}
  to ${\rm aff}(\dom \Psi)\eqdef A \eqdef U$ yields  
  \begin{align*}
    \partial \iota_S(x^*)
    &=
    \partial (\iota_S|_A)(x^*) + U^\perp
    =
    \R_0^+(1,0)^T + \R(0,1)^T
    \\
    &=
    \{(p_1,p_2)^T: p_1 \in [0, +\infty), p_2 \in (-\infty, +\infty) \}
  \end{align*}
  and
  $
    \partial\Psi(x^*)
    =
    (1,0)^\tT + \R(0,1)^\tT
    =
    \{(1,p_2)^\tT: p_2 \in (-\infty, +\infty) \}
  $
  so that
  \begin{gather*}
    \R_0^+\partial\Psi(x^*)
    =
    \{(0,0)^\tT\}
    \cup
    \{(p_1,p_2)^\tT: p_1 \in (0, +\infty), p_2 \in (-\infty, +\infty) \}.
  \end{gather*}
  We see that the closure $\overline{\R_0^+\partial \Psi}$
  is just the closed half-plane $\partial \iota_S(x^*)$,
  as guaranteed by Lemma \upref[]{level_sets}.
  However we only have
  $\R_0^+ \partial \Psi(x^*) \subset \partial \iota_S(x^*)$.
  \\[1ex]
  Concerning Lemma \ref{aux_lemma} we note 
  that equation \eqref{an_equality} holds true here if and only if
  $S$ is not completely contained in the straight line 
  $H^=_{p,\alpha(p)}$, where $\alpha (p)  \defeq \langle p, x^* \rangle$:
  Choosing any $p = (p_1, p_2)$ with $p_1 > 0$  we see that the 
  line $H^=_{p,\alpha(p)}$ intersects ${\rm aff}(\dom \Psi)$ only in 
  $x^*$, so that we clearly have
  $\inf_{ x \in H_{p,\alpha}^= } \Psi(x) = \Psi(x^*)$.
  However this equation is no longer fulfilled if we choose 
      $p$ in such a way that ${\rm aff}(\dom \Psi) \subseteq H^=_{p,\alpha}$,
      say e.g. $p=(0,1)$.

\end{remark} 

Using Lemma  \ref{level_sets} it is not hard to prove 
the following \prettyref{thm:constraint_vs_nonconstraint} on the correspondence between 
the constrained problem ($P_{1,\tau}$)  in \eqref{eq:intro_constraint} 
and the penalized problem ($P_{2,\lambda}$) in \eqref{eq:intro_nonconstraint}.
%
The core part of the theorem has been restated in 
\prettyref{cor:constraint_vs_nonconstraint} for 
proper, convex functions 
$\Phi, \Psi: \R^n \rightarrow \R \cup \{+\infty\}$,
where the effective domain $\dom \Psi$ is open and contains $\dom \Phi$.
In that case the theorem basically states, on the one hand,
in its second part the following: For $\lambda > 0$ any 
$\hat x \in {\rm SOL}(P_{2,\lambda})$ which does not minimize 
$\Phi$, belongs also to 
${{\rm SOL}}(P_{1,\tau})$ exactly for $\tau = \Psi(\hat x)$.
In its first part, on the other hand, it then states something converse: 
For any $\hat x \in  {\rm SOL} (P_{1,\tau})$ which does neither minimize $\Phi$ nor $\Psi$, 
there exists $\lambda >0$ such that
$\hat x \in {\rm SOL} (P_{2,\lambda})$.
To determine this $\lambda$ we will later use duality considerations.
In the following theorem we give the rigorous statement and take also 
the case $\lambda = 0$ into account in both parts of the theorem; 
note here however that the second part of the theorem
does not state that for given $\hat x \in {\rm SOL}(P_{2,0})$
there actually is a $\tau \in \R$ with
$\hat x \in {{\rm SOL}}(P_{1,\tau})$, cf. Remark 
\ref{rem:for_lambda_eq_0_there_needs_to_be_no_tau}.
Before proving the theorem we give also one remark to part i) and one 
remark to part ii), noting that, on the one hand,
several Lagrange Multiplier values
for $\lambda$ can correspond to the same levelparameter $\tau$,
and that, on the other hand, several levelparameters $\tau$
can correspond to one and the same Lagrange Multiplier value $\lambda$.
%
\rem{Folgende Alternative koennte vielleicht Vereinfachung 
bewirken:
Fall $\dom \Phi \cap {\rm lev}_\tau \Psi = \emptyset$
separat behandeln, und nur im anderen Fall, dann auf die 
Indikatorfunktion zurueckgreifen;
allerdings beist sichs mit unconstrained perspective, vgl.
Ende der Perspectivesection}
\begin{theorem} \label{thm:constraint_vs_nonconstraint}
{\rm i)}
  Let $\Phi,\Psi: \mathbb R^n \rightarrow \R \cup \{+\infty\}$ be proper, convex functions.
  Consider ${\rm (P_{1,\tau})}$ for a $\tau \in (\inf \Psi, + \infty)$
  with 
  $
      {\rm ri}(\dom \Phi) \cap {\rm ri}({\rm lev}_\tau \Psi) 
      \not = 
      \emptyset
  $ 
  and let $\hat x$ be a minimizer of {\rm ($P_{1,\tau}$)}, which 
  is situated in ${\rm int}(\dom \Psi)$.
  Then  there exists a real parameter $\lambda \ge 0$ such that $\hat x$ 
  is also a minimizer of {\rm ($P_{2,\lambda}$)}.
  This parameter $\lambda$ is positive, 
  if $\hat x$ is in addition not a minimizer of $\Phi$.
  %
%
%
\\[1ex]
{\rm ii)} 
  For proper $\Phi,\Psi: \mathbb R^n \rightarrow \R \cup \{+\infty\}$
  with $\dom \Phi \cap \dom \Psi \not = \emptyset$,
  let $\hat x$  be a minimizer of {\rm ($P_{2,\lambda}$)}. 
  For $\lambda = 0$ and $\tau \in OP(\Phi, \Psi)$ the point
  $\hat x$ is also a minimizer of {\rm ($P_{1,\tau}$)}
  if and only if $\tau \ge \Psi(\hat x)$.
  If $\lambda > 0$, then $\hat x$ is also a minimizer of {\rm ($P_{1,\tau}$)} 
  for $\tau \defeq \Psi(\hat x) \in OP(\Phi, \Psi)$. 
  Moreover, if $\Phi,\Psi$ are proper, convex functions and $\hat x \in {\rm int}({\dom \Psi})$,
  this $\tau$ is unique among all values in $OP(\Phi, \Psi)$ if and only if $\hat x$ is not a minimizer of $\Phi$. 
\rem{Das umstaendliche beschraenken von $\tau$ ist noetig, um 
       das Auseinananderklaffen der ``restringierten Sichtweise''
       und der ``unrestringierten Sichtweise'' auf die 
       Minimierungsproblem mit Nebenbed. zu vermeiden.}

\end{theorem}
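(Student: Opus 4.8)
\textbf{Proof plan for Theorem \ref{thm:constraint_vs_nonconstraint}.}

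The plan is to treat the two parts separately, reducing each to the subdifferential characterization of minimizers together with Lemma \ref{level_sets}. For part i), I would start from the reformulation $(P_{1,\tau}) \iseq \argmin_{x \in \R^n}\{\Phi(x) + \iota_{\lev_\tau \Psi}(x)\}$, which is legitimate because the assumption ${\rm ri}(\dom\Phi)\cap{\rm ri}(\lev_\tau\Psi)\neq\emptyset$ in particular guarantees $\dom\Phi\cap\lev_\tau\Psi\neq\emptyset$, so the unconstrained and constrained perspectives agree here. Since $\hat x$ minimizes this sum and $\Phi,\iota_{\lev_\tau\Psi}$ are proper convex with overlapping relative interiors of their domains, the Moreau--Rockafellar sum rule gives $\zerovec \in \partial\Phi(\hat x) + \partial\iota_{\lev_\tau\Psi}(\hat x)$. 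Next I would observe that, because $\tau > \inf\Psi$ and $\hat x \in \lev_\tau\Psi$, either $\Psi(\hat x) < \tau$ — in which case $\hat x$ is an interior-like point of the level set and $\partial\iota_{\lev_\tau\Psi}(\hat x) = \{\zerovec\}$, forcing $\zerovec\in\partial\Phi(\hat x)$ so that $\hat x$ minimizes $\Phi$ and $\lambda = 0$ works — or $\Psi(\hat x) = \tau$, so that $\lev_\tau\Psi = \lev_{\Psi(\hat x)}\Psi \eqdef S$. In the latter case, since $\hat x \in {\rm int}(\dom\Psi)$, Lemma \ref{level_sets} yields $\partial\iota_S(\hat x) = \R_0^+\,\partial\Psi(\hat x)$ \emph{provided} $\hat x$ is not a minimizer of $\Psi$; but $\tau > \inf\Psi$ together with $\Psi(\hat x)=\tau$ precisely says $\hat x$ does not minimize $\Psi$, so the lemma applies. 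Hence $\zerovec \in \partial\Phi(\hat x) + \R_0^+\,\partial\Psi(\hat x)$, i.e. there is $\lambda\geq 0$ and $p\in\partial\Psi(\hat x)$ with $-\lambda p \in \partial\Phi(\hat x)$, which is exactly $\zerovec\in\partial(\Phi+\lambda\Psi)(\hat x)$, i.e. $\hat x$ minimizes $(P_{2,\lambda})$. Finally, if $\hat x$ does not minimize $\Phi$ then $\lambda = 0$ is impossible (it would give $\zerovec\in\partial\Phi(\hat x)$), so $\lambda > 0$.

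For part ii), the easy direction concerns $\lambda > 0$: if $\zerovec\in\partial(\Phi+\lambda\Psi)(\hat x)$ then there is $p\in\partial\Psi(\hat x)$ with $-\lambda p\in\partial\Phi(\hat x)$; setting $\tau\defeq\Psi(\hat x)$ and $S\defeq\lev_\tau\Psi$, the inclusion $\R_0^+\,\partial\Psi(\hat x)\subseteq\partial\iota_S(\hat x)$ from \eqref{wichtig} gives $\lambda p\in\partial\iota_S(\hat x)$, hence $-\lambda p\in\partial\Phi(\hat x)$ and $\lambda p \in \partial\iota_{\lev_\tau\Psi}(\hat x)$ add to $\zerovec$, so $\hat x$ minimizes $\Phi+\iota_{\lev_\tau\Psi}$; since $\hat x\in\dom\Phi\cap\lev_\tau\Psi$ we have $\tau\in OP(\Phi,\Psi)$ and the constrained problem is genuinely solved by $\hat x$. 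For $\lambda = 0$, $\hat x$ minimizes $\Phi$, and then for $\tau\in OP(\Phi,\Psi)$ the point $\hat x$ solves $(P_{1,\tau})$ iff $\hat x\in\lev_\tau\Psi$ and $\hat x$ minimizes $\Phi$ on that level set; since $\hat x$ already minimizes $\Phi$ globally, the only remaining constraint is feasibility $\Psi(\hat x)\leq\tau$, which is the claimed equivalence.

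The uniqueness claim in part ii) is where the real work lies. Suppose $\lambda > 0$, $\Phi,\Psi$ proper convex, $\hat x\in{\rm int}(\dom\Psi)$, and $\hat x$ minimizes $(P_{2,\lambda})$. I would show: $\hat x$ is a minimizer of $(P_{1,\tau})$ for some $\tau\in OP(\Phi,\Psi)$ other than $\tau=\Psi(\hat x)$ if and only if $\hat x$ minimizes $\Phi$. The ``if'' direction is immediate from the $\lambda=0$ analysis just done (any $\tau\geq\Psi(\hat x)$ in $OP(\Phi,\Psi)$ works). For ``only if'', assume $\hat x$ also solves $(P_{1,\tau_0})$ with $\tau_0\neq\Psi(\hat x)$, $\tau_0\in OP(\Phi,\Psi)$. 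Feasibility forces $\Psi(\hat x)\leq\tau_0$, so $\tau_0>\Psi(\hat x)$; then $\hat x$ lies in the strict level set $\lev_{<\tau_0}\Psi$, which is an open neighborhood of $\hat x$ inside $\lev_{\tau_0}\Psi$ (using $\hat x\in{\rm int}(\dom\Psi)$ and continuity of $\Psi$ there). Minimality of $\Phi$ over $\lev_{\tau_0}\Psi$ at $\hat x$ then gives a local minimum of $\Phi$ at $\hat x$ over an open set, which for convex $\Phi$ is a global minimum, so $\hat x$ minimizes $\Phi$. The delicate point throughout — and the main obstacle — is keeping the ``constrained vs. unconstrained perspective'' bookkeeping honest: the rewriting $(P_{1,\tau})=\argmin\{\Phi+\iota_{\lev_\tau\Psi}\}$ must be justified by an overlap condition in each branch, and one must invoke precisely the right case of Lemma \ref{level_sets} (interior vs. relative interior, and the ``$\hat x$ not a minimizer of $\Psi$'' hypothesis) so that the subdifferential identity rather than merely the inclusion \eqref{wichtig} is available when the equivalence, not just one implication, is needed.
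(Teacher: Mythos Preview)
Your approach to part i) and to the uniqueness statement in part ii) matches the paper's essentially line for line: split on whether $\Psi(\hat x)<\tau$ or $\Psi(\hat x)=\tau$, use continuity of $\Psi$ on ${\rm int}(\dom\Psi)$ in the first case (which is what makes your ``interior-like'' claim precise) and the sum rule together with Lemma~\ref{level_sets} in the second; for uniqueness, argue that any alternative $\tau_0>\Psi(\hat x)$ forces $\hat x$ to be a local, hence global, minimizer of $\Phi$.

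There is, however, a genuine gap in your treatment of the basic $\lambda>0$ claim in part ii). You start from ``$\zerovec\in\partial(\Phi+\lambda\Psi)(\hat x)$, hence there is $p\in\partial\Psi(\hat x)$ with $-\lambda p\in\partial\Phi(\hat x)$'', and then invoke \eqref{wichtig}. But at this stage of the statement only properness of $\Phi,\Psi$ is assumed --- convexity enters \emph{only} for the subsequent uniqueness assertion --- so Fermat's rule and Lemma~\ref{level_sets} are not available. Even if you granted convexity, the inclusion $\partial(\Phi+\lambda\Psi)(\hat x)\subseteq\partial\Phi(\hat x)+\lambda\partial\Psi(\hat x)$ needs a constraint qualification strictly stronger than the hypothesis $\dom\Phi\cap\dom\Psi\neq\emptyset$. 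The paper sidesteps all of this with a direct argument: if some $\tilde x$ satisfied $\Psi(\tilde x)\le\tau=\Psi(\hat x)$ and $\Phi(\tilde x)<\Phi(\hat x)$, then since $\hat x\in\dom\Phi\cap\dom\Psi$ all four values are finite and one gets $\Phi(\tilde x)+\lambda\Psi(\tilde x)<\Phi(\hat x)+\lambda\Psi(\hat x)$, contradicting optimality in $(P_{2,\lambda})$. This uses neither convexity nor any qualification condition, and it is what you should substitute for your subdifferential argument here.
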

This theorem implies directly the following  
\begin{corollary} \label{cor:constraint_vs_nonconstraint}
  Let $\Psi$ be a proper and convex function with open effective domain
  and let $\Phi$ be another proper and convex function with 
  $\dom \Phi \subseteq \dom \Psi$. For those 
  $\widehat x \in \R^n$ which do neither belong to $\argmin \Phi$ nor 
  to $\argmin \Psi$ the following holds true:
  \begin{enumerate}
    \item 
      If $\widehat x \in {\rm SOL}(P_{1, \tau})$ for some 
      $\tau \in (\inf \Psi, \pinfty)$ then also 
      $\widehat x \in {\rm SOL}(P_{2,\lambda})$ for some $\lambda > 0$.
    \item 
      If $\widehat x \in {\rm SOL}(P_{2,\lambda})$ for some $\lambda > 0$
      then there is exactly one $\tau \in OP(\Phi, \Psi)$ such that
      $\widehat x \in {\rm SOL}(P_{1, \tau})$, namely $\tau = \Psi(\widehat x)$.
  \end{enumerate}

\end{corollary}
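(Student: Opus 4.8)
The plan is to derive Corollary \ref{cor:constraint_vs_nonconstraint} directly from \prettyref{thm:constraint_vs_nonconstraint} by checking that the hypotheses of the corollary are special instances of those of the theorem. First I would fix $\widehat x \in \R^n$ with $\widehat x \notin \argmin \Phi$ and $\widehat x \notin \argmin \Psi$, and record the standing structural facts: $\Psi$ is proper and convex with $\dom \Psi$ open, hence $\dom \Psi = {\rm int}(\dom \Psi) = {\rm ri}(\dom \Psi)$ and $\operatorname{aff}(\dom \Psi) = \R^n$; moreover $\dom \Phi \subseteq \dom \Psi$, so $\dom \Phi \cap \dom \Psi = \dom \Phi \neq \emptyset$ since $\Phi$ is proper. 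In particular, any point of $\dom \Phi$, and a fortiori $\widehat x$ whenever it is feasible for one of the problems, automatically lies in ${\rm int}(\dom \Psi)$, which is exactly the interior-point hypothesis appearing in both parts of the theorem.

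For part (i) of the corollary, assume $\widehat x \in {\rm SOL}(P_{1,\tau})$ for some $\tau \in (\inf \Psi, \pinfty)$. Then $\widehat x \in \dom \Phi \cap \lev_\tau \Psi$, so $\widehat x \in {\rm int}(\dom \Psi)$ as just noted. The regularity requirement ${\rm ri}(\dom \Phi) \cap {\rm ri}(\lev_\tau \Psi) \neq \emptyset$ of \prettyref{thm:constraint_vs_nonconstraint}(i) needs a short argument: here I would use that $\dom \Phi \subseteq \dom \Psi = \operatorname{int}(\dom\Psi)$ together with $\tau > \inf\Psi$ to produce a common relative-interior point (for instance, a relative-interior point of $\dom\Phi$ can be perturbed slightly, staying inside the open set $\dom\Psi$, so that its $\Psi$-value drops strictly below $\tau$, placing it in ${\rm ri}(\lev_\tau\Psi)$; this is essentially the Slater-type condition discussed in \prettyref{sec:penalizers_and_constraints}). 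Since $\widehat x \notin \argmin \Phi$, the theorem's part (i) yields a $\lambda > 0$ with $\widehat x \in {\rm SOL}(P_{2,\lambda})$, which is the claim.

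For part (ii) of the corollary, assume $\widehat x \in {\rm SOL}(P_{2,\lambda})$ for some $\lambda > 0$. Since $\lambda > 0$ and $\widehat x$ minimizes $\Phi + \lambda\Psi$ with $\Phi + \lambda\Psi \not\equiv \pinfty$, we have $\widehat x \in \dom\Phi \cap \dom\Psi$, hence $\widehat x \in {\rm int}(\dom\Psi)$. Applying \prettyref{thm:constraint_vs_nonconstraint}(ii) in the case $\lambda > 0$ gives that $\widehat x$ is a minimizer of $(P_{1,\tau})$ for $\tau \defeq \Psi(\widehat x) \in OP(\Phi,\Psi)$; and because $\Phi,\Psi$ are proper convex, $\widehat x \in {\rm int}(\dom\Psi)$, and $\widehat x \notin \argmin\Phi$, the same part asserts that this $\tau$ is the unique value in $OP(\Phi,\Psi)$ with $\widehat x \in {\rm SOL}(P_{1,\tau})$. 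This is exactly statement (ii) of the corollary, so the proof is complete.

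The main obstacle I anticipate is the verification of the relative-interior intersection condition ${\rm ri}(\dom \Phi) \cap {\rm ri}(\lev_\tau \Psi) \neq \emptyset$ needed to invoke part (i) of the theorem: one must genuinely use both the openness of $\dom\Psi$ (to get $\dom\Phi \subseteq {\rm int}(\dom\Psi)$) and the slack $\tau > \inf\Psi$ to exhibit a point that is simultaneously relative-interior to $\dom\Phi$ and strictly below level $\tau$. Everything else is a matter of unwinding which of the theorem's optional interior-point hypotheses are automatically satisfied under the corollary's blanket assumption $\dom\Phi \subseteq \dom\Psi$ with $\dom\Psi$ open. I would also take care, in part (ii), to note that the corollary only claims existence and uniqueness of the corresponding $\tau$ for $\lambda > 0$, so the delicate $\lambda = 0$ clause of \prettyref{thm:constraint_vs_nonconstraint}(ii) (and the associated \emph{caveat} in Remark \ref{rem:for_lambda_eq_0_there_needs_to_be_no_tau}) is simply not invoked.
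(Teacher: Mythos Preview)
Your approach matches the paper's, which simply asserts that the corollary follows ``directly'' from \prettyref{thm:constraint_vs_nonconstraint} without spelling anything out. You are right to flag the regularity condition ${\rm ri}(\dom\Phi)\cap{\rm ri}(\lev_\tau\Psi)\neq\emptyset$ as the only nontrivial point in part~(i), but your sketched verification does not work: perturbing a point of ${\rm ri}(\dom\Phi)$ need not keep it inside $\dom\Phi$, and even within $\dom\Phi$ there is no reason $\Psi$ should drop below $\tau$. The hypotheses $\dom\Phi\subseteq\dom\Psi$ and $\tau>\inf\Psi$ are simply not strong enough to force a Slater point in $\dom\Phi$.

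In fact the regularity condition can fail under the corollary's hypotheses, and then so can the conclusion. Take $\Psi(x_1,x_2)=x_2$ on $\R^2$ and $\Phi=-x_1+\iota_C$ with $C=\{(x_1,x_2):x_1^2\le x_2\le 1\}$. Then $\dom\Psi=\R^2$ is open, $\dom\Phi=C\subseteq\dom\Psi$, and for $\tau=0$ one has ${\rm ri}(\dom\Phi)\subseteq\{x_2>0\}$ disjoint from ${\rm ri}(\lev_0\Psi)=\{x_2<0\}$. The unique solution of $(P_{1,0})$ is $\widehat x=(0,0)$, which lies neither in $\argmin\Phi=\{(1,1)\}$ nor in $\argmin\Psi=\emptyset$; yet for every $\lambda>0$ and small $\epsilon>0$ the point $(\epsilon,\epsilon^2)\in C$ gives $(\Phi+\lambda\Psi)(\epsilon,\epsilon^2)=-\epsilon+\lambda\epsilon^2<0=(\Phi+\lambda\Psi)(\widehat x)$, so $\widehat x\notin{\rm SOL}(P_{2,\lambda})$. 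Thus the gap you identified is genuine and cannot be closed from the stated hypotheses alone; one needs an additional Slater-type assumption such as $\dom\Phi\cap\lev_{<\tau}\Psi\neq\emptyset$ (cf.\ \prettyref{lem:slaters_cond_is_equiv_if_dom_is_open}).
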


Before proving \prettyref{thm:constraint_vs_nonconstraint} 
we give the announced remarks.

\begin{remark} \label{remark_2}
Part {\rm i)} of the theorem is not constructive. In general, there may exist
various parameters $\lambda$ corresponding to the same parameter $\tau$
as the following example with $m<-2$ and $\tau=1$ shows: 
Consider the proper and convex functions $\Phi, \Psi : \R \rightarrow \R$ given by
$\Psi(x)\defeq|x|$ and 
  \begin{equation*}
    \Phi(x)\defeq
    \begin{cases}
      (x-2)^2      & {\rm if } \; x \geq 1, \\
       m(x-1) + 1  & {\rm if } \; x <    1,
    \end{cases}
  \end{equation*}
  where $m \leq -2$. Note that $\Phi$ is differentiable for $m=-2$.
  Since $\argmin_{x\in\R}\Phi(x)=\{2\}$ we obtain 
  $c \defeq \min_{x \in \argmin \Phi} |x| = 2$.
  Having a look at the graph of $\Phi$ and noting that it is strictly monotonic decreasing
  on $(0,c)=(0,2)$ we see that 
  \begin{equation*}
    \qquad \argmin_{x \in \mathbb R} 
             \left\{ \Phi(x) \st |x| \le \tau \right\}
    = \{\tau\}
  \end{equation*}
  for all $\tau \in (0,2)$.
 On the other hand, we get
  \begin{equation*}
    \qquad \argmin_{x \in \mathbb R}  \left\{ \Phi(x) +\lambda|x| \right\} =
    \begin{cases}
      \{2-\frac{\lambda}{2}\}  &  {\rm if } \; \lambda \in [0,2),       \\
      \{1\}                    &  {\rm if } \; \lambda \in [2,-m),      \\
        [0,1]                  &  {\rm if } \; \lambda = -m,            \\
      \{0\}                    &  {\rm if } \; \lambda \in (-m,+\infty),  
    \end{cases}
  \end{equation*}
so that $\tau = 1$ corresponds to $\lambda \in [2,-m]$. It is known that the set of Lagrange multipliers
$\lambda$ is a bounded, closed interval under certain assumptions, 
see \cite[Corollary 29.1.5]{Rockafellar1970}
\end{remark}
\begin{remark} \label{remark_2_1}
Concerning part {\rm ii)} of the theorem 
in case that there are different minimizers
of $(P_{2,\lambda})$, say $\hat x_1$ and  $\hat x_2$, 
we notice that 
$\Psi(\hat x_1) \not = \Psi(\hat x_2)$ 
can appear as the following example shows:
For $\Phi(x)\defeq|x-2|$ and $\Psi(x)\defeq|x|$ and 
$\lambda=1$ we have 
  \begin{equation*}
 (P_{2,1}) \qquad   
 \Phi(x)+\Psi(x)=
 \left\{
    \begin{array}{cl}
      -2(x-1)  &  {\rm if } \; x<0 ,      \\
           2   &  {\rm if } \; x\in[0,2], \\
      +2(x-1)  &  {\rm if } \; x>2,
      \end{array}
  \right. 
 \end{equation*}
  i.e., $\argmin_{x \in \R} \left\{ \Phi(x) + \Psi(x) \right\} = [0,2]$. 
  Hence we can choose, e.g., $\hat{x}_{1}=1$ and $\hat {x}_{2}=2$ and obtain 
  $\Psi(\hat{x}_{1}) = 1 \neq 2 = \Psi(\hat{x}_{2})$.
\end{remark}

\begin{remark} \label{rem:for_lambda_eq_0_there_needs_to_be_no_tau}
  As warning we finally note that part 
  ii) of the theorem needs to be carefully read
  in case $\lambda = 0$, since the assertion 
  $
    \forall \tau \in OP(\Phi, \Psi):
    \hat x \in {\rm SOL}(P_{1,\tau})
    \Leftrightarrow 
    \tau \geq \Psi(\hat x)
  $
  does not state that there actually is a \emph{real} $\tau$
  with $\hat x \in {\rm SOL}(P_{1,\tau})$.
  This can be concluded if and only if $\hat x \in \dom \Psi$. 
  In our chosen ``unconstrained perspective'', however, the occurrence of 
  $\hat x \not \in \dom \Psi$ can indeed happen.
  Consider for example the proper, convex and
  lower semicontinuous functions
  $\Phi: \R \rightarrow \R$,
  $\Psi: \R \rightarrow \R \cup \{+ \infty\}$
  given by
  \begin{align*}
    \Phi(x) \defeq [x-(-1)]^2,
    &&
    \Psi(x) \defeq 
    \begin{cases}
      -\sqrt{x}     & \text{ if } x \geq 0, \\
      +\infty       & \text{ if } x < 0.
    \end{cases}
  \end{align*}
  Clearly $\dom \Phi \cap \dom \Psi \not = \emptyset$ is fulfilled. 
  For $\lambda =0$, we see that
  $\hat x = -1$ is the unique minimizer of $\Phi + 0\Psi = \Phi$.
  Since $\hat x\not \in \dom \Psi$ we have in particular
  $\hat x \not \in {\rm SOL}(P_{1,\tau})$ 
  for all $\tau \in \R = OP(\Phi,\Psi)$.
\end{remark}

\begin{proof}[Proof of \prettyref{thm:constraint_vs_nonconstraint}]
  i) Let $\hat x \in {\rm SOL}(P_{1,\tau}) \cap {\rm int}(\dom \Psi)$,
  where $\tau \in (\inf \Psi, +\infty)$.
  Then $\Psi(\hat x) \le \tau$ holds true.
  In case $\Psi(\hat x) < \tau$ the continuity of
  $\Psi$ in ${\rm int}(\dom \Psi)$ assures 
  $\Psi(x) < \tau$ in a neighborhood of $\hat x$.
  Consequently $\hat x$ is a local minimizer of $\Phi$ and hence also
  a global minimizer of this convex function.
  In particular $\hat x$ is a solution of ${\rm SOL}(P_{2,0})$.
  In case $\Psi(\hat x) = \tau$, we get by
  Fermat's rule, the regularity assumption,
  $\partial \Psi (\hat x) \not = \emptyset$ and 
  Lemma \ref{level_sets} the relation
  \begin{equation*} 
    0 
    \in \partial\big(\Phi  + \iota_{{\rm lev}_\tau \Psi} \big)(\hat x) 
    = 
    \partial \Phi(\hat x)  + \partial \iota_{{\rm lev}_\tau \Psi} (\hat x)
    =
    \partial \Phi(\hat x) + \R_0^+ \partial \Psi(\hat x). 
  \end{equation*}
  This means that there exists $\lambda \ge 0$ such that
  $\zerovec \in \partial \Phi(\hat x) + \lambda \partial \Psi(\hat x) \subseteq \partial \big( \Phi + \lambda \Psi\big)(\hat x)$
  so that by Fermat's rule $\hat x$ is a minimizer of ($P_{2,\lambda}$).
  If $\hat x$ is not a minimizer of $\Phi$, then clearly $\lambda > 0$.
  \\[1ex]
  ii) 
Let $\hat x \in {\rm SOL}(P_{2,\lambda})$.
If $\lambda = 0$ we have to distinguish 
-- at least in our taken unconstrained perspective --
two cases:
In case $\hat x \not \in \dom \Psi$ and any 
$\tau \in OP(\Phi, \Psi) \subseteq \R$
neither the point $\hat x$ is a minimizer of $(P_{1,\tau})$
nor is $\tau \geq +\infty = \Psi(\hat x)$. So the claimed
equivalence holds true in this case.
In case $\hat x \in \dom \Psi$ this equivalence holds also true
for any $\tau \in OP(\Phi, \Psi):$
For real $\tau < \Psi(\hat x)$ neither 
$\hat x \in {{\rm SOL}}(P_{1,\tau})$ holds true nor does 
$\tau \geq \Psi(\hat x)$.
For real $\tau \geq \Psi(\hat x)$ we have 
$\hat x \in {\rm SOL}(P_{2,0}) = \argmin \Phi$, 
so that also $\hat x \in {{\rm SOL}}(P_{1,\tau})$ is fulfilled.

If $\lambda >0$, we have $\hat x \in \dom \Phi \cap \dom \Psi$ and get
$\hat x \in {\rm SOL}(P_{1,\tau})$ at least for 
$\tau = \Psi(\hat x) \in OP(\Phi, \Psi)$ by the following reason:
if there would exist $\tilde x$ with 
$\Phi(\tilde x) < \Phi(\hat x) < +\infty$ and 
$\Psi(\tilde x) \le \tau < + \infty$, then we can conclude
$\Phi(\tilde x) + \lambda \Psi(\tilde x) < \Phi(\hat x) + \lambda \Psi(\hat x)$,
since only finite values occur.
This contradicts $\hat x \in {\rm SOL}(P_{2,\lambda})$.
Finally, let in addition $\Phi, \Psi$ be convex and 
$\hat x \in {\rm int}({\dom \Psi})$.
If $\hat x$ is a minimizer of $\Phi$ then $\tau = \Psi(\hat x)$
is not the only value in $OP(\Phi, \Psi)$ with 
$\hat x \in {{\rm SOL}}(P_{1,\tau})$, since clearly every 
$\tau \geq \Psi(\hat x)$ belongs all the more to $OP(\Phi,\Psi)$
while $\hat x \in {{\rm SOL}}(P_{1,\tau})$ keeps fulfilled.
If $\hat x$ is not a minimizer of $\Phi$ then there can not exist
another value $\tilde \tau \not = \Psi(\hat x)$ from 
$OP(\Phi,\Psi)$ with $\hat x \in {{\rm SOL}}(P_{1,\tilde \tau})$:
For $\tilde\tau > \Psi(\hat x)$
the condition $\hat x \in {\rm int}(\dom \Psi)$ would imply
$\hat x \in \argmin \Phi$, as we already have seen in part i) 
of the proof,
whereas 
for $\tilde \tau < \Psi(\hat x)$ the point $\hat x$ would not even 
fulfill the constraint condition.
\end{proof}

\subsection{Fenchel duality relation} 

Using duality arguments we will specify the relations between ($P_{1,\tau}$) and ($P_{2,\lambda}$) 
for a more specific class of problems in 
\prettyref{sec:homegeneous_penalizers_and_constraints}. In particular, 
we want to determine $\lambda$ in part i) of 
\prettyref{thm:constraint_vs_nonconstraint}.
To this end, we need the following known Fenchel duality relation, compare, e.g.,
\cite[p. 505]{RW04}.
\begin{lemma} \label{fenchel-dual-hom}
Let $\Phi \in \Gamma_0(\R^n)$, $\Psi \in \Gamma_0(\R^m)$,
$L \in \R^{m,n}$ and $\mu > 0$.
Assume that the following conditions are fulfilled.
\begin{itemize}
\item[{\rm i)}]
${\rm ri} (\dom \Phi) \cap {\rm ri}(\dom \Psi(\mu L \cdot)) \not = \emptyset$,
\item[{\rm ii)}]
$ \mathcal{R}(L) \cap {\rm ri} (\dom \Psi(\mu \cdot)) \not = \emptyset$,
\item[{\rm iii)}]
${\rm ri}(\dom \Phi^*(-L^* \cdot)) \cap {\rm ri}(\dom \Psi^*(\frac{\cdot}{\mu})) \not = \emptyset$,
\item[{\rm iv)}]
$\mathcal{R}(-L^*) \cap {\rm ri} (\dom \Phi^*) \not = \emptyset$.
\end{itemize}
Then, the primal problem
\begin{equation} \label{primal}
(P) \qquad  \argmin_{x \in \R^n} \left\{ \Phi(x) + \Psi(\mu Lx) \right\}, \qquad \mu > 0,
\end{equation}
has a solution if and only if the dual problem 
\begin{equation} \label{dual}
(D) \qquad  \argmin_{p \in \R^m} \big\{ \Phi^* (-L^* p) + \Psi^*\left(\frac{p}{\mu} \right) \big\}
\end{equation}
has a solution. Furthermore
$\hat x \in \R^n$ and $\hat p \in \R^m$ are solutions of the primal and the dual problem, respectively, if and only if 
\begin{equation} \label{primal-dual-relation}
\frac{1}{\mu} \hat p \in \partial \Psi(\mu L \hat x) \quad {\rm and} \quad -L^* \hat p \in \partial \Phi(\hat x).
\end{equation}
\end{lemma}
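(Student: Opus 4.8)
The plan is to deduce the lemma from the classical Fenchel--Rockafellar duality theorem in the form cited by the lemma itself, \cite[p.~505]{RW04} (see also \cite{Rockafellar1970}); essentially all of the work is in matching the scaling factor $\mu$ and the two pairs of regularity conditions to the hypotheses of that theorem. Write $M \defeq \mu L \in \R^{m,n}$, so $M^* = \mu L^*$, and record the two elementary facts, both immediate from the definition of the conjugate: the conjugate of $y \mapsto \Psi(\mu y)$ is $p \mapsto \Psi^*(p/\mu)$, and $\dom(\Psi(\mu L\,\cdot\,)) = M^{-1}(\dom \Psi)$. Problem \eqref{primal} is then of the standard Fenchel shape $\inf_x\{\Phi(x) + (\Psi\circ M)(x)\}$, whose Fenchel dual is $\sup_q\{-\Phi^*(-M^*q) - \Psi^*(q)\}$; the substitution $q = p/\mu$ turns this into minus the objective of \eqref{dual}, so \eqref{dual} is indeed the Fenchel dual of \eqref{primal} up to sign.

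\textbf{Constraint qualification.} First I would show that conditions i) and ii) together supply the qualification $\mathrm{ri}(\dom\Phi)\cap\mathrm{ri}(\dom(\Psi\circ M))\neq\emptyset$ needed for strong duality and dual attainment. Indeed, ii) says $\mathcal{R}(M)\cap\mathrm{ri}(\dom\Psi)\neq\emptyset$, and by the relative-interior calculus for affine preimages (cf. \cite[\S6]{Rockafellar1970}) this gives $\mathrm{ri}(M^{-1}(\dom\Psi)) = M^{-1}(\mathrm{ri}(\dom\Psi)) = \mathrm{ri}(\dom(\Psi(\mu L\,\cdot\,)))$; then i) says precisely that this set meets $\mathrm{ri}(\dom\Phi)$. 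Symmetrically, conditions iii) and iv) are the very same qualification for problem \eqref{dual}, viewed as $\inf_p\{\Psi^*(p/\mu) + \Phi^*(-L^*p)\}$ with linear map $p\mapsto -L^*p$, using $\mathrm{ri}(\dom\Psi^*(\cdot/\mu)) = \mu\,\mathrm{ri}(\dom\Psi^*)$.

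\textbf{Duality and attainment.} Applying the Fenchel duality theorem to \eqref{primal} under i)+ii) yields no duality gap, $\inf\eqref{primal} = -\inf\eqref{dual}$, and attainment of the infimum in \eqref{dual} whenever $\inf\eqref{primal} > -\infty$. Applying it to \eqref{dual} under iii)+iv) --- here using $\Phi^{**}=\Phi$ and $\Psi^{**}=\Psi$ (as $\Phi,\Psi\in\Gamma_0$), so that the Fenchel dual of \eqref{dual} is again \eqref{primal} --- yields attainment of the infimum in \eqref{primal} whenever $\inf\eqref{dual} > -\infty$, i.e. whenever $\inf\eqref{primal} < +\infty$. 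Since i) forces $\dom(\Psi\circ M)\neq\emptyset$ and hence $\inf\eqref{primal} < +\infty$, while iii) analogously forces $\inf\eqref{dual} < +\infty$ and hence, by strong duality, $\inf\eqref{primal} = -\inf\eqref{dual} > -\infty$, the common value is finite; consequently both problems have solutions, and in particular the stated equivalence ``\eqref{primal} solvable $\iff$ \eqref{dual} solvable'' holds (with both sides true; one may also read it as $\inf\eqref{primal}\in\R \iff \inf\eqref{dual}\in\R$).

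\textbf{Characterization of the solutions.} For the equivalence with \eqref{primal-dual-relation} I would note that for any $\hat x, \hat p$ the sum of the objective values of \eqref{primal} at $\hat x$ and of \eqref{dual} at $\hat p$ can be rewritten as
\[
  \bigl[\Phi(\hat x) + \Phi^*(-L^*\hat p) + \langle L\hat x,\hat p\rangle\bigr]
  + \bigl[\Psi(\mu L\hat x) + \Psi^*(\hat p/\mu) - \langle L\hat x,\hat p\rangle\bigr],
\]
where the two cross terms cancel; each bracket is $\geq 0$ by the Fenchel--Young inequality, so the whole sum is $\geq 0$, and by strong duality it equals $\inf\eqref{primal} + \inf\eqref{dual} = 0$ exactly when $\hat x$ and $\hat p$ are optimal. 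Hence $\hat x\in{\rm SOL}\eqref{primal}$ and $\hat p\in{\rm SOL}\eqref{dual}$ iff both Fenchel--Young inequalities are equalities, which in turn is equivalent to $-L^*\hat p\in\partial\Phi(\hat x)$ and $\tfrac1\mu\hat p\in\partial\Psi(\mu L\hat x)$, i.e. to \eqref{primal-dual-relation}; note these inclusions automatically force feasibility. The only genuinely delicate point throughout is the bookkeeping of the factor $\mu$ and the signs --- getting the conjugate of $\Psi(\mu\,\cdot\,)$, the adjoint $\mu L^*$, and the substitution $q=p/\mu$ to line up so that \eqref{dual} comes out exactly as printed, and checking that iii)+iv) really constitute the qualification for \eqref{dual} with the correct map $-L^*$ and domain $\mu\,\mathrm{ri}(\dom\Psi^*)$. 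Once that is pinned down, everything reduces to a direct invocation of the cited Fenchel duality theorem.
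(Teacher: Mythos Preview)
Your argument is correct. The paper, however, takes a different route: instead of invoking the Fenchel--Rockafellar duality theorem as a black box and then deducing the optimality characterization from strong duality plus Fenchel--Young, it works entirely at the level of subdifferential calculus. Concretely, the paper applies Fermat's rule to \eqref{primal}, uses conditions i)+ii) to justify the subdifferential sum rule and chain rule (\cite[Theorems~23.8 and~23.9]{Rockafellar1970}), obtaining $\zerovec\in\partial\Phi(\hat x)+\mu L^*\partial\Psi(\mu L\hat x)$; this is rewritten as an existential condition on a pair $(\hat x,\hat p)$, and then the inversion formula $x\in\partial f^*(y)\iff y\in\partial f(x)$ (\cite[Corollary~23.5.1]{Rockafellar1970}) is applied. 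The same is done for \eqref{dual} using iii)+iv), and the two chains of equivalences terminate in the same condition. Your approach is more modular and yields strong duality (zero gap, finite value) explicitly as a byproduct --- something the paper's subdifferential argument does not state. The paper's approach, in turn, makes the role of each regularity condition especially transparent (i)+ii) feed the sum/chain rule on the primal side, iii)+iv) on the dual side) and avoids appealing to a packaged duality theorem. Both are standard; yours is closer to how the result is typically derived in, e.g., \cite{RW04}, while the paper's is closer in spirit to Rockafellar's original subdifferential calculus.
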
 

\begin{proof}
  Assumptions i) and ii) assure that we can apply 
  \cite[Theorem 23.8]{Rockafellar1970} and \cite[Theorem 23.9]{Rockafellar1970}.
  Using these theorems, Fermat's Rule and \cite[Corollary 23.5.1]{Rockafellar1970}
  we obtain on the one hand
  \begin{eqnarray*}
    && {\rm SOL}(P) \not = \emptyset,
    \\ 
    &\Leftrightarrow&
    \exists \hat x \in \R^n \quad \mbox{such that} \quad
    \zerovec \in \partial \big( \Phi(\cdot) + \Psi(\mu L \cdot) \big)(\hat x)
    \; = \; 
    \partial \Phi(\hat x) + \mu L^* \partial \Psi(\mu L \hat x),
    \\ 
    &\Leftrightarrow&
    \exists \hat x \in \R^n \; \exists \hat p \in \R^m \quad \mbox{such that} \quad
    \hat p \in \mu \partial \Psi(\mu L \hat x)  
    \quad {\rm and} \quad
    -L^* \hat p \in \partial \Phi(\hat x),
    \\  
    &\Leftrightarrow&
    \exists \hat x \in \R^n \; \exists \hat p \in \R^m \quad \mbox{such that} \quad
    \mu L \hat x \in \partial \Psi^*\left(\frac{\hat p}{\mu} \right)
    \quad {\rm and} \quad
    \hat x \in \partial \Phi^*(-L^* \hat p).
  \end{eqnarray*}
  Due to the assumptions iii) and iv) we similarly obtain
  \begin{align*} 
    &{\rm SOL}(D) \not = \emptyset,
  \\ 
    \Leftrightarrow {}& 
    \exists \hat p \in \R^m \;\; \mbox{such that} \;\;
    \zerovec \in \partial \Big( \Phi^*(-L^* \cdot) + \Psi^*\left(\frac{\cdot}{\mu} \right)\Big)(\hat p)
    \; = \; 
    -L \partial \Phi^*(-L^* \hat p) + \frac{1}{\mu} \partial \Psi^* \left( \frac{\hat p}{\mu} \right),
    \\ 
    \Leftrightarrow {}& 
    \exists \hat p \in \R^m \; \exists \hat x \in \R^n \quad \mbox{such that} \quad
    \hat x \in \partial \Phi^*(- L^* \hat p)
    \quad \mbox{and} \quad
    \mu L \hat x \in \partial \Psi^*\left(\frac{\hat p}{\mu} \right),
  \end{align*}
  on the other hand.
\end{proof}

\subsection{Notes to \prettyref{thm:constraint_vs_nonconstraint} and to some technical assumptions} 
\label{sec:Notes_to_pvc_relation_theorem}

In this subsection we discuss mainly
\prettyref{thm:constraint_vs_nonconstraint} with respect to 
\DissVersionForMeOrHareBrainedOfficialVersion{four}{two} 
aspects:
In the fist part we deal with the condition 
$\hat x \in {\rm int}(\dom \Psi)$
and illustrate its importance -- at least in the 
``unconstrained perspective'' -- by two examples.
\DissVersionForMeOrHareBrainedOfficialVersion{A longer}{The second} 
part is dedicated to the regularity assumptions
used in \prettyref{thm:constraint_vs_nonconstraint} 
and in \cite[Theorem 2.4]{CiShSt2012} and their relation
to Slater's Constraint Qualification.
\DissVersionForMeOrHareBrainedOfficialVersion{
  In the third part we discuss differences in the 
  appearance of \prettyref{thm:constraint_vs_nonconstraint}
  and of \cite[Theorem 2.4]{CiShSt2012}.
  A note on the taken restriction $\tau \in OP(\Phi, \Psi)$
  concludes this subsection and the current section.}
{}

\subsection*{The condition \boldmath$\hat x \in {\rm int}(\dom \Psi)$
in \prettyref{thm:constraint_vs_nonconstraint}} 

Concerning part i) of \prettyref{thm:constraint_vs_nonconstraint}
we note that the condition 
$\hat x \in {\rm int}(\dom \Psi)$ is essential
-- at least in our chosen ``unconstrained perspective'':
It can not be omitted as the next example shows.
We will also see that it can not even be replaced by 
the weaker condition $\hat x \in {\rm ri}(\dom \Psi)$.

\begin{example}  ~
  \begin{enumerate}
    \item 
      Consider the proper, convex and lower semicontinuous functions
      $\Phi: \R \rightarrow \R$,
      $\Psi: \R \rightarrow \R \cup \{+ \infty\}$
      given by
      \begin{align*}
	\Phi(x) \defeq [x-(-1)]^2,
	&&
	\Psi(x) \defeq 
	\begin{cases}
	  -\sqrt{x}     & \text{ if } x \geq 0, \\
	  +\infty       & \text{ if } x < 0.
	\end{cases}
      \end{align*}
      We have 
      $
	{\rm ri}(\dom \Phi) \cap 
	    {\rm ri}({\rm lev}_{\tau}\Psi)
	= 
	(\tau^2, + \infty)
	\not =
	\emptyset
      $
      for every $\tau \in (-\infty, 0]=(\inf \Psi, \sup \Psi]$.
      Furthermore
      $
	\argmin \{\Phi \st \Psi \leq \tau \}
	= \{\tau^2\} \eqdef \{\hat x_\tau\}
      $
      does not intersect $\{-1\} = \argmin \Phi$
      for all these $\tau$.
      In case $\tau \in (-\infty, 0)$
      we have $\hat x_\tau \in {\rm int}(\dom \Psi)$
      and -- as guaranteed by part i) of the previous theorem --
      there is indeed a $\lambda \geq 0$ with 
      $\hat x_\tau \in \argmin( \Phi + \lambda \Psi)$ i.e. with 
      $\Phi'(\tau^2) + \lambda \Psi'(\tau^2) = 0$, namely 
      $\lambda = -4 \tau (\tau^2 + 1) > 0$.
      In case $\tau = 0$, however, such a \emph{real}
      $\lambda \geq 0$ does not exist: For $\lambda = 0$ we 
      have 
      $
	\hat x_\tau 
	= 
	0 
	\not \in \{-1\} 
	=
	\argmin(\Phi)
	= 
	\argmin(\Phi + 0\Psi)
      $
      -- in our unconstrained perspective -- 
      and for $\lambda \in (0, +\infty)$ we have 
      $	 
	0 
	\not \in 
	\emptyset 
	= \partial (\Phi + \lambda \Psi)(\hat x_\tau)
      $
      so that $\hat x_\tau \not \in \argmin (\Phi + \lambda \Psi)$
      as well.
    \item 
      Consider the proper, convex and lower semicontinuous functions 
      $\Phi: \R^2 \rightarrow \R$, 
      $\Psi: \R^2 \rightarrow \R \cup \{+\infty \}$ given by
      \begin{align*}
	\Phi(x_1, x_2) \defeq x_1^2 + (x_2-1)^2,
	&&
	\Psi(x_1,x_2) \defeq
	\begin{cases}
	  x_1     & \text{ if } x_2      = 0, \\
	  +\infty & \text{ if } x_2 \not = 0.
	\end{cases}
      \end{align*}
      For any $\tau \in (\inf \Psi, + \infty) = \R$ we have
      $
	{\rm ri}(\dom \Phi) \cap {\rm ri}({\rm lev}_{\tau}\Psi) 
	= \R^2 \cap [(\minfty, \tau)\times \{0\}]
	\not =
	\emptyset
      $.
      Consider
      \begin{align*}
	  \hat x_\tau
	&\in 
	  \argmin \{\Phi \st \Psi \leq \tau \}
	=
	  \argmin_{x\in(-\infty, \tau]\times \{0\}} \Phi(x)
	=
	  \left[
	    \argmin_{x_1 \in (-\infty,\tau] } x_1^2 + 1
	  \right] \times \{0\} \\
	&=
	  \begin{cases}
	    \{(\tau, 0)^T\} & \text{ for } \tau < 0 \\
	    \{(0,0)^T\}     & \text{ for } \tau \geq 0.
	  \end{cases}
      \end{align*}
      In case $\tau < 0$ there is even a $\lambda \in (0, +\infty)$ 
      with 
      \begin{gather*}
	  (\tau, 0)^T
	= 
	  \hat x_\tau  
	\in 
	  \argmin \{\Phi + \lambda \Psi\}
	\overset{\lambda \not = 0}{=}
	  \left[
	    \argmin_{x_1 \in \R} (x_1^2 + \lambda x_1)
	  \right]
	  \times \{0\}
	=
	  \{(-\tfrac{\lambda}{2}, 0)^T\},
      \end{gather*}
      namely $\lambda = -2\tau > 0$.
      In case $\tau \geq 0$, however, there is no $\lambda \geq 0$ 
      with $(0,0)^T = \hat x_\tau \in \argmin(\Phi + \lambda \Psi)$:
      On the one hand any $\lambda > 0$ can not do the job, since
      $
	\argmin(\Phi + \lambda \Psi) = \{(-\tfrac{\lambda}{2}, 0)^T\}
	\not \ni (0,0)^T
      $
      for all $\lambda \in (0, +\infty)$.
      On the other hand also $\lambda = 0$ can not do the job, since
      $
	\argmin(\Phi+ 0 \Psi) 
	= 
	\argmin \Phi 
	= \{(0,1)^T\}
	\not \ni 
	(0,0)^T 
      $.
  \end{enumerate}

\end{example}

\subsection*{Regularity assumptions and the related Slater Condition} 
\MayChangesPartiallyPerformedVersionOrHareBrainedOfficialVersion{%

\diss{Bei finalem Notationsabgleich: Bezeichnungen fuer die
Funktionen hier besser anders?}
}%
{}%
In part i) of \prettyref{thm:constraint_vs_nonconstraint} the condition 
\begin{gather*}
      {\rm ri}(\dom \Phi) \cap {\rm ri}({\rm lev}_\tau \Psi) 
      \not = 
      \emptyset,
\end{gather*}
from \cite[Theorem 23.8]{Rockafellar1970} was used as regularity assumption
to ensure a certain amount of overlapping between 
the sets $\dom \Phi$ and ${\rm lev}_{\tau}\Psi$.
In \cite{CiShSt2012} we used a different condition 
which, however, implies our used condition; that condition was:
\begin{center}
  ``Assume that there exists a point in  $\dom \Phi \cap {\rm lev}_\tau \Psi$ 
  \\where one of the functions $\Phi$ or $\iota_{{\rm lev}_\tau \Psi}$ 
  is continuous.''\footnote{
  We took that condition from the book of Ekeland -- T{\'e}mam,
  cf. \cite[Proposition 5.6 on p. 26]{EkTe1999} 
  with caution in case $F_1 = + \infty$ and $F_2 = - \infty$.}  
\end{center}

Another related regularity assumptions is Slater's Constraint Qualification
\begin{gather*}
  \exists x_0 \in \dom \Phi: \Psi(x_0) < \tau.
\end{gather*}

We will shortly discuss the relation between this Slater Condition and 
the first condition for functions 
$\Psi$ which additionally have an open effective domain $\dom \Psi$.
This additional assumption has the following effect on part i) of 
\prettyref{thm:constraint_vs_nonconstraint}:
All minimizers of {\rm ($P_{1,\tau}$)} are now
automatically situated in ${\rm int}(\dom \Psi)$ 
and for real $\tau > \inf \Psi$ the regularity condition 
$
    {\rm ri}(\dom \Phi) \cap {\rm ri}({\rm lev}_\tau \Psi) 
    \not = 
    \emptyset
$ 
is equivalent to Slater's Constraint Qualification, by
the subsequent lemma. 
In this case, the existence of a Lagrange multiplier $\lambda \ge 0$ 
is also assured by \cite[Corollary 28.2.1]{Rockafellar1970}\footnote{
after resetting $\Phi$ to $+ \infty$ outside of $\dom \Psi$ in order 
to achieve $\dom \Phi \subseteq \dom \Psi$ as demanded by Rockafellar 
on p. 273; his other demand ${\rm ri}(\dom \Phi) \subseteq {\rm ri}(\dom \Psi$) 
is then automatically fulfilled since $\dom \Psi$ is open here.}
if we note \cite[Theorem 28.1]{Rockafellar1970}.
\\[1ex]
Dropping this additional assumption again and returning to 
our general setting in \prettyref{thm:constraint_vs_nonconstraint}
we note that it still might be possible to replace the
first regularity assumption by this Slater Condition;
however the latter does in general no longer imply
the first regularity assumption:
The condition 
$\Psi(x_0) < \tau$ in itself does not ensure 
$ x_0 \in {\rm ri}( {\rm lev}_\tau \Psi )$
as Fig.
\ref{fig:boundary_of__lower_levelset_tau__can_contain_point_with_value_smaller_than_tau}
shows.
Imaging that we choose $\Phi$ now in a way such that 
$\dom \Phi$ is a closed triangle which has $x_0$ as one of its 
vertices and that $\dom \Phi$ intersects the sketched $\dom \Psi$
only in $x_0$. In particular  
$\dom \Phi \cap {\rm lev}_\tau \Psi = \{x_0\}$
so that Slater's Condition is fulfilled here,
but our first regularity condition 
$
  {\rm ri}(\dom \Phi) \cap {\rm ri}({\rm lev}_\tau \Psi) 
  \not = 
  \emptyset
$ does not hold, since $x_0 \not \in {\rm ri}({\rm lev}_\tau \Psi)$ here.
However, in situations where $x_0 \in {\rm int}(\dom \Psi)$ 
holds true in addition, we have
$
  x_0 
  \in 
  {\rm int}(\dom \Psi) \cap {\rm lev}_{<\tau}\Psi 
  = 
  {\rm ri}({\rm lev}_{\tau}\Psi)
$,
by Theorem \ref{thm:ri_of_levelset} and we could 
state the \prettyref{thm:constraint_vs_nonconstraint}
also with the extended slater condition 
\begin{gather*}
  \exists x_0 \in \dom \Phi : x_0 \in {\rm int}(\dom \Psi) 
  \text { and } \Psi(x_0) < \tau
\end{gather*}
by the following Lemma:

\begin{lemma} \label{lem:slaters_cond_is_equiv_if_dom_is_open}
  Let $\Phi, \Psi: \R^n \rightarrow \R \cup \{+ \infty\}$ be proper and 
  convex functions and let ${\rm int}(\dom \Psi) \not = \emptyset$.
  %
  %
  Then, for any $\tau \in \R$, the following statements are equivalent:
  \begin{enumerate}
    \item \label{enu:reg_cond_here}
      $\tau > \inf \Psi$ and 
      $
	{\rm ri}(\dom \Phi) \cap {\rm ri}({\rm lev}_{\tau}\Psi) 
	\not = 
	\emptyset
      $
    \item \label{enu:reg_cond_in_published_paper}
      $\tau > \inf \Psi$ and there exists an 
      $x' \in \dom \Phi \cap {\rm lev}_{\tau}\Psi$
      where $\Phi$ or $\iota_{{\rm lev}_{\tau}\Psi}$ is continuous.
    \item \label{enu:extended_slater_condition}
      There is an $x_0 \in \dom \Phi$ with 
      $x_0 \in {\rm int}(\dom \Psi)$ and $\Psi(x_0) < \tau$.
    \item \label{enu:interior_of_levelset_meets_dom}
      $\tau > \inf \Psi$ and 
      $\dom \Phi \cap {\rm int}({\rm lev}_{\tau}\Psi) \not = \emptyset$.
  \end{enumerate}

\end{lemma}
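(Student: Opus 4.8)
The plan is to prove Lemma~\ref{lem:slaters_cond_is_equiv_if_dom_is_open} by establishing a cycle of implications among the four conditions, say $\upref{enu:extended_slater_condition} \Rightarrow \upref{enu:interior_of_levelset_meets_dom} \Rightarrow \upref{enu:reg_cond_in_published_paper} \Rightarrow \upref{enu:reg_cond_here} \Rightarrow \upref{enu:extended_slater_condition}$, exploiting the standing assumption ${\rm int}(\dom \Psi) \neq \emptyset$ throughout. The main technical device will be \prettyref{thm:ri_of_levelset} (respectively \prettyref{thm:ri_of_levelset}), which — as indicated just before the lemma — identifies ${\rm int}(\dom \Psi) \cap {\rm lev}_{<\tau}\Psi$ with ${\rm ri}({\rm lev}_\tau \Psi)$ under suitable hypotheses; together with the standard fact that on a convex set with nonempty interior one has ${\rm ri} = {\rm int}$ and ${\rm aff} = \R^n$, this lets me translate freely between the ``relative interior'' formulation and the ``interior'' formulation.

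First I would treat $\upref{enu:extended_slater_condition} \Rightarrow \upref{enu:interior_of_levelset_meets_dom}$: given $x_0 \in \dom \Phi \cap {\rm int}(\dom \Psi)$ with $\Psi(x_0) < \tau$, I note $\tau > \Psi(x_0) \ge \inf \Psi$, and since $x_0 \in {\rm int}(\dom \Psi) \cap {\rm lev}_{<\tau}\Psi$ I invoke \prettyref{thm:ri_of_levelset} to get $x_0 \in {\rm ri}({\rm lev}_\tau\Psi)$; because ${\rm int}(\dom\Psi)\neq\emptyset$ the set ${\rm lev}_\tau\Psi$ has nonempty interior (it contains a neighbourhood of $x_0$, as $\Psi$ is continuous on ${\rm int}(\dom\Psi)$), so ${\rm ri} = {\rm int}$ here and $x_0 \in \dom\Phi \cap {\rm int}({\rm lev}_\tau\Psi)$. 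Next, $\upref{enu:interior_of_levelset_meets_dom}\Rightarrow\upref{enu:reg_cond_in_published_paper}$ is immediate: a point $x'$ in $\dom\Phi\cap{\rm int}({\rm lev}_\tau\Psi)$ is a point where $\iota_{{\rm lev}_\tau\Psi}$ is continuous (it vanishes on a whole neighbourhood of $x'$), and $x'\in{\rm lev}_\tau\Psi$ so $x'\in\dom\Phi\cap{\rm lev}_\tau\Psi$. For $\upref{enu:reg_cond_in_published_paper}\Rightarrow\upref{enu:reg_cond_here}$ I would reproduce (or cite) the elementary argument that continuity of one of $\Phi$, $\iota_{{\rm lev}_\tau\Psi}$ at a common point forces the relative interiors of the two effective domains to intersect — this is essentially \cite[Proposition 5.6 on p.~26]{EkTe1999} together with the observation that a point of continuity of $\Phi$ lies in ${\rm int}(\dom\Phi)\subseteq{\rm ri}(\dom\Phi)$ and a point of continuity of the indicator lies in ${\rm int}({\rm lev}_\tau\Psi)\subseteq{\rm ri}({\rm lev}_\tau\Psi)$, handling the two sub-cases symmetrically.

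The step I expect to carry the real weight is $\upref{enu:reg_cond_here}\Rightarrow\upref{enu:extended_slater_condition}$, since it must produce an \emph{interior} point of $\dom\Psi$ that is moreover strictly below level $\tau$ and lies in $\dom\Phi$, starting only from a relative-interior overlap. Here I would argue: pick $z \in {\rm ri}(\dom\Phi)\cap{\rm ri}({\rm lev}_\tau\Psi)$. Since ${\rm int}(\dom\Psi)\neq\emptyset$, the convex set ${\rm lev}_\tau\Psi \supseteq {\rm lev}_{<\tau}\Psi$ and one checks, using $\tau > \inf\Psi$ and convexity of $\Psi$, that ${\rm lev}_{<\tau}\Psi \neq \emptyset$ and ${\rm aff}({\rm lev}_\tau\Psi) = {\rm aff}(\dom\Psi) = \R^n$, hence ${\rm ri}({\rm lev}_\tau\Psi) = {\rm int}({\rm lev}_\tau\Psi)$; and by \prettyref{thm:ri_of_levelset} this interior equals ${\rm int}(\dom\Psi)\cap{\rm lev}_{<\tau}\Psi$. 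Therefore $z \in {\rm int}(\dom\Psi)$ and $\Psi(z) < \tau$, while $z\in{\rm ri}(\dom\Phi)\subseteq\dom\Phi$; so $x_0 \defeq z$ witnesses $\upref{enu:extended_slater_condition}$. The one point demanding care is justifying ${\rm aff}({\rm lev}_\tau\Psi)=\R^n$ from $\tau>\inf\Psi$ and ${\rm int}(\dom\Psi)\neq\emptyset$: the strict level set ${\rm lev}_{<\tau}\Psi$ is nonempty and open in $\dom\Psi$ by continuity of $\Psi$ on ${\rm int}(\dom\Psi)$, so it meets ${\rm int}(\dom\Psi)$ in a nonempty open set, whose affine hull is $\R^n$; this closes the loop and completes the equivalence.
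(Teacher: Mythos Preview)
Your approach is essentially the same as the paper's: both establish a cycle of implications built on \prettyref{thm:ri_of_levelset} and the fact that ${\rm int}(\dom\Psi)\neq\emptyset$ forces ${\rm lev}_\tau\Psi$ to be full-dimensional, so that ${\rm ri}={\rm int}$ on the relevant sets. The paper runs the cycle $\upref{enu:interior_of_levelset_meets_dom}\Rightarrow\upref{enu:extended_slater_condition}\Rightarrow\upref{enu:reg_cond_in_published_paper}\Rightarrow\upref{enu:reg_cond_here}\Rightarrow\upref{enu:interior_of_levelset_meets_dom}$, you run it in a slightly different order, but the content of each link is the same.

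Two places where your sketch should be tightened to match the paper's level of precision. First, in $\upref{enu:reg_cond_in_published_paper}\Rightarrow\upref{enu:reg_cond_here}$ your ``observation'' that a point of continuity lies in the interior of the respective domain does not by itself give a point in \emph{both} relative interiors simultaneously: if, say, $\Phi$ is continuous at $x'$ then $x'\in{\rm int}(\dom\Phi)$, but $x'$ may well lie on ${\rm rb}({\rm lev}_\tau\Psi)$. The paper closes this with the line-segment trick: pick any $y'\in{\rm ri}({\rm lev}_\tau\Psi)$ and slide along $z_\lambda=(1-\lambda)y'+\lambda x'$; by \prettyref{thm:raypoint_from_ri__leaving_closure__never_comes_back} one has $z_\lambda\in{\rm ri}({\rm lev}_\tau\Psi)$ for $\lambda<1$, and for $\lambda$ close enough to $1$ also $z_\lambda\in{\rm int}(\dom\Phi)$. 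You should make this explicit rather than relying on the citation to \cite{EkTe1999}, which in the paper serves only to source the \emph{condition}, not this argument.

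Second, your justification of ${\rm aff}({\rm lev}_\tau\Psi)=\R^n$ in $\upref{enu:reg_cond_here}\Rightarrow\upref{enu:extended_slater_condition}$ is slightly circular: asserting that ${\rm lev}_{<\tau}\Psi$ is ``open in $\dom\Psi$'' uses continuity of $\Psi$, which you only have on ${\rm int}(\dom\Psi)$, and the nonemptiness of ${\rm lev}_{<\tau}\Psi\cap{\rm int}(\dom\Psi)$ still needs an argument (e.g.\ a convex-combination move from a point where $\Psi<\tau$ toward ${\rm int}(\dom\Psi)$). The paper sidesteps this entirely by invoking the dimension clause of \prettyref{thm:ri_of_levelset}: ${\rm lev}_\tau\Psi$ has the same dimension as $\dom\Psi$, which is $n$, hence ${\rm ri}({\rm lev}_\tau\Psi)={\rm int}({\rm lev}_\tau\Psi)$ directly. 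That is the cleaner route.
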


\begin{proof}
  
  \upref{enu:interior_of_levelset_meets_dom}
  $\Rightarrow$ \upref{enu:extended_slater_condition} :
  Let $x_0 \in \dom \Phi \cap {\rm int}({\rm lev}_{\tau}\Psi)$.
  Then $x_0 \in \dom \Phi$ holds banally true. 
  Due to ${\rm int}(\dom \Psi) \not = \emptyset$ 
  we know that $\dom \Psi$ has full dimension $n$, 
  so that Theorem \ref{thm:ri_of_levelset} yields
  $
    x_0 
    \in 
    {\rm int}({\rm lev}_{\tau}\Psi)
    =
    {\rm ri}({\rm lev}_{\tau}\Psi)
    =
    {\rm ri}(\dom \Psi) \cap {\rm lev}_{<\tau}\Psi
    =
    {\rm int}(\dom \Psi) \cap {\rm lev}_{<\tau}\Psi
  $.
  \upref{enu:extended_slater_condition}
  $\Rightarrow$ \upref{enu:reg_cond_in_published_paper} :
    Let there exist $x_0 \in \dom \Phi \cap {\rm int}(\dom \Psi)$ with $\Psi(x_0) < \tau.$
    This assures directly $\tau > \inf \Psi$. To see the
    continuity of $\iota_{{\rm lev}_{\tau}\Psi}$ in $x_0 \eqdef x'$, 
    note that the convex function $\Psi$ is continuous in
    $x_0 \in 
    {\rm int}(\dom \Psi) $,
    assuring $\Psi(x) < \tau$ in a whole neighborhood of $x_0$.
  \upref{enu:reg_cond_in_published_paper} 
  $\Rightarrow$ \upref{enu:reg_cond_here} :
    Let $\Phi$ or $\iota_{{\rm lev}_{\tau}\Psi}$ be continuous in a point
    $x' \in \dom \Phi \cap {\rm lev}_{\tau}\Psi$. Then at least one 
    of the nonempty, convex sets $A = \dom \Phi$ or 
    $B = {\rm lev}_{\tau}\Psi = \dom \iota_{{\rm lev}_{\tau}\Psi}$
    contains that common point in its interior; say 
    $x' \in {\rm int}(A)$ without loss of generality. 
    Choosing any point $y' \in {\rm ri} (B)$, as permitted by Theorem
    \ref{thm:nonemtpy_convex_set_has_nonempty_ri}, we have
    \begin{gather*}
      z_\lambda \defeq (1 - \lambda)y' + \lambda x' \in {\rm ri} (B)
    \end{gather*}
    for all $\lambda \in [0,1)$, due to Theorem
    \ref{thm:raypoint_from_ri__leaving_closure__never_comes_back}.
    So we achieve $z_\lambda \in {\rm ri} (B) \cap {\rm int}(A)$
    by choosing $\lambda \in [0,1)$ close enough to $1$.
    In particular ${\rm ri} (A) \cap {\rm ri} (B) \not = \emptyset$
    holds true.
  \upref{enu:reg_cond_here} 
  $\Rightarrow$ \upref{enu:interior_of_levelset_meets_dom}:
    Let 
    $x_0 \in {\rm ri}(\dom \Phi)\cap {\rm ri}({\rm lev}_{\tau}\Psi)$,
    where $\tau > {\inf \Psi}$.
    Then $x_0 \in \dom \Phi$ holds banally true. Using Theorem 
    \ref{thm:ri_of_levelset} we also obtain 
    $
      x_0 
      \in 
      {\rm ri} ({\rm lev}_{\tau}\Psi)
      =
      {\rm int}({\rm lev}_{\tau}\Psi)
    $,
    again due to the fact that ${\rm lev}_{\tau}\Psi$ 
    has the same full dimension $n$ as $\dom \Psi$.
\end{proof}

\begin{figure}[h!]
  \begin{center}
   \includegraphics[width=5cm]{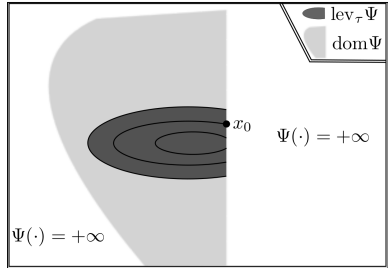}
    \caption{Example where $\Psi(x_0) < \tau$ does not imply $ x_0 \in {\rm ri}( {\rm lev}_\tau \Psi )$. }
  \label{fig:boundary_of__lower_levelset_tau__can_contain_point_with_value_smaller_than_tau}
\end{center}
\end{figure}

\DissVersionForMeOrHareBrainedOfficialVersion
{
\subsection*{A remark on differences between 
\prettyref{thm:constraint_vs_nonconstraint} and 
Theorem 2.4 in \cite{CiShSt2012}} 

\prettyref{thm:constraint_vs_nonconstraint} and 
Theorem 2.4 in \cite{CiShSt2012} seem quite different at first glace.
This is however only partially true as we will see soon.
For convenience of the reader we repeat 
Theorem 2.4 in \cite{CiShSt2012} here:

\begin{theorem*} 
  {\rm i)} 
  Let $\Phi,\Psi: \mathbb R^n \rightarrow \R \cup \{+\infty\}$ be proper, convex functions.
  Assume that there exists a point in  $\dom \Phi \cap {\rm lev}_\tau \Psi$ 
  where one of the functions $\Phi$ or $\iota_{{\rm lev}_\tau \Psi}$ is continuous.
  Let $\hat x \in {\rm int}(\dom \Psi)$ be a minimizer of {\rm ($P_{1,\tau}$)}, where $\hat x$ is not a minimizer of $\Psi$ in case $\Psi(\hat x) = \tau$.
  Then  there exists a parameter 
  $\lambda \ge 0$ 
  such that
  $\hat x$ 
  is also a minimizer of {\rm ($P_{2,\lambda}$)}.
  If $\hat x$ is in addition not a minimizer of $\Phi$, then $\lambda >0$.

  {\rm ii)} For proper $\Phi,\Psi: \mathbb R^n \rightarrow \R \cup \{+\infty\}$
  with $\dom \Phi \cap \dom \Psi \not = \emptyset$,
  let $\hat x$  be a minimizer of {\rm ($P_{2,\lambda}$)}. 
  If $\lambda = 0$, then $\hat x$ is also a minimizer of {\rm ($P_{1,\tau}$)}
  if and only if $\tau \ge \Psi(\hat x)$.
  If $\lambda > 0$, then $\hat x$ is also a minimizer of {\rm ($P_{1,\tau}$)} 
  for $\tau \defeq \Psi(\hat x)$. 
  Moreover, if $\Phi,\Psi$ are proper, convex functions and $\hat x \in {\rm int}({\dom \Psi})$,
  this $\tau$ is unique if and only if $\hat x$ is not a minimizer of $\Phi$. 
\end{theorem*}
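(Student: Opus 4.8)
The plan is to run the argument of \prettyref{thm:constraint_vs_nonconstraint} almost verbatim, the only structural change being that the needed regularity is here supplied by the continuity hypothesis rather than by a relative-interior condition; concretely, that hypothesis yields the Moreau--Rockafellar sum rule $\partial(\Phi+\iota_{\lev_\tau\Psi})(x)=\partial\Phi(x)+\partial\iota_{\lev_\tau\Psi}(x)$ at every $x$, together with $\dom\Phi\cap\lev_\tau\Psi\neq\emptyset$, so that $(P_{1,\tau})$ is the unconstrained minimization of $\Phi+\iota_{\lev_\tau\Psi}$. For part i), I would take a solution $\hat x$ of $(P_{1,\tau})$, note $\Psi(\hat x)\le\tau$, and split into two cases. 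If $\Psi(\hat x)<\tau$, then since $\hat x\in{\rm int}(\dom\Psi)$ the convex function $\Psi$ is continuous there, hence $\Psi<\tau$ on a neighborhood of $\hat x$; so $\hat x$ is a local and therefore global minimizer of $\Phi$, i.e. $\hat x\in{\rm SOL}(P_{2,0})$, and one takes $\lambda=0$ (the final assertion of i) is vacuous here, $\hat x$ already minimizing $\Phi$). If $\Psi(\hat x)=\tau$, then $\lev_\tau\Psi=\lev_{\Psi(\hat x)}\Psi\eqdef S$ and, by hypothesis, $\hat x\notin\argmin\Psi$; Fermat's rule and the sum rule give $\zerovec\in\partial\Phi(\hat x)+\partial\iota_S(\hat x)$, and the interior-point statement of Lemma \ref{level_sets} (applicable because $\hat x\in{\rm int}(\dom\Psi)$ and $\hat x\notin\argmin\Psi$) turns this into $\zerovec\in\partial\Phi(\hat x)+\R_0^+ \partial\Psi(\hat x)$. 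Hence there are $\lambda\ge0$ and $q\in\partial\Psi(\hat x)$ with $-\lambda q\in\partial\Phi(\hat x)$, so $\zerovec\in\partial\Phi(\hat x)+\lambda\partial\Psi(\hat x)\subseteq\partial(\Phi+\lambda\Psi)(\hat x)$ and $\hat x$ solves $(P_{2,\lambda})$ by Fermat; if moreover $\hat x\notin\argmin\Phi$, then $\zerovec\notin\partial\Phi(\hat x)$, which forces $\lambda>0$.

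For part ii), I would first record that $\dom\Phi\cap\dom\Psi\neq\emptyset$ makes $\Phi+\lambda\Psi\not\equiv+\infty$ for every $\lambda$, so a minimizer $\hat x$ of $(P_{2,\lambda})$ lies in $\dom(\Phi+\lambda\Psi)$, and for $\lambda>0$ both $\Phi(\hat x)$ and $\Psi(\hat x)$ are finite. For $\lambda=0$ one has $\hat x\in\argmin\Phi$, and I would check the equivalence ``$\hat x\in{\rm SOL}(P_{1,\tau})\Leftrightarrow\tau\ge\Psi(\hat x)$'' directly: if $\tau\ge\Psi(\hat x)$ (in particular $\hat x\in\dom\Psi$; the alternative $\hat x\notin\dom\Psi$ is consistent, since then $\Psi(\hat x)=+\infty$ and both sides fail), $\hat x$ meets the constraint and minimizes $\Phi$ globally, hence over $\lev_\tau\Psi$; conversely $\tau<\Psi(\hat x)$ violates the constraint. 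For $\lambda>0$ I would set $\tau\defeq\Psi(\hat x)$; if some $\tilde x$ had $\Phi(\tilde x)<\Phi(\hat x)$ and $\Psi(\tilde x)\le\tau$ --- all values then finite --- then $\Phi(\tilde x)+\lambda\Psi(\tilde x)<\Phi(\hat x)+\lambda\Psi(\hat x)$, contradicting optimality of $\hat x$; so $\hat x\in{\rm SOL}(P_{1,\tau})$. For the uniqueness clause (with $\Phi,\Psi$ convex and $\hat x\in{\rm int}(\dom\Psi)$): if $\hat x\in\argmin\Phi$, then every $\tilde\tau\ge\Psi(\hat x)$ still gives $\hat x\in{\rm SOL}(P_{1,\tilde\tau})$, so $\tau$ is not unique; if $\hat x\notin\argmin\Phi$, no $\tilde\tau\neq\Psi(\hat x)$ works, since $\tilde\tau<\Psi(\hat x)$ violates the constraint while $\tilde\tau>\Psi(\hat x)$ would, by continuity of $\Psi$ at the interior point $\hat x$, make $\hat x$ a local and hence global minimizer of $\Phi$.

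The step I expect to be the real content --- and the one place the continuity hypothesis is genuinely needed rather than merely convenient --- is securing the sum rule at $\hat x$ and, just as crucially, checking that Lemma \ref{level_sets} is invoked in its \emph{interior}-point version: this needs both $\hat x\in{\rm int}(\dom\Psi)$ (assumed) and $\hat x\notin\argmin\Psi$ (precisely the separately imposed exceptional hypothesis when $\Psi(\hat x)=\tau$), and it is this pairing that excludes the degenerate behavior of Remark \ref{remark_1}. A secondary subtlety is the bookkeeping for $\lambda=0$ in the presence of a possible $\hat x\notin\dom\Psi$ under the ``unconstrained perspective'', which, as Remark \ref{rem:for_lambda_eq_0_there_needs_to_be_no_tau} emphasizes, is absorbed by reading the part ii) equivalence literally, with $\Psi(\hat x)=+\infty$ rendering both sides false. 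Finally, it is worth noting that the continuity hypothesis here is the counterpart of the relative-interior assumption in \prettyref{thm:constraint_vs_nonconstraint}, the two being linked by Lemma \ref{lem:slaters_cond_is_equiv_if_dom_is_open}.
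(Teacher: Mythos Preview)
Your proposal is correct and follows essentially the same route as the paper's proof of the closely related \prettyref{thm:constraint_vs_nonconstraint}: the case split $\Psi(\hat x)<\tau$ versus $\Psi(\hat x)=\tau$, the use of continuity of $\Psi$ at an interior point in the first case, and Fermat's rule combined with the sum rule and Lemma \ref{level_sets} in the second case are all exactly the paper's steps; part ii) is likewise handled identically via the direct contradiction argument and the same uniqueness analysis. You have also correctly identified that the only structural difference from \prettyref{thm:constraint_vs_nonconstraint} is the regularity hypothesis --- continuity at a common point rather than the relative-interior condition --- and that both serve solely to validate the subdifferential sum rule; the paper notes this explicitly (the continuity version is the Ekeland--T\'emam form of Moreau--Rockafellar, and the implication between the two hypotheses is recorded around Lemma \ref{lem:slaters_cond_is_equiv_if_dom_is_open}).
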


Besides the difference in the already discussed regularity condition
there is another difference in part i) of the theorems: The condition 
``where $\hat x$ is not a minimizer of $\Psi$ in case 
$\Psi(\hat x) = \tau$''
was replaced by the condition $\tau \in (\inf \Psi, + \infty)$,
which clearly ensures the first condition.
But is our \prettyref{thm:constraint_vs_nonconstraint} not
weaker than the older Theorem 2.4, since the case
$\tau \leq \inf \Psi$ was excluded? It is not for the following reason:
Let the regularity condition of the cited Theorem 2.4 
be fulfilled.
In case $\tau = \inf \Psi$ every minimizer $\hat x$ of 
${\rm (P_{1,\tau})}$ belongs therefore\footnote{
also in our taken unconstrained perspective!} to 
${\rm lev}_{\inf \Psi} \Psi$, 
so that $\Psi(\hat x) = \inf \Psi$,
making it impossible to fulfill the assumption
``where $\hat x$ is not a minimizer of $\Psi$ in case 
$\Psi(\hat x) = \tau$''.
In case $\tau < \inf \Psi$ we have
$\dom \Phi \cap {\rm lev}_\tau \Psi \ = \emptyset$, 
i.e. not even the regularity condition is fulfilled.
So part i) of that Theorem 2.4 is not applicable for $\tau \leq \inf \Psi$
anyway. 
Finally we give a comment to a restriction not 
(explicitly) mentioned in cited Theorem 2.4, namely the 

\subsection*{Restriction of 
\boldmath$\tau$ to \boldmath$OP(\Phi, \Psi)$
in \prettyref{thm:constraint_vs_nonconstraint}} 
In our taken ``unconstrained perspective'' we must 
demand $\dom \Phi \cap {\rm lev}_\tau \Psi \not = \emptyset$,
in order to ensure that not every $x \in \R^n$ is 
minimizer of ${\rm SOL}(P_{1,\tau})$ and to 
ensure
$
  \argmin \{\Phi \st \Psi \leq \tau \}
  =
  \argmin \{\Phi + \iota_{{\rm lev}_{\tau}} \Psi \}
$.
To this end we restricted $\tau$ to $OP(\Phi, \Psi)$.} 
{}


\section{Assisting theory with examples} \label{sec:assisting_theory}

\addtointroscoll{

This section provides tools which allow to 
transfer and refine the general relation between
${\rm SOL}(P_{1, \tau})$ and 
${\rm SOL}(P_{2,\lambda})$, 
as stated in 
\prettyref{thm:constraint_vs_nonconstraint} resp.
\prettyref{cor:constraint_vs_nonconstraint},
to the more special setting in 
\prettyref{sec:homegeneous_penalizers_and_constraints}
with homogeneous penalizers and constraints,
resulting in our main 
\prettyref{thm:theo_lambda_tau} of the last
\prettyref{sec:homegeneous_penalizers_and_constraints}.

Among this current section's subsections 
\begin{itemize}
  \item 
    \ref{subsec:convex_functions_and_their_periods_space}
    Convex functions and their periods space
  \item 
    \ref{subsec:operations_that_preserve_essentially_smoothness}
    Operations that preserve essentially smoothness
  \item 
    \ref{subsec:operations_that_preserve_decomposability_into_an_innerly_striclty_convex_and_a_constant_part}
    Operations that preserve decomposability into a innerly strictly convex and a constant part
  \item 
    \ref{subsec:existence_and_direction_of_argmin_of_sum}
    Existence and direction of $\argmin(F+G)$ for certain
    classes of functions
\end{itemize}
the last one is the most important one for that transferring;
roughly speaking its  
\prettyref{thm:direction_of_argmin__essentially_smooth__strict_convex_sharpend}
ensures, for given $\lambda > 0$, that the value 
$\tau = \Psi(\widehat x) = \|L \widehat x\|$ is independent 
from the choice of $\widehat x \in {\rm SOL}(P_{2,\lambda})$,
if $\Phi$ is additionally 
essentially smooth and (essentially) strictly convex on some 
affine subset $\widecheck A$ of ${\rm aff}(\dom \Phi)$.
Demanding such essentially smoothness and (essentially) strictness 
properties on $\Phi$ is done in the setting of the next section,
so that we can apply directly
\prettyref{thm:direction_of_argmin__essentially_smooth__strict_convex_sharpend}
for the primal problems in \prettyref{subsec:setting}.

For the corresponding dual problems we likewise, for given 
$\tau$, would like the value 
$\lambda = \|\widehat p\|_*$ to be independent from the choice 
of $\widehat p \in {\rm SOL}(D_{1,\tau})$.
However we can not directly apply
\prettyref{thm:direction_of_argmin__essentially_smooth__strict_convex_sharpend}
for the dual problems since here the more complicated, concatenated function
$p \mapsto \Phi^*(-L^* p) \eqdef \widetilde{\Phi}(p)$ needs to be considered.
In
\prettyref{sec:homegeneous_penalizers_and_constraints}
we will see that $\Phi^*$ has similar essentially smoothness and 
strictness properties as $\Phi$. So the question remains 
if concatenation with a (not necessarily invertible) linear mapping 
preserve these properties.
Luckily this is the case if certain conditions hold true, see 
\prettyref{thm:concatenation_ess_smooth_with_linear_mapping} and
\prettyref{thm:concatenation_stricly_convex_on_ri_with_linear_mapping}
in the second and third subsection, respectively.

For the proof of that helpful
\prettyref{thm:concatenation_stricly_convex_on_ri_with_linear_mapping}
or rather its 
\prettyref{lem:decompostion_into_periodspace_part_and_strict_convex_part_carries_over_to_affine_subset}
we will use Theorems and Lemmata developed in 
\prettyref{subsec:convex_functions_and_their_periods_space}.
%
%

}

\subsection{Convex functions and their periods space} 
\label{subsec:convex_functions_and_their_periods_space}

In this subsection we define and deal with the periods space of 
a convex functions.
The notion of periods space is closely related to semidirect sums 
discussed in the previous chapter:
For a convex funtion $F: \R^n \rarr \R\cup\{\pinfty\}$
and any decomposition $\R^n = X_1 \oplus X_2$ 
of its domain of definition into some subspace $X_2 \subseteq P[F]$
and some complementary subspace $X_1$ we can write $F$ in the form 
$F = F_1 \sdirsum 0_{X_2}$ with 
$F_1 = F|_{X_1}$.
In subsection 
\ref{subsec:operations_that_preserve_decomposability_into_an_innerly_striclty_convex_and_a_constant_part}
it will be convenient to allow $X_1$ to be also an affine subset of 
$\R^n$. To this end we extend the definition of semidirect sums 
from \prettyref{sec:semidirect_sums_and_coercivity} as follows:
\begin{definition} \label{def:more_general_definition_of_semidirect_sums}
  Let a nonempty subset $X \subseteq \R^n$ have a direct decomposition 
  $X = X_1 \oplus X_2$ into subsets $X_1, X_2 \subseteq \R^n$.
  The semi-direct sum of functions 
  $F_1 : X_1 \rarr \R \cup \{\pinfty\}$,
  $F_2 : X_2 \rarr \R \cup \{\pinfty\}$ is the function 
  $F_1 \sdirsum F_2$, given by 
  \begin{gather*}
    (F_1 \sdirsum F_2 ) (x_1 + x_2) \defeq F_1(x_1) + F_2(x_2)
  \end{gather*}
\end{definition}

The next theorem shows that the periods of a convex function
form a vector space. This space is equal to the
constancy space, defined by Rockafellar, see 
\cite[p. 69]{Rockafellar1970}.

\begin{theorem}[and {\bf Definition}] \label{thm:periods_space}
  Let $X$ be a nonempty affine subset of $\R^n$ with 
  underlying difference space 
  $U \subseteq \R^n$
  and let $F: X \rightarrow \R \cup \{+\infty\}$ be a convex function. 
  The set
  \begin{align*}
    P[F] \defeq {}&
      \{p \in U: F(x+p) = F(x) \text{ for all } x\in X\} 
    \\
    = {}&
    \{p \in U: F(x+p) = F(x) \text{ for all } x\in {\rm aff}(\dom F)\} 
  \end{align*}
  of all periods of $F$ then forms a vector subspace of $U$. 
  We will call it {\bf periods space} of $F$. 
\end{theorem}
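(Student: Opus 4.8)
The plan is to first dispose of the trivial case $\dom F=\emptyset$ (that is, $F\equiv+\infty$): here both sets in the displayed identity equal the whole of $U$, which is a subspace, so there is nothing further to prove. From now on I would assume $\dom F\neq\emptyset$ and split the work into two parts: showing that the two descriptions define the same set $P[F]$, and showing that $P[F]$ is a linear subspace of $U$. One elementary fact is used repeatedly: since $U$ is the difference space of the affine set $X$, one has $x+u\in X$ for every $x\in X$ and $u\in U$, so $X$ is invariant under translation by each $p\in U$ and by $-p$.

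For the coincidence of the two descriptions, the inclusion ``first $\subseteq$ second'' is trivial because $\dom F\subseteq{\rm aff}(\dom F)\subseteq X$, so a $p$ that leaves $F$ invariant on all of $X$ does so in particular on ${\rm aff}(\dom F)$. For the reverse, suppose $F(x+p)=F(x)$ for all $x\in{\rm aff}(\dom F)$. Choosing $x_0\in\dom F$, we get $F(x_0+p)=F(x_0)<+\infty$, hence $x_0+p\in\dom F$, so $p=(x_0+p)-x_0$ lies in the difference space $V$ of $\dom F$; consequently ${\rm aff}(\dom F)$ is invariant under translation by $\pm p$. Now for arbitrary $x\in X$: if $x\in{\rm aff}(\dom F)$ the claim holds by hypothesis; if $x\notin{\rm aff}(\dom F)$ then $F(x)=+\infty$, and also $x+p\notin{\rm aff}(\dom F)$ --- otherwise $x=(x+p)+(-p)\in{\rm aff}(\dom F)$ by the invariance --- so $F(x+p)=+\infty=F(x)$. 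Thus $p$ lies in the first set, and the two descriptions agree.

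For the subspace property I would proceed in three stages. First, $0\in P[F]$ is clear, and closure under addition and under negation is purely formal: if $p,q\in P[F]$ then for every $x\in X$ we have $F(x+p+q)=F((x+q)+p)=F(x+q)=F(x)$ using $x+q\in X$; and if $p\in P[F]$ then, applying the defining relation at the point $x-p\in X$, we get $F(x)=F((x-p)+p)=F(x-p)$, so $-p\in P[F]$, whence $np\in P[F]$ for all $n\in\Z$. Second --- and this is the only point where convexity enters --- closure under an arbitrary real scalar $\lambda$: writing $\lambda=m+\lambda'$ with $m=\lfloor\lambda\rfloor\in\Z$ and $\lambda'\in[0,1)$, and using that $mp\in P[F]$ by the first stage, it suffices to handle $\lambda'\in(0,1)$. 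Fixing $x\in X$ (so $x+\lambda'p\in X$), the decomposition $x+\lambda'p=(1-\lambda')x+\lambda'(x+p)$ together with convexity gives $F(x+\lambda'p)\le(1-\lambda')F(x)+\lambda'F(x+p)=F(x)$, while the decomposition $x=(1-\lambda')(x+\lambda'p)+\lambda'\big((x+\lambda'p)-p\big)$, convexity, and the already established $-p\in P[F]$ (applied at $x+\lambda'p$) give $F(x)\le(1-\lambda')F(x+\lambda'p)+\lambda'F(x+\lambda'p)=F(x+\lambda'p)$; the two inequalities together force $F(x+\lambda'p)=F(x)$, hence $\lambda'p\in P[F]$ and therefore $\lambda p\in P[F]$. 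Third, being nonempty and closed under addition and scalar multiplication, $P[F]$ is a linear subspace of $U$ --- and it coincides with Rockafellar's constancy space.

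The step I expect to be the main obstacle is the scalar-multiplication argument. The delicacy there is twofold: one must choose the two convex combinations so that all the auxiliary points genuinely stay in $X$ (which is exactly what ``$U$ = difference space of $X$'' guarantees), and one must be careful with extended-real arithmetic when $F(x)=+\infty$ --- in that case the first inequality is vacuous, but the second already yields $F(x+\lambda'p)\ge+\infty$, hence equality. Everything else is bookkeeping.
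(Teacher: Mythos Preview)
Your proof is correct and follows the same overall architecture as the paper's: verify the two descriptions of $P[F]$ coincide, then check the subspace axioms, with convexity entering only in the scalar-multiplication step. Two points of comparison are worth mentioning.

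For the equality of the two descriptions, your argument is in fact cleaner than the paper's. The paper asserts that for $x\notin{\rm aff}(\dom F)$ one has $x+p\notin{\rm aff}(\dom F)$ ``for all $p\in U$'', which as stated is false (take $p$ transversal to ${\rm aff}(\dom F)$); what one actually needs, and what you supply, is that any $p$ in the second set already lies in the difference space of ${\rm aff}(\dom F)$, so that this affine set is $\pm p$-invariant.

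For closure under real scalars, the paper fixes $x$, sets $f(\lambda)=F(x+\lambda p)$, observes $f(\lambda_0+k)=f(\lambda_0)$ for all $k\in\Z$, and then invokes its Lemma~\ref{lem:convexity_condition_inside_and_outside}\,\upref{enu:convex_function_constant_on_line_segment_spaned_by_3_points} (a convex function equal at three collinear points is constant on the segment they span) on the triples $\lambda_0-n,\lambda_0,\lambda_0+n$. Your route is more direct: reduce to $\lambda'\in(0,1)$ and sandwich $F(x+\lambda' p)$ between $F(x)$ from above and below via two explicit convex combinations, using $-p\in P[F]$ for the lower bound. This avoids the auxiliary lemma entirely and makes the extended-real case ($F(x)=+\infty$) transparent, exactly as you note.
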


\begin{proof}
  The sets are equal; note herein that in case 
  $x \not \in {\rm aff}(\dom F)$ the equation
  $F(x+p) = F(x)$ is anyway fulfilled for all $p \in U$,
  since then neither $x$ nor $x+p$ belong to ${\rm aff}(\dom F)$,
  so that $F(x) = + \infty = F(x+p)$.
  Next we prove that $P[F]$ is a subspace of $U$ by the 
  Subspace Criterion.
  Clearly $\zerovec \in P[F]$. Furthermore $P[F]$ is closed under addition:
  Let $p', p \in P[F]$ be arbitrarily chosen. Then
  $F(x' + p' + p) = F(x'+p') = F(x')$ for all $x' \in X$ and therefore 
  $p' + p \in P[F]$. Finally $P[F]$ is closed under scalar multiplication:
  Let $p \in P[F]$ and $x \in X$ be arbitrarily chosen.
  We have to show that $F(x + \lambda p) = F(x)$ for all $\lambda \in \R$,
  i.e. that the function $f: \R \rightarrow \R \cup \{+\infty\}$, given by
  $f(\lambda) \defeq F(x + \lambda p)$ is constant.
  In case $f \equiv +\infty$ this is clearly true.
  In case $f \not \equiv +\infty$ we choose any $\lambda_0 \in \dom f$.
  Since $p$ is a period of $F$ all values $f(\lambda_0+k)$, where $k\in \Z$,
  equal $f(\lambda_0) < +\infty$. In particular we have 
  $\lambda_0 + k \in \dom f$ for $k \in \Z$. 
  Part 
  \upref{enu:convex_function_constant_on_line_segment_spaned_by_3_points}
  of Lemma \ref{lem:convexity_condition_inside_and_outside},
  applied to 
  $a_n = \lambda_0 - n$, $b_n = \lambda_0$ and $c_n = \lambda_0 + n$,
  where $n \in \N$, now just says that the convex function $f$ is constant
  on all Intervals $[\lambda_0-n, \lambda_0+n]$, where $n \in \N$, and hence
  on whole $\R$.
\end{proof}

\begin{lemma} 
\label{lem:decomposition_into_some_set_and_a_subspace_of_periods_space}
  Let $X$ be a nonempty affine subset of $\R^n$ and let 
  $E: X \rightarrow \R \cup \{+\infty\}$ be a proper and convex function.
  For any decomposition ${\rm aff}(\dom E) = \check A \oplus \check P$
  of ${\rm aff}(\dom E) \eqdef A$ into some affine set $\check A \subseteq \R^n$
  and some subspace $\check P$ of the periods space $P[E]$
  the following holds true:
  \begin{align}
    {\rm aff}(\dom E|_{\check A}) &= \check A,
  &
    \dom E &= \dom E|_{\check A} \oplus  \check P
    \label{eq:aff_of_dom_of_restricted_function__and__decomposition_of_dom}
  \\
    {\rm int}_{\check A}(\dom E|_{\check A})
    &=
    {\rm ri}(\dom E|_{\check A}), 
  &
    {\rm int}_A(\dom E) 
    &= 
    {\rm int}_{\check A}(\dom E|_{\check A}) \oplus  \check P
    \label{eq:int_dom_of_restricted_function_is_ri_dom__and__decomposition_of_int_dom}
  \end{align}
  Moreover all the sets in these equations are nonempty.
\end{lemma}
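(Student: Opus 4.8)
The plan is to reduce everything to the behaviour of the effective domain under the given splitting ${\rm aff}(\dom E) = \check A \oplus \check P$, where $\check P$ is a subspace of the periods space $P[E]$. The key observation I would exploit is that since every $p \in \check P$ is a period of $E$ (by \prettyref{thm:periods_space}), the value $E(a + p)$ depends only on the $\check A$-component $a$; hence $a + p \in \dom E$ if and only if $a \in \dom E|_{\check A}$, which immediately gives the set-theoretic decomposition $\dom E = \dom E|_{\check A} \oplus \check P$. Nonemptiness of $\dom E|_{\check A}$ follows because $E$ is proper, so $\dom E \neq \emptyset$, and projecting any point of $\dom E$ along the direct decomposition $\check A \oplus \check P$ produces a point of $\dom E|_{\check A}$.

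First I would establish the two equations in \eqref{eq:aff_of_dom_of_restricted_function__and__decomposition_of_dom}. The decomposition $\dom E = \dom E|_{\check A} \oplus \check P$ is the observation above. For ${\rm aff}(\dom E|_{\check A}) = \check A$: the inclusion ${\rm aff}(\dom E|_{\check A}) \subseteq \check A$ is clear since $\dom E|_{\check A} \subseteq \check A$ and $\check A$ is affine; for the reverse, I would use that taking affine hulls is compatible with the direct sum of sets (as recorded in \prettyref{sec:definitions_notations_and_conventions}, citing \prettyref{thm:directness_of_sum_of_convex_sets_eq_carries_over_to_affine_hulls}), so that
\[
  {\rm aff}(\dom E) = {\rm aff}(\dom E|_{\check A}) \oplus {\rm aff}(\check P) = {\rm aff}(\dom E|_{\check A}) \oplus \check P,
\]
and comparing with ${\rm aff}(\dom E) = \check A \oplus \check P$, the uniqueness of the $\check A$-component of the decomposition of the fixed affine set ${\rm aff}(\dom E)$ forces ${\rm aff}(\dom E|_{\check A}) = \check A$.

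Next I would handle the interior statements in \eqref{eq:int_dom_of_restricted_function_is_ri_dom__and__decomposition_of_int_dom}. The equality ${\rm int}_{\check A}(\dom E|_{\check A}) = {\rm ri}(\dom E|_{\check A})$ is just the definition of relative interior together with the already-proven ${\rm aff}(\dom E|_{\check A}) = \check A$, since $\dom E|_{\check A}$ is a nonempty convex subset of $\check A$ (convexity of $E|_{\check A}$ is inherited from that of $E$). The remaining decomposition ${\rm int}_A(\dom E) = {\rm int}_{\check A}(\dom E|_{\check A}) \oplus \check P$ is where the main work lies: I would argue that relative interior respects the direct sum, i.e. ${\rm ri}(C_1 \oplus C_2) = {\rm ri}(C_1) \oplus {\rm ri}(C_2)$ for convex sets with directness carrying over to affine hulls (a standard fact in the spirit of Rockafellar's calculus of relative interiors), applied to $C_1 = \dom E|_{\check A}$, $C_2 = \check P$; since $\check P$ is a subspace we have ${\rm ri}(\check P) = \check P$, and ${\rm int}_A(\dom E) = {\rm ri}(\dom E)$ because ${\rm aff}(\dom E) = A$. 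Combining gives the claim. Finally, nonemptiness of all the interior sets follows from the general fact that a nonempty convex set has nonempty relative interior (\prettyref{thm:nonemtpy_convex_set_has_nonempty_ri}).

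\textbf{Main obstacle.} The delicate point is the interaction of relative interior with the direct-sum decomposition — one must be careful that the decomposition of $\dom E$ really is \emph{direct} in the sense the paper uses (unique components), so that the affine-hull and relative-interior calculus applies. I expect this to hinge on the cited result that directness of a sum of convex sets is equivalent to directness of the sum of their affine hulls, which here holds because ${\rm aff}(\dom E) = \check A \oplus \check P$ was assumed direct from the outset. Everything else is bookkeeping once that is in place.
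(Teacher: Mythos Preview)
Your proposal is correct and follows essentially the same route as the paper's proof: establish $\dom E = \dom E|_{\check A} \oplus \check P$ via the period property, deduce ${\rm aff}(\dom E|_{\check A}) = \check A$ by passing to affine hulls on both sides of the direct sum and cancelling $\check P$, then read off the interior statements from the compatibility of relative interior with direct sums, with nonemptiness coming from \prettyref{thm:nonemtpy_convex_set_has_nonempty_ri}. The only difference is packaging: the paper has a single result, \prettyref{thm:operations_that_interchange_with_direct_sum}, that records both ${\rm aff}(A \oplus C) = A \oplus {\rm aff}(C)$ and ${\rm ri}(A \oplus C) = A \oplus {\rm ri}(C)$ for an affine $A$ and convex $C$ with direct sum, whereas you invoke these as standard facts (and cite \prettyref{thm:directness_of_sum_of_convex_sets_eq_carries_over_to_affine_hulls}, which only gives equivalence of directness, not the affine-hull formula itself); pointing to \prettyref{thm:operations_that_interchange_with_direct_sum} would make your argument self-contained within the paper.
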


\begin{proof}
  Since $E$ is proper we have 
  $\emptyset \not = {\rm aff}(\dom E) = \check A \oplus \check P$ so that
  $\check A \not = \emptyset$ and $\check P \not = \emptyset$ as well.
  The inclusion 
  $
    \dom E|_{\check A} \oplus \check P
    \subseteq
    \dom E
  $
  holds true since 
  $E(\check a + \check p) = E(\check a) = E|_{\check A}(\check a) < +\infty$
  for all $\check a \in \dom E|_{\check A}$ and all 
  $\check p \in \check P \subseteq P[E]$.
  The reverse inclusion $\dom E \subseteq \dom E|_{\check A} \oplus \check P$
  holds also true, since every 
  $x \in \dom E \subseteq {\rm aff}(\dom E) = \check A \oplus \check P$ 
  can be written in the form 
  $x=\check a + \check p$ with some $\check p \in \check P$ and
  $\check a \in {\rm aff}(\dom E|_{\check A})$, where we even have 
  $\check a \in \dom E|_{\check A}$, because 
  $E|_{\check A}(\check a) = E(\check a + \check p) = E(x) < + \infty$.
  Altogether we have 
  \[
    \dom E = \dom E|_{\check A} \oplus \check P,
  \] 
  where $E \not \equiv \pinfty$
  guarantees $\dom E \not = \emptyset $, so that $\dom E|_{\check A}$
  is nonempty, as well.
  Due to the banal $\dom E|_{\check A} \subseteq \check A$ we get the 
  inclusion 
  $
    {\rm aff}(\dom E|_{\check A}) 
    \subseteq 
    {\rm aff}(\check A)
    =
    \check A
  $,
  where actually equality holds true, since (a slightly transposed) 
  equation
  \eqref{eq:aff_of_direct_sum_with_affine_subset} in 
  \prettyref{thm:operations_that_interchange_with_direct_sum}
  gives on the one hand
  \begin{align*}
    {\rm aff}(\dom E|_{\check A}) \oplus \check P
    =
    {\rm aff}(\dom E|_{\check A} \oplus \check P)
    =
    {\rm aff}(\dom E)
    = \check A \oplus \check P
  \end{align*}
  -- whereas the assumption ${\rm aff}(\dom E|_{\check A}) \subset \check A$
  would, on the other hand,  result in the strict subset relation
  $
    {\rm aff}(\dom E|_{\check A}) \oplus \check P 
    \subset 
    \check A \oplus \check P
  $,
  due to $\check P \not = \emptyset$.
  The therewith proven 
  \[
    {\rm aff}(\dom E|_{\check A}) = \check A
  \]
  gives now directly 
  \[
    {\rm int}_{\check A}(\dom E|_{\check{A}})
    =
    {\rm ri}(\dom E|_{\check A}),
  \]
  where these sets are nonempty by Theorem 
  \ref{thm:nonemtpy_convex_set_has_nonempty_ri}
  Using the latter equation and equation
  \eqref{eq:ri_of_direct_sum_with_affine_subset} from Theorem 
  \ref{thm:operations_that_interchange_with_direct_sum}
  we finally obtain
  \begin{equation*}
    {\rm int}_{A}(\dom E) 
    = 
    {\rm ri}(\dom E)
    =
    {\rm ri}(\dom E|_{\check A} \oplus \check P)
    =
    {\rm ri}(\dom E|_{\check A}) \oplus \check P
    =
    {\rm int}_{\check A}(\dom E|_{\check A}) \oplus \check P,
  \end{equation*}
  where ${\rm int}_A(\dom E) \not = \emptyset$ ensures
  that also ${\rm int}_{\check A}(\dom E|_{\check A})$ is non empty.  
\end{proof}

\begin{theorem} 
\label{thm:restricting_function_to_equiv_affine_sets_give_ess_the_same_function}
  Let $F: \R^n \rightarrow \R \cup \{+\infty\}$ be a convex function,
  $\check P$ a subspace of the periods space $P[F]$ and 
  $\check A, \tilde A \subseteq \R^n$ affine sets with 
  $\check A \oplus \check P = \tilde A \oplus  \check P$.
  Then $\check F \defeq F|_{\check A}: \check A \rightarrow \R \cup \{+\infty\}$
  and $\tilde F \defeq F|_{\tilde A}: \tilde A \rightarrow \R \cup \{+\infty\}$
  are the same mapping, except for an affine transformation between 
  their domain of definition:
  There is a bijective affine mapping 
  $\tilde \alpha: \check A \rightarrow \tilde A$ with 
  $\check F = \tilde  F \circ \tilde \alpha$, namely the mapping given by 
  $\tilde \alpha(\check a) = \tilde \alpha(\tilde a + \check p) \defeq \tilde a$.
\end{theorem}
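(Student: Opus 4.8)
The plan is to show that the affine map $\tilde\alpha\colon\check A\to\tilde A$, $\tilde\alpha(\check a+\check p)\defeq\check a$ where we decompose using $\check A\oplus\check P=\tilde A\oplus\check P$... wait, that is not quite the given formula. Let me restate: given $\check a\in\check A$, decompose $\check a$ according to the decomposition $\tilde A\oplus\check P$, i.e.\ write $\check a=\tilde a+\check p$ with $\tilde a\in\tilde A$ and $\check p\in\check P$, and set $\tilde\alpha(\check a)\defeq\tilde a$. So the first task is to verify that $\tilde\alpha$ is well defined and does map $\check A$ into $\tilde A$: this is immediate once we know the decomposition $\R^n\supseteq\tilde A\oplus\check P$ really is a direct decomposition of the \emph{affine} set $\tilde A\oplus\check P=\check A\oplus\check P$, so each element has a unique such representation; uniqueness gives well-definedness and $\tilde a\in\tilde A$ by construction.

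Next I would check that $\tilde\alpha$ is affine and bijective. Affineness: if $\check a_1=\tilde a_1+\check p_1$ and $\check a_2=\tilde a_2+\check p_2$ are the decompositions, then for $\lambda\in\R$ the point $\lambda\check a_1+(1-\lambda)\check a_2$ lies in $\check A$ (affine) and equals $\big(\lambda\tilde a_1+(1-\lambda)\tilde a_2\big)+\big(\lambda\check p_1+(1-\lambda)\check p_2\big)$ with the first summand in $\tilde A$ and the second in the subspace $\check P$; by uniqueness this is \emph{the} decomposition, so $\tilde\alpha\big(\lambda\check a_1+(1-\lambda)\check a_2\big)=\lambda\tilde\alpha(\check a_1)+(1-\lambda)\tilde\alpha(\check a_2)$. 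By symmetry (swapping the roles of $\check A$ and $\tilde A$) we get an affine map $\check\alpha\colon\tilde A\to\check A$; the relations $\check\alpha\circ\tilde\alpha=\id_{\check A}$ and $\tilde\alpha\circ\check\alpha=\id_{\tilde A}$ follow again from uniqueness of the decomposition, so $\tilde\alpha$ is bijective with inverse $\check\alpha$.

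Then I would prove the factorization $\check F=\tilde F\circ\tilde\alpha$. Fix $\check a\in\check A$ with decomposition $\check a=\tilde a+\check p$, $\tilde a\in\tilde A$, $\check p\in\check P\subseteq P[F]$. Since $\check p$ is a period of $F$, by \prettyref{thm:periods_space} we have $F(\tilde a+\check p)=F(\tilde a)$; but $\tilde a+\check p=\check a$, hence
\begin{gather*}
  \check F(\check a)=F(\check a)=F(\tilde a+\check p)=F(\tilde a)=\tilde F(\tilde a)=\tilde F(\tilde\alpha(\check a)).
\end{gather*}
This holds for every $\check a\in\check A$, which is the claimed identity $\check F=\tilde F\circ\tilde\alpha$.

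The only genuinely delicate point is making sure that the hypothesis $\check A\oplus\check P=\tilde A\oplus\check P$ really licenses the uniqueness-of-decomposition arguments for \emph{both} the $\tilde A$-side and the $\check A$-side simultaneously; here I would invoke the conventions and results on direct sums of (not necessarily linear) subsets recalled in Section~\ref{sec:definitions_notations_and_conventions}, in particular that for such a sum every element of $\check A\oplus\check P$ has a \emph{unique} splitting into a component in $\check A$ (resp.\ $\tilde A$) and a component in $\check P$. Everything else is a routine unwinding of definitions, so I expect no real obstacle beyond keeping the two decompositions carefully apart in the notation.
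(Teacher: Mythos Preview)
Your proposal is correct and follows essentially the same approach as the paper: define $\tilde\alpha$ via the unique decomposition $\check a=\tilde a+\check p$, verify it is affine from the decomposition's compatibility with affine combinations, and use $\check p\in P[F]$ to get $\check F=\tilde F\circ\tilde\alpha$. The only cosmetic difference is that you establish bijectivity by constructing the inverse map $\check\alpha$ via the symmetric decomposition, whereas the paper proves injectivity and surjectivity separately (using directness of $\check A\oplus\check P$ for injectivity and the $\check A$-decomposition of an arbitrary $\tilde a\in\tilde A$ for surjectivity); both arguments are equally short.
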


\begin{proof}
  Due to $\check A \oplus \check P = \tilde A \oplus  \check P$
  every $\check a \in \check A$ can be written in the form 
  $\check a = \check a + \zerovec = \tilde a(\check a) + \check p (\check a)$
  with uniquely determined $\tilde a(\check a) \in \tilde A$ and 
  $\check p(\check a) \in \check P$.
  Setting $\tilde \alpha (\check a) \defeq \tilde a(\check a)$
  gives hence a well defined mapping 
  $\tilde \alpha: \check A \rightarrow \tilde A$.
  Geometrically speaking each $\check a \in \check A$ is projected
  parallel to $\check P$ to a point 
  $\tilde a = \tilde \alpha(\check a) \in \tilde A$.
  This mapping is bijective, since it is both injective and surjective:
  Let $\tilde \alpha (\check a_1 ) = \tilde \alpha (\check a_2)$ 
  for $\check a_1, \check a_2 \in \check A$.
  Then 
  $
    \check a_1 - \check a_2 
    =
    (\tilde \alpha (\check a_1) + \check p(\check a_1))
      -
      (\tilde \alpha (\check a_2) + \check p(\check a_2))
    =
    \zerovec + \check p(\check a_1) - \check p(\check a_2) 
    \eqdef 
    \check p
    \in \check P
  $,
  so that $\check a_2 + \check p = \check a_1 + \zerovec$.
  The directness of the sum $\check A \oplus  \check P$ gives thus 
  $p = \zerovec$, i.e. $\check a_2 = \check a_1$.
  This shows that $\tilde \alpha$ is injective.
  In order to prove the surjectivity of 
  $\tilde \alpha $ let $\tilde a \in \tilde A$ be given.
  Thanks to $\tilde A \oplus  \check P = \check A \oplus \check P$
  we can write $\tilde a$ in the form 
  $\tilde a = \tilde a + \zerovec = \check a_* + \check p_*$ with some
  $\check a_* \in \check A$ and $\check p_* \in \check P$.
  Rearranging the latter to $\check a_* = \tilde a - \check p_*$
  gives $\tilde a = \tilde \alpha (\check a_*)$.
  It remains to show that $\tilde \alpha : \check A \rightarrow \tilde A$
  is affine.
  To this end let $t \in \R$ and write arbitrarily chosen
  $\check a_1, \check a_2 \in \check A$ in the form 
  \begin{align*}
    \check a_1 = \tilde a_1 + \check p_1,
    &&
    \check a_2 = \tilde a_2 + \check p_2
  \end{align*}  
  with $\tilde a_1, \tilde a_2 \in \tilde A$ and 
  $\check p_1, \check p_2  \in \check P$.
  Then their affine combination
  \begin{gather*}
    \check a_1 + t (\check a_2 - \check a_1) 
    =
    \tilde a_1 + t (\tilde a_2 - \tilde a_1) 
      +
      \check p_1 + t (\check p_2 - \check p_2)
  \end{gather*}
  is of the same form with 
  $\tilde a_1 + t (\tilde a_2 - \tilde a_1) \in \tilde A$ and
  $\check p_1 + t (\check p_2 - \check p_2) \in \check P$,
  so that 
  $
    \tilde \alpha (\check a_1 + t (\check a_2 - \check a_1))
    =
    \tilde a_1 + t (\tilde a_2 - \tilde a_1)
    =
    \tilde \alpha (\check a_1) 
      + 
      t (\tilde \alpha (\check a_2) - \tilde \alpha (\check a_1))
  $
  really holds true.
\end{proof}

\begin{remark}
  Let $F:\R^n \rightarrow \R \cup \{+\infty\}$ be a convex function.
  Every $p \in P[F]$ fulfills $\dom F + p = \dom F$.
\end{remark}

The previous remark gave a necessary condition for $p \in P[F]$.
The following lemma gives a sufficient condition. It says that, 
in case of a proper, lower semicontinuous and convex function, we do not have to 
check the condition $F(x+p) = F(x)$ for all $x \in \R^n$ in order
to prove $p\in P[F]$: 
It already suffices to find only one single $a\in \dom F$ 
such that $F(x+p) = F(x)$ for all $x\in a + {\rm span}(p) $.
We note that it is even sufficient to find one single 
$a \in \dom F$ such that $F$ is bounded above on the line 
$a + {\rm span}(p)$ by some real $\alpha$; this is ensured 
by \cite[Corollary 8.6.1]{Rockafellar1970}, which contains the next lemma 
as special case.

\iftime{ALLES HIER VON $\R^n$ auf $X$ umstellen oder alles auf 
$\R^n$ zur\"uck?, incl. 
Dinge aus ANHANG [EDIT: 18. Maerz 2013: Ist dies bei der jetzigen 
Unterteilung in Unterabschnitte ueberhaupt nich noetig?]}

\begin{lemma} \label{lem:belonging_to_periods_space}
  Assume that a function $F: \R^n \rightarrow \R \cup \{+\infty\}$
  from $\Gamma_0(\R^n)$
  is constant on a line or point $a+ {\rm span}( p ) \subseteq \R^n$
  which intersects $\dom F$. 
  Then $p \in P[F]$.
\end{lemma}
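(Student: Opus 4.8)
The plan is to show that if $F \in \Gamma_0(\R^n)$ is constant on a line (or single point) $a + \linspan{p}$ that meets $\dom F$, then $F(x+p)=F(x)$ for every $x \in \R^n$, which is exactly $p \in P[F]$. The degenerate case $p = \zerovec$ is trivial since then $a + \linspan{p} = \{a\}$ and $\zerovec \in P[F]$ always; so I may assume $p \neq \zerovec$ in what follows, and the line $\ell \defeq a + \linspan{p}$ is genuinely one-dimensional. Note also that the case $x \notin {\rm aff}(\dom F)$ is automatic, as observed in the remark preceding \prettyref{lem:belonging_to_periods_space} and in the proof of \prettyref{thm:periods_space}: then $F(x) = \pinfty = F(x+p)$. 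So the real content is to handle $x \in {\rm aff}(\dom F)$.

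First I would record what the hypothesis gives: there is a constant $\gamma \in \R$ (finite, since $\ell$ meets $\dom F$ and $F$ is constant on $\ell$) with $F(a + t p) = \gamma$ for all $t \in \R$. In particular the whole line $\ell$ lies in $\dom F$. The key step is to invoke the recession/boundedness behaviour of convex functions on lines: since $F$ is convex, proper and lower semicontinuous and is bounded above (by $\gamma$) on the entire line $\ell$, the direction $p$ must be a direction of constancy of $F$. Concretely, \cite[Corollary 8.6.1]{Rockafellar1970} (explicitly cited in the statement) says that if a closed proper convex function is bounded above on a line then it is constant along the corresponding direction, i.e. $p$ belongs to the constancy space of $F$; and by \prettyref{thm:periods_space} the constancy space coincides with $P[F]$. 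That already finishes the proof. If one prefers a more self-contained route avoiding the corollary, I would instead argue directly: fix any $x \in \dom F$ and consider the convex function $g_x: \R \rarr \R \cup \{\pinfty\}$, $g_x(s) \defeq F(x + s p)$. For $s$ in its effective domain, a convexity-slope estimate comparing the three collinear points $x$, $x + sp$ and a far-out point $a + Np$ on $\ell$ (with $N \rarr \pinfty$) forces the difference quotients of $g_x$ to tend to $0$, hence $g_x$ is nonincreasing; running the same argument in the $-p$ direction with $a - Np$ shows $g_x$ is also nondecreasing, so $g_x$ is constant, i.e. $F(x + sp) = F(x)$ for all $s$. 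Lower semicontinuity then propagates this from $\dom F$ to all of ${\rm aff}(\dom F)$: if $x \in {\rm aff}(\dom F) \setminus \dom F$ then $F(x) = \pinfty$ and, since $\dom F + p = \dom F$ would follow, also $F(x+p) = \pinfty$. Combining with the earlier trivial case, $F(x+p) = F(x)$ for all $x \in \R^n$, so $p \in P[F]$.

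The main obstacle is simply deciding how much of the one-dimensional convex-analysis machinery to reprove versus to cite: the cleanest presentation leans entirely on \cite[Corollary 8.6.1]{Rockafellar1970} together with the identification of $P[F]$ with Rockafellar's constancy space from \prettyref{thm:periods_space}, so that essentially nothing remains to be computed. The only point needing a little care is the passage from ``constant on $\dom F$ in direction $p$'' to ``$F(x+p)=F(x)$ for all $x\in\R^n$'', which is handled by the $\pinfty$-bookkeeping on ${\rm aff}(\dom F)$ and its complement exactly as in the proof of \prettyref{thm:periods_space}. I would also make explicit the harmless reduction that $a$ can be taken in $\dom F$ (indeed $\ell \subseteq \dom F$ once $F$ is constant on $\ell$ with finite value), so that the line genuinely witnesses $p$ as a period.
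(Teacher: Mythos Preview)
Your primary route---citing \cite[Corollary 8.6.1]{Rockafellar1970} together with the identification of $P[F]$ with Rockafellar's constancy space from \prettyref{thm:periods_space}---is correct. The paper in fact remarks, in the paragraph immediately preceding the lemma, that this corollary contains the lemma as a special case. The paper nonetheless supplies its own self-contained proof: for each $x$ with $F|_{x+{\rm span}(p)} \not\equiv +\infty$ it works inside the two-dimensional strip $S_x = {\rm co}\big((x+{\rm span}(p)) \cup (a+{\rm span}(p))\big)$, first showing by a level-set argument that $F$ is constant along every line $y+{\rm span}(p)\subset {\rm ri}(S_x)$, and then pushing this to the boundary line $x+{\rm span}(p)$ via \prettyref{thm:limit_behaviour_of_proper_convex_lsc_functions}. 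Your citation route is shorter; the paper's route is self-contained and makes explicit where lower semicontinuity is used.

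Your alternative ``self-contained'' sketch, however, has a real gap: the three points $x$, $x+sp$, and $a+Np$ are \emph{not} collinear unless $x$ already lies on $\ell$, so no one-dimensional slope comparison between them is available. A correct direct argument must connect the line through $x$ to the line $\ell$ by convex combinations that move \emph{across} the strip---exactly what the paper's two-step strip argument does---or else go through the recession function, which is precisely the content of the corollary you already invoked.
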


\begin{proof}
  In case $p=\zerovec$ the assertion is clearly fulfilled. 
  In the main case $p \not = \zerovec$ we have to show that 
  $F$ is constant on every straight line 
  $x + {\rm span}(p)$ parallel, but not identical 
  to $a + {\rm span}(p)$.
  In case of $F \equiv + \infty$ we are done.
  In the remaining case 
  $F|_{x + {\rm span}(p)} \not \equiv + \infty$
  we consider $F$ on the affine plane spanned by the 
  non-identical, parallel straight lines 
  $x + {\rm span}(p)$ and $a + {\rm span}(p)$, 
  or rather only on the closed strip
  \begin{gather*}
    S_x 
    \defeq 
    {\rm co}([x + {\rm span}(p)] \cup [ a + {\rm span}(p)])
  \end{gather*}
  bounded by these lines. We perform our task in two steps:
  Firstly we will show that $F$ is constant on every straight line 
  $y + {\rm span}(p)$ in 
  $
    {\rm ri} (S_x) 
    = 
    S_x \setminus ([x + {\rm span}(p)] \cup [ a + {\rm span}(p)])
  $.
  Secondly we carry this knowledge over to the bounding line 
  $x + {\rm span}(p)$ of $S_x$.
  The whole straight line $a + {\rm span}(p)$ belongs to 
  $\dom (F)$ as well as at least one point
  $x' \in x + {\rm span}(p)$,
  since $F|_{x + {\rm span}(p)} \not \equiv + \infty$.
  Using the convexity of $\dom F$ we hence obtain
  \begin{gather*}
    \dom F
    =
    {\rm co}(\dom F) 
    \supseteq
    {\rm co}(\{x'\} \cup [a + {\rm span}(p)])
    \supseteq 
    {\rm ri} (S_x),
  \end{gather*}
  i.e. $F$ takes only finite values on every straight line 
  $y + {\rm span}(p) \subseteq {\rm ri}(S_x)$.
  Assume that $F$  is not constant on some line 
  $y + {\rm span}(p) \subseteq {\rm ri}(S_x)$,
  i.e. that there were parameters $\check t, \hat t \in \R$
  with $F(y + \check t p ) < F(y + \hat t p)$.
  Defining the function $F_{y,p}: \R \rightarrow \R$ via
  $F_{y,p}(t) \defeq F(y + t p)$ this reads
  $F_{y,p}(\check t) < F_{y,p}(\hat t)$. 
  Since $F_{y,p}$ is convex the equation
  \eqref{ineq:convexity_condition_outside_of_two_points_of_dom} 
  from Lemma
  \ref{lem:convexity_condition_inside_and_outside}
  would yield 
  \begin{gather*}
    F(y + [(1- \lambda)\check t + \lambda \hat t]p)
    =
    F_{y,p}((1- \lambda)\check t + \lambda \hat t)
    \geq 
    F_{y,p}( \check t) 
      + \lambda (F_{y,p}(\hat t) - F_{y,p}(\check t))
    \rightarrow + \infty
  \end{gather*}
  as $\lambda \rightarrow + \infty$.
  In particular there would exist $t_1, t_2 \in \R$
  such that 
  \begin{gather*}
    F(\underbrace{y + t_2 p}_{\eqdef y_2})
    >
    F(\underbrace{y + t_1 p}_{\eqdef y_1})
    \geq
    F(a)
    =
    F(a + t p)
  \end{gather*}
  for all $t \in \R$. So ${\rm lev}_{F(y_1)}(F)$ would contain 
  not only the point $y_1$ but also the straight line 
  $a + {\rm span}(p)$.
  The convexity of ${\rm lev}_{F(y_1)}(F)$ would therefore give
  \begin{gather*}
    {\rm lev}_{F(y_1)}(F)
    =
    {\rm co} ({\rm lev}_{F(y_1)}(F))
    \supseteq 
    {\rm co} (\{y_1\} \cup [a+ {\rm span}(p)])
    \supseteq 
    {\rm ri}(S_{y_1})
    =
    {\rm ri}(S_y)
  \end{gather*}
  with the nonempty closed strip 
  $S_y = {\rm co}([y + {\rm span}(p)] \cup [ a + {\rm span}(p)])$.
  The lower-semicontinuity of $F$ ensures the closeness of  
  ${\rm lev}_{F(y_1)}(F)$ so that 
  \begin{gather*}
    {\rm lev}_{F(y_1)}(F)
    =
    \overline{ ({\rm lev}_{F(y_1)}(F)) }
    \supseteq
    \overline{{\rm ri}(S_y)}
    \supseteq 
    y + {\rm span}(p)
    \ni
    y_2,
  \end{gather*}
  yielding $F(y_2) \leq F(y_1)$ which contradicts
  $F(y_2) > F(y_1)$.
  So $F$ is constant on every straight line
  $y + {\rm span}(p)$ in ${\rm ri}(S_x)$,
  i.e. $F(y + tp) = F(y)$ for all $t \in \R$.
  Applying Theorem 
  \ref{thm:limit_behaviour_of_proper_convex_lsc_functions}
  to $a \in \dom F$ and an arbitrarily chosen 
  $x^* = x + t^*p \in x + {\rm span}(p)$ we see that
  \begin{gather*}
    F(x^*) 
    =
    \lim_{\mu \uparrow 1} F( (1-\mu) a + \mu x^*)
    =
    \lim_{\mu \uparrow 1} 
    F( \underbrace{(1-\mu)a + \mu x}_{\eqdef y_\mu} 
	  + \underbrace{\mu t}_{\eqdef t_\mu} p ).
  \end{gather*}
  The point $y_\mu$ belongs to the relatively open strip 
  ${\rm ri}(S_x)$ for all $\mu \in (0,1)$,
  so that $F$ is constant on the straight line
  $y_\mu + {\rm span}(p)$.
  Therewith and by Theorem 
  \ref{thm:limit_behaviour_of_proper_convex_lsc_functions}
  we obtain
  \begin{gather*}
    F(y_\mu + t_\mu p)
    =
    F(y_\mu)
    =
    F((1-\mu)a + \mu x )
    \rightarrow
    F(x)
  \end{gather*}
  as $\mu \uparrow 1$.
  Altogether we have $F(x^*) = F(x)$ for all 
  point $x^* \in x + {\rm span}(p)$.
\end{proof}

\begin{remark}
  Demanding that $F$ is lower semicontinuous is important to ensure
  $p \in P[F]$ as the following example shows:
  Consider the function $F:\R^2 \rightarrow \R \cup \{+\infty\}$ given by 
  \begin{gather*}
    F(x_1,x_2) \defeq 
    \begin{cases}
      + \infty & \text{ for } x_1 < 0
    \\
      x_2^2    & \text{ for } x_1 = 0
    \\
      0	       & \text{ for } x_1 > 0
    \end{cases}.
  \end{gather*}
  and regard e.g. $a=(3,0)$ and $p = (0,4)$. Then all assumptions are 
  fulfilled, except for the lower semicontinuity of $F$.
  Moreover the closed right half plane $\dom F$ clearly fulfills 
  $\dom F = \dom F + p$;
  however $p \not \in P[F]$ since $F(\zerovec+p) \not = F(\zerovec)$.
\end{remark}

\begin{theorem} \label{thm:periods_space_of_restricted_function}
  Let $E: X \rightarrow \R \cup \{+\infty\}$ be a convex function, 
  defined on an affine subset $X$ of $\R^n$.
  For any affine subset $A \subseteq X$ and its difference space $U$ we have
  \begin{gather*}
    P[E] \cap U \subseteq P[E|_A].
  \end{gather*}
  We actually have $P[E] \cap U = P[E|_A]$, if in addition
  $E \in \Gamma_0[X]$ and $A \cap \dom E \not = \emptyset$.
\end{theorem}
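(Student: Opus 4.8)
The statement splits into two parts: the inclusion $P[E]\cap U\subseteq P[E|_A]$, which should hold in full generality, and the reverse inclusion, which needs the extra hypotheses $E\in\Gamma_0[X]$ and $A\cap\dom E\neq\emptyset$. I would first dispatch the easy inclusion. Let $p\in P[E]\cap U$. Since $A$ is affine with difference space $U$, we have $A+p=A$, so $E|_A(a+p)$ makes sense for every $a\in A$; and since $p\in P[E]$, by \prettyref{thm:periods_space} we get $E(a+p)=E(a)$ for all $a\in X\supseteq A$. Hence $E|_A(a+p)=E|_A(a)$ for all $a\in A$, i.e. $p\in P[E|_A]$. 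This is purely a matter of restricting the defining equation, so it is immediate.

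For the reverse inclusion, suppose $E\in\Gamma_0[X]$ and $A\cap\dom E\neq\emptyset$, and let $p\in P[E|_A]$. The goal is to show $p\in P[E]$; in particular $p\in U$ is automatic since $P[E|_A]$ lives in $U$ by \prettyref{thm:periods_space}. The idea is to apply \prettyref{lem:belonging_to_periods_space}: because $A\cap\dom E\neq\emptyset$, pick any $a_0\in A\cap\dom E$. Then the line (or point) $a_0+\linspan{p}$ is contained in $A$ (as $p\in U$ and $A$ is affine), intersects $\dom E$, and $E$ is constant on it because $p\in P[E|_A]$ forces $E|_A(a_0+tp)=E|_A(a_0)$ for all integer $t$, hence by the constancy-of-periods argument (the last step in the proof of \prettyref{thm:periods_space}, using part \upref{enu:convex_function_constant_on_line_segment_spaned_by_3_points} of \prettyref{lem:convexity_condition_inside_and_outside}) for all real $t$ as well. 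So the hypotheses of \prettyref{lem:belonging_to_periods_space} are met, giving $p\in P[E]$.

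There is one subtlety to check: \prettyref{lem:belonging_to_periods_space} is stated for functions in $\Gamma_0(\R^n)$ whereas here $E\in\Gamma_0[X]$ with $X$ a possibly proper affine subset of $\R^n$. The clean way around this is to recall (as remarked in the excerpt, after the definition of $\Gamma_0$) that forming closures and lower semicontinuity of a function on $X$ does not depend on whether one works in $X$ or in $\R^n$; concretely, extend $E$ to $\widehat E:\R^n\to\R\cup\{\pinfty\}$ by $+\infty$ off $X$, so that $\widehat E\in\Gamma_0(\R^n)$, and observe $P[\widehat E]=P[E]$ and $P[\widehat E|_A]=P[E|_A]$ since $A\subseteq X$. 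Apply \prettyref{lem:belonging_to_periods_space} to $\widehat E$, $a_0$ and $p$ to conclude $p\in P[\widehat E]=P[E]$. The main (really the only) obstacle is making sure this reduction to $\Gamma_0(\R^n)$ is airtight and that the line $a_0+\linspan p$ genuinely intersects $\dom\widehat E=\dom E$ — both of which follow directly from $a_0\in A\cap\dom E$ and $p\in U$; everything else is bookkeeping.
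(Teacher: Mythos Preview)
Your proof is correct and follows essentially the same approach as the paper: the easy inclusion is handled by restricting the period equation to $A$, and the reverse inclusion is obtained by picking $a_0\in A\cap\dom E$, establishing constancy of $E$ on the line $a_0+\linspan{p}$ via \prettyref{lem:convexity_condition_inside_and_outside}, and then invoking \prettyref{lem:belonging_to_periods_space}. You actually go one step further than the paper by explicitly addressing the $\Gamma_0[X]$ versus $\Gamma_0(\R^n)$ mismatch via the extension $\widehat E$; the paper simply applies \prettyref{lem:belonging_to_periods_space} without comment, so your treatment is slightly more careful here.
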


Before proving this theorem we show by two examples that both
the lower semicontinuity of E and the condition 
$A \cap \dom E \not = \emptyset$ are essential to get the equality
$P[E] \cap U = P[E|_A]$.

\begin{example}
  ~
  \begin{enumerate}
    \item 
      Consider the function $E: \R^3 \rightarrow \R \cup \{+\infty\}$ given by
      \begin{gather*}
        E(x_1,x_2,x_3) 
	\defeq 
	\begin{cases}
	  x_3   	& \text{ if } x_3 > 0,
	\\
	  0		& \text{ if } x_3 = 0 \text{ and } x_2 = 0,
	\\
	  + \infty	& \text{ else }. 
	\end{cases}
      \end{gather*}
      $E$ is obtained from the mapping $\R^3 \rightarrow \R, x \mapsto x_3$
      by restricting its effective domain to the non-closed set
      $\dom E = H^>_{e_3,0} \cup \langle e_1 \rangle$.
      The proper and convex function $E$ is not
      lower semicontinuous, so that $E \not \in \Gamma_0(\R^3)$.
      Both the $x_1 x_2$ plane ${\rm span} \{e_1,e_2\} \eqdef A$ and 
      its translate $A_0 + e_3 \eqdef A'$ are affine subsets of $\R^3$
      that intersect $\dom E$. 
      Although they have the same difference space $U=A$
      the periods spaces $P[E|_A]$ and $P[E|_{A'}]$ are different;
      more precisely 
      \begin{gather*}
        P[E|_A] = P[E] \cap U \subset P[E|_{A'}]
      \end{gather*}
      holds true:
      Clearly 
      $P[E] \cap U = {\rm span}(e_1) \cap A = {\rm span}(e_1) = P[E|_A]$.
      However 
      $
	P[E] \cap U  
	= 
	{\rm span}(e_1)
	\subset {\rm span}(e_1, e_2) 
	= 
	P[E|_{A'}]
      $.
    \item 
      Consider the function $E: \R^3 \rightarrow \R \cup \{+ \infty\}$
      given by
      \begin{gather*}
        E(x_1,x_2,x_3) 
	\defeq 
	\begin{cases}
	  x_3   	& \text{ if } x_3 \leq 0 \text{ and } x_2 = 0,
	\\
	  + \infty	& \text{ else }. 
	\end{cases}
      \end{gather*}
      $E$ is obtained from the mapping $\R^3 \rightarrow \R, x \mapsto x_3$
      by "restricting" it to the closed half-plane
      $\dom E = \{(x_1,0, x_3) \in \R^3: x_1 \in \R, x_3 \leq 0\}$.
      Defining $A,A'$ and $U$ as above we have 
      $A \cap \dom E = {\rm span}(e_1) \not = \emptyset$ but
      $A' \cap \dom E = \emptyset$.
      Clearly $P[E] \cap U = {\rm span}(e_1) = P[E|_A]$. However 
      $
	P[E] \cap U
	=
	{\rm span}(e_1) 
	\subset 
	{\rm span}(e_1,e_2) 
	= 
	U 
	= 
	P[E|_{A'}]
      $,
      since $E|_{A'} \equiv +\infty$.      
  \end{enumerate}
\end{example}

\begin{proof}[Proof of Theorem \ref{lem:restriction_of_ess_smooth_function_on_good_affine_set}]
  Let $p \in P[E] \cap U$. Then 
  \begin{gather*}
    E(x+p) = E(x)
  \end{gather*}
  for all $x \in X$.
  For all $x \in A$ we have $x+ p \in A$ and hence 
  \begin{gather*}
    E|_A(x+p) = E(x+p) = E(x) = E|_A(x),
  \end{gather*}
  for all $x \in A \subseteq X$.
  This shows  $P[E] \cap U \subseteq P[E|_A]$.
  Let now the additional assumptions be fulfilled and let
  $p \in P[E|_A]$.
  Then $p \in U$. Since $E|_A \not \equiv + \infty$, 
  and $E(x+p)=E(x)$ for all $x \in A$ we see by part 
  \upref{enu:convex_function_constant_on_line_segment_spaned_by_3_points}
  of Lemma \ref{lem:convexity_condition_inside_and_outside} that
  $E$ is in particular constant on any line 
  $a + {\rm span}(p)$, $a \in A$ which intersects the 
  nonempty set $\dom E|_A$.
  Lemma \ref{lem:belonging_to_periods_space} gives thus
  $p \in P[E]$ so that $p \in P[E] \cap U$.
  This shows that also the reversed inclusion
  $P[E|_A] \subseteq P[E] \cap U$
  holds true under the additional assumptions.  
\end{proof}

\subsection{Operations that preserve essentially smoothness} 
\label{subsec:operations_that_preserve_essentially_smoothness}

Roughly speaking essential smoothness is preserved when 
performing the following operations on an essentially smooth function 
$H: A \rarr \R \cup \{\pinfty\}$, defined on some affine subspace 
$A$ of $\R^n$:
\begin{itemize}
  \item Restrictions $H|_{\widecheck A}$	
	to an affine subspace $\widecheck A$ of 
	$A$ which intersects ${\rm ri}({\dom H})$
  \item Extensions $F$ of $H$ of the form $F = H \sdirsum 0_{\widecheck P}$
  \item Forming concatenations $F = H \circ M$ with a linear mapping
	whose range intersects ${\rm ri}({\dom H})$,
\end{itemize}
see 
\prettyref{lem:restriction_of_ess_smooth_function_on_good_affine_set},
\prettyref{lem:essentially_smoothness_does_not_depend_on_periodspace}
and
\prettyref{thm:concatenation_ess_smooth_with_linear_mapping}.

\begin{lemma}
\label{lem:restriction_of_ess_smooth_function_on_good_affine_set}
  Let $A$ be an affine subspace of $\R^n$ and 
  $F:A \rightarrow \R \cup \{+\infty\}$ be essentially smooth.
  The restriction $F|_{\check A}$ of $F$ to an affine set 
  $\check A \subseteq A$ stays essentially smooth, if 
  $\check A$ intersects ${\rm ri}(\dom F) [= {\rm int_A}(\dom F)].$
\end{lemma}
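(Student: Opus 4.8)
The plan is to reduce the claim to the definition of essential smoothness and transfer the two defining conditions (differentiability on the relative interior of the domain, and the gradient blowing up at relative boundary points) from $F$ to the restriction $F|_{\widecheck A}$. To make this work smoothly, the first thing I would do is invoke Lemma~\ref{lem:decomposition_into_some_set_and_a_subspace_of_periods_space}-style reasoning about $\mathrm{ri}$ and $\mathrm{aff}$: since $\widecheck A$ is affine and intersects $\mathrm{ri}(\dom F)$, the set $\widecheck A \cap \dom F$ has $\mathrm{aff}(\widecheck A \cap \dom F) = \widecheck A$ and $\mathrm{ri}(\dom F|_{\widecheck A}) = \widecheck A \cap \mathrm{ri}(\dom F) = \mathrm{int}_{\widecheck A}(\dom F|_{\widecheck A})$. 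This identity is the geometric backbone of the whole argument and is a standard consequence of the relative-interior calculus for convex sets (it will be available as one of the appendix theorems on $\mathrm{ri}$ of intersections with affine sets).

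\textbf{Differentiability on the relative interior.} On $\mathrm{ri}(\dom F|_{\widecheck A}) = \widecheck A \cap \mathrm{ri}(\dom F)$, the function $F$ is (by essential smoothness) differentiable as a function on $A$; restricting a differentiable function to the affine subspace $\widecheck A$ keeps it differentiable, with gradient obtained by composing with the (constant) linear inclusion of the difference space of $\widecheck A$ into that of $A$. Concretely, if $U_{\widecheck A} \subseteq U_A$ are the difference spaces, then $\nabla(F|_{\widecheck A})(x)$ is the orthogonal projection of $\nabla F(x)$ onto $U_{\widecheck A}$ (or, coordinate-free, the image of $\nabla F(x)$ under the adjoint of the inclusion). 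So the first defining property of essential smoothness is inherited immediately.

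\textbf{Gradient blow-up at the relative boundary.} This is the step I expect to be the main obstacle. Let $x \in \mathrm{rb}(\dom F|_{\widecheck A})$, and let $(x_k)$ in $\mathrm{ri}(\dom F|_{\widecheck A})$ converge to $x$. I would argue that $x$ necessarily lies in $\mathrm{rb}(\dom F)$ as well: it cannot be in $\mathrm{ri}(\dom F)$, for otherwise $x \in \widecheck A \cap \mathrm{ri}(\dom F) = \mathrm{ri}(\dom F|_{\widecheck A})$, contradicting $x \in \mathrm{rb}$. Hence $x \in \mathrm{rb}(\dom F)$, and since $(x_k) \subseteq \mathrm{ri}(\dom F)$ with $x_k \to x$, essential smoothness of $F$ gives $\|\nabla F(x_k)\| \to \pinfty$. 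The delicate point is that we need $\|\nabla(F|_{\widecheck A})(x_k)\| = \|\pi_{U_{\widecheck A}}(\nabla F(x_k))\| \to \pinfty$, i.e. the component of the diverging gradient \emph{tangent to $\widecheck A$} must also diverge. This is where I would use convexity: the directional derivatives of $F$ along directions $d \in U_{\widecheck A}$ at $x_k$ are exactly $\langle \nabla F(x_k), d\rangle = \langle \nabla(F|_{\widecheck A})(x_k), d\rangle$, and these are the one-sided derivatives of the convex function $t \mapsto F|_{\widecheck A}(x_k + td)$. Since $x \in \mathrm{rb}(\dom F|_{\widecheck A})$, there is a direction $d \in U_{\widecheck A}$ (pointing from $x_k$ towards $x$, suitably normalized) along which the one-dimensional restriction approaches the boundary of its domain; by the one-dimensional theory of essentially smooth convex functions (a convex function finite on an open interval and lsc has derivative $\to \pm\infty$ at a finite endpoint of its domain) the corresponding directional derivatives blow up, forcing $\|\nabla(F|_{\widecheck A})(x_k)\| \to \pinfty$. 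The technical care needed here — choosing the direction $d$ uniformly, handling the case where $\widecheck A \cap \dom F$ is lower-dimensional, and invoking the appropriate appendix lemma on limit behavior of convex functions (the analogue of \prettyref{thm:limit_behaviour_of_proper_convex_lsc_functions}) — is the real content of the proof; everything else is bookkeeping about $\mathrm{ri}$, $\mathrm{aff}$, and projections of gradients.

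Finally I would note that $F|_{\widecheck A}$ is automatically proper and convex (restriction of a proper convex function to an affine set meeting its domain), and lsc if needed, so that "essentially smooth" is meaningful for it, completing the argument.
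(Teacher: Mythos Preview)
Your set-theoretic backbone matches the paper exactly: using Theorem~\ref{thm:restricting_set_operations_to_an_affine_subset} to get $\mathrm{aff}(\dom F|_{\check A}) = \check A$, $\mathrm{ri}(\dom F|_{\check A}) = \check A \cap \mathrm{ri}(\dom F)$, and $\mathrm{rb}(\dom F|_{\check A}) = \check A \cap \mathrm{rb}(\dom F) \subseteq \mathrm{rb}(\dom F)$. The differentiability part is also the same.

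Where you and the paper diverge is in handling the boundary condition. You adopt the gradient-norm characterization ($\|\nabla F(x_k)\| \to \infty$), which forces you to confront the ``delicate point'' of whether the \emph{projected} gradient $\pi_{U_{\check A}}(\nabla F(x_k))$ also diverges, and you resolve it by switching mid-argument to directional derivatives along a direction in $U_{\check A}$. The paper instead uses Rockafellar's equivalent directional-derivative formulation of essential smoothness from the start (condition~c): $F'(x + \lambda(a-x); a-x) \to -\infty$ as $\lambda \searrow 0$ for every $x \in \mathrm{rb}(\dom F)$ and $a \in \mathrm{ri}(\dom F)$. Since any $x \in \mathrm{rb}(\dom \check F)$ and $a \in \mathrm{ri}(\dom \check F)$ are automatically in $\mathrm{rb}(\dom F)$ and $\mathrm{ri}(\dom F)$ respectively, and the segment lies in $\check A$, one has $\check F'(x+\lambda(a-x);a-x) = F'(x+\lambda(a-x);a-x) \to -\infty$ verbatim. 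The projection issue never arises.

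One genuine slip: your parenthetical ``a convex function finite on an open interval and lsc has derivative $\to \pm\infty$ at a finite endpoint of its domain'' is false as stated (take $f(t)=t$ on $(0,1)$). This property holds precisely for \emph{essentially smooth} convex functions, which is what you must invoke about $F$ itself---not derive from an independent one-dimensional fact. Once you correct this to ``by essential smoothness of $F$, the directional derivative along the segment from $x_k$ toward $x$ blows up,'' your argument goes through, though it remains a detour compared to the paper's direct route.
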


The condition
$\check A \cap {\rm ri}(\dom F) \not = \emptyset$ is essential
to preserve the essential smoothness when restricting $F$ to $\check A$.
Cf. example 
\ref{exa:essentially_smoothness_gets_lost_by_concatenation_with_projection}.

\begin{proof}[Proof of Lemma \ref{lem:restriction_of_ess_smooth_function_on_good_affine_set}]
  By definition of ``essentially smooth'', cf. \cite[p. 251]{Rockafellar1970}
  and nearby explanations, see \cite[Lemma 26.2]{Rockafellar1970} and cf.
  \cite[p. 213]{Rockafellar1970} we have
  \begin{itemize}
    \item [$a)$]
      ${\rm int}_A(\dom F) \not = \emptyset$,
    \item [$b)$]
      $F$ is differentiable in every 
      $x \in {\rm int}_A(\dom F) = {\rm ri}(\dom F)$ and
    \item [$c)$]
      the directional derivative
      $F'(x + \lambda(a-x);a-x) \rightarrow -\infty$ as 
      $\lambda \searrow 0$ for every 
      $x \in \partial_A(\dom F) = {\rm rb}(\dom F)$ and every 
      $a \in {\rm int}_A(\dom F) = {\rm ri}(\dom F)$
  \end{itemize}
  Set $\check F \defeq F|_{\check A}$. Then 
  $\dom \check F = \check A \cap \dom F$, so that equation
  \eqref{eq:aff_restricted_to_affine_subset} in Theorem
  \ref{thm:restricting_set_operations_to_an_affine_subset}
  gives 
  $
    {\rm aff}(\dom \check F)
    = 
    \check A \cap {\rm aff}(\dom F)
    =
    \check A \cap A
    =
    \check A
  $,
  ensuring 
  $
    {\rm int}_{\check A}(\dom \check F)
    =
    {\rm ri}(\dom \check F)
  $
  and thus
  $
    \partial_{\check A}(\dom \check F)
    =
    {\rm rb}(\dom \check F)
  $.
  Equation \eqref{eq:ri_restricted_to_affine_subset} of the same 
  theorem gives
  \begin{itemize}
    \item [$\check a)$]
    $
      {\rm int}_{\check A}(\dom \check F)
      =
      {\rm ri}(\dom \check F)
      =
      {\rm ri}(\check A \cap \dom F)
      =
      \check A \cap {\rm ri}(\dom F)
      \not =
      \emptyset
    $.
  \end{itemize}
  Due to 
  $
    {\rm int}_{\check A}(\dom \check F)
    =
    \check A \cap {\rm ri}(\dom F)
    \subseteq
    {\rm ri}(\dom F)
    =
    {\rm int}_A(\dom F)
  $ 
  we know that
  \begin{itemize}
    \item [$\check b)$]
      $\check F = F|_{\check A}$ is differentiable in every
      $x \in {\rm int}_{\check A}(\dom \check F)$.
  \end{itemize}
  Since equation \eqref{eq:rb_restricted_to_affine_subset} from 
  Theorem \ref{thm:restricting_set_operations_to_an_affine_subset}
  ensures
  $
    \partial_{\check A}(\dom \check F)
    =
    {\rm rb}(\dom \check F)
    =
    {\rm rb}(\check A \cap \dom F)
    =
    \check A \cap {\rm rb}(\dom F)
    \subseteq
    {\rm rb}(\dom F)
    =
    \partial_A(\dom F)
  $
  we finally -- still -- have
  \begin{itemize}
    \item [$\check c)$]
      $
        \check F'(x + \lambda(a-x);a-x) 
	=
	F'(x + \lambda(a-x);a-x)
	\rightarrow -\infty
      $ 
      as 
      $\lambda \searrow 0$ 
      for every 
      $
	x \in \partial_{\check A}(\dom \check F) 
	\subseteq
	\partial_A(\dom F)
      $ 
      and every 
      $
	a \in {\rm int}_{\check A}(\dom \check F) 
	\subseteq
	{\rm int}_A(\dom F)
      $.
  \end{itemize}
  Therefore $F|_{\check A} = \check F$ is essentially smooth.
\end{proof}

\begin{lemma} \label{lem:essentially_smoothness_does_not_depend_on_periodspace}
  Let $F: \R^n \rightarrow \R \cup \{+\infty\}$ be a convex function 
  and let ${\rm aff}(\dom F)$ be decomposed as direct sum 
  ${\rm aff}(\dom F) = \check A \oplus \check P$
  of some affine subspace $\check A$ of $\R^n$ and some vector subspace 
  $\check P$ of the periods space $P[F]$. Then the following are equivalent:
  \begin{enumerate}
    \item 
      $F$ is essentially smooth on 
      $\check A \oplus \check P = {\rm aff}(\dom F)$.
    \item 
      $F$ is essentially smooth on $\check A$.
  \end{enumerate}

\end{lemma}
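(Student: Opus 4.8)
The statement asserts that essential smoothness of a convex function $F:\R^n\rarr\R\cup\{\pinfty\}$ on $\mathrm{aff}(\dom F)=\check A\oplus\check P$ (with $\check P\subseteq P[F]$) is equivalent to essential smoothness of $F$ on $\check A$. Since $\check P$ consists of periods, $F$ restricted to $\check A$ is, up to translation, ``the same function'' as $F$; the plan is to make this precise and then transport the three defining conditions of essential smoothness — namely $(a)$ nonempty relative interior of the effective domain, $(b)$ differentiability on the relative interior, $(c)$ blow-up of directional derivatives at relative boundary points — between the two settings. Because essential smoothness is defined with respect to the affine hull of the effective domain, and $\mathrm{aff}(\dom F)=\check A\oplus\check P$ while $\mathrm{aff}(\dom F|_{\check A})=\check A$ (this last equality is exactly \prettyref{lem:decomposition_into_some_set_and_a_subspace_of_periods_space}), the two notions live on genuinely different affine carriers, so the heart of the argument is to relate differentiability and one-sided derivatives across the direct sum decomposition.

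First I would record the structural facts I need from the preceding material. By \prettyref{lem:decomposition_into_some_set_and_a_subspace_of_periods_space}, applied with $E=F$, $X=\R^n$, we get $\mathrm{aff}(\dom F|_{\check A})=\check A$, $\dom F=\dom F|_{\check A}\oplus\check P$, and $\mathrm{int}_A(\dom F)=\mathrm{int}_{\check A}(\dom F|_{\check A})\oplus\check P$ where $A=\mathrm{aff}(\dom F)$; all sets involved are nonempty. Writing $x\in A$ uniquely as $x=\check a+\check p$ with $\check a\in\check A$, $\check p\in\check P$, we have $F(x)=F(\check a)=F|_{\check A}(\check a)$ since $\check p$ is a period. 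Thus $F=(F|_{\check A})\sdirsum 0_{\check P}$ on $A$. This immediately yields condition $(a)$ in both directions simultaneously: $\mathrm{int}_A(\dom F)\neq\emptyset\iff\mathrm{int}_{\check A}(\dom F|_{\check A})\neq\emptyset$.

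Next I would handle $(b)$. Fix a point of $\mathrm{int}_A(\dom F)$; by the decomposition of the interior it has the form $\check a_0+\check p_0$ with $\check a_0\in\mathrm{int}_{\check A}(\dom F|_{\check A})$. Directional derivatives of $F$ at $\check a_0+\check p_0$ in a direction $u$ decomposed as $u=\check u+\check q$ ($\check u$ in the difference space $\check U$ of $\check A$, $\check q\in\check P$) satisfy $F'(\check a_0+\check p_0;\check u+\check q)=(F|_{\check A})'(\check a_0;\check u)$, because moving in a period direction does not change $F$-values at all and moving in a $\check U$ direction reproduces exactly the difference quotients of $F|_{\check A}$. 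Hence $F$ is differentiable (as a function on the affine space $A$) at $\check a_0+\check p_0$ — its gradient, viewed as a linear functional on the difference space $\check U\oplus\check P$, is the gradient of $F|_{\check A}$ on $\check U$ extended by zero on $\check P$ — if and only if $F|_{\check A}$ is differentiable at $\check a_0$. This gives $(b)$ in both directions. For $(c)$ the same computation applies at relative boundary points: from $\mathrm{rb}(\dom F)=\mathrm{rb}(\dom F|_{\check A})\oplus\check P$ (which follows by taking relative interiors and closures through the direct sum, using \prettyref{thm:operations_that_interchange_with_direct_sum}-type identities already available, or directly from \prettyref{lem:decomposition_into_some_set_and_a_subspace_of_periods_space}), a boundary point of $\dom F$ is $\check b_0+\check p_0$ with $\check b_0\in\mathrm{rb}(\dom F|_{\check A})$, and picking $a\in\mathrm{int}_A(\dom F)$, writing $a-(\check b_0+\check p_0)=\check u+\check q$, one gets $F'(\check b_0+\check p_0+\lambda(\check u+\check q);\check u+\check q)=(F|_{\check A})'(\check b_0+\lambda\check u;\check u)\rarr\minfty$ as $\lambda\searrow 0$ precisely when the analogous limit holds for $F|_{\check A}$ at $\check b_0$. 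Assembling $(a)$, $(b)$, $(c)$ gives the claimed equivalence.

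\textbf{Main obstacle.} The routine-looking part — the difference-quotient identity $F'(\check a+\check p;\check u+\check q)=(F|_{\check A})'(\check a;\check u)$ — is where one must be careful: differentiability of a convex function at an interior point means the directional derivative is \emph{linear} in the direction, and I must check that extending the gradient of $F|_{\check A}$ by $0$ on $\check P$ genuinely realizes $F$'s derivative, i.e. that there is no ``hidden'' nonlinearity in mixed directions $\check u+\check q$. This reduces to the elementary fact that if $g$ is convex on $\check U\oplus\check P$ with $g(w+\check q)=g(w)$ for all $\check q\in\check P$, then $g$'s one-sided derivative in direction $\check u+\check q$ equals that in direction $\check u$; this is exactly the kind of statement underpinning \prettyref{lem:belonging_to_periods_space} and \prettyref{thm:periods_space}, so I would cite or mimic those. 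I would also need to state clearly once and for all that ``essentially smooth on the affine set $B$'' is unaffected by an affine change of coordinates on $B$, so that the translation by $\check p_0$ appearing throughout is harmless — this is implicit but worth a sentence, and it is the conceptual (not technical) crux that makes the whole equivalence almost tautological once the bookkeeping from \prettyref{lem:decomposition_into_some_set_and_a_subspace_of_periods_space} is in place.
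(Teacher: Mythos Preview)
Your proposal is correct and takes a genuinely different route from the paper. You argue via the single identity
\[
F'(\check a+\check p;\,\check u+\check q)=(F|_{\check A})'(\check a;\,\check u),
\]
which is immediate from $\check P\subseteq P[F]$, and then read off conditions $(a)$, $(b)$, $(c)$ of essential smoothness using the directional-derivative characterizations (differentiability of a convex function at an interior point $\iff$ $F'(x_0;\cdot)$ is linear; the boundary condition is already formulated via one-sided derivatives). The paper instead works with the full linearization definition $f(a+h)=f(a)+\mathcal{D}f|_a(h)+r_a(h)\|h\|$ and, for condition $(c)$, with the gradient-norm blow-up $\|\mathcal{D}f|_{a_k}\|\to\pinfty$ along boundary sequences. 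Both the remainder estimate in $(b)$ and the norm comparison in $(c)$ force the paper to invoke the inequality $\|\check h\|\le C\|\check h+q\|$ from \prettyref{lem:same_blow_up_factor_gets_all_attachted_affines_strangers_out_of_ball} (indeed twice). Your approach bypasses that lemma entirely, which is a real simplification; the price is that you must cite the equivalence between the gradient-norm and directional-derivative formulations of essential smoothness (Rockafellar, Lemma~26.2), whereas the paper's version is self-contained at that point.

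One remark: the ``main obstacle'' you flag is not actually an obstacle. The directional-derivative identity follows in one line from $F(\check a+t\check u+\check p+t\check q)=F(\check a+t\check u)$, and linearity of $\check u\mapsto (F|_{\check A})'(\check a;\check u)$ automatically yields linearity of $(\check u,\check q)\mapsto (F|_{\check A})'(\check a;\check u)+0$ on $\check U\oplus\check P$; there is no hidden mixed-direction term to worry about. So your proof is in fact shorter than you anticipate.
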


\begin{proof}
  Assume without loss of generality that $\check A$ is placed in a way that
  it even is a vector subspace of $\R^n$ and set 
  $A \defeq \check A \oplus \check P = {\rm aff}(\dom F) = {\rm span}(\dom F)$,
  $f \defeq F|_A$ and $\check f \defeq F|_{\check A}$. We are going to show the following:
  \begin{align}
  \label{eq:nonemptyness_of_interior_is_equivalent}
    {\rm int}_A(\dom f) \not = \emptyset 
    &\Leftrightarrow
    {\rm int}_{\check A}(\dom \check f) \not = \emptyset,
  \\
    f \text{ is differentiable in} & \text{ every } a \in {\rm int}_A(\dom f)
  \nonumber
  \\ 
    \label{eq:differentiability_is_equivalent}
    &\Updownarrow 
  \\
    \check f \text{ is differentiable in} & \text{ every } 
			\check a \in {\rm int}_{\check A}(\dom \check f).
  \nonumber
  \intertext{In case that $f$ and $\check f$ are differentiable in
	      ${\rm int}_A(\dom f)$ and in 
	      ${\rm int}_{\check A}(\dom \check f)$, respectively,
	      we will finally show}
    \|\mathcal{D}f|_{a_k}\|_{A \rightarrow \R} 
    \rightarrow + \infty
    &\text{ for all } 
    (a_k)_k \in BS(\dom f)
  \nonumber
  \\ 
  \label{eq:presence_of_infinit_steep_wall_is_equivalent}
    &\Updownarrow 
  \\
    \|\mathcal{D}\check f|_{\check a_k}\|_{\check A \rightarrow \R} 
    \rightarrow + \infty
    &\text{ for all } 
    (\check a_k)_k \in BS(\dom \check f)
  \nonumber
  \end{align}
  where 
  $BS(\dom f)$ consists of those convergent sequences in 
  ${\rm int}_A(\dom f)$ whose limit point belongs to the 
  relative boundary
  $\partial_A(\dom f)$.
  $BS(\dom \check f)$ is defined accordingly.
  \\
  Note first  that
  $\dom f = \dom \check f \oplus \check P$ gives by Theorem
  \ref{thm:operations_that_interchange_with_direct_sum}
  the equality
  $
    \check A \oplus \check P = A = {\rm aff}(\dom f) 
    = {\rm aff}(\dom \check f) \oplus \check P
  $. Using $\check A \supseteq {\rm aff}(\dom \check f)$ 
  we hence get $\check A = {\rm aff}(\dom \check f)$.
  By the very same theorem we obtain analogously
  $
    \partial_A(\dom f)
    =
    {\rm rb} (\dom \check f \oplus \check P )
    =
    {\rm rb} (\dom \check f) \oplus \check P
    = 
    \partial_{\check A}(\dom \check f)
    \oplus 
    \check P
  $
  and 
  $
    {\rm int}_A(\dom f) 
    =
    {\rm ri}(\dom \check f \oplus \check P)
    =
    {\rm ri}(\dom \check f) \oplus \check P
    = 
    {\rm int}_{\check A}(\dom \check f)
    \oplus 
    \check P
  $.
  The latter equality already shows that 
  \eqref{eq:nonemptyness_of_interior_is_equivalent} 
  is true.
  In order to prove 
  \eqref{eq:differentiability_is_equivalent}
  we will make use of unique decompositions
  $a=\check a + p$ and $h = \check h + q$
  of $a, h \in A$ into $\check a, \check h \in \check A$ and 
  $p, q \in \check P$.
  Assume first the differentiability of $f$ in an arbitrarily chosen
  $a \in {\rm int}_A(\dom f)$; i.e. that there exists a (unique) linear mapping
  $\mathcal{D}f|_a : A \rightarrow \R$ and a function 
  $r_a: A \rightarrow \R$, which is both continuous in $\zerovec$
  and fulfills $r_a(\zerovec) = 0$, such that
  \begin{gather*}
    f(a+h) = f(a) + \mathcal{D}f|_a(h) + r_a(h)\|h\|
  \end{gather*}
  for all sufficiently small $h \in A$.
  For any $\check a \in {\rm int}_{\check A}(\dom \check f)$ we have 
  $
    \check a 
    = 
    \check a + \zerovec 
    \in 
    {\rm int}_{\check A}(\dom \check f) \oplus \check P
    =
    {\rm int}_A(\dom f)
  $.
  So the latter formula stays valid for $a= \check a$ and all 
  sufficiently small $h = \check h \in \check A \subseteq A$.
  Therefore $\check f = f|_{\check A}$ is also differentiable with 
  $
    \mathcal{D} \check f|_{\check a}(\check h) 
    = 
    \mathcal{D} f_a(\check h)
  $
  for all $\check h \in \check A$.
  Assume now to the contrary the differentiability of $\check f$ in an 
  arbitrarily chosen 
  $\check a \in {\rm int}_{\check A}(\dom \check f)$; i.e. that there is a 
  (unique) linear mapping 
  $\mathcal{D}\check f|_{\check a} : \check A \rightarrow \R $ and a function
  $\check r_{\check a}: \check A \rightarrow \R$, which is both continuous 
  in $\zerovec$ and fulfills $\check r_{\check a}(\zerovec) = 0$, such that
  \begin{gather*}
    \check f(\check a+\check h) 
    = 
    \check f(\check a) + \mathcal{D}\check f|_{\check a}(\check h) 
    + \check r_{\check a}(\check h)\|\check h\|
  \end{gather*}
  for all sufficiently small $\check h \in \check A$.
  Any $a \in {\rm int}_A(\dom f) = {\rm int}_{\check A}(\dom \check f)\oplus \check P$
  can be written uniquely as $a = \check a + p$ with 
  $\check a \in {\rm int}_{\check A}(\dom \check f)$.
  For $h \not = \zerovec$ the translational symmetry of $f$ in directions of
  $\check P$ therefore gives
  \begin{align*}
    f(a+h) 
    &= \check f(\check a + \check h)
    \\
    &=   
    \check f(\check a) + \mathcal{D}\check f|_{\check a}(\check h) 
    + \check r_{\check a}(\check h)\|\check h\|
    \\
    &=
    f(a) 
      +
      \underbrace{
      \mathcal{D}\check f|_{\check a}(\check h)
      }_{\eqdef L_a(\check h + q)= L_a(h)}
      +
      \underbrace{
      \check r_{\check a}(\check h) \frac{\|\check h\|}{\|h\|} 
      }_{\eqdef r_a(\check h +q)= r_a(h)}
      \|h\|.
  \end{align*}
  Clearly $L_a: A \rightarrow \R$ is a linear mapping; so we need only to 
  show that extending $r_a: A \setminus \{\zerovec\} \rightarrow \R$ via
  $r_a(\zerovec) \defeq 0 $ yields a function $A \rightarrow \R$ which is 
  continuous in $\zerovec$. Lemma 
  \ref{lem:same_blow_up_factor_gets_all_attachted_affines_strangers_out_of_ball}
  says that there is a constant $C > 0$ such that 
  $\frac{\|\check h\|}{\|h\|} = \frac{\|\check h\|}{\|\check h + q\|}\leq C$.
  Consequently
  $
    |r_a(h)| 
    = 
    |r_a(\check h +q)| 
    \leq 
    C |\check r_{\check a}(\check h)|
    \rightarrow 0
  $
  as $h \rightarrow \zerovec$ 
  (i.e. as the components $\check h, q \rightarrow \zerovec$).
  Thus $f$ is differentiable in $a$ and 
  \begin{gather*}
    \mathcal{D}f|_a(h)
    =
    L_a(\check h + q) 
    = 
    \mathcal{D}\check f|_{\check a}(\check h).
  \end{gather*}
  We finally proof that 
  \eqref{eq:presence_of_infinit_steep_wall_is_equivalent}
  holds true (under the there stated differentiability assumption). For these 
  purpose we will use the found relation between the derivatives of 
  $f$ and $\check f$. For any 
  $
    a 
    = 
    \check a + p 
    \in 
    {\rm int}_{\check A}(\dom \check f) \oplus \check P
    = 
    {\rm int}_A(\dom f)
  $
  we have
  $
    |\mathcal{D}\check f|_{\check a}(\check h)|
    =
    |\mathcal{D}f|_a(\check h)|
  $
  for all $\check h \in \check A$ with $\|\check h\| = 1$.
  In particular 
  $
    \|\mathcal{D}\check f|_{\check a}\|_{\check A \rightarrow \R}
    \leq
    \|\mathcal{D}f|_a\|_{A \rightarrow \R}
  $
  on the one hand.
  Using again the inequality $\|\check h\| \leq C\|\check h + p\| = C \|h\|$ 
  from Lemma 
  \ref{lem:same_blow_up_factor_gets_all_attachted_affines_strangers_out_of_ball}
  we get 
  $
    |\mathcal{D}f|_a(h)| 
    =
    |\mathcal{D}\check f|_{\check a}(\check h)|
    \leq
    \|\mathcal{D}\check f|_{\check a}\|_{\check A \rightarrow \R} \|\check h\|
    \leq 
    C \|\mathcal{D}\check f|_{\check a}\|_{\check A \rightarrow \R}\|h\|
  $
  for all $h \in A$, so that 
  $
    \|\mathcal{D}f|_a\|_{A\rightarrow\R}
    \leq
    C \|\mathcal{D}\check f|_{\check a}\|_{\check A \rightarrow \R}
  $
  on the other hand. Noting that the constant $C$ does not depend on the 
  choice of $a$ we have in total 
  \begin{gather*}
    \|\mathcal{D}\check f|_{\check a}\|_{\check A \rightarrow \R}
    \leq
    \|\mathcal{D}f|_a\|_{A \rightarrow \R}
    \leq
    C \|\mathcal{D}\check f|_{\check a}\|_{\check A \rightarrow \R}
  \end{gather*}
  for all $a= \check a + p \in A$.
  Therefrom and by using 
  $
    \partial_A(\dom f) 
    = 
    \partial_{\check A}(\dom \check f)
    \oplus 
    \check P
  $
  and 
  $
    {\rm int}_A(\dom f) = {\rm int}_{\check A}(\dom \check f) \oplus \check P
  $
  we finally obtain 
  \eqref{eq:presence_of_infinit_steep_wall_is_equivalent}.
\end{proof}

\begin{theorem} \label{thm:concatenation_ess_smooth_with_linear_mapping}
  Let the convex function $E: \R^n \rightarrow \R \cup \{+\infty\}$ 
  be essentially smooth on 
  ${\rm aff}(\dom E)$ and let $M:\R^m \rightarrow \R^n$ be a linear mapping 
  whose range $\mathcal{R}(M)$ intersects ${\rm ri}(\dom E)$. Then the 
  concatenation $F \defeq E \circ M : \R^m \rightarrow \R \cup \{+\infty\}$
  is convex and essentially smooth on ${\rm aff}(\dom F)$.
\end{theorem}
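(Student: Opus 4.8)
The plan is to reduce the claim to the two preservation lemmas we have already established, namely \prettyref{lem:restriction_of_ess_smooth_function_on_good_affine_set} and \prettyref{lem:essentially_smoothness_does_not_depend_on_periodspace}, by factoring the linear map $M$ through its range. Concretely, write $Y \defeq \mathcal{R}(M)$, a linear subspace of $\R^n$. The convexity of $F = E\circ M$ is immediate, since $M$ is affine and $E$ is convex. For essential smoothness the first observation is that, because $E$ is symmetric along its periods space, only the behaviour of $E$ on the affine set $\dom E$, and in particular near $Y$, should matter. So the first step is to restrict $E$ to $Y$: since $Y = \mathcal{R}(M)$ intersects ${\rm ri}(\dom E) = {\rm int}_{{\rm aff}(\dom E)}(\dom E)$ by hypothesis, \prettyref{lem:restriction_of_ess_smooth_function_on_good_affine_set} applies (with $\widecheck A = Y \cap {\rm aff}(\dom E)$, which is an affine subspace of ${\rm aff}(\dom E)$ meeting its relative interior) and tells us that $E|_{Y}$ is again essentially smooth on ${\rm aff}(\dom(E|_Y)) = Y\cap{\rm aff}(\dom E)$.

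Next I would analyse the surjective linear map $M: \R^m \rarr Y$. Let $N \defeq \mathcal{N}(M)$; then $M$ induces a vector space isomorphism $\bar M$ between any complementary subspace $\R^m = N \oplus N^\complement$ and $Y$, i.e. $M|_{N^\complement}: N^\complement \rarr Y$ is a bijective linear map. The key point is that $F = E\circ M = (E|_Y)\circ M$, and since $M$ is constant along $N$, we have $F = \widetilde F \sdirsum 0_{N}$ after identifying $N^\complement$ appropriately, where $\widetilde F \defeq F|_{N^\complement} = (E|_Y)\circ (M|_{N^\complement})$. By \prettyref{thm:linear_mapping_coercive_iff_injective} style reasoning (or simply because a bijective linear map between finite-dimensional spaces is bicontinuous), $M|_{N^\complement}$ is a bicontinuous affine isomorphism, so composing the essentially smooth function $E|_Y$ with it preserves essential smoothness: differentiability, the nonemptiness of the relative interior of the effective domain, and the blow-up of the gradient norm near the relative boundary are all invariant under a bicontinuous linear change of coordinates. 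Thus $\widetilde F$ is essentially smooth on ${\rm aff}(\dom \widetilde F)$.

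Finally I would lift this back to $F$ on all of ${\rm aff}(\dom F)$. Here the role of \prettyref{lem:essentially_smoothness_does_not_depend_on_periodspace} comes in: the nullspace $N$ of $M$ consists of periods of $F$, because $F(x+p) = E(M(x+p)) = E(Mx) = F(x)$ for every $p \in N$ and every $x$; hence $N \subseteq P[F]$. Moreover $\R^m = N^\complement \oplus N$ restricts to a decomposition ${\rm aff}(\dom F) = {\rm aff}(\dom\widetilde F)\oplus N$ of the affine hull of the effective domain into an affine subspace and a subspace of the periods space --- this follows exactly as in \prettyref{lem:decomposition_into_some_set_and_a_subspace_of_periods_space}. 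Applying \prettyref{lem:essentially_smoothness_does_not_depend_on_periodspace} with $\check A = {\rm aff}(\dom\widetilde F)$ and $\check P = N$, the essential smoothness of $F$ on $\check A$ (which is what $\widetilde F$ essentially smooth means) is equivalent to the essential smoothness of $F$ on $\check A\oplus N = {\rm aff}(\dom F)$, which is the assertion.

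The main obstacle I anticipate is the bookkeeping in the middle step: one must be careful that a ``linear'' change of variables between $N^\complement$ and $Y$ interacts correctly with the affine structure of ${\rm aff}(\dom E)$ and ${\rm aff}(\dom F)$, since essential smoothness is a statement about functions on affine, not linear, subspaces, and the relevant interiors and relative boundaries must be matched up under the isomorphism. A secondary point to check carefully is that $Y\cap{\rm aff}(\dom E)$ is indeed the affine hull of $\dom(E|_Y)$ (so that the hypothesis $\mathcal{R}(M)\cap{\rm ri}(\dom E)\neq\emptyset$ is precisely what \prettyref{lem:restriction_of_ess_smooth_function_on_good_affine_set} needs), but this is exactly the content of equation \eqref{eq:ri_restricted_to_affine_subset}-type identities already available, so it should be routine.
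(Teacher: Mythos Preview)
Your proposal is correct and follows essentially the same three-step route as the paper: restrict $E$ to $\mathcal{R}(M)$ via \prettyref{lem:restriction_of_ess_smooth_function_on_good_affine_set}, transfer essential smoothness through the bijective linear map $M|_{N^\complement}$, then lift back via \prettyref{lem:essentially_smoothness_does_not_depend_on_periodspace} using $\mathcal{N}(M)\subseteq P[F]$. The only cosmetic difference is that the paper chooses the specific orthogonal complement $N^\complement = \mathcal{R}(M^*)$, whereas you allow an arbitrary complement; both work.
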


\begin{proof}
  The linearity of $M$ transfers the convexity of $E$ to $F$.
  Consider the restricted functions
  $\check E \defeq E|_{\mathcal{R}(M)}$ and 
  $\check F \defeq F|_{\mathcal{R}(M^*)}$.
  Since $\mathcal{R}(M) \cap {\rm ri}(\dom E) \not = \emptyset$ we can
  apply Lemma \ref{lem:restriction_of_ess_smooth_function_on_good_affine_set}
  to see that $\check E$ is essentially smooth on 
  \begin{gather*}
    \check A_E 
    \defeq 
    \mathcal{R}(M) \cap {\rm aff}(\dom E)
    =
    {\rm aff}(\mathcal{R}(M) \cap \dom E)
    =
    {\rm aff}(\dom \check E),
  \end{gather*}
  where 
  $ 
    \mathcal{R}(M) \cap {\rm aff}(\dom E)
    =
    {\rm aff}(\mathcal{R}(M) \cap \dom E)
  $ 
  holds true by Theorem 
  \ref{thm:restricting_set_operations_to_an_affine_subset}.
  The equation 
  \begin{gather*}
    \check F = \check E \circ \check M,
  \end{gather*}
  where $\check M \defeq M|_{\mathcal{R}(M^*)}$, elucidates
  that $\check F$ and $\check E$ are the very same mapping 
  -- except for the bijective linear transformation 
  $\check M: \mathcal{R}(M^*) \rightarrow \mathcal{R}(M)$
  between their domains of definition.
  Hence $\check F$ is likewise essentially smooth on 
  \begin{gather*}
    \check M^{-1}[\check A_E] 
    =
    \check M^{-1}[{\rm aff}(\dom \check E)]
    =
    {\rm aff}(\check M^{-1}[\dom \check E])
    =
    {\rm aff}(\dom \check F)
    \eqdef
    \check A_F.
  \end{gather*}  
  Applying Lemma 
  \ref{lem:essentially_smoothness_does_not_depend_on_periodspace}
  to $F$, 
  $
    \check A_F 
    \defeq 
    {\rm aff}(\dom \check F) 
    = 
    \mathcal{R}(M^*) \cap {\rm aff}(\dom F)
  $ 
  and $\check P \defeq \mathcal{N}(M)$
  we finally see that $F$ is essentially smooth on 
  ${\rm aff}(\dom F) = {\rm aff}(\dom \check F) \oplus \mathcal{N}(M)$,
  since $F|_{\check A} = \check F$ is essentially smooth on 
  ${\rm aff}(\dom \check F)$; note here that the validity of
  $
    {\rm aff}(\dom F)
    =
    {\rm aff}(\dom \check F \oplus \mathcal{N}(M))
    = 
    {\rm aff}(\dom \check F) \oplus \mathcal{N}(M)
  $
  is guaranteed by Theorem 
  \ref{thm:operations_that_interchange_with_direct_sum}.
\end{proof}

We give two related examples to illustrate the role
of the assumption $\mathcal{R}(M)\cap {\rm ri}(\dom E) \not = \emptyset$.
Although we start with an example where this assumption is not fulfilled but where 
$E \circ M$ is never the less again essentially smooth, we will see in the second example
that we in general can not replace that assumption by the weaker assumption 
$\mathcal{R}(M) \cap \dom E \not = \emptyset$. We use the notations
$\mathbb{H}$ and $Q$ for the {\bf open upper half plane}
$\{w \in \R^2: w_2 > 0\} \subseteq \R^2 = \C$ and the 
{\bf first open quadrant} 
$\{z \in \R^2 : z_1 > 0, z_2 > 0\} \subseteq \C$, respectively.

\begin{figure}[htpb]
\begin{center}
\includegraphics[width=0.35\textwidth]{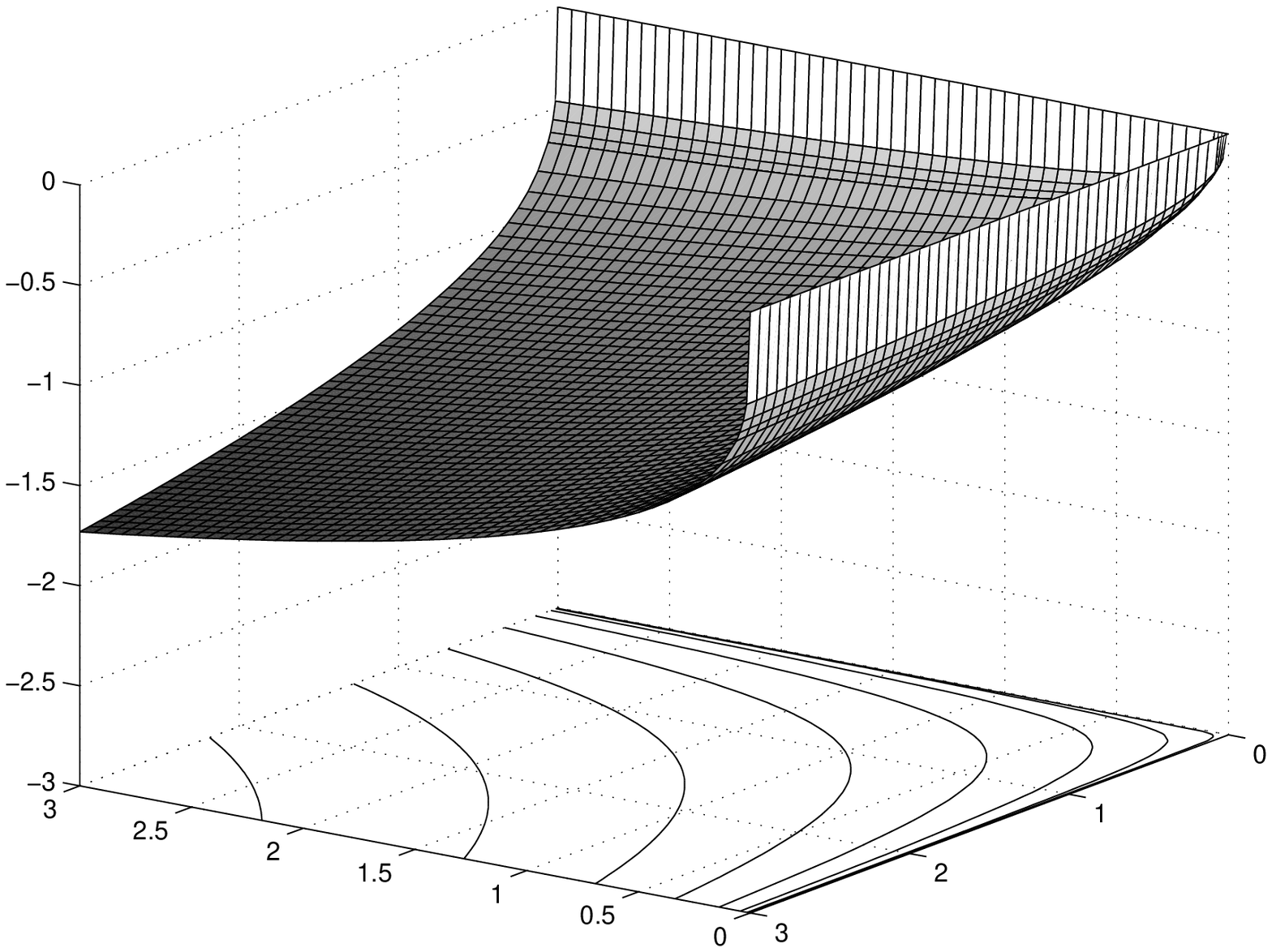}
\includegraphics[width=0.35\textwidth]{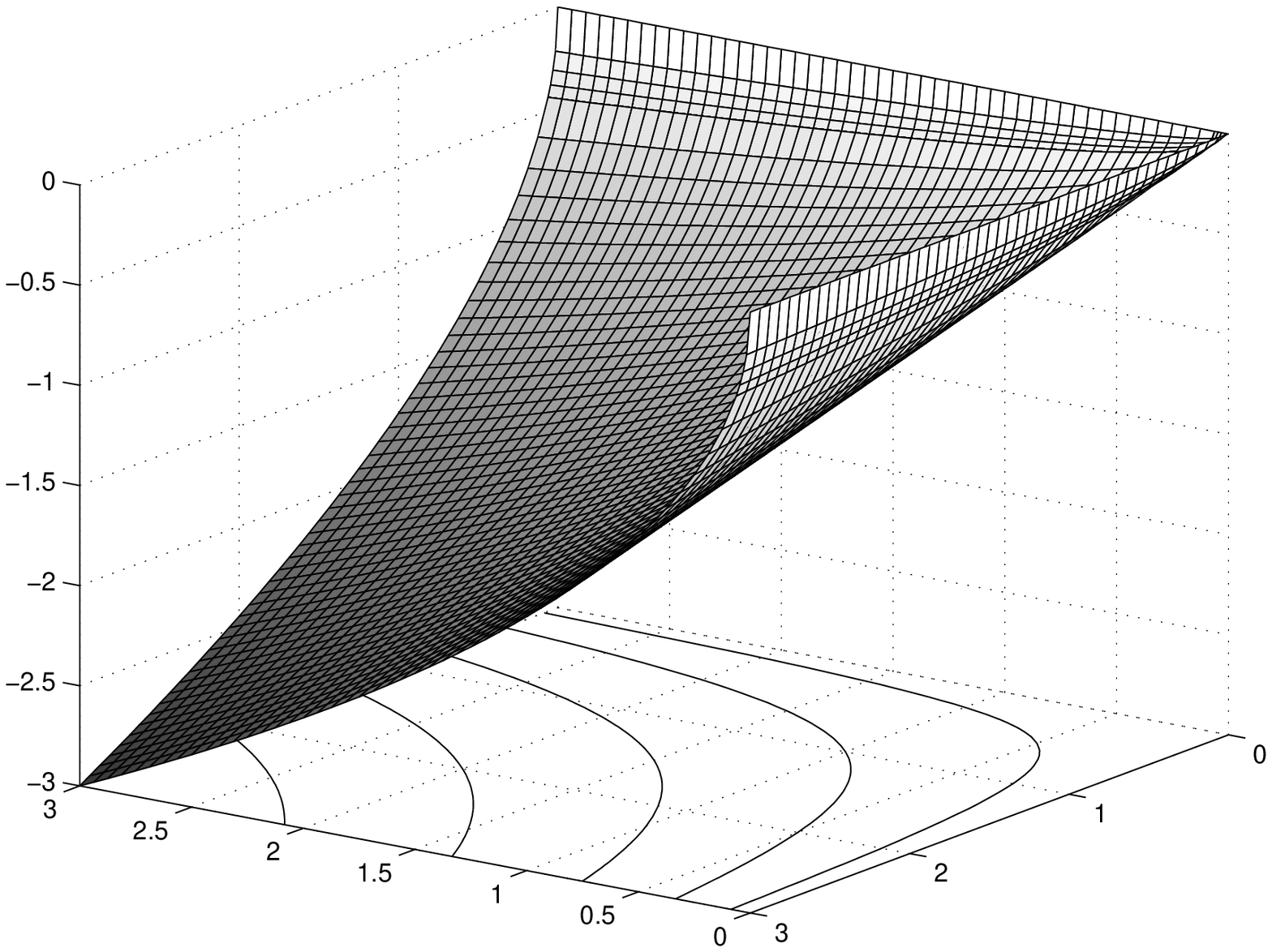}
\caption{
	  Graphs and contour lines of $E_\alpha$ or rather $g_\alpha$.
	  Left for $\alpha = \frac{1}{4} \in (0, \frac{1}{2})$ and right for the border case 
	  $\alpha = \frac{1}{2}$, where 
	  $
	    \|\nabla g_\alpha (z^{(k)})\|_{2}
	    \rightarrow 
	    +\infty
	  $
	  for all boundary points 
	  $z=\lim_{k \rightarrow +\infty} z^{(k)}$ 
	  of $\dom E_{\frac{1}{2}}$, except the origin $(0,0)$.
	  For better quality of the plot a smaller step size was
	  used near the X-axis and the Y-axis, where the norm of the 
	  gradient of $g_\alpha$ is large.
	  \label{fig:essentially_smoothness_get_lost_by_restriction}
	}
\end{center}
\end{figure}

\begin{example} 
\label{exa:essentially_smoothness_gets_lost_by_concatenation_with_projection}
  Consider first the function 
  $\tilde g_\alpha : \overline{\mathbb{H}} \rightarrow \R \cup \{+\infty\}$
  on the closed upper half plane $\overline{\mathbb{H}}$, defined by 
  $\tilde g_\alpha (w) \defeq -w_2^\alpha = -( \Im (w))^\alpha$, with some parameter 
  $\alpha \in (0, + \infty)$.
  Continuing $\tilde g_\alpha$ by setting 
  \begin{gather*}
    \tilde E_\alpha (w) 
    \defeq
    \begin{cases}
      \tilde g_\alpha(w) = -w_2^\alpha & \text{ for } w \in \overline{\mathbb{H}} 
      \\
      +\infty                          & \text{ for } w \in \R^2 \setminus \overline{\mathbb{H}}
    \end{cases}
  \end{gather*}
  we obtain a function $\tilde E_\alpha : \R^2 \rightarrow \R \cup \{+\infty\}$,
  which is convex and essentially smooth for $\alpha \in (0,1)$.
  Concatenation with the linear projection $M:\R^2 \rightarrow \R \times \{0\}$,
  $M(z) \defeq (z_1,0)$ yields the mapping $\tilde F_\alpha = \tilde E_\alpha \circ M$;
  here $\tilde F_\alpha(z) = 0$ for all $z \in \R^2$ elucidates
  that $\tilde F_\alpha$ is both convex and essentially smooth, 
  although $\mathcal{R}(M)$ does not intersect 
  $\overline{\mathbb{H}} = {\rm ri}(\dom \tilde E_\alpha)$.

  The essentially smoothness will, however, be no longer preserved by 
  concatenation with $M$ if we transform $\tilde g_\alpha$'s 
  domain of definition, i.e. the upper closed half plane 
  $\overline{\mathbb{H}} \subseteq \R^2 = \C$, to the first closed quadrant
  $\overline{Q}$ by means of the bijective mapping 
  $h: \overline{Q} \rightarrow \overline{\mathbb{H}}$, given by
  $h(z) \defeq \frac{1}{2} z^2 = (\frac{1}{2}(z_1^2 - z_2^2), z_1 z_2)$:
  The function 
  $g_\alpha \defeq \tilde g_\alpha \circ h : \overline{Q} \rightarrow \overline{\mathbb{H}}$,
  where $g_\alpha(z) = -(z_1 z_2)^\alpha = -z_1^\alpha z_2^\alpha$ and 
  $\alpha \in (0, +\infty)$,
  is infinitely differentiable in $Q$ and continuous in $\overline Q$. Its Hessian
  \begin{gather*}
    H_\alpha |_z
    =
    \alpha z_1^{\alpha -2} z_2^{\alpha -2}
    \begin{pmatrix}
      (1 - \alpha) z_2^2 &   - \alpha z_1 z_2
      \\
      - \alpha z_1 z_2   & (1 - \alpha) z_1^2 
    \end{pmatrix}
  \end{gather*}
  is positive definite for all $z \in Q$, if $\alpha \in (0, \frac{1}{2})$
  by virtue of Sylvester's criterion.
  Therefore the continuous function $g_\alpha $ is strictly convex in $Q$ and
  convex in $\overline{Q}$ for $\alpha \in (0, \frac{1}{2})$.
  For these $\alpha $ we furthermore have
  $\|\nabla g_\alpha (z^{(k)})\|_{2} \rightarrow + \infty$  as 
  $k \rightarrow + \infty$ for any sequence 
  $(z^{(k)})_{k \in \N}$ in $Q$, converging to some boundary point 
  $z^{(\infty)}$ of $Q$,
  see \prettyref{det:gradient_gets_large_when_approaching_boundary}.
  Altogether we see that continuing $g_\alpha $
  by setting 
  \begin{gather*}
    E_\alpha (z) \defeq 
    \begin{cases}
      g_\alpha (z) = -z_1^\alpha z_2^\alpha & \text { if } z \in \overline{Q}
      \\
      +\infty      & \text{ if } z \in \R^2 \setminus \overline{Q}
    \end{cases}
  \end{gather*}
  leads to a function $E_\alpha: \R^2 \rightarrow \R \cup \{+\infty\}$
  which is convex and essentially smooth for $\alpha \in (0, \frac{1}{2}) $.
  However $F_\alpha = E_\alpha \circ M = \iota_{[0, +\infty)\times \R}$
  is not essentially smooth; here $\mathcal{R}(M)$ indeed intersects only 
  $\dom E_\alpha $ but not the relative interior of this effective domain,
  which is consistent with Theorem 
  \ref{thm:concatenation_ess_smooth_with_linear_mapping}.
\end{example}

\subsection{Operations that preserve decomposability into a innerly strictly convex and a constant part} 
\label{subsec:operations_that_preserve_decomposability_into_an_innerly_striclty_convex_and_a_constant_part}

Before giving an overview over the current subsection we need to 
introduce a manner of speaking,
in which we use the extension of semidirect sums $F_1 \sdirsum F_2$
from \prettyref{def:more_general_definition_of_semidirect_sums}.

\begin{definition}
  Let $X_1$ be a nonempty affine subset of $\R^n$.
  We call a function $E_1: X_1 \rarr \R \cup \{\pinfty\}$
  {\bf innerly stricly convex} iff $E_1$ is strictly convex in 
  ${\rm ri}(\dom E_1) = {\rm int}_{{\rm aff}(\dom E_1)}(\dom E_1)$.
  Any semi-direct sum $E = E_1 \sdirsum E_2: X \rarr \R \cup \{\pinfty\}$
  of an innerly strictly convex function 
  $E_1: X_1 \rarr \R \cup \{\pinfty\}$ and some constant function 
  $E_2: X_2 \rarr \R$, defined on some vector subspace $X_2$
  will also be called 
  {\bf decomposition} of $E$ {\bf into an innerly strictly convex part} 
  $E_1$ {\bf and a constant part} $E_2$.
\end{definition}

Roughly speaking we show in this subsection that the following operations
on a proper convex and lower semicontinuous function 
$E: X \rarr \R \cup \{\pinfty\}$ yield a new function which still has 
a decomposition into an innerly strictly convex part and a constant part:
\begin{itemize}
  \item Restrictions $E|_B$	
	to an affine subspace $B \subseteq A \eqdef {\rm aff}(\dom E)$ which intersects ${\rm ri}({\dom E})$,
  \item Forming concatenations $F = E \circ M$ with a linear mapping
	whose range intersects ${\rm ri}({\dom E})$,
\end{itemize}
see 
\prettyref{lem:decompostion_into_periodspace_part_and_strict_convex_part_carries_over_to_affine_subset}
and 
\prettyref{thm:concatenation_stricly_convex_on_ri_with_linear_mapping},
respectively.

\begin{lemma} \label{lem:decompostion_into_periodspace_part_and_strict_convex_part_carries_over_to_affine_subset}
  Let $E: X \rightarrow \R \cup \{+\infty\}$ be 
  a proper, convex and lower semicontinuous
  function on some nonempty affine subset 
  $X \subseteq \R^n$ and let there exist a decomposition 
  ${\rm aff}(\dom E) = \check A \oplus \check P$
  of ${\rm aff}(\dom E) \eqdef A$ into a subspace $\check P$ of $P[E]$
  and an affine subspace $\check A \subseteq \R^n$ such that $E$ is 
  strictly convex on
  ${\rm int}_{\check A}(\dom E|_{\check A})$. Then 
  \begin{enumerate}
    \item \label{enu:subspace_of_periods_space_is_whole_space}
      In fact we even have $\check P = P[E]$.
    \item \label{enu:lem:decompostion_into_periodspace_part_and_strict_convex_part_carries_over_to_affine_subset} 
      Any affine subset $B \subseteq A$ that intersects ${\rm ri}(\dom E)$
      has a decomposition $B= \check B \oplus \check Q$
      into a vector subspace $\check Q \subseteq \check P = P[E]$ 
      and some affine subspace $\check B \subseteq \R^n$ such that
      $E$ is strictly convex on ${\rm int}_{\check B}(\dom E|_{\check B})$.
  \end{enumerate}
  Moreover 
  ${\rm int}_{\check A}(\dom E|_{\check A}) = {\rm ri}(\dom E|_{\check A})$
  and
  ${\rm int}_{\check B}(\dom E|_{\check B}) = {\rm ri}(\dom E|_{\check B})$
  are nonempty sets.

\end{lemma}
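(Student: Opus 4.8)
The plan is to prove the two numbered assertions in turn, using the machinery on periods spaces developed earlier in this subsection, and then to deduce the ``moreover'' part essentially for free from \prettyref{lem:decomposition_into_some_set_and_a_subspace_of_periods_space}.

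First I would address \upref{enu:subspace_of_periods_space_is_whole_space}, that actually $\check P = P[E]$. The inclusion $\check P \subseteq P[E]$ holds by hypothesis, so only $P[E] \subseteq \check P$ needs work. The idea is this: suppose $p \in P[E]$ but $p \notin \check P$. Decompose $p = \check a_p + \check p$ according to $A = \check A \oplus \check P$, where necessarily $\check a_p \neq \zerovec$ lies in the difference space of $\check A$. Pick any $x_0 \in {\rm int}_{\check A}(\dom E|_{\check A})$, which is nonempty by \prettyref{lem:decomposition_into_some_set_and_a_subspace_of_periods_space} (applied with the given decomposition). Since $p$ and $2p$ are periods, $E(x_0) = E(x_0 + p) = E(x_0 + 2p)$; projecting parallel to $\check P$ down to $\check A$ and using that $\check p \in P[E]$ gives $E|_{\check A}(x_0) = E|_{\check A}(x_0 + \check a_p) = E|_{\check A}(x_0 + 2\check a_p)$, i.e.\ $E|_{\check A}$ is constant on the three collinear points $x_0, x_0+\check a_p, x_0+2\check a_p$, all of which lie in $\dom E|_{\check A}$ (the outer ones because $p, 2p$ preserve $\dom E$ and project into $\check A$). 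By the convexity criterion \prettyref{lem:convexity_condition_inside_and_outside}, part \upref{enu:convex_function_constant_on_line_segment_spaned_by_3_points}, $E|_{\check A}$ is then constant on the whole segment $[x_0, x_0 + 2\check a_p]$, whose relative interior meets ${\rm int}_{\check A}(\dom E|_{\check A})$ (one can arrange $x_0$ to be that midpoint). This contradicts strict convexity of $E$ on ${\rm int}_{\check A}(\dom E|_{\check A})$. Hence no such $p$ exists and $\check P = P[E]$.

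Next, for \upref{enu:lem:decompostion_into_periodspace_part_and_strict_convex_part_carries_over_to_affine_subset}, let $B \subseteq A$ be affine with $B \cap {\rm ri}(\dom E) \neq \emptyset$, and let $U_B$ be its difference space. Set $\check Q \defeq U_B \cap \check P$; this is a subspace of $\check P = P[E]$. Choose a subspace $\check V \subseteq U_B$ complementary to $\check Q$ in $U_B$, and let $\check B$ be the affine subspace through a point $b_0 \in B \cap {\rm ri}(\dom E)$ with difference space $\check V$; then $B = \check B \oplus \check Q$. I would first verify ${\rm int}_{\check B}(\dom E|_{\check B}) = {\rm ri}(\dom E|_{\check B})$ and its nonemptiness via \prettyref{thm:restricting_set_operations_to_an_affine_subset} (giving ${\rm aff}(\dom E|_{\check B}) = \check B \cap {\rm aff}(\dom E) = \check B$, using $b_0 \in {\rm ri}(\dom E)$) together with \prettyref{thm:nonemtpy_convex_set_has_nonempty_ri}. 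Then strict convexity of $E$ on ${\rm int}_{\check B}(\dom E|_{\check B})$: take distinct $y_1, y_2$ there and $t \in (0,1)$; I need $E(ty_1 + (1-t)y_2) < tE(y_1) + (1-t)E(y_2)$. Convexity gives ``$\leq$''; equality would mean $E$ is affine (hence, being proper convex lsc, constant up to an affine term) on the segment $[y_1,y_2]$. The segment lies in ${\rm ri}(\dom E|_{\check B}) \subseteq {\rm ri}(\dom E)$; decomposing each point of $\dom E$ parallel to $\check P = P[E]$ into its $\check A$-component, and using that translations along $\check P$ do not change $E$-values, one transports the segment to a segment inside ${\rm int}_{\check A}(\dom E|_{\check A})$ on which $E|_{\check A}$ takes the same affine behaviour — unless the transported segment degenerates to a point, which happens exactly when $y_1 - y_2 \in \check P$; but $y_1 - y_2 \in \check V$ and $\check V \cap \check P = \{\zerovec\}$, so it does not degenerate. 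Strict convexity of $E$ on ${\rm int}_{\check A}(\dom E|_{\check A})$ then yields the contradiction.

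The ``moreover'' clause is then immediate: apply \prettyref{lem:decomposition_into_some_set_and_a_subspace_of_periods_space} to the decomposition ${\rm aff}(\dom E) = \check A \oplus \check P$ to get ${\rm int}_{\check A}(\dom E|_{\check A}) = {\rm ri}(\dom E|_{\check A})$ nonempty, and to the decomposition $B = \check B \oplus \check Q$ (valid since $\check Q \subseteq \check P = P[E]$ by part \upref{enu:subspace_of_periods_space_is_whole_space}) to get ${\rm int}_{\check B}(\dom E|_{\check B}) = {\rm ri}(\dom E|_{\check B})$ nonempty. The main obstacle I anticipate is the careful bookkeeping in part \upref{enu:lem:decompostion_into_periodspace_part_and_strict_convex_part_carries_over_to_affine_subset} when transporting a segment from $\check B$ to $\check A$ via the parallel-to-$\check P$ projection: one must check both that the image lands in ${\rm int}_{\check A}(\dom E|_{\check A})$ and that the $E$-values along it genuinely reproduce the (putatively) affine profile on the original segment — this is where \prettyref{thm:restricting_function_to_equiv_affine_sets_give_ess_the_same_function} and the period-invariance of $E$ do the real work, and where a sign error or a missed degeneracy case would break the argument.
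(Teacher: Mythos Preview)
Your argument is correct. Part \upref{enu:subspace_of_periods_space_is_whole_space} is essentially the paper's proof: both show that a putative $p\in P[E]\setminus\check P$ produces a nonzero period direction lying in the difference space of $\check A$, hence a nondegenerate segment through a point of ${\rm int}_{\check A}(\dom E|_{\check A})$ on which $E$ is constant, contradicting strict convexity. (Your parenthetical ``one can arrange $x_0$ to be that midpoint'' is slightly off --- $x_0$ is an endpoint of $[x_0,x_0+2\check a_p]$ --- but since $x_0$ lies in the open set ${\rm int}_{\check A}(\dom E|_{\check A})$, a small initial piece of the segment does too, and that is all you need.)

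For part \upref{enu:lem:decompostion_into_periodspace_part_and_strict_convex_part_carries_over_to_affine_subset} your route differs from the paper's. You argue pointwise: given a hypothetical segment $[y_1,y_2]\subseteq {\rm int}_{\check B}(\dom E|_{\check B})$ on which $E$ is affine, you project it parallel to $\check P$ onto $\check A$; the projection preserves $E$-values, sends ${\rm ri}(\dom E)$ into ${\rm int}_{\check A}(\dom E|_{\check A})$ (via ${\rm ri}(\dom E)={\rm int}_{\check A}(\dom E|_{\check A})\oplus\check P$), and is injective on $\check B$ because $\check V\cap\check P=\{\zerovec\}$ --- so you land on a nondegenerate affine segment in ${\rm int}_{\check A}(\dom E|_{\check A})$, contradicting strict convexity there. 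The paper instead works structurally: it enlarges $\check B$ to an affine $\tilde A\supseteq\check B$ with $A=\tilde A\oplus\check P$, invokes \prettyref{thm:restricting_function_to_equiv_affine_sets_give_ess_the_same_function} to transfer strict convexity wholesale from ${\rm int}_{\check A}(\dom E|_{\check A})$ to ${\rm int}_{\tilde A}(\dom E|_{\tilde A})$, and then verifies ${\rm int}_{\check B}(\dom E|_{\check B})\subseteq{\rm int}_{\tilde A}(\dom E|_{\tilde A})$ via \prettyref{thm:restricting_set_operations_to_an_affine_subset}. Your approach is more elementary and avoids the auxiliary construction of $\tilde A$; the paper's approach packages the ``transport'' step once and for all in the equivalence theorem, which keeps the bookkeeping (that you rightly flag as the main obstacle) out of sight. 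Both the choice $\check Q=U_B\cap\check P$ and the ``moreover'' clause are handled the same way in the paper as in your sketch.
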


\begin{proof}
  Since $E$ is proper and convex we have
  $\check A \not = \emptyset$ and 
  $
  {\rm int}_{\check A}(\dom E|_{\check A})
  =
  {\rm ri}(\dom E|_{\check A})
  \not =
  \emptyset
  $
  by Lemma
  \ref{lem:decomposition_into_some_set_and_a_subspace_of_periods_space}.
  Due to $B \cap {\rm ri}(\dom E) \not = \emptyset$ the function $E|_B$ 
  is still proper and convex;
  so the same Lemma gives also
  $\check B \not = \emptyset$ and 
  $
  {\rm int}_{\check B}(\dom E|_{\check B})
  =
  {\rm ri}(\dom E|_{\check B})
  \not =
  \emptyset
  $.
  \\[0.7ex]
  \upref{enu:subspace_of_periods_space_is_whole_space} Since $\check P$ is 
  a subspace of the periods space $P[E]$ we clearly have 
  $\check P \subseteq P[E]$. The reverse inclusion $P[E] \subseteq \check P$
  also holds true:
  Let $p \in P[E]$ and chose any 
  $a_0 \in {\rm int}_{\check A}(\dom E|_{\check A})$ and think of it as 
  new origin. Since $E(a_0+p) = E(a_0) < +\infty$ we have
  $a_0 + p \in \dom E \subseteq {\rm aff}(\dom E) =\check A \oplus \check P$,
  so that $a_0 + p = \check a + \check p$, for some $\check a \in \check A$
  and $\check p \in \check P$. Hence 
  $\check a - a_0 = p - \check p \in P[E]$.
  The affine combination
  $a_0 + \lambda (\check a - a_0)$ still belongs to $\check A$ for all 
  $\lambda \in \R$ and hence even to ${\rm int}_{\check A}(\dom E|_{\check A})$ for
  all sufficiently small chosen $\lambda > 0$.
  Choose such a $\lambda > 0 $ and consider the possibly degenerated 
  line segment 
  $
    {\rm co}(a_0, a_0 + \lambda (\check a - a_0)) 
    \subseteq 
    {\rm int}_{\check A}(\dom E|_{\check A})
  $.
  On the one hand $E$ is strictly convex on the latter set and hence on our 
  line segment. On the other hand $\check a - a_0 \in P[E]$
  means that $E$ is constant on this line segment.
  Both can be true only if our line segment is degenerated to one single 
  point, i.e. if $a_0 = a_0 + \lambda (\check a - a_0)$.
  This gives $\zerovec = \check a - a_0 = p - \check p$, 
  so that indeed $p = \check p \in \check P$.
  \\[0.7ex]
  \upref{enu:lem:decompostion_into_periodspace_part_and_strict_convex_part_carries_over_to_affine_subset}
  Let 
  $
    b_0 \in B \cap {\rm ri}(\dom E) 
  = 
    {\rm ri}(\dom E \cap B)
  =
    {\rm ri}(\dom E|_{B})
  $, where we used Theorem 
  \ref{thm:restricting_set_operations_to_an_affine_subset}.
  Without loss of generality we may assume $b_0 = \zerovec$;
  otherwise we could replace $E$ by $E(\cdot - b_0)$ 
  without changing the truth value of the other assumptions and 
  assertions of the lemma.
  By Theorem
  \ref{thm:periods_space_of_restricted_function} and the already proven part   
  \upref{enu:subspace_of_periods_space_is_whole_space} we then have
  \[
    \check Q \defeq P[E|_B] = P[E] \cap B \subseteq P[E] = \check P.
  \]
  Choose now firstly any subspace $\check B$ of $B$ with 
  $B = \check B \oplus \check Q$,
  then some subspace $Q'$ of $P[E] = \check P$ with 
  $\check P = \check Q \oplus Q'$ 
  and finally some subspace $B'$ of $A$ with 
  \[
    A 
  = 
    B' \oplus (\check B \oplus \check Q \oplus Q')
  =
    \underbrace{B' \oplus \check B}_{\eqdef\tilde A}
      \oplus 
      \underbrace{\check Q \oplus Q'}_{= \check P}.
  \]
  By Theorem 
  \ref{thm:restricting_function_to_equiv_affine_sets_give_ess_the_same_function}
  we know that $\check E \defeq E|_{\check A}$ and 
  $\tilde E \defeq E|_{\tilde A}$ are the very same mapping, except 
  for a bijective affine transformation 
  $\tilde \alpha : \check A \rightarrow \tilde A$
  between their domains of definitions, which links these functions via 
  $\check E = \tilde E \circ \tilde \alpha$.
  Consequently $\check E$ is strictly convex on a subset 
  $\check S \subseteq \check A$ if and only if $\tilde E$
  is strictly convex on $\tilde \alpha[\check S] \eqdef \tilde S$.
  Choosing
  $
    \check S 
    \defeq 
    {\rm int}_{\check A}(\dom \check E)
    =
    {\rm int}_{\check A}(\dom E|_{\check A})
  $
  we see that $\tilde E$ is strictly convex on 
  $
    \tilde \alpha [{\rm int}_{\check A}(\dom \check E)]
    =
    {\rm int}_{\tilde A}(\tilde \alpha [\dom \check E])
    =
    {\rm int}_{\tilde A}(\dom \tilde E)
    =
    {\rm int}_{\tilde A}(\dom E|_{\tilde A})
  $.
  So $B = \check B \oplus \check Q$ would give the needed decomposition, if 
  $
    {\rm int}_{\check B}(\dom E|_{\check B})
    \subseteq 
    {\rm int}_{\tilde A}(\dom E|_{\tilde A})
  $
  can be verified.
  Due to $\check B \subseteq \tilde A$ it suffices to show 
  $
    {\rm int}_{\check B}(\dom E|_{\check B})
    =
    {\rm ri}(\dom E) \cap \check B
  $
  and 
  $
    {\rm int}_{\tilde A}(\dom E|_{\tilde A})
    =
    {\rm ri}(\dom E) \cap \tilde A
  $.
  In order to prove the first equation we note that 
  $\check B$ intersects ${\rm ri}(\dom E)$ in $b_0 = \zerovec$
  so that equation \eqref{eq:aff_restricted_to_affine_subset}
  in Theorem 
  \ref{thm:restricting_set_operations_to_an_affine_subset}
  gives 
  $
    {\rm aff}(\dom E|_{\check B})
    =
    {\rm aff}(\dom E \cap \check B)
    =
    {\rm aff}(\dom E) \cap \check B
    =
    \check B
  $.
  Therefore and by equation 
  \eqref{eq:ri_restricted_to_affine_subset} in Theorem
  \ref{thm:restricting_set_operations_to_an_affine_subset}
  we indeed get 
  $
    {\rm int}_{\check B}(\dom E|_{\check B})
    =
    {\rm ri}(\dom E|_{\check B})
    =
    {\rm ri}(\dom E \cap \check B)
    =
    {\rm ri}(\dom E) \cap \check B
  $.
  Just analogously we obtain 
  $
    {\rm int}_{\tilde A}(\dom E|_{\tilde A})
    =
    {\rm ri}(\dom E) \cap \tilde A
  $.
\end{proof}

\begin{theorem} \label{thm:concatenation_stricly_convex_on_ri_with_linear_mapping}
  Let $E: \R^n \rightarrow \R \cup \{+\infty\}$ be 
  proper,
  convex as well as lower semicontinuous
  and let
  $M:\R^m \rightarrow \R^n$ be a linear mapping whose range $\mathcal{R}(M)$
  intersects ${\rm ri}(\dom E)$. 
  Assume further that there exists a decomposition 
  \begin{gather*}
    {\rm aff}(\dom E) = \check A_E \oplus \check P_E
  \end{gather*}
   of 
  ${\rm aff}(\dom E)$ into a subspace $\check P_E$ of 
  $P[E]$ and an 
  affine subspace
  $\check A_E \subseteq \R^n$ 
  such that $E$ is strictly convex on 
  $
    {\rm int}_{\check A_E}(\dom E|_{\check A_E})
  $.
  Then
  the function 
  $F \defeq E \circ M : \R^m \rightarrow \R \cup \{+\infty\}$
  is 	again 
  proper, convex and lower semicontinuous
  and
  there exists a decomposition 
  \begin{gather*}
    {\rm aff}(\dom F) = \check A_F \oplus \check P_F
  \end{gather*}
    of 
  ${\rm aff}(\dom F)$ into a subspace $\check P_F$ of $P[F]$ and 
  an 
  affine 
  subspace $\check A_F \subseteq \R^m$ such that $F$ is strictly convex on 
  $
    {\rm int}_{\check A_F}(\dom F|_{\check A_F}) 
  $.
\end{theorem}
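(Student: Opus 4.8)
The plan is to decompose the task into three ingredients, each of which is already available from earlier in the chapter. First I would deal with the ``soft'' properties: since $M$ is linear, convexity of $E$ transfers to $F = E \circ M$; properness follows because $\mathcal{R}(M)$ meets ${\rm ri}(\dom E) \subseteq \dom E$, so some $Mx$ lies in $\dom E$ and hence $x \in \dom F$; lower semicontinuity of $F$ follows because $M$ is continuous and a composition of a continuous map with an l.s.c.\ function is l.s.c.\ (the theorem in \prettyref{subsec:known_properties_of_lsc_functions_revisited}, applied with $[\minfty,\pinfty]$ replaced by the relevant codomain, or directly since $F^-[(\alpha,\pinfty]] = M^-[E^-[(\alpha,\pinfty]]]$ is open). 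So the bulk of the work is producing the claimed decomposition of ${\rm aff}(\dom F)$.

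For that, the strategy mirrors the proof of \prettyref{thm:concatenation_ess_smooth_with_linear_mapping}. I would first restrict $E$ to $\mathcal{R}(M)$. Because $\mathcal{R}(M) \cap {\rm ri}(\dom E) \neq \emptyset$, I can apply part \upref{enu:lem:decompostion_into_periodspace_part_and_strict_convex_part_carries_over_to_affine_subset} of \prettyref{lem:decompostion_into_periodspace_part_and_strict_convex_part_carries_over_to_affine_subset} with $B \defeq \mathcal{R}(M) \cap {\rm aff}(\dom E) = {\rm aff}(\mathcal{R}(M) \cap \dom E) = {\rm aff}(\dom \check E)$, where $\check E \defeq E|_{\mathcal{R}(M)}$: this yields a decomposition $B = \check B \oplus \check Q$ with $\check Q \subseteq P[E]$ a subspace and $\check B$ an affine subspace such that $\check E = E|_B$ is strictly convex on ${\rm int}_{\check B}(\dom \check E|_{\check B}) = {\rm int}_{\check B}(\dom E|_{\check B})$. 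Next I would pull this back along $\check M \defeq M|_{\mathcal{R}(M^*)}$, which is a bijective linear map $\mathcal{R}(M^*) \rarr \mathcal{R}(M)$, so that $\check F \defeq F|_{\mathcal{R}(M^*)}$ equals $\check E \circ \check M$ and is the same function as $\check E$ up to this linear change of coordinates; hence $\check F$ has the decomposition ${\rm aff}(\dom \check F) = \check M^{-1}[\check B] \oplus \check M^{-1}[\check Q]$ with $\check M^{-1}[\check Q] \subseteq P[\check F]$ and $\check F$ strictly convex on ${\rm int}_{\check M^{-1}[\check B]}(\dom \check F|_{\check M^{-1}[\check B]})$ (using that both strict convexity on a set and the periods-space relation are preserved by a bijective affine transformation of the domain, plus \prettyref{thm:restricting_set_operations_to_an_affine_subset} to commute ${\rm aff}$ with $\check M^{-1}$).

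Finally I would add the kernel of $M$ back in. Set $\check A_F \defeq \check M^{-1}[\check B]$ and $\check P_F \defeq \check M^{-1}[\check Q] \oplus \mathcal{N}(M)$. Every period of $F$ lying in $\mathcal{N}(M)$ is clear, and $\check M^{-1}[\check Q] \subseteq P[\check F] = P[F] \cap \mathcal{R}(M^*)$ by \prettyref{thm:periods_space_of_restricted_function} (valid since $F \in \Gamma_0$ and $\mathcal{R}(M^*) \cap \dom F \neq \emptyset$), so $\check P_F \subseteq P[F]$; one checks $\check P_F$ is a subspace since $\mathcal{N}(M) \cap \mathcal{R}(M^*) = \{\zerovec\}$. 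By \prettyref{thm:operations_that_interchange_with_direct_sum}, ${\rm aff}(\dom F) = {\rm aff}(\dom \check F \oplus \mathcal{N}(M)) = {\rm aff}(\dom \check F) \oplus \mathcal{N}(M) = \check A_F \oplus \check P_F$. It remains to see that $F$ is strictly convex on ${\rm int}_{\check A_F}(\dom F|_{\check A_F})$; but $F|_{\check A_F}$ agrees with $\check F|_{\check A_F}$ (since $\check A_F \subseteq \mathcal{R}(M^*)$), and ${\rm int}_{\check A_F}(\dom F|_{\check A_F})$ is exactly the set on which we already established strict convexity. The main obstacle I anticipate is not any single hard estimate but the careful bookkeeping across the two coordinate changes---keeping straight which ``${\rm int}_{(\cdot)}$'' is relative to which affine subspace and verifying at each stage that the hypotheses of \prettyref{lem:decompostion_into_periodspace_part_and_strict_convex_part_carries_over_to_affine_subset} and \prettyref{thm:periods_space_of_restricted_function} (properness, l.s.c., nonempty intersection with the relative interior) genuinely persist.
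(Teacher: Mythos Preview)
Your proposal is correct and follows essentially the same route as the paper: both restrict $E$ to $\mathcal{R}(M)$, apply \prettyref{lem:decompostion_into_periodspace_part_and_strict_convex_part_carries_over_to_affine_subset} to obtain a decomposition on the image side, pull it back through the linear isomorphism $\check M = M|_{\mathcal{R}(M^*)}$, and then enlarge the period-space part by adding $\mathcal{N}(M)$. The only noteworthy difference is cosmetic: you invoke \prettyref{thm:periods_space_of_restricted_function} to pass from $P[\check F]$ to $P[F]\cap\mathcal{R}(M^*)$, whereas the paper verifies $Q_F \subseteq P[F]$ by a direct elementwise computation; both arguments are valid and of comparable length.
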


\todo{Checken:{\bf HAT SICH ER-LEDIGT?! WICHTIG}: Noch Beweis, dasz 1) jeweils int = ri ist 
dazu und dasz 2) ri hier auch nicht leer ist}

\begin{remark}
  Note that Lemma 
  \ref{lem:decomposition_into_some_set_and_a_subspace_of_periods_space}
  implies that all sets that occur in the above theorem
  are nonempty.
\end{remark}

\begin{proof}[Proof of \prettyref{thm:concatenation_stricly_convex_on_ri_with_linear_mapping}]
  The mapping $F = E \circ M$ is surely again convex and lower 
  semicontinuous. Due to 
  $
    \mathcal{R}(M) \cap \dom E 
    \supseteq 
    \mathcal{R}(M) \cap {\rm ri}(\dom E)
    \neq \emptyset
  $
  it is also again proper.
  Since $\R^m = \mathcal{R}(M^*) \oplus \mathcal{N}(M)$ and since clearly
  $\mathcal{N}(M) \subseteq P[F]$ we have 
  \begin{gather*}
    \dom F = \dom F|_{\mathcal{R}(M^*)} \oplus \mathcal{N}(M).
  \end{gather*}
  It suffices to prove that there is a decomposition 
  \begin{gather} \label{eq:decomposition_of_affinely_restricted_dom}
    {\rm aff}(\dom F|_{\mathcal{R}(M^*)})
    =
    \check A_F \oplus Q_F    
  \end{gather}
  with a subspace $Q_F \subseteq P[F|_{\mathcal{R}(M^*)}]$ and some 
  affine subset $\check A_F \subseteq \R^m$,
  such that $F$ is strictly convex on 
  ${\rm int}_{\check A_F}(\dom F|_{\check A_F})$,
  since this decomposition then yields, by virtue of equation
  \eqref{eq:aff_of_direct_sum_with_affine_subset}
  in Theorem
  \ref{thm:operations_that_interchange_with_direct_sum},
  the needed decomposition
  \begin{align*}
    {\rm aff}(\dom F)
    &= 
    {\rm aff}(\dom F|_{\mathcal{R}(M^*)} \oplus \mathcal{N}(M))
  \\&=
    {\rm aff}(\dom F|_{\mathcal{R}(M^*)}) \oplus \mathcal{N}(M)
  \\&=
    \check A_F \oplus 
      \underbrace{Q_F \oplus \mathcal{N}(M)}_{\eqdef\check P_F};
  \end{align*}
  note herein that not only $\mathcal{N}(M)$ is a subspace of $P[F]$
  but also $Q_F$:
  Let $q \in Q_F \subseteq P[F|_{\mathcal{R}(M^*)}]$ and write every 
  $x' \in {\rm aff}(\dom F)$ in the form 
  $x' = a' + q' + n'$ with $a' \in \check A_F, q' \in Q_F$
  and $n' \in \mathcal{N}(M) \subseteq P[F]$.
  Since $a' + q' \in {\rm aff}(\dom F|_{\mathcal{R}(M^*)})$
  we then indeed obtain 
  \begin{align*}
    F(x'+q) 
    &= 
    F(a' + q' + q + n') 
    =
    F(a' + q' + q)
    \\
    &=
    F(a' + q')
    =
    F(a' + q' + n')
    =
    F(x')
  \end{align*}
  for every $x' \in {\rm aff}(\dom F)$,
  i.e. $q \in P[F]$.
  In order to prove that a decomposition as in 
  \eqref{eq:decomposition_of_affinely_restricted_dom}
  really exists we consider the restricted functions 
  $\tilde F \defeq F|_{\mathcal{R}(M^*)}$, $\tilde E \defeq E|_{\mathcal{R}(M)}$
  and $\tilde M \defeq M|_{\mathcal{R}(M^*)}$.
  The equation $\tilde F = \tilde E \circ \tilde M$ then elucidates
  that $\tilde F$ and $\tilde E$ are the very same mapping 
  -- except for the linear homeomorphism 
  $\tilde M : \mathcal{R}(M^*) \rightarrow \mathcal{R}(M)$
  between their domains of definition.
  Hence our task to prove that there is a decomposition as in 
  \eqref{eq:decomposition_of_affinely_restricted_dom}
  is equivalent to prove that there exists a decomposition 
  \begin{gather*}
    {\rm aff}(\dom E|_{\mathcal{R}(M)})
    =
    \check B_E \oplus Q_E
  \end{gather*}
  of ${\rm aff}(\dom E|_{\mathcal{R}(M)})$ into 
  a subspace $Q_E \subseteq P[E|_{\mathcal{R}(M)}]$ and
  some affine subset $\check B_E \subseteq \R^n$ 
  such that $E$ is strictly convex on 
  ${\rm int}_{\check B_E}(\dom E|_{\check B_E})$.
\todo{checken: Ist hier wirklich ri = int $\check A_E$?}
  To this end we set 
  \begin{align*}
    B \defeq{}& {\rm aff}(\dom E|_{\mathcal{R}(M)})
       = {\rm aff}(\mathcal{R}(M) \cap \dom E)
       = \mathcal{R}(M) \cap {\rm aff}(\dom E)
       \subseteq    
       {\rm aff}(\dom E)
       \eqdef A,
  \end{align*}
  where we have used again equation
  \eqref{eq:aff_restricted_to_affine_subset}.
  The decomposition $A = \check A_E \oplus \check P_E$ fulfills the
  assumption of Lemma 
  \ref{lem:decompostion_into_periodspace_part_and_strict_convex_part_carries_over_to_affine_subset}.
  Part 
  \upref{enu:lem:decompostion_into_periodspace_part_and_strict_convex_part_carries_over_to_affine_subset}
  of this lemma gives now a decomposition 
  \begin{gather*}
    {\rm aff}(\dom E|_{\mathcal{R}(M)}) = \check B \oplus \check Q,
  \end{gather*}
  where $\check B \subseteq \R^n$ is an affine subset such that 
  $E$ is strictly convex on 
  ${\rm int}_{\check B_E}(\dom E|_{\check B_E})$
  and where $\check Q \subseteq \check P_E \subseteq P[E]$.
  Setting $\check B_E \defeq B$ and $Q_E \defeq \check Q$ we are done,
  since the demanded $\check Q \subseteq P[E|_{\mathcal{R}(M)}]$
  really holds true:
  Due to the banal 
  $
    b + \check Q 
    \subseteq 
    \check B \oplus \check Q 
    \subseteq 
    {\rm aff}(\dom E|_{\mathcal{R}(M)})
    =
    B
  $
  for any $b \in \check B \subseteq B$
  we see that $\check Q$ is a subspace of $B$'s difference space 
  $B - b \eqdef U$. Thereby and by 
  \prettyref{thm:periods_space_of_restricted_function}
  we now indeed obtain
  $\check Q = \check Q \cap U \subseteq P[E] \cap U \subseteq P[E|_B]$.
\end{proof}

\subsection[Existence and direction of $\argmin(F+G)$ for certain classes of functions]
  {Existence and direction of \bm{$\argmin(F+G)$} for certain
   classes of functions}
\label{subsec:existence_and_direction_of_argmin_of_sum}

The next lemma gives a necessary criterion in order to ensure
that a function of the form $F+G$ has a minimizer.
The core of the proof consists in showing that the convex function 
$F+G$ has a bounded nonempty level set, i.e. is coercive.
The inequality from Lemma
\ref{lem:same_blow_up_factor_gets_all_attachted_affines_strangers_out_of_ball}
helps in part \upref{enu:intersection_of_lower_level_sets_bounded}


\begin{lemma} \label{lem:levelsets_and_existence_of_minimizer_of_sum}
  Let $\R^n$ be decomposed as direct sums 
  $\R^n = U_1  \oplus U_2$ and $\R^n = V_1  \oplus V_2$ 
  of vector subspaces $U_1, U_2$ and $V_1, V_2$, respectively. 
  Let $\ F , G \in \Gamma_0(\R^n)$ be functions which inhere
  the translation invariances 
  \begin{align*}
     F (x) &=  F (x+u_2),     \\
    G(x)    &=    G(x+v_2)
  \end{align*}
  for all $x\in\R^n$, $u_2\in U_2$ and $v_2\in V_2$. 
  Then the following holds true for levels $\alpha, \beta \in \R$:
  \begin{enumerate}
    \item \label{enu:intersection_of_lower_level_sets_empty_or_unbounded}
      ${\rm lev}_\alpha ( F ) \cap{\rm lev}_\beta(G)$ is empty or unbounded, if 
      $U_2 \cap V_2 \supset \{\zerovec\}$.
    \item \label{enu:intersection_of_lower_level_sets_bounded}
      ${\rm lev}_\alpha ( F ) \cap{\rm lev}_\beta(G)$ is bounded (possibly empty), if
      $U_2 \cap V_2 = \{\zerovec\}$,
      and
      ${\rm lev}_\alpha ( F |_{U_1})$, ${\rm lev}_\beta(G|_{V_1})$ are bounded.
    \item \label{enu:sum_takes_it_minimum}
      $ F  + G$ takes its minimum in $\R$, if 
      $\dom F \cap \dom G \not = \emptyset$,
      $U_2 \cap V_2 = \{\zerovec\}$
      and
      ${\rm lev}_{\alpha} ( F |_{U_1})$, ${\rm lev}_{\beta}(G|_{V_1})$ 
      are nonempty and bounded.
      Moreover the set $\argmin(F+G)$ of minimizers is compact in this
      case.
  \end{enumerate}
\end{lemma}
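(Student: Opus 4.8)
The plan is to first record how the translation invariances pin down the shape of $F$ and $G$, and then dispatch the three claims in order. Writing an arbitrary $x \in \R^n$ as $x = x_1 + x_2$ with $x_1 \in U_1$ and $x_2 \in U_2$, the invariance of $F$ applied with $u_2 = -x_2$ gives $F(x) = F(x_1) = F|_{U_1}(\pi_{U_1,U_2}(x))$; hence $F = F|_{U_1} \sdirsum 0_{U_2}$ and ${\rm lev}_\gamma(F) = \pi_{U_1,U_2}^{-}[{\rm lev}_\gamma(F|_{U_1})]$ for every $\gamma \in \R$, and analogously $G = G|_{V_1} \sdirsum 0_{V_2}$, ${\rm lev}_\delta(G) = \pi_{V_1,V_2}^{-}[{\rm lev}_\delta(G|_{V_1})]$. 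For part \upref{enu:intersection_of_lower_level_sets_empty_or_unbounded}: if ${\rm lev}_\alpha(F) \cap {\rm lev}_\beta(G)$ is nonempty, pick $z$ in it and a nonzero $w \in U_2 \cap V_2$; since $tw \in U_2$ and $tw \in V_2$ for every $t \in \R$, we get $F(z+tw) = F(z) \le \alpha$ and $G(z+tw) = G(z) \le \beta$, so the whole line $z + \R w$ lies in the intersection, which is therefore unbounded.

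For part \upref{enu:intersection_of_lower_level_sets_bounded} I would argue as hinted in the introduction, via the inequality of \prettyref{lem:same_blow_up_factor_gets_all_attachted_affines_strangers_out_of_ball}. If the intersection is empty there is nothing to show, so let $z$ belong to it. Then $\pi_{U_1,U_2}(z) \in {\rm lev}_\alpha(F|_{U_1})$ and $\pi_{V_1,V_2}(z) \in {\rm lev}_\beta(G|_{V_1})$, both bounded sets, so there are $R_1, R_2 > 0$, not depending on $z$, with $\|\pi_{U_1,U_2}(z)\| \le R_1$ and $\|\pi_{V_1,V_2}(z)\| \le R_2$. Decomposing $z = z_1 + z_2 = w_1 + w_2$ with $z_1 \in U_1$, $z_2 \in U_2$, $w_1 \in V_1$, $w_2 \in V_2$ gives $z_2 + (-w_2) = w_1 - z_1$ with $z_2 \in U_2$ and $-w_2 \in V_2$; as $U_2 \cap V_2 = \{\zerovec\}$, the cited inequality provides a $C > 0$ with $\|z_2\| \le C\|z_2 - w_2\| = C\|w_1 - z_1\| \le C(R_1 + R_2)$, whence $\|z\| \le \|z_1\| + \|z_2\| \le R_1 + C(R_1+R_2)$. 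Thus the intersection sits inside a fixed ball. (Equivalently, part \upref{enu:where_duo_of_proj_is_injective} of \prettyref{lem:nullspace_of_projections} together with \prettyref{thm:linear_mapping_coercive_iff_injective} shows the linear map $z \mapsto (\pi_{U_1,U_2}(z),\pi_{V_1,V_2}(z))$, having nullspace $U_2 \cap V_2 = \{\zerovec\}$, is normcoercive, and it carries the intersection into the bounded product ${\rm lev}_\alpha(F|_{U_1}) \times {\rm lev}_\beta(G|_{V_1})$.)

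For part \upref{enu:sum_takes_it_minimum} I would first note that $F|_{U_1} \in \Gamma_0(U_1)$ — a restriction to an affine subspace of a convex, lower semicontinuous function, proper since ${\rm lev}_\alpha(F|_{U_1}) \ne \emptyset$ — and that it has the nonempty bounded sublevel set ${\rm lev}_\alpha(F|_{U_1})$, hence is coercive and, having an affine minorant, also bounded below, in particular locally bounded below; likewise for $G|_{V_1}$. Then \prettyref{thm:sum_coercive_on_certain_subspaces}, applied with $X_1 = U_1$, $X_2 = U_2$, $Y_1 = V_1$, $Y_2 = V_2$, $F_1 = F|_{U_1}$, $G_1 = G|_{V_1}$, $F_2 = 0_{U_2}$, $G_2 = 0_{V_2}$ (so that $F = F_1 \sdirsum F_2$ and $G = G_1 \sdirsum G_2$), and using $X_2 \cap Y_2 = U_2 \cap V_2 = \{\zerovec\}$ so that $Z_1 = \R^n$ is complementary to it, yields that $F + G$ is coercive on $\R^n$. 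Since $\dom(F+G) = \dom F \cap \dom G \ne \emptyset$, the function $F + G$ lies in $\Gamma_0(\R^n)$; fixing $x_0 \in \dom F \cap \dom G$ and $\gamma_0 := (F+G)(x_0) \in \R$, the set ${\rm lev}_{\gamma_0}(F+G)$ is nonempty, closed by lower semicontinuity and bounded by coercivity, hence compact, so $F+G$ attains its infimum there by \prettyref{thm:lsc_function_takes_minimum}; this infimum equals $\inf_{\R^n}(F+G)$ and, being attained at a point of $\dom(F+G)$, is a real number. Finally $\argmin(F+G) = {\rm lev}_{\min(F+G)}(F+G)$ is a closed subset of the bounded set ${\rm lev}_{\gamma_0}(F+G)$ and therefore compact.

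The only ingredient not already present in the excerpt is the classical fact that a proper closed convex function with a nonempty bounded sublevel set is coercive, which I would cite from \cite{Rockafellar1970}; with that in hand the remaining difficulty is mere bookkeeping — verifying the semi-direct-sum reduction and matching the present hypotheses to those of \prettyref{thm:sum_coercive_on_certain_subspaces} — so I expect no substantial obstacle beyond assembling the pieces.
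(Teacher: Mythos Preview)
Your argument is correct. Parts \upref{enu:intersection_of_lower_level_sets_empty_or_unbounded} and \upref{enu:intersection_of_lower_level_sets_bounded} match the paper's proof essentially verbatim (you phrase \upref{enu:intersection_of_lower_level_sets_bounded} as a direct bound rather than a contradiction, but the core use of \prettyref{lem:same_blow_up_factor_gets_all_attachted_affines_strangers_out_of_ball} is identical).

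For part \upref{enu:sum_takes_it_minimum} you take a genuinely different route. The paper stays self-contained: it uses part \upref{enu:intersection_of_lower_level_sets_bounded} of the very same lemma to show that some level set ${\rm lev}_{\tilde\alpha+\tilde\beta}(F+G)$ is bounded, via the inclusion ${\rm lev}_{\tilde\alpha+\tilde\beta}(F+G)\subseteq{\rm lev}_{\tilde\alpha+\tilde\beta}(F)\cap{\rm lev}_{\tilde\alpha+\tilde\beta}(G)$ after normalizing $F,G\ge 0$, and then invokes \cite[Corollary 8.7.1]{Rockafellar1970} only to propagate boundedness from one level set of $f,g$ to all of them. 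You instead reduce to \prettyref{thm:sum_coercive_on_certain_subspaces}, verifying its hypotheses for $F=F|_{U_1}\sdirsum 0_{U_2}$ and $G=G|_{V_1}\sdirsum 0_{V_2}$, and conclude coercivity of $F+G$ on $\R^n$ directly. Your approach is cleaner conceptually and avoids the ad hoc nonnegativity normalization; the paper's approach has the virtue of making the lemma independent of Chapter~\ref{chap:coercivity_of_sum_of_functions}, so that part \upref{enu:sum_takes_it_minimum} is visibly a consequence of part \upref{enu:intersection_of_lower_level_sets_bounded}. Either way the external input is the same Rockafellar fact you flag at the end.
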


\begin{proof}
  We use the abbreviations $ f  \defeq  F |_{U_1}$ and $g\defeq G|_{V_1}$.

  \ref{enu:intersection_of_lower_level_sets_empty_or_unbounded}) 
  Since in case of ${\rm lev}_\alpha ( F ) \cap{\rm lev}_\beta(G) = \emptyset$ there is nothing to show, 
  we assume that there is a $z_1 \in {\rm lev}_\alpha ( F ) \cap{\rm lev}_\beta(G)$. Choose any 
  $z_2 \in U_2 \cap V_2$ with $z_2 \not = \zerovec$. Due to
  $z_1 + \lambda z_2 \in {\rm lev}_\alpha ( F ) \cap{\rm lev}_\beta(G)$ for all $\lambda \in \R$,
  a whole affine line is contained in ${\rm lev}_\alpha ( F ) \cap{\rm lev}_\beta(G)$.
  So the latter set is unbounded.

  \ref{enu:intersection_of_lower_level_sets_bounded})
  Let $U_2 \cap V_2 = \{\zerovec\}$ and let ${\rm lev}_\alpha ( f )$, ${\rm lev}_\beta(g)$ be bounded.
  If the set ${\rm lev}_\alpha ( F ) \cap{\rm lev}_\beta(G)$ was unbounded, 
  it would contain an unbounded sequence of points 
  $z^{(k)}$, $k\in \N$.
  Due to ${\rm lev}_\alpha ( F ) = {\rm lev}_\alpha ( f ) \oplus U_2$ 
  and    ${\rm lev}_\beta(G)      = {\rm lev}_\beta  (g)    \oplus V_2$
  the $z^{(k)}$ could be written in the form
  $z^{(k)} = u_1^{(k)} + u_2^{(k)} = v_1^{(k)} + v_2^{(k)}$ 
  with first components
  $u_1^{(k)} \in {\rm lev}_\alpha ( f )$, 
  $v_1^{(k)} \in {\rm lev}_\beta  (g)   $,
  forming bounded sequences,
  and second components
  $u_2^{(k)} \in U_2$,
  $v_2^{(k)} \in V_2$,
  forming unbounded sequences.
  Lemma \ref{lem:same_blow_up_factor_gets_all_attachted_affines_strangers_out_of_ball}
  ensures that there is a constant $C > 0$ such that
  $\|u_2^{(k)}- v_2^{(k)}\| \geq C^{-1} \|u_2^{(k)}\|$
  for all $k \in \N$. The unboundedness of the sequence 
  $(\|u_2^{(k)}\|)_{k\in \N}$ 
  along with the boundedness of the sequences 
  $(\|u_1^{(k)} \|_2)_{k\in \N}$ and  $(\|v_1^{(k)} \|_2)_{k\in \N}$
  would therefore result in 
  \begin{alignat*}{1}
    0 
    &= 
    \|z^{(k)} - z^{(k)}\|_2
    = 
    \|u_1^{(k)} - v_1^{(k)} + u_2^{(k)} - v_2^{(k)}\|_2
    \geq 
    \|u_2^{(k)} - v_2^{(k)} \|_2 - \|u_1^{(k)} -v_1^{(k)} \|_2
    \\
    &\geq
    C^{-1}\|u_2^{(k)}\|_2  - (\|u_1^{(k)} \|_2 + \|v_1^{(k)} \|_2)
    \rightarrow + \infty
  \end{alignat*}
  -- a contradiction.
  \\
  \ref{enu:sum_takes_it_minimum})
  Since the level sets ${\rm lev}_{\alpha}( f )$ and
  ${\rm lev}_{\beta}(g)$ of 
  the proper, convex and lower semicontinuous functions $f,g$  
  are nonempty and bounded we know that
  all level sets of $f$ and $g$ are bounded, cf. 
  \cite[Corollary 8.7.1]{Rockafellar1970}.
  Since $\dom F \cap \dom G \not = \emptyset$ there are levels 
  $\tilde\alpha, \tilde\beta \in \R$ with 
  ${\rm lev}_{\tilde\alpha}( F ) \cap {\rm lev}_{\tilde\beta}(G) \not = \emptyset$.
  The bounded sets ${\rm lev}_{\tilde\alpha} ( f )$ and ${\rm lev}_{\tilde\beta}(g)$ are nonempty,
  due to ${\rm lev}_{\tilde\alpha}( f ) \oplus U_2 = {\rm lev}_{\tilde\alpha}( F ) \not = \emptyset$
  and ${\rm lev}_{\tilde\beta}(g) \oplus V_2 = {\rm lev}_{\tilde\beta}(G) \not= \emptyset $.
  Consequently $ f $ and $g$ are bounded from below, see
  \prettyref{det:functions_f_and_g_bounded_from_below}.
  Without loss of generality we may therefore assume $ f  \geq 0$ and $g \geq 0$
  (otherwise we can set 
  $m_ f  \defeq \inf_{u_1\in U_1} f (u_1)$, 
  $m_g    \defeq \inf_{v_1 \in V_1}  g(v_1)$ 
  and replace $ f $, $F$, $\tilde\alpha$ and $g$, $G$, $\tilde\beta$ by 
  $ f    - m_ f $,
  $ F    - m_f $,
  $\tilde\alpha - m_ f $
  and 
  $g      - m_g$,
  $ G     - m_g$,
  $\tilde\beta  - m_g$, 
  respectively),
  i.e. $ F  \geq 0$ and $G \geq 0$.
  Next we show that ${\rm lev}_{\tilde\alpha + \tilde\beta}( F +G)$ is a nonempty compact set.
  We have 
  $
    {\rm lev}_{\tilde\alpha + \tilde\beta}( F  + G) 
    \supseteq 
    {\rm lev}_{\tilde\alpha}( F ) \cap {\rm lev}_{\tilde\beta}(G) \not = \emptyset
  $. 
  Furthermore ${\rm lev}_{\tilde\alpha + \tilde\beta}( F  + G)$ is closed 
  due to being a level set of a lower semicontinuous function.
  Lastly 
  $
    {\rm lev}_{\tilde\alpha + \tilde\beta}( F  + G)
    \subseteq
    {\rm lev}_{\tilde\alpha + \tilde\beta}( F ) \cap {\rm lev}_{\tilde\alpha + \tilde\beta}(G)
  $
  is bounded by \eqref{enu:intersection_of_lower_level_sets_bounded}, since
  the needed boundedness of 
  ${\rm lev}_{\tilde\alpha + \tilde\beta}( f )$ and
  ${\rm lev}_{\tilde\alpha + \tilde\beta}(g)$ 
  is only a special case of the already mentioned level boundedness of
  $f$ and $g$ and therewith ensured.
  %
  Hence ${\rm lev}_{\tilde\alpha + \tilde\beta}( F  + G)$ is non empty and compact.
  Therefore the (proper) lower semicontinuous function 
  $
    ( F  + G)|_{{\rm lev}_{\tilde\alpha + \tilde\beta}( F  + G)}
    =
     F  + G + \iota_{{\rm lev}_{\tilde\alpha + \tilde\beta}( F  + G)}
  $
  must be minimized by an 
  $\check u \in {\rm lev}_{\tilde\alpha + \tilde\beta}( F  + G)$, see
  \cite[1.10 Corollary]{RW04} or 
  \prettyref{thm:lsc_function_takes_minimum}, 
  which clearly also minimizes $ F  + G$.
  Finally we set
  $
    \gamma 
    \defeq 
    F(\check u) + G(\check u) 
    \in 
    (-\infty, \tilde \alpha + \tilde \beta ]
  $
  and note that $\argmin(F+G) = {\rm lev}_\gamma (F+G)$
  is a closed subset of the compact set 
  ${\rm lev}_{\tilde \alpha + \tilde \beta}(F+G)$
  and hence itself compact.
\end{proof}

Next we are interested in the direction of $\argmin (F + G)$.
We will see that -- under certain assumptions -- we have
$(\argmin (F + G) - \argmin (F + G) ) \subseteq P[F]$, which 
is the core ingredient to see that $F$ and $G$ are constant 
on $\argmin (F + G)$.

\begin{lemma} \label{lem_unique}
Let the Euclidean space $\R^n$ be decomposed into the direct sum $\R^n = U_1 \oplus U_2$ 
of two subspaces $U_1,U_2$ and let 
$ F :\R^n\rightarrow \R \cup \{ +\infty \}$ be a convex function which inheres
the translation invariance 
$ F (x) =  F (x+u_2)$ for all $x\in\R^n$ and 
$u_2\in U_2$.  
Furthermore, let $G:\R^n \rightarrow \R \cup \{ +\infty \}$ 
be any convex function. Then the following holds true:
\begin{enumerate}
  \item \label{enu:direction_of_argmin__general_version}
    If $\dom  F \cap \dom G \not = \emptyset$ and 
    $ F $ is strictly convex on $U_1$ then
    all $\hat{x},\tilde{x}\in \argmin_{x \in \R^n} \{  F (x) +G(x) \}$ 
    fulfill 
    $\hat{x}-\tilde{x}\in U_2$ 
    and
    $ F (\hat{x})= F (\tilde{x})$, $G(\hat{x})=G(\tilde{x})$.
  \item \label{enu:direction_of_argmin__essentially_smooth_and_ess_convex_version}
    If ${\rm ri} (\dom F) \cap {\rm ri}( \dom G) \not = \emptyset$ and 
    $ F $ is essentially smooth on $U_1$ and strictly convex on
    ${\rm ri}(\dom F \cap U_1)$ then 
    $\argmin_{x\in \R^n}(F(x) + G(x)) \subseteq {\rm ri} (\dom F)$
    and 
    all $\hat{x},\tilde{x}\in \argmin_{x \in \R^n} \{  F (x) +G(x) \}$ 
    fulfill 
    $\hat{x}-\tilde{x}\in U_2$ 
    and
    $ F (\hat{x})= F (\tilde{x})$, $G(\hat{x})=G(\tilde{x})$.
\end{enumerate}

\end{lemma}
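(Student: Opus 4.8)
The plan is to dispatch part~\upref{enu:direction_of_argmin__general_version} by a midpoint argument that only uses convexity of $\argmin(F+G)$, and then to obtain part~\upref{enu:direction_of_argmin__essentially_smooth_and_ess_convex_version} by running the very same argument \emph{after} first proving the inclusion $\argmin(F+G) \subseteq {\rm ri}(\dom F)$, which is the one genuinely new ingredient.

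For part~\upref{enu:direction_of_argmin__general_version}: since $\dom F \cap \dom G \neq \emptyset$ the function $H \defeq F+G$ is not identically $\pinfty$, and since neither $F$ nor $G$ attains $\minfty$ we have $\mu \defeq \inf H \in \R$. The set $\argmin H = \lev_\mu H$ is convex, so for $\hat x,\tilde x \in \argmin H$ their midpoint $m \defeq \tfrac12(\hat x+\tilde x)$ again minimizes $H$, i.e.\ $H(m)=\mu$. Adding the convexity inequalities $F(m)\le\tfrac12(F(\hat x)+F(\tilde x))$ and $G(m)\le\tfrac12(G(\hat x)+G(\tilde x))$ and comparing with $F(m)+G(m)=\mu=\tfrac12(H(\hat x)+H(\tilde x))$ — all values in sight being finite because $H(\hat x)=H(\tilde x)=\mu<\pinfty$ and $F,G>\minfty$ — forces both inequalities to be equalities; in particular $F(m)=\tfrac12(F(\hat x)+F(\tilde x))$. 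Writing $\hat x=\hat x_1+\hat x_2$, $\tilde x=\tilde x_1+\tilde x_2$ with components in $U_1,U_2$ and using $U_2\subseteq P[F]$, translation invariance of $F$ turns this into $F(m_1)=\tfrac12(F(\hat x_1)+F(\tilde x_1))$ with $m_1=\tfrac12(\hat x_1+\tilde x_1)$ and $\hat x_1,\tilde x_1\in\dom(F|_{U_1})$. If $\hat x_1\neq\tilde x_1$ this contradicts strict convexity of $F$ on $U_1$, so $\hat x_1=\tilde x_1$, i.e.\ $\hat x-\tilde x\in U_2$; then $F(\hat x)=F(\hat x_1)=F(\tilde x_1)=F(\tilde x)$ and hence $G(\hat x)=\mu-F(\hat x)=\mu-F(\tilde x)=G(\tilde x)$.

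For part~\upref{enu:direction_of_argmin__essentially_smooth_and_ess_convex_version} I would first record that, because $U_2\subseteq P[F]$, one has $F=(F|_{U_1})\sdirsum 0_{U_2}$, so $\dom F=\dom(F|_{U_1})\oplus U_2$, ${\rm aff}(\dom F)=A_1\oplus U_2$ with $A_1\defeq{\rm aff}(\dom(F|_{U_1}))\subseteq U_1$, and, by \prettyref{thm:operations_that_interchange_with_direct_sum}, ${\rm ri}(\dom F)={\rm ri}(\dom(F|_{U_1}))\oplus U_2$ (with the relative boundary decomposing similarly). Since $F|_{U_1}$ is essentially smooth and $A_1$ meets ${\rm ri}(\dom(F|_{U_1}))$, \prettyref{lem:restriction_of_ess_smooth_function_on_good_affine_set} gives that $F|_{A_1}$ is essentially smooth, and then \prettyref{lem:essentially_smoothness_does_not_depend_on_periodspace} (with $\check A=A_1$, $\check P=U_2$) yields that $F$ is essentially smooth on all of ${\rm aff}(\dom F)$. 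Now take $\hat x\in\argmin H$. By Fermat's rule $\zerovec\in\partial H(\hat x)$, and since ${\rm ri}(\dom F)\cap{\rm ri}(\dom G)\neq\emptyset$ the sum rule \cite[Theorem 23.8]{Rockafellar1970} gives $\partial H(\hat x)=\partial F(\hat x)+\partial G(\hat x)$, so $\partial F(\hat x)\neq\emptyset$. At a relative-boundary point of $\dom F$ the defining condition of essential smoothness makes the relevant directional derivative tend to $\minfty$, hence $F$ has no subgradient there (cf.\ \cite[Theorem 26.1]{Rockafellar1970}); therefore $\hat x\in{\rm ri}(\dom F)$, proving $\argmin H\subseteq{\rm ri}(\dom F)$. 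Finally, for $\hat x,\tilde x\in\argmin H$ we now have $\hat x,\tilde x\in{\rm ri}(\dom F)={\rm ri}(\dom(F|_{U_1}))\oplus U_2$, and the midpoint $m$ lies in the convex set ${\rm ri}(\dom F)$ as well; re-running the computation of part~\upref{enu:direction_of_argmin__general_version}, but invoking strict convexity of $F$ on ${\rm ri}(\dom F\cap U_1)={\rm ri}(\dom(F|_{U_1}))$ — precisely where $\hat x_1,\tilde x_1,m_1$ now live — gives $\hat x-\tilde x\in U_2$, $F(\hat x)=F(\tilde x)$ and $G(\hat x)=G(\tilde x)$.

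The shared midpoint/strict-convexity computation is routine. The main obstacle is the bookkeeping in part~\upref{enu:direction_of_argmin__essentially_smooth_and_ess_convex_version}: correctly promoting ``essentially smooth on $U_1$'' to ``essentially smooth on ${\rm aff}(\dom F)$'' by chaining \prettyref{lem:restriction_of_ess_smooth_function_on_good_affine_set} and \prettyref{lem:essentially_smoothness_does_not_depend_on_periodspace} through the period-space decomposition, and then combining it with the subdifferential sum rule to exclude minimizers on the relative boundary of $\dom F$ — keeping careful track of which effective domains and affine hulls are involved, and checking that every one-sided function value occurring in the convexity inequalities is finite so that those inequalities may be upgraded to equalities.
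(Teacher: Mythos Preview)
Your proof is correct and follows essentially the same approach as the paper. The only cosmetic differences are that the paper argues with the full segment $l(\hat x,\tilde x)$ (establishing the equivalence ``$F$ constant on $l(\hat x,\tilde x)$ $\iff$ $F$ affine on $l(\hat x,\tilde x)$ $\iff$ $\hat x-\tilde x\in U_2$'') rather than just the midpoint, and in part~\upref{enu:direction_of_argmin__essentially_smooth_and_ess_convex_version} it packages the inclusion $\argmin(F+G)\subseteq{\rm ri}(\dom F)$ as a separate lemma (\prettyref{lem:argmin_of_sum_contained_in_ri_of_dom_of_ess_smooth_summand}) and then reduces to part~\upref{enu:direction_of_argmin__general_version} by replacing $F$ with $\tilde F\defeq F+\iota_{{\rm ri}(\dom F)}$ instead of re-running the midpoint computation directly.
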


Before proving this lemma we illustrate that in general we really need 
to require $ F $ to be essentially smooth,
in order to guarantee the assertions of part 
\upref{enu:direction_of_argmin__essentially_smooth_and_ess_convex_version}

\begin{figure}[htpb]
\begin{center}
\includegraphics[width=0.35\textwidth]{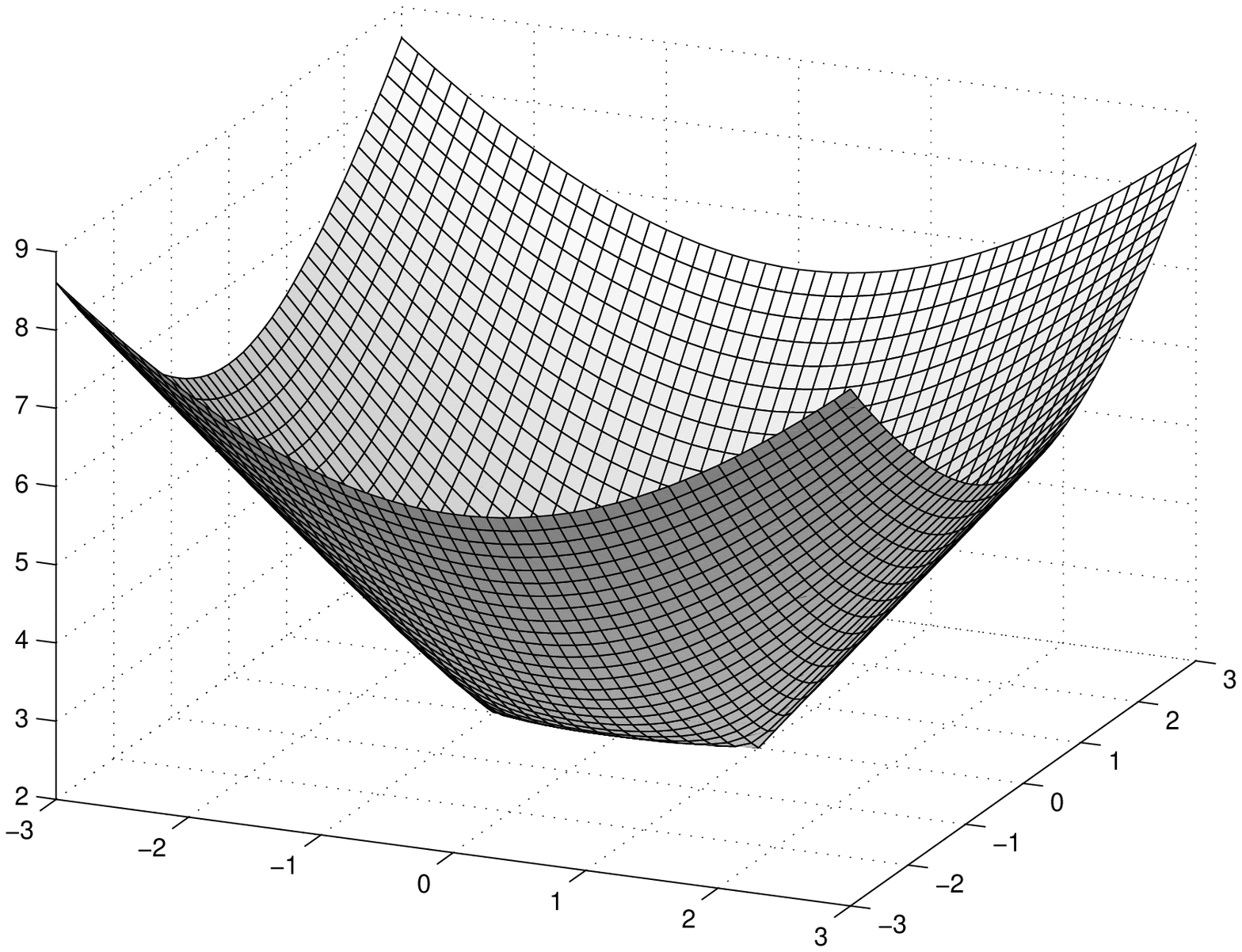}
\includegraphics[width=0.35\textwidth]{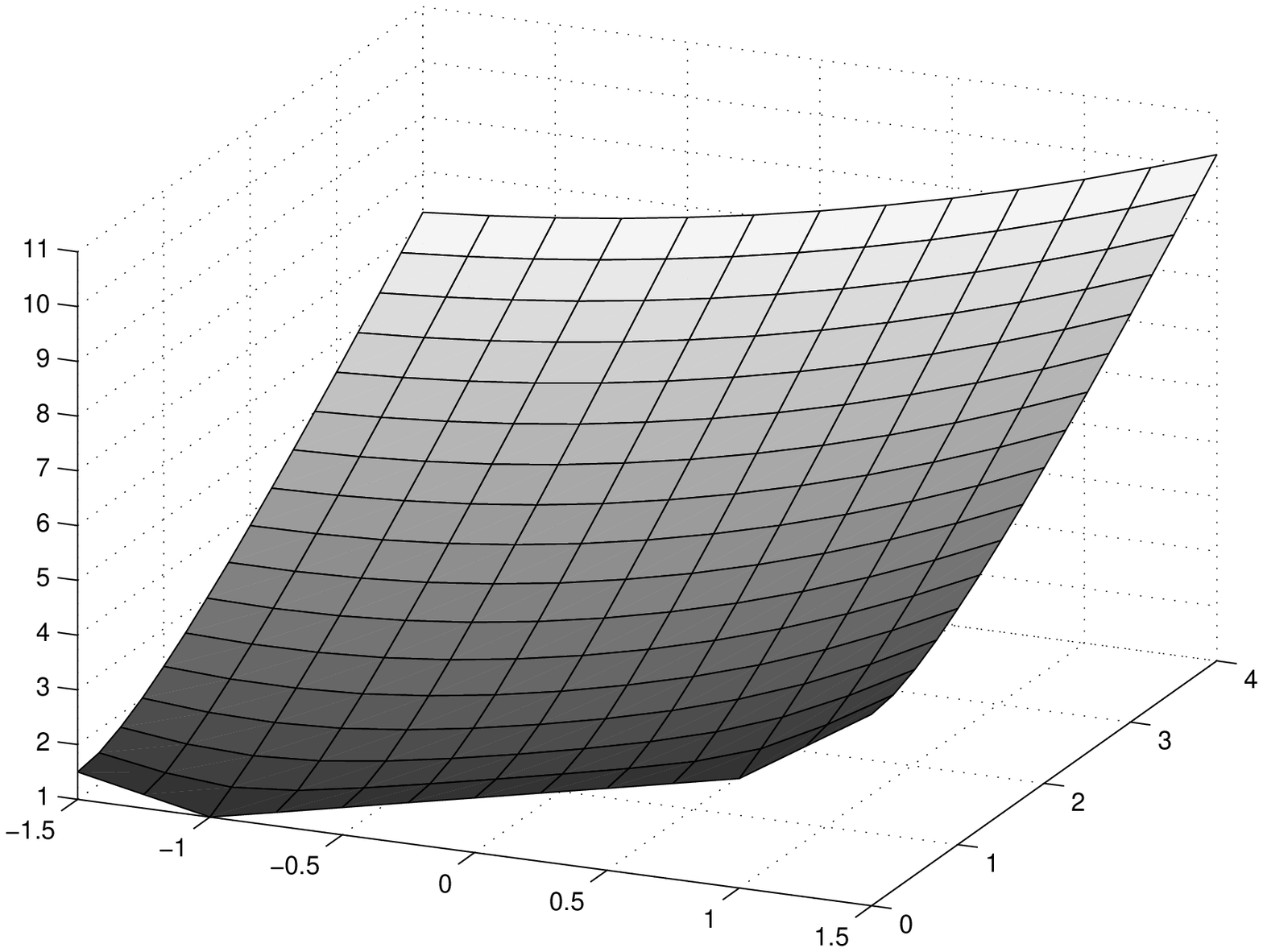}
\includegraphics[width=0.35\textwidth]{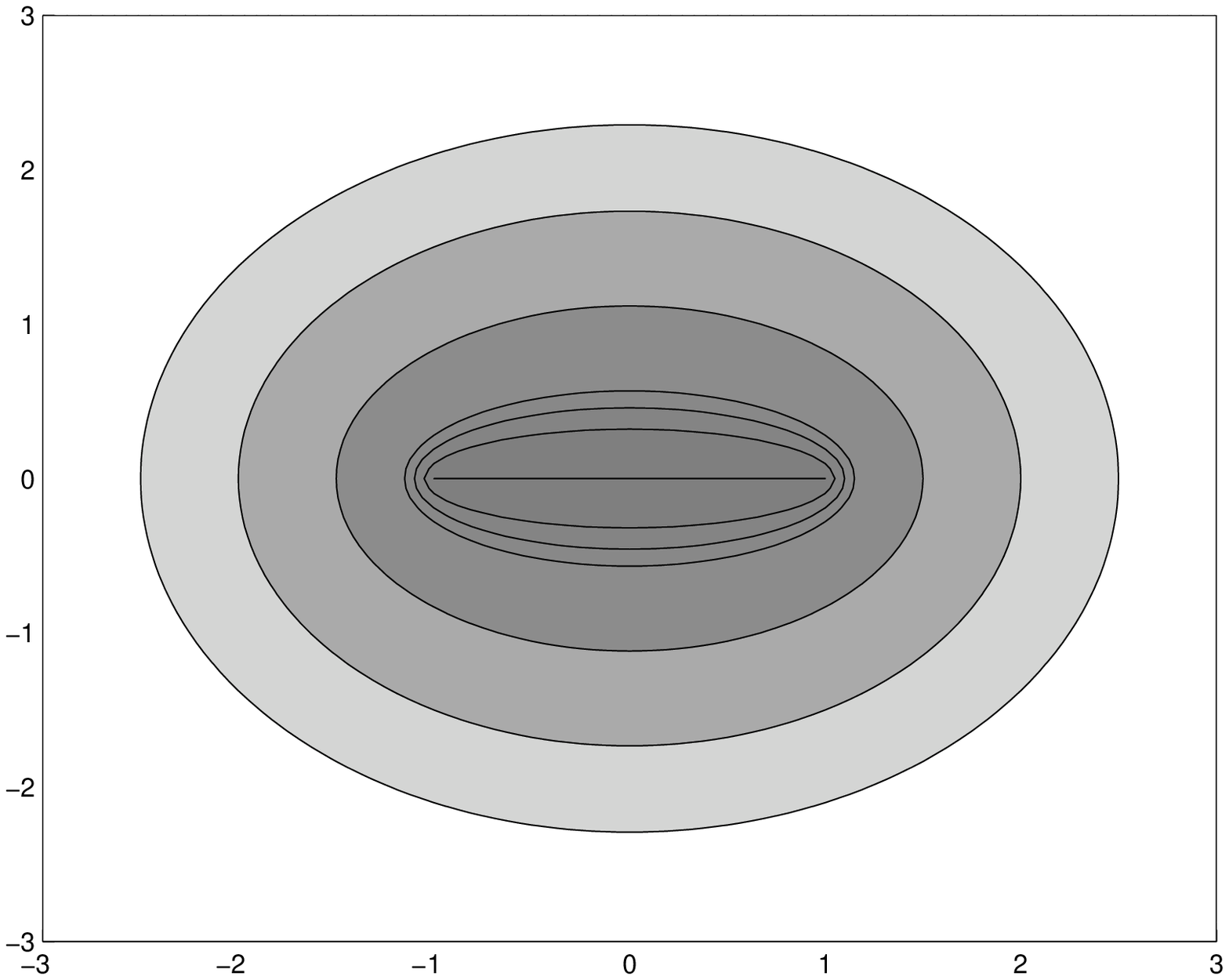}
\caption{Up:  Graph of $h$ and $F$, \; 
	 Down: $\argmin(h) = [-1,1]\times \{0\}$ and some other level sets of $h$ 
	 \label{fig:essentially_smoothness_is_important}}
\end{center}
\end{figure}

\begin{example}
  The shifted Euclidean norm $h_b: \R^2 \rightarrow \R$ given by
  $h_b(x) \defeq \|x - b\|_2$, where $b \in \R^2$ is strictly convex on 
  every straight line which does not meet $b$, by Lemma
  \ref{lem:euclidean_norm_strictly_convex_on_every_straight_line_through_origin}. 
  Set $b = (1,0)^T$ and $b' = -b = (-1,0)^T$ and consider the function
  $h: \R^2 \rightarrow \R$ given by
  \begin{gather*}
    h(x) = h_b(x) + h_{b'}(x) = \|x-b\|_2 + \|x-b'\|_2.
  \end{gather*}
  The only straight line which meets both $b$ and $b'$ is 
  ${\rm aff}(\{b,b'\}) = \R \times \{0\}$.
  Therefore $h$ is strictly convex on all other straight lines in $\R^2$,
  c.f. also Figure \ref{fig:essentially_smoothness_is_important}.
  In particular $h$ is strictly convex in the open upper half plane 
  $\mathbb{H} \defeq \{x\in \R^2: x_2 > 0\}$.
  Set $U_1 \defeq \R^2$, $U_2 \defeq \{\zerovec\}$ and consider the functions 
  $F, G: \R^2 \rightarrow \R \cup \{+\infty\}$ given by
  \begin{align*}
    F(x) \defeq
    \begin{cases}
      h(x) + x_1 & \text{ for } x \in \overline{\mathbb{H}}
      \\
      +\infty    & \text{ for } x \in \R^2 \setminus \overline{\mathbb{H}}
    \end{cases},
    &&
    G(x) \defeq -x_1
  \end{align*}
  Then all general assumptions of Lemma
  \ref{lem_unique}
  are fulfilled just as the assumptions of part
  \upref{enu:direction_of_argmin__essentially_smooth_and_ess_convex_version}
  -- except that $F$ is not essentially smooth on $U_1 = \R^2$; note here
  that $h$ is continuously differentiable in $\R^2 \setminus \{b,b'\}$, 
  so that choosing any boundary point 
  $
    x 
    \in 
    \partial \dom F 
    = 
    \partial \overline{\mathbb{H}}
    = \R \times \{0\}
  $,
  which is different from $b$ and $b'$, we have 
  $
    \lim_{n \rightarrow \infty} \|\nabla F(x_n)\|_2 
    =
    \|\nabla h(x) + (1,0)^T\|_2
    \not =
    +\infty
  $
  for any sequence $(x_n)_{n\in \N}$ in 
  ${\rm int}(\dom F) = \mathbb{H}$, which converges to $x$.

  We have $\argmin \{F+G\} = \argmin h = [-1,1]\times \{0\}$ here, so that 
  $\argmin \{F+G\} \cap {\rm ri}(\dom F) = \emptyset$. Moreover 
  the minimizers $\hat x = (1,0)^T$ and $\tilde x = (-1,0)^T$ neither fulfill
  $\hat x - \tilde x \in U_2$ nor $F(\hat x) = F(\tilde x)$, 
  $G(\hat x) = G(\tilde x)$.
\end{example}

%
{\bf Proof of Lemma \ref{lem_unique}.}
\upref{enu:direction_of_argmin__general_version} 
First we prove that for any $x,y \in \dom  F $  and
the line segment $l(x,y) \defeq \{x + t(y-x):t \in [0,1]\}$ 
the following statements are equivalent:
\begin{itemize}
 \item[a)] $ F \bigl\vert_{l(x,y)}$ is constant,
 \item[b)]  $ F \bigl\vert_{l(x,y)}$ is affine,
 \item[c)]  $y-x\in U_2$.
\end{itemize}
\iftime{Checken und ggf. was dazu einarbeiten: Beziehung dieser drei Aussagen zum Periodenraum und
zu Rockafellar Theorem 8.8 ?!}
We use the unique decompositions $x=x_1 + x_2$, $y=y_1 + y_2$ with $x_1,y_1\in U_1$ and
$x_2,y_2\in U_2$.

a) $\Rightarrow$ b): 
This is clear since a constant function is in particular an affine one.
\\
b) $\Rightarrow$ c): 
If $ F \bigl\vert_{l(x,y)}$ is affine, 
i.e.,
\begin{equation*}
 F (x +t(y-x))  =   F (x) + t( F (y)- F (x))\; \text{ for every } t \in [0,1],
\end{equation*}  
the translation invariance of $ F $ yields
\begin{align*}
 F (x + t(y-x) - x_2 -t(y_2-x_2) )
    &=
 F (x-x_2) + t( F (y-y_2)- F (x-x_2)),\\
 F (x_1 +t(y_1-x_1))
    &=
 F (x_1) + t( F (y_1)- F (x_1))\; \text{ for every }  t \in [0,1],
\end{align*}
so that $ F \bigl\vert_{l(x_1,y_1)}$ is affine as well. 
On the other hand $ F $ is also strictly convex on 
$l(x_1,y_1)$. Both can be simultaneously only true, 
if $x_1=y_1$, which just means that 
$y-x=y_2-x_2 \in U_2$.
\\
c) $\Rightarrow$ a):
  Let $y-x \in U_2$, i.e. $y_1=x_1$, so that $y-x=y_2-x_2$. 
  Therefore and due to the translation invariance of $ F $ we get
  \begin{equation*}
     F (x+t(y-x))
   = F (x+t(y_2-x_2))
   = F (x)
  \end{equation*}
  even for all $t\in\R$. In particular $ F $ is constant on $l(x,y)$.
\\
Now the assertions of part 
\upref{enu:direction_of_argmin__general_version}
can be seen as follows: Due to the convexity of $ F +G$ the whole segment 
$l(\hat{x},\tilde{x})$ 
belongs to
$\argmin\{ F +G\}$ 
so that $ F +G$ is constant on 
$l(\hat{x},\tilde{x})$.
Thus, the convex summands $ F $ and $G$ must be affine on 
$l(\hat{x},\tilde{x}) \subseteq \dom ( F  + G)$.
Now the equivalence b) $\Leftrightarrow $ c) tells us that  
$\hat{x}-\tilde{x} = -(\tilde{x}-\hat{x})\in U_2$ and hence
$ F (\hat{x}) =  F (\tilde{x})$. The remaining $G(\hat{x})=G(\tilde{x})$ 
follows from the last equation and from 
$ F (\hat{x})+G(\hat{x}) =  F (\tilde{x}) + G(\tilde{x})$ since only finite
values occur. 
\\
\upref{enu:direction_of_argmin__essentially_smooth_and_ess_convex_version} 
The function $f\defeq F|_{U_1} : U_1 \rightarrow \R \cup \{+\infty\}$ is
essentially smooth, so that 
${\rm int}_{U_1}(\dom f)$ is in particular a nonempty subset of $U_1$.
Therefore and by Theorem 
\ref{thm:operations_that_interchange_with_direct_sum}
we get
$
  {\rm aff}(\dom F)
  =
  {\rm aff}(\dom F|_{U_1} \oplus U_2)
  =
  {\rm aff}(\dom F|_{U_1}) \oplus U_2
  =
  U_1 \oplus U_2
$.
Lemma 
\ref{lem:essentially_smoothness_does_not_depend_on_periodspace}
now says that $F$ is essentially smooth on ${\rm aff}(\dom F)$.
The therewith applicable
part \upref{enu:argmin_of_sum_contained_in_ri_of_dom_of_ess_smooth_summand} 
of Lemma 
\ref{lem:argmin_of_sum_contained_in_ri_of_dom_of_ess_smooth_summand}
gives $\argmin(F+G) \subseteq {\rm ri}(\dom F)$. Hence the minimizers of 
$F + G$ keep unchanged, if we enlarge the values $F(x)$ outside of
${\rm ri}(\dom F)$ by setting
\begin{gather*}
  \tilde F(x) 
  \defeq 
  \begin{cases}
    F(x),     & \text { for } x \in {\rm ri}(\dom F)
    \\
    +\infty,  & \text { for } x \not \in {\rm ri}(\dom F)
  \end{cases}.
\end{gather*}
Hence we get the remaining assertions for 
$\hat x, \tilde x \in \argmin (F+G) = \argmin (\tilde F + G)$
by applying part \upref{enu:direction_of_argmin__general_version}
to $\tilde F$ and $G$;
note herein that
$\dom \tilde F \cap \dom G = {\rm ri}(\dom F) \cap \dom G \not = \emptyset$,
that 
$\tilde F$ is still convex,
see Theorem \ref{thm:nonemtpy_convex_set_has_nonempty_ri},
and strictly convex
on $U_1$, since $F$ is by assumption strictly convex on 
${\rm ri}(\dom F \cap U_1) = \dom (\tilde F|_{U_1})$, and that 
finally $U_2$ still belongs to the periods space $P[\tilde F]$,
since 
$
  {\rm ri}(\dom F) 
  = 
  {\rm ri}(\dom F|_{U_1} \oplus U_2)
  = 
  {\rm int}_{U_1}(\dom f) \oplus U_2
$, by Theorem \ref{thm:operations_that_interchange_with_direct_sum}.
\hfill $\Box$
\vspace{0.2cm}

\rem{(wegen Formulierung in eingereichter Version) Zweiter Punkt 
$\bullet$ duerfte erfuellt sein,
wenn $F$ essentially strictly convex auf $U_1$ ist}

\begin{theorem} \label{thm:direction_of_argmin__essentially_smooth__strict_convex_sharpend}
  Let $F,G: \R^n \rightarrow \R \cup \{+\infty\}$ be convex functions with 
  ${\rm ri}(\dom F) \cap {\rm ri}(\dom G) \not = \emptyset$. If there is 
  a decomposition 
  \begin{gather*}
    {\rm aff}(\dom F) = \check A \oplus  \check P
  \end{gather*}
  of 
  ${\rm aff}(\dom F)$ into a subspace $\check P$ of $P[F]$ and an 
  affine subspace $\check A \subseteq \R^n$ such that 
  $F$ is essentially smooth on ${\rm aff}(\dom F)$ (or on $\check A$) as 
  well as strictly convex on ${\rm int}_{\check A}(\dom F|_{\check A})$
  then 
  \begin{alignat*}{4}
    & & \argmin_{x\in \R^n}(F(x)+G(x)) \subseteq {\rm ri}(\dom F) 
  \shortintertext{and}
   & \hat x - \tilde x \in \check P, & &
   \begin{split} 
     F(\hat x) &= F(\tilde x),\\ G(\hat x) &= G(\tilde x)
   \end{split}
  \end{alignat*}
  for all $\hat x, \tilde x \in \argmin_{x\in \R^n}(F(x)+G(x))$.
\end{theorem}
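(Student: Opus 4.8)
The plan is to deduce this from part~\upref{enu:direction_of_argmin__essentially_smooth_and_ess_convex_version} of Lemma~\ref{lem_unique} by manufacturing, out of the given decomposition ${\rm aff}(\dom F) = \check A \oplus \check P$, a decomposition $\R^n = U_1 \oplus U_2$ of the whole ambient space with $U_2 \defeq \check P$. First I would remark that the two forms of the hypothesis coincide: $F$ is proper (it is convex, takes values in $\R \cup \{\pinfty\}$, and $\dom F \neq \emptyset$ since ${\rm ri}(\dom F) \cap {\rm ri}(\dom G) \neq \emptyset$), so Lemma~\ref{lem:essentially_smoothness_does_not_depend_on_periodspace}, applied to $F$ and the decomposition ${\rm aff}(\dom F) = \check A \oplus \check P$, says that $F$ is essentially smooth on ${\rm aff}(\dom F)$ iff it is essentially smooth on $\check A$; I shall work with the latter. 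Moreover, Lemma~\ref{lem:decomposition_into_some_set_and_a_subspace_of_periods_space} (with $X = \R^n$, $E = F$) gives ${\rm aff}(\dom F|_{\check A}) = \check A$ and ${\rm int}_{\check A}(\dom F|_{\check A}) = {\rm ri}(\dom F|_{\check A})$, both sets being nonempty; in particular the strict-convexity hypothesis ``$F$ strictly convex on ${\rm int}_{\check A}(\dom F|_{\check A})$'' is literally ``$F$ strictly convex on ${\rm ri}(\dom F|_{\check A})$''.

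Next I would fix $a_0 \in \check A$ and pass from $F, G$ to $F(\,\cdot + a_0), G(\,\cdot + a_0)$. This translation leaves $P[F] \supseteq \check P$, essential smoothness, strict convexity, the condition ${\rm ri}(\dom F) \cap {\rm ri}(\dom G) \neq \emptyset$, and (after translating back) the assertions about $\argmin(F+G)$ all unchanged, while turning $\check A$ into the vector subspace $\check A - a_0$; I relabel this subspace again as $\check A$, so that now ${\rm aff}(\dom F) = \check A \oplus \check P$ is a direct sum of \emph{vector} subspaces. I then choose a complementary subspace $W$ with $\R^n = \check A \oplus \check P \oplus W$ and set
\[
  U_1 \defeq \check A \oplus W, \qquad U_2 \defeq \check P,
\]
so that $\R^n = U_1 \oplus U_2$. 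Since $\dom F \subseteq {\rm aff}(\dom F) = \check A \oplus \check P$ and $(\check A \oplus W) \cap (\check A \oplus \check P) = \check A$ by directness of the triple sum, one gets $\dom(F|_{U_1}) = \dom F \cap U_1 = \dom F \cap \check A = \dom(F|_{\check A})$, and $F|_{U_1}$ and $F|_{\check A}$ coincide on this common effective domain and have the same affine hull $\check A$ of it. Since essential smoothness of a function depends only on its restriction to the affine hull of its domain, $F|_{U_1}$ is essentially smooth; and $F$ is strictly convex on ${\rm ri}(\dom F \cap U_1) = {\rm ri}(\dom F|_{\check A})$ by the hypothesis just rephrased.

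At this point all hypotheses of part~\upref{enu:direction_of_argmin__essentially_smooth_and_ess_convex_version} of Lemma~\ref{lem_unique} are in force: $F$ is convex and translation invariant in the directions of $U_2 = \check P \subseteq P[F]$, $G$ is convex, ${\rm ri}(\dom F) \cap {\rm ri}(\dom G) \neq \emptyset$, $F$ is essentially smooth on $U_1$, and $F$ is strictly convex on ${\rm ri}(\dom F \cap U_1)$. The lemma then yields $\argmin_{x \in \R^n}(F(x) + G(x)) \subseteq {\rm ri}(\dom F)$ together with $\hat x - \tilde x \in U_2 = \check P$ and $F(\hat x) = F(\tilde x)$, $G(\hat x) = G(\tilde x)$ for all minimizers $\hat x, \tilde x$; translating back by $-a_0$ gives precisely the claimed statement. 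The one step that needs genuine care — and which I expect to be the main obstacle — is the transfer of essential smoothness from $F|_{\check A}$ to $F|_{U_1}$: one must check that enlarging the ambient space from $\check A$ to $U_1$, i.e. setting $F|_{U_1} = \pinfty$ on $U_1 \setminus \check A$, changes neither the effective domain nor its affine hull ${\rm aff}(\dom F|_{U_1}) = \check A$, so that the defining conditions of essential smoothness (nonempty relative interior, differentiability there, blow-up of directional derivatives at the relative boundary) evaluate identically for the two functions.
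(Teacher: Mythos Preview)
Your reduction to Lemma~\ref{lem_unique}\upref{enu:direction_of_argmin__essentially_smooth_and_ess_convex_version} is the right idea and matches the paper's route, but the step you flag as ``the main obstacle'' actually fails as written. Essential smoothness of a function $h:V\to\R\cup\{\pinfty\}$ in Rockafellar's sense requires ${\rm int}_V(\dom h)\neq\emptyset$, i.e.\ the interior taken \emph{in the full ambient space $V$}, not the relative interior. In your construction $\dom(F|_{U_1})=\dom F\cap\check A\subseteq\check A$, and whenever ${\rm aff}(\dom F)\subsetneq\R^n$ you have $W\neq\{\zerovec\}$, hence $\check A\subsetneq U_1=\check A\oplus W$ and ${\rm int}_{U_1}(\dom F|_{U_1})=\emptyset$. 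Thus $F$ is \emph{not} essentially smooth on $U_1$, and the hypothesis of Lemma~\ref{lem_unique}\upref{enu:direction_of_argmin__essentially_smooth_and_ess_convex_version} is violated; indeed the proof of that lemma uses ${\rm int}_{U_1}(\dom f)\neq\emptyset$ to conclude ${\rm aff}(\dom F)=U_1\oplus U_2$, which is false in your setup. Your sentence ``essential smoothness of a function depends only on its restriction to the affine hull of its domain'' is where the error lies: that restriction may be essentially smooth in the smaller space while the original function is not.

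The paper's fix is to dispense with $W$ altogether: after translating so that ${\rm aff}(\dom F)$ is a vector subspace and $\check A$ is a subspace of it, one restricts $F$ and $G$ to ${\rm aff}(\dom F)=\check A\oplus\check P$, identifies this with $\R^{n'}$, and applies Lemma~\ref{lem_unique}\upref{enu:direction_of_argmin__essentially_smooth_and_ess_convex_version} there with $U_1=\check A$, $U_2=\check P$. One then only needs to check that the restricted pair still satisfies ${\rm ri}(\dom F|_{{\rm aff}(\dom F)})\cap{\rm ri}(\dom G|_{{\rm aff}(\dom F)})\neq\emptyset$, which follows from Theorem~\ref{thm:restricting_set_operations_to_an_affine_subset}. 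Your argument becomes correct once you replace the ambient $\R^n$ by ${\rm aff}(\dom F)$ and drop $W$.
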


\begin{proof}
  Let $\mathring a \in {\rm ri}(\dom F) \cap {\rm ri}(\dom G)$.
  Replacing $F$ and $G$ by $F_1 (\cdot )= F(\cdot - \mathring a)$
  and $G_1(\cdot) = G(\cdot - \mathring a)$, respectively
  would neither change the truth value of the assumptions nor of 
  the assertions; therefore we may without loss of generality assume 
  $\mathring a = \zerovec$, so that ${\rm aff}(\dom F)$ is a vector subspace 
  of $\R^n$. 
  Write $\zerovec = a_0 + p_0$ with some $a_0 \in  \check A$ and $p_0 \in \check P$.
  Due to $F|_{\check A} = F|_{\check A + p_0}$ we see that replacing
  $\check A$ by the vector subspace $ \check A_2 = \check A + p_0$ 
  would neither change the truth value of the assumptions nor of 
  the assertions; therefore we may without loss of generality
  furthermore also
  assume that $\check A$ is a vector subspace of ${\rm aff}(\dom F)$.
  Set now $U_1 \defeq \check A$ and $U_2 \defeq \check P$. Noting that neither 
  the truth value of the assumptions nor of the assertions changes
  when considering $F,G$ and $F+G$ only on the vector space 
  $U_1 \oplus U_2 = {\rm aff}(\dom F)$ and identifying it with some 
  $\R^{n'}$ we obtain all claimed assertions by part 
  \upref{enu:direction_of_argmin__essentially_smooth_and_ess_convex_version}
  of Lemma \ref{lem_unique}; note here that 
  $
    {\rm ri}(\dom F \cap U_1) 
    =
    {\rm ri}(\dom F) \cap U_1
    =
    {\rm int}_{U_1}(\dom F|_{U_1})
  $,  
  $
    {\rm ri}(\dom G|_{{\rm aff}(\dom F)})
    =
    {\rm ri}(\dom G \cap {\rm aff}(\dom F))
    =
    {\rm ri}(\dom G) \cap {\rm aff}(\dom F)
  $
  by Theorem \ref{thm:restricting_set_operations_to_an_affine_subset},
  and note finally that $F$ is in any case essentially smooth on 
  ${\rm aff}(\dom F)$ by Lemma 
  \ref{lem:essentially_smoothness_does_not_depend_on_periodspace}. 
\end{proof}

\begin{remark}
  ~
  \begin{enumerate} 
    \item \label{enu:only_full_periodspace_possible_in_decomposition}
      The assumptions of the just proven theorem can be only valid 
      if in fact $\check P = P[F]$.
    \item 
    \label{enu:smoothness_and_strictly_convex_assumption_independet_of_decomposition}
      The essentially smoothness as well as the strictly convexity 
      assumptions on $F$ 
      keep valid if $\check A$ is replaced by any other affine subset 
      $\tilde A \subseteq \R^n$ with 
      $\tilde A \oplus \check P = {\rm aff}(\dom F)=\check A \oplus \check P$.
  \end{enumerate}
\end{remark}

\begin{proof}
  \upref{enu:only_full_periodspace_possible_in_decomposition} 
  Since $\check P$ is a subspace of $P[F]$ we have $\check P \subseteq P[F]$.
  For the proof of $\check P \supseteq P[F]$ we may assume 
  without loss of generality that the affine space ${\rm aff}(\dom F)$
  is even a vector subspace of $\R^n$ with origin $\zerovec \in \dom F$.
  Then every arbitrarily chosen $p \in P[F]$ belongs to 
  ${\rm aff}(\dom F)$ and can therefore be written in the form 
  $p = \check a + \check p$ with some $\check a \in \check A$,
  $\check p \in \check P$. Hence 
  $\check a = p - \check p \in P[F]$, i.e.
  $
    F(x + \lambda \check a) = F(x)
  $
  for all $x \in \R^n$ and all $\lambda \in \R$.
  Choosing any element $\mathring a$ from the nonempty set
  $
    {\rm ri}(\dom F) \cap \check A
    =
    {\rm ri}(\dom F|_{\check A})
  $,
  see Theorem \ref{thm:nonemtpy_convex_set_has_nonempty_ri} 
  and Theorem \ref{thm:restricting_set_operations_to_an_affine_subset}
  we have in particular
  $ F(\mathring a + \lambda \check a) = F(\mathring a) $
  for all $\lambda \in \R$.
  This is only possible for $\check a = \zerovec$, since 
  $F$ is strictly convex on 
  $
    \{\mathring a + \lambda \check a : \lambda \in \R\}
    \subseteq
    {\rm ri}(\dom F) \cap \check A
    =
    {\rm ri}(\dom F|_{\check A})
    =
    {\rm int}_{\check A}(\dom F|_{\check A})
  $, where we have used Theorem 
  \ref{thm:restricting_set_operations_to_an_affine_subset}.
  Consequently $p = \check a + \check p = \check p \in \check P$.
  \\
  \upref{enu:smoothness_and_strictly_convex_assumption_independet_of_decomposition}
  Writing $\zerovec = \check a_0 + \check p_0$, $\zerovec = \tilde a_0 + \tilde p_0$
  and noting $F|_{\check A} = F|_{\check A + \check p_0}$,
  $F|_{\tilde A} = F|_{\tilde A + \tilde p_0}$
  we may without loss of generality  assume that 
  $\check A$ and $\tilde A$ are vector subspaces.
  Consider the projection 
  $\pi: A \rightarrow \tilde A$, $x=\tilde a + \check p \mapsto \tilde a$  
  of the vector space 
  $A = \check A \oplus \check P = \tilde A \oplus \check P$ 
  onto its subspace $\tilde A$.
  We have $\mathcal{N}(\pi) = \check P$, so that 
  $\alpha \defeq \pi|_{\check A}: \check A \rightarrow \tilde A$ is a 
  vector space isomorphism, which links $F|_{\check A}$ and 
  $F|_{\tilde A}$ both via $F|_{\check A} = F|_{\tilde A} \circ \alpha$
  and its consequence 
  $
    {\rm int}_{\tilde A}(\dom F|_{\tilde A})
    =
    \alpha [{\rm int}_{\check A}(\dom F|_{\check A}) ]
  $.
  Therefore $F|_{\check A}$ is essentially smooth if and only if 
  $F|_{\tilde A}$ is essentially smooth.
  Writing $\check O \defeq {\rm int}_{\check A}(\dom F|_{\check A})$
  and $\tilde O \defeq {\rm int}_{\tilde A}(\dom F|_{\tilde A})$
  we likewise have that $F|_{\check O}$
  is strictly convex if and only if $F|_{\tilde O}$ is strictly convex.
\end{proof}

\section{Homogeneous penalizers and constraints} \label{sec:homegeneous_penalizers_and_constraints}
%

\iftime{Nach Unterteilung des Kapitels in einige Unterkapitel,
musz der nachf. einl. Text noch entsp. umgeschrieben werden}

\addtointroscoll{

This section is divided into two subsections.
In the first subsection we restrict the broad setting of the  
\prettyref{sec:penalizers_and_constraints} to a less general setting by making a particular choice 
for $\Psi$ and by putting some assumptions on $\Phi$.
In Lemma \ref{lem:subgradient_and_conjugate_function_in_our_setting}
we show some implications of the assumptions on $\Phi$ for $\Phi$
itself and its conjugate function $\Phi^*$.
In Remark  \ref{rem:assumptions_for_fenchel_dualty_theorem_fulfilled} 
we will see that the Fenchel Duality Theorem
\ref{fenchel-dual-hom} can be applied within our setting.
The second subsection deals with properties of the minimizing sets.
\iftime{Existenz auslagern in FenchelDualHom?}
In Theorem \ref{thm:existence_of_solvers} we show that 
the problems 
$(P_{1,\tau})$,
$(P_{2,\lambda})$,
$(D_{1,\tau})$,
$(D_{2,\lambda})$
have a solution for $\tau > 0$ and $\lambda > 0$,
if certain conditions are fulfilled.
In \prettyref{thm:localisation_of_the_solvers}
we prove that under the same conditions and an extra condition 
there are intervals $(0,c)$ and $(0,d)$ such that 
${\rm SOL}(P_{1,\tau})$,
${\rm SOL}(P_{2,\lambda})$,
${\rm SOL}(D_{1,\tau})$,
${\rm SOL}(D_{2,\lambda})$
show similar localization behavior for 
$\tau = 0$, $\lambda \in [d, +\infty)$;
$\tau \in (0,c)$, $\lambda \in (0,d)$;
and $\tau \in [c, + \infty)$, $\lambda = 0$.
In \prettyref{thm:theo_lambda_tau} the localization behavior is refined
for $\tau \in (0,c)$ and $\lambda \in (0,d)$. The results there
say that, while $\tau$ runs from $0$ to $c$ and $\lambda$ runs from
$d$ to $0$, all solver sets have to move. Moreover the mappings,
given by
$\tau \mapsto {\rm SOL}(P_{1,\tau})$ and
$\lambda \mapsto {\rm SOL}(P_{2,\lambda})$ are 
the same -- besides a (``direction reversing'') parametrization change $g:(0,c) \rightarrow (0,d)$.
Similar the mappings, given by
$\tau \mapsto {\rm SOL}(D_{1,\tau})$ and
$\lambda \mapsto {\rm SOL}(D_{2,\lambda})$ are 
the same -- besides the same parametrization change $g:(0,c) \rightarrow (0,d)$.
In the remaining parts of that subsection we deal with $g$.

}

\subsection{Setting} \label{subsec:setting}
%
In the rest of this 
thesis, we deal with the functions
\begin{equation*}
\Psi_{1} \defeq \iota_{lev_1 \| \cdot \|}  \quad {\rm and} \quad \Psi_2 \defeq \| \cdot \|,
\end{equation*}
where $\| \cdot \|$ denotes an arbitrary norm in $\mathbb R^{m}$ 
with dual norm 
$\| \cdot \|_*\defeq \max_{\|x\| \le 1} \langle \cdot, x \rangle$.
Constraints and penalizers of this kind appear in many image processing tasks.
Note that
$\Psi_1(\tau^{-1} x) = \iota_{lev_\tau \| \cdot \|} (x) = \tau \iota_{lev_\tau \| \cdot \|} (x)$
for $\tau \in (0, \pinfty)$.
The conjugate functions of $\Psi_1$ and $\Psi_2$ are 
$$
\Psi_1^* = \| \cdot\|_* \quad {\rm and} \quad \Psi_2^* = \iota_{lev_1 \| \cdot \|_*}.
$$
\iftime{Bessere Stelle fuer die Subdifferentiale finde}
and their subdifferentials are known to be
\begin{equation} 
\partial \Psi_1 (x) =
\left\{
\begin{array}{ll}
\{ \zerovec \} & {\rm if} \; \;\|x\| < 1,\\
\{p \in \R^{m}: \langle p,x \rangle = \|p\|_*\} & {\rm if} \; \; \|x\| = 1,\\
\emptyset &  {\rm otherwise} 
\end{array}
\right.
\end{equation}
and
\begin{equation} \label{subgrad_2}
\partial \Psi_2 (x) =
\left\{
\begin{array}{ll}
\{p \in \R^{m}:\|p\|_* \le 1\}  & {\rm if} \; \; \|x\| = 0,\\
\{p \in \R^{m}: \langle p,x \rangle = \|x\|, \|p\|_* = 1 \} & {\rm otherwise}.\\
\end{array}
\right.
\end{equation}
Then the primal problems $(P)$ in \eqref{primal} with $\mu \defeq \tau^{-1} > 0$ in the case $\Psi = \Psi_1$ 
and $\mu \defeq \lambda > 0$ in the case $\Psi = \Psi_2$ become
\begin{eqnarray*}
(P_{1,\tau}) && \argmin_{x \in \R^n}  \left\{ \Phi(x) \st \| Lx\| \le \tau \right\}, 
\\ 
(P_{2,\lambda}) && \argmin_{x \in \R^n} \left\{ \Phi(x) \; + \; \lambda \| Lx\| \right\} 
\end{eqnarray*}
and the dual problems $(D)$ in \eqref{dual} read
\begin{eqnarray*}
(D_{1,\tau}) && \argmin_{p \in \R^m} \left\{\Phi^*(-L^* p) \; + \; \tau \|p\|_* \right\},  
\\
(D_{2,\lambda}) && \argmin_{p \in \R^m} \left\{ \Phi^*(-L^* p)  \st
\| p \|_* \le \lambda \right\} 
\end{eqnarray*}
We will also consider the cases  $\tau=0$ and $\lambda = 0$.
In what follows we will assume that 
$F_P \defeq \Phi: \R^n \rightarrow \R \cup \{+\infty\}$ and
$F_D \defeq \Phi^*(-L^* \cdot): \R^m \rightarrow \R \cup \{ + \infty\}$
are invariant under translation in direction of subspaces $U_{P,2}$
and $U_{D,2}$, respectively.
Speaking now in terms of a general function 
$F: \R^n \rightarrow \R \cup \{+\infty \}$ we could of course 
always make the uninteresting choice $U_2 \defeq \{\zerovec\}$;
so more 
precisely we are interested in those decompositions $\R^n = U_1 \oplus U_2$ with
$F(u + u_2) = F(u)$ for all $u \in \R^n$, $u_2 \in U_2$, in which
$U_2$ is chosen as large as possible, so that the essential properties of
$F$ can be revealed by considering $F|_{U_1}$.
In case of ${\rm aff}(\dom F|_{U_1}) = U_1$ we do not need to refine the decomposition 
$\R^n = U_1 \oplus U_2$ and can think of $F$ to be essentially given by 
$f = F|_{U_1}$.
In case of ${\rm aff}(\dom F|_{U_1}) \subset U_1$, however, it can be convenient to 
refine the decomposition $\R^n = U_1 \oplus U_2$ by writing 
${\rm aff}(\dom F|_{U_1}) = a + X_1$ with some $a \in {\rm aff}(\dom F|_{U_1})$ and a
vector subspace $X_1 \subseteq \R^n$; after choosing some vector subspace 
$X_3$ with $U_1 = a + X_1 \oplus X_3$ and setting $X_2 \defeq U_2$ we have
$\R^n = a + X_1 \oplus X_2 \oplus X_3$ and can think of $F$ to be given 
essentially by $F|_{a + X_1}$, since the inclusion $\dom F \subseteq a + X_1 \oplus X_2$
just means that $F(x) = F(a + x_1 + x_2 + x_3)$ equals $+\infty$ for $x_3 \not = \zerovec$
and $F(a + x_1 + x_2) = F(a + x_1)$ for $x_3 = \zerovec$.
\\[1ex]
In those cases where $\zerovec \in {\rm aff}(\dom F|_{U_1})$ or where $F$ is replaceable
by $F(\cdot - a)$ we can even assume $a = \zerovec$ so that we have
$\R^n = X_1 \oplus X_2 \oplus X_3$ and can think of $F$ to be given in its 
essence by $F|_{X_1}$ on $X_1$, then extended to a larger subspace 
$X_1 \oplus X_2$ by demanding translation invariance in direction $X_2$, 
and finally set to $+ \infty$ on $\R^n \setminus (X_1 \oplus X_2)$.
This is the core structure, which $\Phi$ will now be demanded to have.
In addition $X_1$, $X_2$ and $X_3$ shall be pairwise orthogonal:
\\[1ex]
Let $\Phi$'s domain $\R^n$ have a decomposition 
$\R^n = X_1 \oplus X_2 \oplus X_3$ into pairwise orthogonal
subspaces such that
\begin{equation} \label{new_setting_Phi}
  \Phi(x) = \Phi(x_1 + x_2 + x_3) =
  \begin{cases}
    \phi(x_1)  & \text{ if } x_3 = \zerovec
    \\
    + \infty   & \text{ if } x_3 \not = \zerovec
  \end{cases},
\end{equation}
where $\phi = \Phi|_{X_1}: X_1 \rightarrow \R \cup \{+\infty\}$ is a function meeting the following demands:
\iftime{Ist die Offenheit von dom $\phi$ noch noetig?}

\begin{enumerate}
  \item \label{enu:dom_phi_shall_touch_origin}
      $\dom \phi$ is an open subset of $X_1$ with $\zerovec \in \overline{ \dom \phi }$,
  \item \label{enu:phi_shall_be_strictly_convex_and_ess_smooth}
      $\phi$ belongs to $\Gamma_0(X_1)$ and is strictly convex and essentially smooth (compare \cite[p. 251]{Rockafellar1970}),
  \item \label{enu:phi_should have_a_minimizer}
      $\phi$ has a  minimizer.
\end{enumerate}

The following lemma shows that the subdifferentials of $\phi$ and $\Phi$ are closely related and that $\Phi^*$ is
of the same basic structure as $\Phi$, whereas the roles of $X_2$ and $X_3$ are interchanged. 
Note that for the proof of the first two parts we use only the direct 
decomposition of $\Phi$'s domain $\R^n$ into the pairwise orthogonal 
subspaces 
$X_1, X_2, X_3$; none of the additional properties of $\phi$ is needed.
\begin{lemma} \label{lem:subgradient_and_conjugate_function_in_our_setting}
  For a function $\Phi$ fulfilling the setting in \eqref{new_setting_Phi} and any points $x, x^* \in \R^n$ 
  the following holds true:
  \begin{enumerate}
    \item \label{enu:subgradient_in_our_setting}
      $
        \partial \Phi(x) 
	=
	\partial \Phi(x_1 + x_2 + x_3) 
	=
	\begin{cases}
	  \emptyset                                                         & \text { if } x_3 \not = \zerovec
	  \\
	  \partial \phi(x_1)  \oplus   \{\zerovec\} \oplus  X_3    & \text{ if } x_3 = \zerovec.
	\end{cases}
      $
    \item \label{enu:conjugate_function_in_our_setting}
      $
	\Phi^*(x^*) 
	=
	\Phi^*(x^*_1 + x^*_2 + x^*_3)
	=
	\begin{cases}
	  \phi^*(x_1^*)   & \text{ if } x_2^* = \zerovec
	  \\
	  + \infty	  & \text{ if } x_2^* \not = \zerovec
	\end{cases}
      $, where
    \item \label{enu:dual_properties_in_our_setting}
      \begin{itemize}
        \item 
	  $\phi^*$ belongs to $\Gamma_0(X_1)$ 
	  and is essentially smooth and essentially strictly convex 
	  (compare \cite[p. 253]{Rockafellar1970})
	\item 
	  $\zerovec \in {\rm int}(\dom \phi^*)$ and $\zerovec \in {\rm ri}(\dom \Phi^*)$
      \end{itemize}
  \end{enumerate}

\end{lemma}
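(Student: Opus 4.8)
\textbf{Plan of proof for Lemma \ref{lem:subgradient_and_conjugate_function_in_our_setting}.}

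The plan is to prove the three parts essentially in order, since the later parts lean on the structural identity established in part \upref{enu:conjugate_function_in_our_setting}. For part \upref{enu:subgradient_in_our_setting}, I would first note that if $x_3 \neq \zerovec$ then $\Phi(x) = \pinfty$, and since $\Phi$ takes the value $\pinfty$ on the whole affine translate $x + X_3$ while being finite somewhere (as $\phi$ is proper), no affine minorant can be exact at $x$; hence $\partial\Phi(x) = \emptyset$. For $x_3 = \zerovec$, write $x = x_1 + x_2$ and test the subgradient inequality $\Phi(x) + \langle p, y - x\rangle \leq \Phi(y)$ against arbitrary $y = y_1 + y_2 + y_3$. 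Decomposing $p = p_1 + p_2 + p_3$ along the orthogonal sum $X_1 \oplus X_2 \oplus X_3$ and using $\langle p, y - x\rangle = \langle p_1, y_1 - x_1\rangle + \langle p_2, y_2 - x_2\rangle + \langle p_3, y_3\rangle$, I would isolate the conditions: letting $y_3 \to \infty$ forces the $p_3$-component to be free (it contributes nothing since $\Phi$ only sees $y_3 = \zerovec$), letting $y_2$ vary forces $p_2 = \zerovec$, and the remaining inequality is exactly $\phi(x_1) + \langle p_1, y_1 - x_1\rangle \leq \phi(y_1)$, i.e. $p_1 \in \partial\phi(x_1)$. This yields $\partial\Phi(x) = \partial\phi(x_1) \oplus \{\zerovec\} \oplus X_3$.

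For part \upref{enu:conjugate_function_in_our_setting}, I would compute directly from the definition $\Phi^*(x^*) = \sup_{x} \{\langle x^*, x\rangle - \Phi(x)\}$. Since $\Phi(x) = \pinfty$ unless $x_3 = \zerovec$, the supremum is over $x = x_1 + x_2$ with $x_1 \in \dom\phi$, and $\langle x^*, x\rangle = \langle x_1^*, x_1\rangle + \langle x_2^*, x_2\rangle$ by orthogonality. The term $\sup_{x_2 \in X_2}\langle x_2^*, x_2\rangle$ equals $0$ if $x_2^* = \zerovec$ and $\pinfty$ otherwise; when it is finite, what remains is $\sup_{x_1}\{\langle x_1^*, x_1\rangle - \phi(x_1)\} = \phi^*(x_1^*)$. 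Hence $\Phi^*(x^*) = \phi^*(x_1^*)$ if $x_2^* = \zerovec$ and $\pinfty$ otherwise, which is the claimed form with the roles of $X_2$ and $X_3$ swapped.

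For part \upref{enu:dual_properties_in_our_setting}, the statement that $\phi^*$ is essentially smooth and essentially strictly convex is precisely the conjugacy duality between essential smoothness and essential strict convexity for a closed proper convex function, which is \cite[Theorem 26.3]{Rockafellar1970}: $\phi \in \Gamma_0(X_1)$ strictly convex and essentially smooth implies $\phi^* \in \Gamma_0(X_1)$ essentially smooth and essentially strictly convex. For the interior conditions, I would use that $\phi$ has a minimizer, say $\widehat x_1$; then $\zerovec \in \partial\phi(\widehat x_1)$, and by conjugacy $\widehat x_1 \in \partial\phi^*(\zerovec)$, so $\partial\phi^*(\zerovec) \neq \emptyset$, which places $\zerovec$ in $\dom\phi^*$; moreover essential smoothness of $\phi^*$ means $\dom\phi^*$ has the property that $\phi^*$ is differentiable on its interior and the gradient blows up at the relative boundary, and since $\partial\phi^*(\zerovec)$ is nonempty and $\phi^*$ cannot be subdifferentiable at a relative-boundary point of an essentially smooth function, $\zerovec$ must lie in ${\rm int}(\dom\phi^*)$ (using here that essential smoothness forces $\dom\phi^*$ to be, up to its interior, where subgradients live; more carefully, one invokes \cite[Lemma 26.2]{Rockafellar1970}). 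Finally $\zerovec \in {\rm ri}(\dom\Phi^*)$ follows because $\dom\Phi^* = \dom\phi^* \oplus \{\zerovec\} \oplus X_3$ by part \upref{enu:conjugate_function_in_our_setting}, so ${\rm ri}(\dom\Phi^*) = {\rm int}(\dom\phi^*) \oplus \{\zerovec\} \oplus X_3 \ni \zerovec$, using that the relative interior of a direct sum is the direct sum of the (relative) interiors.

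The main obstacle I anticipate is the clean bookkeeping in part \upref{enu:subgradient_in_our_setting}: one must be careful that the orthogonality of $X_1, X_2, X_3$ is what makes the inner product split, and that the ``free $X_3$ direction'' in $\partial\Phi$ arises precisely because $\Phi$ ignores the $x_3$-component on its domain while being $+\infty$ off it — so the subgradient inequality imposes no constraint on $p_3$ but does require $p_2 = \zerovec$. The subtlety in part \upref{enu:dual_properties_in_our_setting} is justifying $\zerovec \in {\rm int}(\dom\phi^*)$ rather than merely $\zerovec \in \dom\phi^*$; this is exactly where one needs that an essentially smooth function is not subdifferentiable at relative-boundary points of its domain, combined with $\partial\phi^*(\zerovec) \neq \emptyset$.
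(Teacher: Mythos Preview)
Your proposal is correct. The route, however, differs from the paper's in two places worth noting.

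For parts \upref{enu:subgradient_in_our_setting} and \upref{enu:conjugate_function_in_our_setting} you argue directly from the definitions, splitting the inner product along the orthogonal decomposition and isolating the constraints on each component of $p$ (resp.\ on each component of $x^*$). The paper instead packages this once and for all into a general result on semidirect sums: writing $\Phi = \phi \sdirsum 0_{X_2} \sdirsum \iota_{\{\zerovec\}}$, it invokes \prettyref{thm:conjugate_and_subdiff_of_direct_orthogonal_sum}, which gives $\partial(\bigsdirsum f_i)(x) = \bigoplus \partial f_i(x_i)$ and $(\bigsdirsum f_i)^* = \bigsdirsum f_i^*$ for orthogonal decompositions. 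Your hands-on computation is exactly what that theorem encodes, so nothing is lost; the paper's version just makes the structural reason (orthogonality $\Rightarrow$ separability of both $\partial$ and ${}^*$) explicit and reusable.

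For $\zerovec \in {\rm int}(\dom\phi^*)$ in part \upref{enu:dual_properties_in_our_setting} you give a genuinely different argument: from $\zerovec \in \partial\phi(\widehat x_1)$ you get $\partial\phi^*(\zerovec) \neq \emptyset$, and since $\phi^*$ has already been shown essentially smooth, Theorem~26.1 of Rockafellar forces $\zerovec$ into the interior of $\dom\phi^*$ (an essentially smooth function has empty subdifferential outside ${\rm int}(\dom)$). The paper instead argues via level-boundedness: the unique minimizer of the strictly convex $\phi$ is a bounded nonempty level set, so all level sets of $\phi$ are bounded (\cite[Corollary 8.7.1]{Rockafellar1970}), and this is equivalent to $\zerovec \in {\rm int}(\dom\phi^*)$ by \cite[Corollary 14.2.2]{Rockafellar1970}. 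Both are clean; your argument uses only the existence of a minimizer and the already-derived essential smoothness of $\phi^*$, while the paper's argument additionally exploits strict convexity (to get a singleton minimizer) but avoids invoking essential smoothness of $\phi^*$ for this step.
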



\begin{proof}
  \upref{enu:subgradient_in_our_setting} and 
  \upref{enu:conjugate_function_in_our_setting} 
  We rewrite $\Phi$ in the form 
  $\Phi = \Phi_1 \sdirsum \Phi_2 \sdirsum \Phi_3$, where
  \begin{align*}
    &\Phi_1 = \phi       : X_1 \rightarrow \R \cup \{+\infty\},
    &\Phi_2 = 0_{X_2}    : X_2 \rightarrow \R, 
    &&\Phi_3 = \iota_{\{\zerovec\}} : X_3 \rightarrow \R \cup \{+\infty\}.
  \end{align*}
  Since $\R^n = X_1 \oplus X_2 \oplus X_3$ is a direct decomposition 
  into pairwise orthogonal subspaces we can apply Theorem 
  \ref{thm:conjugate_and_subdiff_of_direct_orthogonal_sum}
  and obtain
  \begin{gather*}
    \partial \Phi (x)
    =
    \partial \Phi_1(x_1) 
    \oplus 
    \partial \Phi_2(x_2)
    \oplus 
    \partial \Phi_3(x_3)
    =
    \partial \phi(x_1)
    \oplus 
    \{\zerovec\}
    \oplus 
    S_3(x_3),
  \end{gather*}
  where $S_3(x_3) = \emptyset$ for $x_3 \not = \zerovec$ and 
  $S_3(x_3) = X_3$ for $x_3 = \zerovec$, as well as
  \begin{gather*}
    \Phi^*(x^*)
    =
    \Phi_1^*(x^*_1)    
    +
    \Phi_2^*(x^*_2)
    +
    \Phi_3^*(x^*_3)
    =
    \phi^*(x^*_1)
    +
    \iota_{\zerovec}(x^*_2)
    + 
    0
  \end{gather*}

  \ref{enu:dual_properties_in_our_setting})
  $\phi \in \Gamma_0(X_1)$ implies $\phi^* \in \Gamma_0(X_1)$.
  Changing the coordinate system via an orthogonal transformation
  $\tilde x \mapsto x = Q\tilde x$ changes $\phi$ and $\phi^*$ 
  in the same way:
  If $\tilde \phi (\tilde x) = \phi (Q \tilde x)$ then also
  $\tilde \phi^*(\tilde x) = \phi^*(Q \tilde x)$.
  Hence \cite[Theorem 26.3]{Rockafellar1970} can be extended for functions like
  $\phi, \phi^*:X_1 \rightarrow \R \cup \{+ \infty\}$,
  which are only defined on a subspace $X_1$ of $\R^n$.
  So the strict convexity of $\phi$ implies that $\phi^*$ is essentially smooth
  and the essentially smoothness of $\phi$ implies that $\phi^*$ is
  essentially strictly convex.
  In order to prove $\zerovec \in {\rm int}(\dom \phi^*)$ we note that
  $\argmin \phi$, consisting of just one element, 
  is a nonempty and bounded level set of $\phi$. Consequently all level sets
  ${\rm lev}_{\alpha}(\phi)$, $\alpha \in \R$, are bounded,
  compare \cite[Corollary 8.7.1]{Rockafellar1970}. This implies
  $\zerovec \in {\rm int}(\dom \phi^*)$ (of course regarded relative to $X_1$),
  compare \cite[Corollary 14.2.2]{Rockafellar1970}. 
  Therefrom we finally obtain
  $\zerovec \in {\rm int}( \dom \phi^*) \oplus X_3 = {\rm ri} (\dom \Phi^*)$,
  because
  $\dom \phi^* \oplus X_3 = \dom \Phi^*$ by part
  \ref{enu:conjugate_function_in_our_setting}).
\end{proof}

\begin{remark} \label{rem:assumptions_for_fenchel_dualty_theorem_fulfilled}
  By our setting -- in first line by the condition 
  \upref{enu:dom_phi_shall_touch_origin} on $\Phi$ -- 
  we have $\zerovec \in \overline{\dom \Phi}$ and also 
  $\zerovec \in {\rm ri} (\dom \Phi^*)$
  by Lemma
  \ref{lem:subgradient_and_conjugate_function_in_our_setting}.
  Therefore our setting ensures that the assumptions i) - iv) of Lemma
  \ref{fenchel-dual-hom} are fulfilled:
  Regarding the first three assumptions
  we note $\mathcal{R}(L) = {\rm ri}(\mathcal{R}(L))$ so that 
  every of these assumptions is of the form 
  \begin{gather*}
    {\rm ri}(A) \cap {\rm ri}(B) \not = \emptyset
  \end{gather*}
  with convex subsets $A,B$ of some Euclidean space.
  Both for $\Psi = \Psi_1$ and $\Psi = \Psi_2$ 
  we have $\zerovec \in \overline{A}$ and $\zerovec \in {\rm int}(B)$ for sets 
  $A,B$ corresponding to condition i), ii) or iii)  of Lemma
  \ref{fenchel-dual-hom}, respectively.
  Since $A$ is in any case convex and nonempty there is some
  $a_k \in {\rm ri}(A)$ with
  $a_k \rightarrow \zerovec$,
  cf. Theorem 
  \ref{thm:raypoint_from_ri__leaving_closure__never_comes_back}.
  Hence we have also $a_K \in {\rm int}(B)$ for a large enough $K$.
  In particular 
  $
    {\rm ri}(A) \cap {\rm ri}(B)
    = 
    {\rm ri}(A) \cap {\rm int}(B)
    \not = 
    \emptyset
  $.
  Also the fourth assumption of Lemma \ref{fenchel-dual-hom} is 
  clearly fulfilled in our setting, since 
  $\zerovec \in \mathcal{R}(-L^*) \cap {\rm ri}(\dom \Phi^*)$.
\end{remark}


\subsection{Properties of the solver sets and the relation between their parameters} 

The next theorem shows that all our problems 
$(P_{1,\tau})$,
$(D_{1,\tau})$,
$(P_{2,\lambda})$,
$(D_{2,\lambda})$
have a solution for $\tau > 0$ and $\lambda > 0$ if certain conditions on 
$\argmin \Phi$ and $\mathcal{N}(L) = \argmin \|L \cdot\|$
are fulfilled.

\begin{theorem} \label{thm:existence_of_solvers}
  Let $\Phi\in \Gamma_0(\R^n)$ be a function
  fulfilling the setting \eqref{new_setting_Phi} and let $L \in \R^{m,n}$ so that
  $X_2 \cap  {\cal N} (L) = \{\zerovec\}$ and 
  $\argmin \Phi \cap  {\cal N} (L) = \emptyset$.
  Then all solver sets 
  ${\rm SOL}(P_{1,\tau})$,
  ${\rm SOL}(D_{1,\tau})$,
  ${\rm SOL}(P_{2,\lambda})$,
  ${\rm SOL}(D_{2,\lambda})$
  are nonempty for 
  $\tau \in (0, +\infty)$, $\lambda \in (0, + \infty)$ 
  and the corresponding minima are finite.
\end{theorem}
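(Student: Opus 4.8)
The plan is to reduce all four solver-set existence statements to a single application of Fenchel's Duality relation (Lemma \ref{fenchel-dual-hom}) combined with the coercivity/existence machinery developed earlier, in particular Lemma \ref{lem:levelsets_and_existence_of_minimizer_of_sum} and Lemma \ref{lem:application_of_compact_continuations_to_get_coercivity_and_lsc_of_sum}. First I would recall that, by Remark \ref{rem:assumptions_for_fenchel_dualty_theorem_fulfilled}, the four regularity assumptions i)--iv) of Lemma \ref{fenchel-dual-hom} are automatically satisfied in our setting, for the choice $\Psi = \Psi_1$ (with $\mu = \tau^{-1}$) and for the choice $\Psi = \Psi_2$ (with $\mu = \lambda$). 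Consequently, for each fixed $\tau \in (0,+\infty)$, the primal problem $(P_{1,\tau})$ has a solution if and only if the dual $(D_{1,\tau})$ has one; likewise $(P_{2,\lambda})$ has a solution iff $(D_{2,\lambda})$ does, for each $\lambda \in (0,+\infty)$. So it suffices to prove that $(P_{2,\lambda})$ has a solution for every $\lambda > 0$ and that $(D_{1,\tau})$ has a solution for every $\tau > 0$; the remaining two cases then follow by duality. (One must also check finiteness of the minima, but this is immediate once a minimizer exists and the effective domains overlap, which is guaranteed by $\zerovec \in \overline{\dom \Phi}$ together with $\mathcal{R}(L) \cap \dom \Psi_j(\mu\cdot) \neq \emptyset$.)

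Next I would handle $(P_{2,\lambda})$, i.e. the existence of a minimizer of $x \mapsto \Phi(x) + \lambda\|Lx\|$. The key observation is that $\Phi = \phi \sdirsum 0_{X_2} \sdirsum \iota_{\{\zerovec\}}$ inherits translation invariance in direction $X_2$, while $\lambda\|L\cdot\|$ inherits translation invariance in direction $\mathcal{N}(L)$. Writing $\R^n = U_{P,1} \oplus X_2$ and $\R^n = V_{P,1} \oplus \mathcal{N}(L)$ with $U_{P,1} = X_1 \oplus X_3$ and $V_{P,1} = \mathcal{N}(L)^\perp$, I would invoke Lemma \ref{lem:levelsets_and_existence_of_minimizer_of_sum}, part \upref{enu:sum_takes_it_minimum}: its hypotheses require $\dom\Phi \cap \dom(\lambda\|L\cdot\|) \neq \emptyset$ (true since $\zerovec$ is in both closures and, more concretely, by the overlap condition), $X_2 \cap \mathcal{N}(L) = \{\zerovec\}$ (assumed), and that the restricted functions have nonempty bounded level sets. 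The level sets of $\|L\cdot\||_{V_{P,1}}$ are bounded because $L|_{V_{P,1}}$ is injective hence normcoercive by Theorem \ref{thm:linear_mapping_coercive_iff_injective}. For $\Phi|_{U_{P,1}}$ one uses that $\phi$ has a minimizer and is proper convex lsc, so all its level sets are bounded by \cite[Corollary 8.7.1]{Rockafellar1970}, and restricting along $X_3$ (on which $\Phi$ equals $\iota_{\{\zerovec\}}$) keeps boundedness; here the condition $\argmin\Phi \cap \mathcal{N}(L) = \emptyset$ is what guarantees the relevant level sets $\mathrm{lev}_\alpha(\Phi|_{U_{P,1}})$ are genuinely nonempty for the $\alpha$ realized on $\dom\Phi \cap \dom(\|L\cdot\|)$ while keeping the sum coercive. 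Alternatively, and perhaps more cleanly, I could apply Lemma \ref{lem:application_of_compact_continuations_to_get_coercivity_and_lsc_of_sum} directly with $H$ the projection onto $X_1 \oplus X_3$ encoding $\Phi$ and $K = L$, after checking $\mathcal{N}(H) \cap \mathcal{N}(K) = \{\zerovec\}$, which reduces exactly to $X_2 \cap \mathcal{N}(L) = \{\zerovec\}$ together with $\argmin\Phi \cap \mathcal{N}(L)$ controlling the coercivity of $\phi$'s part.

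Then I would treat $(D_{1,\tau})$, the existence of a minimizer of $p \mapsto \Phi^*(-L^*p) + \tau\|p\|_*$. By Lemma \ref{lem:subgradient_and_conjugate_function_in_our_setting}, $\Phi^*$ has the same structural form as $\Phi$ but with the roles of $X_2$ and $X_3$ swapped: $\Phi^* = \phi^* \sdirsum \iota_{\{\zerovec\}}\ (\text{on } X_2) \sdirsum 0_{X_3}$, with $\phi^*$ essentially strictly convex, essentially smooth, and having $\zerovec \in \mathrm{int}(\dom\phi^*)$. So $\widetilde\Phi := \Phi^*(-L^*\cdot)$ is translation invariant in direction $\mathcal{N}(L^*) = \mathcal{R}(L)^\perp$, and the argument is symmetric: I apply Lemma \ref{lem:levelsets_and_existence_of_minimizer_of_sum}\upref{enu:sum_takes_it_minimum} again, now with $U_{D,2} = \mathcal{N}(L^*)$ and the second function $\tau\|\cdot\|_*$ having bounded level sets (a norm on $\R^m$ is coercive). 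The conditions to verify are $\mathcal{N}(L^*) \cap \{\zerovec\} = \{\zerovec\}$ (trivial, since $\|\cdot\|_*$ has trivial translation-invariance space), nonempty overlap of effective domains (which holds since $\zerovec \in \mathcal{R}(-L^*) \cap \mathrm{ri}(\dom\Phi^*)$, from Remark \ref{rem:assumptions_for_fenchel_dualty_theorem_fulfilled}), and boundedness plus nonemptiness of $\mathrm{lev}_\alpha(\widetilde\Phi|_{\mathcal{N}(L^*)^\perp})$. The main obstacle here is bounding the level sets of $\widetilde\Phi$ restricted to $\mathcal{N}(L^*)^\perp = \mathcal{R}(L)$: this needs that $\phi^*$ composed with $-L^*$ is coercive along $\mathcal{R}(L)$, which in turn requires that $\mathcal{R}(-L^*)$ meets $\mathrm{int}(\dom\phi^*)$ — true because $\zerovec \in \mathrm{int}(\dom\phi^*)$ — and that the composed linear map is injective on the relevant complement, which is exactly where the hypothesis $X_2 \cap \mathcal{N}(L) = \{\zerovec\}$ re-enters (it is equivalent to $\mathcal{N}(L^*) \cap X_3^\perp$-type conditions making the dual projection-composition injective, cf. Lemma \ref{lem:nullspace_of_projections}\upref{enu:where_duo_of_proj_is_injective}). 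I expect this dual-side coercivity bookkeeping — matching up which subspace intersections translate into which injectivity statement under the $X_2 \leftrightarrow X_3$ swap — to be the most delicate part; everything else is a routine invocation of the already-established lemmas, and finiteness of the minima follows automatically since in each case the two effective domains have nonempty intersection, so the value at any minimizer is a finite real number.
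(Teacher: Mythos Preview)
Your overall strategy --- prove two of the four existence statements directly and deduce the other two via Lemma~\ref{fenchel-dual-hom} --- is sound, and your direct argument for $(P_{2,\lambda})$ via Lemma~\ref{lem:levelsets_and_existence_of_minimizer_of_sum}\,\upref{enu:sum_takes_it_minimum} is correct (the hypothesis $\argmin\Phi\cap\mathcal{N}(L)=\emptyset$ plays no role there, incidentally). The problem is your choice of which two problems to attack directly: you picked $(P_{2,\lambda})$ and $(D_{1,\tau})$, whereas the paper picks $(D_{2,\lambda})$ and $(P_{1,\tau})$. The paper's choice is not merely a matter of taste --- your direct argument for $(D_{1,\tau})$ has a genuine gap.

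To apply Lemma~\ref{lem:levelsets_and_existence_of_minimizer_of_sum}\,\upref{enu:sum_takes_it_minimum} to $F=\Phi^*(-L^*\cdot)$ and $G=\tau\|\cdot\|_*$ with $U_1=\mathcal{R}(L)$, $U_2=\mathcal{N}(L^*)$, you need a nonempty \emph{bounded} level set of $F|_{\mathcal{R}(L)}$. You argue this follows because ``$\phi^*$ composed with $-L^*$ is coercive along $\mathcal{R}(L)$, which in turn requires that $\mathcal{R}(-L^*)$ meets $\mathrm{int}(\dom\phi^*)$''. But intersecting the interior of the domain does not give coercivity: the setting only guarantees $\zerovec\in\mathrm{int}(\dom\phi^*)$, not that $\phi^*$ has bounded level sets. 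Indeed, bounded level sets of $\phi^*$ would be equivalent (via \cite[Corollary~14.2.2]{Rockafellar1970}) to $\zerovec\in\mathrm{int}(\dom\phi)$, yet the setting assumes only $\zerovec\in\overline{\dom\phi}$. Example~\ref{exa:Sol_can_be_empty_for_tau_eq_0} exhibits exactly this failure: there $\Phi^*(p)=-\log(1-p)$ is unbounded below, so $\Phi^*(-L^*\cdot)|_{\mathcal{R}(L)}$ has no bounded level set at all, and your lemma does not apply --- even though $(D_{1,\tau})$ does have a solution for $\tau>0$.

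The paper sidesteps this by attacking the \emph{easy} member of each primal--dual pair directly. For $(D_{2,\lambda})$ the constraint set $\{\|p\|_*\le\lambda\}$ is compact, so a minimizer exists immediately by lower semicontinuity; duality then gives $(P_{2,\lambda})$. For $(P_{1,\tau})$ one applies Lemma~\ref{lem:levelsets_and_existence_of_minimizer_of_sum}\,\upref{enu:sum_takes_it_minimum} with $F=\Phi$ and $G=\iota_{\mathrm{lev}_\tau\|L\cdot\|}$ --- and here the required level-boundedness of $\phi$ \emph{is} available, since $\phi$ has a minimizer by assumption~\upref{enu:phi_should have_a_minimizer} of the setting; duality then gives $(D_{1,\tau})$. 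Your argument is easily repaired by swapping the roles: keep your $(P_{2,\lambda})$ argument (or replace it by the one-line compactness argument for $(D_{2,\lambda})$), and for the $\tau$-pair do $(P_{1,\tau})$ directly instead of $(D_{1,\tau})$.
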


\begin{proof}
  Note in the following that the requirements i) - iv) of 
  Lemma \ref{fenchel-dual-hom} are fulfilled. 
  Let $\lambda > 0$. Since 
  $\Phi(-L^* \cdot)$
  is lower semicontinuous on the compact Ball
  $
    B 
    \defeq 
    \overline{\mathbb{B}}_{\lambda}(\zerovec)[\|\cdot\|_*] 
    \defeq 
    \{p\in \R^m: \|p\|_* \leq \lambda\}
  $
  we have ${\rm SOL}(D_{2,\lambda}) \not = \emptyset$.
  The attained minimum is finite, because
  $\zerovec \in B \cap \dom(\Phi^*(-L^* \cdot))$
  holds true by part \ref{enu:dual_properties_in_our_setting}) of
  Lemma \ref{lem:subgradient_and_conjugate_function_in_our_setting}.
  Lemma \ref{fenchel-dual-hom} ensures that also 
  ${\rm SOL}(P_{2,\lambda}) \not = \emptyset$,
  where the attained minimum is finite, since
  $\dom ( \Phi + \lambda \|L \cdot\|) = \dom \Phi \not = \emptyset$.
  Let now $\tau > 0$. 
  We get
  ${\rm SOL}(P_{1,\tau}) \not = \emptyset$,
  by part \ref{enu:sum_takes_it_minimum}) of
  Lemma \ref{lem:levelsets_and_existence_of_minimizer_of_sum},
  applied to 
  $F \defeq \Phi$, 
  $U_1 \defeq X_1 \oplus  X_3$, $U_2 \defeq X_2$
  and
  $G \defeq \iota_{{\rm lev}_\tau \|L \cdot\|}$,
  $V_1 \defeq \mathcal{R}(L^*)$, $V_2 \defeq \mathcal{N}(L)$;
  the assumption of this Lemma are checked in 
  Detail \ref{det:pr_of_sol_existence:check_of_assumptions_of_lemma}.
  %
\iftime{EVT. schon in Fenchel Dual Hom reinpacken: angenommene Minima endlich}
  Due to the therein appearing relation
  $\dom \Phi \cap {\rm lev}_\tau \|L \cdot\| \not = \emptyset$
  the attained minimum is finite.
  Lemma \ref{fenchel-dual-hom} gives now 
  ${\rm SOL}(D_{1,\tau}) \not = \emptyset$,
  where the attained minimum is also finite since
  $
    \dom (\Phi^*(-L^* \cdot) +\tau \|\cdot\|_*)
    =
    \dom \Phi^*(-L^* \cdot)
    \not =
    \emptyset
  $.
\end{proof}

Recall in the next theorem that $\inf \emptyset = + \infty$ since any $m \in [-\infty, +\infty]$
is a lower bound of $\emptyset \subseteq [-\infty, +\infty]$.
The theorem states that there are three main areas where our solver sets
${\rm SOL}(P_{1,\tau})$
and
${\rm SOL}(P_{2,\lambda})$
must be located:
either they are completely contained in $\argmin \|L \cdot\| = \mathcal{N}(L)$ or $\argmin \Phi$, 
or they are located ``between'' them, in the sense of
${\rm SOL}(P_\bullet)    \cap \mathcal{N}(L) = \emptyset$
and
${\rm SOL}(P_\bullet)    \cap \argmin \Phi   = \emptyset$.
Similar relations hold true for
${\rm SOL}(D_{1,\tau})$
and
${\rm SOL}(D_{2,\lambda})$.
Note that 
${\rm SOL}(D_{1,\tau}) = \emptyset$ can happen in the border case 
$\tau = 0$ as we show in 
Example \ref{exa:Sol_can_be_empty_for_tau_eq_0}.
Also notice in the following theorem that 
$OP(\Phi, \|L\cdot\|)$ can either be $(0, +\infty)$ or
$[0, +\infty)$ for a function $\Phi$ which fulfills our setting 
\eqref{new_setting_Phi}. In case 
$\tau \not \in OP(\Phi, \|L\cdot\|)$ we have to be carefull
when regarding the problem 
\begin{gather*}
  \argmin_{x \in \R^n}  
  \left\{ \Phi(x) \st  \| Lx\| \le \tau 
  \right\},
\end{gather*}
since rewriting it to  
\begin{gather*}
  \argmin_{x \in \R^n}  
  \left\{ \Phi(x) + \iota_{{\rm lev}_{\tau}\|L \cdot \|}  (x) 
  \right\}
\end{gather*}
is not possible in this case, cf. the table on page 
\pageref{tab:unconstrained_vs_constrained_perspecive}.

%
\begin{theorem}  \label{thm:localisation_of_the_solvers}
  Let $\Phi\in \Gamma_0(\R^n)$ be a function
  fulfilling the setting \eqref{new_setting_Phi} and let $L \in \R^{m,n}$ so that
  $X_2 \cap  {\cal N} (L) = \{\zerovec\}$, 
  $X_3 \cap  \mathcal{R}(L^*) = \{\zerovec\}$
  and 
  $\argmin \Phi \cap  {\cal N} (L) = \emptyset$.
  Then the values
  \begin{align} 
      c &\defeq \inf_{x \in \argmin \Phi } \|Lx\| = \min_{x \in \argmin \Phi } \|Lx\|, 
      \label{eq:the_tau_for_which_SOL(P_1,tau)_arrives_at_argmin_Phi_for_the_first_time}
      \\
      d &\defeq \inf_{p \in  \argmin \Phi^*(-L^* \cdot)} \|p\|_*
	  =
	  \begin{cases}
	    \min\limits_{p \in  \argmin \Phi^*(-L^* \cdot)} \|p\|_*, 
		& \text{ if } \argmin \Phi^*(-L^* \cdot) \not = \emptyset
	    \\
	    + \infty,
		& \text{ if } \argmin \Phi^*(-L^* \cdot)      = \emptyset
	  \end{cases}
      \label{eq:the_lambda_for_which_SOL(P_2,lambda)_arrives_at_argmin_G_for_the_first_time}
  \end{align}
  are positive. Their geometrical meaning for the primal and dual problems is 
  expressed by the equations
  \begin{align*}
    c &= \min \{ \tau \in [0, +\infty): 
	{\rm SOL}(P_{1,\tau}) \cap \argmin \Phi \not = \emptyset \}
      \\
      &= \min \{ \tau \in [0, +\infty): 
	{\rm SOL}(D_{1,\tau}) \cap \{ \zerovec \} \not = \emptyset \}
  \shortintertext{and}
    d &= \inf \{\lambda \in [0, +\infty): 
	{\rm SOL}(P_{2,\lambda}) \cap \mathcal{N}(L) \not = \emptyset\}
      \\
      &= \inf \{\lambda \in [0, +\infty): 
	{\rm SOL}(D_{2,\lambda}) \cap \argmin \Phi^*(-L^*\cdot) \not = \emptyset\},
  \end{align*}
  where the infima are actually minima of the latter two sets, 
  if one of them is not empty. Furthermore the value of $\tau$ allows to locate 
  ${\rm SOL}(P_{1,\tau})$ and ${\rm SOL}(D_{1,\tau})$, according to
  \begin{align*}
    {\rm SOL}(P_{1,\tau}) &\subseteq \mathcal{N}(L),
      &{\rm SOL}(D_{1,\tau}) &\subseteq \argmin\Phi^*(-L^* \cdot),
      &&\text{ if } \tau = 0
    \\
    \Bigg\{
    \begin{split}
      {\rm SOL}(P_{1,\tau}) &\cap \mathcal{N}(L) = \emptyset\\
      {\rm SOL}(P_{1,\tau}) &\cap \argmin\Phi    = \emptyset
    \end{split}
    \Bigg\},
    &
    \Bigg\{
    \begin{split}
       {\rm SOL}(D_{1,\tau}) &\cap \argmin\Phi^*(-L^* \cdot)  = \emptyset\\
       {\rm SOL}(D_{1,\tau}) &\cap \{\zerovec\}                      = \emptyset
    \end{split}
    \Bigg\},
    &&\text{ if } \tau \in (0,c)
    \\
    {\rm SOL}(P_{1,\tau}) &\subseteq \argmin \Phi,
      &{\rm SOL}(D_{1,\tau}) &\subseteq \{\zerovec\},
      &&\text{ if } \tau \in [c, +\infty).
  \end{align*}
  The value of $\lambda$ similar allows to locate
  ${\rm SOL}(P_{2,\lambda})$ and ${\rm SOL}(D_{2,\lambda})$, according to
  \begin{align*}
    {\rm SOL}(P_{2,\lambda}) &\subseteq \mathcal{N}(L),
      &{\rm SOL}(D_{2,\lambda}) &\subseteq \argmin\Phi^*(-L^* \cdot),
      &&\text{ if } \lambda \in [d, + \infty)
    \\
    \Bigg\{
    \begin{split}
      {\rm SOL}(P_{2,\lambda}) &\cap \mathcal{N}(L) = \emptyset\\
      {\rm SOL}(P_{2,\lambda}) &\cap \argmin\Phi    = \emptyset
    \end{split}
    \Bigg\},
    &
    \Bigg\{
    \begin{split}
       {\rm SOL}(D_{2,\lambda}) &\cap \argmin\Phi^*(-L^* \cdot)  = \emptyset\\
       {\rm SOL}(D_{2,\lambda}) &\cap \{\zerovec\}                      = \emptyset
    \end{split}
    \Bigg\},
    &&\text{ if } \lambda \in (0,d)
    \\
    {\rm SOL}(P_{2,\lambda}) &\subseteq \argmin \Phi,
      &{\rm SOL}(D_{2,\lambda}) &\subseteq \{\zerovec\},
      &&\text{ if } \lambda = 0.
  \end{align*}
\end{theorem}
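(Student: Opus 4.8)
The plan is to run through the assertions in the order: identify the two extremal sets $\argmin\Phi$ and $\argmin\Phi^{*}(-L^{*}\cdot)$ and deduce $c,d>0$; then settle the three‑zone localizations for the two \emph{constrained} problems $(P_{1,\tau})$, $(D_{2,\lambda})$ and the trivial endpoint cases; then transfer everything to the \emph{penalized} problems $(P_{2,\lambda})$, $(D_{1,\tau})$ via Fenchel duality; and finally read off the descriptions of $c$ and $d$ as thresholds. For the first step: since $\Phi(x_{1}+x_{2}+x_{3})=\phi(x_{1})$ for $x_{3}=\zerovec$ and $+\infty$ otherwise, and $\phi$ is strictly convex with minimizer $\bar x$ in the open set $\dom\phi$ (so $\partial\phi(\bar x)=\{\zerovec\}$ and, by \upref{enu:subgradient_in_our_setting} of \prettyref{lem:subgradient_and_conjugate_function_in_our_setting}, $\partial\Phi(\bar x)=X_{3}$), one gets $\argmin\Phi=\{\bar x\}\oplus X_{2}$. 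Using the conjugate subgradient rule together with the chain rule $\partial\bigl(\Phi^{*}(-L^{*}\cdot)\bigr)(p)=-L\,\partial\Phi^{*}(-L^{*}p)$ — legitimate because $\zerovec\in\mathcal R(-L^{*})\cap{\rm ri}(\dom\Phi^{*})$ by \prettyref{lem:subgradient_and_conjugate_function_in_our_setting} — one finds $\zerovec\in\argmin\Phi^{*}(-L^{*}\cdot)\iff\argmin\Phi\cap\mathcal N(L)\neq\emptyset$, which is excluded. Hence $c=\mathrm{dist}_{\|\cdot\|}\bigl(L\bar x,L[X_{2}]\bigr)$ (using that $L|_{X_{2}}$ is injective by $X_{2}\cap\mathcal N(L)=\{\zerovec\}$, so $L[X_{2}]$ is a closed subspace), positive and attained exactly because $L\bar x\notin L[X_{2}]$ — membership would exhibit a point of $\argmin\Phi\cap\mathcal N(L)$; and $d=\mathrm{dist}_{\|\cdot\|_{*}}\bigl(\zerovec,\argmin\Phi^{*}(-L^{*}\cdot)\bigr)$ with $\mathrm{dist}(\cdot,\emptyset)=+\infty$, positive and, when finite, attained since $\argmin\Phi^{*}(-L^{*}\cdot)$ is a nonempty closed convex set missing $\zerovec$.

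For the constrained problems: at $\tau=0$ the constraint $\|Lx\|\le0$ forces ${\rm SOL}(P_{1,0})\subseteq\mathcal N(L)$ straight from the definition, while $(D_{1,0})=\argmin\Phi^{*}(-L^{*}\cdot)$, $(P_{2,0})=\argmin\Phi$ and $(D_{2,0})=\{\zerovec\}$ because $0\cdot\|\cdot\|$ is the finite‑valued zero function. For $\tau\in(0,c)$: feasibility gives $\|Lx\|\le\tau<c=\min_{x\in\argmin\Phi}\|Lx\|$, so ${\rm SOL}(P_{1,\tau})\cap\argmin\Phi=\emptyset$; and $\hat x\in{\rm SOL}(P_{1,\tau})\cap\mathcal N(L)$ would be an interior point of the feasible set (here one checks $\hat x\in\dom\Phi$, using $\zerovec\in\overline{\dom\phi}$ so that the feasible set meets $\dom\Phi$), hence a local and therefore global minimizer of $\Phi$, forcing $\|L\hat x\|\ge c>0$, a contradiction. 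For $\tau\ge c$ the feasible set contains a global minimizer of $\Phi$, so ${\rm SOL}(P_{1,\tau})\subseteq\argmin\Phi$. The corresponding statements for $(D_{2,\lambda})$ follow by the same reasoning with $\Phi,\|L\cdot\|$ replaced by $\Phi^{*}(-L^{*}\cdot),\|\cdot\|_{*}$ and threshold $d$; the assertion $\zerovec\notin{\rm SOL}(D_{2,\lambda})$ for $\lambda\in(0,d)$ holds because $\zerovec$, an interior feasible point, would otherwise minimize $\Phi^{*}(-L^{*}\cdot)$, contradicting $\zerovec\notin\argmin\Phi^{*}(-L^{*}\cdot)$.

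For the penalized problems I would invoke Lemma~\ref{fenchel-dual-hom}: under our hypotheses its four regularity conditions hold (already noted in the paper), so any $\hat x\in{\rm SOL}(P_{\bullet})$ and any $\hat p\in{\rm SOL}(D_{\bullet})$ satisfy \eqref{primal-dual-relation}, and both solution sets are nonempty for the relevant parameters by \prettyref{thm:existence_of_solvers}. For $(P_{2,\lambda})$, $\lambda\ge d$: take $\hat p\in{\rm SOL}(D_{2,\lambda})\subseteq\argmin\Phi^{*}(-L^{*}\cdot)$; this produces $\tilde x\in\mathcal N(L)$ with $-L^{*}\hat p\in\partial\Phi(\tilde x)\cap\partial\Phi(\hat x)$, injectivity of $\partial\phi$ (strict convexity) gives $\hat x-\tilde x\in X_{2}$, so $\Phi(\tilde x)=\Phi(\hat x)$, $\|L\tilde x\|=0$, and optimality of $\hat x$ forces $\lambda\|L\hat x\|\le0$, i.e. $\hat x\in\mathcal N(L)$. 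For $\lambda\in(0,d)$: the Moreau--Rockafellar inclusion $\zerovec\in\partial\Phi(\hat x)+\lambda L^{*}\partial\|\cdot\|(L\hat x)$ shows that $\hat x\in\mathcal N(L)$ would yield a dual‑feasible point of $(D_{2,\lambda})$ lying in $\argmin\Phi^{*}(-L^{*}\cdot)$ with dual norm $\le\lambda<d$, impossible, while $\hat x\in\argmin\Phi$ would force $-L^{*}\hat p\in\partial\Phi(\hat x)=X_{3}$, hence $L^{*}\hat p\in X_{3}\cap\mathcal R(L^{*})=\{\zerovec\}$, and then $\|L\hat x\|=\langle L^{*}\hat p,\hat x\rangle=0$, again impossible. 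The $(D_{1,\tau})$ claims are symmetric: ${\rm SOL}(D_{1,0})=\argmin\Phi^{*}(-L^{*}\cdot)$ trivially; for $\tau\in(0,c)$, $\zerovec\in{\rm SOL}(D_{1,\tau})$ would (Moreau--Rockafellar plus chain rule) give $\hat x\in\argmin\Phi$ with $\|L\hat x\|\le\tau<c$, impossible, and $\hat p\in{\rm SOL}(D_{1,\tau})\cap\argmin\Phi^{*}(-L^{*}\cdot)$ would — via \eqref{primal-dual-relation}, the explicit form of $\partial\Psi_{1}$, and $\partial\phi$‑injectivity — produce $\tilde x\in{\rm SOL}(P_{1,\tau})\cap\mathcal N(L)$, already excluded; for $\tau\ge c$ one uses ${\rm SOL}(P_{1,\tau})\subseteq\argmin\Phi$ to deduce, for any pair $(\hat x,\hat p)$, first $L^{*}\hat p=\zerovec$ (from $\partial\Phi(\hat x)=X_{3}$ and $X_{3}\cap\mathcal R(L^{*})=\{\zerovec\}$) and then, from $\tau\hat p\in\partial\Psi_{1}(\tfrac1\tau L\hat x)$, that $\hat p=\zerovec$ (directly when $\|L\hat x\|<\tau$, else from $\tau\|\hat p\|_{*}=\langle\hat p,L\hat x\rangle=\langle L^{*}\hat p,\hat x\rangle=0$ when $\|L\hat x\|=\tau$), so ${\rm SOL}(D_{1,\tau})=\{\zerovec\}$. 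The geometric formulas then fall out: combining the three‑zone localizations with nonemptiness from \prettyref{thm:existence_of_solvers} shows each set $\{\tau\ge0:\dots\}$ equals $[c,+\infty)$ and each $\{\lambda\ge0:\dots\}$ equals $[d,+\infty)$ (and is empty, consistently, when $d=+\infty$), so the infima equal $c$ resp. $d$ and are attained because at $\tau=c$ (resp. $\lambda=d$) the relevant solution set is already inside the target set and nonempty.

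The step I expect to be the main obstacle is exactly the transfer of the zone structure from $(P_{1,\tau}),(D_{2,\lambda})$ to $(D_{1,\tau}),(P_{2,\lambda})$: unlike the primal ``interior feasible point is a global minimizer'' argument, this requires simultaneously juggling Lemma~\ref{fenchel-dual-hom} and the primal--dual relation \eqref{primal-dual-relation}, the explicit subdifferentials of $\Psi_{1},\Psi_{2}$ (see \eqref{subgrad_2}), the chain rule whose constraint qualification rests on \prettyref{lem:subgradient_and_conjugate_function_in_our_setting}, the structural formulas for $\partial\Phi$ and $\Phi^{*}$, and the injectivity of $\partial\phi$ — all the while keeping careful track of which of the three hypotheses $X_{2}\cap\mathcal N(L)=\{\zerovec\}$, $X_{3}\cap\mathcal R(L^{*})=\{\zerovec\}$, $\argmin\Phi\cap\mathcal N(L)=\emptyset$ is invoked where. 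A secondary nuisance is the bookkeeping at the endpoints $\tau=0,\lambda=0$ forced by the ``unconstrained perspective'', where $OP(\Phi,\|L\cdot\|)$ may or may not contain $0$.
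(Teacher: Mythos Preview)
Your argument is correct, and for the \emph{constrained} problems $(P_{1,\tau})$, $(D_{2,\lambda})$ and the endpoint cases it coincides with the paper's proof. Your derivation of the threshold formulas for $c,d$ as a corollary of the zone localizations is also legitimate, though the paper instead establishes them first, independently, by rewriting each of the sets $\{\tau:\dots\}$ and $\{\lambda:\dots\}$ as a union of rays $[\|Lx_0\|,+\infty)$ resp.\ $[\|p_0\|_*,+\infty)$ indexed over the same index set, via Fermat's rule and the equivalence $x\in\partial\Phi^*(x^*)\Leftrightarrow x^*\in\partial\Phi(x)$.

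The genuine divergence is in the penalized problems $(P_{2,\lambda})$ and $(D_{1,\tau})$. You push everything through the Fenchel primal--dual relation \eqref{primal-dual-relation} together with the explicit structure $\partial\Phi(x)=\partial\phi(x_1)\oplus\{\zerovec\}\oplus X_3$ from \prettyref{lem:subgradient_and_conjugate_function_in_our_setting} and the single-valuedness of $\partial\phi$ (resp.\ $\partial\phi^*$), systematically exploiting $X_3\cap\mathcal R(L^*)=\{\zerovec\}$ to conclude $L^*\hat p=\zerovec$ whenever $\hat x\in\argmin\Phi$. The paper instead appeals, for ${\rm SOL}(D_{1,\tau})\subseteq\{\zerovec\}$ and ${\rm SOL}(P_{2,\lambda})\subseteq\mathcal N(L)$, to \prettyref{thm:direction_of_argmin__essentially_smooth__strict_convex_sharpend} (``direction of $\argmin$''), which in turn rests on the machinery of \prettyref{sec:assisting_theory}: periods spaces, preservation of essential smoothness and of the innerly-strictly-convex decomposition under concatenation with $-L^*$. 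For ${\rm SOL}(P_{2,\lambda})\cap\argmin\Phi=\emptyset$ the paper deliberately avoids $X_3\cap\mathcal R(L^*)=\{\zerovec\}$ altogether, using instead a one-dimensional derivative argument along the ray $t\mapsto t\hat x$; and for ${\rm SOL}(D_{1,\tau})\cap\argmin\Phi^*(-L^*\cdot)=\emptyset$ it argues via essential smoothness that $\partial f(\hat p)=\{\zerovec\}$, forcing $\zerovec\in\partial\|\cdot\|'|_{\hat p}$ with $\hat p\neq\zerovec$. Your route is shorter and more elementary for this particular theorem; the paper's route reuses structural results that are needed anyway for \prettyref{thm:theo_lambda_tau} and isolates more clearly which assertions are independent of the hypothesis $X_3\cap\mathcal R(L^*)=\{\zerovec\}$.
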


\noindent
\begin{proof}
  In the proof we use the abbreviations
  $\overline{\mathbb{B}}_{r}(a) \defeq \overline{\mathbb{B}}_{r}(a)[\|\cdot\|]$
  and
  $\overline{\mathbb{B}}^*_{r}(a) \defeq \overline{\mathbb{B}}_{r}(a)[\|\cdot\|_*]$.
  \\
  1. $c$ is really a minimum:
  We need only to show that the function 
  $\iota_{\argmin(\Phi)} + \|L \cdot \|$
  attains somewhere in $\R^n$ its minimum.
  In order to apply part \ref{enu:sum_takes_it_minimum}) of Lemma \ref{lem:levelsets_and_existence_of_minimizer_of_sum} we decompose
  $\R^n$ into the orthogonal subspaces $U_1 \defeq X_1 \oplus X_3$, 
  $U_2 \defeq X_2$ and
  $V_1 \defeq \mathcal{R}(L^*)$,  $V_2 \defeq \mathcal{N}(L)$, respectively
  and set
  $F \defeq \iota_{\argmin(\Phi)}$ and
  $G \defeq \|L \cdot\|$, respectively; then all assumptions are
  fulfilled for certain $\alpha, \beta$, see Detail 
  \ref{det:pr_of_locali_theorem:c_is_really_a_min}, so that
  $\iota_{\argmin(\Phi)} + \|L \cdot \|$
  attains indeed its minimum.
  \\
  2. $d$ is really a minimum if $\argmin \Phi^*(-L^* \cdot) \not = \emptyset$ :
  Let $p_0 \in \argmin \Phi^*(-L^* \cdot)$ and set $r \defeq \|p_0\|_*$.
  The set 
  $\argmin \Phi^*(-L^* \cdot)$ 
  is closed, due being 
  a level set of the lower semicontinuous function $\Phi^*(-L^* \cdot)$.
  Hence $C\defeq \argmin \Phi^*(-L^* \cdot) \cap \overline{\mathbb{B}}^*_r$
  is a nonempty compact set, which must provide a minimizer 
  $\check p \in \argmin \Phi^*(-L^* \cdot)$ for the continuous function
  $\|\cdot\|_*|_C$.
  Clearly we also have 
  $\|\check p\|_* = \inf_{p\in \argmin \Phi^*(-L^* \cdot)} \|p\|_*$,
  since $\|p\|_* \geq r = \|p_0\|_* \geq \|\check p\|_*$ for all
  $
    p \in \argmin \Phi^*(-L^* \cdot) 
	\setminus \overline{\mathbb{B}}^*_{r}$.
  \\
  3. Next $c > 0$ and $d > 0$ are proven, where we consider only the 
  interesting case $\argmin \Phi^*(-L^* \cdot) \not = \emptyset$.
  We have 
     \begin{align*}
      c = 0 &\Leftrightarrow
	    \min_{x \in \argmin \Phi} \|Lx\| = 0
	    \\
	    &\Leftrightarrow
	    \exists \check x \in \argmin \Phi: \|L \check x\| = 0
	    \\
	    &\Leftrightarrow
	    \argmin \Phi \cap \mathcal{N}(L) \not = \emptyset.
     \end{align*}
     Since $c \geq 0$ this just means 
     $c>0 \Leftrightarrow \argmin \Phi \cap \mathcal{N}(L) = \emptyset$,
     so that we really obtain $c > 0$.
     Using some calculus from Convex Analysis
     we obtain 
     \begin{align*}
       d=0  & \Leftrightarrow
	      \argmin \Phi \cap \mathcal{N}(L) \not = \emptyset,
     \end{align*}
     see Detail \ref{det:pr_of_locali_theorem:d_greater_than_zero}.
     Due to $d \geq 0$ this just means
     $d>0 \Leftrightarrow \argmin \Phi \cap \mathcal{N}(L) = \emptyset$,
     so that also $d > 0$.
  \\
  4.
  In order to verify that the different views on $c$ and $d$ are really equivalent,
  we set 
  \begin{align*}
    T	      &\defeq \{ \tau \in [0, +\infty) : 
	\exists x_0 \in \argmin(\Phi): \; \tau = \|Lx_0\| \},
  \\
    T_P       &\defeq \{ \tau \in [0, +\infty) : 
	{\rm SOL}(P_{1,\tau}) \cap \argmin \Phi \not = \emptyset \},
  \\
    T_D       &\defeq \{ \tau \in [0, +\infty) : 
	{\rm SOL}(D_{1,\tau}) \cap \{ \zerovec \} \not = \emptyset \}
  \shortintertext{and}
    \Lambda   &\defeq \{\lambda \in [0, +\infty):
	\exists p_0 \in \argmin \Phi^*(-L^* \cdot): \lambda = \|p_0\|_*\},
  \\
    \Lambda_P &\defeq \{\lambda \in [0, +\infty): 
	{\rm SOL}(P_{2,\lambda}) \cap \mathcal{N}(L) \not = \emptyset\},
  \\
    \Lambda_D &\defeq \{\lambda \in [0, +\infty): 
	{\rm SOL}(D_{2,\lambda}) \cap \argmin \Phi^*(-L^*\cdot) \not = \emptyset\},
  \end{align*}
  respectively, and show that
  \begin{align*}
    T          &= 
	\bigcup_{x_0 \in I_T} \{ \|Lx_0\| \}
    ,
    &T_P       &= T_D =
	\bigcup_{x_o \in I_T} [\|Lx_0\|, +\infty)
  \shortintertext{and}
    \Lambda    &= 
	\bigcup_{(x_0, p_0) \in I_{\Lambda}}    \{\|p_0\|_*\}
    ,    
    &\Lambda_D &= \Lambda_P  =
	\bigcup_{(x_0, p_0) \in I_{\Lambda}}    [\|p_0\|_*, +\infty)
    ,
  \end{align*}
  respectively, where 
  $
    I_T
    \defeq
    \{ x_0 \in \R^n : \zerovec \in \partial \Phi(x_0) \}
  $
  and
  $
    I_{\Lambda} 
    \defeq 
    \{ (x_0, p_0)  \in \R^n \times \R^m:  
	Lx_0 = \zerovec, x_0 \in \partial \Phi^*(-L^*p_0)\}
  $
  are some index sets. The above way of representing $T$, $T_P$, $T_D$ and
  $\Lambda$, $\Lambda_P$, $\Lambda_D$, respectively, then
  elucidates $c = \min T = \min T_P = \min T_D $ and
  $d = \inf \Lambda = \inf \Lambda_P = \inf \Lambda_D$, respectively;
  here the headline of part 2 of the proof ensures that the last 
  three infima are actually minima of the respective sets,
  if one -- an thus all -- of them is nonempty.
  For all $\tau \in [0, +\infty)$ we indeed have by Fermat's rule
  \begin{align*}
  \begin{split}
    {}& \tau \in T 
  \\
    \Leftrightarrow{}&
    \exists x_0 \in \R^n: \zerovec \in \partial \Phi (x_0) \wedge \tau = \|Lx_0\|
  \\
    \Leftrightarrow{}&
    \exists x_0 \in I_T: \tau \in \{\|Lx_0\|\}
  \\
    \Leftrightarrow{}&
    \tau \in \bigcup_{x_0 \in I_T} \{ \|Lx_0\| \}        
  \end{split}  
  &&& &&
  \begin{split}
    {}& \tau \in T_P
  \\
    \Leftrightarrow{}&
    \exists x_0 \in \R^n: \|Lx_0\| \leq \tau \wedge \zerovec \in \partial \Phi(x_0)
  \\
    \Leftrightarrow{}&
    \exists x_0 \in I_T:  \|Lx_0\| \leq \tau
  \\
    \Leftrightarrow{}&
    \tau \in \bigcup_{x_0\in I_T} [\|Lx_0\|, +\infty)
  \\
  \end{split}
  \end{align*}
  and -- by using again Fermat's Rule as well as the calculus for 
  subdifferentials, see \cite[p. 222-225]{Rockafellar1970}, 
  $x \in \partial \Phi^*(x^*) \Leftrightarrow x^* \in \partial \Phi(x)$ and
  \eqref{subgrad_2} -- also
  \begin{align*}
    {}& \tau \in T_D
  \\
    \Leftrightarrow{}&
    \exists p_0 \in \R^m: 
    \zerovec \in \partial (\Phi^*(-L^* \cdot) + \tau \|\cdot\|_*)|_{p_0}
    \wedge
    p_0 = \zerovec
  \\
    \Leftrightarrow{}&
    \zerovec \in \partial (\Phi^*(-L^* \cdot))|_{\zerovec} + \tau \partial \|\cdot\|_*|_{\zerovec}
  \\
    \Leftrightarrow{}&
    \zerovec \in -L \partial \Phi^*(-L^* \zerovec) + 
      \tau \overline{\mathbb{B}}_{1}[\|\cdot\|_{**}]
  \\
    \Leftrightarrow{}&
    \exists x_0 \in \R^n : 
      x_0 \in \partial \Phi^*(\zerovec)
      \wedge
      \zerovec \in -Lx_0 + \overline{\mathbb{B}}_{\tau}[\|\cdot\|]
  \\
    \Leftrightarrow{}&
    \exists x_0 \in \R^n : 
      \zerovec \in \partial \Phi(x_0)
      \wedge
      \|Lx_0\| \leq \tau
  \\
    \Leftrightarrow{}&
    \tau \in \bigcup_{x_0\in I_T} [\|Lx_0\|, +\infty)
  \end{align*}
  Similar we obtain for $\lambda \in [0, + \infty)$ the equivalences
  \begin{align*}
    {}& \lambda \in \Lambda
  \\\Leftrightarrow{}&
    \exists p_0 \in \R^m : \zerovec \in \partial (\Phi^*(-L^* \cdot))|_{p_0} \wedge \lambda = \|p_0\|_*
  \\\Leftrightarrow{}&
    \exists p_0 \in \R^m : \zerovec \in L \partial \Phi^*(-L^*p_0) \wedge \lambda = \|p_0\|_*
  \\\Leftrightarrow{}&
    \begin{smallmatrix}
      \exists x_0 \in \R^n, \\
      \exists p_0 \in \R^m
    \end{smallmatrix}:
    x_0 \in \partial \Phi^*(-L^*p_0) \wedge Lx_0 = \zerovec \wedge \lambda = \|p_0\|_*
  \\\Leftrightarrow{}&
    \exists (x_0, p_0) \in I_\Lambda : \lambda = \|p_0\|_*
  \\\Leftrightarrow{}&
    \lambda \in \bigcup_{(x_0,p_0) \in I_\Lambda} \{\|p_0\|_*\}
  \shortintertext{besides}
    {}& \lambda \in \Lambda_D
  \\\Leftrightarrow{}&
    \exists p_0 \in \R^m : \zerovec \in \partial (\Phi^*(-L^* \cdot))|_{p_0} \wedge \lambda \geq \|p_0\|_*
  \\\Leftrightarrow{}&
    \exists p_0 \in \R^m : \zerovec \in L \partial \Phi^*(-L^*p_0) \wedge \lambda \geq \|p_0\|_*
  \\\Leftrightarrow{}&
    \begin{smallmatrix}
      \exists x_0 \in \R^n, \\
      \exists p_0 \in \R^m
    \end{smallmatrix}:
    x_0 \in \partial \Phi^*(-L^*p_0) \wedge Lx_0 = \zerovec \wedge \lambda \geq \|p_0\|_*
  \\\Leftrightarrow{}&
    \exists (x_0, p_0) \in I_\Lambda : \lambda \geq \|p_0\|_*
  \\\Leftrightarrow{}&
    \lambda \in \bigcup_{(x_0,p_0) \in I_\Lambda} [\|p_0\|_*, + \infty)
  \shortintertext{and}
    {}& \lambda \in \Lambda_P
  \\\Leftrightarrow{}&
    \exists x_0 \in \R^n :
      \zerovec \in \partial(\Phi(\cdot) + \lambda\|L\cdot\| )|x_0 \wedge Lx_0 = \zerovec
  \\\Leftrightarrow{}&
    \exists x_0 \in \R^n :
      \zerovec \in \partial\Phi(x_0) + \lambda L^* \partial \|\cdot\| |_{Lx_0} \wedge Lx_0 = \zerovec
  \\\Leftrightarrow{}&
    \exists x_0 \in \R^n :
      \zerovec \in \partial\Phi(x_0) + \lambda L^* \partial \|\cdot\| |_{\zerovec} \wedge Lx_0 = \zerovec
  \\\Leftrightarrow{}&
    \exists x_0 \in \R^n :
      \zerovec \in \partial\Phi(x_0) + L^* \lambda \overline{\mathbb{B}}^*_{1} \wedge Lx_0 = \zerovec
  \\\Leftrightarrow{}&
    \begin{smallmatrix}
      \exists x_0 \in \R^n, \\
      \exists p_0 \in \R^m
    \end{smallmatrix}:
      p_0 \in \lambda \overline{\mathbb{B}}^*_{1} 
      \wedge
      \zerovec \in \partial\Phi(x_0) + L^*p_0
      \wedge 
      Lx_0 = \zerovec
  \\\Leftrightarrow{}&
    \begin{smallmatrix}
      \exists x_0 \in \R^n, \\
      \exists p_0 \in \R^m
    \end{smallmatrix}:
      \|p_0\|_* \leq \lambda
      \wedge
      -L^*p_0 \in \partial\Phi(x_0)
      \wedge 
      Lx_0 = \zerovec
  \\\Leftrightarrow{}&
    \begin{smallmatrix}
      \exists x_0 \in \R^n, \\
      \exists p_0 \in \R^m
    \end{smallmatrix}:
      x_0 \in \partial\Phi^*(-L^*p_0)
      \wedge 
      Lx_0 = \zerovec
      \wedge
      \|p_0\|_* \leq \lambda
  \\\Leftrightarrow{}&
    \exists (x_0, p_0) \in I_\Lambda : \lambda \geq \|p_0\|_*
  \\\Leftrightarrow{}&
    \lambda \in \bigcup_{(x_0,p_0) \in I_\Lambda} [\|p_0\|_*, + \infty).
  \end{align*}

  ~
  \\
  5.
  Finally we prove the 16 claimed relations of the theorem.
  The subset-relations for $\tau = 0$ and $\lambda = 0$ are trivially true.
  In oder to prove the primal relations for $\tau \in (0,c)$ and $\tau \in [c, +\infty)$
  we make use of
  $
    c 
    =
    \min \{\tau \in [0, +\infty) : 
    {\rm SOL}(P_{1,\tau}) \cap \argmin \Phi \not = \emptyset\}.
  $
  For $\tau \in (0,c)$ we directly get
  \[
    {\rm SOL}(P_{1,\tau}) \cap \argmin\Phi = \emptyset;
  \]
  this also implies that any $\hat x \in {\rm SOL}(P_{1,\tau})$, 
  $\tau \in (0,c)$ must fulfill $\|L\hat x\| \geq \tau > 0$. 
  ($\|L \hat x\| < \tau$ would mean that $\hat x$ is a local minimizer of
  $\Phi$, i.e. a global minimizer of the convex function $\Phi$; so we would 
  end up in the contradictory
  $\hat x \in {\rm SOL}(P_{1,\tau}) \cap \argmin\Phi = \emptyset$).
  So we have 
  \[
    {\rm SOL}(P_{1,\tau}) \cap \mathcal{N}(L) = \emptyset
  \]
  for $\tau \in (0,c)$. Furthermore the above reformulation of $c$ ensures
  that there is an $\hat x \in {\rm SOL}(P_{1,\tau}) \cap \argmin\Phi$ 
  for $\tau = c$. Clearly also 
  $\hat x \in {\rm SOL}(P_{1,\tau}) \cap \argmin\Phi$ for $\tau > c$, so that
  ${\rm SOL}(P_{1,\tau}) \cap \argmin \Phi \not = \emptyset$ for
  $\tau \in [c, + \infty)$. 
  Since no solvers of 
  $(P_{1,\tau})$ can be outside of $\argmin \Phi$, as soon as one solver of
  $(P_{1,\tau})$ belongs to this level set of $\Phi$, we even get
  \[
    {\rm SOL}(P_{1,\tau}) \subseteq \argmin\Phi
  \]
  for $\tau \in [c, + \infty)$.
\\[0.5ex]
  In order to prove the dual relations for $\tau \in (0,c)$ and 
  $\tau \in [c, +\infty)$ we use 
  $
    c = \min 
      \{\tau \in [0, +\infty):
	  {\rm SOL}(D_{1,\tau}) \cap \{\zerovec\} \not = \emptyset
      \}
  $.
  For $\tau \in (0,c)$ this immediately implies 
  \[
    {\rm SOL}(D_{1,\tau}) \cap \{\zerovec\} = \emptyset
  \]
  and
  ${\rm SOL}(D_{1,c}) \cap \{\zerovec\} \not = \emptyset$.
  The latter means
  $\Phi^*(-L^* \zerovec) + c \|\zerovec\|_* \leq \Phi(-L^*p) + c \|p\|_*$ 
  for all $p\in \R^m$. For $\tau \in [c, +\infty)$ addition of
  the inequality $(\tau - c)\|\zerovec\|_* \leq (\tau -c) \|p\|_*$ yields
  $\Phi^*(-L^*\zerovec) + \tau \|\zerovec\|_* \leq \Phi^*(-L^*p) + \tau \|p\|_*$
  for all $p\in \R^m$. 
  This just means
  $\zerovec \in {\rm SOL}(D_{1,\tau})$ for $\tau \in [c, +\infty)$.
  We even have
  \[
    {\rm SOL}(D_{1,\tau}) = \{\zerovec\}
  \]
  for $\tau \in [c, +\infty)$:
  Let an additional $\check p \in {\rm SOL}(D_{1,\tau})$ be given.
  In order to prove $\check p = \zerovec$ it suffices to check that  Theorem 
  \ref{thm:direction_of_argmin__essentially_smooth__strict_convex_sharpend}
  can be applied to $F (\cdot) = \Phi^*(-L^* \cdot)$ and 
  $G(\cdot) = \tau \|\cdot\|_*$, since this theorem would then give
  $\tau \|\check p\|_* = G (\check p) = G(\zerovec) = 0$ and hence the wanted
  $\check p = \zerovec$.
  Indeed all assumptions of this theorem are fulfilled:
  Clearly $F$ and $G$ are convex functions with 
  $
    {\rm ri}(\dom F) \cap {\rm ri}(\dom G)
    = 
    {\rm ri}(\dom F) 
    \not = 
    \emptyset
  $.
  Next the needed decomposition 
  ${\rm aff}(\dom F) = \check A_F \oplus \check P_F$ is obtained,
  by using 
  \prettyref{thm:concatenation_stricly_convex_on_ri_with_linear_mapping},
  see Detail \ref{det:pr_of_locali_theorem:SOL_D_1_equals_0}.
  Finally Theorem \ref{thm:concatenation_ess_smooth_with_linear_mapping} 
  ensures that $F = E \circ M$ is essentially smooth on ${\rm aff}(\dom F)$.
  So all assumptions of Theorem
  \ref{thm:direction_of_argmin__essentially_smooth__strict_convex_sharpend}
  are really fulfilled.
  Finally we show 
  \[
    {\rm SOL}(D_{1,\tau}) \cap \argmin \Phi^*(-L^* \cdot) = \emptyset
  \]
  for $\tau \in (0,c)$: 
  Assume that there is a 
  $\hat p \in {\rm SOL}(D_{1,\tau}) \cap \argmin \Phi^*(-L^* \cdot)$ for a 
  $\tau \in (0,c)$.
  The functions $F(\cdot) = \Phi(-L^* \cdot)$ and 
  $G(\cdot) = \tau \|\cdot\|_*$
  fulfill the assumptions of Theorem
  \ref{thm:direction_of_argmin__essentially_smooth__strict_convex_sharpend},
  see Detail \ref{det:pr_of_locali_theorem:SOL_D_1_away_from_argmin},
  so that $\hat p \in \argmin(F+G) \subseteq {\rm int}_A(\dom F)$.
  Consider $F$ and $G$ now only on the vector subspace 
  $A \defeq {\rm aff}(\dom F)$ by setting 
  $f\defeq F|_A \in \Gamma_0(A)$ and 
  $\|\cdot \|' \defeq \|\cdot \|_*|_A \in \Gamma_0(A)$. Since
  $F(x) = + \infty $ for $x \not \in A$ we still had
  $\hat p \in \argmin_{p \in A}(f(p)+ \tau\|p\|')$ and 
  $\hat p \in \argmin_{p\in A} f(p)$.
  The function $f:A \rightarrow \R \cup \{+\infty\}$,
  beeing essentially smooth by Lemma
  \ref{lem:subgradient_and_conjugate_function_in_our_setting}
  and Theorem \ref{thm:concatenation_ess_smooth_with_linear_mapping}, 
  would be differentiable in  
  $\hat p \in {\rm int}_A(\dom F) = {\rm int}_A(\dom f)$.
  By Theorem 
  \ref{thm:subgradient_of_essentially_smooth_and_Gamma_0_function}
  and by Fermat's rule we had
  $\partial f(\hat p) = \{\zerovec\}$.
  Using Fermat's rule and the calculus for subdifferentials we hence obtained
  $
    \zerovec 
    \in 
    \partial (f + \tau \|\cdot \|')|_{\hat p}
    =
    \partial f(\hat p) + \tau \partial \|\cdot\|'|_{\hat p}
    =
    \tau \partial \|\cdot\|'|_{\hat p}
  $.
  The already proven 
  ${\rm SOL}(D_{1,\tau}) \cap \{\zerovec\} = \emptyset$ says $\hat p \not = \zerovec$, so
  that equation \eqref{subgrad_2} implied the contradictory
  $
    \zerovec 
    \in 
    \partial \|\cdot\|'|_{\hat p} 
    \subseteq
    \mathbb{S}_1[(\|\cdot\|')_*]
  $.
  \\[2ex]
  In order to prove the primal relations for $\lambda \in (0,d)$ we make
  use of 
  $
    d 
    = 
    \inf \{\lambda \geq 0: 
	{\rm SOL}(P_{2,\lambda}) \cap \mathcal{N}(L) \not = \emptyset\}
  $,
  while for proving the primal relations for $\lambda \in [d, +\infty)$
  we may assume $d < +\infty$, i.e.
  $
    d
    =
    \min \{\lambda \geq 0: 
	{\rm SOL}(P_{2,\lambda}) \cap \mathcal{N}(L) \not = \emptyset\}
  $,
  since in the vacuous case $d = + \infty$, 
  meaning $[d, +\infty) = \emptyset$, there is nothing to show.
  For $\lambda \in (0,d)$ we then get immediately
  \[
    {\rm SOL}(P_{2,\lambda}) \cap \mathcal{N}(L) = \emptyset
  \]
  and for $\lambda = d$ we get 
  ${\rm SOL}(P_{2,d}) \cap \mathcal{N}(L) \not = \emptyset$.
  The latter means
  $\Phi(\hat x) + d \|L\hat x\| \leq \Phi(x) + d\|Lx\|$
  for all $\hat x\in {\rm SOL}(P_{2,d}) \cap \mathcal{N}(L)$ and 
  $x \in \R^n$. For $\lambda \geq d$ adding 
  $(\lambda -d)\|L\hat x\| \leq (\lambda -d) \|Lx\|$
  hence gives 
  $\Phi(\hat x) + \lambda \|L\hat x\| \leq \Phi(x) + \lambda \|Lx\|$
  for all $\hat x\in {\rm SOL}(P_{2,d}) \cap \mathcal{N}(L)$ and 
  $x \in \R^n$, so that we have
  ${\rm SOL}(P_{2,\lambda}) \cap \mathcal{N}(L) \not = \emptyset$ for
  all $\lambda \in [d, +\infty)$. We even have 
  \[
    {\rm SOL}(P_{2,\lambda}) \subseteq \mathcal{N}(L)
  \]
  for $\lambda \in [d, +\infty)$:
  Choose any $\hat x \in {\rm SOL}(P_{2,\lambda}) \cap \mathcal{N}(L)$ 	
  and consider an arbitrarily chosen $\tilde x \in {\rm SOL}(P_{2,\lambda})$.
  In order to prove $L \tilde x = \zerovec$ it suffices to check that Theorem 
  \ref{thm:direction_of_argmin__essentially_smooth__strict_convex_sharpend}
  can be applied to $F = \Phi$ and $G(\cdot) = \lambda \|L \cdot\|$,
  since this theorem would then give 
  $\lambda \|L\tilde x\| = G(\tilde x) = G(\hat x) = 0$ and hence the 
  needed $L \tilde x = \zerovec$. Indeed all assumptions of this theorems are 
  fulfilled, see Detail 
  \ref{det:pr_of_locali_theorem:SOL_P_2_belongs_to_nullspace}.
  Finally we show 
  \[
    {\rm SOL}(P_{2,\lambda}) \cap \argmin \Phi = \emptyset
  \]
  for $\lambda \in (0,d)$:
\iftime{Dieser neue Beweis der Lagerelation vom 7. oder 8. Maerz kommt ohne die 
Bedingung $\mathcal{R}(L^*) \cap X_3 = \{\zerovec\}$ aus.
Auch bewirkt dieser Weg vereinfachung --> Auch an anderen Stellen moeglich?!}
  It clearly suffices to show that any minimizer of $\Phi$ can never
  belong to
  ${\rm SOL}(P_{2,\lambda})$ for any real $\lambda > 0$.
  To this end fix $\lambda \in(0,+\infty)$ and let an arbitrary
  $\hat x \in \argmin \Phi$ be given.
  Regard $\Phi$ and $\Phi(\cdot) + \lambda \|L \cdot \|$ only on 
  ${\rm span}(\hat x)$ by considering the functions 
  $f,h:\R \rightarrow \R \cup \{+ \infty\}$ given by 
  $f(t) \defeq \Phi(t \hat x)$ and
  $
    h(t) \defeq \Phi(t \hat x) + \lambda \|L (t \hat x)\| 
	 = \Phi(t \hat x) + m|t|
  $, where $m \defeq \lambda \|L \hat x\| > 0$ due to the assumption 
  $\argmin \Phi \cap \mathcal{N}(L) = \emptyset$.
  $\Phi$ is proper, convex, lower semicontinuous and essentially smooth 
  on the affine hull of its effective domain of definition.
  These properties carry over to $f$, 
  see Detail
  \ref{det:pr_of_locali_theorem:SOL_P_2_does_not_intersect_argmin_Phi}.
  Since $1 \in \R$ is clearly a minimizer of $f$ we obtain,
  using part 
  \upref{enu:argmin_contained_in_ri_of_dom}
  of Lemma 
  \ref{lem:argmin_of_sum_contained_in_ri_of_dom_of_ess_smooth_summand},
  that $f$ is differentiable in $1 \in \R$ with derivative
  $f'(1) = 0$. Hence also $h$ is 
  differentiable in $1$ with derivative 
  $h'(1) = f'(1) + m = m > 0$.
  Consequently there is an $\varepsilon > 0$ such that 
  $h(1-\varepsilon) < h(1)$.
  Its rewritten form 
  $
    \Phi\left((1-\varepsilon)\hat x\right) 
      + \lambda \|L (1-\varepsilon)\hat x\|
    <
    \Phi(\hat x) + \lambda \|L \hat x\|
  $
  shows that $\hat x$ is not a minimizer of ${\rm SOL}(P_{2,\lambda})$.
  \\[0.5ex]
  In order to prove the dual relations for $\lambda \in (0,d)$ we make
  use of 
  $
    d 
    = 
    \inf \{\lambda \geq 0: 
	{\rm SOL}(D_{2,\lambda}) \cap \argmin\Phi^*(-L^* \cdot) \not = \emptyset\}
  $,
  while for proving the dual relations for $\lambda \in [d, +\infty)$
  we may assume $d < +\infty$, i.e.
  $
    d
    =
    \min \{\lambda \geq 0: 
	{\rm SOL}(D_{2,\lambda}) \cap \argmin\Phi^*(-L^* \cdot) \not = \emptyset\}
  $,
  since in the vacuous case $d = + \infty$ there is again nothing to show.
  For $\lambda \in (0,d)$ we then get immediately 
  \[
    {\rm SOL}(D_{2,\lambda}) \cap \argmin \Phi^*(-L^* \cdot) = \emptyset;
  \]
  this also implies that any $\hat p \in {\rm SOL}(D_{2,\lambda})$ with
  $\lambda \in (0,d)$ must fulfill $\|\hat p\|_{*} \geq \lambda > 0$.
  ($\|\hat p\|_{*} < \lambda$ would mean that $\hat p$ is a local minimizer of 
  $\Phi^*(-L^* \cdot)$ and hence a global minimizer of this convex function;
  so we would end up in the contradictory
  $\hat p \in {\rm SOL}(D_{2,\lambda}) \cap \argmin \Phi^*(-L^* \cdot) = \emptyset$).
  So we have 
  \[
    {\rm SOL}(D_{2,\lambda}) \cap \{\zerovec\} = \emptyset
  \]
  for $\tau \in (0,d)$. Furthermore the above reformulation of $d$ ensures
  that there is an $\hat p \in {\rm SOL}(D_{2,\lambda}) \cap \argmin \Phi^*(-L^* \cdot)$
  for $\lambda = d$. Clearly also
  $\hat p \in {\rm SOL}(D_{2,\lambda}) \cap \Phi^*(-L^* \cdot)$ for $\lambda > d$,
  so that 
  ${\rm SOL}(D_{2,\lambda}) \cap \argmin \Phi^*(-L^* \cdot) \not = \emptyset$ for
  $\lambda \in [d, +\infty)$.
  Since no solvers of 
  $(D_{2,\lambda})$ can be outside of $\argmin \Phi^*(-L^* \cdot)$, as soon as one solver
  of $(D_{2,\lambda})$ belongs to this level set of $\Phi^*(-L^* \cdot)$, we even get
  \[
    {\rm SOL}(D_{2,\lambda}) \subseteq \argmin \Phi^*(-L^* \cdot)
  \]
  for $\lambda \in [d, +\infty)$.  
\end{proof}

Now we give the announced example, showing that 
${\rm SOL}(D_{1,\tau}) = \emptyset$ can happen in the border case 
$\tau = 0$.
\begin{example} \label{exa:Sol_can_be_empty_for_tau_eq_0}
  The particular choice 
  \begin{gather*}
    \Phi(x) \defeq \phi(x) \defeq 
    \begin{cases}
      x-1 + \log \tfrac{1}{x}  & \text { for } x > 0 
    \\
      \pinfty 			& \text { for } x \leq 0
    \end{cases}
  \end{gather*}
  gives a functions $\Phi: \R \rarr \R \cup \{\pinfty\}$ 
  that fulfills the requirements of our setting along with 
  the identity matrix $L \defeq (1)$ 
  and $\|\cdot\| = |\cdot|$.
  The conjugate function $\Phi^*: \R \rarr \R \cup \{\pinfty\}$
  can explicitely be expressed as 
  \begin{gather*}
    \Phi^*(p) = 
    \begin{cases}
      - \log (1-p)  & \text{ for } p < 1
    \\
      \pinfty 	    & \text{ for } p \geq 1
    \end{cases},
  \end{gather*}
  cf. \cite{CiShSt2012} or \cite[p. 50f]{BorweinLewis2000}. 
  Here clearly the proper function $\Phi^*$ is not bounded below 
  so that ${\rm SOL}(D_{1, 0}) = - \argmin \Phi^* = \emptyset$.
\end{example}

The following theorem specifies the relations between ($P_{1,\tau}$), ($P_{2,\lambda}$),
($D_{1,\tau}$) and ($D_{2,\lambda}$) for the special setting in this section.
We will see that for every $\tau \in (0,c)$,
there exists a {\it uniquely determined} $\lambda$ such that the {\it solution sets} of ($P_{1,\tau}$)
and ($P_{2,\lambda}$) coincide. 
Note that by the Remarks \ref{remark_2} and \ref{remark_2_1}
this is not the case for general functions $\Phi,\Psi \in \Gamma_0(\R^n)$.
Moreover, we want to determine for given $\tau$, the value $\lambda$ such that ($P_{2,\lambda}$)
has the same solutions as ($P_{1,\tau}$).
Note that part i) of \prettyref{thm:constraint_vs_nonconstraint} was not constructive.
%
\begin{theorem} \label{thm:theo_lambda_tau}
Let $\Phi \in \Gamma_0(\R^n)$ be of the form \eqref{new_setting_Phi} and 
let $L \in \R^{m,n}$ such that
$X_2  \cap {\cal N} (L) = \{\zerovec\}$, 
$X_3 \cap  \mathcal{R}(L^*) = \{\zerovec\}$
and 
$\argmin \Phi \cap  {\cal N} (L) = \emptyset$.
Define $c$ 
by  \eqref{eq:the_tau_for_which_SOL(P_1,tau)_arrives_at_argmin_Phi_for_the_first_time}
and $d$ by
\eqref{eq:the_lambda_for_which_SOL(P_2,lambda)_arrives_at_argmin_G_for_the_first_time}.
Then, for $\tau \in (0,c)$ and $\lambda \in (0, d)$, the problems
$(P_{1,\tau})$,
$(P_{2,\lambda})$,
$(D_{1,\tau})$,
$(D_{2,\lambda})$
have solutions with finite minima.
Further there exists a bijective mapping $g: (0,c) \rightarrow (0,d)$
such that for $\tau \in (0,c)$ and $\lambda \in (0,d)$
we have
\begin{alignat*}{2}
  \Bigg\{
  \begin{split}
      {\rm SOL}(P_{1,\tau}) &= {\rm SOL}(P_{2,\lambda}) 
      \\
      {\rm SOL}(D_{1,\tau}) &= {\rm SOL}(D_{2,\lambda}) 
  \end{split}
  \Bigg\} 
  &\quad \text{ if } \quad
  (\tau,\lambda) \in {\rm gr} \, g
\end{alignat*}
and for
$\tau \in (0,c)$, $\lambda \in [0, +\infty)$
or
$\lambda \in (0,d)$, $\tau \in [0, +\infty)$,
\begin{alignat*}{2}
  \Bigg\{
  \begin{split}
      {\rm SOL}(P_{1,\tau}) &\cap {\rm SOL}(P_{2,\lambda}) = \emptyset 
      \\
      {\rm SOL}(D_{1,\tau}) &\cap {\rm SOL}(D_{2,\lambda}) = \emptyset
  \end{split}
  \Bigg\} \quad
  &\text{ if } \quad (\tau,\lambda) \not \in {\rm gr} \, g.
\end{alignat*}
For $(\tau, \lambda) \in {\rm gr} \, g$  any solutions $\hat x$ and $\hat p$ 
of the primal and dual problems, resp., fulfill
\begin{equation*}
\tau = \|L \hat x\| \quad {\rm and} \quad \lambda = \| \hat p \|_*.
\end{equation*}
\end{theorem}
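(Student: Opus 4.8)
The plan is to combine the structural results about our setting with the duality machinery and the ``direction of \(\argmin\)'' theorems from the assisting theory. First I would collect what is already available: by \prettyref{thm:existence_of_solvers} all four problems \((P_{1,\tau})\), \((P_{2,\lambda})\), \((D_{1,\tau})\), \((D_{2,\lambda})\) have solutions with finite minima for \(\tau,\lambda>0\); by \prettyref{thm:localisation_of_the_solvers} the quantities \(c\) and \(d\) are positive, and for \(\tau\in(0,c)\) every \(\hat x\in{\rm SOL}(P_{1,\tau})\) satisfies \(\|L\hat x\|\ge\tau\) and is disjoint from \(\argmin\Phi\) and \(\mathcal N(L)\), with the analogous statements for the dual solvers and for \(\lambda\in(0,d)\). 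The first key step is to show that, for \(\tau\in(0,c)\), the constraint is active: any \(\hat x\in{\rm SOL}(P_{1,\tau})\) has \(\|L\hat x\|=\tau\) exactly. This follows because \(\|L\hat x\|<\tau\) would make \(\hat x\) a local, hence global, minimizer of \(\Phi\), contradicting \({\rm SOL}(P_{1,\tau})\cap\argmin\Phi=\emptyset\). Symmetrically, for \(\lambda\in(0,d)\) every \(\hat p\in{\rm SOL}(D_{2,\lambda})\) has \(\|\hat p\|_*=\lambda\).

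Next I would invoke Fenchel duality, \prettyref{fenchel-dual-hom}, whose hypotheses hold in our setting by \prettyref{rem:assumptions_for_fenchel_dualty_theorem_fulfilled}: for \(\mu=\tau^{-1}\) it links \((P_{1,\tau})\) with \((D_{1,\tau})\), and for \(\mu=\lambda\) it links \((P_{2,\lambda})\) with \((D_{2,\lambda})\), via the primal--dual relations \eqref{primal-dual-relation}. The plan is then: given \(\tau\in(0,c)\), pick any \(\hat x\in{\rm SOL}(P_{1,\tau})\) and any dual solution \(\hat p\in{\rm SOL}(D_{1,\tau})\). From \(\frac1\mu\hat p=\tau\hat p\in\partial\Psi_1(\mu L\hat x)=\partial\Psi_1(\tau^{-1}L\hat x)\) together with \(\|L\hat x\|=\tau\), i.e. \(\|\tau^{-1}L\hat x\|=1\), the subdifferential formula for \(\Psi_1\) gives \(\langle \hat p,\tau^{-1}L\hat x\rangle=\|\hat p\|_*\), and in particular \(-L^*\hat p\in\partial\Phi(\hat x)\). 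Setting \(\lambda\defeq\|\hat p\|_*\), one checks that these same relations are exactly the primal--dual optimality conditions \eqref{primal-dual-relation} for \((P_{2,\lambda})\) and \((D_{2,\lambda})\): \(-L^*\hat p\in\partial\Phi(\hat x)\) and \(\frac1\lambda\hat p\in\partial\Psi_2(\lambda L\hat x)\), the latter from \(\|L\hat x\|=\tau>0\), \(\|\hat p\|_*=\lambda\) and \(\langle\hat p,L\hat x\rangle=\|L\hat x\|\,\|\hat p\|_*\). Hence \(\hat x\in{\rm SOL}(P_{2,\lambda})\) and \(\hat p\in{\rm SOL}(D_{2,\lambda})\). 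Conversely, starting from \((P_{2,\lambda})\) and \(\lambda\in(0,d)\), the same computation with \(\|\hat p\|_*=\lambda\) produces \(\tau\defeq\|L\hat x\|\in(0,c)\) (positivity and the bound \(\tau<c\) coming from \prettyref{thm:localisation_of_the_solvers}) with \(\hat x\in{\rm SOL}(P_{1,\tau})\), \(\hat p\in{\rm SOL}(D_{1,\tau})\).

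The main obstacle — and the third key step — is \emph{well-definedness of the correspondence}: I must show that \(\lambda=\|\hat p\|_*\) does not depend on the choice of \(\hat p\in{\rm SOL}(D_{1,\tau})\), and dually that \(\tau=\|L\hat x\|\) does not depend on \(\hat x\in{\rm SOL}(P_{2,\lambda})\); only then does \(g\colon\tau\mapsto\lambda\) make sense as a bijection. For the primal side this is precisely where \prettyref{thm:direction_of_argmin__essentially_smooth__strict_convex_sharpend} applies: with \(F=\Phi\), \(G(\cdot)=\lambda\|L\cdot\|\), the setting \eqref{new_setting_Phi} plus \prettyref{thm:concatenation_ess_smooth_with_linear_mapping} and \prettyref{thm:concatenation_stricly_convex_on_ri_with_linear_mapping} supply the needed decomposition \({\rm aff}(\dom\Phi)=\check A\oplus\check P\) with \(\Phi\) essentially smooth and innerly strictly convex, so the theorem yields \(G(\hat x)=G(\tilde x)\), i.e. \(\|L\hat x\|=\|L\tilde x\|\), for all solvers of \(P_{2,\lambda}\). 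For the dual side one applies the same theorem to \(F(\cdot)=\Phi^*(-L^*\cdot)\), \(G(\cdot)=\tau\|\cdot\|_*\), using \prettyref{lem:subgradient_and_conjugate_function_in_our_setting} (which gives \(\Phi^*\) the required essentially smooth / essentially strictly convex structure) together again with the two concatenation theorems; this forces \(\|\hat p\|_*\) to be constant over \({\rm SOL}(D_{1,\tau})\). Injectivity of \(g\) then follows because if \(g(\tau_1)=g(\tau_2)=\lambda\) the common solution set \({\rm SOL}(P_{2,\lambda})\) has a well-defined value \(\|L\hat x\|\) that must equal both \(\tau_1\) and \(\tau_2\); surjectivity follows symmetrically. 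Finally, the ``\((\tau,\lambda)\notin{\rm gr}\,g\)'' disjointness assertions and the boundary cases \(\tau\in\{0\}\cup[c,\infty)\), \(\lambda\in\{0\}\cup[d,\infty)\) are read off from the localisation theorem: if a vector lay in both \({\rm SOL}(P_{1,\tau})\) and \({\rm SOL}(P_{2,\lambda})\) it would be forced simultaneously into and out of \(\argmin\Phi\) or \(\mathcal N(L)\), or the values \(\|L\hat x\|=\tau\) and the \(g\)-relation would be contradicted; and \({\rm SOL}(P_{1,\tau})=\emptyset\) or \({\rm SOL}(D_{1,\tau})=\emptyset\) never occurs for \(\tau\in(0,c)\) by \prettyref{thm:existence_of_solvers}. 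The last displayed equalities \(\tau=\|L\hat x\|\), \(\lambda=\|\hat p\|_*\) have already been established along the way.
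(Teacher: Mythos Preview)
Your proposal is correct and follows essentially the same route as the paper's proof: existence from \prettyref{thm:existence_of_solvers}, localisation from \prettyref{thm:localisation_of_the_solvers}, the key primal--dual equivalence via \prettyref{fenchel-dual-hom} and the subdifferential formulas for $\Psi_1,\Psi_2$, well-definedness of $g$ and its inverse via \prettyref{thm:direction_of_argmin__essentially_smooth__strict_convex_sharpend} applied to $\Phi$ and to $\Phi^*(-L^*\cdot)$, and then bijectivity. The paper organises the bijectivity step slightly differently---it first records only inclusions ${\rm SOL}(P_{1,\tau})\subseteq{\rm SOL}(P_{2,g(\tau)})$ and ${\rm SOL}(P_{2,\lambda})\subseteq{\rm SOL}(P_{1,f(\lambda)})$, then proves $f\circ g=\id$ and $g\circ f=\id$ via the disjointness ${\rm SOL}(P_{1,\tau})\cap{\rm SOL}(P_{1,\tau'})=\emptyset$ for $\tau\neq\tau'$, which simultaneously upgrades the inclusions to equalities---but your injectivity argument is equivalent to this.
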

%
\vspace{0.2cm}

\vspace{0.2cm}

\noindent
{\bf Proof.} 
Note in the following that the requirements i) - iv) of 
Lemma \ref{fenchel-dual-hom} are fulfilled for $\tau \in (0, +\infty)$
and $\lambda \in (0, +\infty)$. 

Theorem \ref{thm:existence_of_solvers} ensures that all solver sets
${\rm SOL}(P_{1,\tau})$,
${\rm SOL}(P_{2,\lambda})$,
${\rm SOL}(D_{1,\tau})$,
${\rm SOL}(D_{2,\lambda})$
are not empty for $\tau \in (0,c)$ and $\lambda \in (0, d)$
and that only finite minima are taken.

The core of the proof consists of two main steps:
In the first step we use
Theorem \ref{fenchel-dual-hom}, 
Theorem \ref{thm:direction_of_argmin__essentially_smooth__strict_convex_sharpend}
and 
\prettyref{thm:constraint_vs_nonconstraint} {\rm ii)}
to construct mappings 
$g: (0, c) \rightarrow (0,d)$,
$f: (0, d) \rightarrow (0,c)$
with the following properties:
\begin{align}
  \forall \tau \in (0,c):
  \Bigg\{
  \begin{split}
      {\rm SOL}(P_{1,\tau}) &\subseteq {\rm SOL}(P_{2,g(\tau)}) 
      \\
      {\rm SOL}(D_{1,\tau}) &\subseteq {\rm SOL}(D_{2,g(\tau)}) 
  \end{split}
  \Bigg\},
  \label{eq:sol_1_is_contained_in_sol_2}
  \\
  \forall \lambda \in (0,d):
  \Bigg\{
  \begin{split}
      {\rm SOL}(P_{2,\lambda}) &\subseteq {\rm SOL}(P_{1,f(\lambda)}) 
      \\
      {\rm SOL}(D_{2,\lambda}) &\subseteq {\rm SOL}(D_{1,f(\lambda)}) 
  \end{split}
  \Bigg\}.
  \label{eq:sol_2_is_contained_in_sol_1}
\end{align}
In the second step we verify that $f\circ g = {\rm id}_{(0,c)}$ and $g \circ f= {\rm id}_{(0,d)}$
so that $g$ is bijective and
\eqref{eq:sol_1_is_contained_in_sol_2} and \eqref{eq:sol_2_is_contained_in_sol_1}
actually hold true with equality.
Finally, we deal in a third part with $(\tau,\lambda) \not \in {\rm gr} g$. 
\\[0.7ex]
1. First we show that for all 
$\hat x \in \R^n \setminus \mathcal{N}(L)$,
$\hat p \in \R^m \setminus \{\zerovec\}$
and for all
$\lambda,\tau > 0$
the following equivalence holds true:
\begin{equation}
  \left\{
  \begin{aligned}
      \hat x \in {\rm SOL}(P_{1,\tau}),  \\
      \hat p \in {\rm SOL}(D_{1,\tau}),	 \\
      \lambda = \|\hat p\|_*
  \end{aligned}
  \right\}
  \Leftrightarrow
   \left\{
  \begin{aligned}
      \hat x \in {\rm SOL}(P_{2,\lambda}),  \\
      \hat p \in {\rm SOL}(D_{2,\lambda}),  \\
      \tau    = \|L \hat x\| 
  \end{aligned}
  \right\}.
  \label{eq:solvers_of_1_vs_solvers_of_2}
\end{equation}
We have on the one hand for
$\hat x \in \R^n \setminus \mathcal{N}(L)$,
$\hat p \in \R^m \setminus \{\zerovec\}$,
$\tau > 0$ and $\lambda >0$
the equivalences
\begin{alignat*}{2}
  &~~\phantom{\Leftrightarrow}
    \hat x \in {\rm SOL}(P_{1,\tau}),~ \hat p \in {\rm SOL}(D_{1,\tau})
  &&
\\
  &\Leftrightarrow
    \tau \hat p \in \partial \Psi_1(\tau^{-1}L \hat x),~
  && -L^* \hat p \in \partial \Phi(\hat x)
\\
  &\Leftrightarrow
    \Psi_1(\tau^{-1} L \hat x) + \Psi_1^*(\tau \hat p)
    =
    \langle \tau^{-1}L \hat x, \tau \hat p \rangle,~
  && -L^* \hat p \in \partial \Phi(\hat x)
\\
  &\Leftrightarrow
    \|L \hat x\| \leq \tau,~
    \tau \|\hat p\|_* = \langle L\hat x, \hat p \rangle,~
  && -L^* \hat p \in \partial \Phi(\hat x)
\\
  &\Leftrightarrow
    \|L \hat x\| = \tau,~
    \tau \|\hat p\|_* = \langle L\hat x, \hat p \rangle,~
  && -L^* \hat p \in \partial \Phi(\hat x)
\\
  &\Leftrightarrow
    \|L \hat x\| = \tau,~
    \|L \hat x\| \|\hat p\|_* = \langle L\hat x, \hat p \rangle,~
  && -L^* \hat p \in \partial \Phi(\hat x),    
\end{alignat*}
where we used Lemma \ref{fenchel-dual-hom} in step 1, 
the Fenchel equality \cite[Theorem 23.5]{Rockafellar1970} in step 2
and applied in step 4 the inequality 
$\langle p ,  p' \rangle \leq \|p\| \|p'\|_*$
for $p=L \hat x$, $p' = \hat p$.
On the other hand we obtain similar for
$\hat x \in \R^n \setminus \mathcal{N}(L)$,
$\hat p \in \R^m \setminus \{\zerovec\}$,
$\tau > 0$ and $\lambda >0$
the equivalences
\begin{alignat*}{2}
    &~~\phantom{\Leftrightarrow}
    \hat x \in {\rm SOL}(P_{2,\lambda}),~ \hat p \in {\rm SOL}(D_{2,\lambda})
  &&
\\
  &\Leftrightarrow
    \lambda^{-1} \hat p \in \partial \psi_2(\lambda L \hat x),~
  && -L^* \hat p \in \partial \Phi(\hat x)
\\
  &\Leftrightarrow
    \Psi_2(\lambda L \hat x) + \Psi_2^*(\lambda^{-1} \hat p)
    =
    \langle \lambda L \hat x, \lambda^{-1} \hat p \rangle,~
  && -L^* \hat p \in \partial \Phi(\hat x)
\\
  &\Leftrightarrow
    \lambda \|L \hat x\| = \langle L\hat x, \hat p \rangle,~
    \|\hat p\|_* \leq \lambda,~
  && -L^* \hat p \in \partial \Phi(\hat x)
\\
  &\Leftrightarrow
    \lambda \|L \hat x\| = \langle L\hat x, \hat p \rangle,~
    \|\hat p\|_* = \lambda,~
  && -L^* \hat p \in \partial \Phi(\hat x)
\\
  &\Leftrightarrow
    \|\hat p\|_* = \lambda,~
     \|L \hat x\| \|\hat p\|_* = \langle L\hat x, \hat p \rangle,~
  && -L^* \hat p \in \partial \Phi(\hat x).
\end{alignat*}

Adding the conditions $\lambda = \|\hat p\|_*$ and
$\tau = \|L \hat x\|$, respectively, we see directly
that \eqref{eq:solvers_of_1_vs_solvers_of_2} holds true.
Now we can construct the function $g$ on $(0,c)$ as follows:
Let $\tau \in (0,c)$ and set 
\begin{gather*} 
  g(\tau) \defeq \|\hat p\|_*
\end{gather*}
with any $\hat p \in {\rm SOL}(D_{1,\tau})$; 
this is well defined by Detail 
\ref{det:pr_of_main_theorem:function_g_is_well_defined}.
Theorem \ref{thm:localisation_of_the_solvers} assures
${\rm SOL}(P_{1,\tau}) \cap \mathcal{N}(L) = \emptyset$,
${\rm SOL}(D_{1,\tau}) \cap \{\zerovec\}          = \emptyset$
and 
${\rm SOL}(D_{1,\tau}) \cap \argmin \Phi^*(-L^* \cdot) = \emptyset$,
so that 
\begin{align}
  \|L \hat x\| &> 0, 
  &\|\hat p\|_* &> 0, 
  &\|\hat p\|_* &< d
  \label{eq:x_good_and_p_very_good}
\end{align}
for all $\hat x \in {\rm SOL}(P_{1,\tau})$, and 
for all $\hat p \in {\rm SOL}(D_{1,\tau})$;
see Detail \ref{det:pr_of_main_theorem:norm_of_dual_sol_lower_than_d}
for the last inequality.
By the second and third inequality in \eqref{eq:x_good_and_p_very_good}
we see that
$g(\tau) \in (0,d)$, so that $g:(0,c) \rightarrow (0,d)$.
The wanted inclusions in \eqref{eq:sol_1_is_contained_in_sol_2} follow now
from \eqref{eq:solvers_of_1_vs_solvers_of_2},
which is allowed to apply, by the first and second inequality in 
\eqref{eq:x_good_and_p_very_good}.
\\[0.7ex]
The function $f$ on $(0,d)$ is constructed as follows:
Let $\lambda \in (0,d)$ and set
\begin{gather*}
  f(\lambda) \defeq \|L \hat x\|
\end{gather*}
with any $\hat x \in {\rm SOL}(P_{2,\lambda})$; this is well defined, by Detail 
\ref{det:pr_of_main_theorem:function_f_is_well_defined}.
Theorem \ref{thm:localisation_of_the_solvers} assures
${\rm SOL}(D_{2,\lambda}) \cap \{\zerovec\} = \emptyset$,
${\rm SOL}(P_{2,\lambda}) \cap \mathcal{N}(L) = \emptyset$
and 
${\rm SOL}(P_{2,\lambda}) \cap \argmin \Phi = \emptyset$,
so that 
\begin{align}
  \|\hat p\|_* &> 0, 
  &\|L \hat x\| &> 0, 
  &\|L \hat x\| &< c	
  \label{eq:p_good_and_x_very_good}
\end{align}
for all $\hat p \in {\rm SOL}(D_{2,\lambda})$, and 
for all $\hat x \in {\rm SOL}(P_{2,\lambda})$;
see Detail \ref{det:pr_of_main_theorem:G_of_primal_solution_lower_than_c} 
for the last inequality.
\\[0.5ex]
By the second and third inequality in \eqref{eq:p_good_and_x_very_good} we see that
$f(\lambda) \in (0,c)$, so that $f:(0,d) \rightarrow (0,c)$.
The inclusions in \eqref{eq:sol_2_is_contained_in_sol_1} follow now
from \eqref{eq:solvers_of_1_vs_solvers_of_2},
which is allowed to apply, by the first and second inequality in 
\eqref{eq:p_good_and_x_very_good}.
\\[0.7ex]
2. First we note that
\begin{alignat}{2}
  {\rm SOL}(P_{1,\tau}) 
  &\cap 
  {\rm SOL}(P_{1,\tau'}) 
  &&= 
  \emptyset,
  \label{eq:sol_1_moves}
\\
  {\rm SOL}(D_{2,\lambda})
  &\cap
  {\rm SOL}(D_{2,\lambda'})
  &&=
  \emptyset
  \label{eq:sol_2_moves}
\end{alignat}
for all distinct $\tau, \tau' \in (0,c)$ 
and all distinct $\lambda, \lambda' \in (0,d)$, respectively,
cf. detail 
\ref{det:pr_of_main_theorem:different_taus_from_0_to_c_give_distict_solver_sets}.
\\
Next we prove the bijectivity of $g:(0,c) \rightarrow (0,d)$ 
by showing $f \circ g = {\rm id}_{(0,c)}$
and        $g \circ f = {\rm id}_{(0,d)}$.
In doing so we will also see that
\eqref{eq:sol_1_is_contained_in_sol_2}
actually holds true with equality.
Let $\tau \in (0,c)$ be arbitrarily chosen and
set $\tau' = f(g(\tau))$. 
Using \eqref{eq:sol_1_is_contained_in_sol_2}
and   \eqref{eq:sol_2_is_contained_in_sol_1}
with $\lambda = g(\tau)$ yields
\begin{alignat*}{3}
  {\rm SOL}(P_{1,\tau}) 
  &\subseteq
  {\rm SOL}(P_{2,g(\tau)})
  &&\subseteq
  {\rm SOL}(P_{1,\tau'}),
\\
  {\rm SOL}(D_{1,\tau}) 
  &\subseteq
  {\rm SOL}(D_{2,g(\tau)})
  &&\subseteq
  {\rm SOL}(D_{1,\tau'}).
\end{alignat*}
Since ${\rm SOL}(P_{1,\tau}) \not = \emptyset$ we must have
$\tau = \tau'$ in order to avoid a contradiction to 
\eqref{eq:sol_1_moves}. 
Similarly we can prove for an arbitrarily chosen
$\lambda \in (0,d)$ and
$\lambda' \defeq g(f(\lambda))$ that $\lambda = \lambda'$,
see detail \ref{det:pr_of_main_theorem:g_after_f_is_identity}.
\\[0.7ex]
3. It remains to show 
$ {\rm SOL}(P_{1,\tau}) \cap {\rm SOL}(P_{2,\lambda}) = \emptyset$
and 
$ {\rm SOL}(D_{1,\tau}) \cap {\rm SOL}(D_{2,\lambda}) = \emptyset$
for these 
$ 
  (\tau, \lambda) \in 
  [(0,c)\times [0, +\infty)] \cup [[0, +\infty ) \times (0,d)]
$
with $(\tau, \lambda) \not \in {\rm gr} \, g$.
Having Theorem \ref{thm:localisation_of_the_solvers} in mind, we may
restrict us to those $(\tau, \lambda)\in(0,c)\times(0,d)$
which are not in ${\rm gr} \, g$. For such $\tau$, $\lambda$ we have
$\tau \not = g^{-1}(\lambda)$ and $\lambda \not = g(\tau)$. By 
\eqref{eq:sol_1_moves} and \eqref{eq:sol_2_moves}
we therefore have
$
  {\rm SOL}(P_{1,\tau}) 
  \cap 
  {\rm SOL}(P_{1,g^{-1}(\lambda)}) 
  = 
  \emptyset
$
and 
$ {\rm SOL}(D_{2,\lambda})
  \cap
  {\rm SOL}(D_{2,g(\tau)})
  =
  \emptyset
$.
Substituting 
${\rm SOL}(P_{1,g^{-1}(\lambda)})$ by ${\rm SOL}(P_{2,\lambda})$ and 
${\rm SOL}(D_{2,g(\tau)})$         by ${\rm SOL}(D_{1,\tau})$
we are done.
\hfill $\Box$
\\

Here are some more properties of the function $g$.
%
\begin{corollary}\label{lem:monotone}
  Let the assumptions of \prettyref{thm:theo_lambda_tau} be fulfilled.
  Then the bijection $g: (0,c) \rightarrow (0,d)$ is strictly monotonic decreasing
  and continuous.
\end{corollary}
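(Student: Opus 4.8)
The plan is to read off a workable description of $g$ from the proof of \prettyref{thm:theo_lambda_tau}: for $\tau\in(0,c)$ one has $g(\tau)=\|\hat p\|_*$ for \emph{any} $\hat p\in{\rm SOL}(D_{1,\tau})$, this being well defined since all dual solvers share the same dual norm. First I would show that $g$ is monotonically decreasing. Fix $0<\tau_1<\tau_2<c$ and choose $\hat p_i\in{\rm SOL}(D_{1,\tau_i})$; these exist and the corresponding minima are finite by \prettyref{thm:existence_of_solvers}. Testing each $\hat p_i$ against the other competitor in the respective problem $(D_{1,\tau_i})$ gives
\[\Phi^*(-L^*\hat p_1)+\tau_1\|\hat p_1\|_*\le\Phi^*(-L^*\hat p_2)+\tau_1\|\hat p_2\|_*\]
and
\[\Phi^*(-L^*\hat p_2)+\tau_2\|\hat p_2\|_*\le\Phi^*(-L^*\hat p_1)+\tau_2\|\hat p_1\|_*,\]
where all occurring values are finite. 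Adding the two inequalities and cancelling the (finite) terms $\Phi^*(-L^*\hat p_i)$ yields $(\tau_2-\tau_1)\bigl(\|\hat p_1\|_*-\|\hat p_2\|_*\bigr)\ge 0$, hence $g(\tau_1)=\|\hat p_1\|_*\ge\|\hat p_2\|_*=g(\tau_2)$. Alternatively the symmetric computation on the primal side, using $g^{-1}(\lambda)=\|L\hat x\|$ for $\hat x\in{\rm SOL}(P_{2,\lambda})$ and the minimality of $(P_{2,\lambda})$, gives that $g^{-1}$ is decreasing, which is equivalent.

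Second, to upgrade ``decreasing'' to ``strictly decreasing'' I would invoke that $g:(0,c)\rightarrow(0,d)$ is a bijection by \prettyref{thm:theo_lambda_tau}, in particular injective; a non-increasing injective function on an interval is automatically strictly decreasing, so $\tau_1<\tau_2$ forces $g(\tau_1)>g(\tau_2)$.

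Third, for continuity I would appeal to the elementary topological fact that a strictly monotone map from an interval \emph{onto} an interval is continuous: a monotone function on an interval has at most jump discontinuities, and at such a jump it would skip a whole subinterval of values, contradicting the surjectivity of $g$ onto the interval $(0,d)$ asserted in \prettyref{thm:theo_lambda_tau}. Hence $g$ is continuous.

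There is no genuinely hard step here; the entire proof is the two-line ``add the optimality inequalities'' estimate together with two standard facts (an injective monotone function is strictly monotone, and a strictly monotone surjection between intervals is continuous). The only points needing a little care are bookkeeping: making sure $\Phi^*(-L^*\hat p_i)$ and $\tau_i\|\hat p_i\|_*$ are finite so that the cancellation is legitimate — this is precisely the ``finite minima'' assertion already established — and recalling the well-definedness of $g(\tau)$ through an arbitrary dual solver, which is again part of \prettyref{thm:theo_lambda_tau}.
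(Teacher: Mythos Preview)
Your proof is correct and follows essentially the same approach as the paper: establish monotonicity via optimality inequalities of the solvers, then invoke the elementary fact that a decreasing bijection between open intervals is automatically strictly decreasing and continuous. The only cosmetic difference is that the paper works on the primal side with $f=g^{-1}$ and $\hat x_i\in{\rm SOL}(P_{2,\lambda_i})$, deriving a contradiction from a single optimality inequality, whereas you work on the dual side and use the symmetric ``add the two inequalities'' trick; both variants are standard and equivalent in strength.
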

%
{\bf Proof.}
Since decreasing bijections between open intervals are strict decreasing and continuous
we need only to show 
that $f = g^{-1}: (0,d) \rightarrow (0,c)$ is decreasing.
Let $0<\lambda_1 < \lambda_2 < d$
and
$\hat x_i \in \argmin_{x \in \R^n} \{ \Phi(x) + \lambda_i \Psi(x) \}$, $i = 1,2$, 
where $\Psi(x) \defeq \|Lx\|$.

Then we know that $\tau_i =\Psi(\hat x_i)$, $i=1,2$.
Assume that $\Psi(\hat x_1) < \Psi(\hat x_2)$. Then we obtain with $\lambda_2=\lambda_1+\varepsilon$ and
  $\varepsilon >0$ the contradiction
  \begin{align*}
    \Phi(\hat x_2)+\lambda_2 \Psi(\hat x_2)
    &= \Phi(\hat x_2)+\lambda_1 \Psi(\hat x_2)+\varepsilon \Psi(\hat x_2)\\
    &\geq \Phi(\hat x_1)+\lambda_1 \Psi(\hat x_1)+\varepsilon \Psi(\hat x_2)\\
    &> \Phi(\hat x_1)+\lambda_1 \Psi(\hat x_1)+\varepsilon \Psi(\hat x_1)\\
    &= \Phi(\hat x_1)+\lambda_2 \Psi(\hat x_1).
  \end{align*}
\hfill $\Box$

\begin{remark} \label{rem:g_neither_differentiable_nor_convex}

The function $g$ is in general neither differentiable nor convex 
as the following example shows: The strictly convex function 
$\Phi$, given by
\[
   \Phi(x)
   \defeq
   \begin{cases}
     (x-4)^2      &  \text{ for } x \leq 2 \\
     2(x-3)^2 + 2 &  \text{ for } x > 2
   \end{cases}
\]
has exactly one minimizer, namely $x_0 = 3$.
Clearly $\Phi$, $\|\cdot \| \defeq | \cdot |$ and $L=(1)$ fulfill 
all assumptions of \prettyref{thm:theo_lambda_tau} if we set $X_2 \defeq \{0\}$.
For $\lambda \geq 0$ and $\tau \in (0, c) = (0,x_0)$ we have
%
\begin{equation*}
  \argmin_{x\in\R} \{ \Phi(x) \st  |x|\leq\tau  \}
  =
  \{ \tau \}
  \eqdef
  \{ \hat x \}
  .
\end{equation*}
By \prettyref{thm:theo_lambda_tau} we have 
$\argmin (\Phi(\cdot) + \lambda |\cdot |) = \{\tau\}$
exactly for $\lambda = g(\tau)$. An explicit formula for 
$g(\tau)$ is obtained by applying Fermat's rule: 
$
  0 
  \in 
  \partial(\Phi(\cdot ) + g(\tau)|\cdot|)|_\tau
  =
  (\{\Phi'(\cdot)\} + g(\tau) \partial |\cdot |)|_\tau
  =
  \{\Phi'(\tau) + g(\tau)\}
$; 
by rearranging we get
\[
  g(\tau)
  = 
  -\Phi'(\tau)
  =
  \left.
  \begin{cases}
    2(4 - \tau)  & \text{ for } 0 < \tau < 2 \\
    4		 & \text{ for } \tau = 2 \\
    4(3 - \tau)  & \text{ for } 2 < \tau < x_0
  \end{cases} 
  \right\rbrace
\]
Obviously $g$ is neither differentiable nor convex.
\end{remark}

  \appendix  
\clearpage   
\numberwithin{theorem}{chapter}

\chapter{Supplementary Linear Algebra and Analysis} \label{chap:LinA}


\begin{lemma} \label{lem:criterion_for_linearity}
  Let $V$ and $W$ be vector spaces over $\R$.
  A mapping $\varphi: V \rightarrow W$ is linear if
  \begin{enumerate}
    \item 
      $\varphi(v+v') = \varphi(v) + \varphi(v')$ for all $v, v' \in V$,
    \item \label{enu:shorted_homogenity}
      $\varphi(t v) = t \varphi(v)$ for all $v \in V$ and all $t \in [0,1]$.
  \end{enumerate}
\end{lemma}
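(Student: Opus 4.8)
The statement asserts that additivity together with homogeneity merely on the unit interval $[0,1]$ already forces full linearity. Since linearity means additivity plus $\varphi(tv) = t\varphi(v)$ for \emph{all} $t \in \R$, and additivity is assumed outright, the whole task is to upgrade the scalar range in the homogeneity condition from $[0,1]$ to all of $\R$. The plan is to first nail down the behaviour at $\zerovec$ and under negation, and then treat positive scalars $>1$ by a reciprocal trick and negative scalars by combining the positive case with negation.

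First I would note that additivity gives $\varphi(\zerovec) = \varphi(\zerovec + \zerovec) = \varphi(\zerovec) + \varphi(\zerovec)$, hence $\varphi(\zerovec) = \zerovec$; this already handles the scalar $t = 0$ since $\varphi(0 \cdot v) = \varphi(\zerovec) = \zerovec = 0 \cdot \varphi(v)$. Next, for any $v \in V$ additivity gives $\varphi(v) + \varphi(-v) = \varphi(v + (-v)) = \varphi(\zerovec) = \zerovec$, so $\varphi(-v) = -\varphi(v)$. Together with assumption \upref{enu:shorted_homogenity} this already covers every $t \in [0,1]$ and, via negation, every $t \in [-1,0]$.

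The remaining case is $|t| > 1$. For $t > 1$ the key observation is that $\tfrac{1}{t} \in (0,1]$, so applying \upref{enu:shorted_homogenity} to the vector $tv$ and the scalar $\tfrac{1}{t}$ yields
\[
  \varphi(v) = \varphi\!\left(\tfrac{1}{t}\,(tv)\right) = \tfrac{1}{t}\,\varphi(tv),
\]
whence $\varphi(tv) = t\,\varphi(v)$. For $t < -1$ write $t = -s$ with $s > 1$; then $\varphi(tv) = \varphi\bigl(-(sv)\bigr) = -\varphi(sv) = -s\,\varphi(v) = t\,\varphi(v)$, using the negation identity and the case just handled. Collecting these cases establishes $\varphi(tv) = t\,\varphi(v)$ for all $t \in \R$, which together with the assumed additivity is precisely the definition of linearity. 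There is no genuine obstacle here; the only mildly non-obvious step is the reciprocal substitution for $t > 1$, and everything else is bookkeeping over the sign and size of $t$.
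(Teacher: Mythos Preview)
Your proof is correct and follows essentially the same approach as the paper: both derive $\varphi(\zerovec)=\zerovec$ and $\varphi(-v)=-\varphi(v)$ from additivity, handle $t>1$ by the reciprocal substitution $\varphi(v)=\varphi(\tfrac1t(tv))=\tfrac1t\varphi(tv)$, and then reduce negative scalars to the positive case via negation. The only difference is organizational---the paper treats all $t<0$ in one stroke after settling the positive case, whereas you split into $[-1,0]$ and $t<-1$---but the substance is identical.
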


Note that only $t \in [0,1]$ is required.

\begin{proof}[Proof of \prettyref{lem:criterion_for_linearity}]
  By assumption $\varphi$ is additive. Moreover 
  $\varphi$ is also homogeneous:
  Let $v \in V$ be arbitrarily chosen.
  In case $t \in [0,1]$ we have $\varphi(t v) = t \varphi (v)$ by 
  assumption \upref{enu:shorted_homogenity}.
  In case 
  $t \in (1, +\infty)$
  application of the same assumption 
  to $t' \defeq \tfrac{1}{t} \in [0,1]$ and $v' \defeq tv \in V$ yields
  $
    \varphi(tv)
    =
    t t' \varphi(v')
    =
    t \varphi(t'v')
    =
    t\varphi(v)
  $.
  Using $\varphi(\tilde t \tilde v) = \tilde  t \varphi( \tilde v)$ 
  for $\tilde v \in V$, $\tilde t \in (0, + \infty)$ and 
  $
    \varphi(-v) + \varphi(v) 
    = 
    \varphi(-v + v) 
    =
    \varphi(\zerovec)
    = 
    \varphi (0\cdot\zerovec)
    = 
    0\varphi(\zerovec) 
    = 
    \zerovec
  $,
  i.e. $\varphi(-v) = - \varphi(v)$
  we finally obtain also in case $t \in (-\infty, 0)$ the equation 
  $\varphi(tv) = \varphi(-t (-v))= -t \varphi(-v) = t \varphi(v)$.
\end{proof}

The following Lemma provides a useful inequality, which reflects 
the fact that a direct decomposition $X = X_1 \oplus X_2$ 
of an Euclidean vector space $X$ of finite dimension can only consist
of subspaces $X_1$ and $X_2$ which form a strict positive angle 
$\alpha \in (0, \frac12 \pi]$,
analytically described by 
\begin{gather*}
  -1 
  < 
  \cos(\pi-\alpha) 
  = 
  \inf_{h_1 \in X_1 \setminus \{\zerovec\}, h_2 \in X_2 \setminus \{\zerovec\}}
  \frac{\langle h_1, h_2 \rangle}{\|h_1\|_2 \|h_2\|_2}.
\end{gather*}
The equivalent inequality
$
  \inf_{h_1 \in X_1\cap \mathbb{S}_1, h_2 \in X_2\cap \mathbb{S}_1}
  \langle h_1, h_2 \rangle > -1
$
follows indeed easily from the inequality of the next theorem for 
$\|\cdot \| = \| \cdot \|_2$, see Detail
\ref{det:appendix_pr_of_relation_between_inequalities}.
Note however that the above 
inequality
and the inequality in Lemma 
\ref{lem:same_blow_up_factor_gets_all_attachted_affines_strangers_out_of_ball}
are in general only true in finite-dimensional spaces.
These inequalities do
not directly transfer to infinite dimensional inner 
product spaces as the example
$
  X 
  = 
  {\rm span} \{e_1\} \oplus 
  {\rm span}\{e_1 +  \frac12 e_2, e_1 + \frac13 e_3, e_1 + \frac14 e_4, \dots\}
  \subseteq  
  l_2(\R) 
$
shows; recall here that the notation $X = X_1 \oplus X_2$ 
still shall mean only an inner 
decomposition in the sense of pure vector spaces without demanding
additional properties like (topological) 
closeness on $X_1$ and $X_2$.

\begin{lemma} 
\label{lem:same_blow_up_factor_gets_all_attachted_affines_strangers_out_of_ball}
  Let $X_1, X_2$ be subspaces of $\R^n$ with $X_1 \cap X_2 = \{\zerovec\}$ and 
  let $\|\cdot\|$ be any norm on $\R^n$. Then there is a constant $C\geq 1$
  such that 
  \begin{gather*}
    \|h_1\| \leq C \|h_1 + h_2\|
  \end{gather*}
  for all $h_1 \in X_1$ and $h_2 \in X_2$.
\end{lemma}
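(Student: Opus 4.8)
The plan is to argue by contradiction using the compactness of the unit sphere of $X_1$ with respect to $\|\cdot\|$. First observe that taking $h_2 = \zerovec$ in the desired inequality forces any admissible constant to satisfy $C \geq 1$; conversely, once some finite constant with the stated property is found, replacing it by $\max\{C,1\}$ costs nothing. So it suffices to produce \emph{some} finite $C > 0$ with $\|h_1\| \leq C\|h_1 + h_2\|$ for all $h_1 \in X_1$, $h_2 \in X_2$.

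Suppose no such constant exists. Then for every $k \in \N$ there are $h_1^{(k)} \in X_1$ and $h_2^{(k)} \in X_2$ with $\|h_1^{(k)}\| > k\,\|h_1^{(k)} + h_2^{(k)}\|$. In particular $h_1^{(k)} \neq \zerovec$, so after dividing both $h_1^{(k)}$ and $h_2^{(k)}$ by $\|h_1^{(k)}\| > 0$ (which preserves the inequality and keeps the two vectors in $X_1$ and $X_2$, respectively) we may assume $\|h_1^{(k)}\| = 1$ and hence $\|h_1^{(k)} + h_2^{(k)}\| < \tfrac1k$ for all $k \in \N$. Since $X_1$ is a finite-dimensional subspace of $\R^n$, the set $\{x \in X_1 : \|x\| = 1\}$ is compact, so passing to a subsequence we may assume $h_1^{(k)} \rarr h_1^\ast$ with $h_1^\ast \in X_1$ and $\|h_1^\ast\| = 1$. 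From $\|h_1^{(k)} + h_2^{(k)}\| \rarr 0$ we then obtain $h_2^{(k)} = (h_1^{(k)} + h_2^{(k)}) - h_1^{(k)} \rarr -h_1^\ast$; because $X_2$ is also finite-dimensional, hence a closed subset of $\R^n$, the limit $-h_1^\ast$ lies in $X_2$. Therefore $h_1^\ast \in X_1 \cap X_2 = \{\zerovec\}$, contradicting $\|h_1^\ast\| = 1$. Thus a finite $C$ exists, and passing to $\max\{C,1\}$ yields the claim.

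The main (and essentially only) obstacle is the point where finite-dimensionality is used: compactness of the unit sphere of $X_1$ together with closedness of $X_2$. As the remark preceding the lemma already indicates, in an infinite-dimensional inner product space a vector-space direct sum $X_1 \oplus X_2$ need possess neither property, and the inequality can genuinely fail there, so no reformulation avoiding this input is possible. An alternative but equivalent route would be to note that the linear projection $\pi_{X_1,X_2}|_{X_1+X_2}\colon X_1+X_2 \rarr X_1$ along $X_2$, being a linear map on the finite-dimensional space $X_1+X_2$, is automatically continuous and hence bounded, which produces the constant $C$ at once (one may even take $C = \|\pi_{X_1,X_2}|_{X_1+X_2}\|$); however the contradiction argument above is self-contained and does not require invoking boundedness of linear maps on finite-dimensional spaces.
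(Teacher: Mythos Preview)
Your proof is correct. Both you and the paper normalize to $\|h_1\|=1$ and use finite-dimensional compactness, but the packaging differs: the paper argues directly by splitting into the cases $\|h_2\|\ge 3$ (where the reverse triangle inequality gives $\|h_1+h_2\|\ge 2$) and $\|h_2\|\le 3$ (where the continuous map $(h_1,h_2)\mapsto\|h_1+h_2\|$ attains a positive minimum on the compact set $(X_1\cap\mathbb{S}_1)\times(X_2\cap\overline{\mathbb{B}}_3)$), then takes $C=1/\min\{2,\check c\}$. Your sequential-compactness contradiction avoids the case split and is slightly cleaner; the paper's version has the minor advantage of producing an explicit constant, but both rest on the same input and your remark about the bounded projection $\pi_{X_1,X_2}$ is in fact how the paper exploits the lemma downstream.
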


\begin{proof}
  It suffices to find a constant $C > 0$ for which the claimed  
  inequality holds true, since enlarging the constant then clearly
  keeps the inequality true.
  In case $h_1 = \zerovec$ the inequality is fulfilled for any $C > 0$.
  Therefore we may assume without loss of generality that 
  $h_1 \in X_1 \cap \mathbb{S}_1$;
  note therefore that the following statements are equivalent:
  \begin{align*}
    \exists C > 0 \; 
    \forall h_1 \in X_1 \setminus \{\zerovec\} \; 
    \forall h_2 \in X_2:
      \|h_1\| \leq C \|h_1 + h_2\|,
    \\
    \exists C > 0 \; 
    \forall x_1 \in X_1 \cap \mathbb{S}_1 \; 
    \forall x_2 \in X_2:
      \|x_1\| \leq C \|x_1 + x_2\|.
  \end{align*}
  So we need only to find a constant $C > 0$ such that 
  $\frac{1}{C} \leq \|h_1 + h_2\|$ 
  for all $h_1 \in X_1 \cap \mathbb{S}$ and all $h_2 \in X_2$.
  We have 
  \begin{gather*}
    \|h_1 + h_2\| \geq |\|h_2\| - \|h_1\|| = \|h_2\| - 1 \geq 2 
  \end{gather*}
  for $\|h_2\| \geq 3$, on the one hand.
  The mapping 
  $
    \varphi : 
    (X_1 \cap \mathbb{S}_1) \times (X_2 \cap \overline{\mathbb{B}}_{3})
    \rightarrow
    \R
  $,
  given by $\varphi(h_1, h_2) \defeq \|h_1 + h_2\|$, is continuous on its 
  compact domain of definition. Therefore $\varphi$ attains its minimum 
  $\check c = \varphi(\check h_1, \check h_2)$ for some 
  $\check h_1 \in \mathbb{S} \cap X_1$,
  $\check h_2 \in X_2 \cap \overline{\mathbb{B}}_{3}$.
  Combining $X_1 \cap X_2 = \{\zerovec\}$ and $\|\check h_1\| \not = 0$ ensures
  $\check h_2 \not = -\check h_1$, so that
  $\check c = \|\check h_1 + \check h_2\| > 0$ and hence 
  $\|h_1 + h_2\| \geq \check c > 0$ for all
  $h_1 \in X_1 \cap \mathbb{S} $ and
  $h_2 \in X_2 \cap \overline{\mathbb{B}}_{3}$ on the other hand.
  In total we have 
  $\|h_1 + h_2\| \geq \min \{2, \check c\} > 0 $ for 
  $h_1 \in X_1 \cap \mathbb{S}$ and $h_2 \in X_2$.
  Setting $C \defeq \frac{1}{\min \{2, \check c\}} > 0$
  we are done.
\end{proof}

Next we introduce the notion of an affine mapping via four equivalent 
conditions;
note therein that condition 
\upref{enu:def_affine_mapping_via_convex_combination}
can also be demanded for a function $f$ 
which is defined only on a nonempty convex set.
For condition 
\upref{enu:def_affine_mapping_via_affine_combination} and
\upref{enu:def_affine_mapping_via_linear_mapping_without_fixing_origin}
c.f. also \cite[p. 7]{Rockafellar1970}\DissVersionForMeOrHareBrainedOfficialVersion
  { and \cite{wiki:affinetransform_2012_07_25__08_43}.}{.}

\begin{definition}
  Let $A$,$A'$ be nonempty affine subspaces of $\R^n$ and 
  $U$, $U' \subseteq \R^n$ the corresponding vector subspaces 
  that are parallel to $A$ and $A'$, respectively.
  A mapping $f: A \rightarrow A'$ is called {\bf affine}, iff one 
  of the following equivalent conditions is fulfilled:
  \begin{enumerate}
    \item \label{enu:def_affine_mapping_via_convex_combination}
      $f(a_1 + t(a_2 - a_1)) = f(a_1) + t(f(a_2) - f(a_1))$
      \quad for all $a_1, a_2 \in A$ and all $t \in [0,1]$,
    \item \label{enu:def_affine_mapping_via_affine_combination} 
      $f(a_1 + t(a_2 - a_1)) = f(a_1) + t(f(a_2) - f(a_1))$
      \quad for all $a_1, a_2 \in A$ and all $t \in \R$,
    \item \label{enu:def_affine_mapping_via_linear_mapping_without_fixing_origin}
      There is a linear mapping $\varphi:U \rightarrow U'$ such that 
      \\
      $f(a_2) - f(a_1) = \varphi(a_2 -a_1)$ 
      \quad for all $a_1, a_2 \in A$,
    \item \label{enu:def_affine_mapping_via_linear_mapping_and_fixed_origin}
      There is a linear mapping $\varphi:U \rightarrow U'$ and a point 
      $a_0 \in A$ such that 
      \\
      $f(a) = f(a_0) + \varphi(a - a_0)$ 
      \quad for all $a \in A$.
  \end{enumerate}
\end{definition}

\begin{remark}
  The four conditions are really equivalent:
  \\
  ``\upref{enu:def_affine_mapping_via_linear_mapping_and_fixed_origin}
    $\Rightarrow$
    \upref{enu:def_affine_mapping_via_linear_mapping_without_fixing_origin}'':
  Let $\varphi: U \rightarrow U'$ be linear and $a_0 \in A$ such that
  $f(a) = f(a_0) + \varphi(a-a_0)$ for all $a \in A$. Then we get 
  \begin{align*}
    f(a_2) - f(a_1) 
    &= 
    f(a_2) - f(a_0) - [f(a_1) - f(a_0)]
  \\&= 
    \varphi(a_2 - a_0) - \varphi (a_1 -a_0)
  \\&= 
    \varphi(a_2 - a_0 - [a_1 -a_0] )
  \\&= 
    \varphi(a_2 - a_1)
  \end{align*}
  for all $a_1, a_2 \in A$.
  %
  \\
  ``\upref{enu:def_affine_mapping_via_linear_mapping_without_fixing_origin}
    $\Rightarrow$
    \upref{enu:def_affine_mapping_via_affine_combination}'':
  Using \upref{enu:def_affine_mapping_via_linear_mapping_without_fixing_origin}
  for $a_1' = a_1 \in A$ and $a_2' = a_1 + t(a_2 -a_1) \in A$
  we get
  \begin{align*}
    f(a_1 + t(a_2 -a_1)) - f(a_1) 
    &=
    \varphi(a_2' - a_1')
    =
    \varphi(t(a_2 -a_1))
  \\&=
    t\varphi(a_2 - a_1)
    =
    t(f(a_2) - f(a_1))
  \end{align*}
  for all $a_1,a_2 \in A$ and all $t \in \R$, so that 
  \upref{enu:def_affine_mapping_via_affine_combination}
  holds true.
  \\
  ``\upref{enu:def_affine_mapping_via_affine_combination}
    $\Rightarrow$
    \upref{enu:def_affine_mapping_via_convex_combination}''
    is obviously true.
  \\
  ``\upref{enu:def_affine_mapping_via_convex_combination}
    $\Rightarrow$
    \upref{enu:def_affine_mapping_via_linear_mapping_and_fixed_origin}'':
  Choose any $a_0 \in A$ and set 
  $\varphi(u) \defeq f(a_0 + u) - f(a_0)$ for $u \in U$.
  Then clearly $\varphi: U \rightarrow U'$ and 
  $f(a) = f(a_0) + \varphi(a-a_0)$ for all $a \in A = a_0 \oplus  U$.
  It remains to show that $\varphi$ is linear. By Lemma 
  \ref{lem:criterion_for_linearity} it suffices to show that $\varphi$ 
  is additive and fulfills $\varphi(t u) = t \varphi(u)$ for all $u \in U$
  and all $t \in [0,1]$. In order to prove the latter let $u \in U$
  be arbitrarily chosen. 
  Using \upref{enu:def_affine_mapping_via_convex_combination} 
  with  $a_1 = a_0 \in A$ and $a_2 = a_0 + u \in a_0 + U = A$
  we obtain indeed 
  \begin{align*}
    \varphi(tu)
    &=
    f(a_0 + tu) - f(a_0)
  \\&=
    f(a_0 + t(a_2 -a_0)) - f(a_0)
  \\& =
    f(a_0) + t[f(a_2) -f(a_0)] - f(a_0)
  \\&=
    t[f(a_0 + u) - f(a_0)]
  \\&=
    t \varphi(u)
  \end{align*}
  for all $t \in [0,1]$.
  In order to prove the additivity of we note that choosing $t = \frac{1}{2}$
  in \upref{enu:def_affine_mapping_via_convex_combination} gives the equation
  $f(\frac{1}{2}(a_1+a_2)) = \frac{1}{2}[f(a_1) + f(a_2)]$
  for all $a_1, a_2 \in A$.
  For arbitrarily chosen $u,u' \in U$ we obtain therefrom and by 
  $\frac{1}{2} \in [0,1]$ the identity
  \begin{align*}
    \varphi(u + u') 
    &= 
    f(a_0 + u + u') - f(a_0)
    =
    f\left(\tfrac{1}{2}([a_0 + 2u] + [a_0 + 2u'])\right)  - f(a_0)
  \\&=
    \tfrac{1}{2}f(a_0 + 2u) + \tfrac{1}{2}f(a_0 + 2 u') - f(a_0)
  \\&=
    \tfrac{1}{2}\varphi(2u) + \tfrac{1}{2}\varphi(2u')
     =
    \varphi(\tfrac{1}{2} 2u) + \varphi(\tfrac{1}{2} 2u')
     =
    \varphi(u) + \varphi(u').
  \end{align*}
  So $\varphi$ is additive as well.
\end{remark}

\chapter{Supplementary Convex Analysis} \label{chap:convexAna}

\begin{lemma} \label{lem:convexity_condition_inside_and_outside}
  Let $F:\R^n \rightarrow \R \cup \{+\infty\}$ be a convex function.
  \begin{enumerate}
    \item \label{enu:convexity_condition_in_between_and_outside}
      For any two points $x,y \in \dom F$ and $\lambda \in \R$ we have
      \begin{align}
      \label{ineq:convexity_condition_between_two_points_of_dom}
        F((1-\lambda) x + \lambda y) &\leq (1-\lambda) F(x) + \lambda F(y)
	\quad \text{ if } \lambda \in [0,1],
      \\
      \label{ineq:convexity_condition_outside_of_two_points_of_dom}
	 F((1-\lambda) x + \lambda y) &\geq (1-\lambda) F(x) + \lambda F(y)
	\quad \text{ if } \lambda \in \R \setminus (0,1).
      \end{align}
    \item \label{enu:convex_function_constant_on_line_segment_spaned_by_3_points}
      If there are three different collinear points $a,b,c \in \dom F$ 
      which yield the same value $F(a) = F(b) = F(c)$ then 
      $F$ is constant on the line segment ${\rm co}(\{a,b,c\})$
      spanned by these three points.
  \end{enumerate}
\end{lemma}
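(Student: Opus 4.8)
\textbf{Proof plan for Lemma \ref{lem:convexity_condition_inside_and_outside}.}

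The plan is to reduce everything to a one-dimensional statement about the restriction of $F$ to the affine line through the relevant points, and then exploit the convexity inequality in its rearranged forms. For part \upref{enu:convexity_condition_in_between_and_outside}, inequality \eqref{ineq:convexity_condition_between_two_points_of_dom} is simply the definition of convexity applied to $x,y \in \dom F$ with the convex combination parameter $\lambda \in [0,1]$, where finiteness of $F(x)$ and $F(y)$ guarantees that the right--hand side is a genuine real number. For \eqref{ineq:convexity_condition_outside_of_two_points_of_dom} I would treat the two cases $\lambda > 1$ and $\lambda < 0$ separately, but by a symmetric trick. Fix $\lambda > 1$ and write $z \defeq (1-\lambda)x + \lambda y$; then $y$ itself is a convex combination of $x$ and $z$, namely $y = (1 - \tfrac{1}{\lambda}) x + \tfrac{1}{\lambda} z$ with $\tfrac{1}{\lambda} \in (0,1)$. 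If $z \notin \dom F$ then $F(z) = +\infty$ and \eqref{ineq:convexity_condition_outside_of_two_points_of_dom} holds trivially since its right--hand side is a finite real number (both $F(x), F(y)$ are finite); if $z \in \dom F$, applying \eqref{ineq:convexity_condition_between_two_points_of_dom} to the triple $x, z$ gives $F(y) \leq (1 - \tfrac{1}{\lambda}) F(x) + \tfrac{1}{\lambda} F(z)$, and rearranging this linear inequality for $F(z)$ yields exactly $F(z) \geq (1-\lambda)F(x) + \lambda F(y)$. The case $\lambda < 0$ is handled identically after swapping the roles of $x$ and $y$ (note that $(1-\lambda)x + \lambda y$ with $\lambda < 0$ becomes $(1-\mu)y + \mu x$ with $\mu = 1 - \lambda > 1$).

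For part \upref{enu:convex_function_constant_on_line_segment_spaned_by_3_points}, let $a,b,c \in \dom F$ be distinct collinear points with $F(a) = F(b) = F(c) \eqdef v \in \R$. Without loss of generality I would order them along the line so that $b$ lies strictly between $a$ and $c$ (if $b$ is not the middle one, relabel; three distinct collinear points always have a unique "middle" one). Write $b = (1-\mu)a + \mu c$ with $\mu \in (0,1)$. On one hand \eqref{ineq:convexity_condition_between_two_points_of_dom} gives $v = F(b) \leq (1-\mu)F(a) + \mu F(c) = v$, so this inequality is actually an equality; more importantly, for \emph{any} point $p = (1-t)a + tc$ on the segment $\mathrm{co}(\{a,c\})$ with $t \in [0,1]$, convexity gives $F(p) \leq (1-t)v + tv = v$. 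It remains to show $F(p) \geq v$. For $t \in [0, \mu]$ the point $b$ is an affine combination of $a$ and $p$ with the parameter lying outside $(0,1)$ (since $b$ is "beyond" $p$ as seen from $a$): writing $b = (1-s)a + s p$ forces $s = \mu/t \geq 1$ when $t \le \mu$, so \eqref{ineq:convexity_condition_outside_of_two_points_of_dom} yields $v = F(b) \geq (1-s)F(a) + s F(p) = (1-s)v + sF(p)$, which rearranges (using $s \geq 1$) to $F(p) \geq v$. For $t \in [\mu, 1]$ the symmetric argument using $c$ in place of $a$ applies. Combining the two bounds gives $F(p) = v$ for all $p \in \mathrm{co}(\{a,c\}) = \mathrm{co}(\{a,b,c\})$.

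I expect the only real subtlety to be bookkeeping: making sure the affine parameters stay in the correct ranges when I re-express one point as a combination of two others, and handling the $+\infty$ cases in \eqref{ineq:convexity_condition_outside_of_two_points_of_dom} cleanly (the inequality is vacuous there but must still be stated). No deep machinery is needed — this is entirely elementary manipulation of the convexity inequality, and every ingredient (convexity of $F$, the convention $F \leq +\infty$ everywhere, finiteness on $\dom F$) is already in place from the definitions in the excerpt.
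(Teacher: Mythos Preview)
Your argument for part \upref{enu:convexity_condition_in_between_and_outside} is correct and matches the paper's proof essentially line for line (the paper treats $\lambda \le 0$ directly by writing $x$ as a convex combination of $y$ and $z_\lambda$, while you reduce it to the case $\lambda > 1$ by symmetry, but this is an immaterial difference).

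Part \upref{enu:convex_function_constant_on_line_segment_spaned_by_3_points}, however, contains a genuine error in the lower--bound step. From $b = (1-s)a + sp$ with $s \ge 1$ and $a,p \in \dom F$, inequality \eqref{ineq:convexity_condition_outside_of_two_points_of_dom} indeed gives
\[
  v = F(b) \;\ge\; (1-s)F(a) + sF(p) \;=\; (1-s)v + sF(p),
\]
but rearranging this yields $sv \ge sF(p)$ and hence $F(p) \le v$, not $F(p) \ge v$. So you have only reproved the upper bound you already had from convexity between $a$ and $c$. The issue is that expressing the \emph{known} point $b$ as an extrapolation of one known point $a$ and the \emph{unknown} point $p$ produces an inequality of the wrong sign for $F(p)$; using only $a$ and $b$ you cannot get a lower bound on $F$ between them.

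The fix --- and this is what the paper does --- is to bring in the third point $c$. For $p$ on the sub-segment between $a$ and $b$, write $p$ as an affine combination of $b$ and $c$ with parameter $\lambda' \le 0$ (equivalently, write $b$ as a \emph{convex} combination of $p$ and $c$). Then \eqref{ineq:convexity_condition_outside_of_two_points_of_dom} gives
\[
  F(p) \;\ge\; (1-\lambda')F(b) + \lambda' F(c) \;=\; v,
\]
which is the desired lower bound. The case $p$ between $b$ and $c$ is symmetric, using $a$ in place of $c$.
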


\begin{proof}
  \upref{enu:convexity_condition_in_between_and_outside}
  The inequality \eqref{ineq:convexity_condition_between_two_points_of_dom}
  is just the inequality from the definition of convexity.
  In order to prove 
  \eqref{ineq:convexity_condition_outside_of_two_points_of_dom}
  we set 
  \begin{equation} \label{eq:def_of_point_that_runs_on_line_spanned_by_two_points}
    z_\lambda \defeq x + \lambda(y-x) = (1-\lambda)x + \lambda y
  \end{equation}
  for $\lambda \in \R \setminus (0,1)$.
  If $F(z_\lambda) = +\infty$ we clearly have
  $F(z_\lambda) = +\infty \geq (1-\lambda) F(x) + \lambda F(y)$. 
  Assume now 
  $F(z_\lambda) < + \infty$, i.e. $z_\lambda \in \dom F$.
  In case $\lambda \geq 1$ rewriting equation 
  \eqref{eq:def_of_point_that_runs_on_line_spanned_by_two_points} 
  yields the convex combination
  $
    y 
    = 
    -\frac{1-\lambda}{\lambda}x + \frac{1}{\lambda}z_\lambda
    =
    (1 - \frac{1}{\lambda})x + \frac{1}{\lambda}z_\lambda
  $
  and hence by the convexity of $F$ the inequality
  $
    F(y) 
    \leq 
    (1-\frac{1}{\lambda})F(x) +  \frac{1}{\lambda} F(z_\lambda)
  $.
  Since only finite values occur this can be rewritten as
  $
    \frac{1}{\lambda} F(z_\lambda) \geq (\frac{1}{\lambda}-1)F(x) + F(y)
  $
  which is equivalent to the claimed inequality in
  \eqref{ineq:convexity_condition_outside_of_two_points_of_dom}, since
  $\lambda \geq 1 > 0$.
  In case $\lambda \leq 0$ we can similar write $x$ as convex combination
  $
    x 
    = 
    - \frac{\lambda}{1-\lambda}y + \frac{1}{1- \lambda}  z_\lambda
    =
    (1- \frac{1}{1-\lambda})y + \frac{1}{1- \lambda}  z_\lambda
  $
  so that the convexity of $F$ yields the inequality
  $F(x) \leq (1-\frac{1}{1-\lambda})F(y) + \frac{1}{1-\lambda} F(z_\lambda)$.
  Since only finite values occur this can be rewritten as
  $\frac{1}{1-\lambda}F(z_\lambda) \geq F(x)+ \frac{\lambda}{1-\lambda}F(y)$
  which is equivalent to the claimed inequality in
  \eqref{ineq:convexity_condition_outside_of_two_points_of_dom}, since
  $1-\lambda \geq 1 > 0$.
  \\
  \upref{enu:convex_function_constant_on_line_segment_spaned_by_3_points}  
  Without loss of generality we may assume that $b$ is the 
  point ``between'' the endpoints $a$ and $c$, so that 
  ${\rm co}\{a,b,c\} \eqdef l(a,b)$ is the line segment between $a$ and $c$.
  Set $v \defeq F(a) = F(b) = F(c) \in \R$.
  We have to show that any $z \in l(a,c)$ also fulfills
  $F(z) = v$. In case $z \in l(a,b)$,  we can write $z$ as convex combination
  $z = (1-\lambda)a + \lambda b$ with some $\lambda \in [0,1]$
  and as affine combination
  $z = (1-\lambda')b + \lambda' c$ with some 
  $\lambda' \in \R\setminus(0,1)$, respectively. So 
  inequalities \eqref{ineq:convexity_condition_between_two_points_of_dom}
  and \eqref{ineq:convexity_condition_outside_of_two_points_of_dom}
  give $F(z) \leq (1-\lambda)F(a) + \lambda F(b) = v$ and 
  $F(z) \geq (1-\lambda')F(b) + \lambda' F(c) = v$, respectively.
  All in all we thus have $F(z) = v$.
  In case $z \in l(b,c) = l(c,b)$ we get the assertion analogously
  by interchanging the roles of $a$ and $c$.
\end{proof}

Of course norms are not strictly convex. 
However we have the following lemma.

\begin{lemma} \label{lem:euclidean_norm_strictly_convex_on_every_straight_line_through_origin}
 The Euclidean norm $\|\cdot\|_{2}: \R^n \rightarrow \R$ is strictly convex on every straight line,
 which does not contain the origin $\zerovec$.
\end{lemma}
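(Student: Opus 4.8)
The plan is to reduce strict convexity along a line to the equality case in the triangle inequality for the Euclidean norm. First I would record the sharpened triangle inequality: for all $u,w\in\R^n$ one has $\|u+w\|_2\le\|u\|_2+\|w\|_2$, and equality holds if and only if $u=\zerovec$, or $w=\zerovec$, or $w=\mu u$ for some $\mu>0$. This is obtained by squaring both sides, so that the inequality becomes $\langle u,w\rangle\le\|u\|_2\|w\|_2$ (Cauchy--Schwarz), and then analysing the equality case of Cauchy--Schwarz: if $u,w\neq\zerovec$ then $\langle u,w\rangle=\|u\|_2\|w\|_2$ forces $w=\mu u$ with $\mu\ge 0$, and $w\neq\zerovec$ upgrades this to $\mu>0$.

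Next, let $\ell\subseteq\R^n$ be a straight line with $\zerovec\notin\ell$, fix distinct points $x,y\in\ell$ and $\lambda\in(0,1)$. Applying the sharpened triangle inequality to $u\defeq(1-\lambda)x$ and $w\defeq\lambda y$ yields $\|(1-\lambda)x+\lambda y\|_2\le(1-\lambda)\|x\|_2+\lambda\|y\|_2$, so it remains to see that this is strict. Since $\zerovec\notin\ell$ we have $x\neq\zerovec$ and $y\neq\zerovec$, hence $u\neq\zerovec$ and $w\neq\zerovec$; therefore equality would force $\lambda y=\mu(1-\lambda)x$ for some $\mu>0$, i.e.\ $y=cx$ with $c\defeq\mu(1-\lambda)/\lambda>0$, and $x\neq y$ gives $c\neq 1$.

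The last step is to rule this out. Because $x\neq y$, the line $\ell$ is the unique line through $x$ and $y$, so $\ell=\{x+t(y-x):t\in\R\}=\{(1+t(c-1))x:t\in\R\}$ using $y-x=(c-1)x$; since $c-1\neq 0$ the scalar $1+t(c-1)$ runs over all of $\R$, whence $\ell=\linspan\{x\}\ni\zerovec$, contradicting $\zerovec\notin\ell$. Thus the inequality is strict for all distinct $x,y\in\ell$ and all $\lambda\in(0,1)$, which is exactly strict convexity of $\|\cdot\|_2$ restricted to $\ell$. I expect no serious obstacle; the only delicate point is getting the sign condition ($\mu>0$, not merely linear dependence) right in the equality case of the triangle inequality, since that is precisely the place where the hypothesis $\zerovec\notin\ell$ is used.
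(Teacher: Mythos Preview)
Your proof is correct and follows essentially the same idea as the paper's: both hinge on the fact that two distinct points on a line avoiding the origin are linearly independent, which makes Cauchy--Schwarz strict. The paper observes linear independence upfront and expands $\|\lambda x+(1-\lambda)y\|_2^2$ directly, whereas you reach the same conclusion by analysing the equality case of the triangle inequality and deriving a contradiction; the underlying argument is the same.
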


\begin{proof}
  Let $l$ be a straight line in $\R^n$ with $\zerovec \not \in l$ and let $x, y \in l$ be 
  two distinct points. The strict Cauchy-Schwarz Inequality
  $\langle x,y \rangle < \|x\|_{2} \|y\|_{2}$ holds true for $x$ and $y$, since these vectors are 
  linearly independent. For all $\lambda \in (0,1)$ we hence get
  \begin{align*}
    \|\lambda x + (1- \lambda)y \|_{2}^2
    &=
    \|\lambda x\|_{2}^2 + \|(1-\lambda)y\|_{2}^2 
      +
      2\lambda(1-\lambda) \langle x,y \rangle
    \\
    &<
    \|\lambda x\|_{2}^2 + \|(1-\lambda)y\|_{2}^2 
      +
      2\lambda(1-\lambda) \|x\|_{2} \|y\|_{2}
    \\
    &=
    (\|\lambda x\|_{2} + \|(1-\lambda)y\|_{2})^2
  \end{align*}
  and therewith the needed
  $\|\lambda x + (1-\lambda)y\|_{2} < \lambda \|x\|_{2} + (1-\lambda) \|y\|_{2}$.
\end{proof}

The following Theorem is obtained from 
\cite[p. 52]{Rockafellar1970} and \cite[Theorem 7.4]{Rockafellar1970}.
\begin{theorem} \label{thm:closure_of_proper_convex_function}
  Let $f: \R^n \rightarrow \R \cup \{+\infty\}$ be a 
  proper, convex function. Its closure ${\rm cl}f$ fulfills
  \begin{enumerate}
    \item \label{enu:closure_of_function_as_liminf}
      ${\rm cl}f(x_0) = \liminf_{x \rightarrow x_0}f(x)$
      for every $x_0 \in \R^n$.
    \item \label{enu:closure_of_proper_convex_function_is_almost_original}
      ${\rm cl}f$ is a proper convex and lower 
      semicontinuous function which agrees with $f$
      except perhaps at relative boundary points of 
      $\dom f$.
  \end{enumerate}
\end{theorem}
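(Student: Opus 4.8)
The plan is to exploit that, for a proper convex $f$, the closure ${\rm cl}f$ is by our convention the lower semicontinuous hull of $f$, i.e.\ the largest lower semicontinuous minorant of $f$. Thus part \upref{enu:closure_of_function_as_liminf} amounts to the purely topological fact that the lower semicontinuous hull of \emph{any} function $f:\R^n\rightarrow\R\cup\{\minfty,\pinfty\}$ is the function $h$ given by $h(x_0)\defeq\liminf_{x\rightarrow x_0}f(x)$. First I would check that $h$ is lower semicontinuous (using that $\R^n$ is first countable, so the $\liminf$ may be computed along sequences, together with a diagonal argument), that $h\le f$, and that every lower semicontinuous $g$ with $g\le f$ satisfies $g\le h$: if $g(x_0)>h(x_0)$ for some $x_0$, pick $\alpha$ strictly in between; then $f^{-}[[\minfty,\alpha]]$ meets every neighbourhood of $x_0$, so $x_0$ lies in the closed set $g^{-}[[\minfty,\alpha]]$, contradicting $g(x_0)>\alpha$. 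Hence $h$ is the lower semicontinuous hull and equals ${\rm cl}f$.

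For part \upref{enu:closure_of_proper_convex_function_is_almost_original} the lower semicontinuity of ${\rm cl}f$ is immediate from the above description. For convexity I would pass to epigraphs: since $f$ is proper and convex, there is an affine function $\ell$ with $\ell\le f$ (a supporting hyperplane of the convex set $\epi f$, obtained by Hahn--Banach separation), so $\epi f\subseteq\epi\ell$ with $\epi\ell$ closed; consequently $\overline{\epi f}\subseteq\epi\ell$ is again the epigraph of a function not attaining $\minfty$, and that function is precisely $h={\rm cl}f$; being the closure of the convex set $\epi f$ it is convex, so ${\rm cl}f$ is convex. The same inclusion $\ell\le{\rm cl}f$ shows ${\rm cl}f>\minfty$ everywhere, while ${\rm cl}f\le f$ together with $\dom f\neq\emptyset$ gives ${\rm cl}f\not\equiv\pinfty$; hence ${\rm cl}f$ is proper.

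It remains to locate where ${\rm cl}f$ and $f$ may differ. For $x_0\notin\overline{\dom f}$ there is a whole neighbourhood of $x_0$ on which $f\equiv\pinfty$, so ${\rm cl}f(x_0)=\liminf_{x\rightarrow x_0}f(x)=\pinfty=f(x_0)$. For $x_0\in{\rm ri}(\dom f)$ the convex function $f$ is continuous at $x_0$ (the standard property of convex functions on the relative interior of their domain), so $\liminf_{x\rightarrow x_0}f(x)=f(x_0)$ and again ${\rm cl}f(x_0)=f(x_0)$. Therefore ${\rm cl}f$ can disagree with $f$ only on $\overline{\dom f}\setminus{\rm ri}(\dom f)={\rm rb}(\dom f)$. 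The main obstacle is not this bookkeeping but the two classical inputs from convex analysis it rests on --- the existence of an affine minorant of a proper convex function and the continuity of $f$ on ${\rm ri}(\dom f)$ --- which I would simply quote from \cite{Rockafellar1970} rather than reprove; everything else reduces to elementary point--set topology.
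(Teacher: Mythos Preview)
Your proposal is correct. Note, however, that the paper does not give its own proof of this theorem: it simply records the statement and refers to \cite[p.~52]{Rockafellar1970} and \cite[Theorem~7.4]{Rockafellar1970}. So there is no ``paper's proof'' to compare against, and your write-up is in fact more detailed than what the paper does.

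A few minor remarks on your argument. In part~\upref{enu:closure_of_function_as_liminf}, your verification that $h$ is the largest lower semicontinuous minorant is fine; the level-set argument you give for $g\le h$ is clean and avoids the diagonal argument you mention for the lower semicontinuity of $h$ (which also works but is slightly heavier). In part~\upref{enu:closure_of_proper_convex_function_is_almost_original}, when you argue that ${\rm cl}f(x_0)=f(x_0)$ for $x_0\in{\rm ri}(\dom f)$, be explicit that the $\liminf$ is taken over all of $\R^n$: points outside ${\rm aff}(\dom f)$ contribute $\pinfty$ and therefore do not lower the $\liminf$, so the computation reduces to the relative continuity of $f$ on ${\rm ri}(\dom f)$. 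You implicitly use this, but spelling it out avoids a possible reader stumble. Finally, the identification $\epi({\rm cl}f)=\overline{\epi f}$ that you invoke for convexity is itself a small lemma; it follows readily from the $\liminf$ description you established in part~\upref{enu:closure_of_function_as_liminf}, so you may want to note that link rather than leave it tacit.
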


For the proof of the following theorem see 
\cite[Corollary 7.5.1]{Rockafellar1970}
\begin{theorem} 
\label{thm:limit_behaviour_of_proper_convex_lsc_functions}
  For a function $F \in \Gamma_0(\R^n)$ one has 
  \begin{gather*}
    F(x^*)
    = 
    \lim_{\lambda \uparrow 1}  F( (1- \lambda)a + \lambda x^* )
  \end{gather*}
  for every $a \in \dom F$ and every $x^* \in \R^n$.
\end{theorem}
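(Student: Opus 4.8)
The plan is to verify the claimed equality by sandwiching the one‑sided limit. Writing $x_\lambda \defeq (1-\lambda)a+\lambda x^*$ for $\lambda\in[0,1)$, I would establish the two inequalities $F(x^*)\le\liminf_{\lambda\uparrow1}F(x_\lambda)$ and $\limsup_{\lambda\uparrow1}F(x_\lambda)\le F(x^*)$. Since $\liminf_{\lambda\uparrow1}F(x_\lambda)\le\limsup_{\lambda\uparrow1}F(x_\lambda)$ always holds, these two bounds together force $\liminf_{\lambda\uparrow1}F(x_\lambda)=\limsup_{\lambda\uparrow1}F(x_\lambda)=F(x^*)$, which is exactly the assertion; in particular the limit then exists in $\R\cup\{\pinfty\}$. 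Throughout, recall that $F\in\Gamma_0(\R^n)$ means $F$ is proper, convex and lower semicontinuous and never attains the value $\minfty$, and that $F(a)\in\R$ because $a\in\dom F$.

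For the lower bound I would use only lower semicontinuity of $F$. As $\lambda\uparrow1$ the points $x_\lambda$ converge to $x^*$; picking a sequence $\lambda_l\uparrow1$ realising $\liminf_{\lambda\uparrow1}F(x_\lambda)$, the defining sequential property of lower semicontinuity (stated just before \prettyref{thm:characterize_lower_semi_continuous}) gives $F(x^*)\le\liminf_{l\to\infty}F(x_{\lambda_l})=\liminf_{\lambda\uparrow1}F(x_\lambda)$. This step works verbatim whether $F(x^*)$ is finite or equals $\pinfty$, and it does not use the hypothesis $a\in\dom F$.

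For the upper bound I would distinguish two cases. If $x^*\notin\dom F$, so that $F(x^*)=\pinfty$, then $\limsup_{\lambda\uparrow1}F(x_\lambda)\le\pinfty=F(x^*)$ holds trivially. If instead $x^*\in\dom F$, then both $F(a)$ and $F(x^*)$ are real, and the convexity inequality \eqref{ineq:convexity_condition_between_two_points_of_dom} of \prettyref{lem:convexity_condition_inside_and_outside}, applied with the two points $a$ and $x^*$, yields $F(x_\lambda)\le(1-\lambda)F(a)+\lambda F(x^*)$ for every $\lambda\in[0,1]$; letting $\lambda\uparrow1$ the right‑hand side converges to $F(x^*)$, whence $\limsup_{\lambda\uparrow1}F(x_\lambda)\le F(x^*)$. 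Combining the two bounds completes the argument.

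I do not expect a genuine obstacle here: the proof needs nothing beyond convexity, properness and lower semicontinuity, and in particular it avoids any relative‑interior or closure machinery such as \prettyref{thm:closure_of_proper_convex_function}. The only point that deserves a little care is the case $x^*\notin\dom F$, handled above by the trivial bound; and it is worth stressing that the assumption $a\in\dom F$ is genuinely needed — without it $F(x_\lambda)$ may remain $\pinfty$ while $F(x^*)$ is finite (take $F=\iota_{\{\zerovec\}}$ on $\R$, $a=1$, $x^*=\zerovec$) — which is why it enters the proof precisely through the finiteness of $F(a)$ in the upper‑bound step.
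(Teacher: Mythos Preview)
Your argument is correct. The paper does not give its own proof of this statement; it simply refers to \cite[Corollary 7.5.1]{Rockafellar1970}. Rockafellar derives that corollary from his Theorem~7.5, which computes $(\mathrm{cl}\,f)(y)$ as the radial limit along segments emanating from a point of $\mathrm{ri}(\dom f)$ and then invokes the closure machinery (essentially \prettyref{thm:closure_of_proper_convex_function}) to pass to closed $f$ and to arbitrary $a\in\dom f$. Your route is more elementary: the lower bound is pure lower semicontinuity, and the upper bound is the bare convexity inequality \eqref{ineq:convexity_condition_between_two_points_of_dom} when $x^*\in\dom F$ (trivial otherwise). This avoids any relative-interior or closure considerations entirely, and in particular shows directly why only $a\in\dom F$ --- not $a\in\mathrm{ri}(\dom F)$ --- is needed.
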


\iftime{Noch scharf fassen, was hier genau
mit differentiable gemeint sein soll; 
Funktion soll halt linearisierbar sein}
For the proof of the following theorem cf.
\cite[Theorem 26.1]{Rockafellar1970} after identifying 
${\rm aff}(\dom F)$ with some $\R^m$.
\begin{theorem} \label{thm:subgradient_of_essentially_smooth_and_Gamma_0_function}
  Let $F \in \Gamma_0(\R^n)$ be essentially smooth on 
  ${\rm aff}(\dom F) \eqdef A$. Then $\partial (F|_A)(x)$ 
  contains at most one subgradient for every $x \in \R^n$.
  In case $x \not \in {\rm ri}(\dom F)$ we have
  $\partial (F|_A)(x) = \emptyset$ 
  while in case $x \in {\rm ri}(\dom F)$ there is
  exactly one subgradient in $\partial (F|_A)(x)$. 
  In particular the function
  $F|_A$ is subdifferentiable in every 
  $x \in {\rm ri}(\dom F)$.  
\end{theorem}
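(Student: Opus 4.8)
The plan is to reduce the assertion to its counterpart on a Euclidean space, namely \cite[Theorem 26.1]{Rockafellar1970}, by identifying the affine subspace $A = {\rm aff}(\dom F)$ with some $\R^m$, exactly as indicated in the reference preceding the statement. Since $F\in\Gamma_0(\R^n)$ is proper, $A$ is a nonempty affine subspace of $\R^n$; let $U$ be its difference space, $m\defeq\dim U$, fix $a_0\in A$ and a linear isomorphism $L\colon\R^m\rarr U$, and set $\alpha\colon\R^m\rarr A$, $\alpha(t)\defeq a_0+Lt$. Then $\alpha$ is a bijective affine map between finite--dimensional spaces, hence a bi--Lipschitz homeomorphism. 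Put $g\defeq (F|_A)\circ\alpha\colon\R^m\rarr\R\cup\{\pinfty\}$.

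First I would verify that $g\in\Gamma_0(\R^m)$ and record the domain identities. Because $\dom F\subseteq A$, the restriction $F|_A$ inherits properness, convexity and lower semicontinuity from $F$, and these pass to $g$ through the homeomorphism $\alpha$; moreover $\dom g=\alpha^{-1}[\dom F]$, and since $\alpha$ maps $A$ onto all of $\R^m$ we get ${\rm aff}(\dom g)=\R^m$, whence ${\rm int}(\dom g)={\rm ri}(\dom g)=\alpha^{-1}[{\rm ri}(\dom F)]=\alpha^{-1}[{\rm int}_A(\dom F)]$, with the analogous identity for the relative boundary.

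Next I would check that $g$ is essentially smooth in Rockafellar's sense on $\R^m$. By hypothesis $F|_A$ satisfies, relative to $A$, the three conditions recalled in the proof of \prettyref{lem:restriction_of_ess_smooth_function_on_good_affine_set}: its relative interior is nonempty, it is differentiable on ${\rm int}_A(\dom F)$, and $F'(x+\lambda(a-x);a-x)\rarr\minfty$ as $\lambda\searrow 0$ for all $x\in{\rm rb}(\dom F)$ and $a\in{\rm int}_A(\dom F)$. Each of these transfers verbatim to $g$ via $\alpha$: by the previous paragraph $\alpha$ respects interiors and relative boundaries of the domains, directional derivatives transform covariantly as $g'(t;v)=(F|_A)'(\alpha(t);Lv)$, and differentiability is preserved because $\alpha$ is a linear map plus a translation. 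Hence \cite[Theorem 26.1]{Rockafellar1970} applies to $g$ and gives, for every $t\in\R^m$, that $\partial g(t)=\{\nabla g(t)\}$ when $t\in{\rm int}(\dom g)$ and $\partial g(t)=\emptyset$ otherwise.

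Finally I would pull this back along $\alpha$. By the chain rule for subdifferentials under a bijective affine map one has $\partial g(t)=L^{*}\,\partial(F|_A)(\alpha(t))$ for every $t\in\R^m$; as $L^{*}$ is a linear isomorphism, $\partial(F|_A)(\alpha(t))$ and $\partial g(t)$ have the same cardinality and one is empty exactly when the other is. Setting $x\defeq\alpha(t)$ and using $x\in{\rm ri}(\dom F)\iseq t\in{\rm int}(\dom g)$, this yields precisely the claim: $\partial(F|_A)(x)$ is a singleton for $x\in{\rm ri}(\dom F)$ and empty for $x\in A\setminus{\rm ri}(\dom F)$ (for $x\notin A$ the claim is trivial), so $\partial(F|_A)(x)$ has at most one element for every $x$, and $F|_A$ is subdifferentiable exactly on ${\rm ri}(\dom F)$. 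The only point demanding a little care — rather than a real obstacle — is making the identification precise: one must observe that the paper's ``essentially smooth on ${\rm aff}(\dom F)$'' is literally Rockafellar's notion transported through $\alpha$, and that the subdifferential transforms as stated; both are routine, and the reduction to $\R^m$ is needed only because Theorem 26.1 is phrased there.
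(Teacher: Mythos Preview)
Your proof is correct and is precisely the argument the paper has in mind: the paper does not spell out a proof but simply says ``cf.\ \cite[Theorem 26.1]{Rockafellar1970} after identifying ${\rm aff}(\dom F)$ with some $\R^m$'', and you have carried out exactly that identification in full detail. The transfer of essential smoothness and of the subdifferential through the affine isomorphism $\alpha$ is handled correctly, so there is nothing to add.
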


\begin{lemma} 
\label{lem:argmin_of_sum_contained_in_ri_of_dom_of_ess_smooth_summand}
  Let $F: \R^n \rightarrow \R \cup \{+\infty\}$ be a proper and convex 
  function, which is essentially smooth on 
  $A \defeq {\rm aff}(\dom F)$. Then 
  \begin{enumerate}
    \item 
    \label{enu:argmin_of_sum_contained_in_ri_of_dom_of_ess_smooth_summand}
      $\argmin_{x\in \R^n}(F(x)+G(x)) \subseteq {\rm ri}(\dom F)$ for 
      every convex function $G: \R^n \rightarrow \R \cup \{+\infty\}$
      with ${\rm ri}(\dom F) \cap {\rm ri}(\dom G) \not = \emptyset$.
    \item \label{enu:argmin_contained_in_ri_of_dom}
      $\argmin_{x\in \R^n}F(x) \subseteq {\rm ri}(\dom F)$ and 
      $F|_A$ is differentiable in every $\hat x \in \argmin F$.
  \end{enumerate}

\end{lemma}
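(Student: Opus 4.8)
The plan is to prove part \upref{enu:argmin_of_sum_contained_in_ri_of_dom_of_ess_smooth_summand} by contradiction, reducing everything to a one-dimensional statement along a line segment, and then to deduce part \upref{enu:argmin_contained_in_ri_of_dom} as an easy consequence.

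First I would fix some $\hat x \in \argmin_{x\in\R^n}(F(x)+G(x))$ (if this set is empty there is nothing to prove) together with a point $a \in {\rm ri}(\dom F) \cap {\rm ri}(\dom G)$, which exists by hypothesis. Since $(F+G)(a) = F(a)+G(a) < +\infty$, the value $(F+G)(\hat x) = \inf(F+G)$ is finite (it cannot be $-\infty$, as $F$ and $G$ do not take the value $-\infty$), so in particular $\hat x \in \dom F \cap \dom G$. Assume, contrary to the claim, that $\hat x \notin {\rm ri}(\dom F)$; since ${\rm aff}(\dom F) = A$ this forces $\hat x \in {\rm rb}(\dom F) = \partial_A(\dom F)$. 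Now restrict $F+G$ to the segment from $\hat x$ to $a$, i.e. consider $\varphi\colon [0,1] \rarr \R$, $\varphi(\lambda) \defeq (F+G)\bigl(\hat x + \lambda(a-\hat x)\bigr)$. By convexity of $\dom F \cap \dom G$ the whole segment lies in $\dom F \cap \dom G$, so $\varphi$ is real-valued and convex; I split it as $\varphi = \psi_F + \psi_G$ with $\psi_F(\lambda) \defeq F(\hat x + \lambda(a-\hat x))$ and $\psi_G(\lambda) \defeq G(\hat x + \lambda(a-\hat x))$, both real-valued and convex on $[0,1]$.

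The heart of the argument is essential smoothness. By the line segment principle (\prettyref{thm:raypoint_from_ri__leaving_closure__never_comes_back}, used with $a \in {\rm ri}(\dom F)$ and $\hat x \in \dom F \subseteq \overline{\dom F}$), the point $\hat x + \lambda(a-\hat x)$ lies in ${\rm ri}(\dom F) = {\rm int}_A(\dom F)$ for every $\lambda \in (0,1]$. Hence, by the differentiability clause of essential smoothness, $F$ is differentiable there, so $\psi_F$ is differentiable on $(0,1]$ with $\psi_F'(\lambda) = F'\bigl(\hat x + \lambda(a-\hat x);\, a-\hat x\bigr)$; and the steepness-at-the-relative-boundary clause of essential smoothness, applied with the relative boundary point $\hat x \in \partial_A(\dom F)$ and the relative interior point $a$, gives $\psi_F'(\lambda) \rarr -\infty$ as $\lambda \searrow 0$. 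On the other hand $\psi_G$ is a real-valued convex function on $[0,1]$, so its right derivative $\lambda \mapsto \psi_G'(\lambda^+)$ is nondecreasing on $[0,1)$ and finite at each interior point; in particular there is a real constant $M$ with $\psi_G'(\lambda^+) \le M$ for all $\lambda \in (0,\tfrac12]$, namely $M \defeq \psi_G'(\tfrac12^+)$. Picking $\lambda_1 \in (0,\tfrac12)$ with $\psi_F'(\lambda_1) < -M-1$, I obtain $\varphi'(\lambda_1^+) = \psi_F'(\lambda_1) + \psi_G'(\lambda_1^+) < -1 < 0$; since $\varphi$ is convex its right derivative is nondecreasing, so $\varphi$ is strictly decreasing on $[0,\lambda_1]$ and $\varphi(\lambda_1) < \varphi(0)$. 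This says $(F+G)\bigl(\hat x + \lambda_1(a-\hat x)\bigr) < (F+G)(\hat x)$, contradicting $\hat x \in \argmin(F+G)$. Hence $\hat x \in {\rm ri}(\dom F)$, which proves part \upref{enu:argmin_of_sum_contained_in_ri_of_dom_of_ess_smooth_summand}.

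Part \upref{enu:argmin_contained_in_ri_of_dom} then follows at once: apply the already proven part \upref{enu:argmin_of_sum_contained_in_ri_of_dom_of_ess_smooth_summand} to the convex zero function $G \equiv 0$, for which $\dom G = \R^n$ and hence ${\rm ri}(\dom F) \cap {\rm ri}(\dom G) = {\rm ri}(\dom F) \neq \emptyset$ (nonempty by \prettyref{thm:nonemtpy_convex_set_has_nonempty_ri}, or directly by the first clause of essential smoothness); since $\argmin_{x\in\R^n}(F(x)+0) = \argmin_{x\in\R^n}F(x)$ this gives $\argmin F \subseteq {\rm ri}(\dom F)$. Finally, each $\hat x \in \argmin F$ then lies in ${\rm ri}(\dom F) = {\rm int}_A(\dom F)$, and $F$ — hence its restriction $F|_A$ — is differentiable at every such point by essential smoothness. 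The step I expect to be the main obstacle is the one-dimensional reduction in the third paragraph: one must argue carefully that the right derivative of $\psi_G$ stays bounded above near $0$ (it may itself tend to $-\infty$, which only helps), while $\psi_F'$ is driven to $-\infty$, and one must be sure to invoke the boundary-steepness clause with $\hat x$ in the role of the relative-boundary point; a minor point is that the statement is vacuous when $\argmin(F+G) = \emptyset$, so the case $\inf(F+G) = -\infty$ needs no separate treatment.
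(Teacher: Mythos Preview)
Your proof is correct, but it takes a genuinely different route from the paper's. The paper argues via subdifferential calculus: after passing to ${\rm cl}\,F$ so that $F \in \Gamma_0(\R^n)$, it restricts to $A = {\rm aff}(\dom F)$, applies Fermat's rule together with the subdifferential sum rule to get $\zerovec \in \partial(F|_A)(\hat x) + \partial(G|_A)(\hat x)$, and then invokes the characterisation of essential smoothness (\prettyref{thm:subgradient_of_essentially_smooth_and_Gamma_0_function}, i.e.\ Rockafellar's Theorem~26.1) that $\partial(F|_A)(\hat x) = \emptyset$ whenever $\hat x \notin {\rm ri}(\dom F)$. Your argument instead works directly from the defining clauses of essential smoothness: you reduce to the one-dimensional function $\varphi$ along the segment $[\hat x, a]$ and use the boundary-steepness clause to force $\varphi'(0^+) \le \varphi'(\lambda_1^+) < 0$, contradicting minimality at $0$. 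Your approach is more elementary in that it avoids both the subdifferential characterisation and the preliminary passage to ${\rm cl}\,F$; the paper's approach is shorter once those tools are in hand. For part~\upref{enu:argmin_contained_in_ri_of_dom} the two proofs coincide. One small remark: your phrase ``strictly decreasing on $[0,\lambda_1]$'' is slightly more than you need or have literally shown at the endpoint $0$; what you actually use (and what follows immediately from $\varphi'(0^+) < 0$) is that $\varphi(s) < \varphi(0)$ for all sufficiently small $s > 0$, which is enough for the contradiction.
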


\begin{proof}
  \upref{enu:argmin_of_sum_contained_in_ri_of_dom_of_ess_smooth_summand}
    Let all assumptions be fulfilled. By Theorem 
    \ref{thm:closure_of_proper_convex_function} we may further assume 
    without loss of generality that $F$ is closed, 
    i.e. lower semicontinuous, since replacing $F$ by ${\rm cl}F$
    would neither affect the assumptions nor the assertions of the 
    theorem. Let 
    $\hat x \in \argmin(F+G)$.
    Restricting $F$ and $G$ to $A = {\rm aff}(\dom F)$ by setting 
    $f \defeq F|_A$ and $g \defeq G|_A$ we still have 
    $\hat x \in \argmin (f+g)$.
    Using Theorem 
    \ref{thm:restricting_set_operations_to_an_affine_subset}
    we see that still 
    \begin{gather*}
      {\rm ri}(\dom f) \cap {\rm ri}(\dom g)
      =
      {\rm ri}(\dom F) \cap {\rm ri}(\dom G \cap A)
      =
      {\rm ri}(\dom F) \cap {\rm ri}(\dom G) \cap A
      \\
      =
      ({\rm ri}(\dom F) \cap A )\cap {\rm ri}(\dom G) 
      =
      {\rm ri}(\dom F) \cap {\rm ri}(\dom G)
      \not = 
      \emptyset.
    \end{gather*}
    Using the therewith applicable Sum rule and Fermat's rule we 
    obtain
    \begin{gather*}
      \zerovec 
      \in 
      \partial (f+g)(\hat x)
      =
      \partial f(\hat x) + \partial g(\hat x).
    \end{gather*}
    In particular $\partial f(\hat x) \not = \emptyset$ so that 
    the essentially smoothness of $f$ gives 
    $\hat x \in {\rm int}_A(\dom f) = {\rm ri}(\dom F)$ 
    by Theorem
    \ref{thm:subgradient_of_essentially_smooth_and_Gamma_0_function}.
\\
  \upref{enu:argmin_contained_in_ri_of_dom}
    The inclusion
    follows from the just proven by choosing $G \equiv 0$ since then 
    $
      {\rm ri}(\dom F) \cap {\rm ri}(\dom G) 
      = 
      {\rm ri}(\dom F)
      \not = 
      \emptyset
    $
    by Theorem 
    \ref{thm:nonemtpy_convex_set_has_nonempty_ri}.
    From the inclusion we now also get the differentiability assertion 
    by applying Theorem
    \ref{thm:subgradient_of_essentially_smooth_and_Gamma_0_function}.
\end{proof}

The proofs of the following two theorems 
can be found in \cite[p. 45]{Rockafellar1970}.
\begin{theorem} \label{thm:raypoint_from_ri__leaving_closure__never_comes_back}
  Let $C$ be a convex set in $\R^n$. Let $\mathring x \in {\rm ri}(C)$ and 
  $x\in \overline{C}$. Then $(1- \lambda)\mathring x + \lambda x$ belongs to ${\rm ri}(C)$
  (and hence in particular to C) for $0 \leq \lambda < 1$.
\end{theorem}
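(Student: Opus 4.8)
The statement is Rockafellar's ``line segment principle''; the plan is to reduce to the full--dimensional case and then run the standard ball--inclusion estimate. First I would dispose of the degenerate situations: if $C = \emptyset$ there is nothing to prove, and for $\lambda = 0$ the point $(1-\lambda)\mathring x + \lambda x = \mathring x$ lies in ${\rm ri}(C)$ by hypothesis; so from now on assume $C \neq \emptyset$ and $\lambda \in (0,1)$. Since ${\rm aff}(C)$ is a closed affine subspace of $\R^n$ containing both $C$ and $\overline C$, I would replace $\R^n$ by ${\rm aff}(C)$ and identify the latter with some $\R^m$ via an affine isomorphism; such a map preserves convexity, closures and (relative) interiors, so the claim is unchanged, and under it ${\rm ri}(C) = {\rm int}_{{\rm aff}(C)}(C)$ becomes the ordinary interior ${\rm int}(C)$, which is nonempty by \prettyref{thm:nonemtpy_convex_set_has_nonempty_ri}. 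Thus it suffices to prove: if $\mathring x \in {\rm int}(C)$ and $x \in \overline C$, then $z_\lambda \defeq (1-\lambda)\mathring x + \lambda x \in {\rm int}(C)$.

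For this core step I would proceed as follows. Choose $\varepsilon > 0$ with $\openball[\varepsilon][\mathring x] \subseteq C$ and then $\delta > 0$ with $\lambda\delta < (1-\lambda)\varepsilon$. Since $x \in \overline C$ there is $x' \in C$ with $\|x' - x\| < \delta$. Because $1-\lambda > 0$, scaling and translating the ball yields the identity $(1-\lambda)\openball[\varepsilon][\mathring x] + \lambda x' = \openball[(1-\lambda)\varepsilon][(1-\lambda)\mathring x + \lambda x']$, and every point of the left--hand set is a convex combination $(1-\lambda)y + \lambda x'$ of points $y \in \openball[\varepsilon][\mathring x] \subseteq C$ and $x' \in C$, hence lies in $C$ by convexity. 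Therefore
\[
  \openball[(1-\lambda)\varepsilon][(1-\lambda)\mathring x + \lambda x'] \subseteq C .
\]
Since $\|z_\lambda - ((1-\lambda)\mathring x + \lambda x')\| = \lambda\|x - x'\| < \lambda\delta$, the point $z_\lambda$ sits inside this ball with a margin of at least $(1-\lambda)\varepsilon - \lambda\delta > 0$, so a whole open ball around $z_\lambda$ is contained in $C$ and $z_\lambda \in {\rm int}(C)$. Back in the original space this reads $(1-\lambda)\mathring x + \lambda x \in {\rm ri}(C) \subseteq C$, which gives both assertions.

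I do not expect a genuine obstacle here. The only two points needing care are the passage to ${\rm aff}(C)$ — which rests on affine isomorphisms commuting with closure and relative interior, and on the nonemptiness of ${\rm ri}(C)$ furnished by \prettyref{thm:nonemtpy_convex_set_has_nonempty_ri} — and the elementary identity $(1-\lambda)\openball[\varepsilon][\mathring x] + \lambda x' = \openball[(1-\lambda)\varepsilon][(1-\lambda)\mathring x + \lambda x']$, which is immediate from the definition of the Euclidean ball since $1-\lambda > 0$. Everything else is routine.
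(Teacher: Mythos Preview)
Your argument is correct and is essentially the classical proof from Rockafellar's book; the paper does not give its own proof here but simply refers to \cite[p.~45]{Rockafellar1970}. One small remark: the appeal to \prettyref{thm:nonemtpy_convex_set_has_nonempty_ri} is unnecessary, since the hypothesis $\mathring x \in {\rm ri}(C)$ already hands you a point of ${\rm int}(C)$ after the reduction to ${\rm aff}(C)$.
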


\begin{theorem} \label{thm:nonemtpy_convex_set_has_nonempty_ri}
  Let $C$ be any convex set in $\R^n$. Then $\overline{C}$ and
  ${\rm ri}(C)$ are convex sets in $\R^n$, having the same affine hull,
  and hence the same dimension, as $C$.
  In particular ${\rm ri}(C) \not = \emptyset$ if $C \not = \emptyset$.
\end{theorem}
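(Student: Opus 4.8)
The plan is to treat the case $C=\emptyset$ trivially (then $\overline C={\rm ri}(C)=\emptyset$ and all affine hulls are empty) and, for $C\neq\emptyset$, to reduce to the full‑dimensional situation before handling the three assertions — convexity of $\overline C$, convexity and nonemptiness of ${\rm ri}(C)$, and coincidence of the affine hulls — essentially independently. First I would translate so that $\zerovec\in C$, which makes ${\rm aff}(C)$ a linear subspace; identifying it with $\R^m$, where $m=\dim{\rm aff}(C)$, the relative interior ${\rm ri}(C)={\rm int}_{{\rm aff}(C)}(C)$ becomes an ordinary interior in $\R^m$. This reduction is routine but must be stated with care, since ${\rm ri}(C),C,\overline C$ and ${\rm aff}(C)$ all get transported along the same affine isomorphism.

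Convexity of $\overline C$ is the short part: given $x,y\in\overline C$, pick sequences $x_k\to x$, $y_k\to y$ in $C$; for $\lambda\in[0,1]$ the point $(1-\lambda)x_k+\lambda y_k$ lies in $C$ by convexity and converges to $(1-\lambda)x+\lambda y$, which therefore lies in $\overline C$.

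The heart of the argument is ${\rm ri}(C)\neq\emptyset$. Here I would use that a maximal affinely independent subset $\{v_0,\dots,v_m\}$ of $C$ necessarily satisfies ${\rm aff}(\{v_0,\dots,v_m\})={\rm aff}(C)$ (otherwise a further point of $C$ would enlarge the independent set), so it has exactly $m+1$ elements. Its convex hull $\Delta={\rm co}(\{v_0,\dots,v_m\})$ is contained in $C$ by convexity, and the affine bijection carrying the standard $m$‑simplex onto $\Delta$ sends the (manifestly nonempty) relative interior of the standard simplex onto the interior of $\Delta$ relative to ${\rm aff}(\Delta)={\rm aff}(C)$; concretely the barycenter $\frac{1}{m+1}\sum_{i}v_i$ is such an interior point. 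Hence $\emptyset\neq{\rm int}_{{\rm aff}(C)}(\Delta)\subseteq{\rm int}_{{\rm aff}(C)}(C)={\rm ri}(C)$. Convexity of ${\rm ri}(C)$ then follows at once from Theorem \ref{thm:raypoint_from_ri__leaving_closure__never_comes_back}, which is already available: for $x_1,x_2\in{\rm ri}(C)$ one has $x_2\in{\rm ri}(C)\subseteq\overline C$, so $(1-\lambda)x_1+\lambda x_2\in{\rm ri}(C)$ for $\lambda\in[0,1)$, and the endpoint $\lambda=1$ is just $x_2\in{\rm ri}(C)$; alternatively one can argue directly with Minkowski combinations of relative balls around $x_1$ and $x_2$.

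For the affine hulls, from ${\rm ri}(C)\subseteq C\subseteq\overline C$ I get ${\rm aff}({\rm ri}(C))\subseteq{\rm aff}(C)\subseteq{\rm aff}(\overline C)$; since affine subspaces of $\R^n$ are closed, ${\rm aff}(C)\supseteq\overline C$, whence ${\rm aff}(\overline C)\subseteq{\rm aff}(C)$. It remains to see ${\rm aff}(C)\subseteq{\rm aff}({\rm ri}(C))$: as ${\rm ri}(C)$ is relatively open and nonempty it contains a relative ball of dimension $m$, whose affine hull is all of ${\rm aff}(C)$. Thus all three affine hulls equal ${\rm aff}(C)$, hence have dimension $m=\dim C$, and in particular ${\rm ri}(C)\neq\emptyset$ when $C\neq\emptyset$. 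I expect the main obstacle to be exactly the nonemptiness of ${\rm ri}(C)$ — the simplex/barycenter computation and making the "maximal affinely independent subset spans the full affine hull" step precise — together with the bookkeeping in the reduction ${\rm aff}(C)\cong\R^m$; the remaining assertions are brief once that is established.
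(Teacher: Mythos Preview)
Your proof is correct and follows the classical route. Note, however, that the paper does not actually give its own proof of this theorem: it simply refers the reader to \cite[p.~45]{Rockafellar1970}. Your argument --- the simplex/barycenter construction for nonemptiness of ${\rm ri}(C)$, the line-segment lemma (Theorem~\ref{thm:raypoint_from_ri__leaving_closure__never_comes_back}) for convexity of ${\rm ri}(C)$, and the affine-hull sandwich using closedness of affine subspaces and a relative ball inside ${\rm ri}(C)$ --- is essentially the standard proof one finds in Rockafellar, so there is nothing to compare.
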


The following theorem is obtained from
\cite[Theorem 7.6]{Rockafellar1970} and \cite[Theorem 6.2]{Rockafellar1970}.

\begin{theorem} \label{thm:ri_of_levelset}
  For a proper, convex function 
  $F: \R^n \rightarrow \R \cup \{+ \infty\}$
  and $\tau \in ( \inf F, + \infty)$ we have
  \begin{align*}
    {\rm ri} ({\rm lev}_{\tau} F)
    =
    {\rm ri} ({\rm lev}_{<\tau} F)
    =
    {\rm lev}_{<\tau} F \cap {\rm ri} (\dom F).
  \end{align*}
  Furthermore all these sets have the same dimension as $\dom F$. 
\end{theorem}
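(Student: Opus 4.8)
The plan is to work with the two sublevel sets $S_0 \defeq {\rm lev}_{<\tau} F$ and $S \defeq {\rm lev}_{\tau} F$, both convex as sublevel sets of the convex function $F$, and to compare them with $C \defeq \dom F$. Since $\tau > \inf F$, the set $S_0$ is nonempty; fix some $x_0$ with $F(x_0) < \tau$. First I would establish the sandwich $S_0 \subseteq S \subseteq \overline{S_0}$. The left inclusion is trivial, and the right one follows from the convexity inequality \eqref{ineq:convexity_condition_between_two_points_of_dom} in Lemma \ref{lem:convexity_condition_inside_and_outside}: for $x \in S$ and $\lambda \in [0,1)$ the value $F((1-\lambda)x_0 + \lambda x) \le (1-\lambda)F(x_0) + \lambda\tau < \tau$, so $(1-\lambda)x_0 + \lambda x \in S_0$, and these points tend to $x$ as $\lambda \uparrow 1$. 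Taking closures gives $\overline{S} = \overline{S_0}$, whence Rockafellar's \cite[Theorem 6.2]{Rockafellar1970} (${\rm ri}(\overline D) = {\rm ri}(D)$ for convex $D$) yields ${\rm ri}(S) = {\rm ri}(\overline S) = {\rm ri}(\overline{S_0}) = {\rm ri}(S_0)$. This already proves the first claimed equality.

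Next I would produce an anchor point $\mathring x \in {\rm ri}(C) \cap S_0$ and use it to pin down the affine hulls. By Theorem \ref{thm:nonemtpy_convex_set_has_nonempty_ri} there is some $x_1 \in {\rm ri}(C)$, and by Theorem \ref{thm:raypoint_from_ri__leaving_closure__never_comes_back} the points $(1-\lambda)x_1 + \lambda x_0$ stay in ${\rm ri}(C)$ for $\lambda \in [0,1)$; since $F((1-\lambda)x_1 + \lambda x_0) \le (1-\lambda)F(x_1) + \lambda F(x_0) \to F(x_0) < \tau$ as $\lambda \uparrow 1$ (using $F(x_1)<\infty$), choosing $\lambda$ close to $1$ gives $\mathring x \in {\rm ri}(C) \cap S_0$. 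Because $F$ is continuous relative to ${\rm aff}(C)$ at each point of ${\rm ri}(C)$ (after identifying ${\rm aff}(C)$ with some $\R^m$ this is the usual continuity of a finite convex function on the interior of its domain, cf.\ the use of this fact in Lemma \ref{aux_lemma}), a whole relative neighbourhood of $\mathring x$ lies in $S_0$; such a neighbourhood is full-dimensional in ${\rm aff}(C)$, so ${\rm aff}(S_0) = {\rm aff}(C)$, and from $S_0 \subseteq S \subseteq C$ also ${\rm aff}(S) = {\rm aff}(C)$. This is exactly the asserted dimension statement.

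Finally I would prove ${\rm ri}(S_0) = S_0 \cap {\rm ri}(C)$. For ``$\supseteq$'', given $x \in S_0 \cap {\rm ri}(C)$ the same continuity argument provides a relative neighbourhood of $x$ in ${\rm aff}(C) = {\rm aff}(S_0)$ on which $F < \tau$, so $x \in {\rm ri}(S_0)$. For ``$\subseteq$'', given $x \in {\rm ri}(S_0)$ I would prolong the segment from $\mathring x$ through $x$: by the prolongation characterisation of relative interior (\cite[Theorem 6.4]{Rockafellar1970}) there is $\mu > 1$ with $y \defeq (1-\mu)\mathring x + \mu x \in S_0 \subseteq C \subseteq \overline C$, and then $x = (1-\tfrac1\mu)\mathring x + \tfrac1\mu y$ is a convex combination with $\tfrac1\mu \in (0,1)$, $\mathring x \in {\rm ri}(C)$ and $y \in \overline C$, so Theorem \ref{thm:raypoint_from_ri__leaving_closure__never_comes_back} gives $x \in {\rm ri}(C)$; since $x \in S_0$ is clear, this shows ${\rm ri}(S_0) \subseteq S_0 \cap {\rm ri}(C)$. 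Combining with the first equality finishes everything.

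I expect the main obstacle to be the inclusion ${\rm ri}({\rm lev}_{<\tau}F) \subseteq {\rm ri}(\dom F)$: one must avoid conflating relative interiors taken with respect to a priori different affine hulls, which is precisely why the preliminary identity ${\rm aff}(S_0) = {\rm aff}(C)$ and the existence of the anchor $\mathring x \in {\rm ri}(C) \cap S_0$ have to be secured before the prolongation argument can be applied safely. The link to \cite[Theorem 7.6]{Rockafellar1970} is that this level-set comparison is the ``shadow'' on $\R^n$ of the corresponding statement about relative interiors of epigraphs; one could alternatively deduce the result from that theorem, but the direct segment arguments above seem more transparent and self-contained.
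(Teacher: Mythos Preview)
Your proof is correct. The paper, however, does not actually prove this theorem: it simply records that the result ``is obtained from \cite[Theorem 7.6]{Rockafellar1970} and \cite[Theorem 6.2]{Rockafellar1970}'' and leaves it at that. So your argument is not a variant of the paper's approach but a genuine, self-contained replacement for a bare citation.

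The comparison is therefore between \emph{quoting} Rockafellar's epigraph-based result (Theorem 7.6) and your \emph{direct} segment argument in $\R^n$. Rockafellar's Theorem 7.6 computes ${\rm ri}(\epi F)$ and reads off the level-set statement as its horizontal slice; this is conceptually elegant but imports the $(n{+}1)$-dimensional machinery. Your route stays in $\R^n$ and uses only the convexity inequality, the line-segment principle (Theorem \ref{thm:raypoint_from_ri__leaving_closure__never_comes_back}), the equality ${\rm ri}(D) = {\rm ri}(\overline D)$, and the prolongation characterisation \cite[Theorem 6.4]{Rockafellar1970}. The key step you correctly isolate --- manufacturing the anchor $\mathring x \in {\rm ri}(\dom F) \cap {\rm lev}_{<\tau} F$ to force ${\rm aff}({\rm lev}_{<\tau} F) = {\rm aff}(\dom F)$ before comparing relative interiors --- is exactly what makes the prolongation argument go through without ambiguity about the ambient affine hull. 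Your approach is more elementary and fully transparent; the paper's citation is shorter but opaque.
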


\begin{theorem} \label{thm:restricting_set_operations_to_an_affine_subset}
  Let $C$ be a convex set in $\R^n$, and let $A$ be an affine set
  in $\R^n$ which contains
  a point of ${\rm ri} (C)$. Then 
  \begin{align}
    \label{eq:ri_restricted_to_affine_subset}
    {\rm ri}(A \cap C) &= A \cap {\rm ri} (C),
    \\
    \label{eq:closure_restricted_to_affine_subset}
    \overline{A \cap C} &= A \cap \overline{C},
    \\
    \label{eq:rb_restricted_to_affine_subset}
    {\rm rb}(A \cap C) &= A \cap {\rm rb} (C),
    \\
    \label{eq:aff_restricted_to_affine_subset}
    {\rm aff}(A \cap C) &= A \cap {\rm aff}(C).   
  \end{align}
\end{theorem}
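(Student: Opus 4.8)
The statement is, up to specializing $C_1=A$ to an affine set, the classical fact (essentially \cite[Theorem 6.5]{Rockafellar1970}) that the relative interior and the closure commute with intersections of convex sets whose relative interiors meet. The plan is to give a direct proof using only the prolongation principle (Theorem~\ref{thm:raypoint_from_ri__leaving_closure__never_comes_back}), the nonemptiness and affine-hull-invariance of the relative interior of a nonempty convex set (Theorem~\ref{thm:nonemtpy_convex_set_has_nonempty_ri}), and the bare definition ${\rm ri}(S)={\rm int}_{{\rm aff}(S)}(S)$. Throughout I would fix a point $\mathring a\in A\cap{\rm ri}(C)$ (it exists by hypothesis), observe that $A\cap C$ is a nonempty convex set, and use freely that $A$ is topologically closed and closed under affine combinations.

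First I would settle the closure identity \eqref{eq:closure_restricted_to_affine_subset}. Inclusion ``$\subseteq$'' is immediate from $\overline{A\cap C}\subseteq\overline A\cap\overline C=A\cap\overline C$. For ``$\supseteq$'', let $x\in A\cap\overline C$; by Theorem~\ref{thm:raypoint_from_ri__leaving_closure__never_comes_back} applied to $C$ with $\mathring a\in{\rm ri}(C)$ and $x\in\overline C$, the points $(1-\lambda)\mathring a+\lambda x$ lie in $C$, and in $A$ by affineness, for all $\lambda\in[0,1)$, so $x=\lim_{\lambda\uparrow 1}\big((1-\lambda)\mathring a+\lambda x\big)\in\overline{A\cap C}$. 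Next, the affine-hull identity \eqref{eq:aff_restricted_to_affine_subset} is short and independent of the rest: ``$\subseteq$'' holds because $A\cap C$ is contained in the affine sets $A$ and ${\rm aff}(C)$; for ``$\supseteq$'', given $x\in A\cap{\rm aff}(C)$ with $x\neq\mathring a$, a sufficiently short step from $\mathring a$ towards $x$ stays in ${\rm aff}(C)$ and hence, since $\mathring a\in{\rm int}_{{\rm aff}(C)}(C)$, lands in $C$; this produces a second point $z\in A\cap C$, distinct from $\mathring a$, on the line through $\mathring a$ and $x$, so $x\in{\rm aff}(\{\mathring a,z\})\subseteq{\rm aff}(A\cap C)$.

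The heart of the argument is the relative-interior identity \eqref{eq:ri_restricted_to_affine_subset}, which I would prove by two symmetric applications of the prolongation principle. For ``$\subseteq$'': if $x\in{\rm ri}(A\cap C)$, then by definition of relative interior one may extend the segment from $\mathring a$ slightly past $x$ inside $A\cap C$, i.e. $x^{+}:=(1-\mu)\mathring a+\mu x\in A\cap C\subseteq\overline C$ for some $\mu>1$; then $x=\big(1-\tfrac1\mu\big)\mathring a+\tfrac1\mu x^{+}$ is a proper convex combination of $\mathring a\in{\rm ri}(C)$ and $x^{+}\in\overline C$, so Theorem~\ref{thm:raypoint_from_ri__leaving_closure__never_comes_back} gives $x\in{\rm ri}(C)$, while $x\in A$ is trivial. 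For ``$\supseteq$'': let $x\in A\cap{\rm ri}(C)$ and pick any $\mathring x\in{\rm ri}(A\cap C)$ (nonempty by Theorem~\ref{thm:nonemtpy_convex_set_has_nonempty_ri}); since $x\in{\rm ri}(C)$ one may extend the segment from $\mathring x$ slightly past $x$ inside $C$, and it remains in $A$ by affineness, so $(1-\mu)\mathring x+\mu x\in A\cap C$ for some $\mu>1$; then $x$ is a proper convex combination of $\mathring x\in{\rm ri}(A\cap C)$ and a point of $\overline{A\cap C}$, whence Theorem~\ref{thm:raypoint_from_ri__leaving_closure__never_comes_back}, applied to $A\cap C$, yields $x\in{\rm ri}(A\cap C)$. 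Finally, the relative-boundary identity \eqref{eq:rb_restricted_to_affine_subset} drops out by substituting \eqref{eq:ri_restricted_to_affine_subset} and \eqref{eq:closure_restricted_to_affine_subset} into ${\rm rb}(S)=\overline S\setminus{\rm ri}(S)$ and using $(A\cap\overline C)\setminus(A\cap{\rm ri}(C))=A\cap\big(\overline C\setminus{\rm ri}(C)\big)=A\cap{\rm rb}(C)$, which is valid because ${\rm ri}(C)\subseteq\overline C$.

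I expect the only genuinely delicate point to be making the two prolongations in \eqref{eq:ri_restricted_to_affine_subset} rigorous: one must verify both that extending a segment past a relative-interior point keeps one inside the set, and — crucially — that when one endpoint lies in $A$ the extended point remains in $A\cap C$ rather than merely in $C$. This is exactly where the affineness of $A$ enters, and it is what makes the clean ``proper convex combination $+$ Theorem~\ref{thm:raypoint_from_ri__leaving_closure__never_comes_back}'' finish work in both directions; everything else is bookkeeping with inclusions.
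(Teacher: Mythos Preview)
Your proof is correct and self-contained. It differs from the paper's proof in flavor: the paper simply cites \cite[Corollary~6.5.1]{Rockafellar1970} for \eqref{eq:ri_restricted_to_affine_subset} and \eqref{eq:closure_restricted_to_affine_subset}, derives \eqref{eq:rb_restricted_to_affine_subset} exactly as you do, and proves \eqref{eq:aff_restricted_to_affine_subset} by a translation-to-the-origin argument followed by a ball inclusion $\mathbb{B}_\varepsilon\subseteq C$ inside ${\rm span}(C)$. You instead prove all four identities uniformly from the prolongation principle (Theorem~\ref{thm:raypoint_from_ri__leaving_closure__never_comes_back}), which has the virtue of being fully self-contained within the paper and of making explicit where the affineness of $A$ enters in each step. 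The paper's route is shorter on the page but hides the mechanism behind the citation; your route is longer but more transparent, and your affine-hull argument via a single second point on the line through $\mathring a$ is arguably cleaner than the paper's reduction to $A\subseteq{\rm span}(C)$. One small remark: the set identity $(A\cap\overline C)\setminus(A\cap{\rm ri}(C))=A\cap(\overline C\setminus{\rm ri}(C))$ you use for \eqref{eq:rb_restricted_to_affine_subset} holds for any sets and does not actually need ${\rm ri}(C)\subseteq\overline C$; the paper uses the same identity without that remark.
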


\begin{proof}
  For the proof of the first and the second equality 
  see \cite[Corollary 6.5.1]{Rockafellar1970}.
  With these statements we now also get 
  \begin{align*}
    {\rm rb}(A \cap C) 
    &= 
    \overline{A \cap C} \setminus {\rm ri}(A \cap C)
    =
    (A \cap \overline{C}) \setminus (A \cap {\rm ri}(C) )
    = 
    A \cap (\overline{C} \setminus {\rm ri}(C))
    =
    A \cap {\rm rb}(C).
  \end{align*}
  For the proof of the remaining forth statement let
  $a \in A \cap {\rm ri }(C)$.
  Since the truth value of the assertion stays unchanged when translating 
  the coordinate system we may assume $a = \zerovec$, so that 
  ${\rm aff}(A) = {\rm span}(A)$,
  ${\rm aff}(C) = {\rm span}(C)$ and
  ${\rm aff}(A \cap C) = {\rm span}(A \cap C)$.
  Due to 
  $
    {\rm span}(A \cap C) 
    = 
    {\rm span}( A \cap ({\rm span}(C) \cap C) )
    =
    {\rm span}( (A \cap {\rm span}(C) ) \cap C) 
  $
  and
  $ A \cap {\rm span}(C) = (A \cap  {\rm span}(C) ) \cap {\rm span} (C) $
  we may restrict us to subspaces $A \subseteq {\rm span}(C)$, so that we can 
  identify ${\rm span}(C)$ with $\R^m$ where $m = \dim({\rm span}(C))$.
  Choose $\varepsilon > 0$ so small that 
  $\mathbb{B}_{\varepsilon} \subseteq C$.
  Then 
  $
    {\rm span}(A) 
    = 
    {\rm span}(A \cap \mathbb{B}_{\varepsilon})
    \subseteq 
    {\rm span}(A \cap C)
    \subseteq
    {\rm span}(A) \cap {\rm span}(C)
    =
    {\rm span}(A) \cap \R^m
    =
    {\rm span}(A)
  $
  so that we have in particular
  $
    {\rm span}(A \cap C)
    =
    {\rm span}(A) \cap {\rm span}(C)
    =
    A \cap {\rm span}(C)
  $.
\end{proof}

\begin{theorem} 
\label{thm:directness_of_sum_of_convex_sets_eq_carries_over_to_affine_hulls}
  For convex subsets $C_1$ and $C_2$ of $\R^n$ the following are equivalent:
  \begin{enumerate}
    \item \label{enu:unique_decomposition_of_sum_of_convex_sets}
      $C_1 +  C_2 =  C_1 \oplus C_2$,
\DissVersionForMeOrHareBrainedOfficialVersion      
      {i.e. each $c \in C \defeq C_1 + C_2$ has a unique decomposition $c = c_1 + c_2$ into components  $c_1 \in C_1$, $c_2 \in C_2$.}
{}
    \item \label{enu:unique_decomposition_of_sum_of_affine_hull_of_convex_sets}
      ${\rm aff} (C_1) + {\rm aff} (C_2) = {\rm aff} (C_1) \oplus {\rm aff} (C_2)$.
  \end{enumerate}
\end{theorem}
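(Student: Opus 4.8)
The plan is to recast both conditions as statements about difference sets and then to isolate the one non-formal ingredient, an absorption property of $C-C$.

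For a nonempty $S\subseteq\R^n$ write $\Delta S := S-S = \{s-s' : s,s'\in S\}$. A short bookkeeping argument gives, for nonempty $S,T\subseteq\R^n$,
\[
  S+T = S\oplus T \iff \Delta S\cap\Delta T = \{\zerovec\},
\]
since $s+t = s'+t'$ says exactly $s-s' = t'-t$, so the decomposition of an element of $S+T$ is unique precisely when $\Delta S$ and $\Delta T$ share only $\zerovec$. For a nonempty affine set $A$ with direction subspace $U$ one has $\Delta A = U$. Hence, writing $U_i$ for the direction subspace of ${\rm aff}(C_i)$ (and noting that if $C_1$ or $C_2$ is empty both conditions hold vacuously, so we may assume $C_1,C_2\neq\emptyset$), condition \upref{enu:unique_decomposition_of_sum_of_convex_sets} becomes $\Delta C_1\cap\Delta C_2 = \{\zerovec\}$ and condition \upref{enu:unique_decomposition_of_sum_of_affine_hull_of_convex_sets} becomes $U_1\cap U_2 = \{\zerovec\}$. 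Because $\Delta C_i \subseteq \Delta({\rm aff}\,C_i) = U_i$, the implication ``\upref{enu:unique_decomposition_of_sum_of_affine_hull_of_convex_sets}$\Rightarrow$\upref{enu:unique_decomposition_of_sum_of_convex_sets}'' is then immediate: $\Delta C_1\cap\Delta C_2 \subseteq U_1\cap U_2 = \{\zerovec\}$.

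The converse needs the following claim, which is the heart of the matter: \emph{for nonempty convex $C$, the set $\Delta C$ is convex, symmetric ($-\Delta C = \Delta C$) and contains $\zerovec$, its linear span equals the direction subspace $U$ of ${\rm aff}(C)$, and for every $u\in U$ there is $\lambda>0$ with $[0,\lambda]u\subseteq\Delta C$.} Convexity, symmetry and $\zerovec\in\Delta C$ are routine; $\linspan{\Delta C} = U$ follows by fixing $p\in C$, using that $C-p$ is convex with $\zerovec\in C-p$ (so ${\rm aff}(C-p) = \linspan{C-p}$) together with $C-p\subseteq\Delta C\subseteq U$. For the absorption part I would take $u\in U = \linspan{\Delta C}$, write $u = \sum_i s_i d_i$ with $d_i\in\Delta C$, use symmetry to arrange $s_i\ge 0$, and — the case $u=\zerovec$ being trivial — set $S := \sum_i s_i>0$; then $u/S = \sum_i (s_i/S)d_i$ is a genuine convex combination of the $d_i$, hence lies in $\Delta C$, and convexity together with $\zerovec\in\Delta C$ yields $[0,1/S]u\subseteq\Delta C$.

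Granting the claim, ``\upref{enu:unique_decomposition_of_sum_of_convex_sets}$\Rightarrow$\upref{enu:unique_decomposition_of_sum_of_affine_hull_of_convex_sets}'' follows by contraposition: if $\zerovec\neq v\in U_1\cap U_2$, apply the claim to each $C_i$ to obtain $\lambda_i>0$ with $[0,\lambda_i]v\subseteq\Delta C_i$; then $\mu := \min(\lambda_1,\lambda_2)>0$ gives $\mu v\in\Delta C_1\cap\Delta C_2$ with $\mu v\neq\zerovec$, so \upref{enu:unique_decomposition_of_sum_of_convex_sets} fails. The main obstacle is the absorption claim; the rest is formal manipulation, the only other point needing a little care being the identification $\Delta({\rm aff}\,C) = U = \linspan{\Delta C}$ that bridges the two conditions (plus the trivial empty-set case).
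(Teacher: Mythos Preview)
Your proof is correct. Both directions are sound, and the absorption claim is cleanly established: $\Delta C = C + (-C)$ is convex and symmetric, spans the direction space $U$ of ${\rm aff}(C)$, and the finite-combination trick (flip signs to make coefficients nonnegative, normalise, use convexity) gives the absorption property.

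Your route differs from the paper's in a worthwhile way. The paper first translates so that $\zerovec \in {\rm ri}(C_i)$ for $i=1,2$, then uses that a small ball of ${\rm aff}(C_i)$ around $\zerovec$ lies inside $C_i$; given a non-unique decomposition in the affine hulls, it scales all four points by a small common $\lambda>0$ to push them back into the $C_i$. You instead reformulate directness via difference sets, reducing both conditions to $\Delta C_1\cap\Delta C_2=\{\zerovec\}$ versus $U_1\cap U_2=\{\zerovec\}$, and use absorption of $\Delta C_i$ in $U_i$ in place of the relative-interior ball. The paper's argument is geometrically transparent and fits naturally into its surrounding machinery (where ${\rm ri}$ is already available via Rockafellar), but it tacitly relies on the nontrivial fact that nonempty convex sets in $\R^n$ have nonempty relative interior. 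Your argument sidesteps relative interiors entirely and is more self-contained; the only place finite-dimensionality enters is in writing $u$ as a \emph{finite} linear combination of elements of $\Delta C$, which is exactly what is needed.
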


\begin{proof}
  Assume without loss of generality that $C_1$ and $C_2$ are not empty.
  Translating $C_1$ or $C_2$ does neither change the truth value of the 
  statement $C_1 + C_2 = C_1 \oplus C_2$ nor the truth value of the statement
  ${\rm aff}(C_1) + {\rm aff}(C_2) = {\rm aff}(C_1) \oplus  {\rm aff}(C_2)$.
  Without loss of generality we may therefore assume 
  $\zerovec \in {\rm ri}(C_1)$ and $\zerovec \in {\rm ri}(C_2)$.
  \\[0.5ex]
  Clearly \upref{enu:unique_decomposition_of_sum_of_affine_hull_of_convex_sets} 
  implies 
  \upref{enu:unique_decomposition_of_sum_of_convex_sets},
  since $C_1 \subseteq {\rm aff }(C_1)$ and $C_2 \subseteq {\rm aff}(C_2)$.
  We show the remaining direction 
  $
    \upref{enu:unique_decomposition_of_sum_of_convex_sets}
    \Rightarrow
    \upref{enu:unique_decomposition_of_sum_of_affine_hull_of_convex_sets}
  $
  by proving its contrapositive; assume that the sum
  ${\rm aff} (C_1) + {\rm aff} (C_2)$ is not direct,
  so that there are distinct $a_1, a_1' \in {\rm aff}(C_1)$ and 
  distinct $a_2, a_2' \in {\rm aff}(C_2)$ such that $a_1 + a_2 = a_1' + a_2'$.
  Let $a$ be any of the four points and let $C$ be the corresponding
  set $C_1$ or $C_2$. We can find a $\lambda_a >0$
  such that $\lambda_a a\in C$; indeed, by $\zerovec \in {\rm ri}(C)$
  there is an $\varepsilon > 0$ such that 
  $\mathbb{B}_\varepsilon \cap {\rm aff}(C) \subseteq C$.
  Hence and since ${\rm aff}(C)$ is an affine set we get
  $
    \lambda_a a
    = 
    \lambda_a a + (1-\lambda)\zerovec 
    \in 
    {\rm aff}(C) \cap \mathbb{B}_\varepsilon
    \subseteq C
  $ 
  for $\lambda_a$ chosen sufficiently small.
  For sufficiently small chosen $\lambda >0$ the four points
  $c_1 \defeq \lambda a_1$, $c_1' \defeq \lambda a_1'$ and 
  $c_2 \defeq \lambda a_2$, $c_2' \defeq \lambda a_2'$ belong 
  hence to $C_1$ and $C_2$, respectively, and 
  fulfill still $c_1 + c_2 = c_1' + c_2'$ under preservation of
  the distinctions $c_1 \not = c_1'$ and $c_2 \not = c_2'$.
  In particular the sum $C_1 + C_2$ is also not direct.
\end{proof}

\begin{remark}
  The condition that $C_1$ and $C_2$ are convex is essential to guarantee
  the implication 
  $
   C_1 +  C_2 =  C_1 \oplus C_2
    \Rightarrow
   {\rm aff} (C_1) + {\rm aff} (C_2) = {\rm aff} (C_1) \oplus {\rm aff} (C_2)
  $
  as the following example shows:
  Consider the sum of the upper circle line 
  $C_1 \defeq \{(\cos(t), \sin(t)): t \in [0, \pi]\}$
  with the line 
  $C_2\defeq \{(0,\lambda) \in \R^2: \lambda \in \R\}$.
  We have 
  $C_1 + C_2 = [-1,1] \times \R = C_1 \oplus C_2  $.
  However ${\rm aff}(C_1) = \R^2$ and ${\rm aff}(C_2) = C_2$, so that 
  the sum ${\rm aff}(C_1) + {\rm aff}(C_2) $
  is clearly not direct.

\end{remark}

\begin{lemma} \label{lem:addition_restricted_is_homeo}
  Assume that two nonempty convex sets $C_1,C_2 \subseteq \R^n$
  give a direct sum $C_1 \oplus C_2$.
  Restricting the vector addition $+: \R^n \times \R^n \rightarrow \R^n$
  to $C_1 \times C_2$ gives then a homeomorphism between the 
  product space $C_1 \times C_2$ and the 
  (topological) subspace $C_1 \oplus C_2$ of $\R^n$.
\end{lemma}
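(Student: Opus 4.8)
The plan is to exhibit the restricted addition $s\colon C_1\times C_2\to C_1\oplus C_2$, $(c_1,c_2)\mapsto c_1+c_2$, as a continuous bijection with continuous inverse. First I would record the cheap half: $s$ takes values in $C_1\oplus C_2$ and is surjective by the very definition of that set, it is injective precisely because the sum $C_1\oplus C_2$ is direct (each element of $C_1\oplus C_2$ has a unique decomposition into a $C_1$-component and a $C_2$-component), and it is continuous because it is the restriction of the continuous vector addition $+\colon\R^n\times\R^n\to\R^n$ to the product space $C_1\times C_2$ — which by \prettyref{rem:productspace_of_supspaces_is_a_subspace_of_productspace} is just a subspace of $\R^{2n}$ — with codomain cut down to $C_1\oplus C_2$; restricting the domain and shrinking the codomain preserve continuity. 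So everything reduces to showing that $s^{-1}\colon C_1\oplus C_2\to C_1\times C_2$, $c\mapsto(c_1,c_2)$, is continuous.

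Next I would reduce to a linear situation by translation. Fixing $a_i\in C_i$ and replacing $C_i$ by $C_i-a_i$ leaves the directness of the sum intact (a one-line check: a coincidence $c_1'+c_2'=d_1'+d_2'$ of decompositions in the translated sets translates back to one in the original) and, since translations are homeomorphisms of $\R^n$, it does not affect whether $s$ is a homeomorphism. Hence we may assume $\zerovec\in C_1$ and $\zerovec\in C_2$, so that $X_1\defeq{\rm aff}(C_1)$ and $X_2\defeq{\rm aff}(C_2)$ are \emph{linear} subspaces with $C_i\subseteq X_i$. Now \prettyref{thm:directness_of_sum_of_convex_sets_eq_carries_over_to_affine_hulls} applies: from $C_1+C_2=C_1\oplus C_2$ we get ${\rm aff}(C_1)+{\rm aff}(C_2)={\rm aff}(C_1)\oplus{\rm aff}(C_2)$, i.e. $X_1\cap X_2=\{\zerovec\}$.

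The heart of the proof is then the continuity of the decomposition on $X_1\oplus X_2$. I would bring in the projections $\pi_{X_1,X_2}\colon X_1\oplus X_2\to X_1$ and $\pi_{X_2,X_1}\colon X_1\oplus X_2\to X_2$ along each other; being linear mappings on the finite-dimensional space $X_1\oplus X_2$, they are continuous. For $c=c_1+c_2\in C_1\oplus C_2$ with $c_1\in C_1\subseteq X_1$ and $c_2\in C_2\subseteq X_2$ one has $\pi_{X_1,X_2}(c)=c_1$ and $\pi_{X_2,X_1}(c)=c_2$, so $s^{-1}$ is exactly the restriction to $C_1\oplus C_2$, with codomain $C_1\times C_2$, of the map $x\mapsto(\pi_{X_1,X_2}(x),\pi_{X_2,X_1}(x))$, which is continuous because both component maps are. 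Hence $s^{-1}$ is continuous and $s$ is a homeomorphism. An alternative, presumably the original route given the foreword's remark, is a sequential argument ($\R^n$ being metric): if $c^{(k)}=c_1^{(k)}+c_2^{(k)}\to c=c_1+c_2$ in $C_1\oplus C_2$, then $c_1^{(k)}-c_1\in X_1$, $c_2^{(k)}-c_2\in X_2$, and the inequality $\|h_1\|\le C\|h_1+h_2\|$ of \prettyref{lem:same_blow_up_factor_gets_all_attachted_affines_strangers_out_of_ball} (applicable since $X_1\cap X_2=\{\zerovec\}$) yields $\|c_1^{(k)}-c_1\|\le C\|c^{(k)}-c\|\to 0$ and symmetrically for the second component, so $s^{-1}(c^{(k)})\to s^{-1}(c)$. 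I expect the only genuine obstacle to be this continuity of the decomposition map; the translation-invariance of directness and the subspace/product-topology bookkeeping are routine, and the essential input — that two subspaces in direct sum are "uniformly transversal" in finite dimension — is supplied either by continuity of the linear projections or, equivalently, by \prettyref{lem:same_blow_up_factor_gets_all_attachted_affines_strangers_out_of_ball}.
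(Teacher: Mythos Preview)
Your proof is correct and follows essentially the same route as the paper: reduce by translation so that the affine hulls $X_1,X_2$ are linear subspaces, invoke \prettyref{thm:directness_of_sum_of_convex_sets_eq_carries_over_to_affine_hulls} to get $X_1\cap X_2=\{\zerovec\}$, and then establish continuity of the inverse. The paper works on the enlarged sets $A_i={\rm aff}(C_i)$ and uses only the sequential argument via \prettyref{lem:same_blow_up_factor_gets_all_attachted_affines_strangers_out_of_ball} (your ``alternative''), whereas your primary argument via continuity of the linear projections $\pi_{X_i,X_j}$ is a clean equivalent that sidesteps the explicit inequality.
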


\begin{proof}
  By theorem 
  \ref{thm:directness_of_sum_of_convex_sets_eq_carries_over_to_affine_hulls}
  we know that the sum of 
  ${\rm aff}(\dom C_1) \eqdef A_1$ and ${\rm aff}(\dom C_2) \eqdef A_2$ is also 
  a direct one. Therefore it suffices to show that 
  $+|_{A_1 \times A_2}$ is a homeomorphism between $A_1 \times A_2$ 
  and $A_1 \oplus A_2$.
  Choose any $a^* = (a_1^*, a_2^*) \in A_1 \times A_2$ and set 
  $X_1 \defeq A_1 - a_1^*$ and $X_2 \defeq A_2 - a_2^*$. Noting that
  $+|_{A_1 \times A_2}$ is a homeomorphism between $A_1 \times A_2$ and
  $A_1 \oplus A_2$ if and only if $f\defeq+|_{X_1 \times X_2}$ is a 
  homeomorphism between $X_1 \times X_2$ and $X_1 \oplus X_2$
  it suffices to prove the latter.
  To this end note that $f$ is clearly continuous and surjective.
  Since $X_1 + X_2 = X_1 \oplus X_2$ we see that $f$ is also injective 
  and hence bijective. Finally 
  $f^{-1}: X_1 \oplus X_2 \rightarrow X_1 \times X_2$
  is continuous:
  Let $x = x_1 + x_2 \in X_1 \oplus X_2$ and let 
  $x^{(k)} = x_1^{(k)} + x_2^{(k)} \in X_1 \oplus X_2$ converge to $x$.
  We have to show that 
  $f^{-1}(x^{(k)}) = (x_1^{(k)}, x_2^{(k)})$ converges to 
  $f^{-1}(x) = (x_1, x_2)$.
  By Lemma 
  \ref{lem:same_blow_up_factor_gets_all_attachted_affines_strangers_out_of_ball}
  we know that there exists a constant $C > 0$ such that
  $\|h_1\| \leq C \|h_1 + h_2\|$ for all $h_1 \in X_1, h_2 \in X_2$.
  In particular we obtain
  \begin{gather*}
    \|x_1^{(k)} - x_1\| 
    \leq 
    C \|(x_1^{(k)} - x_1) + (x_2^{(k)} - x_2)\|
    =
    C \|x^{(k)} - x\|
    \rightarrow
    0
  \end{gather*}
  as $k \rightarrow + \infty$,
  so that $x_1^{(k)} \rightarrow x_1$ as $k \rightarrow +\infty$.
  By role reversal we obtain also $x_2^{(k)} \rightarrow x_2$ as 
  $k \rightarrow + \infty$, so that really 
  $(x_1^{(k)}, x_2^{(k)}) \rightarrow (x_1, x_2) $ as 
  $k \rightarrow + \infty$.
\end{proof}

The key in the previous proof was that the directness of the sum 
of two convex sets $C_1, C_2$ keep maintained when enlarging these 
sets to their affine hull.
This is, however, in general not true for a direct sum $C_1 \oplus C_2$,
where one of the summands $C_1, C_2$ is not convex.
In such cases it can happen that 
$+|_{C_1 \times C_2}: C_1 \times C_2 \rightarrow C_1 \oplus C_2$ 
is no longer a homeomorphism, as the following 
example illustrates:

\begin{example}
  Consider the non-convex set $C_1 \defeq \{0,1\}$ and the convex set 
  $C_2 \defeq [0,1)$.
  Although their sum $C_1 + C_2 = [0,2) = C_1 \oplus C_2$ is a direct one,
  the sum ${\rm aff}(C_1) + {\rm aff}(C_2) = \R + \R$ is not direct and 
  $+|_{C_1 \times C_2}$ is not a homeomorphism between $C_1 \times C_2$ 
  and $C_1 \oplus C_2$, since these topological spaces are not at all 
  homeomorphic: $C_1 \oplus C_2 = [0,2)$ is a connected space while 
  $C_1 \times C_2 = [\{0\} \times [0,1) ] \cup [\{1\} \times [0,1) ]$
  is not a connected space.
\end{example}

\begin{theorem} \label{thm:operations_that_interchange_with_direct_sum}
  Let $C$ and $A$ be a convex and an affine subset of $\R^n$, respectively,
  whose sum $C+A$ is direct. Then the following holds true:
  \begin{align}
    \label{eq:ri_of_direct_sum_with_affine_subset}
    {\rm ri}(A \oplus  C) &= A \oplus  {\rm ri} (C),
    \\
    \label{eq:closure_of__direct_sum_with_affine_subset}
    \overline{A \oplus  C} &= A \oplus  \overline{C},
    \\
    \label{eq:rb_of_direct_sum_with_affine_subset}
    {\rm rb}(A \oplus  C) &= A \oplus  {\rm rb} (C),
    \\
    \label{eq:aff_of_direct_sum_with_affine_subset}
    {\rm aff}(A \oplus  C) &= A \oplus {\rm aff}(C).    
  \end{align}
\end{theorem}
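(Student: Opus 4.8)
The plan is to reduce everything to the case where $A$ is a linear subspace and where $\zerovec\in\mathrm{ri}(C)$. Indeed, translating both $A$ and $C$ by a fixed vector changes neither side of any of the four equations (closure, relative interior, relative boundary and affine hull all commute with translations, and directness of the sum is translation invariant by the reasoning already used in the proof of \prettyref{thm:directness_of_sum_of_convex_sets_eq_carries_over_to_affine_hulls}). So I would fix $a_0\in A$ and some $c_0\in\mathrm{ri}(C)$, subtract $a_0+c_0$, and assume from now on that $\zerovec\in A$ (so $A$ is a subspace) and $\zerovec\in\mathrm{ri}(C)$. Then \prettyref{lem:addition_restricted_is_homeo} applies: the restricted addition $+|_{A\times C}\colon A\times C\to A\oplus C$ is a homeomorphism between the product space $A\times C$ and the subspace $A\oplus C$ of $\R^n$. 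This homeomorphism is the workhorse for the topological identities.

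Next I would prove \eqref{eq:aff_of_direct_sum_with_affine_subset} first, since it is purely algebraic and the relative-interior statement needs to know the ambient affine hull. Clearly $\mathrm{aff}(A\oplus C)\supseteq A$ and $\mathrm{aff}(A\oplus C)\supseteq \mathrm{aff}(C)$, hence $\mathrm{aff}(A\oplus C)\supseteq A+\mathrm{aff}(C)$. Conversely $A+\mathrm{aff}(C)$ is an affine set containing $A\oplus C$, so it contains $\mathrm{aff}(A\oplus C)$; thus $\mathrm{aff}(A\oplus C)=A+\mathrm{aff}(C)$, and this sum is direct by \prettyref{thm:directness_of_sum_of_convex_sets_eq_carries_over_to_affine_hulls} applied to the convex sets $A$ and $C$ (an affine set is convex). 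For \eqref{eq:ri_of_direct_sum_with_affine_subset} and \eqref{eq:closure_of__direct_sum_with_affine_subset}, the idea is that forming the relative interior, resp.\ the closure, of $A\oplus C$ inside its affine hull $A\oplus\mathrm{aff}(C)$ corresponds, under the homeomorphism $+|_{A\times\mathrm{aff}(C)}$, to forming the interior, resp.\ closure, of $A\times C$ inside the product space $A\times\mathrm{aff}(C)$; and in a product space $\mathrm{int}(A\times C)=\mathrm{int}(A)\times\mathrm{int}(C)=A\times\mathrm{int}_{\mathrm{aff}(C)}(C)$ and $\overline{A\times C}=\overline A\times\overline C=A\times\overline C$, where I use that $A$ is closed and (relatively) open in itself, and that in a Cartesian product of two topological spaces interior and closure distribute over the factors. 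Pushing these back through the homeomorphism and using that relative notions are exactly the corresponding notions inside the affine hull (noting $\mathrm{int}_{\mathrm{aff}(C)}(C)=\mathrm{ri}(C)$), I get \eqref{eq:ri_of_direct_sum_with_affine_subset} and \eqref{eq:closure_of__direct_sum_with_affine_subset}. Finally \eqref{eq:rb_of_direct_sum_with_affine_subset} follows by subtracting: using the definition $\mathrm{rb}(S)=\overline S\setminus\mathrm{ri}(S)$,
\[
  \mathrm{rb}(A\oplus C)=\overline{A\oplus C}\setminus\mathrm{ri}(A\oplus C)
  =(A\oplus\overline C)\setminus(A\oplus\mathrm{ri}(C))
  =A\oplus(\overline C\setminus\mathrm{ri}(C))=A\oplus\mathrm{rb}(C),
\]
where the middle step uses that $A\oplus$ distributes over set differences within a direct sum (an element $a+c$ lies in $A\oplus\mathrm{ri}(C)$ iff $c\in\mathrm{ri}(C)$, by uniqueness of the decomposition).

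The main obstacle I anticipate is making the "push through the homeomorphism" step rigorous rather than hand-wavy: one must be careful that the homeomorphism $+|_{A\times\mathrm{aff}(C)}$ indeed carries the subset $A\times C$ onto $A\oplus C$ and carries the product topology on $A\times\mathrm{aff}(C)$ onto the subspace topology on $A\oplus\mathrm{aff}(C)$, so that interior/closure taken relative to the larger product space correspond exactly to interior/closure relative to the larger affine subspace. This is where \prettyref{lem:addition_restricted_is_homeo} (together with \prettyref{rem:productspace_of_supspaces_is_a_subspace_of_productspace}, which identifies the product of subspaces with a subspace of the product) does the real work, and it must be invoked with $\mathrm{aff}(C)$ in place of $C$; the directness of $A+\mathrm{aff}(C)$, already established for \eqref{eq:aff_of_direct_sum_with_affine_subset}, is exactly the hypothesis that lemma needs. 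Everything else is routine: the distributivity of interior and closure over finite Cartesian products, and the elementary set-theoretic bookkeeping for the relative boundary.
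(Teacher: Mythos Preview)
Your proposal is correct and follows essentially the same route as the paper: reduce by translation to the case $\zerovec\in A$ and $\zerovec\in C$, establish \eqref{eq:aff_of_direct_sum_with_affine_subset} first (the paper does this via a span chain rather than your two-sided containment, but both are straightforward), then invoke \prettyref{lem:addition_restricted_is_homeo} on $A\times\mathrm{aff}(C)$ to transport the product-space identities for interior, closure and boundary across to the direct sum. Your explicit flagging of the need to apply the homeomorphism lemma with $\mathrm{aff}(C)$ rather than $C$---and hence the reliance on \prettyref{thm:directness_of_sum_of_convex_sets_eq_carries_over_to_affine_hulls}---matches exactly what the paper does implicitly.
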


\begin{proof}
  Assume without loss of generality that $A$ and $C$ are not empty.
  Note first that the ``largest'' sum of the four right hand side sums,
  i.e. the sum $A + {\rm aff}(C)$ is a direct one by Theorem 
  \ref{thm:directness_of_sum_of_convex_sets_eq_carries_over_to_affine_hulls}.
  Hence the other three sums 
  $A + {\rm ri} (C)$,
  $A + \overline{C}$ and
  $A + {\rm rb} (C)$
  are direct all the more.
  Noting that the truth value of the statement 
  ${\rm aff}(A + C) = A + {\rm aff}(C)$
  does not change when translating $A$ or $C$ we may assume 
  $\zerovec \in A$ and $\zerovec \in C$ without loss of generality, so that
  in particular
  $A \subseteq A + C$ and $C \subseteq A + C$.
  We then get
  \begin{multline*}
    A + {\rm aff}(C) 
    = 
    {\rm aff}(A) + {\rm aff}(C)
    \subseteq
    {\rm aff}(A + C) + {\rm aff}(A + C)
    =
    {\rm span}(A + C) + {\rm span}(A + C)
    \\
    =
    {\rm span}(A + C)
    \subseteq
    {\rm span}( {\rm span}(A) + {\rm span}(C))
    =
    {\rm span}(A) + {\rm span}(C)
    =
    A + {\rm aff}(C)
  \end{multline*}
  and therewith 
  $A \oplus {\rm aff}(C) = {\rm span}(A \oplus C) = {\rm aff}(A \oplus C) $.

  Consider now the topological spaces $C_1 \defeq {\rm aff}(A) = A$ and
  $C_2 \defeq {\rm aff}(C)$ and their product space $C_1 \times C_2$,
  equipped with the product topology.
  We have 
  \begin{align*}
    {\rm int}_{C_1 \times C_2}(A \times C)
    &=
    {\rm int}_{C_1}(A) \times {\rm int}_{C_2}(C)
    =
    A \times {\rm int}_{C_2}(C),
  \\
    \overline{A \times C}^{C_1 \times C_2}
    &=
    \overline{A}^{C_1} \times \overline{C}^{C_2}
    =
    A \times \overline{C}^{C_2}
  \\
  \shortintertext{and}
    {\rm \partial}_{C_1 \times C_2} (A \times C)
    &=
    \overline{A \times C}^{C_1 \times C_2}
      \setminus
      {\rm int}_{C_1 \times C_2}(A \times C)
  \\
    &=
    \left(A \times \overline{C}^{C_2} \right)
      \setminus 
      \left(A \times {\rm int}_{C_2}(C) \right)
  \\
    &=
    A \times \left(\overline{C}^{C_2} \setminus  {\rm int}_{C_2}(C) \right)
  \\
    &=
    A \times {\rm \partial}_{C_2}(C).
  \end{align*}
  By means of the homeomorphism
  $+|_{C_1 \times C_2}: C_1 \times C_2 \rightarrow C_1 \oplus C_2$
  from lemma  \ref{lem:addition_restricted_is_homeo}
  these three equations can be translated to 
  \begin{align*}
    {\rm int}_{C_1 + C_2} (A + C)
    &=
    A + {\rm int}_{C_2}(C),
  \\
    \overline{A + C}^{C_1 + C_2}
    &=
    A + \overline{C}^{C_2}
  \\
  \shortintertext{and}
    {\rm \partial}_{C_1 + C_2} (A + C)
    &=
    A + {\rm \partial}_{C_2}(C),
  \end{align*}
  which gives the equations 
  \eqref{eq:ri_of_direct_sum_with_affine_subset},
  \eqref{eq:closure_of__direct_sum_with_affine_subset},
  \eqref{eq:rb_of_direct_sum_with_affine_subset}.
\end{proof}

The following theorem is a special case of 
\cite[Corollary 2.4.5]{ZA02} and an equation used in its proof. 
Cf. also \cite[Theorem 10.5]{RW04}.
Note that we need an \emph{orthogonal} decomposition
$\R^n = X_1 \oplus X_2 \dots \oplus  X_n$ in order to guarantee
$
  \langle x, x^* \rangle 
  = 
  \sum_{i=1}^n \langle x_i, x^*_i \rangle
$.

\begin{theorem} \label{thm:conjugate_and_subdiff_of_direct_orthogonal_sum}
  Let $\R^n = X_1 \oplus \dots \oplus X_2$ be a decomposition of $\R^n$
  into pairwise orthogonal vector subspaces $X_1, \dots , X_n$. For
  any  proper functions $f_i : X_i \rightarrow \R \cup \{+ \infty\}$
  and their 
  semidirect sum 
  $
    f = f_1 \sdirsum f_2 \sdirsum \dots \sdirsum f_n : 
    \R^n \rightarrow \R \cup \{+ \infty\}, 
    f(x) = f(x_1 + \dots + x_n) 
    \defeq 
    \sum\limits_{i=1}^n f_i(x_i)
  $ 
  we have
  \begin{enumerate}
    \item \label{enu:conjugate_of_direct_and_orthogonal_sum}
      $
	[f_1 \sdirsum f_2 \sdirsum \dots \sdirsum f_n]^*
	=
	f_1^* \sdirsum f_2^* \sdirsum \dots \sdirsum f_n^*
      $,
      i.e.\\
      $
	f^*(x^*) 
	= 
	f^*(x_1^* + \dots + x_n^*)
	=
	\sum\limits_{i=1}^n f_i^*(x_i^*)
      $
      for every $x^* \in \R^n$.
    \item \label{enu:subdifferential_of_direct_and_orthogonal_sum}
      $
	\partial f(x) 
	= 
	\partial f(x_1 + \dots + x_n)
	=
	\bigoplus\limits_{i=1}^n \partial f_i(x_i)
      $
      for every $x \in \R^n$.
  \end{enumerate}

\end{theorem}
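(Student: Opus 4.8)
The plan is to reduce everything to an orthogonal-block computation together with the standard Fenchel-calculus facts for sums over independent coordinates. First I would fix the orthogonal decomposition $\R^n = X_1 \oplus \dots \oplus X_n$ and the proper functions $f_i\colon X_i \rarr \R\cup\{\pinfty\}$, and write $f = f_1 \sdirsum \dots \sdirsum f_n$. The crucial elementary observation, to be recorded at the outset, is that orthogonality gives the decoupling of the inner product: for $x = x_1 + \dots + x_n$ and $x^* = x_1^* + \dots + x_n^*$ with $x_i, x_i^* \in X_i$ we have $\langle x, x^*\rangle = \sum_{i=1}^n \langle x_i, x_i^*\rangle$. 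This is exactly the place where orthogonality (and not merely directness) is used, as the theorem statement warns.

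For part \upref{enu:conjugate_of_direct_and_orthogonal_sum} I would compute directly from the definition of the Fenchel conjugate. Writing any $x \in \R^n$ uniquely as $x = x_1 + \dots + x_n$ with $x_i \in X_i$, we get
\begin{align*}
  f^*(x^*)
  &= \sup_{x \in \R^n}\Big(\langle x, x^*\rangle - f(x)\Big)
   = \sup_{x_1 \in X_1, \dots, x_n \in X_n}\Big(\sum_{i=1}^n \langle x_i, x_i^*\rangle - \sum_{i=1}^n f_i(x_i)\Big)
  \\
  &= \sum_{i=1}^n \sup_{x_i \in X_i}\Big(\langle x_i, x_i^*\rangle - f_i(x_i)\Big)
   = \sum_{i=1}^n f_i^*(x_i^*),
\end{align*}
where the third equality is the standard fact that a supremum of a sum of functions of independent variables splits into the sum of the suprema (here each inner supremum is over a value in $[\minfty,\pinfty]$, and since every $f_i$ is proper the values $f_i^*(x_i^*) \in (\minfty,\pinfty]$ so no $\minfty - \pinfty$ ambiguity arises; this is worth a one-line remark). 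By the more general definition of semidirect sum in \prettyref{def:more_general_definition_of_semidirect_sums} this is precisely the statement $f^* = f_1^* \sdirsum \dots \sdirsum f_n^*$.

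For part \upref{enu:subdifferential_of_direct_and_orthogonal_sum} I would argue by the subgradient inequality. Fix $x = x_1 + \dots + x_n$ and a candidate $p = p_1 + \dots + p_n$ with $p_i \in X_i$. Then $p \in \partial f(x)$ means $f(y) \ge f(x) + \langle p, y - x\rangle$ for all $y \in \R^n$; writing $y = y_1 + \dots + y_n$ and using the orthogonal decoupling of $\langle p, y-x\rangle$, this reads $\sum_i f_i(y_i) \ge \sum_i f_i(x_i) + \sum_i \langle p_i, y_i - x_i\rangle$ for all choices of $y_i \in X_i$. Taking $y_j = x_j$ for $j \ne i$ shows this forces $f_i(y_i) \ge f_i(x_i) + \langle p_i, y_i - x_i\rangle$ for each $i$ and all $y_i$, i.e. $p_i \in \partial f_i(x_i)$; conversely summing those $n$ inequalities gives the global one. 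Hence $\partial f(x) = \partial f_1(x_1) \oplus \dots \oplus \partial f_n(x_n)$, where the sum is direct because the summands lie in the pairwise-orthogonal (hence independent) subspaces $X_i$.

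I do not expect a genuine obstacle here; the statement is essentially a bookkeeping consequence of orthogonality. The only points demanding a little care are (i) making the splitting of the supremum rigorous in the extended reals when some $f_i$ or $f_i^*$ can take the value $\pinfty$ (handled by properness, as above), and (ii) being explicit that ``$\oplus$'' in the conclusion of part \upref{enu:subdifferential_of_direct_and_orthogonal_sum} is legitimate precisely because $\partial f_i(x_i) \subseteq X_i$ and the $X_i$ are independent, so that each $p \in \partial f(x)$ has a \emph{unique} decomposition with components in the $X_i$. Both are brief remarks rather than real difficulties, so the proof should be short. Since the theorem is quoted from the literature (\cite[Corollary 2.4.5]{ZA02} and \cite[Theorem 10.5]{RW04}), I would also note that one may simply invoke those references, but the self-contained computation above is short enough to include in full.
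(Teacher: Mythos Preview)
Your proposal is correct and follows essentially the same route as the paper's proof: for part \upref{enu:conjugate_of_direct_and_orthogonal_sum} the paper computes the conjugate by the identical chain $\sup_x = \sup_{x_1}\cdots\sup_{x_n}$ of the decoupled sum, and for part \upref{enu:subdifferential_of_direct_and_orthogonal_sum} it runs the very same equivalence via the subgradient inequality, obtaining the componentwise condition by freezing all but one coordinate (your ``$y_j = x_j$ for $j\neq i$'' step) and the converse by summing. The only cosmetic difference is that the paper first disposes of the case $x_i \notin \dom f_i$ for some $i$ (both sides are then empty) before running the equivalence under the assumption $x \in \dom f$; you may want to add that one-line case distinction so the subtraction $\sum_i f_i(y_i) - \sum_i f_i(x_i)$ is unambiguously defined.
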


\begin{proof}
  \upref{enu:conjugate_of_direct_and_orthogonal_sum}
  For any $x^* = (x_1^*, \dots, x_n^*)$ we have
  \begin{gather*}
    f(x^*)
    \defeq 
    \sup_{x\in \R^n} [\langle x, x^* \rangle - f(x)]
    = 
    \sup_{x_1 \in X_1} \dots \sup_{x_n \in X_n} 
	\sum_{i=1}^n [\langle x_i, x_i^* \rangle - f_i(x_i) ]
    = \sum_{i=1}^n f_i^*(x_i^*).
  \end{gather*}

  \upref{enu:subdifferential_of_direct_and_orthogonal_sum}
  Let $x = x_1 + \dots + x_n$ be arbitrarily chosen. In case 
  $x_i \not \in \dom f_i$ for some $i$ the equation and the directness 
  of its right-hand side sum holds vacuously true. 
  In case $x_i \in \dom f_i$ for all $i \in \{1, \dots, n\}$
  the claimed equation also holds true since for any
  $x^* = x_1^* + \dots x_n^*$ we have
  the equivalences
  \begin{align*}
    {}& 
    x^* \in \partial f(x)
  \\
    \Leftrightarrow{}&
    \forall z = z_1 + z_2 + \dots + z_n \in \R^n:
    f(z) \geq f(x) + \langle z-x, x^* \rangle 
  \\
    \Leftrightarrow{}&
    \forall z = z_1 + z_2 + \dots + z_n \in \R^n:
    f(z) - f(x) - \langle z-x, x^* \rangle \geq 0
  \\
    \Leftrightarrow{}&
    \forall z = z_1 + z_2 + \dots + z_n \in \R^n:
    \sum_{i=1}^n [f_i(z_i)-f_i(x_i) - \langle z_i - x_i, x_i^* \rangle] \geq 0
  \\
    \Leftrightarrow{}&
    \forall i \in \{1, \dots , n\} \; \forall z_i \in X_i:
    f_i(z_i)-f_i(x_i) - \langle z_i - x_i, x_i^* \rangle \geq 0
  \\
    \Leftrightarrow{}&
    \forall i \in \{1, \dots , n\}:
    x_i^* \in \partial f_i (x_i).
  \end{align*}
  Finally note that the directness of the sum 
  $\partial f_1(x_1) \oplus \dots \oplus \partial f_n(x_n)$
  is inherited from the direct sum $X_1 \oplus \dots \oplus X_n$.  
\end{proof}

As corollary of the previous Theorem 
\ref{thm:conjugate_and_subdiff_of_direct_orthogonal_sum}
we get the following theorem.

\begin{theorem} 
\label{thm:subdifferential_for_functions_with_not_full_dim_dom}
  Let $f: \R^n \rightarrow \R \cup \{+\infty \}$ be a  proper function
  and let $\dom f$ be contained in some affine subset 
  $A$ of $\R^n$ with difference space $U$. Then 
  \begin{gather*}
    \partial f(x) = 
    \begin{cases}
      \emptyset 			& \text{ if } x \not \in A \\
      \partial f|_A (x) \oplus U^\perp  & \text{ if } x \in A
    \end{cases}    
  \end{gather*}
  for every $x \in \R^n$.
\end{theorem}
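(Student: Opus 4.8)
The plan is to derive this statement as a corollary of \prettyref{thm:conjugate_and_subdiff_of_direct_orthogonal_sum}, exactly as the text announces, by writing $f$ as an orthogonal semidirect sum whose second summand is the indicator function of a single point.

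\textbf{Reduction to a linear subspace.} First I would reduce to the case where $A$ is a linear subspace. Since $f$ is proper, $\dom f \neq \emptyset$, so $\dom f \subseteq A$ forces $A \neq \emptyset$; fix any $a_0 \in \dom f$ and put $g \defeq f(\cdot + a_0)$. Then $g$ is again proper, $\dom g = \dom f - a_0 \subseteq A - a_0 = U$, and $U$ is now a linear subspace through $\zerovec$. The subdifferential is translation-equivariant straight from its definition, i.e. $\partial g(x) = \partial f(x + a_0)$ and $\partial(g|_U)(x) = \partial(f|_A)(x+a_0)$ for all $x$. Hence it suffices to prove the formula for $g$ and $U$ and then translate back; from here on I assume $A = U$ is a subspace of $\R^n$ with $\dom f \subseteq U$.

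\textbf{Writing $f$ as an orthogonal semidirect sum.} Equipping $\R^n$ with its Euclidean inner product, use the orthogonal decomposition $\R^n = U \oplus U^\perp$ and set $f_1 \defeq f|_U$ on $U$ and $f_2 \defeq \iota_{\{\zerovec\}}$ on $U^\perp$ (indicator of $\{\zerovec\}$ inside $U^\perp$). The key observation is $f = f_1 \sdirsum f_2$: for $x = x_1 + x_2$ with $x_1 \in U$, $x_2 \in U^\perp$, if $x_2 = \zerovec$ then $(f_1 \sdirsum f_2)(x) = f(x_1) + 0 = f(x)$, and if $x_2 \neq \zerovec$ then $(f_1 \sdirsum f_2)(x) = f(x_1) + \pinfty = \pinfty = f(x)$, the last equality because $x \notin U \supseteq \dom f$ — this is the step that uses $\dom f \subseteq U$. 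Both $f_1$ (same nonempty effective domain as $f$, finite there) and $f_2$ are proper, so \prettyref{thm:conjugate_and_subdiff_of_direct_orthogonal_sum} is applicable.

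\textbf{Reading off the subdifferential.} By part \upref{enu:subdifferential_of_direct_and_orthogonal_sum} of \prettyref{thm:conjugate_and_subdiff_of_direct_orthogonal_sum}, for every $x = x_1 + x_2$ one has $\partial f(x) = \partial f_1(x_1) \oplus \partial f_2(x_2)$, so it only remains to compute $\partial \iota_{\{\zerovec\}}(x_2)$ inside $U^\perp$. If $x_2 \neq \zerovec$, then $x_2 \notin \dom \iota_{\{\zerovec\}}$ and, $\iota_{\{\zerovec\}}$ being proper, $\partial \iota_{\{\zerovec\}}(x_2) = \emptyset$; if $x_2 = \zerovec$, the subgradient inequality $\iota_{\{\zerovec\}}(\zerovec) + \langle p, z - \zerovec\rangle \leq \iota_{\{\zerovec\}}(z)$ holds vacuously for every $p \in U^\perp$ (the right-hand side is finite only at $z = \zerovec$), so $\partial \iota_{\{\zerovec\}}(\zerovec) = U^\perp$. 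Therefore $\partial f(x) = \emptyset$ whenever $x \notin U$, and $\partial f(x) = \partial(f|_U)(x) \oplus U^\perp$ whenever $x \in U$; translating back by $a_0$ yields the asserted formula. There is essentially no obstacle here — the content is entirely in \prettyref{thm:conjugate_and_subdiff_of_direct_orthogonal_sum} — and the only care needed is in checking the semidirect-sum identity at points outside $U$ and in handling the boundary cases of $\partial \iota_{\{\zerovec\}}$, i.e. recalling that a proper function has empty subdifferential off its effective domain.
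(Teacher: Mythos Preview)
Your proof is correct and follows essentially the same approach as the paper: translate so that $A$ becomes the linear subspace $U$, write $f = f|_U \sdirsum \iota_{\{\zerovec\}}$ with respect to the orthogonal decomposition $\R^n = U \oplus U^\perp$, and read off the subdifferential from \prettyref{thm:conjugate_and_subdiff_of_direct_orthogonal_sum}. Your write-up is slightly more explicit than the paper's in verifying the semidirect-sum identity and in computing $\partial\iota_{\{\zerovec\}}$, but the argument is the same.
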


\begin{proof}
  There is an $x_0 \in \dom f$. Translating the origin of the 
  coordinate system to $x_0$ through replacing $f$ by 
  $f(\cdot - x_0)$ would not affect the truth value of the claimed
  equation. Therefore we may assume $x_0 = \zerovec$ without loss of 
  generality, so that $A = U$ is even a vector subspace of $\R^n$.
  Setting $X_1 \defeq A = U$, $X_2 \defeq U^\perp$ and defining proper 
  functions
  $f_1: X_1 \rightarrow \R \cup \{+\infty\}$,
  $f_2: X_2 \rightarrow \R \cup \{+\infty\}$
  by 	
  \begin{align*}
    f_1(x_1) &\defeq f|_{X_1}(x_1) && \text{  and } 
    &&
    f_2(x_2) \defeq 
    \begin{cases}
      0	      & \text{ if } x_2 = \zerovec \\
      +\infty & \text{ if } x_2 \not = \zerovec
    \end{cases}
  \end{align*}
  allows us to write $f$ in the form 
  $f(x) = f(x_1 + x_2) = f_1(x_1)+f_2(x_2)$
  for all $x \in \R^n$.
  Applying Theorem 
  \ref{thm:conjugate_and_subdiff_of_direct_orthogonal_sum}
  yields
  \begin{align*}
    \partial f(x) 
    &=
    \partial f(x_1 + x_2)
    =
    \partial f_1(x_1) \oplus \partial f_2(x_2)
  \\
    &= 
    \begin{cases}
      \emptyset 			& \text{ if } x_2 \not = \zerovec \\
      \partial f_1(x_1) \oplus U^\perp  & \text{ if } x_2 = \zerovec
    \end{cases}
  \\
    &=
    \begin{cases}
      \emptyset 			& \text{ if } x \not \in U\\
      \partial f_1(x) \oplus U^\perp    & \text{ if } x \in U
    \end{cases}        
  \\
    &=
    \begin{cases}
      \emptyset 			& \text{ if } x \not \in A\\
      \partial f|_A(x) \oplus U^\perp   & \text{ if } x \in A
    \end{cases}
  \end{align*}
  for every $x \in \R^n$.
\end{proof}

\chapter{Elaborated details} \label{chap:details}

\begin{detail}
  \label{det:intersection_of_compact_sets_not_compact}
  The intersection of compact subsets of a non-Hausdorff space
  does not need to be compact again:
  We will construct an example for this phenomenon
  in three steps.
  First we will obtain a non-Hausdorff space 
  $\decorspace{X}{'}$ by gluing
  two copies of the interval 
  $([0,1], [0,1] \Cap \mathcal{O}^{\otimes 1})$ to an 
  ``interval'' which has two different right-hand side endpoints 
  $\overline{1}, \underline{1}$.
  Next we will show that homeomorphic copies of the original 
  spaces are contained in $\decorspace{X}{'}$ as 
  certain subspaces $(X_1', X_1' \Cap \mathcal{O}')$ and 
  $(X_2', X_2' \Cap \mathcal{O}')$.
  Finally we will show that the intersection 
  $(X_1' \cap X_2', (X_1' \cap X_2') \Cap \mathcal{O}')$
  of these compact subspaces is homeomorphic to the half-open 
  interval $([0,1), [0,1) \Cap \mathcal{O}^{\otimes 1})$
  and hence not compact.
  Consider the space 
  \begin{gather*}
  \big(  
    \underbrace{(\{-1\} \times [0,1]) \cup (\{1\} \times [0,1])}_{\eqdef X},  
    \underbrace{X \Cap \mathcal{O}_\R^{\otimes 2}}_{\eqdef \mathcal{O}}
  \big),
  \end{gather*}
  consisting of two copies
  $\{-1\} \times [0,1] \eqdef X_{-1}$ and 
  $\{1\} \times [0,1] \eqdef X_{1}$ of the interval $[0,1]$,
  equipped with the usual topology.
  In order to glue the space $\decorspace{X}{}$ to an ``interval'' with 
  two right-hand side endpoints
  we set 
  \begin{gather*}
    X' \defeq [0,1) \cup \{\underline{1}, \overline{1}\},
  \end{gather*}
  where $\underline{1}$ and $\overline{1}$ are two different elements
  which are not contained in $[0,1)$; moreover we equip $X'$ with the 
  identification topology $\mathcal{O}'$ which is induced by 
  $\mathcal{O}$ and the mapping $f: X \rarr X'$ given by
  \begin{gather*}
    f(t,a) 
    \defeq 
    \begin{cases}
      t 	    & \text{ for } t \in [0,1) \\
      \overline{1}  & \text{ for } t = 1  \text{ and } a = 1 \\
      \underline{1} & \text{ for } t = 1 \text{ and } a = -1.
    \end{cases}
  \end{gather*}
  The space $\decorspace{X}{'}$ is not a Hausdorff space since every
  $\topology{X}'$--neighborhoods $\overline{U}$ of $\overline{1}$
  has nonempty intersection with every $\topology{X}'$--neighborhood 
  $\underline{U}$ of $\underline{1}$ because both $\overline{U}$ and 
  $\underline{U}$ contain infinitely many of the points 
  $1-\tfrac1n$, $n\in \N$.
  However $\overline{1}$ and $\underline{1}$ are the 
  only distinct points in $\decorspace{X}{'}$ which can not be separated 
  from each other by distinct neighborhoods; i.e. all subspaces of 
  $\decorspace{X}{'}$, which contain at most one of these endpoints,
  are Hausdorff spaces.
  In particular the sets 
  \begin{gather*}
    X'_a \defeq f[X_a],
  \end{gather*}
  $a \in \{-1, 1\}$, are Hausdorff spaces. By 
  \prettyref{lem:continuous_mapping_from_compact_space_to_hausdorff_space_is_closed}
  the mapping $f|_{X_a}$, $a \in \{-1, 1\}$ acts as a homeomorphism 
  between $(X_a, X_a \Cap \mathcal{O})$ and 
  $(X'_a, X'_a \Cap \mathcal{O}')$ for $a \in \{-1 , 1\}$.
  In particular both $X'_{-1}$ and $X'_{1}$ are compact subsets of 
  $\decorspace{X}{'}$.
  However their intersection 
  \begin{gather*}    
    X'_{-1} \cap X'_{1} 
    =
    f|_{X_1}\big[\{1\} \times [0,1) \big]
  \end{gather*}
  is homeomorphic to 
  $\big(\{1\} \times [0,1), (\{1\} \times [0,1))\Cap \mathcal{O}\big)$,
  i.e. to $\big([0,1), [0,1) \Cap \mathcal{O}^{\otimes{1} } \big)$ 
  and hence not compact.
  We note here that our construction could also be done
  in a more elegant way if we had used the mapping 
  $\id_{[0,1)}: [0,1) \rarr [0,1)$ as ``Anheftungsabbildung'' 
  in order to stick two copies of the interval 
  $([0,1], [0,1] \Cap \mathcal{O}^{\otimes 1})$ together,
  cf. \cite[p. 54]{Jnch2001}; however this would bring the need 
  to introduce further topological notions.
  Moreover the constructed space $\decorspace{X}{'}$ should be 
  homeomorphic to the space presented by Steen and Seebach in section 
  ``Telophase Topology'' of their book 
  ``Counterexamples in Topology'', see \cite[p. 92]{Steen1978}.
\end{detail}

\begin{detail}
  \label{det:intersection_of_closed_and_compact_sets_is_again_closed_and_compact}
  The intersection of two both compact and closed subsets $K_1, K_2$ of
  a topological space $(X, \mathcal{O})$ is again closed and compact:
  Clearly $K_1 \cap K_2$ is again a closed subset of 
  $(X, \mathcal{O})$.
  Due to
  \begin{gather*}
    K_1 \cap K_2 
    = 
    K_1 \cap (K_1 \cap K_2) 
    \in 
    K_1 \Cap \mathcal{A}(X,\mathcal{O})
    =
    \mathcal{A}(K_1, K_1 \Cap \mathcal{O})
  \end{gather*}
  the intersection $K_1 \cap K_2$ is also a closed subset of the
  compact space $(K_1, K_1 \Cap \mathcal{O})$ and hence a compact subset 
  of this space by part \upref{enu:closeness_implies_compactness} of 
  Theorem \ref{thm:relation_closed_compact}. 
  From the compactness of the subspace 
  $
    \big(K_1 \cap K_2, (K_1 \cap K_2) \Cap (K_1 \Cap \mathcal{O}) \big)
  $
  of $(K_1, K_1 \Cap \mathcal{O})$
  we conclude that 
  \begin{gather*}
    \big (K_1 \cap K_2, (K_1 \cap K_2) \Cap (K_1 \Cap \mathcal{O}) \big )
    =
    \big (K_1 \cap K_2, (K_1 \cap K_2) \Cap \mathcal{O} \big)  
  \end{gather*}
  is also a compact subspace of 
  the original space $(X, \mathcal{O})$, since
  beeing compact is an intrinsic property of a topological (sub)space,
  cf. Definition \ref{def:compactness_of_a_space};
  i.e. $K_1 \cap K_2$ is a compact subset of $(X, \mathcal{O})$.
\end{detail}


\begin{detail} \label{det:definition_in_steen_not_totally_correct}
  The Definition in \cite[p. 74]{Steen1978} is not totally correct:
  In that book the right order topology for a linearly ordered space
  $(X, \leq)$
  is said to be the topology which is generated by basis sets
  of the form $S_a = \{x | x > a\}$. 
  However the whole space $X$ needs in general to be added to that set 
  system in order to really obtain a basis for a topology:
  Consider for instance the linearly ordered set 
  $(X, \leq) \defeq ([\minfty, \pinfty], \leq)$.
  The union of all sets $S_a$ is only the set 
  $(\minfty, \pinfty] \not = X$.
  Instead of adding the set $X$ to the set system formed by the $S_a$
  the problem could also be repaired by replacing the word ``basis'' by 
  ``subbasis''.
  For the left order order topology there is the very same problem.
  It can be repaired analogously.
\end{detail}


\begin{detail} \label{det:closed_left_halfray_is_compact_in_opposite_topology}
  $K' \defeq \{ z \in Z : z \leq z' \}$ is a compact subset of 
  $(Z, \mathcal{T}_\geq)$: 
  Let $(O'_i)_{i_\in I}$ be some open covering of $K'$.
  At least one of these open sets, lets call it $O'$, must cover 
  $z'$ and  hence also every $z \leq z'$, 
  i.e. every $z \in K'$, by the interval-like structure of the set 
  $O' \in \mathcal{T}_\geq$.
  Taking $O'$ already yields the needed finite subcover.
\end{detail}

\begin{detail} \label{det:explanation_equivalences_star_and_diamond_in_thm_characterize_lsc_and_coercive}
  The equivalences in $(\ast)$ and $(\diamond)$ in the proof of 
  \prettyref{thm:characterize_lsc_and_coercive} hold true:
  Note that the harder direction ``$\Larr$'' of the equivalence in $(\ast)$ is true,
  since every compact set $K \in \mathcal{K}(\R^n) = \mathcal{KA}(\R^n)$
  is contained in the closed ball $\closedball[R][\zerovec]$, 
  if the radius $R$ is chosen large enough. The other direction 
  ``$\Rarr$'' is true since we can simply choose 
  $K = \closedball[R][\zerovec]$.
  Next we proof the equivalence in $(\diamond)$. The totally ordered set
  $
    (Z, \leq) \defeq ([\minfty, \pinfty], \leq_{[\minfty, \pinfty]})
  $
  with the natural order on $[\minfty, \pinfty]$ has both a minimum and 
  a maximum. Hence part
  \upref{enu:compact_closed_sets_which_do_not_hit_max_el_are_compl_of_its_open_neighborhoods}
  of \prettyref{lem:compact_and_closed_subsets_in_right_order_topology}
  can be applied and we obtain
  $
    \mathcal{KA}_{\{\pinfty\}}([\minfty, \pinfty], \mathcal{T})
    =
    \{Z \setminus U' : U' \in \mathcal{U}'(\pinfty) \cap \mathcal{T}\}
  $.
  After taking complements this reads 
  \begin{gather*}
    \mathcal{U}'(\pinfty) \cap \mathcal{T} 
    = 
    \{Z \setminus K' :
      K' \in \mathcal{KA}_{\{\pinfty\}}([\minfty, \pinfty], \mathcal{T})
    \}
  \end{gather*}
  which directly shows that the equivalence in  $(\diamond)$ is true.  
\end{detail}

\begin{detail} 
\label{det:continuity_notions_at_pinfty_are_equivalent}
  $
    \widehat f: 
	    (\R^n_\infty, \mathcal{O}^{\otimes n}_\infty) 
	    \rarr 
	    ([\minfty, \pinfty], \mathcal{T})
  $
  is continuous at the point $\infty$ if and only if
  $
      \widehat f: 
	    (\R^n_\infty, \mathcal{O}^{\otimes n}_\infty) 
	    \rarr 
	    ([\minfty, \pinfty], \mathcal{O}_\leq)
  $
  is continuous at the point $\infty$:
  Let 
  $
    \widehat f: 
	    (\R^n_\infty, \mathcal{O}^{\otimes n}_\infty) 
	    \rarr 
	    ([\minfty, \pinfty], \mathcal{T})
  $
  be continuous at the point $\infty$, i.e.
  for any $\mathcal{T}$--neighborhood $T$ of 
  $\pinfty = \widehat f (\infty)$ there is a neighborhood 
  $U \in \mathcal{O}^{\otimes n}_\infty$ of $\infty$ with
  $\widehat f[U] \subseteq T$.
  In order to show that 
  $
      \widehat f: 
	    (\R^n_\infty, \mathcal{O}^{\otimes n}_\infty) 
	    \rarr 
	    ([\minfty, \pinfty], \mathcal{O}_\leq)
  $
  is continuous at the point $\infty$ let any 
  $\mathcal{O}_\leq$--neighborhood $O$ of 
  $\pinfty = \widehat f(\infty)$ be given.
  Since $O$ contains a set of the form
  $(\alpha, \pinfty] \eqdef T \in \mathcal{T}$
  we obtain with some corresponding 
  neighborhood $U \in \mathcal{O}^{\otimes n}_\infty$ of $\infty$ 
  the inclusion $f[U] \subseteq T \subseteq O$
  and have therewith shown one implication.
  Let now, to the contrary,
  $
      \widehat f: 
	    (\R^n_\infty, \mathcal{O}^{\otimes n}_\infty) 
	    \rarr 
	    ([\minfty, \pinfty], \mathcal{O}_\leq)
  $
  be continuous at the point $\infty$, i.e.
  for any $\mathcal{O}_\leq$--neighborhood $O$ of 
  $\pinfty = \widehat f (\infty)$ there is a neighborhood
  $U \in \mathcal{O}^{\otimes n}_\infty$ of $\infty$ with
  $\widehat f[U] \subseteq O$.
  In particular the mapping 
  $
    \widehat f: 
	    (\R^n_\infty, \mathcal{O}^{\otimes n}_\infty) 
	    \rarr 
	    ([\minfty, \pinfty], \mathcal{T})
  $
  is also continuous in $\pinfty$,
  since every $\mathcal{T}$--neighborhood of $\pinfty$ 
  is also a $\mathcal{O}_\leq$--neighborhood 
  of $\pinfty$.
\end{detail}

\begin{detail} \label{det:product_of_locally_compact_Hausdorff_is_of_same_type}
  The product space 
  $
    \decorspace{Y}{'} \varotimes \decorspace{Y}{''}
    \eqdef \decorspace{Y}{}
  $ of
  two locally compact Hausdorff spaces is again a locally compact Hausdorff
  space:
  Let $(x', x''), (y', y'')$  be two different points in 
  $\decorspace{Y}{}$ with, say, $x' \neq y'$.
  Since $\decorspace{Y}{'}$ is a Hausdorff space there exist
  disjoint neighborhoods $U'$ and $V'$ of $x'$ and $y'$, 
  respectively.
  Then clearly $U \defeq U' \times Y''$ and 
  $V \defeq V' \times Y''$ are disjoint neighborhoods of 
  $(x',x'')$ and $(y',y'')$, respectively, in
  $\decorspace{Y}{}$. So the latter topological space is again a
  Hausdorff space.
  Moreover $\decorspace{Y}{}$ is also locally compact:
  Let $y = (y', y'') \in Y$.
  Since $\decorspace{Y}{'}$ and $\decorspace{Y}{''}$ are locally compact 
  there exist compact neighborhoods 
  $U' \in \mathcal{U}'(y')$ and $U'' \in \mathcal{U}''(y'')$.
  The neighborhood $U \defeq U' \times U''$ of $y$ is then compact 
  in virtue of Tichonov's Theorem \ref{thm:little_tichonov}.   
\end{detail}



\begin{detail} \label{det:coercivity_assertion_in_example_is_special_case_of_thm_sum_coercive_on_certain_subspaces}
  The coercivity assertion of 
  \prettyref{lem:application_of_compact_continuations_to_get_coercivity_and_lsc_of_sum}
  is contained in \prettyref{thm:sum_coercive_on_certain_subspaces} 
  as special case:
  $F_1$ and $G_1$ are coercive; for instance 
  $F_1 = \phi \circ H|_{\mathcal{R}(H^*)}$ is a concatenation of 
  the coercive mapping $\phi$ and the injective and 
  hence normcoercive linear mapping 
  $H|_{\mathcal{R}(H^*)}: \mathcal{R}(H^*) \rarr \mathcal{R}(H)$;
  cf. also the proof of 
  \prettyref{thm:linear_mapping_coercive_iff_injective}.
  Moreover the mappings 
  $F_1: X_1 \rarr (\minfty, \pinfty]$ and 
  $G_1: Y_1 \rarr (\minfty, \pinfty]$ are lower semicontinuous and hence
  in particular locally bounded. 
  Finally $F_2 = 0_{X_2}$ and $G_2 = 0_{Y_2}$ are clearly bounded below.
\end{detail}

\begin{detail} \label{det:both_subsequences_bounded_above}
  Both 
  $(\|\widecheck x_{n_k}\|)_{k \in \N}$ and 
  $(\|\widecheck y_{n_k}\|)_{k \in \N}$
  would be bounded above by some $B > 0$:
  If one of this sequences, say $(\widecheck x_{n_k})$ without loss of
  generality, would be unbounded there would be a subsequence 
  $(\widecheck x_{n_{k_j}})_{j \in \N}$ with
  $\|\widecheck x_{n_{k_j}}\|_X \rarr \pinfty$ as $j \rarr \pinfty$.
  Since $\widecheck F$ is normcoercive we would get 
  $\|\widecheck F(\widecheck x_{n_{k_j}})\|_Z \rarr \pinfty$
  as $j \rarr \pinfty$. This contradicts
  \eqref{eq:functions_bounded_on_subsequence}.
\end{detail}

\begin{detail} \label{det:compact_subset_contained_in_closed_half_ray}
  There is an element $b \in Z \setminus \MAX_\leq(Z)$ with 
  $K' \subseteq b]$:
  If $Z \setminus \MAX_\leq(Z)$ contains  a maximum $\widehat b$
  then clearly $K' \subseteq Z \setminus \MAX_\leq(Z) = \widehat b]$.
  If $Z \setminus \MAX_\leq(Z)$ contains no maximum then we can write
  \begin{gather*}
    Z \setminus \MAX_\leq(Z)
    =
    \bigcup_{b \in Z \setminus \MAX_\leq(Z)} b)
  \end{gather*}
  so that the sets $b)$, where $b \in Z \setminus \MAX_\leq(Z)$, form
  in particular an open cover of $K'$. Due to the compactness of 
  $K'$ there are finitely many 
  $b_1, \dots , b_n \in Z \setminus \MAX_\leq(Z)$ with 
  \begin{gather*}
     K' \subseteq \bigcup_{i=1}^n b_i).
  \end{gather*}
  Denoting the largest of 
  the $b_i$ with $b$ we hence have 
  $K' \subseteq \bigcup_{i=1}^n b_i) \subseteq b]$.
\end{detail}

\begin{detail} \label{det:intersection_between_sum_of_spaces_and_intersection_of_their_orthogonal_complements_is_trivial}
  The subspaces $X_1 + W_1$ and $(X_1^\perp \cap W_1^\perp)$
  have trivial intersection:
  Writing an arbitrarily chosen 
  $x \in (X_1 + W_1) \cap X_1^\perp \cap W_1^\perp$ in the form 
  $x = x_1 + w_1$ with some $x_1 \in X_1$ and $w_1 \in W_1$
  we get $\langle x, x_1 \rangle = 0$ and 
  $\langle x, w_1 \rangle = 0$. Addition gives 
  $\langle x, x\rangle = 0$ and hence $x = \zerovec$.
\end{detail}

\begin{detail} \label{det:sum_unique_only_up_to_constant}
  For real-valued functions 
  $F_1, \widetilde{F_1}: X_1 \rarr \R \cup \{\pinfty\}$,
  $F_2, \widetilde{F_2}: X_2 \rarr \R \cup \{\pinfty\}$
  with $F_1 \sdirsum F_2 = \widetilde{F_1} \sdirsum \widetilde{F_2}$ 
  there is a constant $C \in \R$ such that $F_1 = \widetilde {F_1} + C$ 
  and $F_2 = \widetilde {F_2} - C$:  
  For all $x_1 \in X_1$ and $x_2 \in X_2$ we have
  $F_1(x_1) + F_2(x_2) = \widetilde{F_1}(x_1) + \widetilde{F_2}(x_2)$.
  Since only finite values occur we can rearrange the latter and obtain
  $F_1(x_1) - \widetilde{F_1}(x_1) = \widetilde{F_2}(x_2) - F_2(x_2)$
  for all $x_1 \in X_1$ and $x_2 \in X_2$.
  In particular the functions 
  $F_1 - \widetilde{F_1}: X_1 \rarr \R$ and 
  $\widetilde{F_2} - F_2: X_2 \rarr \R$
  are constant on $X_1$ and $X_2$, respectively; by the previous 
  equality, they take the same constant value. 
  Denoting this value by ``$C$'' we are done.  
\end{detail}

\begin{detail} \label{det:sum_not_even_unique_up_to_constant}
  If one of the functions $F_1, F_2, \widetilde{F_1}, \widetilde{F_2}$
  takes the value $\pinfty$ there is no guarantee 
  that, e.g. $F_2$ and $\widetilde{F_2}$ differ merely 
  by a real constant; consider for instance the functions
  $F_1 = \widetilde{F_1} \equiv + \infty$ on $X_1$.
  Then $F_1 \sdirsum F_2 = \widetilde{F_1} \sdirsum \widetilde{F_2}$ for 
  any functions $F_2, \widetilde{F_2}: X_2 \rarr \R \cup \{\pinfty\}$.
\end{detail}

\MayChangesPartiallyPerformedVersionOrHareBrainedOfficialVersion{%
\begin{detail} \label{det:orthogonality_can_be_assumed_wlog}
  Without loss of generality, we may assume
  $X_1 = X_2^\perp$, $Y_1 = Y_2^\perp$ and $Z_1 = Z_2^\perp$;
  otherwise we can replace
  \begin{alignat*}{3}
    F_1 {}& \text{ by  } \;
      \widetilde {F_1} &&= F_1 \circ \pi_{X_1,X_2}|_{X_2^\perp},
  \\
    G_1 {}& \text{ by  } \;
      \widetilde {G_1} &&= G_1 \circ \pi_{Y_1,Y_2}|_{Y_2^\perp},
  \\
    H_1 {}& \text{ by  } \;
      \widetilde {H_1} &&= H_1 \circ \pi_{Z_1,Z_2}|_{Z_2^\perp},
  \end{alignat*}
  and continue the proof with theses new functions instead of the 
  original functions due to the following three reasons: 
  \begin{enumerate}
    \item 
      The assumptions on $F_1, G_1$
      carry over to $\widetilde {F_1}, \widetilde{G_1}$:
      Using part \upref{enu:nullspace_of_projection} of 
      \prettyref{lem:nullspace_of_projections} we see that 
      the new functions differ from the original functions merely 
      by bijective linear transformations of their image domains.
      Since the involved spaces are of finite dimension these linear 
      bijections are even homeomorphisms. In particular the locally
      boundedness assumption on the original functions carries over 
      to the new functions.
      Also the coercivity assumption on the original functions 
      carries over to the new functions by part
      \upref{enu:semidirect_sum_with_zero_independet_from_choice_of_complement}
      of \prettyref{lem:semidirect_sum_with_zerofunction}.
    \item 
      $H$ stays unchanged when replacing the old function by the 
      new ones: part 
      \upref{enu:semidirect_sum_with_zero_independet_from_choice_of_complement}
      of \prettyref{lem:semidirect_sum_with_zerofunction} gives 
      $F_1 \sdirsum 0_{X_2} = \widetilde{F_1} \sdirsum 0_{X_2}$ and 
      $G_1 \sdirsum 0_{X_2} = \widetilde{G_1} \sdirsum 0_{X_2}$ so that
      \begin{align*}
	H & = (F_1 \sdirsum 0_{X_2}) + (G_1 \sdirsum 0_{Y_2})
      \\
	  & = (\widetilde {F_1} \sdirsum 0_{X_2}) 
	      + (\widetilde{G_1} \sdirsum 0_{Y_2})
      \end{align*}
    \item 
      After proving the coercivity of $\widetilde {H_1}$ also 
      the coercivity of $H_1$ would follow:
      Using parts
      \upref{enu:sum_of_semidir_sum_with_zero_is_of_the_same_structure}
      and 
      \upref{enu:semidirect_sum_with_zero_independet_from_choice_of_complement}
      of \prettyref{lem:semidirect_sum_with_zerofunction}
      we can rewrite $H$ in the form 
      \begin{gather*}
        H = H_1 \sdirsum 0_{Z_2} 
	  = \widetilde{H_1} \sdirsum 0_{Z_2}
      \end{gather*}
      so that part 
      \upref{enu:semidirect_sum_with_zero_independet_from_choice_of_complement}
      of \prettyref{lem:semidirect_sum_with_zerofunction}
      ensures that $\widetilde {H_1}$ is coercive iff $H_1$ is coercive.
  \end{enumerate} 
\end{detail} 

\may{Details 14 und 15 vertauscht (damit sie chronologisch zitiert werden)}
\begin{detail} \label{det:coercive_and_locally_bounded_below_imply_bounded_below}
  Both $F_1$ and $G_1$ are bounded below:
  Let $\|\cdot \|_2$ be the Euclidean norm in $\R^n$.
  After setting 
  \begin{align*}
    (X, \|\cdot\|) \defeq (X_1, \|\cdot \|_2|_{X_1})
    &&
    (Z, \leq) \defeq ((\minfty, \pinfty], \leq_{(\minfty, \pinfty]})
  \end{align*}
  with the natural ordering $\leq_{(\minfty, \pinfty]}$ on 
  $(\minfty, \pinfty]$ we can apply 
  \prettyref{thm:coercive_mapping_in_finite_dim_space_bounded_below_already_when_locally_bounded_below}
  to $F_1: (X, \|\cdot\|) \rarr (Z, \leq)$
  and obtain that $F_1$ is bounded from below.
  Likewise we see that also $G_1$ is bounded below.
\end{detail}
}%
{\begin{detail} \label{det:coercive_and_locally_bounded_below_imply_bounded_below}
  Both $F_1$ and $G_1$ are bounded below:
  Let $\|\cdot \|_2$ be the Euclidean norm in $\R^n$.
  After setting 
  \begin{align*}
    (X, \|\cdot\|) \defeq (X_1, \|\cdot \|_2|_{X_1})
    &&
    (Z, \leq) \defeq ((\minfty, \pinfty], \leq_{(\minfty, \pinfty]})
  \end{align*}
  with the natural ordering $\leq_{(\minfty, \pinfty]}$ on 
  $(\minfty, \pinfty]$ we can apply 
  \prettyref{thm:coercive_mapping_in_finite_dim_space_bounded_below_already_when_locally_bounded_below}
  to $F_1: (X, \|\cdot\|) \rarr (Z, \leq)$
  and obtain that $F_1$ is bounded from below.
  Likewise we see that also $G_1$ is bounded below.
\end{detail}

\begin{detail} \label{det:orthogonality_can_be_assumed_wlog}
  Without loss of generality, we may assume
  $X_1 = X_2^\perp$, $Y_1 = Y_2^\perp$ and $Z_1 = Z_2^\perp$;
  otherwise we can replace
  \begin{alignat*}{3}
    F_1 {}& \text{ by  } \;
      \widetilde {F_1} &&= F_1 \circ \pi_{X_1,X_2}|_{X_2^\perp},
  \\
    G_1 {}& \text{ by  } \;
      \widetilde {G_1} &&= G_1 \circ \pi_{Y_1,Y_2}|_{Y_2^\perp},
  \\
    H_1 {}& \text{ by  } \;
      \widetilde {H_1} &&= H_1 \circ \pi_{Z_1,Z_2}|_{Z_2^\perp},
  \end{alignat*}
  and continue the proof with theses new functions instead of the 
  original functions due to the following three reasons: 
  \begin{enumerate}
    \item 
      The assumptions on $F_1, G_1$
      carry over to $\widetilde {F_1}, \widetilde{G_1}$:
      Using part \upref{enu:nullspace_of_projection} of 
      \prettyref{lem:nullspace_of_projections} we see that 
      the new functions differ from the original functions merely 
      by bijective linear transformations of their image domains.
      Since the involved spaces are of finite dimension these linear 
      bijections are even homeomorphisms. In particular the locally
      boundedness assumption on the original functions carries over 
      to the new functions.
      Also the coercivity assumption on the original functions 
      carries over to the new functions by part
      \upref{enu:semidirect_sum_with_zero_independet_from_choice_of_complement}
      of \prettyref{lem:semidirect_sum_with_zerofunction}.
    \item 
      $H$ stays unchanged when replacing the old function by the 
      new ones: part 
      \upref{enu:semidirect_sum_with_zero_independet_from_choice_of_complement}
      of \prettyref{lem:semidirect_sum_with_zerofunction} gives 
      $F_1 \sdirsum 0_{X_2} = \widetilde{F_1} \sdirsum 0_{X_2}$ and 
      $G_1 \sdirsum 0_{X_2} = \widetilde{G_1} \sdirsum 0_{X_2}$ so that
      \begin{align*}
	H & = (F_1 \sdirsum 0_{X_2}) + (G_1 \sdirsum 0_{Y_2})
      \\
	  & = (\widetilde {F_1} \sdirsum 0_{X_2}) 
	      + (\widetilde{G_1} \sdirsum 0_{Y_2})
      \end{align*}
    \item 
      After proving the coercivity of $\widetilde {H_1}$ also 
      the coercivity of $H_1$ would follow:
      Using parts
      \upref{enu:sum_of_semidir_sum_with_zero_is_of_the_same_structure}
      and 
      \upref{enu:semidirect_sum_with_zero_independet_from_choice_of_complement}
      of \prettyref{lem:semidirect_sum_with_zerofunction}
      we can rewrite $H$ in the form 
      \begin{gather*}
        H = H_1 \sdirsum 0_{Z_2} 
	  = \widetilde{H_1} \sdirsum 0_{Z_2}
      \end{gather*}
      so that part 
      \upref{enu:semidirect_sum_with_zero_independet_from_choice_of_complement}
      of \prettyref{lem:semidirect_sum_with_zerofunction}
      ensures that $\widetilde {H_1}$ is coercive iff $H_1$ is coercive.
  \end{enumerate} 
\end{detail} }%


\begin{detail} \label{det:hyperplane_intersection_with_subspace_yields_here_again_a_hyperplane}
  $H^{=}$ is a hyperplane in $U = {\rm aff}(\dom \Psi)$:
  The subspace $H^{=} \defeq H_{p,\alpha}^{=} \cap U$ 
  is of dimension 
  $
    \dim H^{=} 
    = 
    \dim H_{p,\alpha}^{=} + \dim U - \dim (U+H_{p,\alpha}^{=})
    \in
    n - 1 + \dim U - \{n, n-1\}
    =
    \{\dim U, \dim U - 1\}
  $.
  The set $H_{p,\alpha}^{=}$ does not completely contain $S$; consequently
  $H^{=} \subseteq H_{p,\alpha}^{=}$ can not completely contain ${\rm aff}(\dom \Psi) \supseteq S$ all the more, so that only
  $\dim H^{=} = \dim ({\rm aff}(\dom \Psi)) - 1$ can be true.
  Therefore $H^{=}$ is a hyperplane in ${\rm aff}(\dom \Psi)$.
\end{detail}



%

\begin{detail} \label{det:gradient_gets_large_when_approaching_boundary}
  For $\alpha  \in (0, \frac{1}{2})$ we have
  $\|\nabla g_\alpha (z^{(k)})\|_{2} \rightarrow + \infty$  as 
  $k \rightarrow + \infty$ for any sequence 
  $(z^{(k)})_{k \in \N}$ in $Q$, converging to some boundary point 
  $z^{(\infty)}$ of $Q$:
  Since all norms in $\R^2$ are equivalent it suffices to show 
  $\|\nabla g_\alpha (z^{(k)})\|_{\infty} \rightarrow + \infty$.
  We have
  $\nabla g_\alpha (z) = - \alpha z_1^{\alpha - 1}z_2^{\alpha -1} (z_2, z_1)^T$
  for all $z \in Q$, so that 
  $
    \|\nabla g_\alpha(z)\|_{\infty} 
    = 
    \alpha z_1^{\alpha - 1}z_2^{\alpha -1} \max\{z_2, z_1\}
  $
  for these $z$. In case $z^{(\infty)} = (0,0)^T$ we thus have for 
  $\alpha \in (0, \frac{1}{2})$ the estimate
  \begin{align*}
    \|\nabla g_\alpha (z^{(k)})\|_{\infty} 
    &\geq 
    \alpha 
      [\max \{z^{(k)}_1, z^{(k)}_2\}]^{\alpha -1} 
      [\max \{z^{(k)}_1, z^{(k)}_2\}]^{\alpha -1} 
      \max \{z^{(k)}_1, z^{(k)}_2\}
    \\
    &=
     \alpha [\max \{z^{(k)}_1, z^{(k)}_2\}]^{2 \alpha -1}
    \rightarrow
    +\infty
  \end{align*}
  as $k \rightarrow + \infty$.
  In case $z^{(\infty)} \not = (0,0)$ we may assume, due to symmetry reasons,
  $z^{(\infty)}_1 = 0$ and $z^{(\infty)}_2> 0$ without loss of generality.
  We then obtain
  \begin{gather*}
    \|\nabla g_\alpha (z^{(k)})\|_{\infty}
    =
    [z^{(k)}_1]^{\alpha -1}
    (\alpha [z^{(k)}_2]^{\alpha -1} \max\{z^{(k)}_2, z^{(k)}_1\})
    \rightarrow 
    +\infty
  \end{gather*}
  as $k \rightarrow +\infty$, even for $\alpha \in (0, 1 )$.    
\end{detail}

%

\begin{detail} \label{det:functions_f_and_g_bounded_from_below}
  The functions $ f $ and $g$ are bounded from below:
  If, say $ f $, was not bounded from below there would be a sequence $(u_k)_{k\in \N}$
  in the compact level set ${\rm lev}_{\tilde\alpha}( f )$ with $ f (u_k) \rightarrow - \infty$
  for $k \rightarrow + \infty$.
  However, after choosing a subsequence which converges to some
  $u \in {\rm lev}_{\tilde\alpha}( f )$ we had 
  $ f (u) = - \infty$, by the lower semicontinuity of $ f $.
  But this would mean that $ f $ is not proper -- a contradicition.
\end{detail}

%

\begin{detail} \label{det:pr_of_sol_existence:check_of_assumptions_of_lemma}
  All assumptions of 
  part \ref{enu:sum_takes_it_minimum}) of
  Lemma \ref{lem:levelsets_and_existence_of_minimizer_of_sum} are 
  fulfilled for 
  $F \defeq \Phi$, 
  $U_1 \defeq X_1 \oplus  X_3$, $U_2 \defeq X_2$
  and
  $G \defeq \iota_{{\rm lev}_\tau \|L \cdot\|}$,
  $V_1 \defeq \mathcal{R}(L^*)$, $V_2 \defeq \mathcal{N}(L)$, for appropriately
  chosen $\alpha$ and $\beta$:
  \begin{itemize}
    \item 
      $U_2 \cap V_2 = \{\zerovec\}$ holds true, beeing an assumption of the
      current theorem.
    \item 
      $
	\dom F \cap \dom G
	= 
	\dom \Phi \cap {\rm lev}_\tau{\|L \cdot\|} 
	\not =
	\emptyset
      $:
      Each neighborhood of $\zerovec \in \overline{\dom F}$ intersects $\dom F$.  
      Since $\tau > 0$ ensures $\zerovec \in {\rm int}({\rm lev}_\tau{\|L\cdot\|})$
      we thus have in particular for this neighborhood
      $
	\emptyset 
	\not = 
	\dom F \cap {\rm int}({\rm lev}_\tau{\|L\cdot\|})
	\subseteq
	\dom F \cap {\rm lev}_\tau{\|L\cdot\|}
      $.
    \item 
      ${\rm lev}_{\alpha}(F|_{U_1})$ is nonempty and bounded for
      an $\alpha \in \R$:
      Denoting the unique minimizer of the strictly convex function 
      $\phi = \Phi|_{X_1}$ by $\check x$ and setting
      $\alpha \defeq \phi(\check x)$ we see that 
      $
	{\rm lev}_{\alpha}(F|_{U_1})
	=
	{\rm lev}_{\alpha}(\phi) \oplus \{\zerovec\}
	=
	\{\check x\}
      $
      is nonempty and bounded.
    \item 
      Finally ${\rm lev}_{\beta}(G|_{V_1})$
      is nonempty and bounded for any $\beta \geq 0$,
      since  $G|_{V_1}$ is a norm 
      -- namely the norm on $V_1$, which makes $(V_1, G|_{V_1})$ isometrically isomorph
      to $(\mathcal{R}(L),$ $\| \cdot \||_{\mathcal{R}(L)})$, by virtue of the bijection
      $L|_{\mathcal{R}(L^*)} :\mathcal{R}(L^*) \rightarrow \mathcal{R}(L)$.
  \end{itemize}
\end{detail}

%

\begin{detail} \label{det:pr_of_locali_theorem:c_is_really_a_min}
  All assumptions of part \ref{enu:sum_takes_it_minimum}) of 
  Lemma \ref{lem:levelsets_and_existence_of_minimizer_of_sum} 
  are fulfilled for $U_1 \defeq X_1 \oplus X_3$, 
  $U_2 \defeq X_2$,
  $V_1 \defeq \mathcal{R}(L^*)$,  $V_2 \defeq \mathcal{N}(L)$ and 
  $F \defeq \iota_{\argmin(\Phi)}$,
  $G \defeq \|L \cdot\|$, for appropriate choice of $\alpha$ and $\beta$:
  \begin{itemize}
    \item 
      $F$, $G$ are in $\Gamma_0(\R^n)$ 
      and have the needed translation invariance.
    \item 
      $U_2 \cap V_2 = \{\zerovec\}$ holds true, beeing an assumption of the current theorem.
    \item 
      ${\rm lev}_{\alpha} (F|_{U_1})$ is nonempty and bounded for an $\alpha$:
      Denoting the unique minimizer of $\phi$ with $\check x_1$ we have
      $
	\argmin \Phi
	= 
	\{\check x_1\} \oplus X_2 \subseteq X_1 \oplus X_2
      $.
      For $\alpha \defeq F(\check x_1) = 0$ the set
      ${\rm lev}_{\alpha} (F|_{U_1}) = \{\check x_1\}$ is then 
      obviously nonempty and bounded.
    \item 
      Finally ${\rm lev}_{\beta}(G|_{V_1})$ is nonempty and bounded for any 
      $\beta \geq 0$, since 
      $G|_{V_1}$ is a norm -- namely the norm on $V_1$, 
      which makes $(V_1, G|_{V_1})$ isometrically isomorph
      to $(\mathcal{R}(L),$ $\| \cdot \||_{\mathcal{R}(L)})$, by virtue of the bijection
      $L|_{\mathcal{R}(L^*)} :\mathcal{R}(L^*) \rightarrow \mathcal{R}(L)$.
  \end{itemize}
\end{detail}

\begin{detail} \label{det:pr_of_locali_theorem:d_greater_than_zero}
  $d=0  \Leftrightarrow \argmin \Phi \cap \mathcal{N}(L) \not = \emptyset:$
  Using Fermat's Rule, see \cite[p. 264, l. 8]{Rockafellar1970};
  $\zerovec \in {\rm ri}(\dom \Phi^*)$,
  see part \ref{enu:dual_properties_in_our_setting}) in Lemma \ref{lem:subgradient_and_conjugate_function_in_our_setting},
  in order to apply the chain rule, see \cite[Theorem 23.9]{Rockafellar1970} and
  $x\in\partial \Phi^*(x^*) \Leftrightarrow x^* \in \partial \Phi(x)$,
  see \cite[Corollary 23.5.1]{Rockafellar1970} 
  we obtain 
  \begin{align*}
    d=0  & \Leftrightarrow
	  \zerovec \in \argmin \Phi^*(-L^* \cdot)
	\\
	& \Leftrightarrow
	  \zerovec \in \partial [\Phi^*(-L^* \cdot)]|_{\zerovec}
	\\
	& \Leftrightarrow
	  \zerovec \in -L \partial \Phi^*(-L^*\zerovec)
	\\
	& \Leftrightarrow
	  \exists x \in \R^n: x \in \partial \Phi^*(\zerovec) \wedge \zerovec = -Lx
	\\
	& \Leftrightarrow
	  \exists x \in \R^n: \zerovec \in \partial \Phi(x) \wedge x \in \mathcal{N}(L)
	\\
	& \Leftrightarrow
	  \argmin \Phi \cap \mathcal{N}(L) \not = \emptyset.
  \end{align*}
\end{detail}

\begin{detail} \label{det:pr_of_locali_theorem:SOL_D_1_equals_0}
  There is a decomposition 
  ${\rm aff}(\dom F) = \check A_F \oplus \check P_F$ such that 
  $\check P_F$ is a subspace of $P[F]$ and such that $F$ is strictly convex 
  on ${\rm int}_{\check A}(\dom F|_{\check A})$:
  We set $E = \Phi^*: \R^n \rightarrow \R \cup \{+\infty \}$,
  $M(\cdot) = -L^* \cdot$. 
  Note now that $\zerovec \in {\rm ri}(\dom E) \cap \mathcal{R}(M)$ 
  and that ${\rm aff}(\dom E) = X_1 \oplus X_3$, where
  $X_3$ is a subspace of $P[E]$, by Lemma 
  \ref{lem:subgradient_and_conjugate_function_in_our_setting},
  and where $E = \Phi^*$ is strictly convex 
  on ${\rm int}_{X_1}(\dom \Phi^*|_{X_1}) = {\rm ri}(\dom \Phi^*|_{X_1})$,
  since it is even essentially strictly convex on $X_1$
  by Lemma \ref{lem:subgradient_and_conjugate_function_in_our_setting}.
  Thus we can use 
  \prettyref{thm:concatenation_stricly_convex_on_ri_with_linear_mapping}
  and obtain that ${\rm aff}(\dom F)$ can be decomposed in the 
  claimed way.
\end{detail}

\begin{detail} \label{det:pr_of_locali_theorem:SOL_D_1_away_from_argmin}
  The functions $F(\cdot) = \Phi(-L^* \cdot)$ and 
  $G(\cdot) = \tau\|\cdot\|_*$
  fulfill the assumptions of Theorem
  \ref{thm:direction_of_argmin__essentially_smooth__strict_convex_sharpend}:
  Due to $\zerovec = -L^*\zerovec \in {\rm ri}(\dom \Phi^*)$ and 
  $\dom \Phi^* = X_1 \oplus X_3$ we see that
  \prettyref{thm:concatenation_stricly_convex_on_ri_with_linear_mapping}
  can be applied to $E = \Phi^*$ and $M(\cdot)= -L^* \cdot$. Thereby we 
  get a decomposition ${\rm aff}(\dom F) = \check A \oplus \check P$
  of ${\rm aff}(\dom F) \eqdef A$ into a vector subspace $\check P$ of the 
  periods space $P[F]$ and an affine subspace $\check A \subseteq \R^n$
  such that $F$ is strictly convex on
  ${\rm int}_{\check A}(\dom F|_{\check A})$.
  Furthermore $F$ is essentially smooth on $A$ by Theorem 
  \ref{thm:concatenation_ess_smooth_with_linear_mapping}.
\end{detail}

\begin{detail} \label{det:pr_of_locali_theorem:SOL_P_2_belongs_to_nullspace}
  The assumptions of Theorem 
  \ref{thm:direction_of_argmin__essentially_smooth__strict_convex_sharpend}
  are fulfilled for $F = \Phi$ and $G(\cdot) = $ $\lambda \|L \cdot\|$:  
  Clearly $F$ and $G$ are convex functions with
  ${\rm ri}(\dom F) \cap {\rm ri}(\dom G) \not = \emptyset$.
  Moreover the decomposition ${\rm aff}(\dom F) = X_1 \oplus X_2$, or rather 
  their components, have the needed properties 
  by our setting's assumptions:
  $X_2$ is a subspace of $P[F]$, $F$ is strictly convex on 
  ${\rm int}_{X_1}(\dom F|_{X_1})= {\rm ri}(\dom F|_{X_1}) \not = \emptyset$,
  and lastly $F$ is essentially smooth on $X_1$.
\end{detail}

\begin{detail} \label{det:pr_of_locali_theorem:SOL_P_2_does_not_intersect_argmin_Phi}
  $f$ is again proper, convex, lower semicontinuous and essentially 
  smooth:
  $f$ is proper since $\Phi \eqdef F$ is proper and because 
  $f(1)=F(\hat x) < +\infty$.
  Moreover $f$ also inherits convexity and lower semicontinuity from $F$.
  Finally $f$ is essentially smooth:
  Part \upref{enu:argmin_contained_in_ri_of_dom} of Lemma 
  \ref{lem:argmin_of_sum_contained_in_ri_of_dom_of_ess_smooth_summand}
  gives $\hat x \in \argmin F \subseteq {\rm ri}(\dom F)$, so that 
  Theorem \ref{thm:concatenation_ess_smooth_with_linear_mapping}
  can be applied to $E = F$ and $M(\cdot) = \cdot \hat x$, giving the 
  essentially smoothness of $f = F \circ M$ on ${\rm aff}(\dom f) = \R$;
  note for the last equality -- in the nontrivial case 
  $\hat x \not = \zerovec$ -- the above $\hat x \in {\rm ri}(\dom F)$ 
  and our setting assumption $\zerovec \in \overline{\dom F}$.
\end{detail}

%

\begin{detail} \label{det:pr_of_main_theorem:function_g_is_well_defined}
  The function $g$, given by $g(\tau) \defeq \|\hat p\|_*$ with any 
  $\hat p \in {\rm SOL}(D_{1,\tau})$, $\tau \in (0,c)$ is well defined,
  since ${\rm SOL}(D_{1,\tau}) \not = \emptyset$ and since
  Theorem 
  \ref{thm:direction_of_argmin__essentially_smooth__strict_convex_sharpend}
  ensures $\|\hat p\|_* = \|\hat q\|_*$ for any other 
  $\hat q \in {\rm SOL}(D_{1,\tau})$:
  Consider $F(\cdot) = \Phi(-L^* \cdot)$ and $G(\cdot) = \tau \|\cdot\|_*$.
  Due to $\zerovec = -L^*\zerovec \in {\rm ri}(\dom \Phi^*)$ and 
  $\dom \Phi^* = X_1 \oplus X_3$ we see that
  Theorem \ref{thm:concatenation_stricly_convex_on_ri_with_linear_mapping}
  can be applied to $E = \Phi^*$ and $M(\cdot)= -L^* \cdot$. Thereby we 
  get a decomposition ${\rm aff}(\dom F) = \check A \oplus \check P$
  of ${\rm aff}(\dom F) \eqdef A$ into a vector subspace $\check P$ of the 
  periods space $P[F]$ and an affine subspace $\check A \subseteq \R^n$
  such that $F$ is strictly convex on
  ${\rm int}_{\check A}(\dom F|_{\check A})$.
  We may assume without loss of generality that $\check A$
  is a vector subspace as well, since $\zerovec \in A$.
  Furthermore $F$ is essentially smooth on $A$ by Theorem 
  \ref{thm:concatenation_ess_smooth_with_linear_mapping}
  and even on $\check A$ by Lemma
  \ref{lem:essentially_smoothness_does_not_depend_on_periodspace}.
  Theorem
  \ref{thm:direction_of_argmin__essentially_smooth__strict_convex_sharpend}
  can thus be applied, giving
  $\tau \|\hat p \|_* = G(\hat p) = G(\hat q) = \tau \|\hat q\|_*$.
  Since $\tau \not = 0$ we get the claimed $\|\hat p\|_* = \|\hat q\|_*$.
\end{detail}

\begin{detail} \label{det:pr_of_main_theorem:norm_of_dual_sol_lower_than_d}
  $\|\hat p\|_* < d:$
  \prettyref{thm:constraint_vs_nonconstraint} {\rm ii)} ensures 
  $\hat p \in {\rm SOL}(D_{2,\|\hat p\|_*})$; 
  hence we must have $\|\hat p\|_* < d$ since the
  assumption $\|\hat p\|_* \geq d$ would imply, 
  by \prettyref{thm:localisation_of_the_solvers}, that
  $\hat p \in {\rm SOL}(D_{2,\|\hat p\|}) \subseteq \argmin \Phi^*(-L^* \cdot)$,
  resulting in 
  $\hat p \in {\rm SOL}(D_{1,\tau}) \cap \argmin \Phi^*(-L^* \cdot)$.
  This contradicts the relation 
  ${\rm SOL}(D_{1,\tau}) \cap \argmin \Phi^*(-L^* \cdot) = \emptyset$
  from \prettyref{thm:localisation_of_the_solvers} 
  which holds since $\tau \in (0,c)$.
\end{detail}

\begin{detail} \label{det:pr_of_main_theorem:function_f_is_well_defined}
  The function $f$, given by
  $f(\lambda) \defeq \|L \hat x\|$ with any $\hat x \in {\rm SOL}(P_{2,\lambda})$,
  $\lambda \in (0,d)$,
  is well defined,
  since ${\rm SOL}(P_{2,\lambda}) \not = \emptyset$ and since
  Theorem 
  \ref{thm:direction_of_argmin__essentially_smooth__strict_convex_sharpend} 
  ensures $\|L \hat x\| = \|L \tilde x\|$ for any other 
  $\tilde x \in {\rm SOL}(P_{2,\lambda})$: 
  For $F = \Phi$ and 
  $G(\cdot) = \lambda \|L\cdot\|$ all assumptions of Theorem
  \ref{thm:direction_of_argmin__essentially_smooth__strict_convex_sharpend}
  are fulfilled;
  note herein that $F$ and $G$ are convex functions with
  ${\rm ri}(\dom F) \cap {\rm ri}(\dom G) \not = \emptyset$ and that
  the decomposition ${\rm aff}(\dom F) = X_1 \oplus X_2$ fits to the 
  assumptions of Theorem
  \ref{thm:direction_of_argmin__essentially_smooth__strict_convex_sharpend}:
  $X_2$ is a subspace of $P[F]$ and $F$ is strictly convex on 
  ${\rm int}_{X_1}(\dom F|_{X_1})= {\rm ri}(\dom F|_{X_1}) \not = \emptyset$.
  Lastly $F$ is essentially smooth on $X_1$.
  Applying Theorem 
  \ref{thm:direction_of_argmin__essentially_smooth__strict_convex_sharpend}
  gives now 
  $
    \lambda \| L \hat x \| 
    = G(\hat x)
    = G(\tilde x) 
    = \lambda \| L \tilde x \|
  $
  and hence the claimed $\| L \hat x \| = \| L \tilde x \|$.
\end{detail}

\begin{detail} \label{det:pr_of_main_theorem:G_of_primal_solution_lower_than_c}
  $\|L \hat x\| < c:$
  \prettyref{thm:constraint_vs_nonconstraint} {\rm ii)} ensures 
  $\hat x \in {\rm SOL}(P_{1,\|L \hat x\|})$;
  so we must have $\|L \hat x\| < c$,
  since the assumption $\|L \hat x\| \geq c$ would imply, 
  by \prettyref{thm:localisation_of_the_solvers}, that
  $\hat x \in {\rm SOL}(P_{1,\|L \hat x\|}) \subseteq \argmin \Phi$,
  resulting in 
  $\hat x \in {\rm SOL}(P_{2,\lambda}) \cap \argmin \Phi$.
  This contradicts the relation 
  ${\rm SOL}(P_{2,\lambda}) \cap \argmin \Phi = \emptyset$
  from \prettyref{thm:localisation_of_the_solvers} which holds 
  since $\lambda \in (0,d)$.
\end{detail}

\begin{detail} \label{det:pr_of_main_theorem:different_taus_from_0_to_c_give_distict_solver_sets}
  The equations
  \begin{alignat*}{2}
    {\rm SOL}(P_{1,\tau}) 
    &\cap 
    {\rm SOL}(P_{1,\tau'}) 
    &&= 
    \emptyset,
  \\
    {\rm SOL}(D_{2,\lambda})
    &\cap
    {\rm SOL}(D_{2,\lambda'})
    &&=
    \emptyset
  \end{alignat*}
  hold true for all distinct $\tau, \tau' \in (0,c)$ 
  and all distinct $\lambda, \lambda' \in (0,d)$, respectively:
  If there were e.g. distinct $\lambda, \lambda' \in (0,d)$ with,
  say $\lambda < \lambda'$, such that there would be a
  $
    \hat p 
    \in 
    {\rm SOL}(D_{2,\lambda}) 
    \cap 
    {\rm SOL}(D_{2,\lambda'})
  $
  we had $\|\hat p\|_* \leq \lambda < \lambda'$ and
  $\hat p \in \argmin \Phi^*(-L^* \cdot)$ subject to 
  $\| \cdot \|_* \leq \lambda'$,  
  so that $\hat p$ would be a local minimizer of $\Phi^*(-L^* \cdot)$.
  Hence, $\hat p \in \argmin \Phi^*(-L^* \cdot)$, 
  by the convexity of $\Phi^*(-L^* \cdot)$.
  This, however, contradicts
  $
    \argmin \Phi^*(-L^* \cdot)
    \cap 
    {\rm SOL}(D_{2,\lambda'})
    =
    \emptyset
  $,
  which holds by Theorem \ref{thm:localisation_of_the_solvers}
  since $\lambda' \in (0,d)$.
  The proof of the other equation is done just analogously.
\end{detail}

\begin{detail} \label{det:pr_of_main_theorem:g_after_f_is_identity}
  For an arbitrarily chosen
  $\lambda \in (0,d)$ and
  $\lambda' \defeq g(f(\lambda))$ we have $\lambda = \lambda'$:
  Using \eqref{eq:sol_2_is_contained_in_sol_1}
  and   \eqref{eq:sol_1_is_contained_in_sol_2}
  with $\tau = f(\lambda)$ yields
  \begin{alignat*}{3}
    {\rm SOL}(P_{2,\lambda}) 
    &\subseteq
    {\rm SOL}(P_{1,f(\lambda)})
    &&\subseteq
    {\rm SOL}(P_{2,\lambda'}),
  \\
    {\rm SOL}(D_{2,\lambda}) 
    &\subseteq
    {\rm SOL}(D_{1,f(\lambda)})
    &&\subseteq
    {\rm SOL}(D_{2,\lambda'})
  \end{alignat*}
  Since ${\rm SOL}(D_{2,\lambda}) \not = \emptyset$ we must have
  $\lambda = \lambda'$, in order to avoid a contradiction to 
  \eqref{eq:sol_2_moves}.
\end{detail}

%

\begin{detail} \label{det:appendix_pr_of_relation_between_inequalities}
  Lemma 
  \ref{lem:same_blow_up_factor_gets_all_attachted_affines_strangers_out_of_ball}
  implies 
  $
    \inf_{h_1 \in X_1\cap \mathbb{S}_1, h_2 \in X_2\cap \mathbb{S}_1}
    \langle h_1, h_2 \rangle > -1
  $ by the following reason:
  By this lemma there is a constant $C \geq 1 > 0$ such that 
  $
    \frac{1}{C^2} \|h_1\|_2^2 
    \leq 
    \|h_1 + h_2\|_2^2
    =
    \|h_1\|_2^2 + \|h_2\|_2^2 + 2 \langle h_1,h_2 \rangle
  $
  for all $h_1 \in X_1$ and $h_2 \in X_2$.
  For $h_1 \in X_1 \cap \mathbb{S}_1$ 
  and $h_2 \in X_2 \cap \mathbb{S}_1$
  we obtain in particular
  $
    \langle h_1, h_2 \rangle
    \geq
    \frac12 [\frac{1}{C^2} -1 -1]
    =
    -1 + \frac{1}{2 C^2} 
    \eqdef 
    \gamma
  $,
  so that 
  $
    \inf_{h_1 \in X_1\cap \mathbb{S}_1, h_2 \in X_2\cap \mathbb{S}_1}
    \langle h_1, h_2 \rangle 
    \geq 
    \gamma 
    > 
  -1
  $
  holds indeed true.
\end{detail}

\DissVersionForMeOrHareBrainedOfficialVersion
   {\include{./suggestions_for_further_work}
    \include{./leveled_spaces_und_level_coercivity}}
{} 

    \bibliographystyle{abbrv}
    \bibliography{./dr_arbeit}
\section*{Own publication}

\begin{tabular}{p{3.3cm}p{\textwidth-2\tabcolsep-3.3cm}}
  [CiShSt2012]
    & R.~Ciak, B.~Shafei, and G.~Steidl
      Homogeneous penalizers and constraints in convex image restoration.
      {\em Journal of Mathematical Imaging and Vision},
      47(3):210--230, 2013, published online October 2012.
\end{tabular}

\pagestyle{plain}
\clearpage   

\section*{Wissenschaftlicher Werdegang}

\begin{tabular}{p{3.3cm}p{\textwidth-2\tabcolsep-3.3cm}}
  06/2001
    & {\it Hochschulreife}
\\
    & Matthias-Grünewald-Gymnasium, Tauberbischofsheim
\\[0.7ex]
  10/2001 - 12/2009
    & {\it Studium der Physik und Mathematik}
\\
    & Julius-Maximilians-Universität Würzburg
\\[0.7ex]
  12/2009
    & {\it Diplom in Mathematik}
\\
    & Diplomarbeit: Eine Variationsmethode für die Koebefunktion
\\[0.7ex]
  01/2010 - 03/2010
    & {\it Teilnahme an mehreren Kursen}
\\  
    & Rechenzentrum, Universität Würzburg
\\[0.7ex]
  04/2010 - 07/2010
    & {\it Nebenberuflicher wissenschaftlicher Mitarbeiter}
\\
    & Fakultät für Mathematik und Informatik, Universität Würzburg
\\[0.7ex]
  08/2010 - 03/2011
    & {\it Praktikum}
\\
    & Fraunhofer ITWM, Kaiserslautern
\\[0.7ex]
  ab 04/2011
    & {\it Doktorand} (Dissputation am 9. Oktober 2014)
\\
    & Fachbereich Mathematik, TU Kaiserslautern
\end{tabular}

\section*{Scientific Career}

\begin{tabular}{p{3.3cm}p{\textwidth-2\tabcolsep-3.3cm}}
  06/2001
    & {\it University entrance qualification}
\\
    & Matthias-Grünewald-Gymnasium, Tauberbischofsheim
\\[0.7ex]
  10/2001 - 12/2009
    & {\it Undergraduate studies in Physics and Mathematics}
\\
    & Julius Maximilians University of Würzburg
\\[0.7ex]
  12/2009
    & {\it Diploma in Mathematics}
\\
    & Diploma thesis: Eine Variationsmethode für die Koebefunktion
\\[0.7ex]
  01/2010 - 03/2010
    & {\it Participation in ceveral courses}
\\
    & Computer center, University of Würzburg
\\[0.7ex]
  04/2010 - 07/2010
    & {\it Teaching Assistantship}
\\
    & Departement of Mathematics and Computer Science, University of Würzburg
\\[0.7ex]
  08/2010 - 03/2011
    & {\it Internship}
\\
    & Fraunhofer ITWM, Kaiserslautern
\\[0.7ex]
  from 04/2011
    & {\it Ph. D. student}  (PhD thesis defense on October 9, 2014)
\\
    & Fachbereich Mathematik, TU Kaiserslautern

\end{tabular}

\clearpage   
\section*{Danksagung}
\thispagestyle{empty}

Ganz vielen Dank möchte ich zuallererst meiner Yoga--Lehrerin
Susanne sagen. Wer weiß, ob oder wie ich die Zeit 
auf der anderen Straßenseite gegenüber, überstanden hätte,
wenn ich nicht das Glück gehabt hätte, daß gerade sie 
im Unisport Yoga Vidya unterrichtet. \linebreak[3]
Vielen Dank Susanne für die Gelegenheiten und Hilfestellungen
auf Nahes und doch manchmal so Weites aufmerksam und achtsam(er)
zu werden.

Herzlichen Dank hier auch nochmal an Hemmi-Maria Schaar,
dafür daß sie mich auf das wertvolle Buch 
``Haben oder Sein'' aufmerksam gemacht hat,
an Jessica Borsche für ihre, in ``Amtsstuben''
nicht selbstverständlich anzutreffende, freundliche und hilfsbereite 
Art, an meine Eltern für mannigfache Unterstützungen,
besonders bei Umzügen und als ich im Krankenhaus war und für die 
Wochen danach. Hier auch vielen lieben Dank an mein Schwesterherz,
insbesondere fürs Beantworten so vieler Fragen.
%

%

Danke auch an alle, die die Zeit meines Doktorandendaseins 
bereichert haben. Besonders an Andreas, Micha, Elmi, Sarah, Maria,
Jin Yu, Sophie, Lena und Jochen. Sarah und Lena vielen Dank 
für die vielen schönen und liebevollen Karten.
Insbesondere Maria, Jochen und Andreas, sowie meinen Eltern, 
vielen Dank auch dafür, daß sie durch
ihr Sein und Sosein erkennbar machten, 
daß gewisse idealistische Grundeinstellungen
der Erosion zu trotzen vermögen auch heutzutage noch.

Danke an Martin für Aufmunterungen und gute berufliche und private 
Gespräche und für sehr viele gute Vorschläge,
an Friederike für ihre Hilfe und Vorschläge zur Verbesserung 
der Einleitung sowie für gute Gespräche beruflicher wie privater
Natur.
Beiden und meinem Schwesterherz vielen Dank, 
daß sie ihre guten Englischkenntnisse
mit mir teilten und halfen an vielen und wichtigen Stellen, 
den Text besser werden zu lassen. Für Verbesserungsvorschläge 
hier auch nochmal herzlichen Dank an meinen Freund Elmi.

Danke an Gabi, für die Momente in denen wir beide ganz Mensch waren,
und ebenfalls für die Stellen, welche ich sah und welche,
die ich nicht wahrnahm oder wahrnehme, 
an denen sie sich für mich einsetzte. Danke auch für 
die vielen Korrektur- und Verbesserungsvorschläge für die Diss.

Ein Dankeschön für die Bereitschaft meine Dissertation zu begutachten
geht jeweils an Gabi und an ihre Kollegin Frau Professorin Gerlind Plonka-Hoch.

Für ihre Hilfsbereitschaft danke ich Kirsten, Tobi, Nico 
und Jin Yu -- auch für, obwohl oder vielleicht vielmehr weil ich 
zu vielen Zeiten nicht in der Lage war ihn (immer) zu sehen, 
den Korb mit den wunderbaren Sachen.

Für Hilfe bei Latex--Fragen möchte ich vielen Danke sagen --
neben zahlreichen Bloggern, die ihr Wissen mit anderen teilten,
besonders Behrang, Tanja, Sören und Ronny.
Ihnen, Stanislav und Jan und den verbleibenden heutigen oder 
ehemaligen Gruppenmitgliedern, auch der anderer AG's
vielen Dank für gute Momente und Zeiten beruflicher wie privater Natur.
Tanja hier nochmal ein herzliches Dankeschön für ihren Hinweis auf 
die Klamm im  Karlstal bei Trippstadt.


%
%
%
%
%
%
%
%
%
%

%
%

\clearpage   
\cleardoublepage
%
%
%
\section*{Some remarks to the thesis}
\thispagestyle{plain}
Between the preceding thesis and the ``vorgelegte Dissertation'' there are 
some minor differences. When handing in the ``vorgelegte Dissertation'' 
the ``Summary'' and the ``Zusammenfassung'' were printed on separate pages outside
of the thesis, whereas here they were included inside the thesis itself.
Moreover Typos, obvious small local errors and certain inconsequencies in notation were corrected.
In particular the zerovector of the Euclidean space $\R^n$ should now everywhere be denoted
by $\zerovec$ (with exception for $n=1$ where the notation $0$ might be used).

We finally note that an electronic version of this work is available via ArXive, see\\
\url{http://arxiv.org/a/ciak_r_1}  

The reader may want to check this webpage also for Erata / Update 
(maybe additionally containing a new space concept, 
which was not yet developed enough to be included in the 
``vorgelegte Dissertation'')

\DissVersionForMeOrHareBrainedOfficialVersion{\newpage \listoftodos}{}

\end{document}